\documentclass[10pt]{article}

\usepackage[margin=1.0in]{geometry}
\usepackage{amsthm, amsfonts,amssymb,euscript}
\usepackage{amsmath}
\usepackage{enumerate}
\usepackage{bm}
\usepackage{amssymb}
\usepackage{amsthm}
\usepackage{graphicx}
\usepackage{caption}
\usepackage{subcaption}
\usepackage{amsfonts}
\usepackage{float}
\usepackage{color}
\usepackage{slashed}
\usepackage{mathrsfs}
\usepackage[affil-it]{authblk}

\usepackage[usenames,dvipsnames,svgnames,table]{xcolor}
\usepackage[colorlinks=true]{hyperref}
\hypersetup{linkcolor=BrickRed, urlcolor=green, citecolor=blue, linktoc=page}

\numberwithin{equation}{section}

\newtheorem*{proposition*}{Proposition}
\newtheorem*{theorem*}{Theorem}
\newtheorem*{conjecture*}{Conjecture}
\newtheorem*{claim*}{Claim}
\newtheorem*{lemma*}{Lemma}
\newtheorem*{corollary*}{Corollary}
\newtheorem{theorem}{Theorem}[section]
\newtheorem{proposition}[theorem]{Proposition}

\newtheorem{lemma}[theorem]{Lemma}
\newtheorem{corollary}[theorem]{Corollary}

\newtheorem*{definition*}{Definition}
\newtheorem{definition}{Definition}[section]
\newtheorem*{assumption*}{\mathcal{A}ssumption}

\newtheorem*{remark*}{Remark}
\newtheorem{remark}{Remark}[section]

\newtheorem{mytheo}{Theorem}
\setcounter{mytheo}{0}

\newtheorem{thmx}{Theorem}

\hyphenation{space-time}
\hyphenation{Christo-doulou}
\hyphenation{Schwarz-schild}

\newcommand{\R}{\mathbb{R}}
\newcommand{\s}{\mathbb{S}}
\newcommand{\C}{\mathbb{C}}

\newcommand{\N}{\mathbb{N}}

\newcommand{\snabla}{\slashed{\nabla}}

\newcommand{\Lbar}{\underline{L}}

\setcounter{tocdepth}{2}
\allowdisplaybreaks

\begin{document}

\title{A non-degenerate scattering theory for the wave equation \\ on extremal Reissner--Nordstr\"om}

\author[1]{Yannis Angelopoulos \thanks {yannis@caltech.edu}}
\author[2]{Stefanos Aretakis\thanks {aretakis@math.toronto.edu}}
\author[3]{Dejan Gajic \thanks {D.Gajic@dpmms.cam.ac.uk}}
	\affil[1]{\small The Division of Physics, Mathematics and Astronomy, Caltech,
1200 E California Blvd, Pasadena CA 91125, USA}
	\affil[2]{\small Department of Mathematics, University of Toronto, 40 St George Street, Toronto, ON, Canada}
	\affil[3]{\small Centre for Mathematical Sciences, University of Cambridge, Wilberforce Road, Cambridge CB3 0WB, UK}

\date{October 18, 2019}

\maketitle

\begin{abstract}
It is known that sub-extremal black hole backgrounds do not admit a (bijective) non-degenerate scattering theory in the exterior region due to the fact that the redshift effect at the event horizon acts as an unstable blueshift mechanism in the backwards direction in time. In the extremal case, however, the redshift effect degenerates and hence yields a much milder blueshift effect when viewed in the backwards direction. In this paper, we construct a definitive (bijective) non-degenerate scattering theory for the wave equation on extremal Reissner--Nordstr\"{o}m backgrounds. We make use of physical-space energy norms which are non-degenerate both at the event horizon and at null infinity. As an application of our theory we present a construction of a large class of smooth, exponentially decaying modes. We also derive scattering results in the black hole interior region. 

\end{abstract}

\tableofcontents

\section{Introduction}

\subsection{Introduction and background}
\label{sec:Introduction}

Scattering theories for the wave equation
\begin{equation}
\label{eq:waveequation}
\square_g\psi=0
\end{equation}
on black hole backgrounds provide useful insights in studying the evolution of perturbations ``at infinity''. In this article we construct a new scattering theory for scalar perturbations on extremal Reissner--Nordstr\"{o}m. Our theory makes crucial use of the vanishing of the surface gravity on the event horizon and our methods extend those of the horizon instability of extremal black holes in the forward-in-time evolution. In the remainder  of this section we will briefly recall  scattering theories for sub-extremal backgrounds and in the next section we will provide a rough version of the main theorems. 

We will first review the scattering theories of the wave equation \eqref{eq:waveequation}
on Schwarzschild spacetime backgrounds. Let $T$ denote the standard stationary Killing vector field on a Schwarzschild spacetime. Since $T$ is globally causal in the domain of outer communications, the energy flux associated to $T$ is non-negative definite. This property played a crucial role in the work of Dimock and Kay \cite{dimock2, dimock1} where a $T$-scattering theory on Schwarzschild, in the sense of Lax--Phillips \cite{laxphillips}, was developed (Figure \ref{fig:schwscata}). Subsequently, the $T$-scattering theory was understood by Nicolas \cite{nicolas}, following the notion of scattering states by Friedlander \cite{friedlander} (Figure \ref{fig:schwscatb}). 

\begin{figure}[h!]
\centering
\begin{subfigure}[b]{0.4\textwidth}
\centering
\includegraphics[scale=0.45]{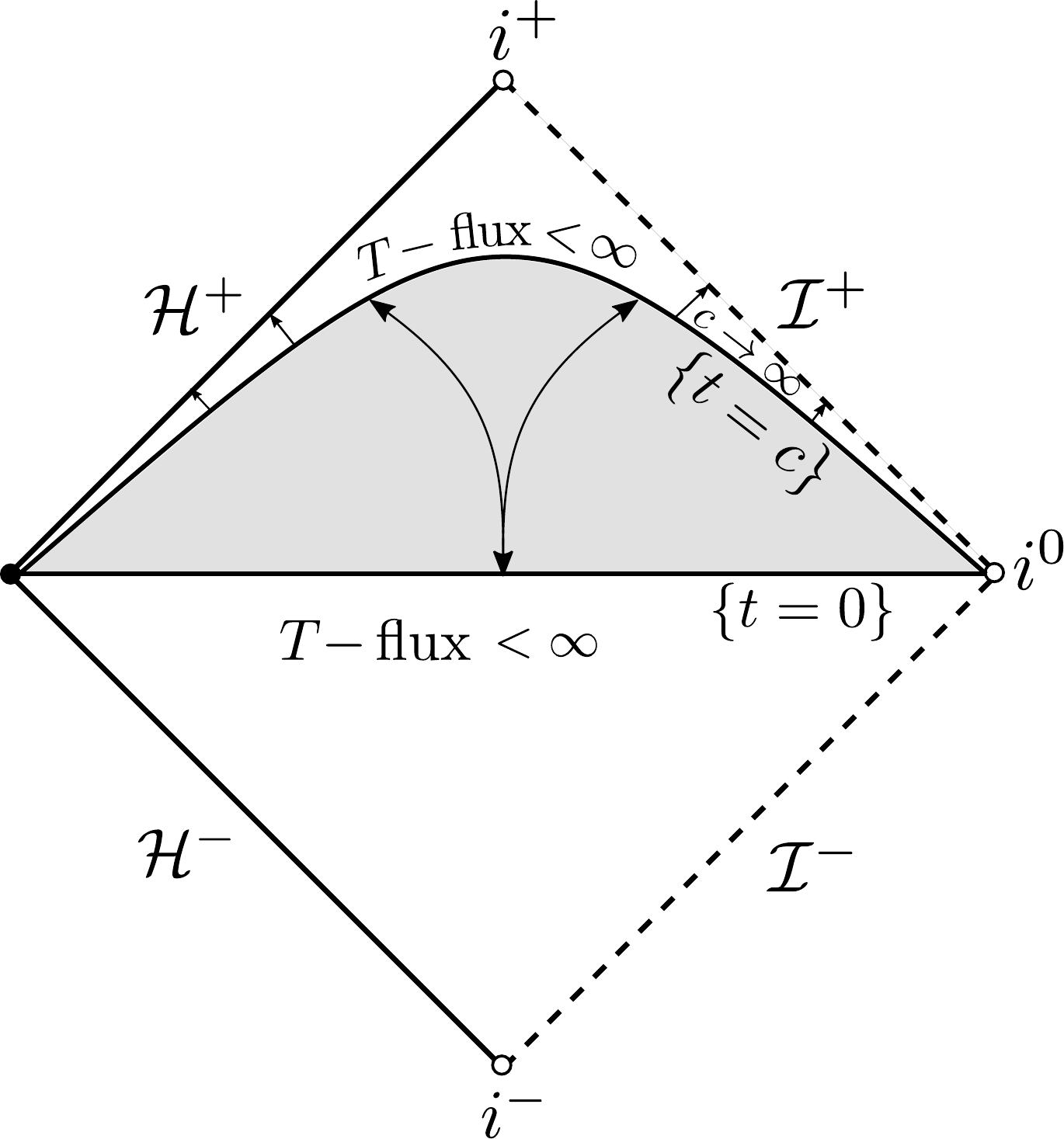}
\caption{The Dimock--Kay $T$-scattering map \\ in the sense of Lax--Phillips. }
\label{fig:schwscata}
\end{subfigure}
\begin{subfigure}[b]{0.4\textwidth}
\centering
\includegraphics[scale=0.45]{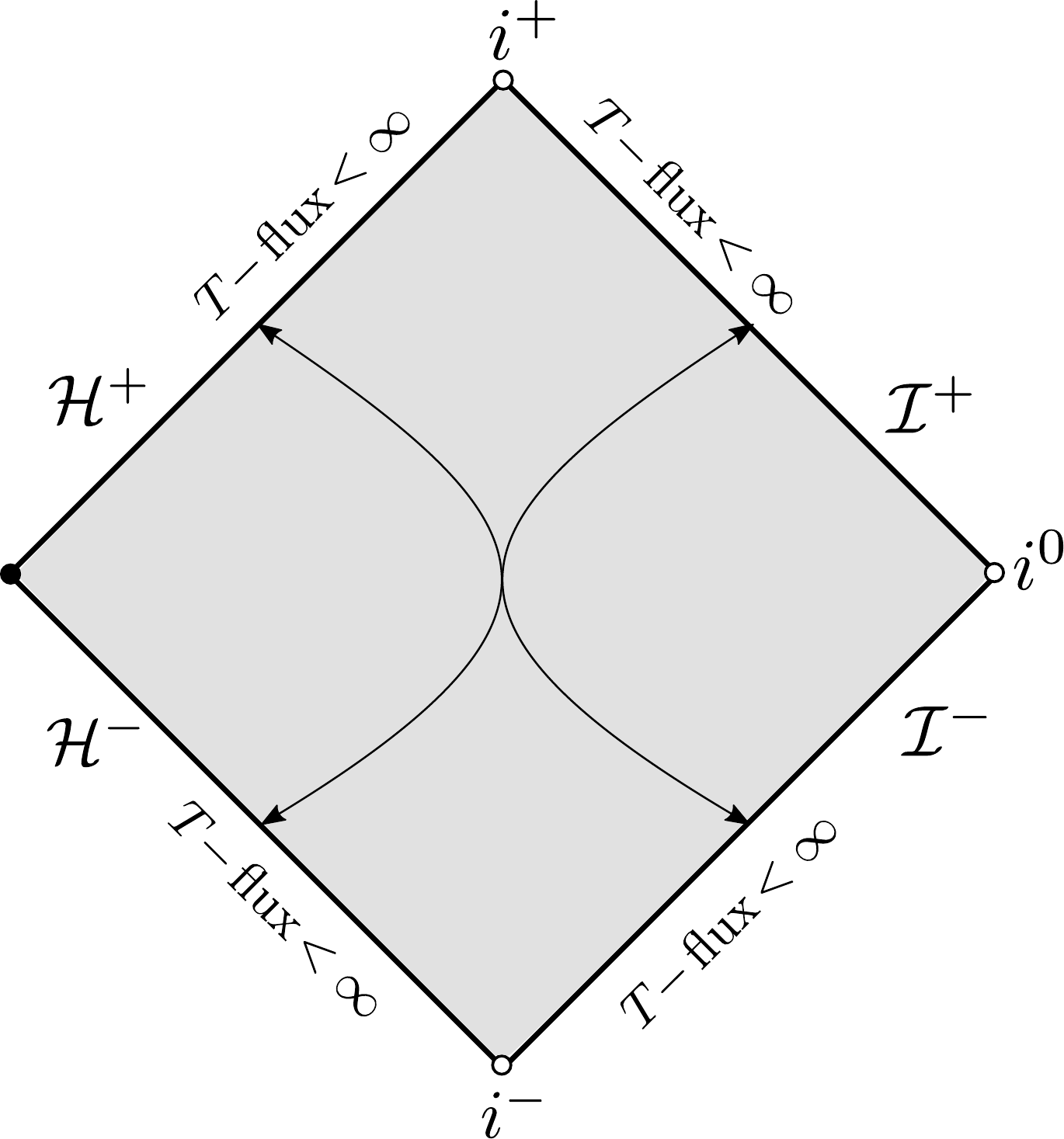}
\caption{The Nicolas $T$-scattering map \\ in the sense of Friedlander. }
\label{fig:schwscatb}
\end{subfigure}
\caption{The $T$-scattering maps on Schwarzschild spacetime.}
\label{fig:schwscat}
\end{figure}

 The $T$-energy scattering theory on Schwarzschild applies also when the standard Schwarzschild time function $t$ is replaced by a time function corresponding to a foliation by hypersurfaces intersecting the future event horizon and terminating at future null infinity (Figure \ref{fig:schwscathypa}).  This is convenient since it allows one to bound energies as measured by local observers. Recall that $T$ is timelike in the black hole exterior and null on the event horizon. For this reason, the $T$-energy flux across an achronal hypersurface intersecting the event horizon is positive-definite away from the horizon and degenerate at the horizon. Hence, the associated norm for the $T$-energy scattering theory is degenerate at the event horizon. On the other hand, it has been shown \cite{linearscattering, DafShl2016} that Schwarzschild does not admit a non-degenerate scattering theory where the norm on the achronal hypersurface is defined in terms of the energy flux associated to a globally timelike vector field $N$ (Figure \ref{fig:schwscathypb}) and the norms on the event horizon and null infinity are also defined in terms of energy flux associated with $N$, but with additional, arbitrarily fast polynomially decaying weights in time. This is due to the celebrated redshift effect which turns into a blueshift instability mechanism when seen from the backwards scattering point of view. 

\begin{figure}[h!]
\centering
\begin{subfigure}[b]{0.4\textwidth}
\centering
\includegraphics[scale=0.45]{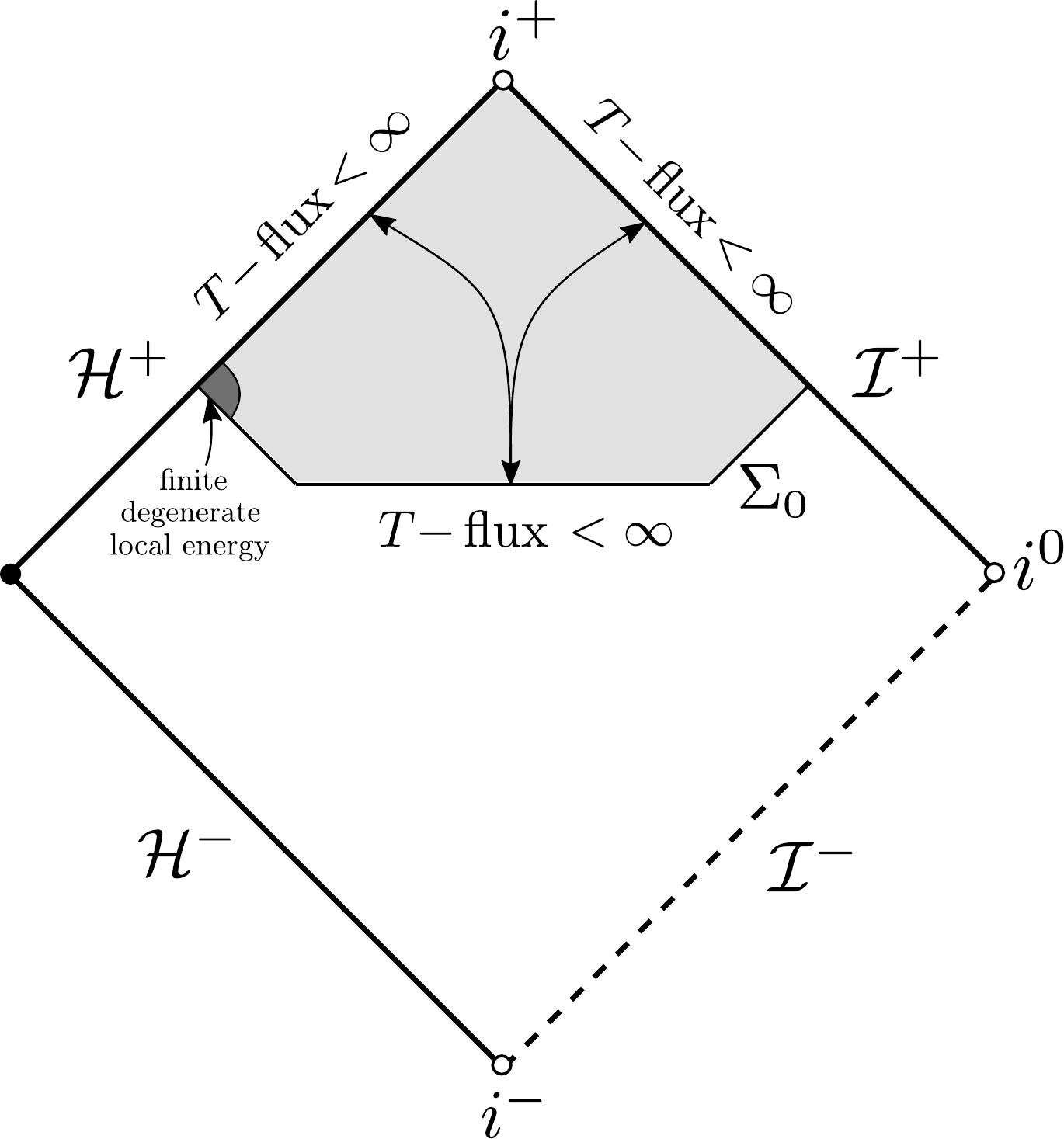}
\caption{The $T$-scattering map.}
\label{fig:schwscathypa}
\end{subfigure}
\begin{subfigure}[b]{0.4\textwidth}
\centering
\includegraphics[scale=0.45]{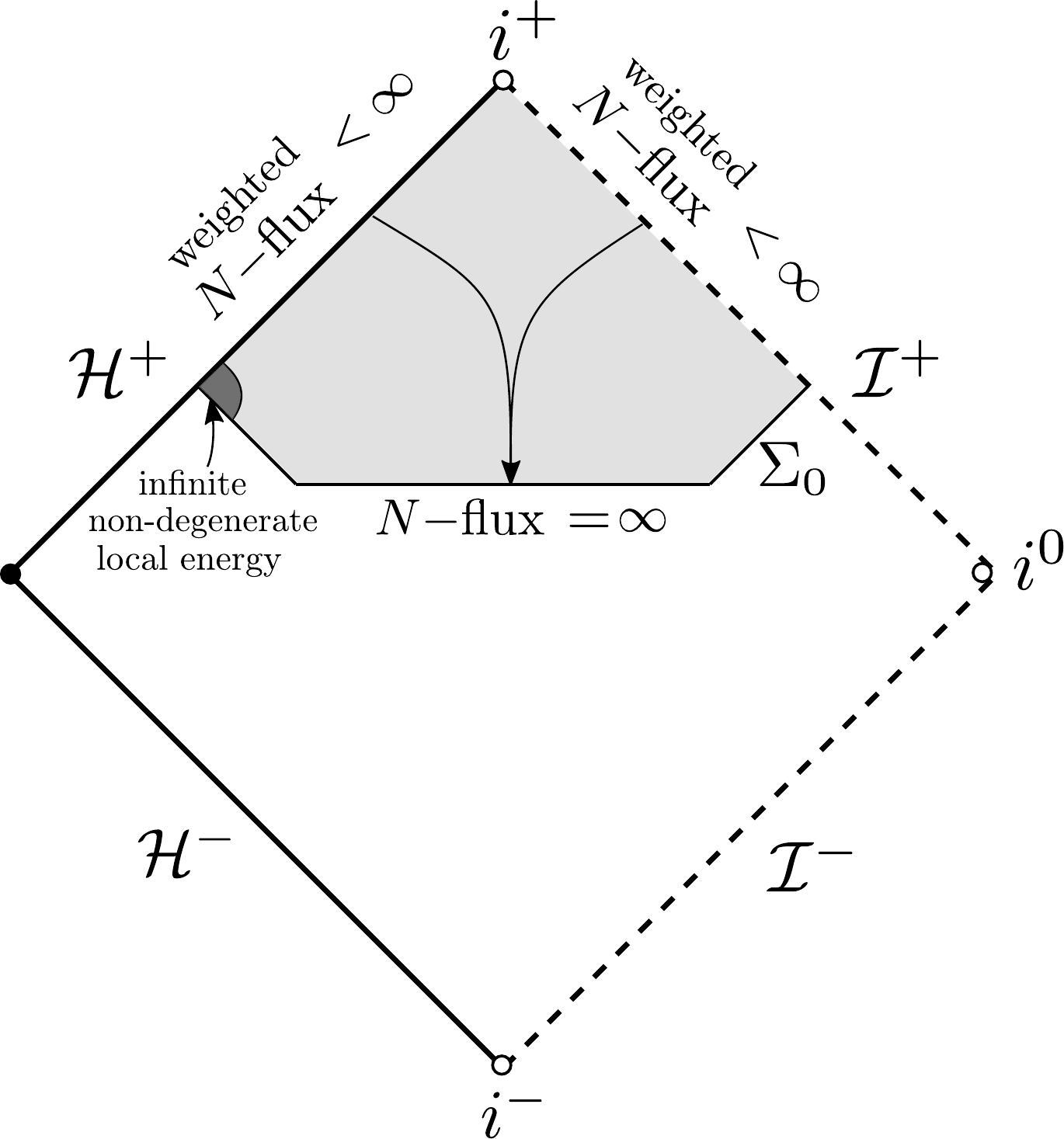}
\caption{The $N$-scattering map\\ fails to be surjective.}
\label{fig:schwscathypb}
\end{subfigure}
\caption{The $T$ and $N$ scattering maps on Schwarzschild.}
\label{fig:schwscathyp}
\end{figure}
It is important to note that one can counter the blue-shift mechanism and define a backwards scattering map for non-degenerate high-regularity norms on an achronal hypersurface if the data on $\mathcal{H}^{+}$ and $\mathcal{I}^{+}$ are sufficiently regular and decay exponentially fast with sufficiently large rate (Figure \ref{fig:higher}). A fully nonlinear version of this statement, in the context of the vacuum Einstein equations, was presented in \cite{scattering}. 
 \begin{figure}[H]
	\begin{center}
\includegraphics[scale=0.45]{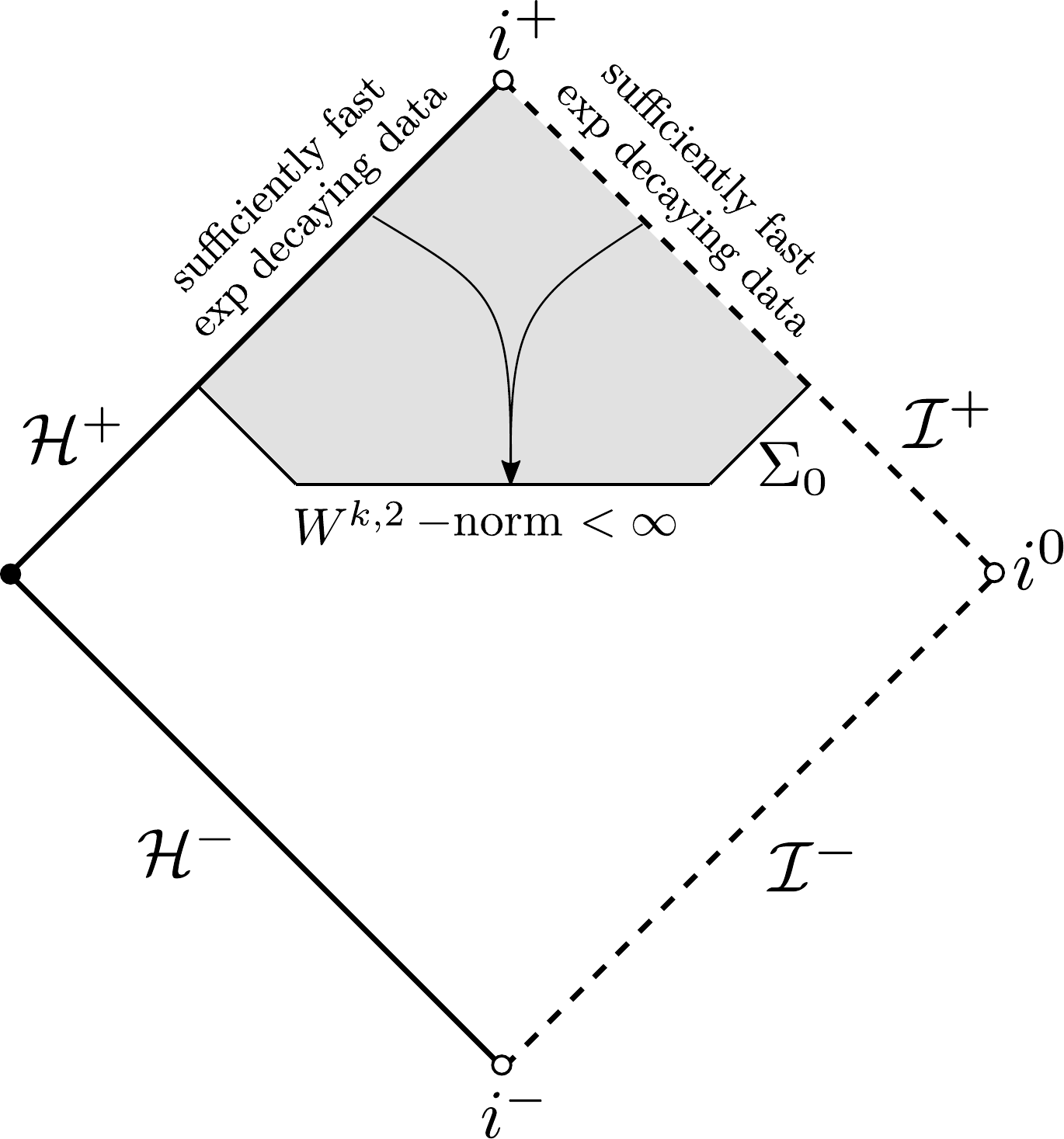}
\end{center}
\vspace{-0.2cm}
\caption{Higher-order non-degenerate 
backwards scattering on Schwarzschild.}
	\label{fig:higher}
\end{figure}
As far as the Kerr family is concerned, Dafermos, Rodnianski and Shlapentokh-Rothman \cite{linearscattering} derived a degenerate scattering theory in terms of the energy flux associated to a globally causal vector field $V$ which is null on the event horizon and timelike in the exterior region. Similarly to the Schwarzschild case, the sub-extremal Kerr backgrounds do not admit a non-degenerate scattering theory in the exterior region. Let us also note that a $T$-energy scattering theory on Oppenheimer--Snyder spacetimes, describing Schwarzschild-like black holes arising from gravitational collapse, was developed in \cite{alf19}.

Finally we present some results regarding the black hole interior region. Luk--Oh \cite{Luk2015} showed that the forward evolution of smooth compactly supported initial data on sub-extremal Reissner--Nordstr\"om (RN) is $W^{1,2}$-singular at the Cauchy horizon (Figure \ref{fig:higher2sub2ern}). 
\begin{figure}[H]
	\begin{center}
\includegraphics[scale=0.40]{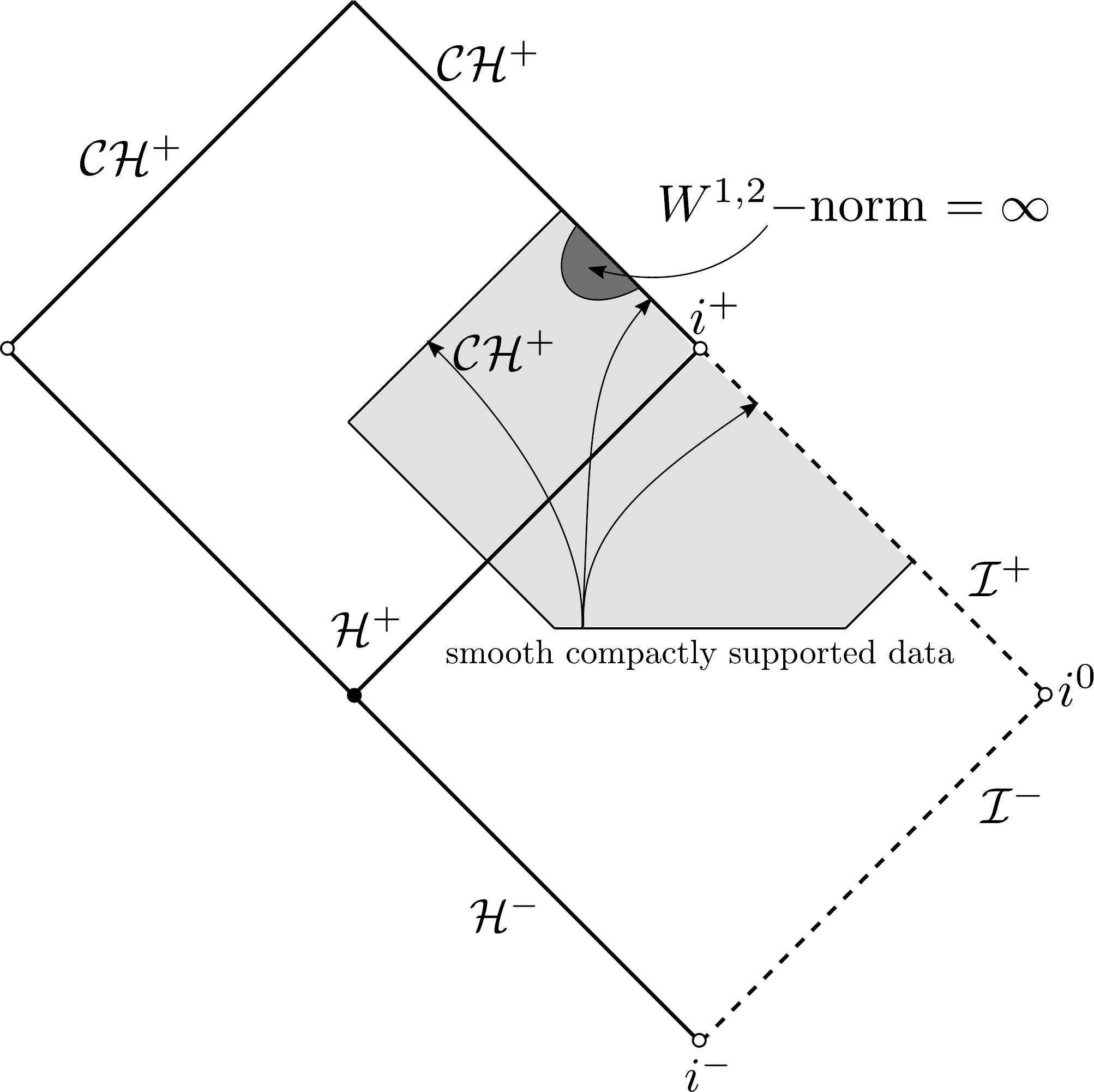}
\end{center}
\vspace{-0.2cm}
\caption{Blow-up of $W^{1,2}$ norm in any neighborhood of the Cauchy horizon.}
	\label{fig:higher2sub2ern}
\end{figure}
Similar instability results for the wave equation on Kerr interiors were presented by Luk--Sbierski \cite{LukSbierski2016} and independently by Dafermos--Shlapentokh-Rothman \cite{DafShl2016} (see also \cite{chr18, hintz17, Franzen2014}). Specifically, in \cite{DafShl2016} the authors assumed trivial data on the past event horizon and arbitrary, non-trivial polynomially decaying data on past null infinity and showed that local (non-degenerate) energies blow up in a neighborhood of any point at the Cauchy horizon (Figure \ref{fig:higher2sub10}). The interior of Schwarzschild was considered by Fournodavlos and Sbierski \cite{gregjan}, who derived asymptotics for the wave equation at the singular boundary $\{r=0\}$.
 \begin{figure}[H]
	\begin{center}
\includegraphics[scale=0.40]{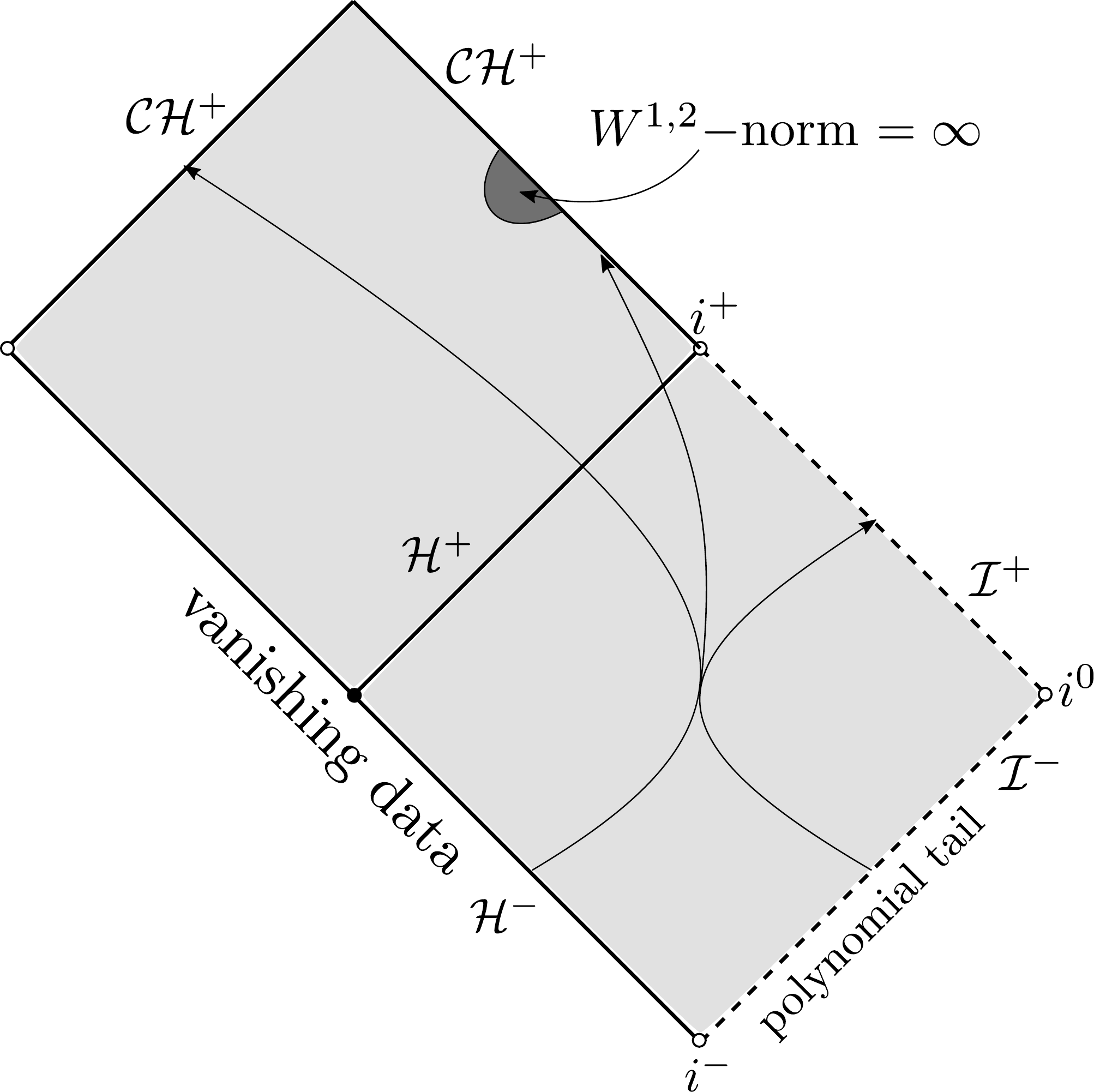}
\end{center}
\vspace{-0.2cm}
\caption{Blow-up of $W^{1,2}$ norm from scattering data on $\mathcal{H}^{-}$ and $\mathcal{I}^{-}$.}
	\label{fig:higher2sub10}
\end{figure}

\subsection{Overview of the main theorems}
\label{sec:OverviewOfTheMainTheorems}

In this section we present a rough version of our main theorems. Theorems \ref{thm:tscatERN} and \ref{thm:Nenergyinfinite} are straightforward extensions of known results, so we will only sketch their proofs, whereas Theorems \ref{theo1}--\ref{theo6} are entirely novel results that require new techniques and whose precise statements of the theorems can be found in Section \ref{sec:thms}. 

First of all, note that the standard stationary Killing vector field $T$ is causal everywhere in the domain of outer communications of ERN. From this, it follows that the $T$-energy scattering theory in Schwarzschild can easily be extended to ERN:
\begin{thmx}
\label{thm:tscatERN}
The $T$-scattering theory in Schwarzschild extends to extremal Reissner--Nordstr\"om.
\end{thmx}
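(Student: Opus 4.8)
The plan is to adapt the $T$-scattering construction on Schwarzschild, as developed by Dimock--Kay in the Lax--Phillips framework and by Nicolas following Friedlander, to the extremal Reissner--Nordstr\"om (ERN) geometry, exploiting the fact that (as already noted in the excerpt) the stationary Killing field $T$ is causal throughout the domain of outer communications of ERN. The essential point is that the entire $T$-energy scattering machinery on Schwarzschild rests on the positive-definiteness of the $T$-energy flux across achronal hypersurfaces, which in turn follows purely from the causal character of $T$; it never uses the specific form of the Schwarzschild metric beyond this and the asymptotic flatness needed to attach $\mathcal{I}^\pm$. Since these structural features persist on ERN, the same construction should transfer with only cosmetic changes.

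First I would fix a foliation of the exterior of ERN by achronal hypersurfaces $\Sigma_\tau$ crossing the future event horizon $\mathcal{H}^+$ and terminating at future null infinity $\mathcal{I}^+$, together with the corresponding null foliation needed to define the radiation fields. Using standard double-null (Eddington--Finkelstein-type) coordinates on ERN, I would define the $T$-energy flux through $\Sigma_\tau$, through $\mathcal{H}^+$, and through $\mathcal{I}^+$, and verify the associated degenerate energy identity obtained by contracting the energy-momentum tensor $\mathbf{T}_{\mu\nu}[\psi]$ of a solution of $\square_g\psi=0$ with $T$ and integrating over the spacetime region bounded by $\Sigma_\tau$ and the null boundaries. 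Because $T$ is causal, the boundary flux terms are non-negative, so this identity yields \emph{a priori} boundedness of the $T$-energy and, crucially, the conservation of total $T$-energy between the data slice and $\mathcal{H}^+\cup\mathcal{I}^+$. This conservation law is precisely what defines the forward scattering map and furnishes its boundedness and injectivity.

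Next I would establish surjectivity, i.e.\ the existence of the backwards map: given finite-$T$-energy scattering data on $\mathcal{H}^+$ and $\mathcal{I}^+$, solve the backwards characteristic (Goursat) problem to recover a solution on the exterior whose radiation fields reproduce the prescribed data. In the Friedlander picture this amounts to solving the conformally rescaled wave equation with characteristic data posed on the null conformal boundary; the energy estimate of the previous step again controls the solution and guarantees that the forward and backward maps are mutual inverses, giving a unitary (in the $T$-energy norm) isomorphism. The only genuinely ERN-specific input here is checking that the conformal compactification near $\mathcal{I}^\pm$ and the regularity of the metric coefficients near $\mathcal{H}^+$ behave well enough for the Goursat problem to be well-posed; the degeneracy of the surface gravity changes the near-horizon redshift behaviour but, since the $T$-flux is degenerate at $\mathcal{H}^+$ in exactly the same way as on Schwarzschild, this does not obstruct the construction.

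The main obstacle I anticipate is not in the energy estimates themselves, which are robust, but in the well-posedness of the backwards Goursat problem near the horizon and null infinity: one must confirm that the characteristic data with merely finite (degenerate) $T$-energy determine a unique solution, which requires an approximation/density argument (solving with smooth, compactly supported data and passing to the limit using the energy identity) and careful control of the conformal factor and the volume form near $\mathcal{I}^\pm$. Since the degeneracy structure of the $T$-energy at $\mathcal{H}^+$ on ERN mirrors that on Schwarzschild, and asymptotic flatness at $\mathcal{I}^+$ is unchanged, these steps go through essentially verbatim. For this reason I would present the proof only as a sketch, emphasising the verification of the causal character of $T$ and the resulting degenerate energy identity, and then invoking the Schwarzschild arguments of \cite{dimock2, dimock1, nicolas} mutatis mutandis.
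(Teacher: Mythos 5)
Your overall strategy coincides with the paper's (whose ``proof'' is a two-line citation to the physical-space scattering construction of Section 9.6 of \cite{linearscattering} together with the decay estimates of \cite{aretakis1}), but there is one genuine gap: you assert that the whole construction ``rests on the positive-definiteness of the $T$-energy flux'' and ``never uses the specific form of the Schwarzschild metric beyond this and asymptotic flatness.'' That is not accurate, and the missing ingredient is precisely the second citation in the paper's proof. The $T$-energy identity between $\Sigma_0$ and $\Sigma_\tau\cup(\mathcal{H}^+\cup\mathcal{I}^+)_{[0,\tau]}$ gives boundedness of the forward map, but \emph{unitarity} onto $\mathcal{E}^T_{\mathcal{H}^+}\oplus\mathcal{E}^T_{\mathcal{I}^+}$ (and hence injectivity with norm preservation, and ultimately asymptotic completeness) requires $\int_{\Sigma_\tau}\mathbf{J}^T[\psi]\cdot\mathbf{n}_\tau\,d\mu_\tau\to 0$ as $\tau\to\infty$, i.e.\ that no $T$-energy remains in compact regions. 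This is a quantitative decay statement, not a consequence of energy conservation and the causal character of $T$; on Schwarzschild it rests on Morawetz/integrated local energy decay estimates that see the trapping at the photon sphere, and on extremal Reissner--Nordstr\"om the analogous estimates are nontrivial for an additional reason (the degeneracy of the horizon, which obstructs the redshift mechanism) and are exactly what \cite{aretakis1} supplies. Without invoking these, your argument that ``the forward and backward maps are mutual inverses'' does not close: a solution with vanishing radiation fields could a priori retain all its $T$-energy on $\Sigma_\tau$ for all $\tau$.

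The rest of your sketch --- the degenerate energy identity from the causality of $T$, the backwards characteristic problem solved first for smooth compactly supported scattering data and then extended by density using the energy identity, and the remark that the degeneracy of the $T$-flux at $\mathcal{H}^+$ has the same structure as on Schwarzschild so the near-horizon step is not obstructed --- matches the intended argument. If you add the integrated local energy decay input of \cite{aretakis1} at the point where you claim unitarity and completeness, the proposal becomes a faithful (if more detailed) version of the paper's proof.
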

\begin{proof}
Follows by applying the methods in Section 9.6 of \cite{linearscattering} together with the decay estimates derived in \cite{aretakis1}.
\end{proof}
\begin{figure}[H]
\centering
\begin{subfigure}[b]{0.3\textwidth}
\centering
\includegraphics[scale=0.35]{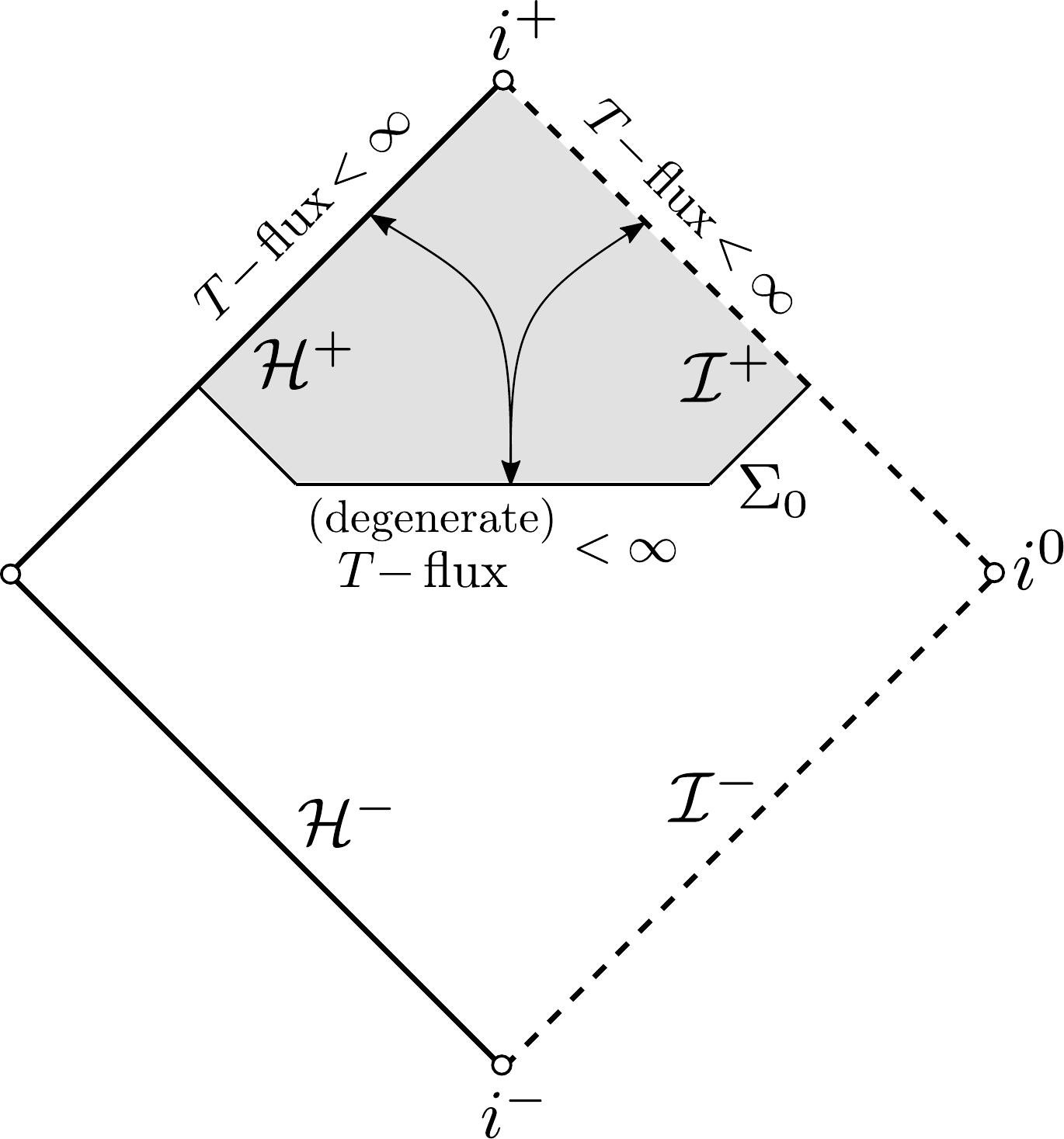}
\end{subfigure}
\centering
\begin{subfigure}[b]{0.3\textwidth}
\centering
\includegraphics[scale=0.35]{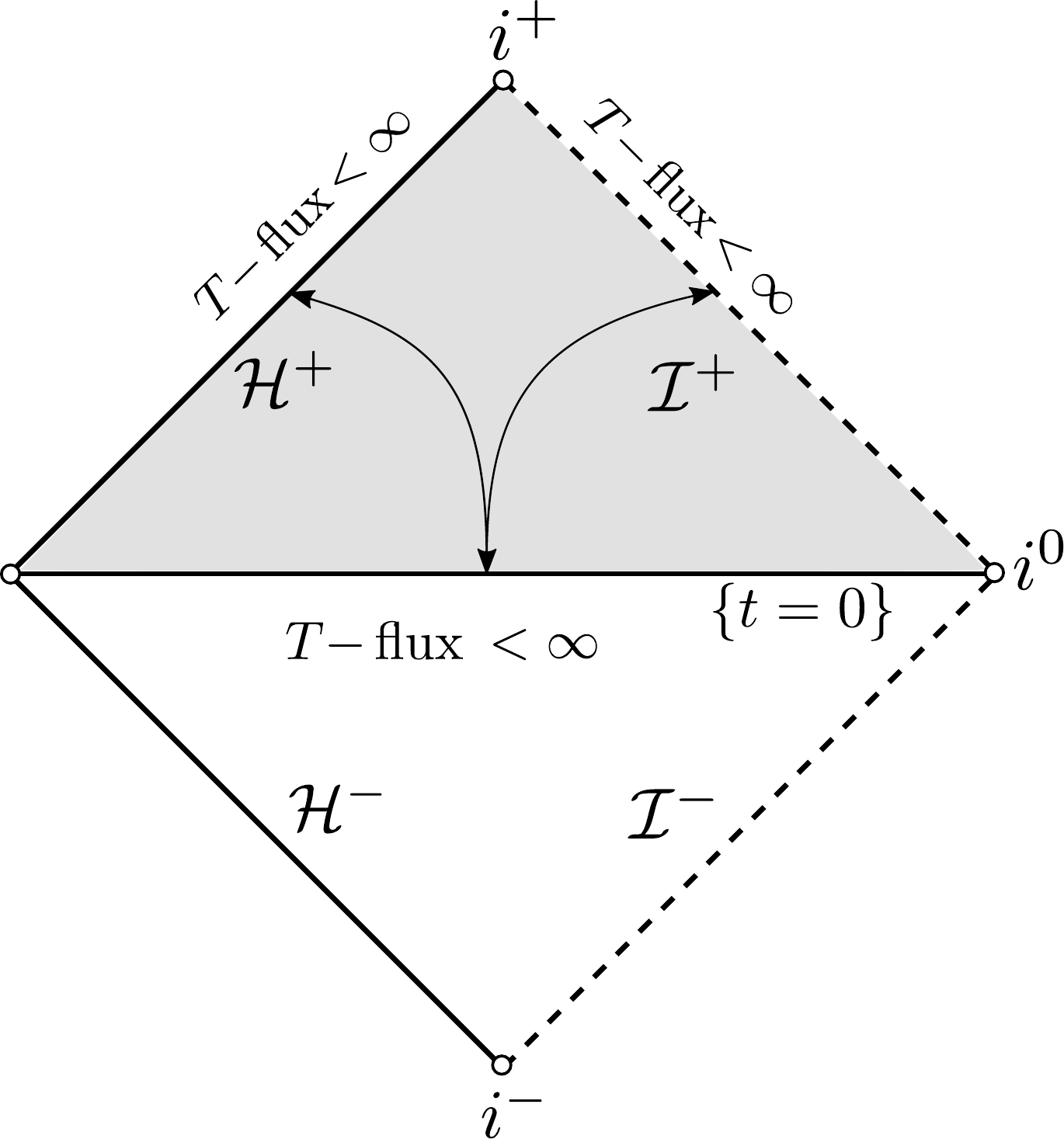}
\end{subfigure}
\begin{subfigure}[b]{0.3\textwidth}
\centering
\includegraphics[scale=0.35]{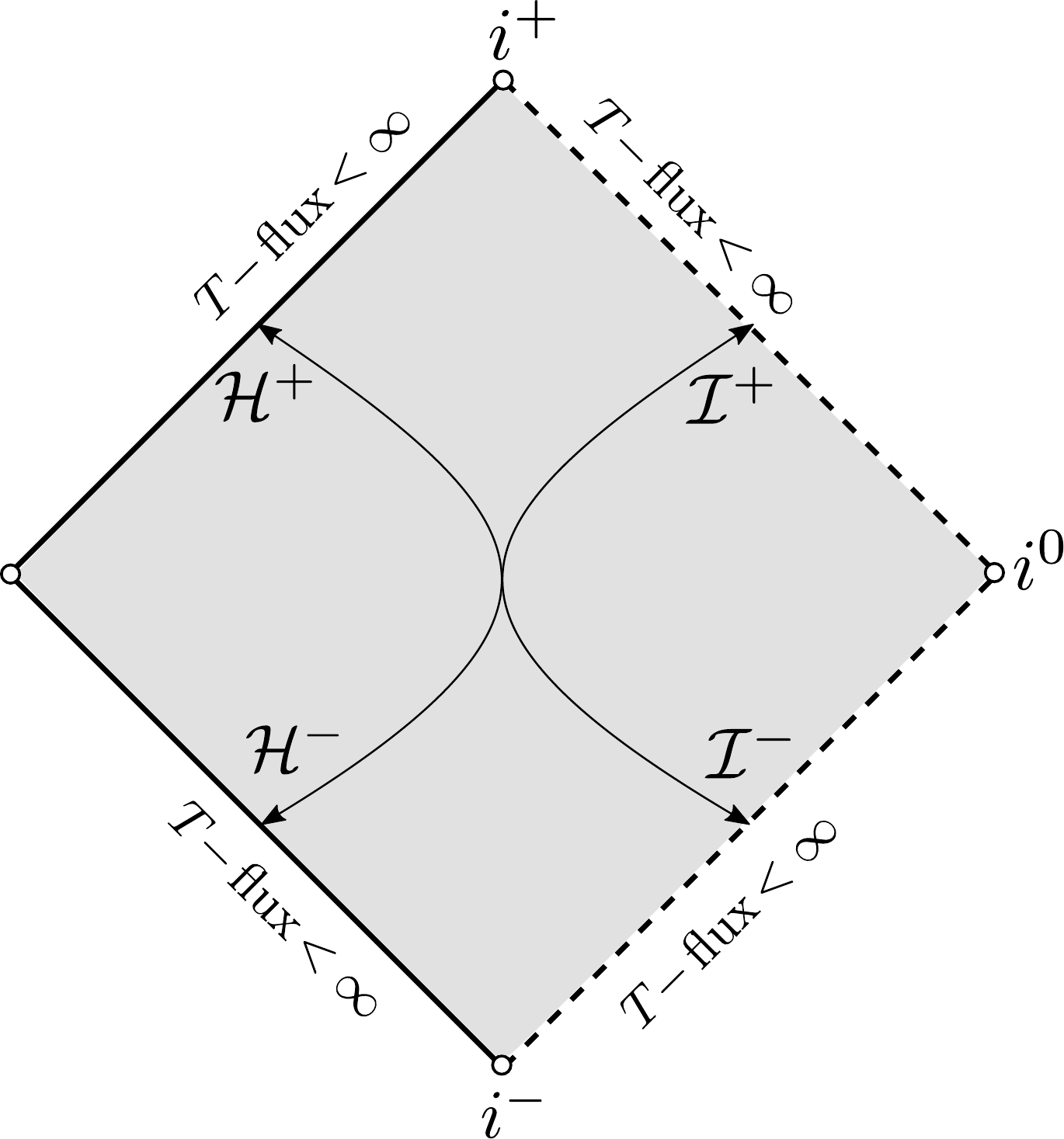}
\end{subfigure}
\caption{The $T$-scattering theory for ERN.}
\label{fig:schwscat1ern}
\end{figure}

In the following theorem, we show that in ERN \textbf{we can in fact go beyond $T$-energy scattering} by providing a bijective scattering theory for weighted and non-degenerate norms on ERN; see Figure \ref{fig:higherFD} for an illustration. Here, $\Sigma_0$ will denote a spacelike-null hypersurface intersecting $\mathcal{H}^{+}$ and terminating at $\mathcal{I}^{+}$.

\begin{mytheo} (Rough version of Theorem \ref{thm:mainthm})
The  scattering maps defined in the black hole exterior of ERN  
 \begin{itemize}
	 \item between weighted energy spaces on  ($\mathcal{H}^{-}$, $\mathcal{I}^{-}$) and ($\mathcal{H}^{+}$, $\mathcal{I}^{+}$),
 \end{itemize}
or 
\begin{itemize}
	\item between a weighted energy space on  ($\mathcal{H}^{+}\cap\mathcal{J}^{+}(\Sigma_0)$, $\mathcal{I}^{+}\cap\mathcal{J}^{+}(\Sigma_0)$) and a \underline{non-degenerate} energy space on $\Sigma_0$  
\end{itemize}
 
are bounded and bijective. 
\label{theo1}
\end{mytheo}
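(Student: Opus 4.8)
The plan is to reduce the bijectivity statement to a pair of two-sided non-degenerate energy estimates—one propagating energy forward, one backward—together with existence theory for the Cauchy problem from $\Sigma_0$ and for the Goursat (characteristic initial value) problem from $(\mathcal{H}^+,\mathcal{I}^+)$. Concretely, I would define the forward map by solving $\square_g\psi=0$ with data on $\Sigma_0$ and taking the traces on $\mathcal{H}^+$ and $\mathcal{I}^+$; boundedness of this map is exactly a forward energy estimate controlling the weighted fluxes on the horizon and at null infinity by the non-degenerate flux on $\Sigma_0$. The candidate inverse is defined by solving the backward Goursat problem, and its boundedness is a backward energy estimate. The two maps are then mutually inverse by uniqueness for the Cauchy and Goursat problems, so the whole theorem rests on closing these estimates in norms that degenerate neither at $\mathcal{H}^+$ nor at $\mathcal{I}^+$.

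For the forward direction I would build the non-degenerate flux on $\Sigma_0$ from the timelike multiplier $N$ (coinciding with $T$ away from the horizon and modified transversally near $\mathcal{H}^+$), and run the $r^p$-weighted vector-field hierarchy of Dafermos--Rodnianski toward $\mathcal{I}^+$ to generate the weighted energy at null infinity. The genuinely extremal input enters near the horizon: because the surface gravity vanishes, the redshift multiplier no longer yields a favorable bulk term, so I would instead close the near-horizon estimate with the hierarchy of degenerate-weighted estimates that underlies the horizon instability \cite{aretakis1}, commuting with the transversal derivative $\partial_r$ and with the Killing and angular vector fields to reach the regularity demanded by the weighted norms. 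Combining this with its past analogue, obtained from the time-reflection isometry of ERN, yields the stated isomorphism between the weighted spaces on $(\mathcal{H}^-,\mathcal{I}^-)$ and $(\mathcal{H}^+,\mathcal{I}^+)$.

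The main obstacle is the backward estimate, i.e.\ surjectivity, which is precisely where the sub-extremal theory breaks down: the redshift at $\mathcal{H}^+$ becomes a blueshift instability when the evolution is run backward. In the sub-extremal case this blueshift is exponential in the surface gravity and cannot be absorbed, but in the extremal case the surface gravity is zero, so the backward growth near the horizon is at most polynomial in the relevant time coordinate. The heart of the argument is to show that this polynomial loss is exactly compensated by the polynomial time-weights built into the weighted norms on $\mathcal{H}^+$ and $\mathcal{I}^+$; concretely I would set up a backward version of the near-horizon hierarchy, propagating the degenerate-weighted transversal energy inward from $\mathcal{H}^+$ while the $r^p$-hierarchy is run backward from $\mathcal{I}^+$, tracking the boundary terms so that no net energy is lost. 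Local existence for the Goursat problem can be imported from standard characteristic theory (as in \cite{nicolas,linearscattering}), so the new content lies entirely in this global backward estimate.

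Finally, I would pass from smooth, suitably decaying representatives to the full energy spaces by a density argument, extending the maps by continuity using the uniform two-sided bounds; the resulting map is bounded with bounded inverse, hence bijective. The second bullet—the isomorphism onto the non-degenerate space on $\Sigma_0$—follows from the same forward and backward estimates restricted to $\mathcal{J}^+(\Sigma_0)$, since the weighted fluxes on $\mathcal{H}^+\cap\mathcal{J}^+(\Sigma_0)$ and $\mathcal{I}^+\cap\mathcal{J}^+(\Sigma_0)$ are controlled by, and control, the non-degenerate flux on $\Sigma_0$.
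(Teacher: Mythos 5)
Your overall architecture — forward map from $\Sigma_0$, backward Goursat map as candidate inverse, two-sided weighted estimates built from the $r^p$-hierarchy near $\mathcal{I}^+$ and a degenerate-horizon analogue near $\mathcal{H}^+$ exploiting the vanishing surface gravity, then density — is the paper's strategy for the \emph{second} bullet, and there the plan is essentially sound (the paper's near-horizon tool is the $(r-M)^{-p}$-weighted multiplier hierarchy for the radiation field $\phi=r\psi$, the Couch--Torrence mirror of the $r^p$-hierarchy, rather than $\partial_r$-commutation, but that is a difference of implementation, not of idea). However, there are two genuine gaps.

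First, the step ``combining this with its past analogue, obtained from the time-reflection isometry, yields the isomorphism between $(\mathcal{H}^-,\mathcal{I}^-)$ and $(\mathcal{H}^+,\mathcal{I}^+)$'' does not close. The time-reflection of $\Sigma_0$ is a hypersurface meeting $\mathcal{H}^-$ and $\mathcal{I}^-$, and your forward/backward estimates from $\Sigma_0$ only see $\mathcal{H}^+\cap\{v\geq v_0\}$ and $\mathcal{I}^+\cap\{u\geq u_0\}$; to assemble the full scattering map one must pass through $\widetilde{\Sigma}=\{t=0\}$ and control the ``triangular'' regions near spacelike infinity and near the past limit point of $\mathcal{H}^+$, which carry the portions $\mathcal{I}^+\cap\{u\leq u_0\}$ and $\mathcal{H}^+\cap\{v\leq v_0\}$. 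In those regions the backward $r^p$-hierarchy does \emph{not} close: the target flux on $\widetilde{\Sigma}$ near $i^0$ contains $r^2(\Lbar\phi)^2$, which no $r^p$-multiplier identity in the $L$-direction produces. The paper instead uses the Morawetz conformal multiplier $K=u^2\partial_u+v^2\partial_v$ (and $Y=v\partial_v-u\partial_u$ for the $p=1$ level) applied to the radiation field, exploiting an approximate conservation law for $r^{-2}g$ and the Couch--Torrence invariance of $K$; see Section \ref{sec:spacelikeinfbfsphere} and Proposition \ref{prop:estasympflat}. Without some such ingredient the first bullet is not proved.

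Second, your forward estimate as described would not land in the stated weighted space. Running the $r^p$-method in the standard way (energy decay via the dyadic pigeonhole argument, then summation) yields only $(1+u)^{2-\epsilon}$-weighted control of $(\Lbar\phi)^2$ on $\mathcal{I}^+$, which is strictly weaker than the $u^2$-weighted norm that the backward estimate requires as input; the forward and backward spaces would then fail to match and bijectivity would be lost. The paper avoids the epsilon loss by never invoking the mean-value argument: it bounds the doubly time-integrated $T$-energy flux directly from the $p=1,2$ estimates and converts the iterated time integrals into sharp $u^2$, $v^2$ weights by integration by parts (Lemma \ref{lm:integralsandweights}); see Section \ref{intro:fowest}. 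You need this (or an equivalent device), and with it comes the derivative loss from trapping at the photon sphere, which is why the forward map costs extra $T$-commuted energy on $\Sigma_0$ while the backward map does not — an asymmetry your proposal does not account for in setting up the spaces.
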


\begin{figure}[H]
\centering
\begin{subfigure}[b]{0.3\textwidth}
\centering
\includegraphics[scale=0.35]{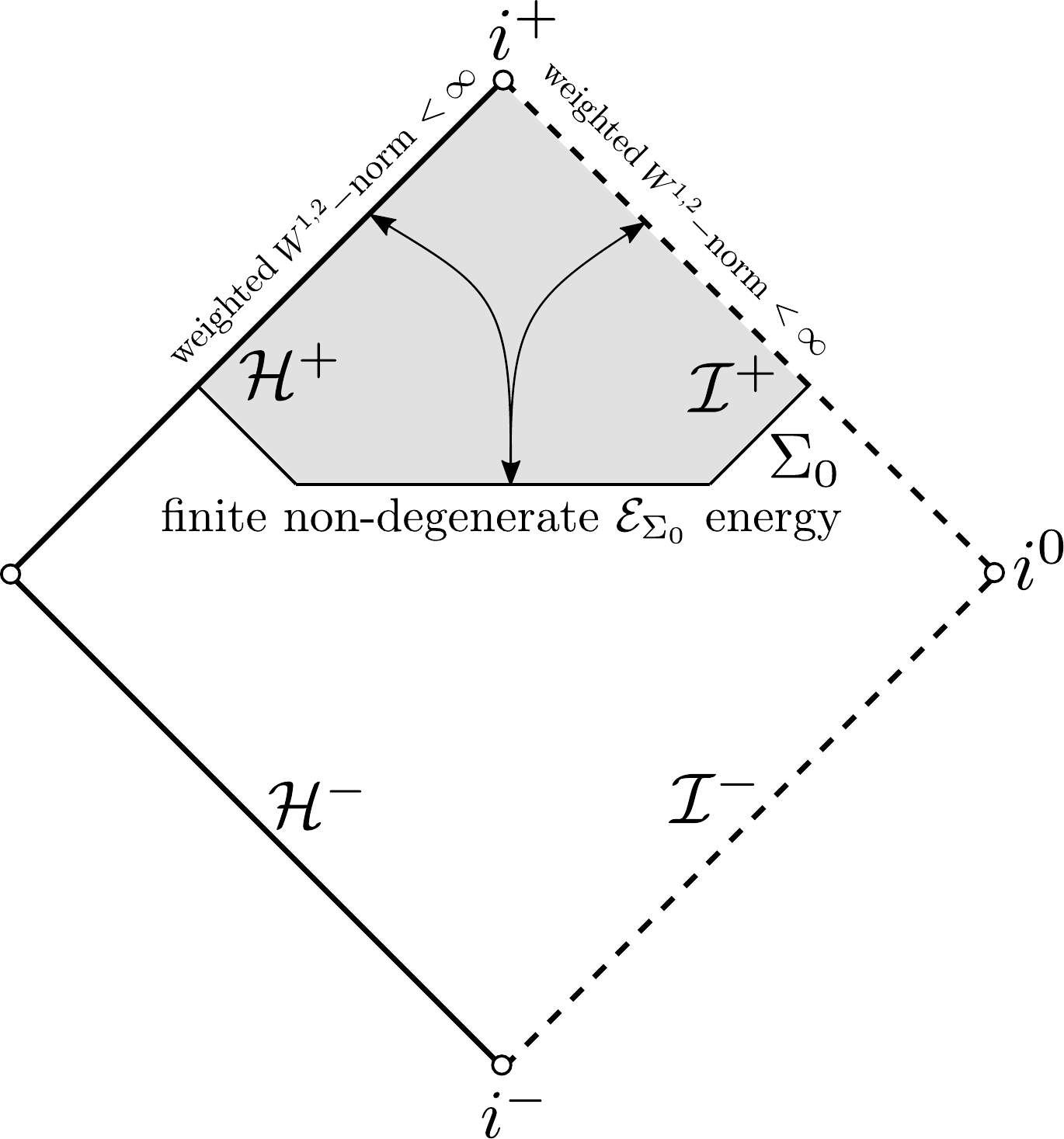}
\end{subfigure}
\centering
\begin{subfigure}[b]{0.3\textwidth}
\centering
\includegraphics[scale=0.35]{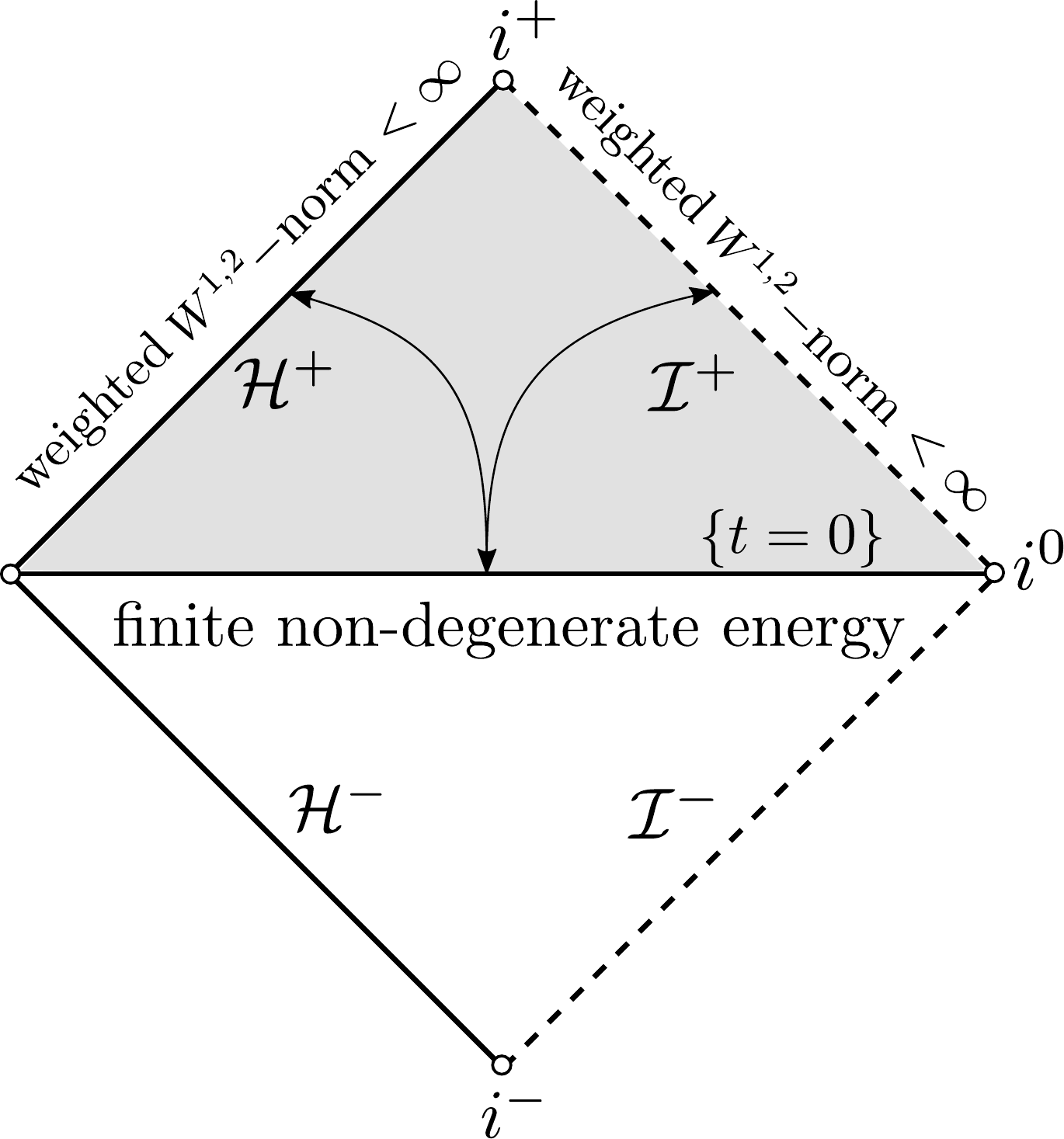}
\end{subfigure}
\begin{subfigure}[b]{0.3\textwidth}
\centering
\includegraphics[scale=0.35]{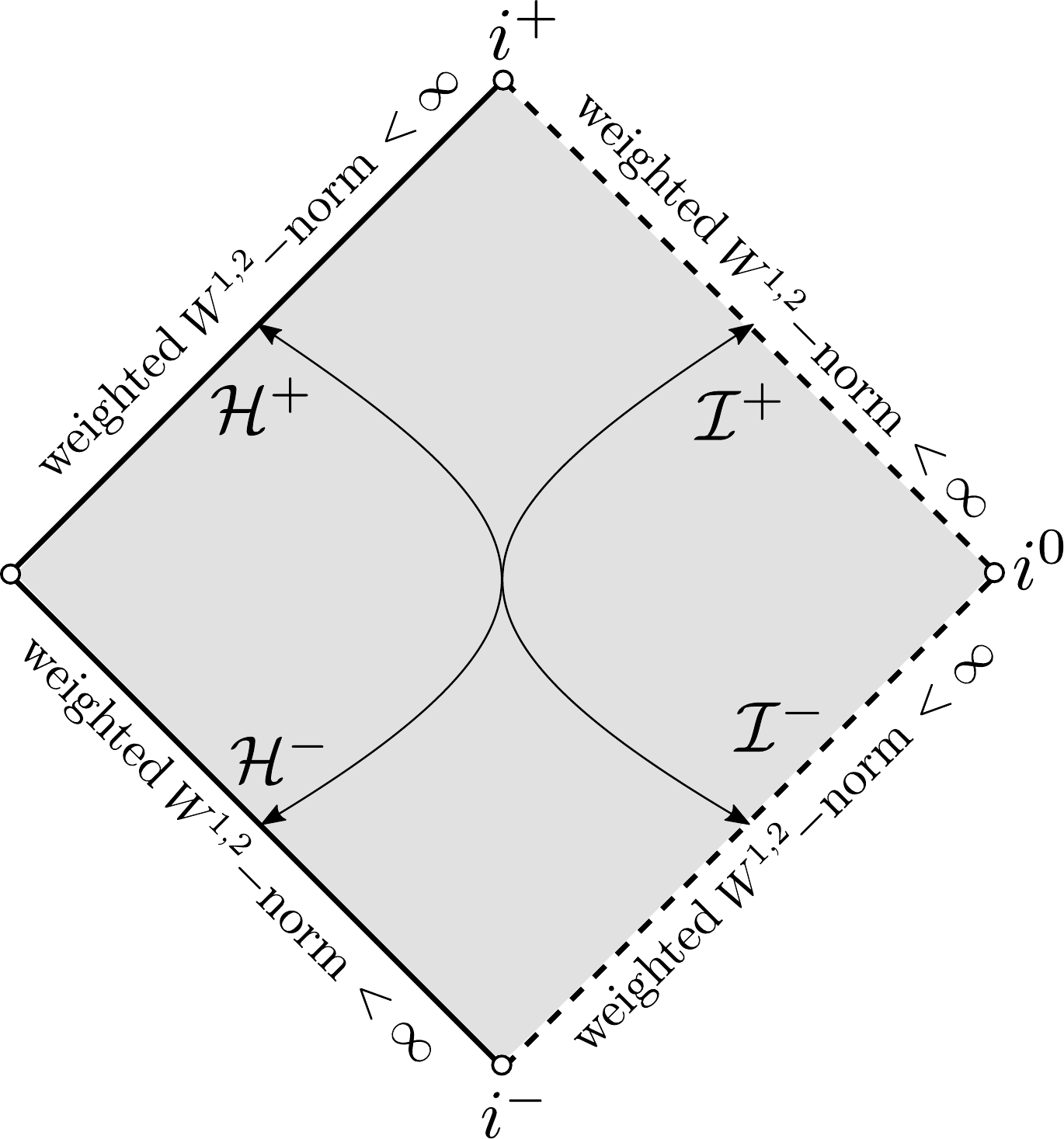}
\end{subfigure}
\caption{A non-degenerate scattering theory on ERN.}
	\label{fig:higherFD}
\end{figure}

 A rough schematic definition of the weighted norms on $\mathcal{H}^{+}$ and $\mathcal{I}^{+}$ is the following
\begin{equation}
\begin{split}
\left\|r\psi\right\|^2_{\mathcal{E}_{\mathcal{H}^{+}}} &= \int_{\mathcal{H}^{+}\cap\mathcal{J}^{+}(\Sigma_0)} (1+v^2)\cdot J^{T}[\psi]+\ldots, \\
\left\|r\psi\right\|^2_{\mathcal{E}_{\mathcal{I}^{+}}} &= \int_{\mathcal{I}^{+}\cap\mathcal{J}^{+}(\Sigma_0)} (1+u^2)\cdot J^{T}[\psi]+\ldots . \\
\end{split}
\label{roughnorms1}
\end{equation}
 A rough schematic definition of non-degenerate energy on $\Sigma_0$ is the following
\begin{equation}
\begin{split}
\left\|(\psi, \partial\psi)\right\|^2_{\mathcal{E}_{\Sigma_0}} &= \int_{\Sigma_0}   J^{N}[\psi]+(\partial_{\rho}(r\psi))^2 +\ldots .
\end{split}
\label{roughnorms2}
\end{equation}
Note that the $\mathcal{E}_{\Sigma_0}-$norm is non-degenerate both at the event horizon and at null infinity (the latter understood in  an appropriate conformal sense; see Section \ref{addnot}). 
The omitted terms involve either smaller weights or extra degenerate factors and additional angular or time derivatives.  Here $J^{T}$ and $J^{N}$ denote the energy fluxes associated to the vector fields $T$ and $N$ and $\partial_{\rho}$ is a tangential to $\Sigma_0$ derivative such that $\partial_{\rho}r=1$. Let $\mathcal{E}_{\mathcal{H}^{+}\cap\mathcal{J}^{+}(\Sigma_0)}, \mathcal{E}_{\mathcal{I}^{+}\cap\mathcal{J}^{+}(\Sigma_0)},\mathcal{E}_{\Sigma_0} $ denote the closure of smooth compactly supported data under the corresponding norms schematically defined above. 

The above theorem is in stark contrast to the sub-extremal case where the backwards evolution is singular at the event horizon (contrast Figure \ref{fig:higherFD} with Figure \ref{fig:schwscathyp}).

By the bijective properties of Theorem \ref{theo1}, we can moreover conclude immediately that \underline{all} scattering data along $\mathcal{H}^+$ and $\mathcal{I}^+$ with \emph{finite} $T$-energy but  with \emph{infinite} weighted norm (as in \eqref{roughnorms1}) will have an \emph{infinite} weighted non-degenerate energy on $\Sigma_0$. The above theorem however does not specify which of  the horizon-localized $N$-energy or  the weighted energy for $\{r>R_0\}$, for some large $R_0>0$, is infinite. The following theorem shows that there are characteristic data for which the solutions specifically have infinite horizon-localized $N$-energy. This immediately implies that the unweighted non-degenerate $N$-energy forward scattering map fails to be invertible, in other words we can find data with finite characteristic $N$-energies but with infinite standard (unweighted) $N$-energy at $\Sigma_0$.

\begin{thmx}
\label{thm:Nenergyinfinite}
There exists solutions $\psi$ to \eqref{eq:waveequation} on ERN that are smooth away from the event horizon $\mathcal{H}^+$ with finite $T$-energy flux along $\mathcal{H}^+$ and future null infinity $\mathcal{I}^+$, such that either:
\begin{enumerate}[\rm (i)]
\item $\psi|_{\mathcal{H}^+}$ vanishes, but $r\psi|_{\mathcal{I}^+}$ satisfies
\begin{equation*}
\int_{\mathcal{I}^{+}\cap\{u\geq 0\}} (1+u)^p(\partial_u(r\psi))^2\,\sin\theta d\theta d\varphi du=\infty\quad \textnormal{if and only if $p\geq 2$}
\end{equation*}
and $\psi$ has infinite unweighted $N$-energy flux along $\Sigma_0\cap \{r\leq r_0\}$, with $r_0>r_+$ arbitrarily close to the horizon radius $r_+$, or
\item
$\psi|_{\mathcal{I}^+}$ vanishes, but $\psi|_{\mathcal{H}^+}$ satisfies
\begin{equation*}
\int_{\mathcal{H}^{+}\cap\{v\geq 0\}} (1+v)^p(\partial_v(r\psi))^2\,\sin\theta d\theta d\varphi dv=\infty \quad \textnormal{if and only if $p\geq 2$}
\end{equation*}
and $\psi$ has infinite weighted $N$-energy flux along $\Sigma\cap \{r\geq R_0\}$ with $R_0>0$ arbitrarily large.
\end{enumerate}
\end{thmx}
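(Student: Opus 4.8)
The plan is to prove the existence statement by an explicit construction at the level of a single spherical harmonic, combined with the $T$-energy well-posedness already furnished by Theorem \ref{thm:tscatERN}, and then to extract the sharp near-horizon (resp.\ far-away) behaviour by transport along the characteristics. Since it suffices to exhibit one solution, I would work with the spherically symmetric mode $\ell=0$ and set $\phi=r\psi$, so that \eqref{eq:waveequation} reduces to $4\partial_u\partial_v\phi+V_0\phi=0$ in double-null coordinates $(u,v)=(t-r_*,t+r_*)$, where $D=(1-M/r)^2$, $dr_*/dr=D^{-1}$ and $V_0=DD'/r$. For part (i) I would prescribe trivial horizon data $\phi|_{\mathcal{H}^+}=0$ together with outgoing data $\partial_u\phi|_{\mathcal{I}^+}(u)=\chi(u)(1+u)^{-3/2}$, where $\chi$ is a smooth cutoff vanishing for $u\le 0$; for part (ii) I would swap the roles, prescribing $\phi|_{\mathcal{I}^+}=0$ and $\partial_v\phi|_{\mathcal{H}^+}(v)=\chi(v)(1+v)^{-3/2}$. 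The $T$-energy of such data is finite because $\int(1+u)^{-3}\,du<\infty$, so Theorem \ref{thm:tscatERN} produces a unique finite-$T$-energy solution with these traces, smooth away from $\mathcal{H}^+$. With this normalisation the stated dichotomy is immediate: $\int(1+u)^p(1+u)^{-3}\,du$ converges precisely for $p<2$, giving ``$=\infty$ iff $p\ge 2$''.

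The geometric heart of the argument is that the critical tail at large $u$ on $\mathcal{I}^+$ is carried back onto the near-horizon portion of $\Sigma_0$, and this is exactly where the degeneracy of ERN enters. Since $D\sim(r-M)^2/M^2$ near $r=M$, one has $r_*\sim -M^2/(r-M)\to-\infty$ \emph{polynomially} rather than logarithmically, so on $\Sigma_0$ the retarded time satisfies $u\sim M^2/(r-M)$ as $r\downarrow M$. The plan is to show that $\partial_u\phi$ is transported essentially rigidly along the ingoing characteristics: integrating the equation once in $v$ from $\mathcal{I}^+$ gives $\partial_u\phi(u,v)=\partial_u\phi|_{\mathcal{I}^+}(u)+\tfrac14\int_v^\infty V_0\phi\,dv'$, and because $V_0\sim(r-M)^3$ is extremely small near the horizon while $\phi$ decays, the correction is lower order. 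Hence on $\Sigma_0\cap\{r\le r_0\}$ one obtains $\partial_u\phi\sim u^{-3/2}\sim(r-M)^{3/2}$.

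The energy divergence is then read off by passing to the non-degenerate derivative. Writing the $\Sigma_0$-tangential derivative $\partial_\rho$ in double-null form, $\partial_\rho\sim D^{-1}(\partial_v-\partial_u)$ near $r=M$, and since $\phi|_{\mathcal{H}^+}=0$ forces $\partial_v\phi$ to be subleading there, one finds $\partial_\rho\phi\sim D^{-1}\partial_u\phi\sim(r-M)^{-2}(r-M)^{3/2}=(r-M)^{-1/2}$. The non-degenerate flux \eqref{roughnorms2} controls $(\partial_\rho(r\psi))^2=(\partial_\rho\phi)^2\sim(r-M)^{-1}$, whose integral $\int_M^{r_0}(r-M)^{-1}\,dr$ diverges logarithmically; this is precisely the infinite unweighted $N$-energy on $\Sigma_0\cap\{r\le r_0\}$ claimed in (i). The same computation shows that a strictly faster tail $(1+u)^{-3/2-\epsilon}$ would give $(\partial_\rho\phi)^2\sim(r-M)^{-1+2\epsilon}$ and hence finite energy, confirming that $u^{-3/2}$ is exactly the threshold rate matching the $p=2$ weight.

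Part (ii) I would obtain by the mirror argument under $u\leftrightarrow v$, $\mathcal{H}^+\leftrightarrow\mathcal{I}^+$: the late-$v$ tail on $\mathcal{H}^+$ is transported along outgoing characteristics to large $r$ on $\Sigma_0$, where $v\sim r_*\sim r$, so that $\partial_v\phi\sim v^{-3/2}\sim r^{-3/2}$ and the $r^2$-weighted (conformally non-degenerate) flux density behaves like $r^2\cdot r^{-3}=r^{-1}$, again log-divergent on $\{r\ge R_0\}$. I expect the main obstacle to be the second step, namely establishing the rigidity of the transport with error terms that are provably subleading at the borderline rate; this is where the coupling of $\partial_u\phi$ to $\phi$ through $V_0$ (and the subleading nature of $\partial_v\phi$ near the horizon) must be controlled, and it is exactly the content of the weighted transport hierarchy developed in the body of the paper. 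A secondary subtlety is that the near-horizon and far-away regions are only asymptotically symmetric, so the two estimates, though entirely parallel, must be carried out separately rather than deduced from an exact isometry.
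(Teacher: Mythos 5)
Your reduction to the spherically symmetric mode, the choice of threshold data $\partial_u(r\psi)|_{\mathcal{I}^+}\sim(1+u)^{-3/2}$, and the bookkeeping showing this is finite-$T$-energy data whose weighted flux along $\mathcal{I}^+$ diverges precisely for $p\geq 2$ are all fine. The genuine gap is the step you yourself flag as the main obstacle: the claim that $\partial_u\phi$ is transported ``essentially rigidly'' along ingoing characteristics, so that $\partial_u\phi(u,v_0)\sim u^{-3/2}$ on $\underline{N}_{v_0}$. Integrating $\partial_v\partial_u\phi=-\tfrac{1}{4}V_0\phi$ in $v$ at fixed large $u$ produces the correction $\tfrac{1}{4}\int_{v_0}^{\infty}V_0\phi\,dv'$, and this integral is \emph{not} localized near the horizon: writing $V_0\,dv'=2D'r^{-1}\,dr$ along the ray, the ingoing characteristic $\{u=\mathrm{const}\}$ sweeps through the region $r\sim 2M$ (at advanced times $v'\sim u$), where $2D'/r$ is order one and where a priori $|\phi|$ is only controlled at the level of its boundary value $\phi|_{\mathcal{I}^+}(u)\sim u^{-1/2}$. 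Without a quantitative interior decay estimate for the \emph{backwards} solution, the correction is thus only bounded by $O(u^{-1/2})$, which dominates rather than being subleading to the putative main term $u^{-3/2}$. Since what you actually need is a \emph{lower} bound on $|\partial_u\phi|$ near the horizon (to force divergence of $\int(r-M)^{-2}(\underline{L}\phi)^2\,d\omega\,du$), an uncontrolled correction of larger nominal size is fatal: it could cancel or completely reshape the leading behaviour. Closing this requires sharp two-sided late-time asymptotics for the backwards problem with tail data, a substantial piece of analysis absent from your sketch and not supplied by the weighted hierarchies in the body of the paper, which yield upper bounds only.

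The paper's proof avoids this entirely by a softer route following Section 6 of Dafermos--Shlapentokh-Rothman \cite{DafShl2016}. One first verifies two hypotheses: well-posedness of the backwards problem in the $T$-energy class (which follows from Theorem \ref{thm:tscatERN}), and the statement that a single spherically symmetric solution with compactly supported radiation field on $\mathcal{I}^+$ and vanishing data on $\mathcal{H}^+$ must have $T\psi$ non-vanishing on $\mathcal{H}^-$; the latter is proved by a $T$-energy identity plus a Gr\"onwall-type unique continuation argument. One then superposes time-translates of this seed with polynomially decaying coefficients: linearity and time-translation invariance convert the qualitative non-vanishing of the seed into a quantitative lower bound for the superposed solution, producing data with exactly the borderline tail and provably infinite non-degenerate energy, with no need to track the pointwise asymptotics of any individual solution near the horizon. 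If you wish to salvage a direct construction, you must replace your transport heuristic with some such lower-bound mechanism.
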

\begin{proof}
See Appendix \ref{sec:proofB}.
\end{proof}

The following theorem concerns the scattering of initial data with higher regularity; see Figure \ref{fig:higher21} for an illustration.

\begin{mytheo} (Rough version of Theorem \ref{thm:mainthmho})
The scattering maps defined in the black hole exterior of ERN  
 \begin{itemize}
	 \item between weighted higher-order energy spaces on  ($\mathcal{H}^{-}$, $\mathcal{I}^{-}$) and ($\mathcal{H}^{+}$, $\mathcal{I}^{+}$)
 \end{itemize}
or 
\begin{itemize}
	\item between a weighted higher-order energy space on  ($\mathcal{H}^{+}\cap\mathcal{J}^{+}(\Sigma_0)$, $\mathcal{I}^{+}\cap\mathcal{J}^{+}(\Sigma_0)$) and a \underline{degenerate} higher-order energy space on $\Sigma_0$  
\end{itemize}
 are bounded and bijective.
 \label{theo2} 
\end{mytheo}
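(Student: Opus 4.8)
The plan is to bootstrap the higher-order theory from the first-order bijective scattering theory of Theorem \ref{theo1}, by commuting the wave equation with a well-chosen family of differential operators and then carefully tracking how the vanishing of the surface gravity forces the loss of non-degeneracy at $\Sigma_0$. The reduction is clean for the symmetry-generated derivatives and delicate only for the transversal (radial) ones, which is precisely where the degeneracy originates.

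First I would exploit the symmetries of ERN. Since the metric is both stationary and spherically symmetric, the Killing field $T$ and the rotation generators $\Omega_i$ satisfy $[\square_g,T]=0$ and $[\square_g,\Omega_i]=0$; hence if $\psi$ solves \eqref{eq:waveequation} then so do $T\psi$, $\Omega_i\psi$, and all their compositions, with \emph{no} commutator error terms. Applying Theorem \ref{theo1} to each of these commuted solutions immediately yields bijective scattering, with the same non-degenerate structure at $\Sigma_0$ and the same weighted structure on the characteristic hypersurfaces, for the entire collection of $T$- and angular derivatives of $\psi$. This reduces the whole problem to controlling the transversal derivatives, which are not generated by Killing symmetries.

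Next I would recover the transversal derivatives from the equation itself. Writing $\square_g\psi=0$ in coordinates regular across $\mathcal{H}^+$ and adapted to $\Sigma_0$, one can express the top transversal derivative $\partial_\rho^2(r\psi)$ in terms of $T$-derivatives, angular derivatives $\snabla$, and the first transversal derivative already controlled above. The crucial feature is that the coefficient multiplying this top derivative \emph{degenerates at $\mathcal{H}^+$} exactly because the surface gravity vanishes: the associated transport equation for $\partial_\rho(r\psi)$ along $\mathcal{H}^+$ lacks the coercive redshift term present in the sub-extremal case. Isolating the second transversal derivative therefore necessarily introduces a weight that vanishes at the horizon, and iterating to order $k$ and summing the first-order norms of all commuted quantities \emph{defines} the higher-order energy on $\Sigma_0$ — explaining why it must be \underline{degenerate} at the event horizon, in contrast with the non-degenerate first-order space of Theorem \ref{theo1}. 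On the characteristic hypersurfaces one defines the higher-order weighted norms by combining these commutations with the $r^p$-weighted hierarchy near $\mathcal{I}^\pm$ and the matching $v^p$- (resp.\ $u^p$-) weights near $\mathcal{H}^\pm$, chosen so that forward and backward fluxes balance.

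Boundedness of the forward map then follows from commuted forward-in-time energy estimates, while boundedness of the backward map follows from the backward estimates of Theorem \ref{theo1} applied componentwise, using that the degenerate redshift produces only a mild blueshift in the backward direction. That the two maps are mutually inverse follows from uniqueness of solutions, density of smooth compactly supported data in all the relevant spaces, and the first-order bijectivity. The main obstacle is the transversal step: because the redshift degenerates, the top-order transversal derivative at $\mathcal{H}^+$ is governed by a degenerate transport equation closely tied to the Aretakis conservation laws, and the technical heart of the argument is to pin down the \emph{sharp} degenerate horizon weights that are simultaneously propagated by the forward evolution and strong enough to close the backward estimate before the residual blueshift causes a loss. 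Verifying that no non-degenerate choice survives at higher order is exactly the point where the extremal horizon instability enters the scattering picture.
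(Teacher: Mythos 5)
There is a genuine gap in your reduction. You claim that applying Theorem \ref{theo1} to the commuted solutions $T^m\Omega^\alpha\psi$ ``immediately yields bijective scattering for the entire collection of $T$- and angular derivatives,'' leaving only the transversal derivatives to be recovered algebraically from the equation. But the higher-order characteristic energy spaces $\mathcal{E}_{n;\mathcal{H}^+_{\geq v_0}}\oplus\mathcal{E}_{n;\mathcal{I}^+_{\geq u_0}}$ carry time-weights $v^{2k+2-j}$ and $u^{2k+2-j}$ with $k$ up to $n$, i.e.\ weights growing like $v^{2n+2}$. Applying the first-order theorem to any $T^m\Omega^\alpha\psi$ can never produce a flux with weight exceeding $v^{2}$ (resp.\ $u^{2}$), since $T$ and $\Omega_i$ are Killing and do not interact with the weights. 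The growing weights come from a genuinely new mechanism: commuting with the \emph{non-Killing} operators $r^2L$ and $(r-M)^{-2}\underline{L}$ and extending the $r^p$- and $(r-M)^{-p}$-hierarchies to the ranges $2k\leq p\leq 2k+2$ applied to $L^{k+1}\phi$ and $\underline{L}^{k+1}\phi$ (Theorem \ref{thm:fowhier} with $k\geq 1$ in the forward direction; the commuted equations \eqref{eq:eqphicommLk}--\eqref{eq:eqphicommLbark} and the hierarchy \eqref{eq:hobackhierarchy} in the backward direction), each extra unit of $p$ being converted into an extra power of $v$ or $u$ via Lemma \ref{lm:integralsandweights}. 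The same issue defeats your backward step: the target norm $\mathcal{E}_{n;\Sigma_0}$ contains $\int_{\underline{N}_{v_0}}(r-M)^{-2-2k}(\underline{L}^{1+k}\phi)^2$ and $\int_{N_{u_0}}r^{2+2k}(L^{1+k}\phi)^2$, which are not first-order norms of any symmetry-commuted solution, so ``the backward estimates of Theorem \ref{theo1} applied componentwise'' cannot reach them. For the first bullet of the statement one additionally needs higher-order estimates near spacelike infinity, which require commuting with the scaling field $S=u\underline{L}+vL$ (Lemma \ref{lm:equationSnphi}, Propositions \ref{prop:TenergyboundSkphi}--\ref{prop:hop1asympflatest}); none of this appears in your outline.

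Your diagnosis of the degeneracy is also off target. The loss at higher order is not caused by the vanishing of the surface gravity at $\mathcal{H}^+$: exactly the same degeneration occurs at $\mathcal{I}^+$, where the $(1+k)$-th regular transversal derivative $(r^2L)^{1+k}\phi$ is controlled only with a weight vanishing like $r^{-2k}$ at infinity and there is no horizon at all (this symmetry between the two ends reflects the Couch--Torrence inversion). The actual mechanisms are the finite range $p\leq 2k+2$ of the weighted hierarchies, together with the derivative loss at the photon sphere coming from the trapping term in the Morawetz estimate \eqref{eq:morawetzfowTloss}, which forces the extra $T$-derivatives in the bulk term of $\mathcal{E}_{n;\Sigma_0}$ --- a loss your outline does not mention. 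Your algebraic recovery of $\partial_\rho^2(r\psi)$ from the equation is an a posteriori regularity observation; it does not identify which weighted fluxes on $\mathcal{H}^+\cup\mathcal{I}^+$ correspond to the transversal control on $\Sigma_0$, and that correspondence is precisely what is needed to prove boundedness and surjectivity between the two fixed spaces.
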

\begin{figure}[H]
\centering
\begin{subfigure}[b]{0.3\textwidth}
\centering
\includegraphics[scale=0.35]{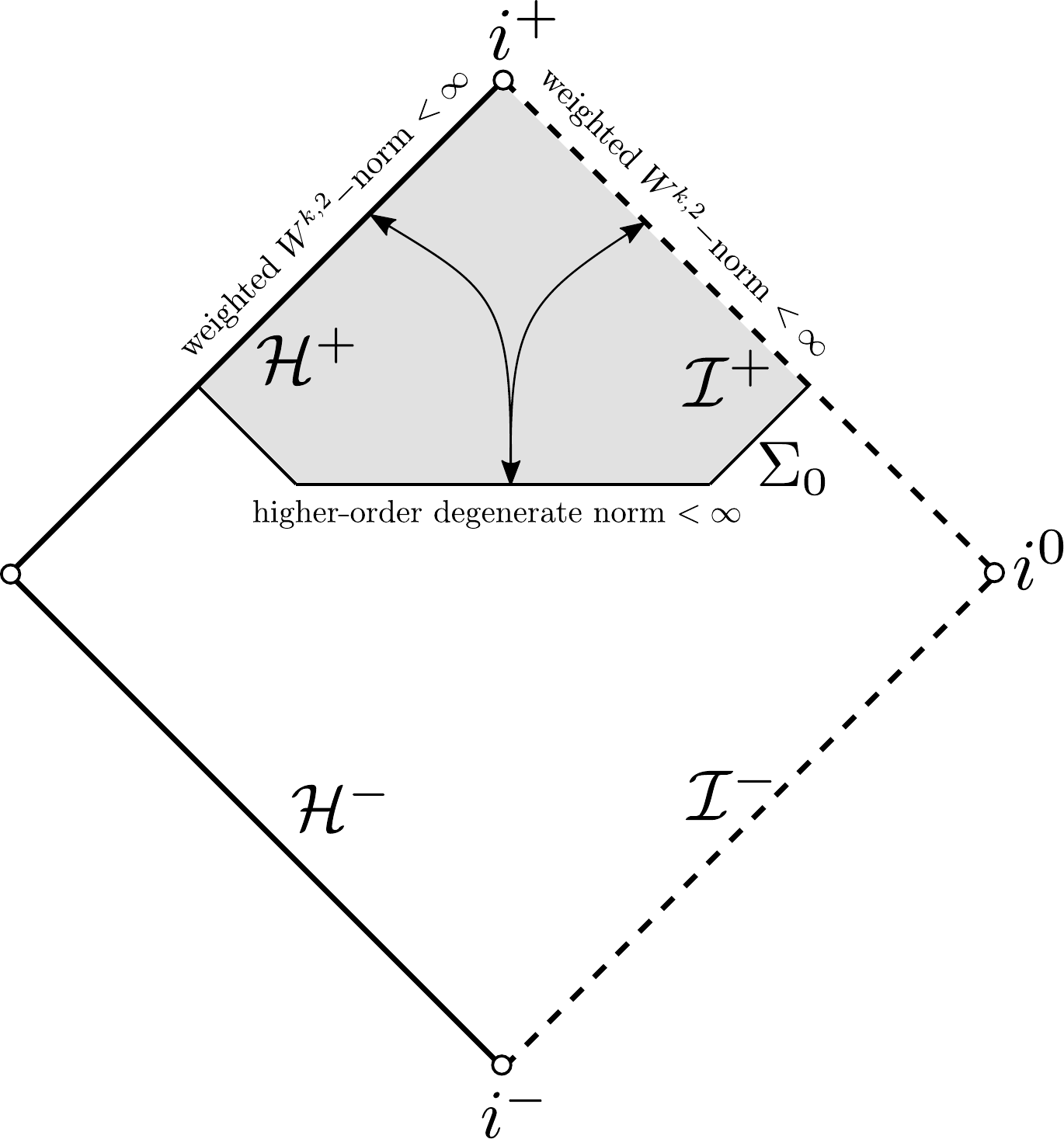}
\end{subfigure}
\centering
\begin{subfigure}[b]{0.3\textwidth}
\centering
\includegraphics[scale=0.35]{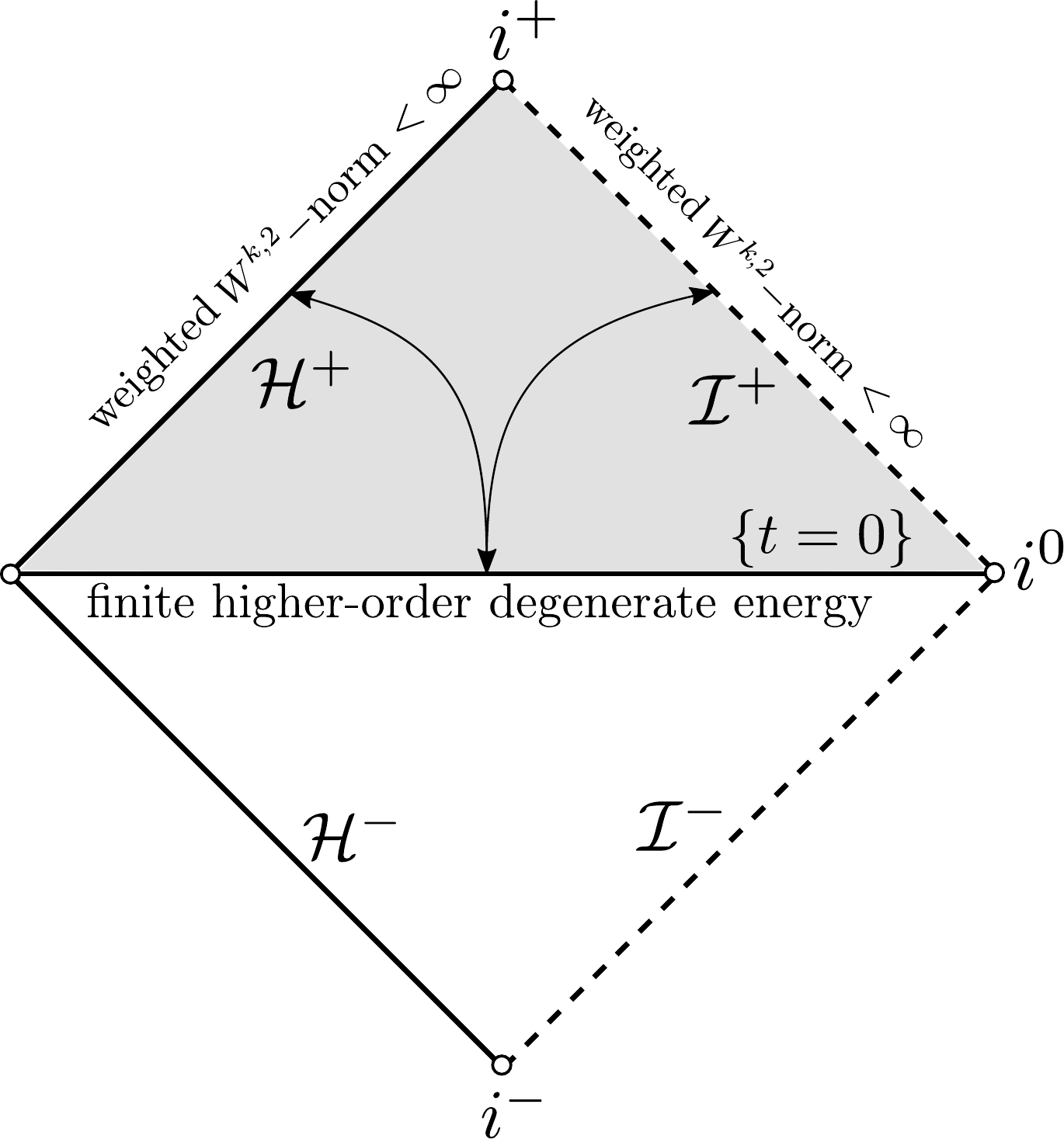}
\end{subfigure}
\begin{subfigure}[b]{0.3\textwidth}
\centering
\includegraphics[scale=0.35]{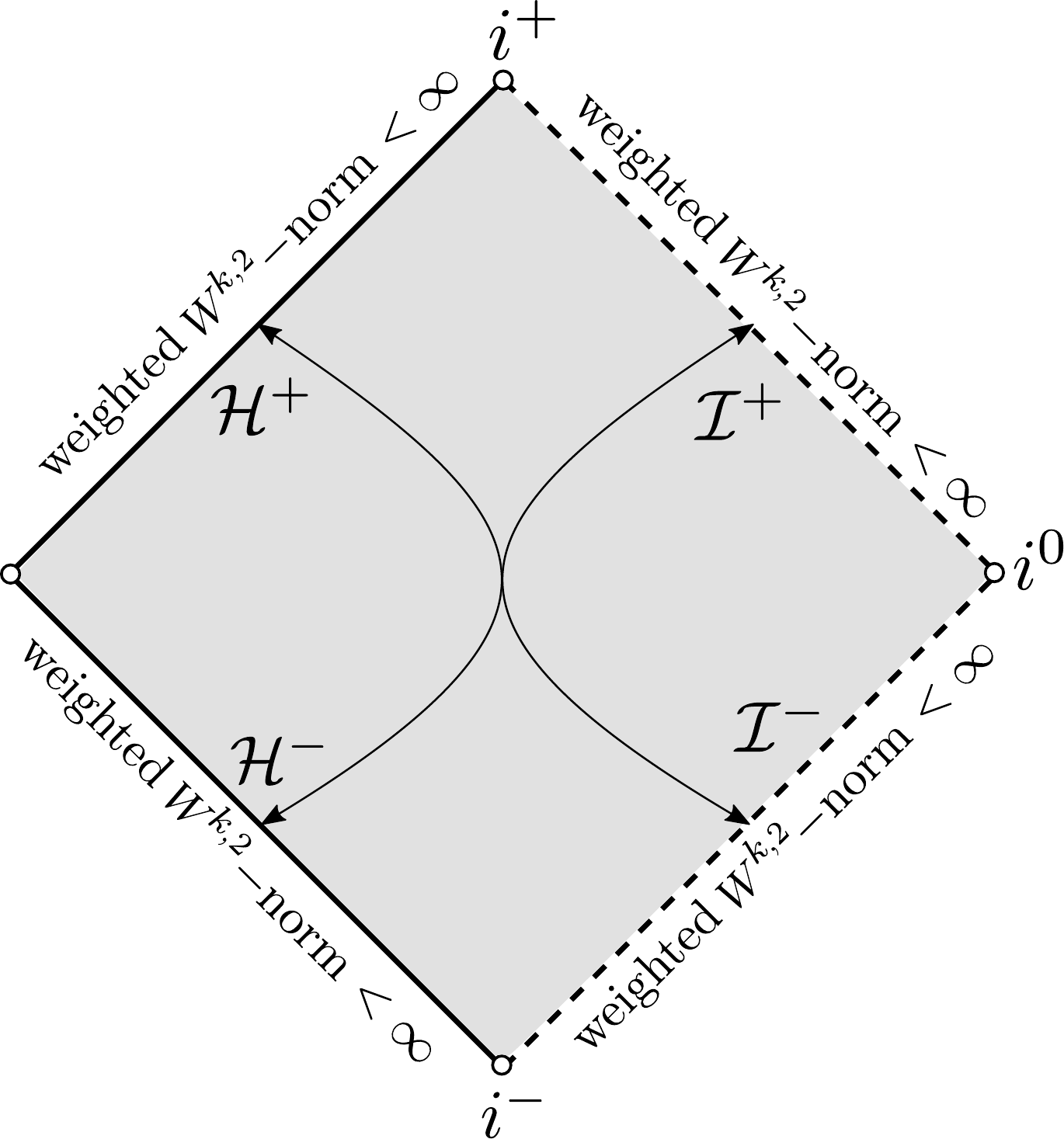}
\end{subfigure}
\caption{Higher-order degenerate scattering theory on ERN.}
	\label{fig:higher21}
\end{figure}
  The above theorem is of particular importance in constructing special solutions with high regularity.  We next present a scattering result for the black hole interior of ERN (Figure \ref{fig:higher210I}) that extends the results derived in \cite{gajic}. 
\begin{mytheo} (Rough version of Theorem \ref{thm:intscat})
The scattering map in the black hole interior of ERN defined between weighted energy spaces is bounded and bijective. 
\end{mytheo}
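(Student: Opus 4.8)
The plan is to reduce the construction to a pair of two-sided weighted energy estimates in the interior region, which is bounded to the past by the interior portion of the event horizon $\mathcal{H}^+$ and to the future by the Cauchy horizon $\mathcal{CH}^+$. First I would introduce double-null coordinates $(u,v)$ adapted to these two null boundaries and decompose $\psi$ into spherical harmonics, so that for each mode \eqref{eq:waveequation} becomes a $1+1$ wave equation with a potential. The decisive structural feature, and the reason a \emph{bijective} interior theory should exist in the extremal case but not the sub-extremal one, is that the stationary Killing field $T$ remains timelike throughout the ERN interior: since $g(T,T) = -(1-M/r)^2 < 0$ for $0 < r < M$, with equality only at $r = M$, the energy current $J^T[\psi]$ is coercive and the blueshift amplification seen in the backwards direction is at most polynomial rather than exponential, exactly as in the exterior analysis underlying Theorems \ref{theo1} and \ref{theo2}.

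Next I would establish the forward estimate. Posing characteristic data on $\mathcal{H}^+$, together with data on a bifurcation segment, I would use $T$ as a multiplier supplemented by a suitable $(u,v)$-weighted vector field chosen to capture the behaviour at $\mathcal{CH}^+$, and integrate the resulting energy identity over the interior to bound the weighted flux on $\mathcal{CH}^+$ by the weighted flux on $\mathcal{H}^+$. The backward estimate runs the same argument in the opposite causal direction, bounding the $\mathcal{H}^+$-flux by the $\mathcal{CH}^+$-flux; here it is precisely the mildness of the extremal blueshift that prevents the exponential growth obstructing the sub-extremal construction (contrast Figures \ref{fig:higher2sub2ern} and \ref{fig:higher2sub10}). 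Carrying out both estimates with matching weights simultaneously pins down the correct weighted energy spaces on $\mathcal{H}^+$ and $\mathcal{CH}^+$ and shows that the forward evolution map is bounded with bounded inverse.

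To upgrade the two-sided bounds to a genuine bijection I would first solve the characteristic initial-value problem for smooth, compactly supported data on $\mathcal{H}^+$, propagating to $\mathcal{CH}^+$ via local existence and uniqueness together with the forward estimate; the backward estimate then lets me solve the analogous problem with data prescribed on $\mathcal{CH}^+$. Since smooth compactly supported data are dense in the weighted energy spaces, defined as closures under the corresponding norms, the bounded maps extend by continuity, and the two constructions are mutual inverses on the dense subspace and hence on the whole spaces. This yields both boundedness and bijectivity, extending the forward interior results of \cite{gajic}.

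The hard part will be the sharp analysis near the Cauchy horizon. Although $T$ is timelike, the degeneracy of the extremal horizon forces slow, polynomial weights, and these must be chosen so that the forward and backward fluxes are simultaneously finite and equivalent: too strong a weight breaks surjectivity, too weak a weight breaks injectivity. Controlling the potential term and, in particular, the low spherical modes, where the conserved quantities of Aretakis type along $\mathcal{H}^+$ from \cite{aretakis1} interact with the near-$\mathcal{CH}^+$ behaviour, within this weighted framework so as to isolate the exact norms that make the map invertible, is where the main technical effort lies.
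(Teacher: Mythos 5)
Your overall skeleton --- a two-sided weighted energy estimate between the past null boundary $\mathcal{H}^+_{\geq v_0}\cup \underline{N}^{\rm int}_{v_0}$ and the future null boundary $\mathcal{CH}^+_{\leq u_{\rm int}}\cup H^{\rm int}_{u_{\rm int}}$, followed by solving the characteristic problem for compactly supported data and extending by density to mutually inverse bounded maps --- is exactly the architecture of Proposition \ref{prop:interiorestimates} and Proposition \ref{prop:intscat}. But the proposal leaves unspecified precisely the ingredient that constitutes the proof: which multiplier produces the matching weights. The paper takes the Morawetz conformal vector field $K=u^2\underline{L}+v^2L$ in Eddington--Finkelstein double-null coordinates, applied to the \emph{radiation field} $\phi=r\psi$ rather than to $\psi$, and exploits two structural facts: (i) the equation \eqref{eq:maineqradfield} for $\phi$ takes the same form in $\mathring{\mathcal{M}}^{\rm int}$ as in the exterior near $\mathcal{H}^+$, so the identities \eqref{eq:keyidasymflat2}--\eqref{eq:keycancelhor} carry over verbatim; and (ii) $(r-M)^{-1}\sim v+|u|$ in the interior region, which lets the $O((r-M)^3)$ potential term and the $\log((r-M)^{-1})D|\snabla_{\s^2}\phi|^2$ bulk term be absorbed into the flux terms once $v_0$ and $|u_{\rm int}|$ are taken large. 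This is why the theorem is stated only for $|u_{\rm int}|$ suitably large, a restriction your proposal omits. The resulting norms carry exactly the weights $v^2(\partial_v\cdot)^2$ on $\mathcal{H}^+$ and $u^2(\partial_u\cdot)^2$ on $\mathcal{CH}^+$, plus unweighted angular derivatives; there is no freedom to ``tune'' the weights as your last paragraph suggests --- they are dictated by $K$. A further point your plan skips: the flux on $\mathcal{CH}^+$ is not a priori defined and must be constructed as a limit of fluxes on $\underline{N}^{\rm int}_{v_k}$ via a Cauchy-sequence argument, which the paper extracts from the same $K$-estimate.

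Two of your stated concerns are misdirected. First, no spherical harmonic decomposition or $1+1$ reduction is used or needed; the argument is entirely physical-space, and the angular term enters only through the integration by parts on $\s^2$ in \eqref{eq:keyidasymflat2}. Second, Aretakis-type conserved charges along $\mathcal{H}^+$ play no role here: the interior estimate is at the level of first derivatives of $\phi$ with polynomial weights, and the low spherical modes cause no additional difficulty. Your observation that $T$ remains causal throughout the ERN interior (equivalently, that the interior spheres are not trapped, cf.\ the footnote in Section \ref{sec:geom}) is correct and is indeed part of why the extremal interior is better behaved than the sub-extremal one, but the $T$-energy is degenerate at both $\mathcal{H}^+$ and $\mathcal{CH}^+$ and is not the multiplier that produces the non-degenerate weighted theory; the engine is $K$ together with its invariance under the Couch--Torrence conformal symmetry.
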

\begin{figure}[H]
	\begin{center}
\includegraphics[scale=0.5]{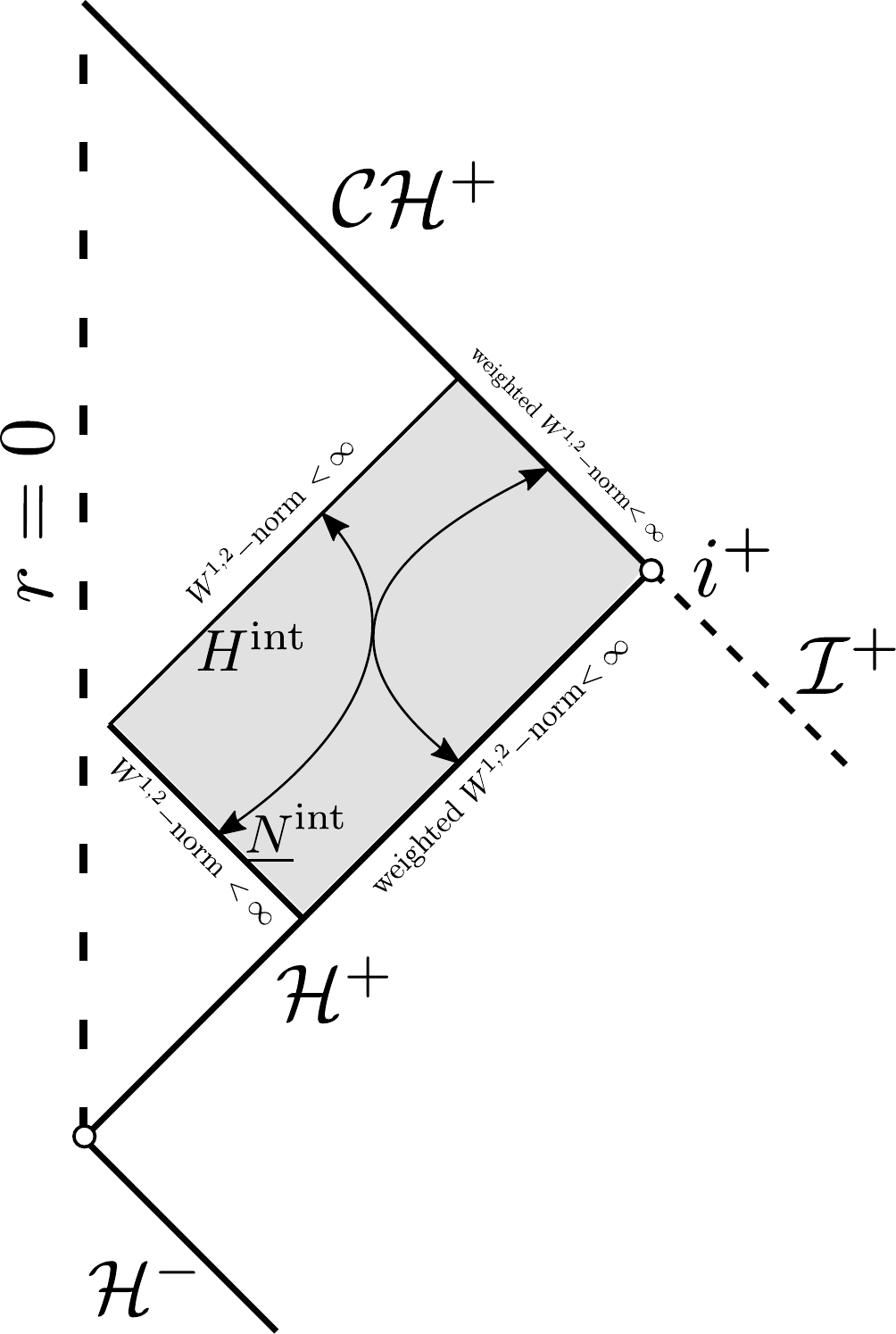}
\end{center}
\vspace{-0.2cm}
\caption{Scattering theory in the black hole interior of ERN.}
	\label{fig:higher210I}
\end{figure}  We will now provide a few applications of the above theorems. The first application has to do with the relation of decay along $\mathcal{H}^{+}$ and $\mathcal{I}^{+}$ and regularity of the data on the hypersurface $\Sigma_0$. 

\begin{mytheo} (Rough version of Theorem \ref{thm:regularity})
Solutions to the wave equation \eqref{eq:waveequation} on ERN with sufficiently fast polynomial decay rates along $\mathcal{H}^{+}$ and $\mathcal{I}^{+}$ have finite $W^{k,2}$ norm in the domain of dependence of $\Sigma_0$. 
\label{theo4} 
\end{mytheo}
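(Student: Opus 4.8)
The plan is to derive the $W^{k,2}$ regularity as a direct consequence of the boundedness of the backward scattering maps furnished by Theorems \ref{theo1} and \ref{theo2}, combined with the elementary observation that sufficiently fast pointwise decay along $\mathcal{H}^+$ and $\mathcal{I}^+$ renders finite the weighted scattering norms appearing in \eqref{roughnorms1} and their higher-order analogues. The logic is: decay rates $\Rightarrow$ finite weighted (higher-order) data norms on $(\mathcal{H}^+,\mathcal{I}^+)$ $\Rightarrow$ (by backward boundedness) finite energy on $\Sigma_0$ $\Rightarrow$ $W^{k,2}$.

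First I would translate the polynomial decay hypotheses into finiteness of the weighted energy fluxes. If $\partial_v(r\psi)$ decays along $\mathcal{H}^+$ at a rate $|\partial_v(r\psi)|\lesssim (1+v)^{-q}$, and similarly in $u$ along $\mathcal{I}^+$, with the commuted quantities $T^j\Omega^\alpha\psi$ enjoying the corresponding decay, then for any weight exponent $p<2q-1$ the integrals $\int_{\mathcal{H}^+}(1+v)^p(\partial_v(r\psi))^2$ and their counterparts at $\mathcal{I}^+$ converge. Since the norms $\mathcal{E}_{\mathcal{H}^+}$ and $\mathcal{E}_{\mathcal{I}^+}$ are built from such quantities together with angular and $T$-derivatives, and since $T$ and the angular momentum operators commute with $\square_g$, a decay rate $q$ that is large enough makes the full $k$-th order scattering data norm finite. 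This is the step in which ``sufficiently fast'' is made quantitative: each additional order of regularity we wish to recover forces one further commutation and hence a slightly faster required rate.

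Next I would invoke the boundedness of the backward map. By Theorem \ref{theo1} (precise version \ref{thm:mainthm}) the map from $\mathcal{E}_{\mathcal{H}^+\cap\mathcal{J}^{+}(\Sigma_0)}\oplus\mathcal{E}_{\mathcal{I}^+\cap\mathcal{J}^{+}(\Sigma_0)}$ to the non-degenerate space $\mathcal{E}_{\Sigma_0}$ is bounded, so finiteness of the scattering norms from the previous step yields finiteness of $\|(\psi,\partial\psi)\|_{\mathcal{E}_{\Sigma_0}}$, which by \eqref{roughnorms2} already controls $J^N[\psi]$ together with $(\partial_\rho(r\psi))^2$, i.e.\ a first-order bound on $\Sigma_0$ that is non-degenerate at both $\mathcal{H}^+$ and $\mathcal{I}^+$. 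To climb to $W^{k,2}$ I would commute the wave equation with $T$, the angular momentum operators, and the transversal redshift field $N$, apply the higher-order theory of Theorem \ref{theo2} (precise version \ref{thm:mainthmho}) to the commuted solutions, and then run elliptic estimates on the leaves foliating $\mathcal{J}^{+}(\Sigma_0)$ — trading each pair of time derivatives for the induced Laplacian plus lower-order terms via the equation — to recover all mixed spatial derivatives up to order $k$. The non-degenerate forward energy boundedness underlying the scattering construction then propagates this bound from $\Sigma_0$ to the whole foliation, yielding the claimed $W^{k,2}$ control in the domain of dependence.

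The main obstacle is that the higher-order theory of Theorem \ref{theo2} produces only a \emph{degenerate} energy on $\Sigma_0$, whereas $W^{k,2}$ demands non-degenerate control of transversal derivatives at the horizon. This degeneracy is localized at $\mathcal{H}^+$, exactly where the extremal surface gravity vanishes, and it is governed by the conserved Aretakis charges and their higher-order analogues. The crux is therefore to show that a sufficiently fast decay rate forces these horizon hierarchy quantities, and the associated weighted transversal fluxes, to be finite, so that the degenerate weight can be removed and the non-degenerate $N$-energy of each commuted quantity recovered near $\mathcal{H}^+$. Quantifying how fast the decay must be to close this horizon hierarchy at order $k$, and checking compatibility of the weights in \eqref{roughnorms1} with the extremal redshift estimates, is the technical heart of the argument; the behaviour near $\mathcal{I}^+$, by contrast, is handled by the standard $r^p$-weighted hierarchy and poses no comparable difficulty.
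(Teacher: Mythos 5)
Your overall architecture (pointwise decay $\Rightarrow$ finite weighted data norms on $\mathcal{H}^+\cup\mathcal{I}^+$ $\Rightarrow$ backward boundedness $\Rightarrow$ energy on $\Sigma_0$) matches the paper's, and you have correctly located the obstruction: Theorem \ref{theo2} only returns a \emph{degenerate} higher-order energy on $\Sigma_0$, which controls quantities like $(r-M)^{-2-2k}(\underline{L}^{k+1}\phi)^2$ and $r^{2+2k}(L^{k+1}\phi)^2$, whereas the $W^{k,2}$ norm of Section \ref{addnot} is built from the regular vector fields $D^{-1}\underline{L}\sim (r-M)^{-2}\underline{L}$ and $r^2L$, i.e.\ it requires roughly twice the weight per derivative. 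But having named this gap you do not close it: your proposal to ``remove the degenerate weight'' by commuting with the transversal field $N$ and running elliptic estimates, or by appealing to finiteness of Aretakis-type horizon charges, is not an argument — and commuting the backward estimates with transversal derivatives at the horizon is precisely what the weighted hierarchy of Section \ref{sec:backwhoest} is designed to avoid. As written, the ``technical heart'' you defer is the entire theorem.

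The paper's resolution is the \emph{time integral construction}, which is absent from your proposal. One integrates the scattering data $n$ times along $\mathcal{H}^+$ in $v$ and along $\mathcal{I}^+$ in $u$ (Definition \ref{def:timeinv}) to produce data for a solution $T^{-n}\psi$ with $T^n(T^{-n}\psi)=\psi$ (Lemma \ref{lm:timeinv}); the ``sufficiently fast polynomial decay'' is exactly the condition under which these iterated integrals are well defined and have finite $\mathcal{E}_{2n}$-norms. Applying the degenerate backward map $\mathscr{B}_{2n}$ to $T^{-n}\psi$ and then using the structure of the wave equation — Proposition \ref{prop:regderphi}, which expresses $((r-M)^{-2}\underline{L}T)^{k}\phi$ and $(r^2LT)^k\phi$ as sums of terms $O((r-M)^{-m+j})\underline{L}^{m-j}\slashed{\Delta}_{\s^2}^sT^i\phi$, resp.\ $O(r^{m-j})L^{m-j}\slashed{\Delta}_{\s^2}^sT^i\phi$ — converts each interspersed $T$ into a gain of one factor of $(r-M)^{-2}$ (resp.\ $r^{2}$) on the null derivatives. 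This is what upgrades the degenerate energy of $T^{-n}\psi$ to the non-degenerate $\partial_r^k$ and $(r^2L)^k$ control of $\psi=T^n(T^{-n}\psi)$ demanded by $W^{n+1,2}_{\rm loc}(\widehat{\mathcal{R}})$ (Propositions \ref{prop:regtimeder} and \ref{prop:regtimeinv}). Without this mechanism — or a substitute for it — your argument does not reach the stated conclusion.
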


 \begin{figure}[H]
	\begin{center}
\includegraphics[scale=0.45]{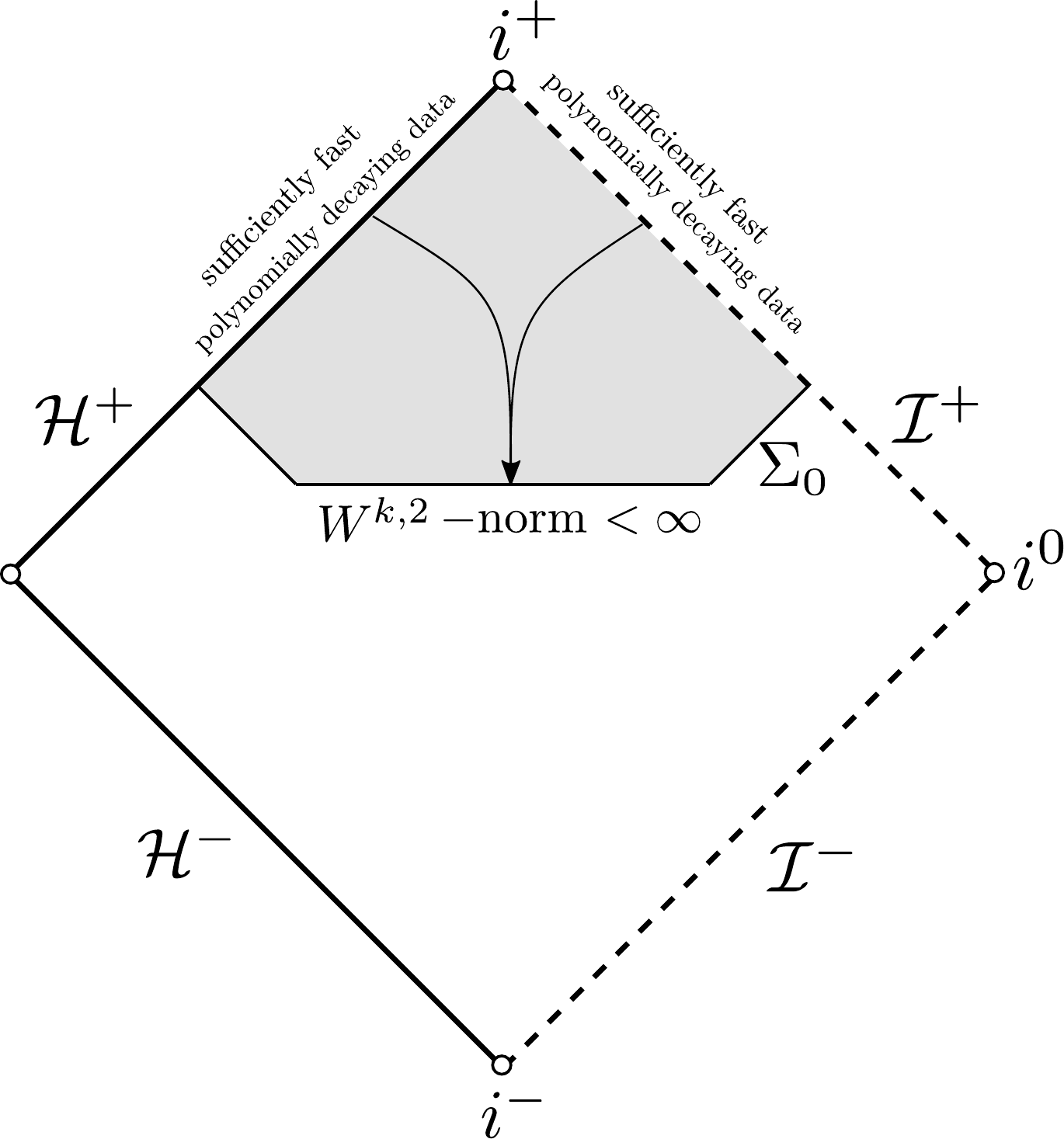}
\end{center}
\vspace{-0.2cm}
\caption{Construction of regular solutions with polynomially decaying scattering data on ERN.  }
	\label{fig:higher212}
\end{figure}
For a precise statement see Theorem \ref{thm:regularity}.
The above theorem relies on a time integral construction and a delicate use of Theorem \ref{theo2}.  Contrast this result with the sub-extremal case where one needs to consider superexponential rates to overcome the (higher-order) blue-shift effect and obtain a similar regularity result in the exterior region up to and including the event horizon. A corollary of this result is the following
\begin{mytheo} (Rough version of Theorem \ref{thm:superpoly})
Consider  smooth scattering data which are exponential in time functions   with identical decay rates on $\mathcal{H}^{+}$ and on $\mathcal{I}^{+}$. There exists a unique exponentially decaying smooth solution to the equation \eqref{eq:waveequation} which admits these data. 
\label{theo5}
\end{mytheo}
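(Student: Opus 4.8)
The plan is to deduce Theorem \ref{theo5} from the bijective higher-order scattering theory of Theorem \ref{theo2}, the regularity statement of Theorem \ref{theo4}, and a symmetry argument exploiting that $T$ is Killing. Write the prescribed data as $r\psi|_{\mathcal{H}^+}=e^{-\lambda v}F$ and $(r\psi)|_{\mathcal{I}^+}=e^{-\lambda u}G$ for a common rate $\lambda>0$ and smooth angular profiles $F,G$ (more generally a finite sum of such terms). The first observation is that exponential decay dominates every polynomial weight, so these data have finite weighted higher-order energy at every order; in particular they lie in $\mathcal{E}_{\mathcal{H}^+\cap\mathcal{J}^+(\Sigma_0)}$ and $\mathcal{E}_{\mathcal{I}^+\cap\mathcal{J}^+(\Sigma_0)}$ to all orders, since $\int(1+v^2)\lambda^2 e^{-2\lambda v}|F|^2\,dv<\infty$ and likewise for the higher-weight, higher-derivative terms.

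First I would invoke the backward direction of the higher-order scattering map of Theorem \ref{theo2} to produce a solution $\psi$ of \eqref{eq:waveequation} realizing these data and having finite degenerate higher-order energy on $\Sigma_0$ at every order. Feeding the super-polynomial decay of $e^{-\lambda v}F$ and $e^{-\lambda u}G$ into Theorem \ref{theo4} then yields $\psi\in W^{k,2}$ in the domain of dependence of $\Sigma_0$ for all $k$, whence $\psi$ is smooth by Sobolev embedding. This already settles existence and smoothness; uniqueness within the class of finite-energy solutions is immediate from the injectivity half of Theorem \ref{theo1}/\ref{theo2}.

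The decisive step is to upgrade smoothness to the exponential mode structure. Since $T$ is Killing, $T\psi$ again solves \eqref{eq:waveequation}. On $\mathcal{H}^+$ the flow of $T$ is translation in $v$ and on $\mathcal{I}^+$ translation in $u$, so $T$ acts as $\partial_v$ and $\partial_u$ on the respective radiation fields; applied to the data this gives
\begin{equation*}
T(r\psi)|_{\mathcal{H}^+}=-\lambda e^{-\lambda v}F=-\lambda\,(r\psi)|_{\mathcal{H}^+},\qquad T(r\psi)|_{\mathcal{I}^+}=-\lambda e^{-\lambda u}G=-\lambda\,(r\psi)|_{\mathcal{I}^+}.
\end{equation*}
Thus $T\psi$ and $-\lambda\psi$ are two finite-energy solutions with identical scattering data, and injectivity of the scattering map forces $T\psi=-\lambda\psi$, i.e. $\partial_t\psi=-\lambda\psi$. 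Integrating gives $\psi(t,r,\theta,\varphi)=e^{-\lambda t}\Psi(r,\theta,\varphi)$ with $\Psi$ smooth, so $\psi$ is a genuine exponentially decaying smooth mode. Uniqueness among exponentially decaying smooth solutions follows by applying the same injectivity to the difference of two such solutions sharing the data.

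The hard part will be the bookkeeping that lets injectivity be applied to $T\psi$: one must check that $T\psi$ itself lies in the higher-order energy space, which here is automatic because differentiating exponential data in $v$ or $u$ merely rescales it by $-\lambda$, so all weighted norms remain finite. The genuinely delicate structural point — and the reason the hypothesis of identical rates on $\mathcal{H}^+$ and $\mathcal{I}^+$ cannot be dropped — is that only when the two rates coincide does the one-parameter group generated by $T$ rescale \emph{both} pieces of data by the common factor $e^{-\lambda\tau}$; with distinct rates, $T\psi$ is no longer proportional to $\psi$ and the argument produces no pure mode. I would also keep careful track of the distinction between the non-degenerate space of Theorem \ref{theo1} and the degenerate higher-order spaces of Theorem \ref{theo2}, so that Theorems \ref{theo2} and \ref{theo4} are applied to mutually compatible data classes and the resulting profile $\Psi$ is smooth up to and including $\mathcal{H}^+$ and $\mathcal{I}^+$.
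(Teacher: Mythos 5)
Your proposal is correct and follows essentially the same route as the paper: smoothness is obtained by feeding the superpolynomially (here exponentially) decaying data into the time-integral regularity theorem (Theorem \ref{thm:regularity} via Corollary \ref{cor:smoothbackw}), and the mode structure is then extracted exactly as in Proposition \ref{prop:modesol} by noting that $T$ acts on the scattering data as multiplication by the common rate and invoking uniqueness of the solution associated to given data to conclude $T\psi=-\lambda\psi$. The only cosmetic difference is that you take a real rate $\lambda$ where the paper allows complex $\omega$ with $\textnormal{Im}\,\omega<0$; the argument is unchanged.
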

We refer to such solutions as mode solutions.  See also Remarks \ref{rm:smoothmodesub} and \ref{remarkquasi} for a discussion about the relation between our modes solutions and the notion of quasinormal mode solutions. 

Finally, we have the following application for the black hole interior of ERN. 
\begin{mytheo} (Rough version of Theorem \ref{thm:interior})
Solutions to the wave equation \eqref{eq:waveequation} on ERN with finite
$\mathcal{E}_{\Sigma_0}$ energy norm on the hypersurface $\Sigma_0$  have finite $W^{1,2}$ norm in the black hole interior region up to and including the Cauchy horizon. 
\label{theo6}
\end{mytheo}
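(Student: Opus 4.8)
The plan is to propagate the non-degenerate energy from $\Sigma_0$ across the event horizon into the interior by a characteristic initial value argument, and then to close a weighted energy estimate up to the Cauchy horizon $\mathcal{CH}^+$ by exploiting the vanishing of the surface gravity at $r=r_+$. First I would bound the data induced on the future event horizon: by the bounded forward direction of Theorem \ref{theo1} (precisely, of Theorem \ref{thm:mainthm}), finiteness of the non-degenerate norm $\|(\psi,\partial\psi)\|_{\mathcal{E}_{\Sigma_0}}$ implies finiteness of the weighted horizon energy $\|r\psi\|_{\mathcal{E}_{\mathcal{H}^+}}$, so that the $(1+v^2)\,J^{T}[\psi]$ flux along $\mathcal{H}^{+}\cap\mathcal{J}^{+}(\Sigma_0)$, together with its lower-order companions, is finite. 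A weighted Hardy and Sobolev argument on $\mathcal{H}^+$ then turns this into quantitative polynomial decay in the advanced time $v$ for the trace of $r\psi$ and its tangential derivatives. Together with finite data on the transversal ingoing cone emanating from $\Sigma_0\cap\mathcal{H}^+$ — which lies in a compact region and is controlled by $\mathcal{E}_{\Sigma_0}$ via local, non-degenerate energy estimates — this furnishes the characteristic initial data for the interior evolution.

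Next I would describe the interior in double-null coordinates, with $\mathcal{H}^+$ and $\mathcal{CH}^+$ as the two null boundaries at $r=r_+$, and pass to the equation for the radiation field $r\psi$. Its potential is proportional to $f=(1-r_+/r)^2$, which degenerates like $(r-r_+)^2$, equivalently like $r_*^{-2}$, as $r\to r_+$ from inside the black hole. This inverse-polynomial degeneration, rather than the exponential one present in the sub-extremal case, is the structural manifestation of the vanishing surface gravity: there is no factor $e^{2\kappa v}$ and hence no exponential blueshift of the kind responsible for the $W^{1,2}$ blow-up of \cite{Luk2015, DafShl2016}. This is exactly the point at which extremality is indispensable.

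Extending the interior estimates of \cite{gajic}, I would then run a $v$-weighted energy estimate for $r\psi$, using a multiplier adapted to the approach to $\mathcal{CH}^+$, and integrate over the interior up to the Cauchy horizon. The weighted bulk terms either carry a favorable sign or are absorbable precisely because no exponential factor appears, while the incoming boundary contribution on $\mathcal{H}^+$ is exactly the finite weighted horizon energy obtained in the first step; the polynomial weight is chosen so that it is dominated by the polynomial decay of the horizon data. This closes the estimate and bounds both the energy flux through $\mathcal{CH}^+$ and the spacetime integral of $|\partial(r\psi)|^2$ in a neighborhood of $\mathcal{CH}^+$. Since $r$ is bounded away from zero there, Hardy inequalities let one pass between derivatives of $r\psi$ and of $\psi$ without loss, so these bounds yield finiteness of $\int |\psi|^2+|\partial\psi|^2$ on slices reaching up to and including $\mathcal{CH}^+$; in particular the trace of $\psi$ on $\mathcal{CH}^+$ lies in $W^{1,2}$.

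The main obstacle I anticipate is the interior weighted energy estimate itself: one must choose the multiplier and the weight $v^p$ so that the bulk error terms are controlled and, simultaneously, so that the weight of the boundary term on $\mathcal{H}^+$ is compensated exactly by the polynomial decay rate of the horizon data supplied by $\mathcal{E}_{\mathcal{H}^+}$. This matching of the (polynomial) blueshift growth against the data decay is the crux, and it is precisely where $\kappa=0$ enters decisively: in the sub-extremal case the exponential blueshift would overwhelm any polynomial decay of the data, consistent with the instability results recalled in the introduction.
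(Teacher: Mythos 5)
Your overall strategy coincides with the paper's: push a $v^2$-weighted flux onto $\mathcal{H}^+$ by the forwards estimates (Corollary \ref{cor:fowdecay}), run a polynomially weighted energy estimate in the black hole interior whose bulk terms close only because the surface gravity vanishes, and convert the resulting fluxes into $W^{1,2}$ control up to $\mathcal{CH}^+$. The paper's realization of your ``multiplier adapted to the approach to $\mathcal{CH}^+$'' is the conformal Morawetz vector field $K=u^2\partial_u+v^2\partial_v$ applied to the radiation field $\phi=r\psi$ (Proposition \ref{prop:interiorestimates}); the decisive structural facts are the cancellation $\Lbar\left(u^2Dr^{-2}\right)+L\left(v^2Dr^{-2}\right)=O((r-M)^2)|\log(r-M)|$ together with $(r-M)^{-1}\sim |u|+v$ in the interior, which let every bulk error be absorbed for $|u_{\rm int}|$ suitably large. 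The boundary weight $v^2$ on $\mathcal{H}^+$ is then exactly the weight supplied by $\mathcal{E}_{\mathcal{H}^+_{\geq v_0}}$, so the ``matching'' you identify as the crux is realized with the fixed exponent $p=2$ and there is nothing left to optimize. Your intermediate step of converting the weighted horizon flux into pointwise polynomial decay of $r\psi$ is unnecessary: the weighted fluxes enter the $K$-estimate directly as boundary terms, and the final $W^{1,2}$ statement follows from $\partial_vr\sim v^{-2}$ on constant-$u$ hypersurfaces, the fundamental theorem of calculus, and commutation with $T$ (Corollary \ref{cor:regint}).

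There is, however, one genuine error. You assert that the data on the ingoing null hypersurface transversal to $\mathcal{H}^+$ (the paper's $\underline{N}_{v_0}^{\rm int}$, the portion of $\{v=v_0\}$ inside the black hole) ``lies in a compact region and is controlled by $\mathcal{E}_{\Sigma_0}$ via local, non-degenerate energy estimates.'' It is not controlled by $\Sigma_0$ at all: $\underline{N}_{v_0}^{\rm int}$ does not lie in $D^+(\Sigma_0)$, since its causal past meets the event horizon at advanced times $v<v_0$, outside the future of $\Sigma_0$; no estimate from $\Sigma_0$ alone, local or otherwise, can bound it. This is precisely why the precise statement (Theorem \ref{thm:interior}) carries the additional term $\|\psi\|_{W^{1,2}(\underline{N}_{v_0}^{\rm int})}$ on its right-hand side, a hypothesis that the rough version you were given suppresses. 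Your argument as written would need either to add this hypothesis explicitly or to replace $\Sigma_0$ by initial data on a hypersurface whose future domain of dependence actually contains the interior region in question.
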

 Contrast Figure \ref{fig:higher2sub1} with Figure \ref{fig:higher2sub2ern} in the sub-extremal case. See also Remark \ref{remarkinterior}. 
\begin{figure}[H]
	\begin{center}
\includegraphics[scale=0.45]{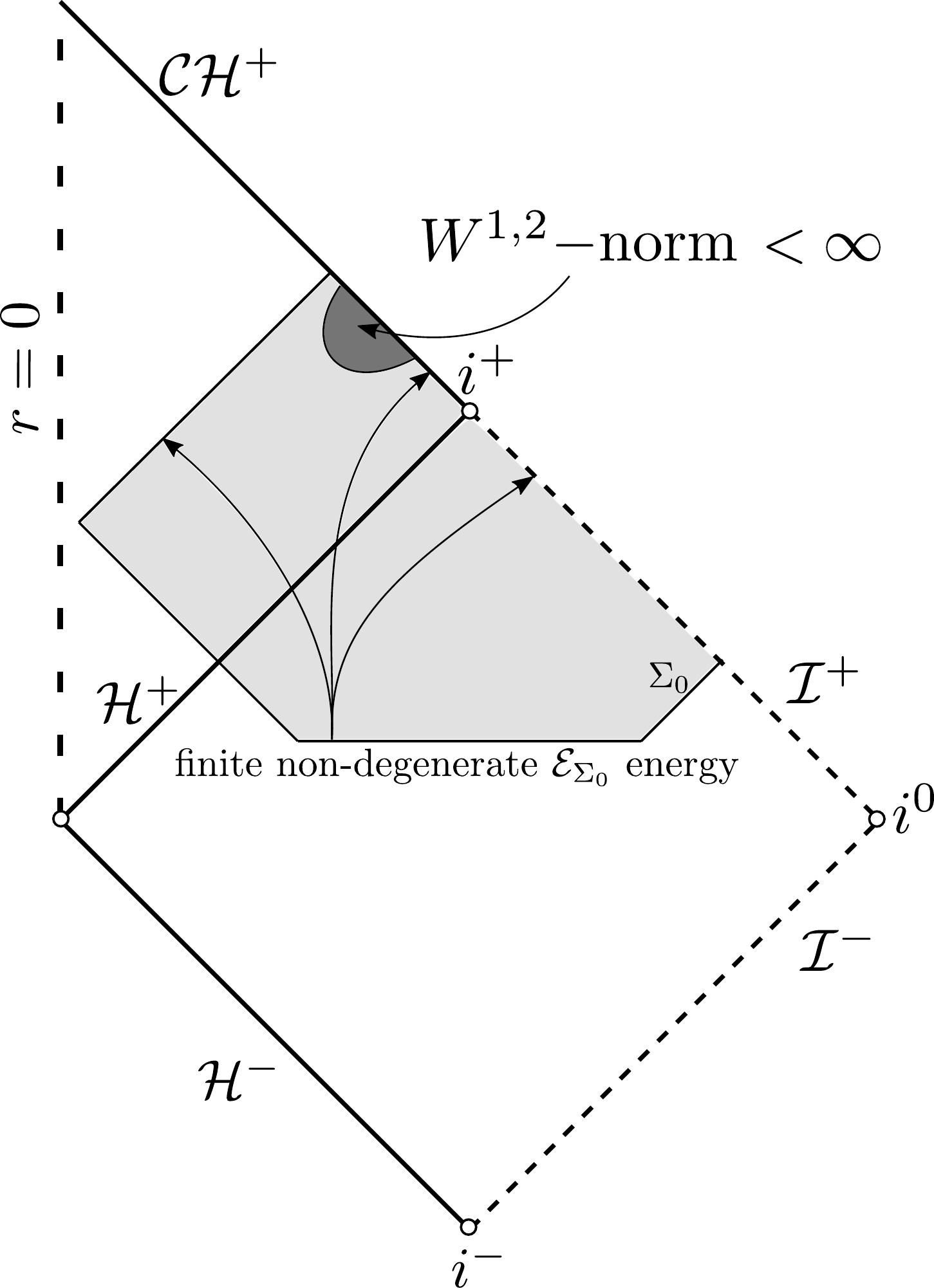}
\end{center}
\vspace{-0.2cm}
\caption{Finiteness of $W^{1,2}$ norm in the black hole interior of ERN.}
	\label{fig:higher2sub1}
\end{figure}

\subsection{Related works}
\label{sec:RelatedWorks}

A closely related topic to the scattering theories on black holes is the black hole stability problem for the forward-in-time evolution. Intense research has been done for both sub-extremal and extremal black holes in this direction. Decay results for the wave equation  on the full sub-extremal Kerr family  were derived in  \cite{part3}. Definitive stability results of the linearized gravity system for Schwarzschild and Reissner--Nordstr\"om were presented in  \cite{Dafermos2016} and \cite{elena1,elena2}, respectively. The non-linear stability of Schwarzschild in a symmetry restricted context was presented in  \cite{klainerman17}. 
The rigorous study of linear waves on extremal black holes was initiated by the second author in \cite{aretakis1, aretakis2, aretakis3, aretakis4, aretakis2012} where it was shown that scalar perturbations are unstable along the event horizon in the sense that higher-order transversal derivatives asymptotically blow up towards the future. The stronger regularity properties of scalar perturbations in the interior of extremal black hole spacetimes compared to sub-extremal black holes was derived by the third author in \cite{gajic,gajic2}.  Precise late-time asymptotics were derived in \cite{paper4}. These asymptotics led to a novel observational signature of ERN \cite{extremal-prl} where it was shown that the horizon instability of ERN is in fact ``observable'' by observers at null infinity. For a detailed study of this signature we refer to the recent \cite{gauravnew}. For works on extremal Kerr spacetimes we refer to the works \cite{hm2012, zimmerman1, harveyeffective}. Extentions of the horizon instability have been presented in various settings \cite{aag1, ori2013,sela, harvey2013, godazgar17, khanna17, cvetic2018}. For a detailed review of scalar perturbations on extremal backgrounds  we refer to  \cite{aretakisbrief}.

\subsection{Discussion on nonlinear problems
}
\label{sec:RelatedWorksdfs}

The methods developed in this article have applications beyond extremal black holes. Indeed, they may be also applied in the construction of non-degenerate scattering theories with weighted energy norms in more general asymptotically flat spacetimes \emph{without} a local redshift effect at the horizon (which acts as a blueshift effect in backwards evolution). One such example would be the Minkowski spacetime; see Section \ref{sec:tech}. Since our methods involve \emph{weighted} and non-degenerate energies, we expect them to be particularly useful for developing a scattering theory for nonlinear wave equations satisfying the classical null condition, as weighted energies need to be controlled in order to obtain global well-posedness for the (forwards) initial value problem \cite{sergiunull}. It would be moreover interesting to explore the generalization of our methods to the setting of perturbations of Minkowski in the context of a scattering problem for the Einstein equations. See also \cite{lindschl17} for work in this direction.

Another interesting direction to explore is the construction of dynamically extremal black holes settling down to extremal Reissner--Nordstr\"om with inverse polynomial rates from initial data along the future event horizon and future null infinity, which would involve a generalization of the backwards evolution estimates in this article to the setting of the Einstein equations. Note that the construction of dynamically extremal black holes settling down \emph{exponentially} follows from an application of the methods of \cite{scattering}. However, whereas it is conjectured in \cite{scattering} that a scattering construction of dynamically sub-extremal black holes settling down inverse polynomially will generically result in spacetimes with a weak null singularity at the event horizon, our methods suggest that the event horizon of dynamically extremal black holes may generically be more regular (with the regularity depending on the assumed polynomial decay rate).

\subsection{Overview of paper}
We provide in this section an overview of the remainder of the paper.
\begin{itemize}
\item In Section \ref{sec:geom}, we introduce the extremal Reissner--Nordstr\"om geometry and spacetime foliations. We also introduce the main notation used throughout the rest of the paper.
\item We introduce in Section \ref{sec:energyspaces} the main Hilbert spaces which appear as domains for our scattering maps.
\item Having introduced the main notation and Hilbert spaces, we subsequently give precise statement of the main theorems of the paper in Section \ref{sec:thms}.
\item In Section \ref{sec:tech}, we outline the main new ideas introduced in the present paper and we provide a sketch of the key proofs.
\item We construct in Section \ref{sec:fowmap} the forwards scattering map $\mathscr{F}$, mapping initial data on a mixed spacelike-null hypersurface to the traces of the radiation field at the future event horizon and future null infinity. We moreover construct restrictions to this map which involve additionally higher-order, degenerate norms.
\item In Section \ref{sec:backwest}, we construct the backwards evolution map $\mathscr{B}$, which send initial data for the radiation field at the future event horizon and future null infinity to the trace of the solution at a mixed spacelike-null hypersurface and is the inverse of $\mathscr{F}	$. Similarly, we construct restrictions of $\mathscr{B}$ involving higher-order, degenerate norms.
\item We prove in Section \ref{sec:spacelikeinfbfsphere} additional energy estimates (in forwards and backwards time direction) that allow us to construct invertible maps $\mathscr{F}_{\pm}$ that send initial data along the asymptotically flat hypersurface $\{t=0\}$ to the future event horizon/null infinity and past event horizon/null infinity, respectively. The composition $\mathscr{S}=\mathscr{F}_+\circ \mathscr{F}_{-}^{-1}$ defines the scattering map, which may be thought of as the key object in our non-degenerate scattering theory.
\item In Section \ref{sec:regcauchyhor} we construct a scattering map $\mathscr{S}^{\rm int}$ in a subset of the black hole interior of extremal Reissner--Nordstr\"om.
\item In the rest of the paper, we provide several applications of the scattering theory developed in the aforementioned sections. In Section \ref{sec:appreg}, we apply the backwards estimates of Section \ref{sec:backwest} to construct arbitrarily regular solutions to \eqref{eq:waveequation} from data along future null infinity and the future event horizon. As a corollary, we construct in Section \ref{sec:appmode} smooth mode solutions from data at infinity and the event horizon.
\end{itemize}

\subsection{Acknowledgements}

The second author (S.A.) acknowledges support through  NSERC grant 502581, an Alfred P. Sloan Fellowship in Mathematics and the Connaught Fellowship 503071.

\section{Geometry and notation}
\label{sec:geom}

\subsection{Black hole exterior}
Consider the 1-parameter family of extremal Reissner-Nordstr\"om spacetimes $(\mathcal{M}^{\rm ext},g_M)$, where\\ $\mathcal{M}^{\rm ext}=\R\times [M,\infty)\times \s$ is a manifold-with-boundary. In $(v,r,\theta,\varphi)$ coordinates, $g$ can be expressed as follows:
\begin{equation}
\label{def:metricext1}
g_M=-Ddv^2+2dvdr+r^2\slashed{g}_{\s^2},
\end{equation}
where $D(r)=(1-Mr^{-1})^2$, with $M>0$ the mass parameter, and $(\theta,\varphi)$ are spherical coordinates on $\s^2$. We denote the boundary as follows $\mathcal{H}^+:=\partial \mathcal{M}^{\rm ext}=\{r=M\}$. We refer to $\mathcal{H}^+$ as the \emph{future event horizon}. The coordinate vector field $T:=\partial_v$ is a Killing vector field that generates the time-translation symmetry of the spacetime.

Consider $u=v-2r_*(r)$, with
\begin{equation*}
r_*(r)=r-M-M^2(r-M)^{-1}+2M \log \left(\frac{r-M}{M}\right).
\end{equation*}
We moreover denote $t=\frac{1}{2}(v+u)$ and we will also employ the notation $u_+:=u$, $u_-:=v$, $v_+:=v$ and $v_-:=u$.

We can change to the coordinate chart $(u,r,\theta, \varphi)$ on the manifold $\mathring{\mathcal{M}}^{\rm ext}=\mathcal{M}^{\rm ext} \setminus \mathcal{H}^+$, in which $g$ can be expressed as follows:
\begin{equation}
\label{def:metricext2}
g_M=-Ddu^2-2dudr+r^2\slashed{g}_{\s^2},
\end{equation}
and $\mathring{\mathcal{M}}^{\rm ext}=\R_u\times (M,\infty)_r\times \s^2$. By employing the coordinate chart $(u,r,\theta,\varphi)$, we can moreover smoothly embed $\mathring{\mathcal{M}}^{\rm ext}$ into a different manifold-with-boundary ${\mathcal{M}'}^{\rm ext}=\R\times [M,\infty)\times \s^2$, where we denote $\mathcal{H}^-:=\partial {\mathcal{M}'}^{\rm ext}=\{r=M\}$. We refer to $\mathcal{H}^-$ as the \emph{past event horizon}. In these coordinates $T=\partial_u$.

Finally, it will also be convenient to employ the Eddington--Finkelstein double null coordinate chart $(u,v,\theta,\varphi)$ in $\mathring{\mathcal{M}}^{\rm ext}$, in which $g$ takes the following form:
\begin{equation}
\label{def:metricext3}
g_M=-Ddudv+r^2\slashed{g}_{\s^2}.
\end{equation}
Here, $(u,v)\in \R\times \R$.

In these coordinates $T=\partial_u+\partial_v$. We moreover introduce the following vector field notation in $(u,v,\theta,\varphi)$ coordinates:
\begin{align*}
L:=&\:\partial_v,\\
\underline{L}:=&\: \partial_u.
\end{align*}
We have that $L(r)=\frac{1}{2}D$ and $\underline{L}(r)=-\frac{1}{2}D$. Note that in $(v,r)$ coordinates, we can express:
\begin{equation*}
\partial_r=2D^{-1}\underline{L}.
\end{equation*}

Let $\slashed{\nabla}$ denote the induced covariant derivative on the spheres of constant $(u,v)$. Then we denote the following rescaled covariant derivative:
\begin{equation*}
\slashed{\nabla}_{\s^2}=r^2\slashed{\nabla}.
\end{equation*}
The rescaled covariant derivative $\slashed{\nabla}_{\s^2}$ is the standard covariant derivative on the unit round sphere.

Consider the following \emph{rescaled} radial coordinate on $\mathring{\mathcal{M}}^{\rm ext}$: $x:=\frac{1}{r}$. The metric $g_M$ takes the following form in $(u,x,\theta,\varphi)$ coordinates:
\begin{equation*}
g_M=-Ddu^2+2r^2dudx+r^2\slashed{g}_{\s^2}.
\end{equation*}

We can then express $\mathring{\mathcal{M}}^{\rm ext}=\R_u\times (0,\frac{1}{M}]_{x}\times \s$. We can embed $\mathcal{M}^{\rm ext}$ into the manifold-with-boundary
\begin{equation*}
 \widehat{\mathcal{M}}^{\rm ext}=\left(\R_u\times \left[0,\frac{1}{M}\right]_{x}\times \s\right)\cup \mathcal{H}^+.
\end{equation*}

We denote $\mathcal{I}^+:=\R_u \times \{0\}_{x} \times \s^2$ and refer to this hypersurface as \emph{future null infinity}. By considering a conformally rescaled metric 
\begin{equation*}
\hat{g}_M=r^{-2}g_M= -Dx^2du^2+2dudx+\slashed{g}_{\s^2}
\end{equation*}
 in $(u,x,\theta, \varphi)$ coordinates, we can extend $\hat{g}_M$ smoothly to $\widehat{\mathcal{M}}^{\rm ext}$ so that $\mathcal{I}^+$ embeds as a genuine null boundary with respect to $\hat{g}_M$. This interpretation, however, will not be necessary for our purposes.

Similarly, we can embed ${\mathcal{M}'^{\rm ext}}=\R_v\times (0,\frac{1}{M}]_{x}\times \s$ into the manifold-with-boundary
\begin{equation*}
{\widehat{\mathcal{M}'}^{\rm ext}}=\R_v\times \left[0,\frac{1}{M}\right]_{x}\times \s
\end{equation*}
and define \emph{past null infinity} as the hypersurface $\mathcal{I}^-:=\R_v \times \{0\}_{x} \times \s^2$, which can be interpreted as a null boundary with respect to a smooth extension of $\hat{g}$.

\subsection{Black hole interior}
By employing $(v,r,\theta,\varphi)$ coordinates it follows immediately that we can smoothly embed $(\mathcal{M}^{\rm ext},g_M)$ into the manifold $\mathcal{M}=\R_v\times (0,\infty)_r\times \s$, where the metric $g$ takes on the form \eqref{def:metricext1}. We will refer to the subset $\mathcal{M}^{\rm int}=\R_v \times (0,M]_r \times \s^2$ as the \emph{black hole interior}. By defining $u=v-2r_*(r)$ in $\mathring{\mathcal{M}}^{\rm int}=\R_v \times (0,M)_r \times \s^2$, with
\begin{equation*}
r_*(r)=r-M+M^2(M-r)^{-1}+2M \log \left(\frac{M-r}{M}\right)
\end{equation*}
we can also introduce $(u,r,\theta,\varphi)$ coordinates on $\mathring{\mathcal{M}}^{\rm int}$, in which the metric takes the expression \eqref{def:metricext2}. In these coordinates, it immediately follows that we can embed $\mathring{\mathcal{M}}^{\rm int}$ into a larger manifold $\widetilde{\mathcal{M}}=\R_u \times (0,\infty)_r\times \s^2$. Let us denote the manifold-with-boundary $\widetilde{\mathcal{M}}^{\rm int}=\R_u \times (0,M]_r \times \s^2$ and the boundary
\begin{equation*}
\mathcal{CH}^+:=\partial \widetilde{\mathcal{M}}^{\rm int}=\{r=M\} \subset \widetilde{\mathcal{M}},
\end{equation*}
which we refer to as the \emph{inner horizon} or the \emph{Cauchy horizon} (the latter terminology follows from the globally hyperbolic spacetime regions considered in Section \ref{sec:foliations}).

Finally, it is also useful to work in Eddington--Finkelstein double-null coordinates $(u,v,\theta,\varphi)$ in $\mathring{\mathcal{M}}^{\rm int}$, in which the metric $g$ takes the form \eqref{def:metricext3}, with $(u,v)\in \{(u',v')\in \R^2\,|\, r(u',v')>0\}$. Furthermore, as in $\mathring{\mathcal{M}}^{\rm ext}$, we have that $L(r)=\frac{1}{2}D(r)$ and $\underline{L}(r)=-\frac{1}{2}D(r)$.\footnote{The positivity of $L(r)$ in $\mathring{\mathcal{M}}^{\rm int}$ illustrates the following characteristic property of extremal Reissner--Nordstr\"om black holes: the spheres foliating the black hole interior are \underline{not} trapped.}

\subsection{Foliations}
\label{sec:foliations}
We introduce the function $v_{\Sigma}(r): [M,\infty) \to \R$, defined as follows: $v_{\Sigma}(M)=v_0>0$ and $\frac{dv_{\Sigma}}{dr}(r)=h(r)$, where $h: [M,\infty) \to \R$ is a piecewise smooth non-negative function satisfying $h(r)=0$, when $r\in [M,r_{\mathcal{H}}]$, with $M<r_{\mathcal{H}}<2M$ and $h=2D^{-1}(r)$ for $r\in [r_{\mathcal{I}},\infty)$, with $2M<r_{\mathcal{I}}<\infty$. Furthermore, in $r_{\mathcal{H}}<r<r_{\mathcal{I}}$, $h$ is smooth and satisfies $0<h<2D^{-1}$. 

Consider the corresponding hypersurface $\Sigma=\{(v,r,\theta,\varphi)\in \mathcal{M}^{\rm ext}\,|\, v=v_{\Sigma}(r)\}$. Then $\underline{N}_{v_0}:=\Sigma|_{\{r\in (M,r_{\mathcal{H}})\}}$ is an ingoing null hypersurface intersecting $\mathcal{H}^+$, tangential to $\underline{L}$ and  ${N}_{u_0}:=\Sigma|_{\{r\in [r_{\mathcal{I}},\infty)\}}$ is an outgoing null hypersurface, tangential to $L$. Furthermore, $\Sigma|_{\{r\in (r_{\mathcal{H}},r_{\mathcal{I}})\}}$ is spacelike. We denote $u_{\Sigma}(r):=v_{\Sigma}(r)-2r_*$ and observe that
\begin{equation*}
u_0:=\lim_{r \to \infty}u_{\Sigma}(r)= u_{r_{\mathcal{H}}}<\infty.
\end{equation*}
Without loss of generality, we can assume that $u_0>0$ (by taking $v_0$ appropriately large for fixed $r_{\mathcal{H}}$ and $r_{\mathcal{I}}$). We will consider the coordinate chart $(\rho:=r|_{\Sigma},\theta,\varphi)$ on $\Sigma$.

We denote with $D^{\pm}(S)$ the future and past domain of dependence, respectively, of a spacelike or mixed spacelike-null hypersurface $S$. Let $\mathcal{R}:=D^+(\Sigma)$. We can foliate $\mathcal{R}$ as follows:
\begin{equation*}
\mathcal{R}=\bigcup_{\tau\in [0,\infty)} \Sigma_{\tau},
\end{equation*}
where $\Sigma_{\tau}$ denote the hypersurfaces induced by flowing $\Sigma$ along $T$, with $\Sigma_0=\Sigma$. 

Denote with ${\underline{N}}_{\tau}=\{v=\tau+v_0,\,M\leq r \leq r_{\mathcal{H}}\}$ the ingoing null part of $\Sigma_{\tau}$ and with\\ ${N}_{\tau}=\{u=\tau+u_0,\, r\geq r_{\mathcal{I}}\}$ the outgoing part.

We can extend $\mathcal{R}$ (with respect to the $(u,x,\theta,\varphi)$ coordinate chart) into the extended manifold-with-boundary $\widehat{\mathcal{M}}^{\rm ext}$ by attaching the boundary $\mathcal{I}^+_{\geq u_0}:=\mathcal{I}^+\cap\{u\geq u_0\}$:
\begin{equation*}
\widehat{\mathcal{R}}:=\mathcal{R} \cup \mathcal{I}^+_{\geq u_0}.
\end{equation*}

Note that we can similarly consider $D^-(\Sigma')$ where $\Sigma'$ is the time-reversed analogue of $\Sigma$ (the roles of $u$ and $v$ reversed) that intersects $\mathcal{H}^-$ and define, with respect to $(v,x,\theta,\varphi)$ coordinates and $v_0'\in \R$ the analogue of $u_0$ and also define $\mathcal{I}^-_{\leq v_0'}:=\mathcal{I}^-\cap\{v\leq v_0'\}$.

The hypersurface $\Sigma$ naturally extends to a hypersurface $\widehat{\Sigma}$ in $\widehat{\mathcal{R}}$, with endpoints on $\mathcal{H}^+$ and $\mathcal{I}^+$, and can be equipped with the coordinate chart $(\chi=x|_{\widehat{\Sigma}},\theta,\varphi)$.

We moreover define $\mathcal{H}^+_{\geq v_0}=\mathcal{H}^+\cap \{v\geq v_0\}$.

Let $u_{\rm int}<0$. We will denote with $N_{v_0}^{\textnormal{int} }$ the hypersurface $\{r(u_{\rm int},v_0)<r<M\,|\,v=v_0\}\subset \mathcal{M}_{\rm int}$. Furthermore, we let
\begin{equation*}
\Sigma^{\rm int}_{0}:=\Sigma_0\cup N_{v_0}^{\textnormal{int} }.
\end{equation*}

We denote furthermore
\begin{align*}
\widetilde{\Sigma}:=&\:\{t=0\},\\
D_{u'}:=&\:D^+\left(\widetilde{\Sigma}\cap \{u\leq u'\}\right),\\
\underline{D}_{v'}:=&\:D^+\left(\widetilde{\Sigma}\cap \{v\leq v'\}\right).
\end{align*}

We foliate the regions $D_{-u_0}$, with $u_0>0$, by outgoing null hypersurfaces that we also denote $N_{u'}$. In this setting $N_{u'}=\{u'=u\:|\: v\geq |u|\}$. It is also useful to consider a foliation by ingoing null hypersurfaces $I_{v'}=\{v=v'\: |\: -v\leq u\leq -u_0\}$.

Similarly, we foliate $\underline{D}_{-v_0}$ by ingoing hypersurfaces $\underline{N}_{v'}=\{v'=v\:|\: u\geq |v|\}$ and outgoing hypersurfaces $H_{u'}=\{u=u'\: |\: -u\leq v\leq -v_0\}$.

We moreover consider the following null hypersurfaces in $D^+(\Sigma_0^{\rm int})\cap \mathring{\mathcal{M}}^{\rm int}$: $\underline{N}_{v'}^{\rm int}=\{v=v'\:|\: |u|\leq |u_{\rm int}|\}$ and $H_{u'}^{\rm int}=\{u=u'\:\: v\geq v_0\}$. We refer to Figure \ref{fig:foliations} for an illustration of the above foliations and hypersurfaces.
 \begin{figure}[H]
	\begin{center}
\includegraphics[scale=0.7]{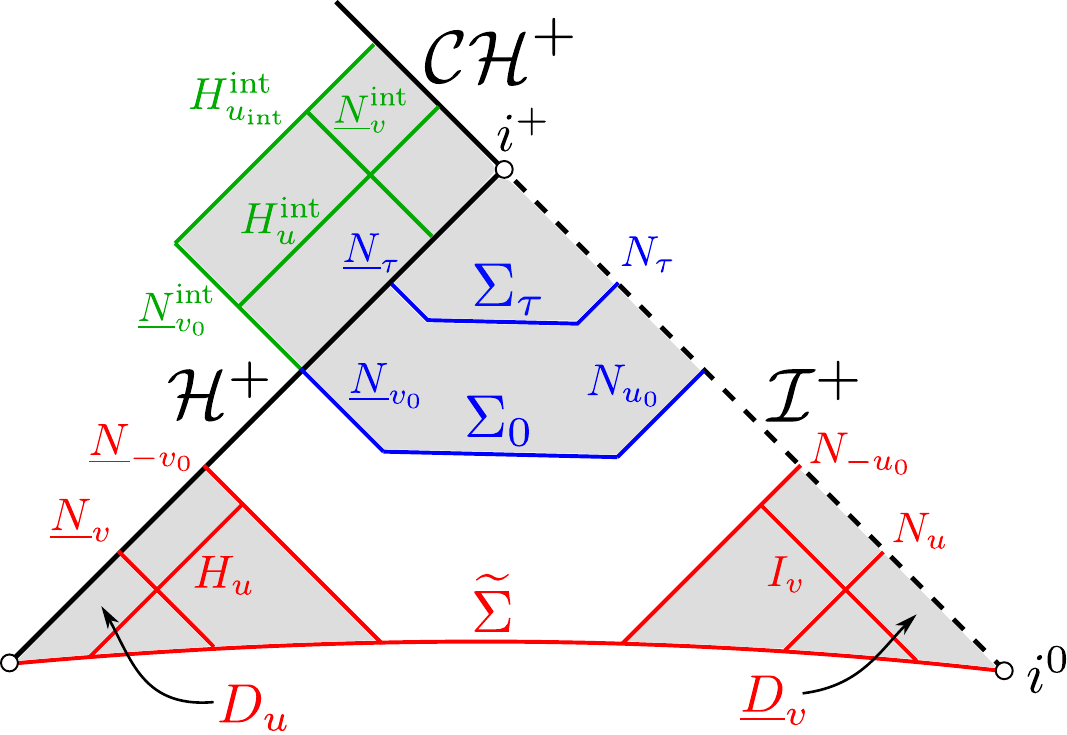}
\end{center}
\vspace{-0.2cm}
\caption{A Penrose diagrammatic representation of the four main spacetime regions (shaded) of the extremal Reissner--Nordstr\"om manifold $\mathcal{M}$ where we derive energy estimates, together with their respective foliations.}
	\label{fig:foliations}
\end{figure}

We use the following notation for the standard volume form on the unit round sphere: $d\omega=\sin \theta d\theta d\varphi$. Let $\mathbf{n}_{\tau}$ and $\mathbf{n}_{\widetilde{\Sigma}}$ be the normal vector fields to $\Sigma_{\tau}$ and $\widetilde{\Sigma}$, respectively. We denote with $d\mu_{\tau}$,  $d\mu_{\widetilde{\Sigma}}$ the induced volume forms on $\Sigma_{\tau}$ and $\widetilde{\Sigma}$ respectively. On the null segments ${N}_{\tau}$ and $\underline{N}_{\tau}$, $\mathbf{n}_{\tau}$ and $d\mu_{\Sigma_{\tau}}$ are not uniquely defined, so we take the following conventions:
\begin{align*}
\mathbf{n}_{\tau}|_{N_{\tau}}= &\:L ,\\
d\mu_{\tau}|_{N_{\tau}}=&\:r^2d\omega dv,\\
\mathbf{n}_{\tau}|_{\underline{N}_{\tau}}= &\:\underline{L}, \\
d\mu_{\tau}|_{\underline{N}_{\tau}}=&\: r^2d\omega du.
\end{align*}

We moreover use the notation $d\mu_{g_M}$ for the natural volume form on $\mathcal{M}_{\rm ext}$ or $\widetilde{\mathcal{M}}_{\rm int}$. Note that in $(u,v,\theta,\varphi)$ coordinates on either $\mathring{\mathcal{M}}_{\rm ext}$ or $\mathring{\mathcal{M}}_{\rm int}$, we can express:
\begin{equation*}
d\mu_{g_M}=Dr^2\, d\omega du dv.
\end{equation*}
We use the notation $d\mu_{\hat{g}_M}$ for the natural volume form on $\widehat{\mathcal{M}}_{\rm ext}$ (corresponding to the metric $\hat{g}_M$). In $(u,x,\theta,\varphi)$ coordinates on $\widehat{\mathcal{M}}_{\rm ext} \setminus \mathcal{H}^+$, we can express:
\begin{equation*}
d\mu_{\hat{g}_M}=d\omega du dx.
\end{equation*}
\subsection{Additional notation}
\label{addnot}

Let $n\in \N_0$. Suppose $K\subset \widehat{\mathcal{R}}$ is compact. Then the Sobolev spaces $W^{n,2}(K)$ are defined in a coordinate-independent way with respect to the following norm:
\begin{equation*}
||f||_{W^{n,2}(K)}^2:=\sum_{0\leq k_1+k_2+k_3\leq n}\int_K |\slashed{\nabla}_{\s^2}^{k_1}(r^2L)^{k_2} (D^{-1}\underline{L})^{k_3}f|^2\,d\mu_{\hat{g}_M}.
\end{equation*}
Recall that we can write in $(v,r,\theta,\varphi)$ coordinates: $2D^{-1}\underline{L}= \partial_r$, which is a regular vector field in $\widehat{\mathcal{R}}$. Furthermore, we can express in $(u,x,\theta,\varphi)$ coordinates:
\begin{equation*}
r^2L=\frac{1}{2}D\partial_x,
\end{equation*}
which implies that $r^2L$ is also regular in $\widehat{\mathcal{R}}$. Hence, $W^{n,2}(K)$ is a natural choice of Sobolev space with respect to the conformal metric $\hat{g}_M$.

If $K^{\rm int}\subset \mathcal{M}_{\rm int}$ is compact, we instead define $W^{n,2}(K^{\rm int})$ in a coordinate-independent way with respect to the following norm:
\begin{equation*}
||f||_{W^{n,2}(K^{\rm int})}^2:=\sum_{0\leq k_1+k_2+k_3\leq n}\int_{K^{\rm int}} |\slashed{\nabla}^{k_1}(D^{-1}L)^{k_2} \underline{L}^{k_3}f|^2\,d\mu_{g_M}
\end{equation*}
In $(u,r,\theta,\varphi)$ coordinates, we can express $2D^{-1}L=\partial_r$, which is a regular vector field in $\widetilde{M}_{\rm int}$. We can also express
\begin{equation*}
\underline{L}=\partial_u-\frac{1}{2}D\partial_r,
\end{equation*}
in $(u,r,\theta,\varphi)$ coordinates, which clearly is also regular $\widetilde{M}_{\rm int}$. We have that $W^{n,2}(K^{\rm int})$ is therefore a natural choice of Sobolev space with respect to $g_M$.

We define the Sobolev spaces $W^{1,2}(\underline{N}^{\textnormal{int}}_{v_0})$ with respect to the following norm:
\begin{equation*}
||f||_{W^{1,2}(\underline{N}^{\textnormal{int}}_{v_0})}^2:=\sum_{0\leq k_1+k_2\leq 1}\int_{r(u_0,v_0)}^{M}\int_{\s^2} |\slashed{\nabla}^{k_1}(D^{-1}\underline{L})^{k_2}f|^2\big|_{\underline{N}^{\textnormal{int}}_{v_0}}\,r^2d\omega dr.
\end{equation*}

Let $f,g$ be positive real-valued functions. We will make use of the notation $f \lesssim g$ when there exists a constant $C>0$ such that $f\leq C \cdot g$. We will denote $f\sim g$ when $f \lesssim g$ and $g \lesssim f$. We will also employ the alternate notation $f \sim_{c,C} g$, with $f,g$ for $0<c\leq C$ positive constants, to indicate:
\begin{equation*}
c \cdot g \leq f \leq C\cdot  g.
\end{equation*}

We use the ``big O'' notation $O((r-M)^p)$ and $O(r^{-p})$, $p\in \R$ to group functions $f$ of $r$ satisfying
\begin{align*}
|f|\lesssim&\: (r-M)^p,\\
|f| \lesssim &\: r^{-p},
\end{align*}
respectively.
\section{Energy spaces}
\label{sec:energyspaces}

\subsection{Main energy spaces}
\label{sec:mainspaces}

In this section, we will introduce the Hilbert spaces on which we will define scattering maps. Before we can do so, we will need existence and uniqueness (in the smooth category) for the Cauchy problem for \eqref{eq:waveequation} on extremal Reissner--Nordstr\"om.

\begin{theorem}
\label{thm:globexuni}
\begin{enumerate}
\item
Consider $(\Psi,\Psi')\in  C^{\infty}(\Sigma_0)\times C^{\infty}(\Sigma_0\cap \{r_{\mathcal{H}}\leq r\leq r_{\mathcal{I}}\})$. Then there exists a unique solution $\psi\in C^{\infty}(D^+(\Sigma_0))$ to \eqref{eq:waveequation} such that $\psi|_{\Sigma_0}=\Psi$ and $\mathbf{n}_{\Sigma_0}\psi|_{\Sigma_0\cap \{r_{\mathcal{H}}\leq r\leq r_{\mathcal{I}}\}}=\Psi'$.
\item Consider $(\Psi,\Psi')\in  C_{c}^{\infty}(\widetilde{\Sigma})\times  C_{c}^{\infty}(\widetilde{\Sigma})$. Then there exists a unique solution $\psi\in C^{\infty}(D^+(\widetilde{\Sigma})\cup \mathcal{H}^+)$ to \eqref{eq:waveequation} such that $\psi|_{\widetilde{\Sigma}}=\Psi$ and $\mathbf{n}_{\widetilde{\Sigma}}\psi|_{\widetilde{\Sigma}}=\Psi'$.
\item Consider characteristic initial data $\Psi\in C^0(\mathcal{H}^+_{\geq v_0}\cup \underline{N}_{v_0}^{\rm int})$, with $\Psi|_{\mathcal{H}^+_{\geq v_0}}\in C^{\infty}(\mathcal{H}^+_{\geq v_0})$ and $\Psi|_{ \underline{N}_{v_0}^{\rm int}} \in  C^{\infty}(\underline{N}_{v_0}^{\rm int})$. Then there exists a unique solution $\psi\in C^{\infty}(D^+(\Sigma_0^{\rm int}) \cap \mathcal{M}^{\rm int})$ to \eqref{eq:waveequation} such that\\ $\psi|_{\mathcal{H}^+_{\geq v_0}\cup \underline{N}_{v_0}^{\rm int}}=\Psi$.
\end{enumerate}
\end{theorem}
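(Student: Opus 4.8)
The plan is to realize all three parts as instances of the classical well-posedness theory for linear second-order hyperbolic equations, exploiting that $\square_{g_M}$ is a smooth, strictly hyperbolic operator in each of the coordinate charts of Section \ref{sec:geom}. In particular, the degeneracy of the surface gravity at $\mathcal{H}^+$ is irrelevant at the level of local existence, since $g_M$ is perfectly regular up to and across $\{r=M\}$ in the $(v,r,\theta,\varphi)$ chart \eqref{def:metricext1}. I would dispatch part (2) first, as it is cleanest: $\widetilde{\Sigma}=\{t=0\}$ is a smooth spacelike hypersurface and the data are compactly supported, so I would embed a neighborhood of $\widetilde{\Sigma}$ into a globally hyperbolic extension of $(\mathcal{M}^{\rm ext},g_M)$ across $\mathcal{H}^+$ into $\mathcal{M}^{\rm int}$ (using that \eqref{def:metricext1} is smooth there), extend the data arbitrarily, and invoke the standard global existence and uniqueness theorem for the wave equation on a globally hyperbolic Lorentzian manifold. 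Finite speed of propagation together with compactness of the support confines the solution and, crucially, shows that its restriction to $D^+(\widetilde{\Sigma})\cup\mathcal{H}^+$ is independent of the chosen extension; smoothness up to and including $\mathcal{H}^+$ is then immediate, since $\mathcal{H}^+$ is a regular null hypersurface of the smooth extended metric. Uniqueness follows throughout from the $T$-energy (or $N$-energy) identity applied in the relevant causal diamonds.

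For part (1) I would solve $\mathcal{R}=D^+(\Sigma_0)$ region by region. The spacelike middle piece $\Sigma_0\cap\{r_{\mathcal{H}}\le r\le r_{\mathcal{I}}\}$ carries full Cauchy data $(\Psi,\Psi')$ and, by the argument of part (2), determines $\psi$ in its (triangular) domain of dependence, whose two future null edges are $u=\mathrm{const}$ and $v=\mathrm{const}$ rays emanating from the corners $r=r_{\mathcal{H}}$ and $r=r_{\mathcal{I}}$. The two remaining regions are each bounded to the past by one null leg of $\Sigma_0$ ($\underline{N}_{v_0}$ or $N_{u_0}$, carrying only $\psi$) and by a transverse null hypersurface along which $\psi$ is already determined by the preceding step; these are Goursat problems on two transverse null hypersurfaces, whose local solvability and smoothness are classical, and patching them along the double-null foliation of $\mathcal{R}$ from Section \ref{sec:foliations} determines $\psi$ throughout $\mathcal{R}$. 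Compatibility of the prescribed values at the corners is automatic because $\Psi$ is a single smooth function restricted to $\Sigma_0$. Near $\mathcal{H}^+$ I would again use the regular chart \eqref{def:metricext1}; near $\mathcal{I}^+$ I would instead pass to the conformal metric $\hat{g}_M=r^{-2}g_M$, which extends smoothly to $x=0$, and solve the characteristic problem for the rescaled field $r\psi$, whose equation has coefficients regular up to the conformal boundary, so that the characteristic data on $N_{u_0}$ are admissible up to $\mathcal{I}^+$.

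Part (3) is a pure characteristic initial value problem, with data prescribed on the two transverse null hypersurfaces $\mathcal{H}^+_{\ge v_0}$ and $\underline{N}^{\rm int}_{v_0}$, which meet along the sphere $\{r=M,\,v=v_0\}$. Working in the interior chart \eqref{def:metricext1} on $\mathcal{M}^{\rm int}$, where $g_M$ is smooth and strictly hyperbolic away from $\{r=0\}$ and up to (but not including) the Cauchy horizon $\mathcal{CH}^+$, I would apply the standard characteristic well-posedness result near the intersection sphere and then propagate the solution throughout $D^+(\Sigma_0^{\rm int})\cap\mathcal{M}^{\rm int}$ using the double-null foliation. Since this region is contained in $\{r\ge r(u_{\rm int},v_0)>0\}$ and terminates strictly before $\mathcal{CH}^+$, no degeneration occurs and the resulting $\psi$ is smooth, with uniqueness again supplied by an energy identity.

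I expect the main obstacle to be the bookkeeping at the corners and at the conformal boundary in part (1): one must verify that the local Goursat solutions constructed near $\mathcal{H}^+$ and near $\mathcal{I}^+$ genuinely agree, to all orders, with the Cauchy evolution from the spacelike piece, and that the rescaled equation for $r\psi$ is regular enough at $x=0$ for the characteristic problem with one leg along (the $\mathcal{I}^+$-limit of) $N_{u_0}$ to be well-posed up to the conformal boundary. The extremal degeneracy of the horizon, by contrast, is a red herring for this purely local-in-time statement.
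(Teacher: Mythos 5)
The paper does not actually supply a proof of Theorem \ref{thm:globexuni}: it is stated as a standard well-posedness fact and invoked throughout, with the only genuinely non-classical existence statement (prescribing data ``at infinity'' on $\mathcal{H}^+\cup\mathcal{I}^+$) deferred to Proposition \ref{prop:mainpropbackdef}, which is proved by a limiting argument. So there is no in-paper argument to compare against; judged on its own terms, your outline is the canonical proof and I find no gap in it. The decomposition of part (1) into a Cauchy problem on the spacelike segment followed by Goursat problems in the two null wedges, with smoothness across the junction null cones obtained from corner compatibility and the transport equations for transversal derivatives along those cones, is exactly right; parts (2) and (3) are correctly reduced to global well-posedness on a globally hyperbolic extension across the (smooth, in the $(v,r,\theta,\varphi)$ chart) horizon and to a standard two-transversal-null-hypersurface characteristic problem in the interior, respectively, with uniqueness from the $T$-energy identity (whose degeneration at $\mathcal{H}^+$ is harmless since the horizon flux is still non-negative).

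Two small remarks. First, the worry you flag about solvability of the characteristic problem ``up to the conformal boundary'' at $x=0$ is a red herring for part (1): every point of $D^+(\Sigma_0)$ lies at finite $r$, and its causal past intersected with $J^+(\Sigma_0)$ is a compact set contained in a finite-$r$ region, so the domain is exhausted by compact causal diamonds on which only finitely much of the data on $N_{u_0}$ is seen; no conformal rescaling is needed for this purely qualitative statement (it only becomes essential for the quantitative radiation-field statements of Proposition \ref{prop:regularityphiinfty} and for Proposition \ref{prop:mainpropbackdef}). Second, in part (3) your observation that $r\geq r(u_{\rm int},v_0)>0$ on $D^+(\Sigma_0^{\rm int})\cap\mathcal{M}^{\rm int}$ (since $L(r)>0$ and $\underline{L}(r)<0$ in the interior) is the one point worth making explicit, as it is what keeps the argument away from the $r=0$ singularity; note also that every point of $\mathcal{M}^{\rm int}$ is at finite $(u,v)$, so no uniformity up to $\mathcal{CH}^+$ is required here either.
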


We denote with $C^{\infty}(\widehat{\Sigma}_0)$ the space of smooth functions on the hypersurface $\widehat{\Sigma}_0$, with respect to the coordinate chart $(\chi, \theta, \varphi)$ introduced in Section 
\ref{sec:foliations}. We denote with $C^{\infty}(\Sigma_0\cap \{r_{\mathcal{H}}\leq r\leq r_{\mathcal{I}}\})$ the space of smooth function on the restriction $\Sigma_0\cap \{r_{\mathcal{H}}\leq r\leq r_{\mathcal{I}}\}$, with respect to the coordinate chart $(\rho, \theta, \varphi)$.

Let us introduce the \emph{stress-energy tensor} $\mathbf{T}[\psi]$ of \eqref{eq:waveequation}, defined as follows with respect to a coordinate basis:
\begin{equation*}
\mathbf{T}_{\alpha \beta}[\psi]:=\partial_{\alpha}\psi\partial_{\beta}\psi-\frac{1}{2}g_{\alpha \beta}(g^{-1})^{\kappa \lambda}\partial_{\kappa}\psi \partial_{\lambda}\psi.
\end{equation*}
Given a vector field $X$ on $\mathcal{M}$, we define the corresponding $X$-\emph{energy current} $\mathbf{J}^X$ as follows:
\begin{equation*}
(\mathbf{J}^X[\psi])_{\alpha}=\mathbf{T}_{\alpha \beta}[\psi]X^{\beta}.
\end{equation*}

We will denote the \emph{radiation field} of $\psi$ as follows:
\begin{equation*}
\phi:=r \psi.
\end{equation*}

We define the following energy space
\begin{definition}
Define the norm  $||\cdot||_{\mathcal{E}^T_{\Sigma_0}}$  as follows: let $(r\Psi,\Psi')\in C^{\infty}(\widehat{\Sigma}_0)\times C^{\infty}(\Sigma_0\cap \{r_{\mathcal{H}}\leq r\leq r_{\mathcal{I}}\})$, then
\begin{equation*}
\begin{split}
||(\Psi,\Psi')||_{\mathcal{E}^T_{\Sigma_0}}^2:=&\int_{\Sigma_0} {\mathbf{J}}^T[\psi]\cdot \mathbf{n}_{0}\,d\mu_{0}.
\end{split}
\end{equation*}
where $\psi$ denotes the (unique) smooth local extension of $\Psi$ in $\mathcal{R}$ that satisfies $\psi|_{\Sigma_0}=\Psi$ and\\ $\mathbf{n}_{\Sigma_0}\psi|_{\Sigma_0\cap \{r_{\mathcal{H}}\leq r\leq r_{\mathcal{I}}\}}=\Psi'$ and solves \eqref{eq:waveequation} (see Theorem \ref{thm:globexuni}), so that all derivatives of $\psi$ above can be expressed solely in terms of derivatives of $\Psi$ and $\Psi'$.

We also define the norm $||\cdot||_{\mathcal{E}_{\Sigma_0}}$ on $C^{\infty}(\widehat{\Sigma}_0)\times C^{\infty}(\Sigma_0\cap \{r_{\mathcal{H}}\leq r\leq r_{\mathcal{I}}\})$ as follows:
\begin{equation*}
\begin{split}
||(\Psi,\Psi')||_{\mathcal{E}_{\Sigma_0}}^2:=&\: \sum_{j=0}^1 \int_{{\underline{N}}_{v_0}}(r-M)^{-2+j}(\underline{L}T^{j}\phi)^2\,d\omega du+\int_{{N}_{u_0}}r^{2-j}(LT^{j}\phi)^2\,d\omega dv+\sum_{j=0}^2\int_{\Sigma_0}{\mathbf{J}}^T[T^j\psi]\cdot \mathbf{n}_{0}\,d\mu_{0},
\end{split}
\end{equation*}

We denote with $\mathcal{E}^T_{\Sigma_0}$ and $\mathcal{E}_{\Sigma_0}$ the completions of  $C^{\infty}(\widehat{\Sigma}_0)\times C^{\infty}(\Sigma_0\cap \{r_{\mathcal{H}}\leq r\leq r_{\mathcal{I}}\})$ with respect to the norms $||\cdot||_{\mathcal{E}^T_{\Sigma_0}}$ and $||\cdot ||_{\mathcal{E}_{\Sigma_0}}$, respectively. Note that, by construction,
\begin{equation*}
\mathcal{E}_{\Sigma_0} \subset \mathcal{E}^T_{\Sigma_0}.
\end{equation*}
\end{definition}

\begin{definition}
Define the norm  $||\cdot||_{\mathcal{E}^T_{\widetilde{\Sigma}}}$  as follows: let $(\Psi,\Psi')\in C_{c}^{\infty}(\widetilde{\Sigma})\times C_{c}^{\infty}(\widetilde{\Sigma})$, then
\begin{equation*}
\begin{split}
||(\Psi,\Psi')||_{\mathcal{E}^T_{\widetilde{\Sigma}}}^2:=&\int_{\widetilde{\Sigma}} {\mathbf{J}}^T[\psi]\cdot \mathbf{n}_{\widetilde{\Sigma}}\,d\mu_{\widetilde{\Sigma}},
\end{split}
\end{equation*}
where $\psi$ denotes the (unique) smooth local extension of $\Psi$ to $D^+(\widetilde{\Sigma})$ that satisfies $\psi|_{\widetilde{\Sigma}}=\Psi$ and $\mathbf{n}_{\widetilde{\Sigma}}\psi|_{\widetilde{\Sigma}}=\Psi'$ and solves \eqref{eq:waveequation} (see Theorem \ref{thm:globexuni}), so that all derivatives of $\psi$ above can be expressed solely in terms of derivatives of $\Psi$ and $\Psi'$.

We also define the norm $||\cdot||_{\mathcal{E}_{\widetilde{\Sigma}}}$ on $C_{c}^{\infty}(\widetilde{\Sigma})\times C_{c}^{\infty}(\widetilde{\Sigma})$ as follows:
\begin{equation*}
\begin{split}
||(\Psi,\Psi')||_{\mathcal{E}_{\widetilde{\Sigma}}}^2:=&\: \sum_{j=0}^1\int_{\widetilde{\Sigma}\cap\{v\geq v_{r_{\mathcal{I}}}\}} r^{2-j}(LT^j\phi)^2+r^{2-j}(\underline{L}T^j\phi)^2+r^{-j}|\snabla_{\s^2}T^j\phi|^2\,dv\\
&+\int_{\widetilde{\Sigma}\cap\{u\geq u_{r_{\mathcal{H}}}\}} (r-M)^{-2+j}(LT^j\phi)^2+(r-M)^{-2+j}(\underline{L}T^j\phi)^2+(r-M)^j|\snabla_{\s^2}T^j\phi|^2\,dv\\
&+\sum_{m+2|\alpha|\leq 2}\int_{\widetilde{\Sigma}}{\mathbf{J}}^T[T^m\Omega^{\alpha}\psi]\cdot \mathbf{n}_{\widetilde{\Sigma}}\,d\mu_{\widetilde{\Sigma}}.
\end{split}
\end{equation*}

We denote with $\mathcal{E}^T_{\widetilde{\Sigma}}$ and $\mathcal{E}_{\widetilde{\Sigma}}$ the completions of  $C_{c}^{\infty}(\widetilde{\Sigma})\times C_{c}^{\infty}(\widetilde{\Sigma})$ with respect to the norms $||\cdot||_{\mathcal{E}^T_{\widetilde{\Sigma}}}$ and $||\cdot ||_{\mathcal{E}_{\widetilde{\Sigma}}}$, respectively. Note that, by construction,
\begin{equation*}
\mathcal{E}_{\widetilde{\Sigma}}\subset \mathcal{E}^T_{\widetilde{\Sigma}}.
\end{equation*}
\end{definition}

We denote with $C_{c}^{\infty}(\mathcal{H}^+_{\geq v_0})$ and $C_{c}^{\infty}(\mathcal{I}^+_{\geq u_0})$ the spaces of smooth, compactly supported functions on $\mathcal{H}_{\geq v_0}^+$ and $\mathcal{I}^+_{\geq u_0}$, respectively.

\begin{definition}
Let $u_0,v_0>0$. Define the norms $||\cdot||_{\mathcal{E}^T_{\mathcal{H}^+_{\geq v_0}}}$ and $||\cdot||_{\mathcal{E}^T_{\mathcal{I}^+_{\geq u_0}}}$ as follows: let $({\underline{\Phi}},{\Phi})\in C_{c}^{\infty}(\mathcal{H}^+_{\geq v_0})\oplus C_{c}^{\infty}(\mathcal{I}^+_{\geq u_0})$, then
\begin{align*}
||{\underline{\Phi}}||_{\mathcal{E}^T_{\mathcal{H}^+_{\geq v_0}}}^2:=&\:\int_{\mathcal{H}^+_{\geq v_0}}(\partial_v{\underline{\Phi}})^2\,d\omega dv,\\
||{\Phi}||_{\mathcal{E}^T_{\mathcal{I}^+_{\geq u_0}}}^2:=&\int_{\mathcal{I}^+_{\geq u_0}}(\partial_u{\Phi})^2\,d\omega du.
\end{align*}

We also define the norms $||\cdot||_{\mathcal{E}_{\mathcal{H}^+_{\geq v_0}}}$ and $||\cdot||_{\mathcal{E}_{\mathcal{I}^+_{\geq u_0}}}$ as follows: let $({\underline{\Phi}},{\Phi})\in C_{c}^{\infty}(\mathcal{H}^+_{\geq v_0})\oplus C_{c}^{\infty}(\mathcal{I}^+_{\geq u_0})$, then
\begin{align*}
||{\underline{\Phi}}||_{\mathcal{E}_{\mathcal{H}^+_{\geq v_0}}}^2:=&\:\sum_{j=0}^2\int_{\mathcal{H}^+_{\geq v_0}} v^{2-j}(\partial_v^{j+1}{\underline{\Phi}})^2+|\snabla_{\s^2}{\underline{\Phi}}|^2\,d\omega dv,\\
||{\Phi}||_{\mathcal{E}_{\mathcal{I}^+_{\geq u_0}}}^2:=&\sum_{j=0}^2\int_{\mathcal{I}^+_{\geq u_0}} u^{2-j}(\partial_u^{j+1}{\Phi})^2+ |\snabla_{\s^2}{\Phi}|^2\,d\omega du.
\end{align*}

Then we denote with $\mathcal{E}^T_{\mathcal{H}^+_{\geq v_0}}\oplus \mathcal{E}^T_{\mathcal{I}^+_{\geq u_0}}$ and $\mathcal{E}_{\mathcal{H}^+_{\geq v_0}}\oplus \mathcal{E}_{\mathcal{I}^+_{\geq u_0}}$ the completions of  $C_{c}^{\infty}(\mathcal{H}^+_{\geq v_0})\oplus C_{c}^{\infty}(\mathcal{I}^+_{\geq u_0})$ with respect to the product norms associated to $||\cdot||_{\mathcal{E}^T_{\mathcal{H}^+_{\geq v_0}}}$ and $||\cdot||_{\mathcal{E}^T_{\mathcal{I}^+_{\geq u_0}}}$, respectively. 

Note that
\begin{equation*}
\mathcal{E}_{\mathcal{H}^+_{\geq v_0}}\oplus \mathcal{E}_{\mathcal{I}^+_{\geq u_0}} \subset \mathcal{E}^T_{\mathcal{H}^+_{\geq v_0}}\oplus \mathcal{E}^T_{\mathcal{I}^+_{\geq u_0}}.
\end{equation*}
\end{definition}

\begin{definition}
Define the norms $||\cdot||_{\mathcal{E}^T_{\mathcal{H}^{\pm}}}$ and $||\cdot||_{\mathcal{E}^T_{\mathcal{I}^{\pm}}}$ on respectively $\mathcal{H}^{\pm}$ and $\mathcal{I}^{\pm}$ as follows: let $({\underline{\Phi}},{\Phi})\in C_{c}^{\infty}(\mathcal{H}^{\pm})\oplus C_{c}^{\infty}(\mathcal{I}^{\pm})$, then
\begin{align*}
||{\underline{\Phi}}||_{\mathcal{E}^T_{\mathcal{H}^{\pm}}}^2:=&\:\int_{\mathcal{H}^{\pm}}(\partial_{v_{\pm}}{\underline{\Phi}})^2\,d\omega dv_{\pm},\\
||{\Phi}||_{\mathcal{E}^T_{\mathcal{I}^{\pm}}}^2:=&\int_{\mathcal{I}^{\pm}}(\partial_{u_{\pm}}{\Phi})^2\,d\omega du_{\pm},
\end{align*}
with respect to the coordinate charts $(u_{\pm}, v_{\pm},\theta,\varphi)$.

We also define the norms $||\cdot||_{\mathcal{E}_{\mathcal{H}^{\pm}}}$ and $||\cdot||_{\mathcal{E}_{\mathcal{I}^{\pm}}}$ on respectively $\mathcal{H}^{\pm}$ and $\mathcal{I}^{\pm}$ as follows: let $({\underline{\Phi}},{\Phi})\in C_{c}^{\infty}(\mathcal{H}^{\pm})\oplus C_{c}^{\infty}(\mathcal{I}^{\pm})$, then
\begin{align*}
||{\underline{\Phi}}||_{\mathcal{E}_{\mathcal{H}^{\pm}}}^2:=&\:\sum_{j=0}^2\int_{\mathcal{H}^{\pm}} (1+|v_{\pm}|)^{2-j}(\partial_{v_{\pm}}^{j+1}{\underline{\Phi}})^2+|\snabla_{\s^2}{\underline{\Phi}}|^2+|\snabla_{\s^2}\partial_{v_{\pm}}{\underline{\Phi}}|^2\,d\omega dv_{\pm},\\
||{\Phi}||_{\mathcal{E}_{\mathcal{I}^{\pm}}}^2:=&\sum_{j=0}^2\int_{\mathcal{I}^{\pm}} (1+|u_{\pm}|)^{2-j}(\partial_{u_{\pm}}^{j+1}{\Phi})^2+ |\snabla_{\s^2}{\Phi}|^2+ |\snabla_{\s^2}\partial_{u_{\pm}}{\Phi}|^2\,d\omega du_{\pm}.
\end{align*}

Then we denote with $\mathcal{E}^T_{\mathcal{H}^{\pm}}\oplus \mathcal{E}^T_{\mathcal{I}^{\pm}}$ and $\mathcal{E}_{\mathcal{H}^{\pm}}\oplus \mathcal{E}_{\mathcal{I}^{\pm}}$ the \emph{completions} of  $C_{c}^{\infty}(\mathcal{H}^{\pm})\oplus C_{c}^{\infty}(\mathcal{H}^{\pm})$ with respect to the product norms associated to $||\cdot||_{\mathcal{E}^T_{\mathcal{H}^{\pm}}}$ and $||\cdot||_{\mathcal{E}^T_{\mathcal{H}^{\pm}}}$, and $||\cdot||_{\mathcal{E}_{\mathcal{H}^{\pm}}}$ and $||\cdot||_{\mathcal{E}_{\mathcal{H}^{\pm}}}$,respectively. 

Note that
\begin{equation*}
\mathcal{E}_{\mathcal{H}^{\pm}}\oplus \mathcal{E}_{\mathcal{I}^{\pm}} \subset \mathcal{E}^T_{\mathcal{H}^{\pm}}\oplus \mathcal{E}^T_{\mathcal{I}^{\pm}}.
\end{equation*}
\end{definition}

\subsection{Degenerate higher-order energy spaces}
In this section, we will introduce analogues of the Hilbert spaces introduced in Section \ref{sec:mainspaces}, but with norms depending on degenerate higher-order derivatives.
\begin{definition}
Define the norm  $||\cdot||_{\mathcal{E}_{n;\Sigma_0}}$  as follows: let $(r\Psi,\Psi')\in C^{\infty}(\widehat{\Sigma}_0)\times C^{\infty}(\Sigma_0\cap \{r_{\mathcal{H}}\leq r\leq r_{\mathcal{I}}\})$, then
\begin{equation*}
\begin{split}
||(\Psi,\Psi')||_{\mathcal{E}_{n; \Sigma_0}}^2:=&\: \sum_{j=0}^1 \sum_{m+2|\alpha|+2k\leq 2n} \int_{{N}_{u_0}} r^{2+2k-j}(L^{1+k}T^{m+j}\Omega^{\alpha}\phi)^2\,d\omega dv\\
&+ \int_{\underline{N}_{v_0}} (r-M)^{-2-2k+j}(\underline{L}^{1+k}T^{m+j}\Omega^{\alpha}\phi)^2\,d\omega du\\
&+\sum_{\substack{m+2|\alpha|\leq 2n+2\\ |\alpha|\leq n}} \int_{\Sigma_{0}} {\mathbf{J}}^T[T^m \Omega^{\alpha}\psi]\cdot \mathbf{n}_{0}\, d\mu_{0}.
\end{split}
\end{equation*}
We denote with $\mathcal{E}_{n; \Sigma_0}$ the completion of  $C^{\infty}(\widehat{\Sigma}_0)\times C^{\infty}(\Sigma_0\cap \{r_{\mathcal{H}}\leq r\leq r_{\mathcal{I}}\})$ with respect to the norm $||\cdot ||_{\mathcal{E}_{n; \Sigma_0}}$.
\end{definition}

\begin{definition}
Define the norm  $||\cdot||_{\mathcal{E}_{n; \widetilde{\Sigma}}}$  as follows: let $(\Psi,\Psi')\in C_{c}^{\infty}(\widetilde{\Sigma}) \times C_{c}^{\infty}(\widetilde{\Sigma})$, then
\begin{equation*}
\begin{split}
||(\Psi,\Psi')||_{\mathcal{E}_{n; \widetilde{\Sigma}}}^2:=&\sum_{j=0}^2\sum_{m+2|\alpha|+2k\leq 2n}\int_{\widetilde{\Sigma}\cap\{ v\geq v_{r_{\mathcal{I}}}\}} r^{2+2k-j}(L^{k+1}\Omega^{\alpha}T^{j+m}\phi)^2+r^{2k-j}|\snabla_{\s^2}L^{k}\Omega^{\alpha}T^{j+m}\phi|^2 \\ 
&+r^{2k+2-j}(\Lbar^{k+1}\Omega^{\alpha}T^{j+m}\phi)^2+r^{2k-j}|\snabla_{\s^2}\Lbar^{k}\Omega^{\alpha}T^{j+m}\phi|^2\,d\omega dr\\ 
&+\sum_{m+2|\alpha|+2k\leq 2n}\int_{\widetilde{\Sigma}\cap\{v \geq v_{r_{\mathcal{I}}}\}} r^{2k}|\snabla_{\s^2}L^{k+1}\Omega^{\alpha}T^m\phi|^2+r^{2k-2}|\snabla_{\s^2}^2L^{k}\Omega^{\alpha}T^m\phi|^2 \\
&+r^{2k}|\snabla_{\s^2}\Lbar^{k+1}\Omega^{\alpha}T^m\phi|^2+r^{2k-2}|\snabla_{\s^2}^2\Lbar^{k}\Omega^{\alpha}T^m\phi|^2\,d\omega dr\\
&+\sum_{j=0}^2\sum_{m+2|\alpha|+2k\leq 2n}\int_{\widetilde{\Sigma}\cap\{u \geq u_{r_{\mathcal{H}}}\}} (r-M)^{-2k-2+j}(L^{k+1}\Omega^{\alpha}T^{j+m}\phi)^2+(r-M)^{-2k+j}|\snabla_{\s^2}L^{k}\Omega^{\alpha}T^{j+m}\phi|^2 \\ 
&+(r-M)^{-2k-2+j}(\Lbar^{k+1}\Omega^{\alpha}T^{j+m}\phi)^2+(r-M)^{-2k+j}|\snabla_{\s^2}\Lbar^{k}\Omega^{\alpha}T^{j+m}\phi|^2\,d\omega dr_*\\
&+\sum_{m+2|\alpha|+2k\leq 2n}\int_{\widetilde{\Sigma}\cap\{u \geq u_{r_{\mathcal{H}}}\}} (r-M)^{-2k}|\snabla_{\s^2}L^{k+1}\Omega^{\alpha}T^m\phi|^2+(r-M)^{-2k+2}|\snabla_{\s^2}^2L^{k}\Omega^{\alpha}T^m\phi|^2 \\ 
&+(r-M)^{-2k}|\snabla_{\s^2}\Lbar^{k+1}\Omega^{\alpha}T^m\phi|^2+(r-M)^{-2k+2}|\snabla_{\s^2}^2\Lbar^{k}\Omega^{\alpha}T^m\phi|^2\,d\omega dr_*\\
&+\sum_{m+2|\alpha|\leq 2n+2} \int_{\widetilde{\Sigma}} {\mathbf{J}}^T[T^m \Omega^{\alpha}\psi]\cdot \mathbf{n}_{\widetilde{\Sigma}}\, d\mu_{\widetilde{\Sigma}},
\end{split}
\end{equation*}
where $\psi$ denotes the smooth extension of $\Psi$ to $\mathcal{R}$ that satisfies $\psi|_{\widetilde{\Sigma}}=\Psi$ and $\mathbf{n}_{\widetilde{\Sigma}}\psi|_{\widetilde{\Sigma}}=\Psi'$ and solves \eqref{eq:waveequation} (see Theorem \ref{thm:globexuni}), so that all derivatives of $\psi$ above can be expressed solely in terms of derivatives of $\Psi$ and $\Psi'$.

We denote with $\mathcal{E}_{n; \widetilde{\Sigma}}$ the completion of  $C_{c}^{\infty}(\widetilde{\Sigma}) \times C_{c}^{\infty}(\widetilde{\Sigma})$ with respect to the norm $||\cdot ||_{\mathcal{E}_{\widetilde{\Sigma}}}$.
\end{definition}
\begin{definition}
Let $n\in \N_0$ and $u_0,v_0>0$. Define the higher-order norms $||\cdot||_{\mathcal{E}_{n;\mathcal{H}^+_{\geq v_0}}}$ and $||\cdot||_{ \mathcal{E}_{n;\mathcal{I}^+_{\geq u_0}}}$ as follows: let $(\underline{\Phi},\Phi)\in C_{c}^{\infty}(\mathcal{H}^+_{\geq v_0})\oplus C_{c}^{\infty}(\mathcal{I}^+_{\geq u_0})$, then
\begin{align*}
||{\underline{\Phi}}||_{\mathcal{E}_{n;\mathcal{H}^+_{\geq v_0}}}^2:=&\:\sum_{j=0}^2\sum_{m+2k+2|\alpha|\leq 2n}\int_{ \mathcal{H}^+_{\geq v_0}} v^{2k+2-j} (L^{1+k+m+j} \Omega^{\alpha}\underline{\Phi})^2+ |\snabla_{\s^2} L^m\Omega^{\alpha} \underline{\Phi}|^2\,d\omega dv,\\ 
||{\Phi}||_{ \mathcal{E}_{n;\mathcal{I}^+_{\geq u_0}}}^2:=&\:\sum_{j=0}^2\sum_{m+2k+2|\alpha|\leq 2n}\int_{ \mathcal{I}^+_{\geq u_0}} u^{2k+2-j} (\underline{L}^{1+k+m+j} \Omega^{\alpha}\Phi)^2+|\snabla_{\s^2} \underline{L}^m\Omega^{\alpha} \Phi|^2\,d\omega du
\end{align*}
Then we denote with $\mathcal{E}_{n;\mathcal{H}^+_{\geq v_0}}\oplus \mathcal{E}_{n;\mathcal{I}^+_{\geq u_0}}$ the \emph{completion} of  $C_{c}^{\infty}(\mathcal{H}^+_{\geq v_0})\oplus C_{c}^{\infty}(\mathcal{I}^+_{\geq u_0})$ with respect to the norms $||\cdot||_{\mathcal{E}_{2;n}(\mathcal{H}^+_{\geq v_0})}$ and $||\cdot||_{ \mathcal{E}_{n;\mathcal{I}^+_{\geq u_0}}}$. 
\end{definition}

Note that for all $n\in \N_0$,
\begin{equation*}
\mathcal{E}_{n;\mathcal{H}^+_{\geq v_0}}\oplus \mathcal{E}_{n;\mathcal{I}^+_{\geq u_0}} \subset \mathcal{E}_{\mathcal{H}^+_{\geq v_0}}\oplus \mathcal{E}_{\mathcal{I}^+_{\geq u_0}} \subset\mathcal{E}_{\mathcal{H}^+_{\geq v_0}}^T\oplus  \mathcal{E}_{\mathcal{I}^+_{\geq u_0}}^T.
\end{equation*}

\begin{definition}
Let $n\in \N_0$. Define the higher-order norms $||\cdot||_{\mathcal{E}_{n;\mathcal{H}^{\pm}}}$ and $||\cdot||_{\mathcal{E}_{n; \mathcal{I}^{\pm}}}$, as follows: let $(\underline{\Phi},\Phi)\in C_{c}^{\infty}(\mathcal{H}^{\pm})\oplus C_{c}^{\infty}(\mathcal{I}^{\pm})$, then
\begin{align*}
||{\underline{\Phi}}||_{\mathcal{E}_{n;\mathcal{H}^{\pm}}}^2:=&\:\sum_{j=0}^2\sum_{m+2k+2|\alpha|\leq 2n}\int_{\mathcal{H}^{\pm}} (1+|v_{\pm}|)^{2+2k-j}(\partial_{v_{\pm}}^{1+k+m+j}\Omega^{\alpha}{\underline{\Phi}})^2+(1+|v_{\pm}|)^{2k}|\snabla_{\s^2} \partial_{v_{\pm}}^{k+m}\Omega^{\alpha}{\Phi}|^2\\
&+(1+|v_{\pm}|)^{2k}|\snabla_{\s^2} \partial_{v_{\pm}}^{k+1+m}\Omega^{\alpha}{\Phi}|^2 \,d\omega dv_{\pm},\\ 
||{\Phi}||_{\mathcal{E}_{n; \mathcal{I}^{\pm}}}^2:=&\:\sum_{j=0}^2\sum_{m+2k+2|\alpha|\leq 2n}\int_{\mathcal{I}^{\pm}}(1+|u_{\pm}|)^{2+2k+2j}(\partial_{u_{\pm}}^{1+k+m+j}\Omega^{\alpha}T^{-n}{\underline{\Phi}})^2+(1+|u_{\pm}|)^{2k}|\snabla_{\s^2} \partial_{u_{\pm}}^{k+m}\Omega^{\alpha}{\Phi}|^2\\
&+(1+|u_{\pm}|)^{2k}|\snabla_{\s^2} \partial_{u_{\pm}}^{k+1+m}\Omega^{\alpha}{\Phi}|^2\, d\omega du_{\pm},
\end{align*}
with respect to the coordinate charts $(u_{\pm}, v_{\pm},\theta,\varphi)$.

Then we denote with $\mathcal{E}_{n;\mathcal{H}^{\pm}}\oplus \mathcal{E}_{n;\mathcal{I}^{\pm}}$ the \emph{completion} of  $C_{c}^{\infty}(\mathcal{H}^{\pm})\oplus C_{c}^{\infty}(\mathcal{I}^{\pm})$ with respect to the norms $||\cdot||_{\mathcal{E}_{n;\mathcal{H}^{\pm}}}$ and $||\cdot||_{\mathcal{E}_{n; \mathcal{I}^{\pm}}}$. 
\end{definition}

\subsection{Black hole interior energy spaces}
\label{sec:intenspaces}
In this section, we introduce additional energy spaces that play a role in a non-degenerate scattering theory for the extremal Reissner--Nordstr\"om black hole interior.
\begin{definition}
Let $v_0>0$ and $u_{\rm int}<0$. Define the norms $||\cdot||_{\mathcal{E}_{\mathcal{H}^+_{\geq v_0}}}$ and $||\cdot||_{\mathcal{E}_{\mathcal{CH}^+_{\leq u_{\rm int}}}}$ as follows: let $\underline{\Phi} \in C_{c}^{\infty}(\mathcal{H}^+_{\geq v_0})$ and $\Phi \in C_{c}^{\infty}(\mathcal{CH}^+_{\leq u_{\rm int}})$, then
\begin{align*}
||{\underline{\Phi}}||_{\mathcal{E}_{\mathcal{H}^+_{\geq v_0}}}^2:=&\:\int_{\mathcal{H}^+_{\geq v_0} } v^{2}(\partial_v{\underline{\Phi}})^2+|\snabla_{\s^2}{\underline{\Phi}}|^2\,d\omega dv,\\
||\Phi||_{\mathcal{E}_{\mathcal{CH}^+_{\leq u_{\rm int}}}}^2:=&\:\int_{  \mathcal{CH}^+_{ \geq u_{\rm int} }   } u^{2}(\partial_u\Phi)^2+|\snabla_{\s^2} \Phi|^2\,d\omega du.
\end{align*}

Then we denote with $\mathcal{E}^{\rm int}_{\mathcal{H}^+_{\geq v_0}}$ and $\mathcal{E}^{\rm int}_{\mathcal{CH}^+_{\leq u_{\rm int}}}$ the completions of $C_{c}^{\infty}(\mathcal{H}^+_{\geq v_0})$ and $C_{c}^{\infty}(\mathcal{CH}^+_{\leq u_{\rm int}})$ with respect to the norms $||\cdot ||_{\mathcal{E}_{\mathcal{H}^+_{\geq v_0}}}$ and $|| \cdot ||_{\mathcal{E}_{\mathcal{CH}^+_{\leq u_{\rm int}}}}$, respectively. 
\end{definition}

\begin{definition}
Let $v_0>0$ and $u_{\rm int}<0$. Define the norms $||\cdot||_{\mathcal{E}_{\underline{N}^{\rm int}_{v_0}}}$ and $||\cdot||_{\mathcal{E}_{H^{\rm int}_{u_{\rm int}}}}$ as follows: let $\underline{\Phi} \in C^{\infty}(\underline{N}^{\rm int}_{v_0})$ and $\Phi \in C^{\infty}(H^{\rm int}_{u_{\rm int}})$, then
\begin{align*}
||{\underline{\Phi}}||_{\mathcal{E}_{\underline{N}^{\rm int}_{v_0}}}^2:=&\:\int_{\underline{N}^{\rm int}_{v_0} } u^{2}(\partial_u{\underline{\Phi}})^2+D|\snabla_{\s^2}{\underline{\Phi}}|^2\,d\omega du,\\
||\Phi||_{\mathcal{E}_{H^{\rm int}_{ u_{\rm int}}}}^2:=&\:\int_{ H^{\rm int}_{u_{\rm int} }   } v^{2}(\partial_v\Phi)^2+D|\snabla_{\s^2} \Phi|^2\,d\omega dv.
\end{align*}

Then we denote with $\mathcal{E}_{\underline{N}^{\rm int}_{v_0}}$ and $\mathcal{E}_{H^{\rm int}_{ u_{\rm int}}}$ the completions of $C^{\infty}(\underline{N}^{\rm int}_{v_0})$ and $C^{\infty}(H^{\rm int}_{u_{\rm int}})$ with respect to the norms $||\cdot ||_{\mathcal{E}_{\underline{N}^{\rm int}_{v_0}}}$ and $|| \cdot ||_{\mathcal{E}_{H^{\rm int}_{ u_{\rm int}}}}$, respectively. 
\end{definition}

\section{Main theorems}
\label{sec:thms} 
In this section, we give precise statements of the results proved in this paper. We refer to Sections \ref{sec:geom} and \ref{sec:energyspaces} for an introduction to the notation and definitions of the objects appearing in the statements of the theorems.

\subsection{Non-degenerate scattering theory results}
\label{sec:mainthms}
We first state the main theorems that establish a non-degenerate scattering theory in extremal Reissner--Nordstr\"om.
\begin{theorem}
\label{thm:mainthm}
The following linear maps
\begin{align*}
\mathscr{F}: C^{\infty}(\widehat{\Sigma}_0)\times C^{\infty}(\Sigma_0\cap \{r_{\mathcal{H}}\leq r\leq r_{\mathcal{I}}\})&\to \mathcal{E}_{\mathcal{H}^+_{\geq v_0}}\oplus \mathcal{E}_{\mathcal{I}^+_{\geq u_0}},\\
\widetilde{\mathscr{F}}_{\pm}: C_{c}^{\infty}(\widetilde{\Sigma}) \times C_{c}^{\infty}(\widetilde{\Sigma}) &\to \mathcal{E}_{\mathcal{H}^{\pm}}\oplus \mathcal{E}_{\mathcal{I}^{\pm}},
\end{align*}
with $\mathscr{F}(\Psi,\Psi')=(r\psi|_{\mathcal{H}^+_{\geq v_0}},r\psi|_{\mathcal{I}^+_{\geq u_0}})$, $\widetilde{\mathscr{F}}_{\pm}(\Psi,\Psi')=(r\psi|_{\mathcal{H}^{\pm}},r\psi|_{\mathcal{I}^{\pm}})$, are well-defined. Here, $\psi$ denotes the unique solution to \eqref{eq:waveequation} with initial data $(\Psi,\Psi')$ in accordance with statements 2. and 3. of Theorem \ref{thm:globexuni}. 

Furthermore, their unique extensions
\begin{align*}
\mathscr{F}: \mathcal{E}_{\Sigma_0}&\to \mathcal{E}_{\mathcal{H}^+_{\geq v_0}}\oplus \mathcal{E}_{\mathcal{I}^+_{\geq u_0}},\\
\widetilde{\mathscr{F}}_{\pm}: \mathcal{E}_{\widetilde{\Sigma}}&\to \mathcal{E}_{\mathcal{H}^{\pm}}\oplus \mathcal{E}_{\mathcal{I}^{\pm}}
\end{align*}
are bijective and bounded linear operators, and
\begin{equation*}
\mathscr{S}:=\widetilde{\mathscr{F}}_+\circ \widetilde{\mathscr{F}}_-^{-1}: \mathcal{E}_{\mathcal{H}^-}\oplus \mathcal{E}_{\mathcal{I}^-} \to \mathcal{E}_{\mathcal{H}^+}\oplus \mathcal{E}_{\mathcal{I}^+}
\end{equation*}
is also a bijective bounded linear operator.
\end{theorem}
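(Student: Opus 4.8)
The plan is to establish the theorem in two halves: first I would prove that $\mathscr{F}$ and $\widetilde{\mathscr{F}}_{\pm}$ are bounded on smooth data (the forward estimates), and then that each admits a bounded inverse obtained by solving \eqref{eq:waveequation} backwards from the radiation-field data (the backward estimates). Once both directions are available on the dense subspaces of smooth (compactly supported) data, bijectivity of the extensions to the completions $\mathcal{E}_{\Sigma_0}$ and $\mathcal{E}_{\widetilde{\Sigma}}$ follows from a standard density argument: a densely-defined bounded operator with a densely-defined bounded two-sided inverse extends to a bounded bijection of the completions. The final assertion about $\mathscr{S}=\widetilde{\mathscr{F}}_+\circ \widetilde{\mathscr{F}}_-^{-1}$ is then purely formal, since a composition of bounded linear bijections with bounded inverses is again a bounded linear bijection.

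For the forward boundedness of $\mathscr{F}$ I would run the vector-field method in $\mathcal{R}=D^+(\Sigma_0)$. The backbone is conservation of the $T$-energy flux, available because $T$ is Killing and causal throughout the exterior; commuting with $T$ up to second order (as $\|\cdot\|_{\mathcal{E}_{\Sigma_0}}$ demands) controls the terms $\mathbf{J}^T[T^j\psi]$. To upgrade this to the weighted radiation-field norms on $\mathcal{H}^+$ and $\mathcal{I}^+$ I would use an $r^p$-type hierarchy with multipliers of the schematic form $v^{p}\partial_v$ near the horizon and $u^{p}\partial_u$ near null infinity, exploiting that $r^2L=\tfrac12 D\partial_x$ and $2D^{-1}\underline{L}=\partial_r$ are regular up to $\mathcal{I}^+$ and $\mathcal{H}^+$ respectively; integrating the resulting fluxes over $\Sigma_{\tau}$ and letting $\tau\to\infty$ bounds $\|r\psi|_{\mathcal{H}^+_{\geq v_0}}\|_{\mathcal{E}_{\mathcal{H}^+_{\geq v_0}}}$ and $\|r\psi|_{\mathcal{I}^+_{\geq u_0}}\|_{\mathcal{E}_{\mathcal{I}^+_{\geq u_0}}}$ by $\|(\Psi,\Psi')\|_{\mathcal{E}_{\Sigma_0}}$. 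The non-degenerate contribution built into $\mathcal{E}_{\Sigma_0}$ is propagated using the red-shift multiplier $N$, which in the forward direction is dissipative at $\mathcal{H}^+$.

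The heart of the matter is the backward estimate, which is where extremality is indispensable. Given radiation-field data $(\underline{\Phi},\Phi)$ on $(\mathcal{H}^+_{\geq v_0},\mathcal{I}^+_{\geq u_0})$ with finite weighted norms, I would solve \eqref{eq:waveequation} backwards and show the trace on $\Sigma_0$ has finite \emph{non-degenerate} $\mathcal{E}_{\Sigma_0}$ energy. In the sub-extremal case the red-shift at $\mathcal{H}^+$ becomes an exponential blue-shift under backward evolution and this step fails outright; on ERN the surface gravity vanishes, since $D=(1-Mr^{-1})^2$ has a double root at $r=M$, so the blue-shift is merely polynomial. The strategy is to run the \emph{same} weighted multipliers in reverse time: the temporal weights $(1+|v_{\pm}|)^{2-j}$ and $(1+|u_{\pm}|)^{2-j}$ entering $\|\cdot\|_{\mathcal{E}_{\mathcal{H}^{\pm}}}$ and $\|\cdot\|_{\mathcal{E}_{\mathcal{I}^{\pm}}}$ are chosen precisely so that the error terms generated by the deformation tensors of the weighted multipliers are \emph{absorbable} rather than amplifying, thereby closing a backward energy inequality. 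Here I would import the horizon-localized hierarchy underlying the Aretakis conservation laws together with the degenerate red-shift estimates of \cite{aretakis1}, now run in the backward direction, to recover the weight $(r-M)^{-2+j}(\underline{L}T^j\phi)^2$ near $\mathcal{H}^+$ without loss.

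Finally, for $\widetilde{\mathscr{F}}_{\pm}$ I would repeat the scheme on the asymptotically flat Cauchy slice $\widetilde{\Sigma}=\{t=0\}$, which admits evolution both to the future ($+$) and to the past ($-$); the new ingredient relative to $\Sigma_0$ is uniform control near spacelike infinity $i^0$ and across the bifurcation sphere, supplied by the additional estimates of Section \ref{sec:spacelikeinfbfsphere}. With $\widetilde{\mathscr{F}}_+$ and $\widetilde{\mathscr{F}}_-$ each a bounded bijection with bounded inverse, $\mathscr{S}=\widetilde{\mathscr{F}}_+\circ\widetilde{\mathscr{F}}_-^{-1}$ inherits these properties automatically. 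I expect the decisive obstacle to be the backward estimate of the preceding paragraph: quantifying exactly how much polynomial amplification the extremal blue-shift produces and verifying that the prescribed weights on $\mathcal{H}^{\pm}$ and $\mathcal{I}^{\pm}$ are simultaneously strong enough to dominate it yet weak enough that the forward map still lands in the very same spaces — that is, that the forward and backward norms are sharply matched.
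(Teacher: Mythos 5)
Your overall architecture — forward boundedness on smooth data, backward boundedness via reverse-time weighted estimates, extension to completions by density, and the purely formal composition for $\mathscr{S}$ — is exactly the skeleton of the paper's proof (which assembles Propositions \ref{prop:fowbound} and \ref{prop:backwmapho}, Corollary \ref{cor:bijectivity} and Propositions \ref{prop:towardsscatmat1} and \ref{prop:towardsscatmat3}). However, two of your key mechanisms are not the ones that actually work, and one prerequisite step is missing entirely.

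First, you propose to propagate the non-degenerate part of $\mathcal{E}_{\Sigma_0}$ forwards ``using the red-shift multiplier $N$, which in the forward direction is dissipative at $\mathcal{H}^+$.'' On extremal Reissner--Nordstr\"om the surface gravity vanishes, so the red-shift multiplier produces no coercive bulk term at the horizon; this is precisely why the extremal case is delicate in the forward direction (and why the Aretakis instability exists at higher order). The paper instead obtains the non-degenerate horizon energy as the $p=2$ level of the $(r-M)^{-p}$-weighted hierarchy for the radiation field (Theorem \ref{thm:fowhier}), mirroring the $r^p$ hierarchy at infinity; the temporal weights $v^{2-j}$, $u^{2-j}$ on the boundary fluxes are then extracted not from $v^p\partial_v$- or $u^p\partial_u$-type multipliers but by iterating the $T$-energy identity in time and converting the iterated time integrals into polynomial weights by integration by parts (Lemma \ref{lm:integralsandweights}). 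Second, your description of the backward estimate — that the deformation-tensor errors of the weighted multipliers are ``absorbable'' — misses the actual structure: run backwards, the bulk terms of the $r^p$ and $(r-M)^{-p}$ identities appear with the \emph{unfavourable} sign and cannot be absorbed; they are instead controlled a priori by integrating the $T$-energy conservation law repeatedly and trading the iterated integrals for the $u^2$, $v^2$ weights on the scattering data, after which the $p=1$ and $p=2$ estimates close sequentially. Finally, before any backward estimate can be run you must first \emph{construct} a solution attaining prescribed radiation fields on $\mathcal{H}^+\cup\mathcal{I}^+$; this semi-global characteristic problem with data ``at infinity'' is nontrivial and is handled in the paper by an exhaustion argument (a sequence of finite initial value problems with data on $\{v=V_i\}$, uniform $C^k$ bounds, and Arzel\`a--Ascoli, Proposition \ref{prop:mainpropbackdef}). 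Your closing concern about the sharp matching of forward and backward norms is well placed: the paper resolves it by revisiting the forward hierarchy to reach the weight $(1+u)^2$ without an $\epsilon$-loss, and by building the derivative loss from trapping at the photon sphere (incurred only in the forward direction) into the definition of $\mathcal{E}_{\Sigma_0}$.
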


We refer to the maps $\mathscr{F}$ and $\mathscr{F}_{\pm}$ as a \textbf{forwards evolution maps}, $\mathscr{F}^{-1}$ and $\widetilde{\mathscr{F}}_{\pm}^{-1}$ and \textbf{backwards evolution maps} and $\mathscr{S}$ as the \textbf{scattering map}. 

\begin{remark}
An analogous result holds with respect to the degenerate energy spaces $ \mathcal{E}^T_{\Sigma_0}$, $ \mathcal{E}^T_{\widetilde{\Sigma}}$, $\mathcal{E}^T_{\mathcal{H}^+_{\geq v_0}}$, $\mathcal{E}^T_{\mathcal{I}^+_{\geq u_0}}$, $\mathcal{E}^T_{\mathcal{H}^{\pm}}$ and $\mathcal{E}^T_{\mathcal{I}^{\pm}}$. This follows easily from an analogue of Proposition 9.6.1 in \cite{linearscattering} applied to the setting of extremal Reissner--Nordstr\"om; see also Sections \ref{sec:constfevo}, \ref{sec:bacwevomap} and \ref{sec:consscatmat}. They advantage of Theorem \ref{thm:mainthm} is the use of \underline{\emph{non-degenerate}} and \underline{\emph{weighted}} energy norms that also appear when proving global uniform boundedness and decay estimates for solutions to \eqref{eq:waveequation}.
\end{remark}

The following theorem extends Theorem \ref{thm:mainthm} by considering degenerate and weighted higher-order energy spaces.
\begin{theorem}
\label{thm:mainthmho}
Let $n\in\N_0$. We can restrict the codomains of the linear maps $\mathscr{F}$ and $\widetilde{\mathscr{F}}_{\pm}$ defined in Theorem \ref{thm:mainthm}, to arrive at
\begin{align*}
\mathscr{F}_n: C^{\infty}(\widehat{\Sigma}_0)\times C^{\infty}(\Sigma_0\cap \{r_{\mathcal{H}}\leq r\leq r_{\mathcal{I}}\})&\to \mathcal{E}_{n; \mathcal{H}^+_{\geq v_0}}\oplus \mathcal{E}_{n; \mathcal{I}^+_{\geq u_0}},\\
\widetilde{\mathscr{F}}_{n;\pm}: C_{c}^{\infty}(\widetilde{\Sigma}) \times C_{c}^{\infty}(\widetilde{\Sigma})&\to \mathcal{E}_{n; \mathcal{H}^{\pm}}\oplus \mathcal{E}_{n; \mathcal{I}^{\pm}},\\
\end{align*}
which are well-defined. 

Furthermore, the unique extensions
\begin{align*}
\mathscr{F}: \mathcal{E}_{n; \Sigma_0}&\to \mathcal{E}_{n; \mathcal{H}^+_{\geq v_0}}\oplus \mathcal{E}_{n; \mathcal{I}^+_{\geq u_0}},\\
\widetilde{\mathscr{F}}_{n; \pm}: \mathcal{E}_{n; \widetilde{\Sigma}}&\to \mathcal{E}_{n; \mathcal{H}^{\pm}}\oplus \mathcal{E}_{n; \mathcal{I}^{\pm}}
\end{align*}
are bijective and bounded linear operators and
\begin{equation*}
\mathscr{S}_n:=\mathscr{F}_{n;+}\circ \mathscr{F}_{n;-}^{-1}: \mathcal{E}_{n; \mathcal{H}^-}\oplus \mathcal{E}_{n; \mathcal{I}^-} \to \mathcal{E}_{n; \mathcal{H}^+}\oplus \mathcal{E}_{n; \mathcal{I}^+}
\end{equation*}
is also a bijective bounded linear operator.
\end{theorem}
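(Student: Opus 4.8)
The plan is to bootstrap from the base case established in Theorem \ref{thm:mainthm}, using two structural facts already recorded: that $\mathscr{F}$ and $\widetilde{\mathscr{F}}_{\pm}$ are bounded bijections between the corresponding unweighted-order spaces, and that the higher-order spaces embed continuously into them, $\mathcal{E}_{n;\Sigma_0}\subset\mathcal{E}_{\Sigma_0}$ and $\mathcal{E}_{n;\mathcal{H}^+}\oplus\mathcal{E}_{n;\mathcal{I}^+}\subset\mathcal{E}_{\mathcal{H}^+_{\geq v_0}}\oplus\mathcal{E}_{\mathcal{I}^+_{\geq u_0}}$ (and likewise over $\widetilde{\Sigma}$). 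It then suffices to prove two \emph{a priori} estimates on smooth data: a forward bound $\|\mathscr{F}(\Psi,\Psi')\|_{\mathcal{E}_{n;\mathcal{H}^+}\oplus\mathcal{E}_{n;\mathcal{I}^+}}\lesssim\|(\Psi,\Psi')\|_{\mathcal{E}_{n;\Sigma_0}}$ and a backward bound $\|\mathscr{F}^{-1}(\underline{\Phi},\Phi)\|_{\mathcal{E}_{n;\Sigma_0}}\lesssim\|(\underline{\Phi},\Phi)\|_{\mathcal{E}_{n;\mathcal{H}^+}\oplus\mathcal{E}_{n;\mathcal{I}^+}}$, where $\mathscr{F}^{-1}$ is the inverse already supplied by Theorem \ref{thm:mainthm}. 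Granting these, $\mathscr{F}$ restricts to a bounded injection on $\mathcal{E}_{n;\Sigma_0}$ (injectivity is inherited from the base case) that is onto the higher-order codomain, since the base-level preimage of any higher-order datum lies in $\mathcal{E}_{n;\Sigma_0}$ by the backward bound. The same argument applies verbatim to $\widetilde{\mathscr{F}}_{n;\pm}$, and by the time-reflection symmetry of ERN the estimates for $\widetilde{\mathscr{F}}_{n;-}$ follow from those for $\widetilde{\mathscr{F}}_{n;+}$, so it is enough to treat a single time direction.

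The terms in the higher-order norms split into a Killing part and a genuinely transversal part. The Killing part is immediate: since $T=\partial_v$ and the rotation generators $\Omega$ are Killing, $T^m\Omega^\alpha\psi$ again solves \eqref{eq:waveequation}, so applying Theorem \ref{thm:mainthm} (and its degenerate $T$-energy analogue) to each commuted solution controls all norm contributions built only from $T$- and angular derivatives. This reduces the problem to propagating the transversal hierarchies, namely the contributions weighted by $r^{2+2k-j}$ carrying $L^{1+k}$ near $\mathcal{I}^+$ and those weighted by $(r-M)^{-2-2k+j}$ carrying $\underline{L}^{1+k}$ near $\mathcal{H}^+$, for $k\geq 1$.

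For the transversal part I would work with the radiation field $\phi=r\psi$, which satisfies a wave equation of the schematic form $\underline{L}L\phi=\tfrac{D}{4r^2}\sD_{\s^2}\phi-V(r)\phi$, where the potential $V$ degenerates to high order as $r\to M$; this is the decisive extremal feature, since $D=(1-Mr^{-1})^2$ vanishes to second order at the horizon. Near $\mathcal{I}^+$ I would commute repeatedly with the regular vector field $r^2L=\tfrac12 D\partial_x$ and run the associated $r^p$-weighted hierarchy, while near $\mathcal{H}^+$ I would commute with the regular field $\partial_r=2D^{-1}\underline{L}$, exploiting the degeneracy of $V$ and of the surface gravity so that the commutator errors remain absorbable. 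At every stage the wave equation is used to trade $\underline{L}L\phi$ for angular and potential terms, which is precisely what relates the tangential derivatives appearing in the $\mathcal{E}_{n;\Sigma_0}$-norm to those appearing in the $\mathcal{E}_{n;\mathcal{H}^+}$- and $\mathcal{E}_{n;\mathcal{I}^+}$-norms; the $\sD_{\s^2}\phi$ coupling is closed by reinserting the angular derivatives already bounded in the Killing step.

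The main obstacle will be the \emph{backward} propagation of the near-horizon transversal hierarchy. This is exactly where the sub-extremal theory fails, through the blueshift instability, and the purpose of the extremal construction is that the vanishing surface gravity downgrades this to a borderline, weight-sensitive effect. One must verify that the horizon weights $v^{2k+2-j}$ and the matching $\Sigma_0$ weights $(r-M)^{-2-2k+j}$ are calibrated so that integrating the transport and energy identities backward in $v$ produces no net loss, and that at each stage the newly generated commutator terms and the $\sD_{\s^2}$ coupling stay subcritical with respect to these weights. Once the two-sided higher-order estimates hold on smooth data, the density of $C_c^{\infty}$ in the completed spaces upgrades them to the asserted bounded bijections $\mathscr{F}$ and $\widetilde{\mathscr{F}}_{n;\pm}$, and the bounded bijectivity of the scattering map $\mathscr{S}_n$ follows immediately.
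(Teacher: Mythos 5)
Your overall architecture --- two-sided a priori estimates on smooth data, density, and bijectivity inherited from the mutually inverse base-level maps --- is the paper's architecture, and your treatment of the map between $\Sigma_0$ and $\mathcal{H}^+_{\geq v_0}\cup\mathcal{I}^+_{\geq u_0}$ (commuting the radiation field equation with transversal derivatives and running the $r^p$ and $(r-M)^{-p}$ hierarchies with exponents growing in the number of transversal derivatives, in both time directions) is essentially what Sections \ref{sec:hoestfow} and \ref{sec:backwhoest} do. One refinement you should be aware of: the ``Killing part is immediate'' step does not by itself produce the weights $v^{2k+2-j}$, $u^{2k+2-j}$ appearing in $\mathcal{E}_{n;\mathcal{H}^+_{\geq v_0}}\oplus\mathcal{E}_{n;\mathcal{I}^+_{\geq u_0}}$; applying the base theorem to $T^m\Omega^{\alpha}\psi$ only yields the weights $v^{2-j}$. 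The growing time-weights are tied to the higher hierarchy exponents $p\leq 2k+2$ and are extracted by iterating the time integration (Proposition \ref{prop:hoedecay} together with Lemma \ref{lm:integralsandweights}), with the identities \eqref{eq:TconvLLbar1}--\eqref{eq:TconvLLbar2} used to convert the extra $T$'s into $L^2$, $\underline{L}^2$ and angular terms.

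The genuine gap is your claim that ``the same argument applies verbatim to $\widetilde{\mathscr{F}}_{n;\pm}$.'' It does not. Passing from $\Sigma_0$ to $\widetilde{\Sigma}=\{t=0\}$ requires estimates in the regions $D_{-u_0}$ and $\underline{D}_{-v_0}$ near spacelike infinity and near the past limit points of $\mathcal{H}^+$, and there the $r^p$-weighted hierarchy is \emph{not} suitable in the backwards direction: the weights you would need to propagate are weights in $u$ and $v$ separately (to match the norms on $\mathcal{H}^{\pm}$ and $\mathcal{I}^{\pm}$ over the full real line), and the $r^p$ multiplier identities do not close there. The paper instead uses the Morawetz conformal vector field $K=u^2\partial_u+v^2\partial_v$ as multiplier (Proposition \ref{prop:estasympflat}), the multiplier $Y=v\partial_v-u\partial_u$ for the intermediate weights applied to $T\phi$ (Proposition \ref{prop:p1estspacelikeinf}), and, for the higher-order statement you are proving, commutes with the scaling vector field $S=u\underline{L}+vL$, whose favourable commutation with $\underline{L}L$ (Lemma \ref{lm:equationSnphi}) is what makes the higher-order two-sided estimates \eqref{eq:asymflatestbacktot1hov2}--\eqref{eq:asymflatestbacktot2hov2} close. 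Without this ingredient you have constructed $\mathscr{F}_n$ and its inverse between $\Sigma_0$ and the truncated boundaries, but not $\widetilde{\mathscr{F}}_{n;\pm}$, and hence not $\mathscr{S}_n$. The time-reflection reduction of $\widetilde{\mathscr{F}}_{n;-}$ to $\widetilde{\mathscr{F}}_{n;+}$ is fine once these estimates are in place.
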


Both Theorem \ref{thm:mainthm} and Theorem \ref{thm:mainthmho} follow by combining Propositions \ref{prop:fowbound} and \ref{prop:backwmapho}, Corollary \ref{cor:bijectivity} and Propositions \ref{prop:towardsscatmat1} and  \ref{prop:towardsscatmat3}.

We additionally construct a scattering map restricted to the black hole interior.

\begin{theorem}
\label{thm:intscat}
Let $u_{\rm int}<0$ with $|u_{\rm int}|$ suitably large. The following linear map:
\begin{equation*}
\mathscr{S}^{\rm int}: C_{c}^{\infty}(\mathcal{H}^+_{\geq v_0})\times C^{\infty}(\underline{N}_{v_0}^{\rm int})\to \mathcal{E}_{\mathcal{CH}^+_{\leq u_{\rm int}}}\oplus \mathcal{E}_{H^{\rm int}_{u_{\rm int}}},
\end{equation*}
with 
\begin{equation*}
\mathscr{S}^{\rm int}(r\psi|_{\mathcal{H}^+_{\geq v_0}}, r\psi|_{\underline{N}_{v_0}^{\rm int}})=(r\psi|_{\mathcal{CH}^+_{\leq u_{\rm int}}},r\psi_{H^{\rm int}_{u_{\rm int}}})
\end{equation*}
is well-defined, with $\psi$ denoting the unique solution to \eqref{eq:waveequation} with initial data $(r\psi|_{\mathcal{H}^+_{\geq v_0}}, r\psi|_{\underline{N}_{v_0}^{\rm int}})$ in accordance with statement 3. of Theorem \ref{thm:globexuni}. 

Furthermore, uniquely as a bijective, bounded linear operator:
\begin{equation*}
\mathscr{S}^{\rm int}: \mathcal{E}_{\mathcal{H}^+_{\geq v_0}}\oplus \mathcal{E}_{\underline{N}_{v_0}^{\rm int}}\to \mathcal{E}_{\mathcal{CH}^+_{\leq u_{\rm int}}}\oplus \mathcal{E}_{H^{\rm int}_{u_{\rm int}}}.
\end{equation*}
\end{theorem}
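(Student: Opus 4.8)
The plan is to reduce the claim to a pair of two-sided weighted energy estimates—one propagating energy forwards from $\mathcal{H}^+_{\geq v_0}\cup\underline{N}^{\rm int}_{v_0}$ to $\mathcal{CH}^+_{\leq u_{\rm int}}\cup H^{\rm int}_{u_{\rm int}}$, and one propagating it backwards—and then to promote the resulting bounded bijection between dense subspaces to the completions by the bounded linear extension theorem. Throughout I work with the radiation field $\phi=r\psi$, for which \eqref{eq:waveequation} in the double-null coordinates $(u,v,\theta,\varphi)$ on $\mathring{\mathcal{M}}^{\rm int}$ takes the form
\begin{equation*}
\partial_u\partial_v\phi=\frac{D}{4r}\left(\frac{1}{r}\slashed{\Delta}_{\s^2}\phi-D'\phi\right),
\end{equation*}
with $D=(r-M)^2r^{-2}$ and $D'=2M(r-M)r^{-3}$. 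The essential structural point is that the entire right-hand side carries the overall factor $\tfrac{D}{4r}=O((r-M)^2)$, which degenerates to \emph{second} order at both components of $\{r=M\}$; equivalently, the surface gravity of the Cauchy horizon $\mathcal{CH}^+$ vanishes. This double degeneracy is precisely what converts the usual exponential blueshift at $\mathcal{CH}^+$ into polynomial growth, and is what makes a bijective, polynomially weighted theory possible in the extremal interior, in contrast with the sub-extremal case.

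First I would establish forward boundedness. Given smooth characteristic data, statement 3 of Theorem \ref{thm:globexuni} supplies a unique smooth solution on $D^+(\Sigma_0^{\rm int})\cap\mathcal{M}^{\rm int}$. To generate the target norms I integrate, over characteristic rectangles, the energy identities associated to the Killing multiplier $T=\partial_u+\partial_v$ together with the weighted multipliers $v^p\partial_v\phi$ and $u^p\partial_u\phi$ for $0\leq p\leq 2$; using $L(r)=\tfrac12 D$ and $\underline{L}(r)=-\tfrac12 D$, integration by parts produces bulk terms that are either nonnegative or controlled by the overall factor $D$, together with boundary fluxes on $\mathcal{CH}^+$ and $H^{\rm int}_{u_{\rm int}}$ of exactly the form appearing in $\|\cdot\|_{\mathcal{E}_{\mathcal{CH}^+_{\leq u_{\rm int}}}}$ and $\|\cdot\|_{\mathcal{E}_{H^{\rm int}_{u_{\rm int}}}}$; the top weight $p=2$ yields the $u^2$ and $v^2$ factors. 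The angular contributions are treated by commuting with the angular momentum operators $\Omega^{\alpha}$, the degenerate factor $D$ accounting for the distinction between the $D$-weighted angular fluxes on the interior null segments and the unweighted ones on the horizons. This gives
\begin{equation*}
\left\|\mathscr{S}^{\rm int}(f,g)\right\|^2_{\mathcal{E}_{\mathcal{CH}^+_{\leq u_{\rm int}}}\oplus\mathcal{E}_{H^{\rm int}_{u_{\rm int}}}}\lesssim \left\|(f,g)\right\|^2_{\mathcal{E}_{\mathcal{H}^+_{\geq v_0}}\oplus\mathcal{E}_{\underline{N}^{\rm int}_{v_0}}}
\end{equation*}
for all smooth data $(f,g)$, proving that $\mathscr{S}^{\rm int}$ is well-defined and bounded.

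The harder half is the reverse estimate, which simultaneously furnishes boundedness of the inverse and surjectivity. By the same local characteristic theory one may solve the wave equation backwards from data on $\mathcal{CH}^+_{\leq u_{\rm int}}\cup H^{\rm int}_{u_{\rm int}}$, and then run the identical multiplier argument in the reversed time direction. The main obstacle lives exactly here: propagating weighted energy backwards out of a neighborhood of $\mathcal{CH}^+$ is where the blueshift instability resides. The estimate closes only because $\kappa_{\mathcal{CH}^+}=0$, so that the bulk error terms acquire the suppressing factor $D=O((r-M)^2)$; choosing $|u_{\rm int}|$ sufficiently large confines the evolution to a region where $D$ is small enough that these errors are absorbed into the weighted boundary fluxes by a Gr\"onwall argument, the polynomial growth being exactly matched by the $u^2$ and $v^2$ weights. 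This is the step that genuinely requires extremality and adapts the interior techniques of \cite{gajic}. It yields
\begin{equation*}
\left\|(f,g)\right\|^2_{\mathcal{E}_{\mathcal{H}^+_{\geq v_0}}\oplus\mathcal{E}_{\underline{N}^{\rm int}_{v_0}}}\lesssim \left\|\mathscr{S}^{\rm int}(f,g)\right\|^2_{\mathcal{E}_{\mathcal{CH}^+_{\leq u_{\rm int}}}\oplus\mathcal{E}_{H^{\rm int}_{u_{\rm int}}}}.
\end{equation*}

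Finally, the two displayed estimates exhibit $\mathscr{S}^{\rm int}$ as a bounded, bounded-below linear map on the dense subspace $C_c^{\infty}(\mathcal{H}^+_{\geq v_0})\oplus C^{\infty}(\underline{N}^{\rm int}_{v_0})$, so it extends uniquely to a bounded injection of $\mathcal{E}_{\mathcal{H}^+_{\geq v_0}}\oplus\mathcal{E}_{\underline{N}^{\rm int}_{v_0}}$ into $\mathcal{E}_{\mathcal{CH}^+_{\leq u_{\rm int}}}\oplus\mathcal{E}_{H^{\rm int}_{u_{\rm int}}}$ with closed range. Surjectivity follows from backward solvability: approximating arbitrary target data by smooth compactly supported data $(F_n,G_n)$, the backward estimate makes the corresponding source traces Cauchy, and continuity of the extended map identifies their limit as a preimage. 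Hence $\mathscr{S}^{\rm int}$ is a bounded linear bijection with bounded inverse, as claimed.
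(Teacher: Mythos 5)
Your proposal follows essentially the same route as the paper: the interior estimates are obtained by applying the conformal Morawetz multiplier $K=u^2\partial_u+v^2\partial_v$ to the radiation field $\phi=r\psi$ in Eddington--Finkelstein double-null coordinates (your $p=2$ weighted multipliers are exactly this), the bulk errors are absorbed using the double degeneracy $D=O((r-M)^2)$ coming from the vanishing surface gravity together with the choice of $|u_{\rm int}|$ large and a Hardy-type bound on $\int_{\s^2}\phi^2$, the estimate is run in both time directions to produce the two-sided equivalence of Proposition \ref{prop:interiorestimates}, and the bijection on completions then follows by density. Two of your side remarks deviate harmlessly from the paper: the angular fluxes $D|\snabla_{\s^2}\phi|^2$ are generated directly by integrating the $\slashed{\Delta}_{\s^2}\phi$ term by parts on the sphere in the $K$-identity rather than by commuting with $\Omega^{\alpha}$, and the absorption is done by smallness of $\int u^{-2}\log u\,du$ and Young's inequality rather than by Gr\"onwall.

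The one genuine omission is that you treat the flux on $\mathcal{CH}^+_{\leq u_{\rm int}}$ as an ordinary boundary term of the characteristic rectangle, whereas in the double-null coordinates in which all the estimates are performed the Cauchy horizon sits at $v=+\infty$ (and $\mathcal{H}^+$ at $u=-\infty$), so the trace $r\psi|_{\mathcal{CH}^+}$ and its weighted energy are not a priori defined. The paper closes this by showing, from the uniform boundedness supplied by the $K$-estimate, that $v\mapsto \int_{\underline{N}^{\rm int}_{v}} u^2(\Lbar\phi)^2+\tfrac14 v^2r^{-2}D|\snabla_{\s^2}\phi|^2\,d\omega du$ is Cauchy as $v\to\infty$ (with the difference between two such fluxes controlled by $C v_m^{-\epsilon}\cdot(\text{data})$ plus a tail of the $\mathcal{H}^+$ flux), so that the limit exists and is independent of the exhausting sequence; the analogous limit $u\to-\infty$ defines the $\mathcal{H}^+$ flux in the backwards direction. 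Without this step the map $\mathscr{S}^{\rm int}$ is not yet well-defined on smooth data, so you should add it; it follows from the same multiplier inequality you already have, but it must be stated.
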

Theorem \ref{thm:intscat} is a reformulation of Proposition \ref{prop:intscat}.

\subsection{Applications}
In this section, we state some applications of the non-degenerate scattering theory of Section \ref{sec:mainthms}.

In Theorem \ref{thm:regularity} below, we show that we can obtain unique solutions to \eqref{eq:waveequation} with \emph{arbitrary high} Sobolev regularity (with respect to the differentiable structure on $\widehat{\mathcal{R}}$) from suitably regular and polynomially decaying \emph{scattering data} on $\mathcal{H}^+$ and $\mathcal{I}^+$ in an $L^2$-integrated sense.
\begin{theorem}
\label{thm:regularity}
Let $n\in \N_0$ and let $(\underline{\Phi},\Phi)\in C^{\infty}(\mathcal{H}^+_{\geq v_0}) \oplus C^{\infty}(\mathcal{I}^+_{\geq u_0})$. Assume that $\lim_{v \to \infty} v^{n+1}|\underline{\Phi}| (v,\theta,\varphi)<\infty$ and $\lim_{u \to \infty} u^{n+1}|{\Phi}| (u,\theta,\varphi)<\infty$. Define the integral functions $(T^{-n}\underline{\Phi},T^{-n}\Phi)\in C^{\infty}(\mathcal{H}^+_{\geq v_0}) \oplus C^{\infty}(\mathcal{I}^+_{\geq u_0})$ as follows:
\begin{align*}
T^{-n}\underline{\Phi}(v,\theta,\varphi):=\int_{v}^{\infty} \int_{v_1}^{\infty}\ldots \int_{v_{n-1}}^{\infty}\underline{\Phi}(v_{n},\theta,\varphi) \,dv_{n}\ldots dv_1,\\
T^{-n}\Phi(u,\theta,\varphi)=\int_{u}^{\infty} \int_{u_1}^{\infty}\ldots \int_{u_{n-1}}^{\infty}\Phi(u_{n},\theta,\varphi) \,du_{n}\ldots du_1.
\end{align*}
and assume moreover that
\begin{equation}
\label{eq:tinversenorms}
\left|\left|T^{-n}\underline{\Phi}\right|\right|_{\mathcal{E}_{2n; \mathcal{I}^+_{\geq u_0}}}+ \left|\left|T^{-n}\Phi\right|\right|_{ \mathcal{E}_{2n; \mathcal{H}^+_{\geq v_0}}}<\infty.
\end{equation}
Then there exists a unique corresponding solution $\psi$ to \eqref{eq:waveequation} that satisfies $r\psi|_{\mathcal{H}^+_{\geq v_0}}=\underline{\Phi}$, $r\psi|_{\mathcal{I}^+_{\geq u_0}}={\Phi}$ and
\begin{equation*}
r\psi\in W_{\rm loc}^{n+1,2}(\widehat{\mathcal{R}}).
\end{equation*}
\end{theorem}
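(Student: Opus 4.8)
The plan is to combine a \emph{time-integral construction} with the higher-order (degenerate) backwards scattering map of Theorem \ref{thm:mainthmho}, and then to upgrade the resulting degenerate control to genuine $W^{n+1,2}$-regularity by exploiting the wave equation itself. \emph{(Time integral and backwards map.)} The hypothesis \eqref{eq:tinversenorms} says precisely that the integrated data $(T^{-n}\underline{\Phi}, T^{-n}\Phi)$ lie in the higher-order weighted space $\mathcal{E}_{2n;\mathcal{H}^+_{\geq v_0}}\oplus\mathcal{E}_{2n;\mathcal{I}^+_{\geq u_0}}$. Applying the inverse of the bounded bijection $\mathscr{F}\colon \mathcal{E}_{2n;\Sigma_0}\to \mathcal{E}_{2n;\mathcal{H}^+_{\geq v_0}}\oplus\mathcal{E}_{2n;\mathcal{I}^+_{\geq u_0}}$ from Theorem \ref{thm:mainthmho} (with the index there set to $2n$), I obtain a solution $\psi_n$ of \eqref{eq:waveequation} with finite $\mathcal{E}_{2n;\Sigma_0}$-norm whose radiation fields are $r\psi_n|_{\mathcal{H}^+_{\geq v_0}}=T^{-n}\underline{\Phi}$ and $r\psi_n|_{\mathcal{I}^+_{\geq u_0}}=T^{-n}\Phi$. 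Since $T$ is Killing, $\psi:=(-1)^nT^n\psi_n$ again solves \eqref{eq:waveequation}; because $T=\partial_v$ along $\mathcal{H}^+$ and $T=\partial_u$ along $\mathcal{I}^+$ while $T^{-n}$ is an $n$-fold antiderivative in the corresponding null coordinate, the radiation fields of $\psi$ are exactly $\underline{\Phi}$ and $\Phi$. Uniqueness of such a solution is guaranteed by Theorem \ref{thm:mainthm}, so it only remains to establish $r\psi\in W^{n+1,2}_{\rm loc}(\widehat{\mathcal{R}})$.

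\emph{(Reduction to transversal derivatives.)} The $W^{n+1,2}$-norm on a compact $K\subset\widehat{\mathcal{R}}$ controls $\slashed{\nabla}_{\s^2}^{k_1}(r^2L)^{k_2}(D^{-1}\underline{L})^{k_3}(r\psi)$ with $k_1+k_2+k_3\leq n+1$. The angular, tangential and time derivatives are already controlled on each $\Sigma_\tau$ by the $\mathbf{J}^T[T^m\Omega^{\alpha}\psi_n]$ terms inside $\mathcal{E}_{2n;\Sigma_0}$ (propagated into $\mathcal{R}$ by the uniform boundedness estimates underlying the forwards map), and after applying $T^n$ these give the required control of $T^m\Omega^{\alpha}\psi$. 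Thus the only genuinely new quantities are the \emph{pure transversal} derivatives $(D^{-1}\underline{L})^{k_3}(r\psi)$ near $\mathcal{H}^+$ and $(r^2L)^{k_2}(r\psi)$ near $\mathcal{I}^+$.

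\emph{(Trading transversal derivatives via the equation.)} In $(v,r)$-coordinates the wave equation for $\phi=r\psi$ reads
\begin{equation*}
2\,\partial_v\partial_r\phi+D\,\partial_r^2\phi+D'\,\partial_r\phi-\frac{D'}{r}\phi+\frac{1}{r^2}\slashed{\Delta}_{\s^2}\phi=0,
\end{equation*}
where $D=(1-Mr^{-1})^2$ vanishes to \emph{second} order at $r=M$. Commuting this identity with $\partial_r=2D^{-1}\underline{L}$ and restricting to $\mathcal{H}^+$, the double vanishing of $D$ is exactly what makes the would-be top-order term $D\,\partial_r^{2}(\,\cdot\,)$ drop out, so that each additional transversal derivative of $T\phi$ is expressed through an angular Laplacian of lower-order transversal derivatives, schematically $\partial_r\big(T\partial_r^{k}\phi\big)\sim \slashed{\Delta}_{\s^2}\partial_r^{k}\phi+(\text{lower transversal order})$ on $\mathcal{H}^+$ (the Aretakis-type hierarchy). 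Because $\psi=\pm T^n\psi_n$ already carries $n$ powers of $T$, each integration-in-time implicit in inverting $T$ on these relations is paid for out of the time-regularity stored in $\psi_n$, and the angular regularity is supplied by the $\Omega^{\alpha}$-commuted energies with $|\alpha|\leq n$. I would run this as an induction on $k_3$: establish the transversal-derivative bounds on $\mathcal{H}^+$ from the commuted equation, then transport them off the horizon by treating the equation as a radial transport equation for $\partial_r^{k}\phi$ with the $T$-energies as forcing. The analogous argument near $\mathcal{I}^+$, using the $r^p$-weighted $(r^2L)^{k_2}$ hierarchy and the conformal regularity of Section \ref{addnot}, handles the outgoing transversal derivatives, while in the compact transition region $r_{\mathcal{H}}\leq r\leq r_{\mathcal{I}}$ the operator is uniformly hyperbolic and standard local energy/elliptic estimates bound all derivatives in terms of quantities already controlled.

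\emph{(Main obstacle.)} The crux, and the reason a naive restriction of $\mathcal{E}_{2n;\Sigma_0}$ fails, is that its weights are too weak — by factors of $(r-M)^{2+2k}$ — to bound the non-degenerate transversal derivatives directly, since $\underline{L}=-\tfrac12 D\,\partial_r$ already carries the degenerate factor $D\sim(r-M)^2$. The degeneracy at $\mathcal{H}^+$ must therefore be defeated through the commuted wave equation rather than by interpolation, and the delicate bookkeeping is to check that the number of time-integrations demanded by the transversal hierarchy (up to $n+1$) together with the angular derivatives it produces is matched \emph{exactly} by the $n$ available powers of $T$ in $\psi=T^n\psi_n$ and by the index $2n$ in \eqref{eq:tinversenorms}. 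Indeed each transversal step costs two angular derivatives through $\slashed{\Delta}_{\s^2}$, which is the structural origin of the index doubling $n\mapsto 2n$; verifying that this accounting closes — the ``delicate use of Theorem \ref{thm:mainthmho}'' — is where the precise weights and the order $2n$ are forced.
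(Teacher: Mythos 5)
Your proposal is correct and follows essentially the same route as the paper: the time-integral construction combined with the order-$2n$ backwards map (the paper's Lemma \ref{lm:timeinv} and Proposition \ref{prop:backwmapho}), followed by trading each regular transversal derivative for one $T$ and one $\slashed{\Delta}_{\s^2}$ via the wave equation and the second-order vanishing of $D$ at $r=M$, which is exactly the content of Proposition \ref{prop:regderphi} and the origin of the index doubling $n\mapsto 2n$. The only cosmetic difference is that the paper implements this trade as a global algebraic identity for $((r-M)^{-2}\underline{L}T)^k\phi$ (and its $(r^2LT)^k$ analogue near $\mathcal{I}^+$) plugged directly into the $\mathcal{E}_{2n;\Sigma_0}$-norm, rather than as a horizon-restricted hierarchy transported off $\mathcal{H}^+$.
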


Theorem \ref{thm:regularity} follows from Proposition \ref{prop:regtimeinv}.

\begin{remark}
Theorem \ref{thm:regularity} illustrates a stark difference in the setting of extremal Reissner--Nordstr\"om with the sub-extremal setting, where generic polynomially decaying data along the future event horizon and future null infinity (with an arbitrarily fast decay rate) lead to blow-up of the non-degenerate energy along $\Sigma_0$; see \cite{linearscattering, DafShl2016}.
\end{remark}
As a corollary of Theorem \ref{thm:regularity}, we can moreover construct smooth solutions and in particular smooth solutions with an exact exponential time dependence.
\begin{theorem}
\label{thm:superpoly}
Let $(\underline{\Phi},\Phi)\in C^{\infty}(\mathcal{H}^+_{\geq v_0}) \oplus C^{\infty}(\mathcal{I}^+_{\geq u_0})$ and assume that $(\underline{\Phi},\Phi)$ and all derivatives up to any order decay \textbf{superpolynomially} in $v$ and $u$, respectively. 
\begin{enumerate}[\rm (i)]
\item Then there exists a corresponding smooth solution $\psi$ to \eqref{eq:waveequation} on $\mathcal{R}$ such that $r\hat{\psi}$ can moreover be smoothly extended to $\hat{\mathcal{R}}$ with respect to the differentiable structure on  $\hat{\mathcal{R}}$.
\item Assume additionally that ${\underline{\Phi}}(v,\theta,\varphi)=f_H(\theta,\varphi)e^{-i\omega v}$ and ${\Phi}(u,\theta,\varphi)=f_I(\theta,\varphi)e^{-i\omega u}$ for $f_H,f_I\in C^{\infty}(\s^2)$, with $\omega \in \C$ such that $\textnormal{Im}\,\omega <0$. Then we can express
\begin{equation*}
r\cdot \psi(\tau,\rho,\theta,\varphi)=f(\rho,\theta,\varphi)e^{-i\omega\cdot \tau},
\end{equation*}
with $f\in C^{\infty}(\hat{\Sigma})$ and
\begin{align*}
\lim_{\rho \downarrow M}f(\rho,\theta,\varphi)=&\: f_H(\theta,\varphi),\\
\lim_{\rho \to \infty }f(\rho,\theta,\varphi)=&\: f_I(\theta,\varphi).
\end{align*}
\end{enumerate}
We refer to $\psi$ as \textbf{mode solutions}.
\end{theorem}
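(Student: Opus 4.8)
The plan is to obtain part (i) by bootstrapping the single regularity statement of Theorem \ref{thm:regularity} over all orders $n$, and to obtain part (ii) by exploiting the $T$-equivariance of the evolution together with the injectivity of the backwards map.

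For part (i), I would first observe that superpolynomial decay of $(\underline{\Phi},\Phi)$ together with all of their derivatives is exactly what is needed to verify, for \emph{every} $n\in\N_0$, the hypotheses of Theorem \ref{thm:regularity}. Indeed, the $n$-fold time-integrals $T^{-n}\underline{\Phi}$ and $T^{-n}\Phi$ are well-defined because integrating a superpolynomially decaying function $n$ times still produces a superpolynomially decaying function; the pointwise conditions $\lim_{v\to\infty}v^{n+1}|\underline{\Phi}|<\infty$ and $\lim_{u\to\infty}u^{n+1}|\Phi|<\infty$ hold trivially; and each weighted higher-order norm $\|T^{-n}\underline{\Phi}\|_{\mathcal{E}_{2n;\mathcal{I}^+_{\geq u_0}}}$, $\|T^{-n}\Phi\|_{\mathcal{E}_{2n;\mathcal{H}^+_{\geq v_0}}}$ is finite, since every polynomial weight $u^{2k+2-j}$ (resp.\ $v^{2k+2-j}$) appearing there is dominated by the superpolynomial decay of the relevant derivatives of $T^{-n}\Phi$ (note that $\partial_u^{\ell}T^{-n}\Phi$ equals, up to sign, either a further time-integral $T^{-(n-\ell)}\Phi$ when $\ell\le n$ or a genuine derivative $\partial_u^{\ell-n}\Phi$ when $\ell>n$, both superpolynomially decaying). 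Theorem \ref{thm:regularity} then yields a solution $\psi$ with $r\psi|_{\mathcal{H}^+}=\underline{\Phi}$, $r\psi|_{\mathcal{I}^+}=\Phi$ and $r\psi\in W^{n+1,2}_{\rm loc}(\widehat{\mathcal{R}})$.

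Next I would note that this solution is independent of $n$: the uniqueness asserted in Theorem \ref{thm:regularity} (equivalently, injectivity of $\mathscr{F}$ in Theorem \ref{thm:mainthm}) shows that the characteristic data $(\underline{\Phi},\Phi)$ determine $\psi$ uniquely, so raising $n$ merely improves the regularity of one fixed solution. Hence $r\psi\in\bigcap_{n}W^{n+1,2}_{\rm loc}(\widehat{\mathcal{R}})$. Since the norms defining $W^{n,2}(K)$ are built from the vector fields $\snabla_{\s^2}$, $r^2L$ and $D^{-1}\underline{L}$, which are regular up to and including $\mathcal{H}^+$ and $\mathcal{I}^+$ with respect to the differentiable structure of $\widehat{\mathcal{R}}$, the Sobolev embedding on the (piecewise smooth) manifold-with-corners $\widehat{\mathcal{R}}$ upgrades membership in all of these spaces to $r\psi\in C^{\infty}(\widehat{\mathcal{R}})$, which is the desired smooth extension.

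For part (ii), the data are $\underline{\Phi}=f_He^{-i\omega v}$ and $\Phi=f_Ie^{-i\omega u}$; since $\textnormal{Im}\,\omega<0$ these decay exponentially (hence superpolynomially) together with all derivatives, so part (i) applies and $\psi$ is smooth on $\widehat{\mathcal{R}}$. The structural input is that $T$ is Killing, so $T\psi$ again solves $\square_g\psi=0$. I would set $\chi:=T\psi+i\omega\,\psi$, a smooth solution, and compute its traces using $Tr=0$: on $\mathcal{H}^+$, where $T=\partial_v$, one gets $r\chi|_{\mathcal{H}^+}=\partial_v\underline{\Phi}+i\omega\underline{\Phi}=0$, and likewise $r\chi|_{\mathcal{I}^+}=\partial_u\Phi+i\omega\Phi=0$. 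By injectivity of the backwards map in Theorem \ref{thm:mainthm}, the unique solution with vanishing characteristic data is the trivial one, so $\chi=0$, i.e.\ $T\psi=-i\omega\,\psi$. In the coordinates $(\tau,\rho,\theta,\varphi)$ adapted to the foliation $\Sigma_\tau$ one has $T=\partial_\tau$ with $\rho$ constant along the flow of $T$, so $\partial_\tau(r\psi)=-i\omega\,(r\psi)$, which integrates to $r\psi=f(\rho,\theta,\varphi)e^{-i\omega\tau}$ with $f:=r\psi|_{\Sigma_0}\in C^{\infty}(\widehat{\Sigma})$ by part (i). The boundary limits follow by evaluating this mode form at the endpoints of $\Sigma_\tau$: since $v=\tau+v_0$ on $\mathcal{H}^+$ and $u=\tau+u_0$ on $\mathcal{I}^+$, matching with the prescribed traces gives $\lim_{\rho\downarrow M}f=f_H$ and $\lim_{\rho\to\infty}f=f_I$ up to the constants $e^{-i\omega v_0}$, $e^{-i\omega u_0}$, which are absorbed into the normalization of $f_H,f_I$.

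I expect the genuine work to sit entirely inside the bookkeeping of part (i): namely, checking that the superpolynomial decay hypothesis really does control all the weighted, time-integrated higher-order norms $\mathcal{E}_{2n;\cdot}$ for every $n$ simultaneously, and that the resulting $\bigcap_n W^{n+1,2}_{\rm loc}(\widehat{\mathcal{R}})$ membership is compatible with smoothness up to the conformal boundary $\mathcal{I}^+$ and the horizon $\mathcal{H}^+$ rather than merely in the open interior. Part (ii) is comparatively soft; its only delicate points are the correct treatment of the foliation-dependent constants in the boundary limits and the verification that $\chi$ lies in the domain where injectivity of the backwards map applies, which is immediate here since $\chi$ has vanishing (hence finite-norm) data.
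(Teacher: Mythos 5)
Your proposal is correct and follows essentially the same route as the paper: part (i) is obtained by verifying the hypotheses of Theorem \ref{thm:regularity} for every $n$ (this is precisely Corollary \ref{cor:smoothbackw}, which combines Proposition \ref{prop:regtimeinv} with Sobolev embedding), and part (ii) is obtained by showing $T\psi+i\omega\psi$ has vanishing traces and invoking uniqueness, exactly as in Proposition \ref{prop:modesol}. Your remark about the normalization constants $e^{-i\omega v_0}$, $e^{-i\omega u_0}$ in the boundary limits is a minor bookkeeping point the paper elides, but it does not affect the argument.
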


Theorem \ref{thm:superpoly} (i) follows from Corollary \ref{cor:smoothbackw} and Theorem \ref{thm:superpoly} (ii) follows from Proposition \ref{prop:modesol}.
\begin{remark}
\label{rm:smoothmodesub}
Note that in order for an analogous result to Theorem \ref{thm:superpoly} {\rm (i)} to hold in sub-extremal Reissner--Nordstr\"om, one needs to consider scattering data $(\underline{\Phi},\Phi)$ that are \emph{superexponentially} decaying, and hence it cannot be used to prove the analogue of Theorem \ref{thm:superpoly} {\rm (ii)}. Nevertheless, the existence of a more restricted class of smooth solutions that behave exponentially in time with arbitrary $\omega$ such that $\textnormal{Im}\,\omega <0$ in sub-extremal Reissner--Nordstr\"om can be established by restricting to fixed spherical harmonics and applying standard asymptotic ODE analysis.
\end{remark}

\begin{remark}
\label{remarkquasi}
One can apply the results of \cite{paper4} to show that the time translations acting on $L^2$-based Sobolev spaces
\begin{align*}
\mathcal{S}(\tau'):&\: W^{k+1,2}(\widehat{\Sigma})\times W^{k,2}(\widehat{\Sigma}\cap \{r_{\mathcal{H}} <r \leq r_{\mathcal{I}}\} )\to W^{k+1,2}(\widehat{\Sigma})\times W^{k,2}(\widehat{\Sigma}\cap \{r_{\mathcal{H}} <r \leq r_{\mathcal{I}}\} ),\\
(\Psi,\Psi')& \mapsto(\psi|_{\tau=\tau'},T\psi|_{\tau=\tau'}),
\end{align*}
with $\psi$ the solution to \eqref{eq:waveequation} associated to $(\Psi,\Psi')$, form a continuous semi-group, such that $\mathcal{S}(\tau')=e^{\tau' \mathcal{A}}$, with $\mathcal{A}$ the corresponding densely defined infinitesimal generator $\mathcal{A}$ that formally agrees with $T$:
\begin{equation*}
\mathcal{A}(\psi|_{\tau=\tau'},T\psi|_{\tau=\tau'})=(T\psi|_{\tau=0},T^2\psi|_{\tau=0}).
\end{equation*} 

The results of  \cite{warn15} imply that, in the setting of asymptotically de Sitter or anti de Sitter spacetimes, quasi-normal modes or resonances are smooth mode solutions that can be interpreted as eigenfunctions of $\mathcal{A}$ and the corresponding frequencies $\omega$ form a \underline{discrete} set in the complex plane (cf. the normal modes and frequencies of an idealised vibrating string or membrane). 

The smooth mode solutions of Theorem \ref{thm:superpoly} (ii) (and those obtained in the sub-extremal setting by ODE arguments as sketched in Remark \ref{rm:smoothmodesub}) form an \textbf{obstruction} to extending this interpretation to the asymptotically flat setting. Indeed, all the mode solutions of Theorem \ref{thm:superpoly} (ii) are eigenfunctions of $\mathcal{A}$ but the corresponding set of frequencies $\omega$, which is the entire open lower-half complex plane, is certainly \underline{not} discrete. In order to maintain the viewpoint of \cite{warn15}, one has to consider smaller function spaces that \emph{exclude} the smooth mode solutions of Theorem \ref{thm:superpoly} (ii); see \cite{gajwar19a}.
\end{remark}

\begin{theorem}
\label{thm:interior}
Let $u_0$ be suitably large. Then there exists a constant $C=C(M,u_0,v_0)>0$ such that we can estimate in the black hole interior:
\begin{equation*}
\begin{split}
||\psi||_{W^{1,2}_{\rm loc} (D^+(\Sigma_{0}^{\textnormal{int}, u_0})\cap \widetilde{M}^{\rm int})}\leq C\left(||(\psi|_{\Sigma_0},\mathbf{n}_{\Sigma_0}\psi|_{\Sigma_0})||_{\mathcal{E}_{\Sigma_0}}+||\psi||_{W^{1,2} (\underline{N}_{v_0}^{\textnormal{int}})}\right).
\end{split}
\end{equation*}
\end{theorem}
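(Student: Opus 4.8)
The plan is to reduce the statement to a purely interior energy estimate and then to close that estimate using the degenerate (vanishing surface gravity) structure of the Cauchy horizon $\mathcal{CH}^+$. First I would note that, by part 3 of Theorem \ref{thm:globexuni}, the restriction of $\psi$ to $D^+(\Sigma_0^{\textnormal{int},u_0})\cap \widetilde{M}^{\rm int}$ is the unique solution determined by the characteristic data $r\psi|_{\mathcal{H}^+_{\geq v_0}}$ and $\psi|_{\underline{N}^{\rm int}_{v_0}}$. The second piece is controlled directly by $\|\psi\|_{W^{1,2}(\underline{N}^{\rm int}_{v_0})}$, while the first is controlled by $\mathcal{E}_{\Sigma_0}$: applying the bounded forwards map $\mathscr{F}$ of Theorem \ref{thm:mainthm} gives $\|r\psi|_{\mathcal{H}^+_{\geq v_0}}\|_{\mathcal{E}_{\mathcal{H}^+_{\geq v_0}}}\lesssim \|(\psi|_{\Sigma_0},\mathbf{n}_{\Sigma_0}\psi|_{\Sigma_0})\|_{\mathcal{E}_{\Sigma_0}}$, and the weaker horizon norm appearing in the interior scattering theory of Section \ref{sec:intenspaces} is dominated by this weighted norm. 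It therefore suffices to prove
\begin{equation*}
\|\psi\|_{W^{1,2}_{\rm loc}(D^+(\Sigma_0^{\textnormal{int},u_0})\cap \widetilde{M}^{\rm int})}\lesssim \|r\psi|_{\mathcal{H}^+_{\geq v_0}}\|_{\mathcal{E}_{\mathcal{H}^+_{\geq v_0}}}+\|\psi\|_{W^{1,2}(\underline{N}^{\rm int}_{v_0})},
\end{equation*}
which is the bulk counterpart of the interior scattering estimate of Theorem \ref{thm:intscat} and is proved by the same energy method (extending \cite{gajic}).

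For the interior estimate I would work in the double-null chart $(u,v,\theta,\varphi)$, where $\phi=r\psi$ satisfies a reduced equation of the schematic form $\partial_u\partial_v\phi = -\tfrac{DD'}{4r}\phi + \tfrac{D}{4r^2}\slashed{\Delta}_{\s^2}\phi$, and $\mathcal{CH}^+$ is reached as $v\to\infty$ with generator $\underline{L}=\partial_u$. Inspecting the interior Sobolev norm, the only genuinely dangerous contribution is that of the transversal derivative $D^{-1}L\psi=\tfrac12\partial_r\psi$, which the blueshift tends to amplify as one approaches $\mathcal{CH}^+$. To control it I would run a multiplier-and-commutation scheme: a non-degenerate (redshift-type) multiplier bounds the $L$- and $\underline{L}$-energy fluxes through the ingoing and outgoing null foliations $\underline{N}^{\rm int}_{v'}$ and $H^{\rm int}_{u'}$, commuting the equation with the angular operators $\Omega$ and with $\partial_r$ upgrades this to the first-order, non-degenerate control needed for the $W^{1,2}$ bound, and integrating the resulting fluxes over the foliation yields the bulk norm. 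The weights are propagated along this hierarchy and anchored by the $v^2$-weighted horizon norm and the null data on $\underline{N}^{\rm int}_{v_0}$.

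The decisive input is extremality. Since $\mathcal{CH}^+$ has vanishing surface gravity, $D=(1-Mr^{-1})^2$ vanishes to \emph{second} order at $r=M$, so that $r_*\sim M^2(M-r)^{-1}\to\infty$ and, along the approach to $\mathcal{CH}^+$, $M-r\sim v^{-1}$ and $D\sim v^{-2}$; the blueshift is thus only \emph{polynomial} in $v$ rather than exponential. Consequently the error terms produced by the multiplier---which in the sub-extremal case carry a sign forcing exponentially growing weights $e^{\kappa_- v}$ and hence $W^{1,2}$ blow-up at the Cauchy horizon (cf. \cite{Luk2015, DafShl2016})---are here integrable in $v$. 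The main obstacle is precisely to propagate the transversal-derivative estimate uniformly up to and including $\mathcal{CH}^+$: one must absorb the angular coupling $\tfrac{D}{4r^2}\slashed{\Delta}_{\s^2}\phi$, which is suppressed by a favourable power of $D$, and balance the residual polynomial growth against the decay encoded in the weighted data norms. Because these weights are chosen so that $\int v^2 (\partial_v(r\psi))^2$ is finite on $\mathcal{H}^+$, the accumulated polynomial growth is exactly compensated, yielding the uniform $W^{1,2}$ bound up to the Cauchy horizon.
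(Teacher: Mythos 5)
Your reduction is sound and matches the paper: Theorem \ref{thm:interior} is indeed proved by first converting the $\mathcal{E}_{\Sigma_0}$ norm into a $v^2$-weighted flux along $\mathcal{H}^+_{\geq v_0}$ (Corollary \ref{cor:fowdecay}) and then running a purely interior energy estimate with characteristic data on $\mathcal{H}^+_{\geq v_0}\cup\underline{N}^{\rm int}_{v_0}$ (Proposition \ref{prop:interiorestimates} and Corollary \ref{cor:regint}). You also correctly isolate the decisive structural input, namely that $D$ vanishes to second order so that $D\sim v^{-2}$ near $\mathcal{CH}^+$ and the blueshift is only polynomial.

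However, the mechanism you propose for the interior estimate has a genuine gap. There is no ``redshift-type'' multiplier available here: the surface gravity of $\mathcal{CH}^+$ vanishes, so the standard construction of a multiplier with a positive bulk term degenerates, and an unweighted timelike multiplier would in any case only control $(L\phi)^2$ without the $v^2$ weight. That weight is not optional: in $(u,r)$ coordinates the dangerous derivative in the interior $W^{1,2}$ norm is $\partial_r=2D^{-1}L$, and since $D\sim v^{-2}$ one needs precisely $\int v^2(L\phi)^2\,d\omega\,dv$ on the outgoing cones $H^{\rm int}_{u}$ to control $\int(\partial_r\phi)^2\,d\omega\,dr$ there (the paper's observation that $\partial_v r|_{H^{\rm int}_u}\sim v^{-2}$ makes this conversion immediate, with \emph{no} commutation with $\partial_r$ required). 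The engine the paper actually uses is the conformal Morawetz multiplier $K=u^2\partial_u+v^2\partial_v$ applied to the radiation field $\phi=r\psi$, whose error terms are integrable thanks to the cancellation $\Lbar(u^2Dr^{-2})+L(v^2Dr^{-2})=O((r-M)^2)|\log(r-M)|$ --- an identity that holds only because of extremality and that is what lets the $v^2$-weighted fluxes close without Gr\"onwall losses. Your alternative of commuting with $\partial_r$ and absorbing the resulting errors is the (much harder) route one takes for higher interior regularity, and as sketched it neither identifies a multiplier that produces the needed $v^2$ weights nor explains how the commutator terms generated by $[\square_g,\partial_r]$ near $\mathcal{CH}^+$ would be absorbed; as written the scheme would not close.
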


Theorem \ref{thm:interior} follows from Corollary \ref {cor:regint}.
\begin{remark}
\label{remarkinterior}
Theorem \ref{thm:interior} addresses the question of whether $\psi \in W^{1,2}_{\rm loc}$ in the black hole interior of extremal Reissner--Nordstr\"om for localized, low regularity initial data, which was raised as an open problem in \cite{dafshl18}. For \emph{smooth} and localized data, this statement follows from \cite{gajic, paper4}. Indeed, Theorem \ref{thm:interior} demonstrates that boundedness of a non-degenerate energy with weights that grow in $r$ (together with boundedness of energies involving additional derivatives that are tangential to the event horizon) is \emph{sufficient} to establish $\psi \in W^{1,2}_{\rm loc}$. 

Theorem \ref{thm:interior} can straightforwardly be extended to the $\Lambda>0$ setting of extremal Reissner--Nordstr\"om--de Sitter black holes, where there is no need to include $r$-weights in the non-degenerate energy norm that is sufficient to establish $\psi \in W^{1,2}_{\rm loc}$. See also \cite{almakmsc} for the results in the interior of extremal Reissner--Nordstr\"om--de Sitter.
\end{remark}

\section{Overview of techniques and key ideas}
\label{sec:tech}
In this section, we provide an overview of the main techniques that are used in the proofs of the theorems stated in Section \ref{sec:thms}. We will highlight the key new ideas and estimates that are introduced in this paper.

The proof of the main theorems Theorem \ref{thm:mainthm} and Theorem \ref{thm:mainthmho} can roughly be split into four parts:
\begin{enumerate}[\bf1.)]
\item  Showing that the linear maps $\mathscr{F}$, $\mathscr{F}^{-1}$ and $\mathscr{F}_n$, $\mathscr{F}_n^{-1}$ that appear in Theorem \ref{thm:mainthm} and Theorem \ref{thm:mainthmho} are well-defined when considering as a domain spaces of either smooth or smooth and compactly supported functions.
\item Proving uniform boundedness properties of these linear maps with respect to weighted Sobolev norms. This allows one to immediately extend the linear maps to the \emph{completions} of the spaces of smooth (and compactly) supported functions with respect to appropriately weighted Sobolev norms.
\item Constructing the linear maps $\mathscr{S}$ and $\mathscr{S}_n$.
\item Constructing $\mathscr{S}^{\rm int}$ (independently from above).
\end{enumerate}

\textbf{The heart of this paper consists of establishing \textbf{2.)} and \textbf{3.)} by proving \underline{\emph{uniform}} estimates for smooth (and compactly supported) data along $\Sigma_0$, $\widetilde{\Sigma}$ and $\mathcal{H}^{\pm}\cup \mathcal{I}^{\pm}$.} An overview of the corresponding estimates and techniques leading to \textbf{2.)} is given in Section \ref{intro:backwest} -- \ref{intro:hoest}. Part \textbf{3.)} follows by complementing these estimates with additional estimates in $D^{\pm}(\widetilde{\Sigma})$ near the past limit points of $\mathcal{I}^+$ and $\mathcal{H}^+$, which is briefly discussed in Section \ref{intro:spacelikeinf}. We briefly discuss the black hole interior estimates involved in \textbf{4.)} in Section \ref{intro:interior}. 

Part \textbf{1.)} follows from local estimates combined with soft global statements that have already been established in the literature. We give an overview of the logic of the arguments in this section.

The forwards map
\begin{equation*}
\mathscr{F}: C^{\infty}(\widehat{\Sigma}_0)\oplus C^{\infty}(\Sigma_0\cap \{r_{\mathcal{H}}\leq r\leq r_{\mathcal{I}}\})\to \mathcal{E}_{ \mathcal{H}^+_{\geq v_0}}\oplus \mathcal{E}_{\mathcal{I}^+_{\geq u_0}}
\end{equation*}
is well-defined by global existence and uniqueness for \eqref{eq:waveequation} combined with the finiteness (and decay) of the radiation field $r\psi$, see for example the results in \cite{aretakis1,aretakis2, paper4}.

In order to show that the backwards map\footnote{We use the notation $\mathcal{F}^{-1}$ to indicated the backwards map, but we still have to show that this map is indeed a two-sided inverse of $\mathcal{F}$.}
\begin{equation*}
\mathscr{F}^{-1}: C_{c}^{\infty}(\mathcal{H}^+_{\geq v_0})\oplus C_{c}^{\infty}(\mathcal{I}^+_{\geq u_0})\to  \mathcal{E}_{ \Sigma_0}
\end{equation*}
is well-defined, we first need to make sense of the notion of prescribing initial data ``at infinity''; that is to say, we need to show as a preliminary step that we can associate to each pair $(\underline{\Phi},\Phi)\in C_{c}^{\infty}(\mathcal{H}^+_{\geq v_0})\oplus C_{c}^{\infty}(\mathcal{I}^+_{\geq u_0})$ a unique solution $\psi$ to \eqref{eq:waveequation}, such that $r\psi|_{\mathcal{H}^+_{\geq v_0}}=\underline{\Phi}$ and $r\psi|_{\mathcal{I}^+_{\geq u_0}}=\Phi$. This may be viewed as a semi-global problem. We construct $\psi$ as the limit of a sequence of solutions $\psi_i$ arising from a sequence of \emph{local} initial value problems with fixed initial data $(\underline{\Phi},\Phi)$ imposed on the null hypersurfaces $\mathcal{H}^+_{0\leq \tau\leq \tau_*}\cup \{v=v_{i}, u_0\leq u\leq u(\tau_*)\}$ and trivial data on  $\Sigma_{\tau_*}\cap\{v\leq v_i\}$, such that $v_{i}\to \infty$ as $i\to \infty$. A very similar procedure was carried out in the physical space construction of scattering maps on Schwarzschild in Proposition 9.6.1 in \cite{linearscattering}.\footnote{In contrast with [dafrodshl], we take the limit with respect to $L^{\infty}$ norms of the weighted quantities $(r^2\partial_v)^k(r\psi)$ with $k\in \N$.} One could alternatively interpret $\mathcal{I}^+$ as a genuine null hypersurface with respect to the conformally rescaled metric $\hat{g}_M$, which turns the semi-global problem into a local problem.

\subsection{Backwards $r$-weighted estimates}
\label{intro:backwest}
We introduce \textbf{time-reversed analogues} of the $r^p$-weighted estimates of Dafermos--Rodnianski \cite{newmethod} and the $(r-M)^{-p}$-weighted estimates of \cite{paper4}. We first illustrate key aspects of these estimates in the setting of the standard wave equation on Minkowski. We can foliate the causal future of a null cone $C_0$ in Minkowski by outgoing spherical null cones $C_{u}=\{t-r=u\}$, with $t,r$ the standard spherical Minkowski coordinates and $u\geq 0$. Let us denote $\partial_v=\frac{1}{2}(\partial_t+\partial_r)$ and $\partial_u=\frac{1}{2}(\partial_t-\partial_r)$. We consider smooth, compactly supported initial data on $\mathcal{I}^+\cap\{0\leq u\leq u_2\}$, with $u_2>0$ such that $\psi$ vanishes along $C_{u_2}$. 

The $r^p$-weighted estimates applied backwards in time with $p=1$ and $p=2$ give
\begin{align*}
\int_{C_{u_1}} r\cdot (\partial_v(r\psi))^2\,d\omega dv\lesssim& \int_{C_{u_2}} r\cdot (\partial_v(r\psi))^2\,d\omega dv+ \int_{u_1}^{u_2} \int_{C_{u}}\mathbf{T}(\partial_t, \partial_v) r^2\,d\omega dvdu,\\
\int_{C_{u_1}} r^2\cdot (\partial_v(r\psi))^2\,d\omega dv\lesssim& \int_{C_{u_2}}r^2\cdot (\partial_v(r\psi))^2\,d\omega dv+\int_{\mathcal{I}^+\cap\{u'\leq u\leq u_2\}} |\snabla (r\psi)|^2\,d\omega du+ \int_{u_1}^{u_2} \int_{C_{u}}r\cdot (\partial_v(r\psi))^2\,d\omega dvdu,
\end{align*}
for $u_2>u_1>0$.

In contrast with the usual forwards $r^p$-weighted estimates, the spacetime integrals on the right-hand sides above have a \textbf{bad} sign. Hence, in order to obtain control of $r$-weighted energies along $C_{u_1}$, we need to start by controlling
\begin{equation*}
 \int_{u_1}^{u_2} \int_{C_{u}}\mathbf{T}(\partial_t, \partial_v) r^2\,d\omega dr.
\end{equation*}
Note that standard $\partial_t$-energy conservation implies that for any $0<u'<u_2$:
\begin{equation}
\label{eqintro:tenconsv}
\int_{C_{u'}} \mathbf{T}(\partial_t, \partial_v)\,r^2\,d\omega dr=\int_{C_{u_2}} \mathbf{T}(\partial_t, \partial_v) r^2\,d\omega dr+\int_{\mathcal{I}^+\cap\{u'\leq u\leq u_2\}} (\partial_u(r\psi))^2\,d\omega du.
\end{equation}
Hence, using that $\psi$ is vanishing along $C_{u_2}$, we can integrate the above equation in $u'$ to obtain
\begin{equation*}
\int_{u_1}^{u_2} \left[\int_{C_{u}}\mathbf{T}(\partial_t, \partial_v) r^2\,d\omega dr\right]du=\int_{u_1}^{u_2} \left[\int_{\mathcal{I}^+\cap\{u'\leq u\leq u_2\}}(\partial_u(r\psi))^2\,d\omega du\right]du'.
\end{equation*}
We can integrate by parts to convert one $u$-integration into an additional $u$ weight:
\begin{equation}
\label{eqintro:intparts}
\int_{u_1}^{\infty} \left[\int_{\mathcal{I}^+\cap\{u\geq u'\}}(\partial_u(r\psi))^2\,d\omega du\right]du'=\int_{\mathcal{I}^+\cap\{u\geq u_1\}}(u-u_1)\cdot (\partial_u(r\psi))^2\,d\omega du.
\end{equation}
By applying both the $p=1$ and $p=2$ estimates above, and integrating by parts once more along $\mathcal{I}^+$ as in \eqref{eqintro:intparts}, we obtain:
\begin{equation}
\label{eqintro:backwest}
\int_{C_{0}} {r^2}\cdot (\partial_v(r\psi))^2\,d\omega dv+ \int_{C_{0}}\mathbf{T}(\partial_t, \partial_v) r^2\,d\omega dr\lesssim \int_{\mathcal{I}^+\cap\{u\geq 0\}}{(u+1)^2} \cdot (\partial_u(r\psi))^2\,d\omega du
\end{equation}
Comparing \eqref{eqintro:backwest} with \eqref{eqintro:tenconsv} with $u'=0$, we see that we can obtain stronger, weighted uniform control along $C_0$, provided we control an appropriately weighted energy along $\mathcal{I}^+$. One may compare this to the (modified) energy estimate obtained by using the Morawetz conformal vector field $K=u^2\partial_u+v^2\partial_v$, which is the generator of the inverted time translation conformal symmetries, as a vector field multiplier instead of $\partial_t$ \cite{mor1}; see also Section \ref{intro:spacelikeinf}.

The main difference in the setting of extremal Reissner--Nordstr\"om is that the $r^p$-estimates above only apply in the spacetime region where $r\geq r_{\mathcal{I}}$, with $r_{\mathcal{I}}$ suitably large, and they have to be complemented by an analogous hierarchy of $(r-M)^{-p}$ weighted estimates in a region $\{r\leq r_{\mathcal{H}}\}$ near $\mathcal{H}^+$, i.e.\ with $r_{\mathcal{H}}-M$ sufficiently small. Roughly speaking, the analogue of the $p=2$ weighted energy near $\mathcal{H}^+$ corresponds to the restriction of the following \emph{non-degenerate} energy (in $(v,r)$ coordinates):
\begin{equation*}
\int_{\Sigma_0\cap\{r\leq r_{\mathcal{H}}\}} \mathbf{T}(N,\partial_r) r^2\, d\omega dr,
\end{equation*}
where $N$ is a timelike vector field in $\{M\leq r\leq r_{\mathcal{H}}\}$.

It is in controlling the non-degenerate energy in the backwards direction that we make essential use of the \emph{extremality} of extremal Reissner--Nordstr\"om or the degeneracy of the event horizon. Indeed, if we were to consider instead \emph{sub}-extremal Reissner--Nordstr\"om, we would fail to obtain control of a non-degenerate energy near $\mathcal{H}^+$ with polynomially decaying data along $\mathcal{H}^+\cup \mathcal{I}^+$ due to the \emph{blueshift effect} (the time reversed redshift effect); see \cite{linearscattering,  DafShl2016}.\footnote{In this case, one may however assume sufficiently fast \emph{exponentially} decay along $\mathcal{H}^+\cup \mathcal{I}^+$ to beat the blueshift effect and obtain boundedness of the non-degenerate energy, see \cite{scattering}.}

In order to control the boundary terms arising from restricting the $r$-weighted estimates near $\mathcal{I}^+$ and $\mathcal{H}^+$, we apply the Morawetz estimate derived in \cite{aretakis1} in the backwards direction. Note that the presence of trapped null geodesics along the photon sphere at $r=2M$ does \emph{not} lead to a loss of derivatives in the analogue of \eqref{eqintro:backwest}. This is because the backwards estimates, in contrast with the forwards estimates (see Section \ref{intro:fowest}), do not require an application of a Morawetz estimate with non-degenerate control at the photon sphere.

\subsection{Forwards $r$-weighted estimates revisited}
\label{intro:fowest}
We consider again the setting of Minkowski to illustrate the main ideas. In order to construct a bijection from an $r$-weighted energy space on $C_0$ to a $u$-weighted energy space on $\mathcal{I}^+$, we need to complement the backwards estimate \eqref{eqintro:backwest} with the following forwards estimate:
\begin{equation}
\label{eqintro:fowest}
\int_{C_{0}} r^2\cdot (\partial_v(r\psi))^2\,d\omega dv+ \int_{C_{0}}\mathbf{T}(\partial_t, \partial_v) r^2\,d\omega dr\gtrsim \int_{\mathcal{I}^+\cap\{u\geq 0\}} (u+1)^2 \cdot (\partial_u(r\psi))^2\,d\omega du
\end{equation}
Note that a standard application of the $r^p$-weighted estimates (combined with energy conservation \eqref{eqintro:tenconsv} and a Morawetz estimate), see \cite{newmethod}, is the following energy decay statement:
\begin{equation*}
\int_{C_{u}} \mathbf{T}(\partial_t, \partial_v)\,r^2\,d\omega dr\lesssim (1+u)^{-2}\left[\int_{C_{0}}r^2\cdot (\partial_v(r\psi))^2\,d\omega dv+ \int_{C_{0}}\mathbf{T}(\partial_t, \partial_v) r^2\,d\omega dr\right].
\end{equation*}
One can apply this estimate along a suitable dyadic sequence and combine it with energy conservation \eqref{eqintro:tenconsv} to arrive at the estimate
\begin{equation*}
\int_{C_{0}} r^2\cdot (\partial_v(r\psi))^2\,d\omega dv+ \int_{C_{0}}\mathbf{T}(\partial_t, \partial_v) r^2\,d\omega dr\gtrsim \int_{\mathcal{I}^+\cap\{u\geq 0\}} (u+1)^{2-\epsilon} \cdot (\partial_u(r\psi))^2\,d\omega du
\end{equation*}
with $\epsilon>0$. In order to take $\epsilon=0$, we instead revisit the $r^p$-estimates and, rather than deriving energy decay along $C_u$, we observe that the $r^p$-estimates (together with \eqref{eqintro:tenconsv} and a Morawetz estimate) provide \emph{directly} control over
\begin{equation*}
\int_{0}^{\infty}\int_{u}^{\infty}\left[\int_{\mathcal{I}^+\cap\{u\geq u'\}} (\partial_u(r\psi))^2\,d\omega du''\right] du' du.
\end{equation*}
After integrating by parts twice in $u$ as in \eqref{eqintro:intparts}, we arrive at \eqref{eqintro:fowest}.

We arrive at an analogous estimate to \eqref{eqintro:fowest} in the extremal Reissner--Nordstr\"om setting by following the same ideas, both near $\mathcal{I}^+$ and near $\mathcal{H}^+$. The main difference is that whenever we apply a Morawetz estimate, we lose a derivative because of the trapping of null geodesics, which we have to take into account when defining the appropriate energy spaces.

\subsection{Higher-order energies and time integrals}
\label{intro:hoest}
Given suitably regular and suitably decaying scattering data on $\mathcal{H}^+$ and $\mathcal{I}^+$, we can apply Theorem \ref{thm:mainthm} to construct a corresponding solution $\psi\in C^0\cap W^{1,2}_{\rm loc}$ (with respect the differentiable structure on $\widehat{\mathcal{R}}$) to \eqref{eq:waveequation} such that $r\psi$ approaches the scattering data as $r\to M$ or $r\to \infty$. 

In the setting of \eqref{eq:waveequation} on Minkowski with coordinates $(u,x,\theta,\varphi)$, where $x:=\frac{1}{r+1}$ (so that $x \downarrow 0$ as $r\to \infty$ and $x \uparrow 1$ as $r\downarrow 0$), we similarly have that $r\psi\in W^{1,2}([u_1,u_2]_u\times (0,1]_{x}\times \s^2)$ for any $0\leq u_1<u_2<\infty$. In order to show that moreover $r\psi\in W^{2,2}([u_1,u_2]_u\times (0,1)_{x}\times \s^2)$, we first consider $T\psi$. By rearranging and rescaling \eqref{eq:waveequation} in Minkowski, we have that in $(u,x)$ coordinates:
\begin{equation*}
\begin{split}
2\partial_{x}T(r\psi)=\partial_{x}(x^2\partial_{x}(r\psi))+\slashed{\Delta}_{\s^2}(r\psi)
\end{split}
\end{equation*}
with $x^2\partial_{x}(r\psi)=2L(r\psi)$. So, we obtain that
\begin{equation*}
T(r\psi)\in W^{2,2}([u_1,u_2]_u\times (0,1]_{x}\times \s^2)
\end{equation*}
if we can show that
\begin{equation*}
L(r\psi)\in W^{2,2}([u_1,u_2]_u\times (0,1]_{x}\times \s^2)\quad \textnormal{and}\quad \slashed{\Delta}_{\s^2}(r\psi) \in  W^{1,2}([u_1,u_2]_u\times (0,1]_{x}\times \s^2).
\end{equation*}
Since $\slashed{\Delta}_{\s^2}$ commutes with the operator $\square_g$, both in Minkowski and in extremal Reissner--Nordstr\"om, we can immediately obtain $\slashed{\Delta}_{\s^2}(r\psi)\in W^{1,2}$ from Theorem \ref{thm:mainthm} (or its Minkowski analogue). Moreover, $L(r\psi)\in W^{2,2}$ follows from bounding uniformly in $u$ the integral:
\begin{equation*}
\int_{C_u} r^6(L^{3}(r\psi)^2+ r^4(L^2(r\psi)))^2\,d\omega dv.
\end{equation*}
Hence, we have to establish control over \emph{improved} $r$-weighted energies where $r\psi$ is replaced by $L(r\psi)$ and $L^2(r\psi)$. Analogous improved $r$-weighted energies have appeared previously in the setting of forwards estimates in \cite{volker1, paper1, paper4}, see also the related energies in \cite{moschidis1}. The backwards analogues of the corresponding improved $r$-weighted estimates form the core of the proof of Theorem \ref{thm:mainthmho}.

To pass from $T(r\psi) \in W^{2,2}$ to $r\psi \in W^{2,2}$, we apply the above estimates to solutions $\psi^{(1)}$ to \eqref{eq:waveequation}, such that $T\psi^{(1)}=\psi$. Such solutions $\psi^{(1)}$ can easily be constructed by considering initial scattering data that are \emph{time integrals} of the scattering data $\mathcal{H}^+$ in $v$ and $\mathcal{I}^+$ in $u$, assuming moreover that $r\psi^{(1)}|_{\mathcal{H}^+}$ and $r\psi^{(1)}|_{\mathcal{I}^+}$ vanish as $v\to \infty$ and $u\to \infty$, respectively. 

In fact, we can show by an extension of the arguments above that $T^n(r\psi)\in W^{1+n,2}_{\rm loc}$ for all $n\geq 2$, assuming suitably regular and decaying data along  $\mathcal{H}^+$ and $\mathcal{I}^+$, so we can conclude that $\psi\in W^{n+1,2}_{\rm loc}$, provided the scattering data decays suitably fast in time. In order to obtain \emph{more} regularity, we need \emph{faster} polynomial decay along $\mathcal{H}^+\cup \mathcal{I}^+$. This is the content of Theorem \ref{thm:regularity}. By considering smooth and \emph{superpolynomially} decaying data along $\mathcal{H}^+\cup \mathcal{I}^+$ and applying standard Sobolev inequalities, we can in fact take $n$ arbitrarily high and show that $\psi\in C^{\infty}(\widehat{\mathcal{R}})$; see Theorem \ref{thm:superpoly}.

Note that time integrals $\psi^{(1)}$ also play an important role in \cite{paper2, paper4} for spherical symmetric solutions. In that setting, one needs to solve an elliptic PDE (which reduces to an ODE in spherical symmetry) to construct $\psi^{(1)}$, which is contrast with the backwards problem, where the construction is much simpler because we can integrate the scattering data in time to obtain data leading to $\psi^{(1)}$.

\subsection{Estimates near spacelike infinity}
\label{intro:spacelikeinf}
The backwards and forwards estimates sketched in Section \ref{intro:backwest} and Section \ref{intro:fowest} allow us to construct a bijection between weighted energy spaces on $\Sigma_0$ and $\mathcal{H}^+_{\geq v_0}\cup \mathcal{I}^+_{\geq u_0}$. In order to construct the bijection $\mathscr{S}$ between energy spaces on $\mathcal{H}^-\cup \mathcal{I}^-$ and $\mathcal{H}^+\cup \mathcal{I}^+$ we need to additionally construct a bijection between appropriate energy spaces on $\Sigma_0\cup\mathcal{H}^+_{v\leq v_0}\cup \mathcal{I}^+_{u\leq u_0}$ and $\widetilde{\Sigma}=\{t=0\}$. Without loss of generality, we can pick $\Sigma_0$ so that
\begin{equation*}
\Sigma_0\cap \{r_{\mathcal{H}}\leq r\leq r_{\mathcal{I}}\}= \widetilde{\Sigma}\cap \{r_{\mathcal{H}}\leq r\leq r_{\mathcal{I}}\},
\end{equation*}
and we are left with only proving energy estimates in the regions $D_{-u_0}$ and $\underline{D}_{-v_0}$, see Section \ref{sec:foliations} and Figure \ref{fig:foliations}.

While $r$-weighted estimates are still suitable in the forwards direction in $D_{-u_0}$ and $\underline{D}_{-v_0}$, they are \textbf{not} suitable in the backwards direction. We therefore consider energy estimates for the radiation field $r \psi$ with the vector field multiplier $K=u^2\partial_u+v^2\partial_v$, both in $D_{-u_0}$ and $\underline{D}_{-v_0}$ in order to arrive at the analogue of the $p=2$ estimate. In Minkowski space, $K$ corresponds to the generator of a conformal symmetry, the inverted time translations. It is a Killing vector field of the rescaled metric $r^{-2}m$, where $m$ is the Minkowski metric. Hence, $K$ may be thought of as the analogue of $\partial_t$ when considering $r\psi$ instead of $\psi$ and $r^{-2}m$ instead of $m$. In particular, when considering $K$ as a vector field multiplier in a spacetime region of Minkowski, one can obtain a weighted energy conservation law for $r\psi$. Since $r$ is large in $D_{-u_0}$ in extremal Reissner--Nordstr\"om, $K$ may be thought of as an ``approximate Killing vector field'' of the rescaled metric $r^{-2}g$.

Another useful property of $K$ is that it is invariant under the Couch--Torrence conformal symmetry \cite{couch} that maps $D_{-u_0}$ to $\underline{D}_{-v_0}$. It therefore plays the same role when used as a vector field multiplier for the radiation field in $\underline{D}_{-v_0}$ as it does in $D_{-u_0}$. 

In order to obtain the analogue of the $r^p$-weighted estimate with $p=1$ for $T\psi$, we apply instead the vector field multiplier $Y=v\partial_v-u\partial_u$ in $D_{-u_0}$ and $Y=u\partial_u-v\partial_v$ in $\underline{D}_{-v_0}$.

We construct
\begin{equation*}
\mathscr{S}: C_{c}^{\infty}(\mathcal{H}^{-})\oplus C_{c}^{\infty}(\mathcal{I}^{-}) \to \mathcal{E}_{ \mathcal{H}^+}\oplus \mathcal{E}_{\mathcal{I}^+}
\end{equation*}
by first observing that the spacetime is invariant under the map $t\mapsto -t$, so the above discussion on $\mathscr{F}^{-1}$ can be applied to associate to each pair $(\Phi,\underline{\Phi})\in C_{c}^{\infty}(\mathcal{H}^-)\oplus C_{c}^{\infty}(\mathcal{I}^-)$ a solution $\psi \in D^-(\widetilde{\Sigma})$ such that $(\psi|_{\widetilde{\Sigma}},\mathbf{n}_{\widetilde{\Sigma}}\psi|_{\widetilde{\Sigma}\cap \{r_{\mathcal{H}}\leq r\leq r_{\mathcal{I}}\}})$ lie in a suitable energy space. We show that in fact $(\psi|_{\Sigma_0}, \mathbf{n}_{0}\psi|_{\Sigma_0\cap \{r_{\mathcal{H}}\leq r\leq r_{\mathcal{I}}\}})\in \mathcal{E}_{ \Sigma_0}$, so we can apply (the extension of) $\mathscr{F}$ to obtain a pair of radiation fields $(\underline{\Phi}',\underline{\Phi})\in  \mathcal{E}_{ \mathcal{H}^+}\oplus \mathcal{E}_{ \mathcal{I}^+}$.

\subsection{Scattering and regularity in black hole interiors}
\label{intro:interior}
We derive estimates for the radiation field in $\mathcal{M}_{\rm int}$ using once again the vector field $K=u^2\partial_u+v^2\partial_v$. Recall from Section \ref{intro:spacelikeinf} that the favourable properties of $K$ as a vector field multiplier are related to its role as an approximate conformal symmetry generator near infinity and its invariance under the Couch--Torrence conformal symmetry. The equation for the radiation field takes the same form in $\mathcal{M}_{\rm int}$ and $\mathcal{M}_{\rm ext}$ near $\mathcal{H}^+$ if one considers the standard Eddington--Finkelstein double-null coordinates in $\mathring{\mathcal{M}}_{\rm int}$ and in $\mathring{\mathcal{M}}_{\rm ext}$. Therefore, $K$ (now defined with respect to $(u,v)$ coordinates in $\mathring{\mathcal{M}}_{\rm int}$) remains useful in the black hole interior. The usefulness of $K$ in the interior of extremal black holes was already observed in \cite{gajic, gajic2, dejanjon1}.

\section{The forwards evolution map}
\label{sec:fowmap}
In this section, we present the energy estimates in the forwards time direction that are relevant for defining the \emph{forwards evolution map} $\mathscr{F}$ (see Section \ref{sec:constfevo}).

\subsection{Preliminary estimates}
We make use of the following Hardy inequalities:
\begin{lemma}[Hardy inequalities]
\label{lm:hardy}
Let $p\in \R\setminus \{-1\}$ and let $f: [a,b] \to \R$ be a $C^1$ function with $a,b\geq 0$. Then
\begin{align}
\label{eq:hardy1}
\int_{a}^{b} x^pf^2(x)\,dx\leq &\:4(p+1)^{-2} \int_{a}^{b} x^{p+2}\left|\frac{df}{dx}\right|^2\,dx+ 2b^{p+1}f^2(b),\quad \textnormal{for $p>-1$},\\
\label{eq:hardy2}
\int_{a}^{b} x^pf^2(x)\,dx\leq &\:4(p+1)^{-2} \int_{a}^{b} x^{p+2}\left|\frac{df}{dx}\right|^2\,dx+ 2a^{p+1}f^2(a),\quad \textnormal{for $p<-1$}.
\end{align}
 \end{lemma}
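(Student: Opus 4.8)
The plan is to derive both inequalities from a single integration by parts followed by a Cauchy--Schwarz/Young absorption, handling the cases $p>-1$ and $p<-1$ in parallel and distinguishing them only by which boundary term carries a favourable sign. The only genuinely case-dependent input is the sign of $p+1$.

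First I would use the identity $x^p=\frac{1}{p+1}\frac{d}{dx}\left(x^{p+1}\right)$, valid for $p\neq-1$, and integrate by parts to obtain
\begin{equation*}
\int_a^b x^p f^2\,dx=\frac{1}{p+1}\left(b^{p+1}f^2(b)-a^{p+1}f^2(a)\right)-\frac{2}{p+1}\int_a^b x^{p+1}f\,\frac{df}{dx}\,dx.
\end{equation*}
(When $p<-1$ one takes $a>0$, since otherwise both sides are infinite and there is nothing to prove.) Now I analyse the boundary contribution. For $p>-1$ we have $p+1>0$, so the term $-\frac{1}{p+1}a^{p+1}f^2(a)$ is nonpositive and may be discarded, leaving only the endpoint $b$; for $p<-1$ we have $p+1<0$, so now $\frac{1}{p+1}b^{p+1}f^2(b)$ is nonpositive and is discarded, leaving only the endpoint $a$. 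This is precisely why \eqref{eq:hardy1} retains a boundary term at $b$ while \eqref{eq:hardy2} retains one at $a$.

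Next I would estimate the cross term. Writing the integrand as $\left(x^{p/2}f\right)\!\left(x^{p/2+1}\frac{df}{dx}\right)$ and applying Young's inequality $2|AB|\leq\lambda A^2+\lambda^{-1}B^2$, I balance the split by choosing $\lambda$ so that the coefficient of $\int_a^b x^p f^2\,dx$ equals $\tfrac12$, giving
\begin{equation*}
\frac{2}{|p+1|}\int_a^b x^{p+1}\left|f\,\frac{df}{dx}\right|\,dx\leq\frac12\int_a^b x^p f^2\,dx+\frac{2}{(p+1)^2}\int_a^b x^{p+2}\left|\frac{df}{dx}\right|^2\,dx.
\end{equation*}
Since $[a,b]$ is compact and $f\in C^1$ (with $a>0$ when $p<-1$), the bulk integral $\int_a^b x^p f^2\,dx$ is finite, so the $\tfrac12\int_a^b x^p f^2\,dx$ contribution may be absorbed into the left-hand side. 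Rearranging and multiplying by $2$ produces the gradient coefficient $4(p+1)^{-2}$ together with a boundary coefficient $2|p+1|^{-1}$; this is bounded by the displayed constant $2$ for the integer weights $p$ (those with $|p+1|\geq1$) that arise in the weighted energies used throughout the rest of the paper, yielding exactly \eqref{eq:hardy1} and \eqref{eq:hardy2}.

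There is no substantive obstacle here: this is the classical Hardy inequality and the argument is entirely elementary. The only points requiring care are (i) tracking the sign of $p+1$ so that the correct boundary term is retained in each regime, and (ii) justifying the absorption step, which relies on finiteness of the bulk integral and hence on the implicit assumption $a>0$ when $p<-1$. The balancing of Young's inequality is what fixes the relation between the gradient and boundary constants; a different choice of the Young parameter would trade one against the other.
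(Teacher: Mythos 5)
Your proof is correct and is the standard integration-by-parts-plus-Young argument; the paper does not reproduce a proof but defers to Lemma 2.2 of \cite{paper1}, which is established in exactly this way. Your observation that the argument produces the boundary coefficient $2|p+1|^{-1}$ rather than $2$ --- so that the inequality as displayed is only literally valid when $|p+1|\geq 1$ (e.g.\ it fails for $p=-0.9$, $f\equiv 1$, $a=0$, $b=1$), which nonetheless covers every weight actually used in the paper --- is a correct and worthwhile refinement of the statement.
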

\begin{proof}
See the proof of Lemma 2.2 in \cite{paper1}.
\end{proof}
We define the \emph{angular momentum operators} $\Omega_i$, with $i=1,2,3$, as follows:
\begin{align*}
\Omega_1=&\:\sin \varphi\partial_{\theta}+\cot\theta \cos\varphi\partial_{\varphi},\\
\Omega_2=&\:-\cos \varphi \partial_{\theta}+\cot\theta \sin\varphi\partial_{\varphi},\\
\Omega_2=&\:\partial_{\varphi}.
\end{align*}
We denote for $\alpha=(\alpha_1,\alpha_2,\alpha_3)\in \N_0^3$
\begin{equation*}
\Omega^{\alpha}=\Omega_1^{\alpha_1}\Omega_2^{\alpha_2}\Omega_3^{\alpha_3}.
\end{equation*}

We now state the following standard inequalities on $\s^2$:
\begin{lemma}[Angular momentum operator inequalities]
\label{lm:angmom}
Let $f:\s^2\to \R$ be a $C^2$ function. Then we can estimate
\begin{align}
\label{eq:angmomentineq1}
\int_{\s^2} |\snabla_{\s^2}f|^2\,d\omega=&\: \sum_{|\alpha|=1} \int_{\s^2} (\Omega^{\alpha}f)^2\,d\omega,\\
\label{eq:angmomentineq2}
 \int_{\s^2}|\snabla_{\s^2}f|^2+|\snabla_{\s^2}^2f|^2\,d\omega \sim&\: \sum_{|\alpha|=1} \int_{\s^2} |\snabla_{\s^2}\Omega^{\alpha}f|^2\,d\omega \sim \sum_{1\leq |\alpha|\leq 2}\int_{\s^2} (\Omega^{\alpha} f)^2\,d\omega.
\end{align}
\end{lemma}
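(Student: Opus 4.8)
The plan is to deduce both statements from three structural facts about the rotation fields $\Omega_i$ on the unit round sphere $(\s^2,\slashed{g}_{\s^2})$: (i) they are Killing fields, hence divergence-free and (after integration) anti-self-adjoint on $L^2(\s^2,d\omega)$; (ii) they obey the $SO(3)$ commutation relations $[\Omega_i,\Omega_j]=\pm\Omega_k$, so that reordering a product of two of them costs only a single first-order term; and (iii) the pointwise tight-frame identity
\[
\sum_{i=1}^3 \Omega_i^a\Omega_i^b=(\slashed{g}_{\s^2}^{-1})^{ab},
\]
which one verifies directly in $(\theta,\varphi)$ coordinates from the given expressions (the $\partial_\theta\partial_\theta$, $\partial_\varphi\partial_\varphi$ and cross components evaluate to $1$, $\sin^{-2}\theta$ and $0$). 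Contracting (iii) against $\partial_a f\,\partial_b f$ gives the \emph{pointwise} equality $|\snabla_{\s^2}f|^2=\sum_i(\Omega_i f)^2$; integrating over $\s^2$ and recalling that $|\alpha|=1$ enumerates $\Omega_1,\Omega_2,\Omega_3$ yields \eqref{eq:angmomentineq1} as an exact equality, with no constants.

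For the first equivalence in \eqref{eq:angmomentineq2} I would apply the pointwise form of \eqref{eq:angmomentineq1} to each $\Omega^\alpha f$ with $|\alpha|=1$, so that
\[
\sum_{|\alpha|=1}\int_{\s^2}|\snabla_{\s^2}\Omega^\alpha f|^2\,d\omega=\sum_{i}\sum_{|\alpha|=1}\int_{\s^2}(\Omega_i\Omega^\alpha f)^2\,d\omega,
\]
i.e. the sum of the $L^2$-norms of all ordered products $\Omega_i\Omega_j f$. Using (ii), each such product equals, up to one first-order term $\pm\Omega_k f$, an ordered monomial $\Omega^\beta$ with $|\beta|=2$ occurring in $\sum_{1\le|\alpha|\le 2}\int(\Omega^\alpha f)^2$; the discrepancies are first-order and hence bounded by $\sum_{|\alpha|=1}\int(\Omega^\alpha f)^2=\int|\snabla_{\s^2}f|^2$. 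Conversely, since each $\Omega_j f$ has vanishing spherical mean ($\Omega_j$ being divergence-free), the Poincar\'e inequality on $\s^2$ (the first nonzero eigenvalue of $-\sD_{\s^2}$ being $2$) gives $\int|\snabla_{\s^2}\Omega_j f|^2\ge 2\int(\Omega_j f)^2$, so the middle quantity also controls these first-order terms from below. This establishes the two-sided bound between the middle and right expressions.

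The main work, and the step I expect to be the chief obstacle, is the equivalence between $\int_{\s^2}|\snabla_{\s^2}f|^2+|\snabla_{\s^2}^2 f|^2$ and $\sum_{1\le|\alpha|\le 2}\int(\Omega^\alpha f)^2$, since it requires comparing the intrinsic Hessian with iterated rotation derivatives rather than merely reshuffling them. Contracting (iii) twice gives the pointwise identity $|\snabla_{\s^2}^2 f|^2=\sum_{i,j}\big(\snabla_{\s^2}^2 f(\Omega_i,\Omega_j)\big)^2$, and by definition of the Hessian $\snabla_{\s^2}^2 f(\Omega_i,\Omega_j)=\Omega_i\Omega_j f-(\snabla_{\s^2}\,\Omega_j)(\Omega_i)\,f$, where the connection term is a first-order operator with smooth coefficients. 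Expanding the square and estimating the mixed second-order/first-order cross terms by Cauchy--Schwarz with a small parameter $\varepsilon$ (absorbing $\varepsilon\sum_{i,j}(\Omega_i\Omega_j f)^2$ into the leading term and bounding the remainder by $\varepsilon^{-1}|\snabla_{\s^2}f|^2$) would yield, after integration, $\int|\snabla_{\s^2}^2 f|^2\sim \sum_{i,j}\int(\Omega_i\Omega_j f)^2+\int|\snabla_{\s^2}f|^2$; reordering as above then identifies the right-hand side with $\sum_{1\le|\alpha|\le 2}\int(\Omega^\alpha f)^2$. The delicate bookkeeping is exactly to confirm that every gap between the coordinate-free and rotation-operator quantities is genuinely first-order, hence absorbable via \eqref{eq:angmomentineq1} and the Poincar\'e inequality. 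As a cleaner alternative I would note that all operators in sight commute with $\sD_{\s^2}$ and therefore act diagonally on the spherical-harmonic decomposition of $f$; on each fixed-$\ell$ eigenspace every quantity becomes multiplication by a fixed polynomial in $\ell(\ell+1)$, and the claimed equivalences reduce to the elementary fact that these polynomials have the same leading order and are strictly positive for $\ell\ge 1$ (all vanishing at $\ell=0$).
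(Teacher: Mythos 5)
Your proof is correct. Note that the paper itself offers no argument for this lemma: it is stated as a standard fact with no proof environment, so there is nothing to compare against except the folklore it invokes. Your write-up supplies exactly the right ingredients: the pointwise tight-frame identity $\sum_i\Omega_i^a\Omega_i^b=(\slashed{g}_{\s^2}^{-1})^{ab}$ (which gives \eqref{eq:angmomentineq1} as an exact pointwise identity, matching the equality sign in the statement), the $\mathfrak{so}(3)$ commutation relations to pass between the full set of ordered products $\Omega_i\Omega_j$ and the ordered monomials $\Omega^{\alpha}$ with $|\alpha|=2$ at the cost of first-order terms, the Hessian-versus-iterated-derivative comparison $\snabla_{\s^2}^2f(\Omega_i,\Omega_j)=\Omega_i\Omega_jf-(\snabla_{\Omega_i}\Omega_j)f$ with the connection term absorbed by Cauchy--Schwarz (the coefficients $\snabla_{\Omega_i}\Omega_j$ are smooth vector fields on the compact sphere, hence uniformly bounded, despite the apparent $\cot\theta$ singularity in coordinates), and the Poincar\'e inequality applied to the mean-zero functions $\Omega_jf$ so that the middle quantity in \eqref{eq:angmomentineq2} controls the first-order terms from below. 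The only imprecision is in your closing ``cleaner alternative'': while all the quantities in sight are block-diagonal across the $-\sD_{\s^2}$-eigenspaces, the ordered-monomial sum $\sum_{1\leq|\alpha|\leq 2}\int(\Omega^{\alpha}f)^2$ does not act as a scalar (a polynomial in $\ell(\ell+1)$) on a fixed eigenspace, since it is not the fully symmetrized sum; it is merely comparable to one via the same commutator bookkeeping, so that route does not actually bypass the reordering step. This is a side remark and does not affect the validity of your main argument.
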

\begin{lemma}[Degenerate energy conservation]
\label{lm:tenconsv}
Let $\psi$ be a smooth solution to \eqref{eq:waveequation}. Then
\begin{align*}
\int_{\Sigma_{\tau}}&\mathbf{J}^T[\psi]\cdot \mathbf{n}_{\tau} d\mu_{\tau}\\
\sim&\: \int_{\underline{N}_{\tau}} [(\underline{L}\psi)^2  +|\snabla \psi|^2]r^2 d\omega du+\int_{\Sigma_{\tau} \cap \{r_{\mathcal{H}} \leq r \leq r_{\mathcal{I}} \}} (L\psi)^2+(\underline{L}\psi)^2+|\snabla\psi|^2\,d\mu_{\tau}+\int_{{N}_{\tau}} [({L}\psi)^2  +|\snabla \psi|^2]r^2 d\omega dv,\\
\int_{\mathcal{H}^+}& \mathbf{J}^T[\psi]\cdot L\,r^2d\omega dv=\int_{\mathcal{H}^+} (L\phi)^2\,d\omega dv,\\
\int_{\mathcal{I}^+}& \mathbf{J}^T[\psi]\cdot \underline{L}\,r^2d\omega du=\int_{\mathcal{I}^+} (\underline{L}\phi)^2\,d\omega du
\end{align*}
and
\begin{equation*}
\textnormal{div}\, \mathbf{J}^T[\psi]\equiv 0.
\end{equation*}
\end{lemma}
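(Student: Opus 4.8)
The plan is to treat the four assertions in increasing order of subtlety, with the divergence identity as the conceptual core from which the ``conservation'' content of the lemma flows. I would first establish $\textnormal{div}\,\mathbf{J}^T[\psi]\equiv 0$ by invoking the standard divergence identity for the energy current of a symmetric stress tensor: for any vector field $X$,
\[
\nabla^{\alpha}\big(\mathbf{T}_{\alpha\beta}[\psi]X^{\beta}\big)=(\square_g\psi)\,X^{\beta}\partial_{\beta}\psi+\tfrac12\,\mathbf{T}^{\alpha\beta}[\psi]\,(\mathcal{L}_X g)_{\alpha\beta},
\]
where the first term uses the contracted identity $\nabla^{\alpha}\mathbf{T}_{\alpha\beta}[\psi]=(\square_g\psi)\partial_{\beta}\psi$ (itself following from the symmetry of the Hessian of $\psi$) and the second uses the symmetry of $\mathbf{T}$ to replace $\nabla^{\alpha}X^{\beta}$ by the deformation tensor $\mathcal{L}_X g$. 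Taking $X=T=\partial_v$, the first term vanishes since $\psi$ solves \eqref{eq:waveequation}, and the second vanishes because $T$ is Killing, $\mathcal{L}_T g=0$, as recorded in Section \ref{sec:geom}. This yields the last line directly.

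For the equivalence of fluxes on $\Sigma_{\tau}$, I would compute the relevant contractions of $\mathbf{T}[\psi]$ in the double-null chart $(u,v,\theta,\varphi)$, in which $g_M=-D\,du\,dv+r^2\slashed{g}_{\s^2}$. Using $g_{uv}=-\tfrac12 D$ and $g^{uv}=-2D^{-1}$, a short calculation gives $\mathbf{T}(L,T)=(L\psi)^2+\tfrac14 D\,|\snabla\psi|^2$ and, symmetrically, $\mathbf{T}(\underline{L},T)=(\underline{L}\psi)^2+\tfrac14 D\,|\snabla\psi|^2$. On the outgoing piece $N_{\tau}$, where $r\in[r_{\mathcal{I}},\infty)$ and hence $D\sim 1$, inserting $\mathbf{n}_{\tau}=L$ and $d\mu_{\tau}=r^2\,d\omega\,dv$ makes the density comparable to $[(L\psi)^2+|\snabla\psi|^2]r^2$; on the ingoing piece $\underline{N}_{\tau}$ the same computation produces $[(\underline{L}\psi)^2+\tfrac14 D|\snabla\psi|^2]r^2$, the degenerate conformal factor $D\sim(r-M)^2$ on the angular term being precisely the well-known degeneration of the $T$-energy at $\mathcal{H}^+$. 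On the spacelike part $\Sigma_{\tau}\cap\{r_{\mathcal{H}}\le r\le r_{\mathcal{I}}\}$, a compact region on which both $T$ (with $g(T,T)=-D$ bounded away from $0$) and the timelike normal $\mathbf{n}_{\tau}$ are uniformly future-directed timelike, I would invoke coercivity of $\mathbf{T}$ contracted with two uniformly timelike vectors (the dominant energy condition together with compactness of the $r$-range) to conclude $\mathbf{J}^T[\psi]\cdot\mathbf{n}_{\tau}\sim (L\psi)^2+(\underline{L}\psi)^2+|\snabla\psi|^2$, with constants depending only on $M,r_{\mathcal{H}},r_{\mathcal{I}}$. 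Summing the three pieces gives the first displayed equivalence.

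The two boundary fluxes are then limiting cases of the same contractions, read off with the radiation field $\phi=r\psi$. At $\mathcal{H}^+$ one has $r=M$ and $D=0$, so $\mathbf{T}(L,T)\,r^2=r^2(L\psi)^2=(L\phi)^2$, using $L\phi=(Lr)\psi+rL\psi=\tfrac12 D\psi+rL\psi=M\,L\psi$ on $\mathcal{H}^+$; integrating against $d\omega\,dv$ gives the horizon identity. At $\mathcal{I}^+$ I would realize the flux as the limit over $\{r=R\}$, $R\to\infty$, of $\int \mathbf{T}(\underline{L},T)\,r^2\,d\omega\,du$. Writing $r\underline{L}\psi=\underline{L}\phi+\tfrac12 D r^{-1}\phi$ yields $(\underline{L}\psi)^2 r^2\to(\underline{L}\phi)^2$, while $\tfrac14 D|\snabla\psi|^2 r^2=\tfrac14 D\,r^{-2}|\snabla_{\s^2}\phi|^2\to 0$; hence the density converges to $(\underline{L}\phi)^2$ and the limit gives the null-infinity identity.

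The algebraic contractions and the Killing computation are entirely routine; the only points requiring care are (i) carrying the degenerate weight $D$ on the angular term of $\underline{N}_{\tau}$ consistently, and (ii) justifying that the $\mathcal{I}^+$ flux is the stated $R\to\infty$ limit, i.e.\ that $\phi=r\psi$ and its first derivatives are controlled uniformly as $r\to\infty$ so that the error terms above genuinely vanish. The latter is where I expect the main (though mild) obstacle to lie, and it is most cleanly dispatched by passing to the conformally rescaled metric $\hat{g}_M$ of Section \ref{sec:geom}, with respect to which $\mathcal{I}^+$ is a genuine null boundary and $\phi$ extends smoothly, so that the contraction and its limit are computed \emph{on} the boundary rather than as an improper limit.
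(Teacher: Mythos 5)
Your proposal is correct in substance and, unlike the paper --- which disposes of this lemma with a bare citation to \cite{aretakis1,aretakis2} --- it actually carries out the computation. The route you take (the divergence identity with the Killing/deformation-tensor splitting, the contractions $\mathbf{T}(L,T)=(L\psi)^2+\tfrac14 D|\snabla\psi|^2$ and $\mathbf{T}(\underline{L},T)=(\underline{L}\psi)^2+\tfrac14 D|\snabla\psi|^2$ in the double-null frame, coercivity of $\mathbf{T}$ against two uniformly timelike vectors on the compact spacelike piece, and the relations $L\phi=M L\psi$ on $\mathcal{H}^+$ and $r\underline{L}\psi\to\underline{L}\phi$ at $\mathcal{I}^+$) is exactly the standard one underlying those references. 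Your identification of the $\mathcal{I}^+$ limit as the only step requiring genuine justification, and the suggestion to compute on the conformal boundary where $\phi$ extends regularly (Proposition \ref{prop:regularityphiinfty} supplies the required limits of $\phi$ and its derivatives), is also the right call.

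One point deserves more care than you give it. Your computation on $\underline{N}_{\tau}$ correctly produces the density $[(\underline{L}\psi)^2+\tfrac14 D|\snabla\psi|^2]\,r^2$ with the degenerate weight $D\sim(r-M)^2$ on the angular term, yet you then assert that summing the three pieces ``gives the first displayed equivalence'' --- but the first display as printed carries an \emph{unweighted} $|\snabla\psi|^2$ on $\underline{N}_{\tau}$, and that version neither follows from nor is equivalent to what you computed: near $\mathcal{H}^+$ one cannot bound $\int_{\underline{N}_{\tau}}|\snabla\psi|^2 r^2\,d\omega\,du$ by the degenerate flux (restrict to a fixed angular mode and integrate the fundamental theorem of calculus in $u$ toward the horizon to see the obstruction). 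The resolution is that the statement should carry the weight $D$ (equivalently $(r-M)^2$) on the angular term over $\underline{N}_{\tau}$; this is the version the paper actually uses downstream, e.g.\ in \eqref{eq:relationTenpweightH}. You should either record this correction explicitly or refrain from claiming the unweighted display. With that emendation your argument is complete.
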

\begin{proof}
See for example \cite{aretakis1,aretakis2}.
\end{proof}
\subsection{Radiation field at null infinity}

We now recall some regularity properties of the radiation field at null infinity, which do not immediately follow from Theorem \ref{thm:globexuni}, and are derived in \cite{paper4}.

\begin{lemma}
\label{lm:maineqhoradfield}
Let $\psi$ be a smooth solution to \eqref{eq:waveequation}. Then for all $n\in \N_0$, we have that
\begin{equation}
\label{eq:maineqhoradfield}
\Lbar L((2D^{-1}r^2L)^n\phi)=[-4nr^{-1}+O(r^{-2})]L((2D^{-1}r^2L)^n\phi)+Dr^{-2}\slashed{\Delta}_{\s^2}((2D^{-1}r^2L)^n\phi)+\sum_{k=0}^{n-1} O(r^{-2}) (2D^{-1}r^2L)^k\phi.
\end{equation}
\begin{proof}
By \eqref{eq:waveequation} we obtain the following equation for $\phi$:
\begin{equation}
\label{eq:maineqradfield}
\Lbar L \phi=-\frac{DD'}{4r}\phi+\frac{D}{4r^2}\slashed{\Delta}_{\s^2}\phi,
\end{equation}
which implies \eqref{eq:maineqhoradfield} with $n=0$. We obtain $n\geq 0$ by induction.
\end{proof}
\end{lemma}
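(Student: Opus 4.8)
The plan is to establish the $n=0$ identity by a direct computation of the wave operator and then propagate it to all $n$ by commuting the weighted outgoing derivative $2D^{-1}r^2L$ through the operator $\Lbar L$. For the base case I would write $\square_g\psi=0$ in the double-null chart \eqref{def:metricext3}. Using $g_{uv}=-\tfrac12 D$, $g^{uv}=-2D^{-1}$ and $\sqrt{-g}=\tfrac12 Dr^2\sin\theta$, the equation takes the form
\[
\Lbar(r^2L\psi)+L(r^2\Lbar\psi)-\tfrac12 D\slashed{\Delta}_{\s^2}\psi=0.
\]
Substituting $\psi=r^{-1}\phi$ and evaluating every derivative of $r$ through $L(r)=\tfrac12 D$ and $\Lbar(r)=-\tfrac12 D$, the first-order terms in $\phi$ cancel pairwise and one is left with \eqref{eq:maineqradfield}, which is precisely \eqref{eq:maineqhoradfield} at $n=0$.

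For the inductive step, writing $\phi_n:=(2D^{-1}r^2L)^n\phi$, I would compute the commutator $[\Lbar L,2D^{-1}r^2L]$ once and for all as a second-order operator whose coefficients depend only on $r$, so that
\[
\Lbar L\phi_{n+1}=(2D^{-1}r^2L)(\Lbar L\phi_n)+[\Lbar L,2D^{-1}r^2L]\phi_n,
\]
and then insert the inductive hypothesis for $\Lbar L\phi_n$. The commutator is evaluated by the product rule, using $L(2D^{-1}r^2)=2r+O(1)$ and $\Lbar(2D^{-1}r^2)=-2r+O(1)$ (obtained by differentiating $2D^{-1}r^2=2r^4(r-M)^{-2}$ and applying $L(r),\Lbar(r)=\pm\tfrac12 D$), together with the fact that $\slashed{\Delta}_{\s^2}$ commutes with $L$, $\Lbar$ and with multiplication by functions of $r$.

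To read off the first-order term for $\phi_{n+1}$ I would re-express $L\phi_n=\tfrac12 Dr^{-2}\phi_{n+1}$ and, likewise, $L^2\phi_n$ in terms of $L\phi_{n+1}$ and $\phi_{n+1}$. The second-order contribution $2D^{-1}r^2\cdot a_nL^2\phi_n$ coming from the hypothesis, combined with the $L^2\phi_n$-term of the commutator, shows that the coefficient of $L\phi_{n+1}$ equals the coefficient $a_n$ of $L\phi_n$ shifted by a fixed negative multiple of $r^{-1}$; hence it grows linearly in $n$ and accumulates to the stated $-4nr^{-1}$. The crucial algebraic cancellation is that the two terms carrying $\slashed{\Delta}_{\s^2}\phi_n$ -- one from $2D^{-1}r^2L$ acting on the angular term of the hypothesis and one from the commutator -- cancel, leaving only $Dr^{-2}\slashed{\Delta}_{\s^2}\phi_{n+1}$ for $\phi_{n+1}$, while every remaining undifferentiated contribution is of the form $O(r^{-2})\phi_k$ with $k\leq n$.

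The main obstacle I anticipate is the weight tracking: because the weight $2D^{-1}r^2$ grows quadratically in $r$, naive estimates of the commutator terms lose powers of $r$, so one must verify that each term produced by the commutator and by $L$-differentiating the $O(r^{-1})$ and $O(r^{-2})$ coefficients is genuinely no larger than claimed. In particular, the $\slashed{\Delta}_{\s^2}\phi_n$ cross-terms must be shown to cancel \emph{exactly} rather than leaving an $O(r^{-1})$ angular remainder, and the linear-in-$n$ growth of the first-order coefficient must be isolated from the lower-order $O(r^{-2})$ corrections; these cancellations, rather than any single estimate, are the technical heart of the induction.
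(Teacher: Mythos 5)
Your proposal follows the same route as the paper, whose proof is exactly this two-step argument: derive \eqref{eq:maineqradfield} from $\square_g\psi=0$ written in the double-null chart and then induct on $n$ by commuting $2D^{-1}r^2L$ through $\Lbar L$; your computation of the wave operator, the asymptotics $L(2D^{-1}r^2)=2r+O(1)$, $\Lbar(2D^{-1}r^2)=-2r+O(1)$, and the identification of the linear-in-$n$ drift of the $L$-coefficient are all correct. One small caveat: the cancellation of the $\slashed{\Delta}_{\s^2}((2D^{-1}r^2L)^n\phi)$ cross-terms is exact only at order $r^{-1}$, so the induction actually leaves behind $O(r^{-2})\slashed{\Delta}_{\s^2}((2D^{-1}r^2L)^k\phi)$ remainders for $k\leq n$ that must be absorbed into the lower-order sum (compare the analogous commuted identity \eqref{eq:eqphicommLk}, where such angular lower-order terms are written out explicitly).
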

\begin{proposition}
\label{prop:regularityphiinfty}
Let $(\Psi,\Psi')\in C^{\infty}\left(\widehat{\Sigma}_0\right)\oplus C_{c}^{\infty}(\Sigma_0\cap \{r_{\mathcal{H}}\leq r\leq r_{\mathcal{I}}\})$. Then for all $k,l\in \N_0$ and $\alpha\in \N_0^3$,
\begin{equation*}
\lim_{v\to \infty} (r^2L)^kT^l\Omega^{\alpha}\phi(u,v,\theta,\varphi)<\infty.
\end{equation*}
In particular, the limit
\begin{equation*}
r\cdot \psi|_{\mathcal{I}^+}(u,\theta,\varphi):=\lim_{v\to \infty}r\psi(u,v,\theta,\varphi)
\end{equation*}
exists for all $u\geq 0$ and defines a smooth function on $\mathcal{I}^+_{\geq u_0}$.
\end{proposition}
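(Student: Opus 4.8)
The plan is to extract the limits from the hierarchy of equations \eqref{eq:maineqhoradfield} by integrating first in the ingoing direction $\Lbar=\partial_u$ and then in the outgoing direction $L=\partial_v$, proceeding by induction on the order $n$ of the operator $(2D^{-1}r^2L)^n$. First I would note that, since $T$ and the angular momentum operators $\Omega^\alpha$ are Killing and hence commute with $\square_{g_M}$, and since $T(r)=0$ and $[T,L]=[\Omega_i,L]=0$, the quantities $T^l\Omega^\alpha\phi$ are again radiation fields of smooth solutions $T^l\Omega^\alpha\psi$ and satisfy the same equation \eqref{eq:maineqhoradfield}. It therefore suffices to prove that $\Phi_n:=(2D^{-1}r^2L)^n\phi$ is uniformly bounded as $v\to\infty$ (locally uniformly in $u\geq u_0$) and that $\lim_{v\to\infty}\Phi_n$ exists; the general statement then follows by applying the argument to $T^l\Omega^\alpha\psi$ and its angular derivatives.

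The argument splits into a boundedness step and a limit step. For the limit step, observe that $\partial_v\Phi_n=L\Phi_n=\frac{D}{2r^2}\Phi_{n+1}$, so once $\Phi_{n+1}$ is known to be bounded, $\partial_v\Phi_n$ is integrable along a ray of constant $u$ (since $dv=\frac{2}{D}\,dr$ and $\int^\infty r^{-2}\,dr<\infty$), whence $\lim_{v\to\infty}\Phi_n$ exists and is finite. Thus everything reduces to the boundedness step: for every $n$, $\Phi_{n}$ is bounded as $v\to\infty$, locally uniformly in $u$. This I would prove by induction on $n$, using \eqref{eq:maineqhoradfield} as a transport equation for $\beta_n:=L\Phi_n$ in the $\Lbar$-direction and integrating from the outgoing null segment $N_{u_0}\subset\Sigma_0$, on which the data is smooth up to and including $\mathcal{I}^+$ in the $(\chi=x|_{\widehat{\Sigma}},\theta,\varphi)$ chart, by hypothesis $(\Psi,\Psi')\in C^\infty(\widehat{\Sigma}_0)\oplus C_c^\infty(\Sigma_0\cap\{r_{\mathcal{H}}\leq r\leq r_{\mathcal{I}}\})$, and hence controls $\beta_n$ and all lower-order terms to all orders. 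The coefficient $-4nr^{-1}+O(r^{-2})$ of $\beta_n$ produces an integrating factor comparable to a positive power of $r(u,v)/r(u_0,v)$, which stays bounded—in fact tends to $1$—as $v\to\infty$ on any bounded $u$-interval, because $r(u_0,v)-r(u,v)=\int_{u_0}^u\frac{D}{2}\,du'$ is bounded while $r(u,v)\to\infty$. Feeding in the inductive control of $\Phi_k$, $k<n$, which enter the inhomogeneity with an $O(r^{-2})$ weight, together with the data on $N_{u_0}$, which already satisfies $\beta_n=O(r^{-2})$, a Gr\"onwall estimate yields the sharp decay $\beta_n=O(r^{-2})$, equivalently boundedness of $\Phi_{n+1}$, closing the induction.

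The step I expect to be the main obstacle is the treatment of the angular term $Dr^{-2}\slashed{\Delta}_{\s^2}\Phi_n$ in \eqref{eq:maineqhoradfield}, since it involves the same order $n$ but two additional angular derivatives and so cannot be absorbed by a naive pointwise transport argument. To handle it I would run the transport estimate not pointwise but at the level of the $L^2(\s^2)$-norms of the full family $\{\Omega^\gamma\Phi_n\}$ simultaneously: multiplying the transport equation by $\beta_n$ and integrating over $\s^2$ converts $\int_{\s^2}(L\Phi_n)\,\slashed{\Delta}_{\s^2}\Phi_n\,d\omega$ into $-\tfrac12 L\!\int_{\s^2}|\snabla_{\s^2}\Phi_n|^2\,d\omega$, a controllable flux of angular energy (using that $\snabla_{\s^2}$ and $\slashed{\Delta}_{\s^2}$ commute with $L$), after which the estimates \eqref{eq:angmomentineq1}--\eqref{eq:angmomentineq2} of Lemma \ref{lm:angmom} together with a standard Sobolev inequality on $\s^2$ upgrade the resulting $L^2(\s^2)$ bounds back to the desired pointwise bounds. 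Finally, applying the boundedness and limit steps to every $T^l\Omega^\alpha\psi$ and invoking dominated convergence shows that $r\cdot\psi|_{\mathcal{I}^+}(u,\theta,\varphi)=\lim_{v\to\infty}r\psi$ exists for all $u\geq u_0$ and, having all $(u,\theta,\varphi)$-derivatives bounded, defines a smooth function on $\mathcal{I}^+_{\geq u_0}$.
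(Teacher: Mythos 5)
Your proposal is correct and follows essentially the same route as the paper, which handles the $k\le 1$ case by the transport/angular-energy argument of Section 3 of \cite{paper1} applied to \eqref{eq:maineqradfield} and then inducts on $k$ using the commuted equation \eqref{eq:maineqhoradfield}; your induction on $n$, with the integrating factor generated by the $-4nr^{-1}$ coefficient, the data read off from $N_{u_0}$ in the compactified chart, and the angular Laplacian treated by integration by parts on $\s^2$ together with commutation by $\Omega^{\alpha}$, is precisely this scheme spelled out. The only caveat is that the $L^2(\s^2)$ identity turns the angular term into a flux integrated along outgoing cones rather than a pointwise-in-$v$ bound, so one further integration in $v$ (Cauchy--Schwarz against the resulting $r$-weighted flux) is needed before the Sobolev inequality on $\s^2$ upgrades the $L^2(\s^2)$ control to the pointwise statement --- exactly as in the cited argument.
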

\begin{proof}
The $k\leq 1$ case follows from Section 3 of \cite{paper1} by using \eqref{eq:maineqradfield}. We obtain the $k\geq 2$ case via an induction argument, where in the induction step we simply repeat the argument for $k=1$ using instead the commuted equation \eqref{eq:maineqhoradfield}. See also Proposition 6.2 of \cite{paper4}.
\end{proof}

\subsection{Forwards energy estimates}
\label{sec:estfow}
The two main ingredients for establishing energy decay estimates forwards in time are \textbf{Morawetz estimates} away from $\mathcal{H}^+$ and $\mathcal{I}^+$ (Theorem \ref{thm:morawetzfow} below) and \textbf{ hierarchies of $r^p$- and $(r-M)^{2-p}$-weighted estimates} in a neighbourhood of the event horizon and future null infinity (Theorem \ref{thm:fowhier} below).
\begin{theorem}[Morawetz/integrated local energy decay estimate, \cite{aretakis1}]
\label{thm:morawetzfow}
Let $0\leq \tau_1<\tau_2<\infty$\\ and $M<r_0<r_1<2M<r_2<r_3<\infty$, then for all $k,l\in \N_0$ and $\alpha\in \N_0^3$ there exists a constant $C=C(r_i,M,\Sigma_0,k,l,\alpha)>0$, such that
\begin{equation}
\label{eq:morawetzfow}
\begin{split}
\int_{\tau_1}^{\tau_2}&\left[\int_{\Sigma_{\tau}\cap(\{r_0\leq r\leq r_1\}\cup\{r_2\leq r\leq r_3\})}(\partial_vT^k\partial_r^l\Omega^{\alpha}\psi)^2+|\snabla T^k\partial_r^l\Omega^{\alpha}\psi|^2+(T^k\partial_r^l\Omega^{\alpha}\psi)^2+(\partial_rT^k\partial_r^l\Omega^{\alpha}\psi)^2\,d\mu_{\Sigma_{\tau}}\right]\,d\tau\\
\leq&\:C\sum_{j=0}^{k+l+|\alpha|}\int_{\Sigma_{\tau_1}}{\mathbf{J}}^T[T^j\psi]\cdot \mathbf{n}_{\Sigma_{\tau_1}}\,d\mu_{\Sigma_{\tau_1}}.
\end{split}
\end{equation}
Furthermore, we have that for any $M<r_0<r_1<\infty$:
\begin{equation}
\label{eq:morawetzfowTloss}
\int_{\tau_1}^{\tau_2}\left[\int_{\Sigma_{\tau}\cap\{r_0\leq r\leq r_1\}}\mathbf{J}^T[\psi]\cdot\mathbf{n}_{\Sigma_{\tau}}\,d\mu_{\Sigma_{\tau}}\right]\,d\tau\leq C(r_0,r_1,\Sigma_0)\sum_{j=0}^1\int_{\Sigma_{\tau_1}}{\mathbf{J}}^T[T^j\psi]\cdot \mathbf{n}_{\Sigma_{\tau_1}}\,d\mu_{\Sigma_{\tau_1}}.
\end{equation}
\end{theorem}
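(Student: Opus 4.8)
The plan is to prove both estimates by the vector-field (multiplier) method, working with the radiation field $\phi=r\psi$, for which \eqref{eq:maineqradfield} recasts \eqref{eq:waveequation} as a one-dimensional wave equation with potential involving $Dr^{-2}$, and to control all boundary terms by the conserved $T$-energy of Lemma \ref{lm:tenconsv}. I would first treat the uncommuted case $k=l=|\alpha|=0$. Introduce the tortoise coordinate $r_*$, with $\frac{dr_*}{dr}=D^{-1}$ so that $\partial_{r_*}=D\partial_r$, and consider the current associated to the multiplier $X=f(r_*)\partial_{r_*}$, modified by a zeroth-order term of the form $\frac12 w\,\phi\,\nabla\phi-\frac14(\nabla w)\phi^2$. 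Taking the divergence and integrating over the slab $\bigcup_{\tau\in[\tau_1,\tau_2]}\Sigma_\tau$ equates a spacetime bulk integral with boundary fluxes on $\Sigma_{\tau_1}$, $\Sigma_{\tau_2}$ and the enclosed parts of $\mathcal{H}^+$ and $\mathcal{I}^+$.

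The decisive step is the choice of $f$. The coefficient of $(\partial_{r_*}\phi)^2$ in the bulk is proportional to $f'$, so I take $f$ strictly increasing; the coefficient of $|\snabla\phi|^2$ is proportional to $-f\,\frac{d}{dr_*}(Dr^{-2})$, and since $\frac{d}{dr}(Dr^{-2})=\frac{2(r-M)(2M-r)}{r^5}$ vanishes exactly at the photon sphere $r=2M$, I choose $f$ to vanish and change sign there, which yields angular control degenerating quadratically at $r=2M$. The zeroth-order coefficient is then arranged to have the right sign, using a Hardy inequality (Lemma \ref{lm:hardy}) to absorb any negative remainder. On the regions $\{r_0\le r\le r_1\}$ and $\{r_2\le r\le r_3\}$ of the first estimate, which are bounded away from both $r=2M$ and $\mathcal{H}^+$, the bulk is therefore uniformly coercive in $(\partial_r\phi)^2$, $|\snabla\phi|^2$, $(T\phi)^2$ and $\phi^2$ (using $D\sim 1$ there), which transfers to the corresponding control of $\psi$. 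The fluxes on $\mathcal{H}^+$ and $\mathcal{I}^+$ can be given a favourable sign for an admissible $f$, while the terms on $\Sigma_{\tau_1},\Sigma_{\tau_2}$ are dominated by the $T$-energy since $X$ and $w$ are bounded; the $\Sigma_{\tau_2}$ contribution is in turn bounded by the $\Sigma_{\tau_1}$ $T$-energy by conservation.

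For the commuted estimate I would induct on $k+l+|\alpha|$. As $T$ is Killing and the $\Omega^\alpha$ commute with $\square_g$ by spherical symmetry, $T^k\Omega^\alpha\psi$ solves \eqref{eq:waveequation} and the base estimate applies to it directly. The genuinely new input is commutation with $\partial_r$: the quantity $\partial_r\psi$ solves an inhomogeneous equation $\square_g(\partial_r\psi)=F$, with $F$ first order in $\partial_r\psi$ plus terms of lower order in $\partial_r$, all with coefficients regular on the compact regions (where $D$ is bounded below). Running the multiplier estimate for the commuted quantity produces spacetime errors $\int X(\cdot)\,F$, which I absorb into its own coercive bulk at top order (Cauchy--Schwarz with a small constant) and into the bulks of quantities with strictly fewer $\partial_r$-derivatives, handled by the inductive hypothesis; this cascade is what generates the loss of $k+l+|\alpha|$ derivatives. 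On the initial slice, the right-hand side sum of $T$-energies $\mathbf{J}^T[T^j\psi]$ controls the requisite mixed derivatives through elliptic estimates obtained by using \eqref{eq:waveequation} to trade pairs of $T$-derivatives for normal and angular ones.

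Finally, the estimate \eqref{eq:morawetzfowTloss} is where trapping at the photon sphere must be met head-on, since the single region $\{r_0\le r\le r_1\}$ may straddle $r=2M$. There the degenerate bulk controls $\mathbf{J}^T[\psi]\cdot\mathbf{n}_{\Sigma_\tau}$ away from $2M$ but loses the angular and $T$-derivative parts exactly at $r=2M$. I would recover them by commuting once with $T$ and combining the degenerate estimates for $\psi$ and $T\psi$: the non-degenerate $\partial_{r_*}$-control of $T\psi$, together with a Poincar\'e inequality on the spheres (Lemma \ref{lm:angmom}), upgrades the degenerate angular and temporal control at $r=2M$ to a non-degenerate bound, at the cost of the single extra $T$-derivative recorded in the sum up to $j=1$. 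I expect the principal obstacle to be precisely this photon-sphere degeneracy, and relatedly the construction of a single $f$ (with its zeroth-order companion $w$) making the bulk globally non-negative while keeping every boundary flux controlled by the $T$-energy; once the base estimate is secured, the commutator bookkeeping is routine.
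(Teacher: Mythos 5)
The paper does not actually prove Theorem \ref{thm:morawetzfow}: it is imported wholesale from \cite{aretakis1}, so the relevant comparison is with the proof there. Your reconstruction follows that proof in all essentials --- the current $\mathbf{J}^{X,w}$ with $X=f(r_*)\partial_{r_*}$ and a zeroth-order modifier, the choice of $f$ changing sign at $r=2M$ where $\frac{d}{dr}(Dr^{-2})=\frac{2(r-M)(2M-r)}{r^5}$ vanishes, control of the $\Sigma_\tau$, $\mathcal{H}^+$ and $\mathcal{I}^+$ fluxes by the conserved $T$-energy, and an induction on $k+l+|\alpha|$ in which $T$ and $\Omega^\alpha$ commute exactly while the $\partial_r$-commuted equation produces lower-order errors absorbed on the compact regions (where $D$ is bounded below) --- together with elliptic estimates on the initial slice to express the commuted data in terms of $\sum_j\mathbf{J}^T[T^j\psi]$. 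This is the right architecture and would go through.

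The one step that, as written, would fail is your treatment of the trapped directions in \eqref{eq:morawetzfowTloss}. A Poincar\'e inequality on the spheres (Lemma \ref{lm:angmom} merely identifies $|\snabla_{\s^2}f|^2$ with $\sum_{|\alpha|=1}(\Omega^\alpha f)^2$) controls a function \emph{by} its angular derivatives, which is the wrong direction: it cannot upgrade a bulk term whose $|\snabla\psi|^2$-coefficient vanishes at $r=2M$ into a non-degenerate one. The standard mechanism is instead the Lagrangian identity obtained from the multiplier $\chi\psi$ (with $\chi$ a cutoff near the photon sphere): integrating $\chi\psi\,\square_g\psi=0$ by parts yields, in $(t,r_*)$ coordinates,
\begin{equation*}
\int \chi\,|\snabla\psi|^2\,d\mu_{g_M}=\int \chi\, D^{-1}(T\psi)^2\,d\mu_{g_M}-\int \chi\, D^{-1}(\partial_{r_*}\psi)^2\,d\mu_{g_M}+\int O(|\partial\chi|)\,|\psi||\partial\psi|\,d\mu_{g_M}+\textnormal{bdry},
\end{equation*}
so that once $\int\chi(T\psi)^2$ is bounded non-degenerately --- which follows from the zeroth-order bulk term of the degenerate estimate applied to $T\psi$, and this is precisely where the extra derivative $j=1$ on the right-hand side of \eqref{eq:morawetzfowTloss} enters --- the angular term follows, the remaining terms being already controlled. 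With that substitution your argument closes; everything else is the standard bookkeeping you describe.
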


\begin{theorem}
\label{thm:fowhier}
Let $\psi$ be a solution to \eqref{eq:waveequation} arising from initial data \\$(\Psi,\Psi')\in C^{\infty}\left(\widehat{\Sigma}_0\right)\oplus C_{c}^{\infty}(\Sigma_0\cap \{r_{\mathcal{H}}\leq r\leq r_{\mathcal{I}}\})$. Let $k\in \N_0$ and $2k\leq p\leq 2+2k$, then we can estimate for all $0\leq \tau_1\leq \tau_2$:
\begin{equation}
\label{eq:fowhierarchy}
\begin{split}
\int_{{\underline{N}}_{\tau_2}}&(r-M)^{-p}(\underline{L}^{k+1}\phi)^2\,d\omega du+\int_{{N}_{\tau_2}}r^{p}(L^{k+1}\phi)^2\,d\omega dv\\
&+\int_{\mathcal{H}^+\cap\{\tau_1\leq \tau\leq \tau_2\}} (r-M)^{2-p}|\snabla_{\s^2}\underline{L}^k\phi|^2\,d\omega dv+\int_{\mathcal{I}^+\cap\{\tau_1\leq \tau\leq \tau_2\}}r^{p-2}|\snabla_{\s^2}L^k\phi|^2\,d\omega du\\
&+\int_{{{\underline{\mathcal{A}}}}^{\tau_2}_{\tau_1}}(r-M)^{1-p}(\underline{L}^{k+1}\phi)^2+(2-p)(r-M)^{3-p}|\snabla_{\s^2} \underline{L}^k\phi|^2\,d\omega du dv\\
+& \int_{{{\mathcal{A}}}^{\tau_2}_{\tau_1}}r^{p-1}(L^{k+1}\phi)^2+(2-p)r^{p-3}|\snabla_{\s^2} L^k\phi|^2\,d\omega dv d\tau\\
\leq&\: C\sum_{j=0}^k\int_{{\underline{N}}_{\tau_1}}(r-M)^{-p+2j}(L^{1+k-j}\phi)^2\,d\omega du+\int_{{N}_{\tau_1}}r^{p-2j}(L^{1+k-j}\phi)^2\,d\omega dv\\
&+C\sum_{j=0}^k\int_{\Sigma_{\tau_1}}{\mathbf{J}}^T[T^j\psi]\cdot \mathbf{n}_{\Sigma_{\tau_1}}\,d\mu_{\Sigma_{\tau_1}}.
\end{split}
\end{equation}
\end{theorem}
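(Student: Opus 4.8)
The plan is to prove the two ends of the hierarchy separately and then glue them. Concretely, I would establish a genuine $r^p$-weighted estimate in the far region $\{r\ge r_{\mathcal{I}}\}$, a mirror $(r-M)^{-p}$-weighted estimate in the near-horizon region $\{r\le r_{\mathcal{H}}\}$, and control the intermediate region $\{r_{\mathcal{H}}\le r\le r_{\mathcal{I}}\}$ (including the interface boundary terms) by the integrated local energy decay estimate of Theorem~\ref{thm:morawetzfow} together with the degenerate energy conservation of Lemma~\ref{lm:tenconsv}. At hierarchy level $k$ the relevant quantity near infinity is the commuted radiation field $\phi_k:=(2D^{-1}r^2L)^k\phi$ of Lemma~\ref{lm:maineqhoradfield}, which satisfies \eqref{eq:maineqhoradfield}; near the horizon one works with the conjugate quantity built from $2D^{-1}\underline{L}=\partial_r$, exploiting the symmetry exchanging the two asymptotic ends $r\to\infty$ and $r\to M$ (the Couch--Torrence symmetry invoked in Section~\ref{intro:spacelikeinf}).

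First I would treat the far region. Multiplying \eqref{eq:maineqhoradfield} by an $r$-power weight times $L\phi_k$, chosen so that the resulting outgoing boundary flux is exactly $r^{p}(L^{k+1}\phi)^2$, and integrating by parts over the slab bounded by $N_{\tau_1}$, $N_{\tau_2}$, $\{r=r_{\mathcal{I}}\}$ and $\mathcal{I}^+$, the principal term $L\phi_k\cdot \underline{L}L\phi_k=\tfrac12\underline{L}((L\phi_k)^2)$ produces, via $\underline{L}r=-\tfrac12 D$, the flux on $N_{\tau_2}$ and a radial bulk term $\sim r^{p-1}(L^{k+1}\phi)^2$ of favourable sign, while the past flux on $N_{\tau_1}$ passes to the right-hand side. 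The coefficient $-4kr^{-1}$ in \eqref{eq:maineqhoradfield} combines with the weight-derivative to fix the admissible lower bound $p\ge 2k$. The angular term $Dr^{-2}\slashed{\Delta}_{\s^2}\phi_k$, after integration by parts on $\s^2$, contributes the $\mathcal{I}^+$-flux $r^{p-2}|\snabla_{\s^2}L^k\phi|^2$ and the angular bulk $(2-p)r^{p-3}|\snabla_{\s^2}L^k\phi|^2$ matching the $\mathcal{A}^{\tau_2}_{\tau_1}$ term; the upper bound $p\le 2+2k$ is the threshold beyond which this contribution turns unfavourable, and for $p>2$ the negative part is a quantity of strictly lower $L$-order that I would absorb using the hierarchy at levels $<k$.

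The near-horizon estimate is obtained in a completely parallel manner with $L$ and $\underline{L}$ interchanged and $r$ replaced by the weight $(r-M)$, producing the $\underline{N}_{\tau_2}$, $\mathcal{H}^+$ and $\underline{\mathcal{A}}^{\tau_2}_{\tau_1}$ contributions. It is here that extremality is essential: since $D=(1-Mr^{-1})^2\sim (r-M)^2$ degenerates to second order at $r=M$, the weight $(r-M)^{-p}$ plays precisely the role of $r^p$ at infinity and the hierarchy closes forwards in time with no blueshift obstruction, in sharp contrast to the sub-extremal case. To glue the two regional estimates I would insert cutoffs supported in $\{r\le r_{\mathcal{I}}\}$ and $\{r\ge r_{\mathcal{H}}\}$; the commutators of the multipliers with the cutoffs, together with the interface flux terms at $r=r_{\mathcal{I}}$ and $r=r_{\mathcal{H}}$, are all supported in the intermediate region and bounded by \eqref{eq:morawetzfow}. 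Each such application of the Morawetz estimate loses one derivative at the photon sphere $r=2M$, and iterating this across the hierarchy is the origin of the sum $\sum_{j=0}^k\int_{\Sigma_{\tau_1}}\mathbf{J}^T[T^j\psi]\cdot\mathbf{n}_{\Sigma_{\tau_1}}\,d\mu_{\Sigma_{\tau_1}}$ on the right-hand side; the $T$-fluxes themselves are propagated from $\Sigma_{\tau_1}$ by Lemma~\ref{lm:tenconsv}, using $[\square_g,T]=0$.

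I expect the main difficulty to be the sign bookkeeping of the combined bulk terms and the absorption of the error terms. The lower-order terms $\sum_{k'<k}O(r^{-2})\phi_{k'}$ in \eqref{eq:maineqhoradfield}, the zeroth-order term $-\tfrac{DD'}{4r}\phi$ in \eqref{eq:maineqradfield}, and the negative angular bulk for $p>2$ must all be absorbable either into the favourable radial bulk at the same level or into the estimates at lower levels, which forces the weight ranges at successive levels to overlap and requires an induction on $k$ performed jointly with the range $2k\le p\le 2+2k$. At the horizon the analogous computation is more delicate because $D$ is no longer close to a constant, so one must track the $(r-M)$-dependence of every coefficient to confirm that the admissible range is again $2k\le p\le 2+2k$ and that the borderline $p=2+2k$ yields only a degenerate, rather than wrong-sign, angular bulk. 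Verifying these sign conditions uniformly, and checking that the Morawetz error terms are genuinely of lower order relative to the weighted bulk, is where the technical weight of the argument lies.
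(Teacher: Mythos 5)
Your strategy---separate $r^p$- and $(r-M)^{-p}$-weighted multiplier identities for the commuted radiation field in the two asymptotic regions, glued through $\{r_{\mathcal{H}}\leq r\leq r_{\mathcal{I}}\}$ by the Morawetz estimate (whose photon-sphere degeneracy, via \eqref{eq:morawetzfowTloss}, is indeed the source of the sum $\sum_{j=0}^k\mathbf{J}^T[T^j\psi]$ on the right), with an induction on $k$ run jointly with the sliding window $2k\leq p\leq 2k+2$---is exactly the approach of the proof the paper invokes (Proposition 7.6 of \cite{paper4}), and it is the same scheme this paper carries out explicitly in the backwards direction in \eqref{eq:backhierarchy} and \eqref{eq:hobackhierarchy}. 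One correction to your sign bookkeeping: after multiplying by the weighted $L$-derivative of the commuted field, the coefficient $-4kr^{-1}$ in \eqref{eq:maineqhoradfield} produces a \emph{favourable} (positive) addition $\sim +8k\,r^{p-1}$ to the radial bulk term rather than an obstruction, so the lower bound $p\geq 2k$ is not fixed by that coefficient; it arises, as you correctly anticipate elsewhere in your outline, from the requirement that the lower-order terms $\sum_j O(r^{-2})(2D^{-1}r^2L)^j\phi$ and the wrong-signed angular bulk for $p>2$ be absorbable into the level-$(k-1)$ estimates, which forces the admissible windows at successive levels to abut.
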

\begin{proof}
See Proposition 7.6 of \cite{paper4}.
\end{proof}

By combining Theorem \ref{thm:morawetzfow} and Theorem \ref{thm:fowhier} with Lemma \ref{lm:tenconsv} and applying the mean-value theorem along a dyadic sequence of times (``the pigeonhole principle''), one can obtain energy decay in time along the foliation $\Sigma_{\tau}$; see for example \cite{aretakis1, aretakis2} and \cite{paper4} for an application of this procedure in extremal Reissner--Nordstr\"om. 

In the present article, however, we will not apply the mean-value theorem, bur rather derive uniform boundedness estimates for various time-integrated energies on the left-hand side (see Proposition \ref{prop:integrateddecay}). We will then use these time-integrated energy estimates to obtain estimates for energy fluxes along $\mathcal{H}^+$ and $\mathcal{I}^+$ with growing time weights \emph{inside} the integrals (Corollary \ref{cor:fowdecay}).

\begin{proposition}
\label{prop:integrateddecay}
There exists a constant  $C=C(M,\Sigma_0,r_{\mathcal{H}},r_{\mathcal{I}})>0$ such that
\begin{equation}
\label{eq:integrateddecay1}
\begin{split}
\int_{0}^{\infty}\int_{\tau}^{\infty} \int_{\Sigma_{\tau'}} {\mathbf{J}}^T[\psi]\cdot \mathbf{n}_{\tau'} d\mu_{\tau'} d\tau'd\tau \leq&\: C\Bigg[\sum_{j=0}^1 \int_{{\underline{N}_{v_0}}}(r-M)^{-2+j}(\underline{L}T^{j}\phi)^2\,d\omega du+\int_{{N}_{u_0}}r^{2-j}(LT^{j}\phi)^2\,d\omega dv\\
&+\sum_{j=0}^2\int_{\Sigma_0}{\mathbf{J}}^T[T^j\psi]\cdot \mathbf{n}_{\Sigma_0}\,d\mu_{\Sigma_0}\Bigg].
\end{split}
\end{equation}
and
\begin{equation}
\label{eq:integrateddecay2}
\begin{split}
\int_{0}^{\infty}&\int_{\tau}^{\infty} \left[\int_{\mathcal{I}^+_{\geq \tau'}} {\mathbf{J}}^T[\psi]\cdot L\, r^2d\omega du\right] d\tau'd\tau+\int_{0}^{\infty}\int_{\tau}^{\infty} \left[\int_{\mathcal{H}^+_{\geq \tau'}} {\mathbf{J}}^T[\psi]\cdot \underline{L}\, d\omega dv\right] d\tau'd\tau\\
&+\int_{\mathcal{H}^+_{\geq v_0}} |\snabla_{\s^2}\phi|^2\,d\omega dv+\int_{\mathcal{I}^+_{\geq u_0}}|\snabla_{\s^2}\phi|^2\,d\omega du  \\
\leq&\: C\Bigg[\sum_{j=0}^1 \int_{{\underline{N}_{v_0}}}(r-M)^{-2+j}(\underline{L}T^{j}\phi)^2\,d\omega du+\int_{{N}_{u_0}}r^{2-j}(LT^{j}\phi)^2\,d\omega dv+\sum_{j=0}^2\int_{\Sigma_0}{\mathbf{J}}^T[T^j\psi]\cdot \mathbf{n}_{0}\,d\mu_{0}\Bigg].
\end{split}
\end{equation}
\end{proposition}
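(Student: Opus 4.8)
The strategy is to integrate the hierarchy of Theorem~\ref{thm:fowhier} directly in time, rather than passing through pointwise-in-$\tau$ decay: the time-weighted integrals on the left-hand sides of \eqref{eq:integrateddecay1}--\eqref{eq:integrateddecay2} are exactly the spacetime integrals produced by the $r^p$- and $(r-M)^{-p}$-estimates, so no mean-value/pigeonhole argument is needed. Abbreviate $\mathcal{E}^T[\psi](\tau):=\int_{\Sigma_{\tau}}\mathbf{J}^T[\psi]\cdot \mathbf{n}_{\tau}\,d\mu_{\tau}$ and, for $p\in\{1,2\}$,
\[
\mathcal{F}_p[\psi](\tau):=\int_{\underline{N}_{\tau}}(r-M)^{-p}(\underline{L}\phi)^2\,d\omega du+\int_{N_{\tau}}r^p(L\phi)^2\,d\omega dv.
\]
Since $\int_0^\infty\int_\tau^\infty \mathcal{E}^T[\psi](\tau')\,d\tau'\,d\tau=\int_0^\infty G(\tau)\,d\tau$ with $G(\tau):=\int_\tau^\infty\mathcal{E}^T[\psi](\tau')\,d\tau'$, it suffices to bound $G(\tau)$ by a quantity whose $\tau$-integral is controlled by the data at $\tau=0$; the flux terms of \eqref{eq:integrateddecay2} are reduced to $G$ in the same way.

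First I would apply \eqref{eq:fowhierarchy} with $k=0$, $p=2$ on $[0,\tau_2]$ and let $\tau_2\to\infty$. The flux $\mathcal{F}_2[\psi](\tau_2)$ is nonnegative and is discarded; the factor $(2-p)$ annihilates the angular spacetime terms, and identifying the measure $d\omega\,dv\,d\tau$ on $\mathcal{A}^{\tau_2}_{\tau_1}$ (and $d\omega\,du\,d\tau$ on $\underline{\mathcal{A}}^{\tau_2}_{\tau_1}$) with $\int_0^\infty\big(\int_{N_{\tau}}\cdots\big)\,d\tau$ turns the remaining spacetime integral into $\int_0^\infty\mathcal{F}_1[\psi](\tau)\,d\tau$. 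This yields
\[
\int_0^\infty\mathcal{F}_1[\psi](\tau)\,d\tau\lesssim \mathcal{F}_2[\psi](0)+\mathcal{E}^T[\psi](0),
\]
which we refer to as $(\star)$. Moreover the $p=2$ boundary terms on $\mathcal{H}^+$ and $\mathcal{I}^+$ lie on the good side and, because $2-p=0$, reduce to $\int_{\mathcal{H}^+_{\geq v_0}}|\snabla_{\s^2}\phi|^2+\int_{\mathcal{I}^+_{\geq u_0}}|\snabla_{\s^2}\phi|^2\lesssim \mathcal{F}_2[\psi](0)+\mathcal{E}^T[\psi](0)$; these supply the angular terms in \eqref{eq:integrateddecay2}.

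Next I would apply \eqref{eq:fowhierarchy} with $k=0$, $p=1$ on $[\tau,\infty)$. Near $\mathcal{I}^+$ its spacetime integral controls $\int_\tau^\infty\int_{N_{\tau'}}(L\phi)^2+r^{-2}|\snabla_{\s^2}\phi|^2$, which by Lemma~\ref{lm:tenconsv} is comparable (modulo zeroth-order terms absorbed via the Hardy inequalities of Lemma~\ref{lm:hardy}) to the time-integrated outgoing $T$-flux; the middle region $r_{\mathcal{H}}\leq r\leq r_{\mathcal{I}}$ is handled by the Morawetz estimate \eqref{eq:morawetzfowTloss}, at the cost of one $T$-derivative due to trapping at the photon sphere. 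The ingoing angular $T$-flux near $\mathcal{H}^+$ only comes with the degenerate weight $(r-M)^2$ from the hierarchy; to upgrade it to the nondegenerate $\int_{\underline{N}_{\tau'}}|\snabla_{\s^2}\phi|^2$ present in $\mathcal{E}^T$ I would use \eqref{eq:maineqradfield} to trade $\slashed{\Delta}_{\s^2}\phi$ for $(r-M)^{-2}$-weighted $\underline{L}L\phi$ and integrate by parts along $\underline{N}_{\tau'}$, the weight $(r-M)^{-2}$ being exactly the one controlled by the $p=2$ hierarchy and its $T$-commutation. Altogether
\[
G(\tau)\lesssim \mathcal{F}_1[\psi](\tau)+\sum_{j=0}^1\mathcal{E}^T[T^j\psi](\tau).
\]
Integrating over $\tau\in[0,\infty)$, bounding $\int_0^\infty\mathcal{F}_1[\psi]\,d\tau$ by $(\star)$ and $\int_0^\infty\mathcal{E}^T[T^j\psi]\,d\tau$ by applying the same argument to $T^j\psi$, and using the elementary inequalities $\mathcal{F}_1[\psi](0)\leq \mathcal{F}_2[\psi](0)$ (from $r\leq r^2$ on $N_{u_0}$ and $(r-M)^{-1}\leq(r-M)^{-2}$ on $\underline{N}_{v_0}$), produces a right-hand side made of $\mathcal{F}_2[\psi](0)$, $\mathcal{F}_1[T\psi](0)$ and $\sum_{j=0}^2\mathcal{E}^T[T^j\psi](0)$ -- exactly the right-hand side of \eqref{eq:integrateddecay1}.

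Finally, \eqref{eq:integrateddecay2} follows from \eqref{eq:integrateddecay1}. Since $\textnormal{div}\,\mathbf{J}^T\equiv 0$ and $T$ is causal (Lemma~\ref{lm:tenconsv}), integrating over $\mathcal{R}\cap\{\tau\geq \tau'\}$ shows that the nonnegative fluxes through $\mathcal{I}^+_{\geq\tau'}$ and $\mathcal{H}^+_{\geq\tau'}$ sum to $\mathcal{E}^T[\psi](\tau')$; hence their double time-integrals are dominated by $\int_0^\infty\int_\tau^\infty \mathcal{E}^T[\psi](\tau')\,d\tau'\,d\tau$, i.e.\ by \eqref{eq:integrateddecay1}, while the angular boundary terms were already produced in the $p=2$ step. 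I expect the main obstacle to be the bookkeeping forced by the derivative loss in \eqref{eq:morawetzfowTloss}: each Morawetz application trades a unit of $T$-regularity, which is what dictates the range $0\leq j\leq 2$ on the right-hand side and forces the $p=1$ estimate to be reapplied to $T\psi$. The second delicate point is recovering the nondegenerate angular energy near $\mathcal{H}^+$ through \eqref{eq:maineqradfield} together with the $(r-M)^{-2}$-weighted control of the $p=2$ hierarchy, and keeping the weights consistent across the two endpoints $\mathcal{H}^+$ and $\mathcal{I}^+$.
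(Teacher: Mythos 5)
Your argument is essentially the paper's own proof: the same forwards hierarchy \eqref{eq:fowhierarchy} at $p=1$ and $p=2$, the same reduction of the $\Sigma_{\tau'}$-energy to the $p=1$ spacetime terms via Hardy and the degenerate Morawetz estimate \eqref{eq:morawetzfowTloss} (which is exactly what produces \eqref{eq:intp1est} and the loss of up to two $T$-derivatives), the same double integration in $\tau$, and the same use of $T$-energy conservation plus the $p=2$ boundary terms for \eqref{eq:integrateddecay2}. The only deviation, your proposed integration-by-parts upgrade of the angular flux near $\mathcal{H}^+$ to a nondegenerate one, is not needed: the $T$-energy flux through the ingoing null segment $\underline{N}_{\tau'}$ already carries the degenerate factor $D\sim (r-M)^2$ on its angular part, so the estimate \eqref{eq:relationTenpweightH} with the weight $(r-M)^2|\snabla_{\s^2}\phi|^2$ suffices exactly as the paper uses it.
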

\begin{proof}
Note first of all that for all $\tau\geq 0$
\begin{equation}
\label{eq:relationTenpweightI}
\begin{split}
\int_{\tau_1}^{\tau_2}\int_{{N}_{\tau}} {\mathbf{J}}^T[\psi]\cdot L\,r^2d\omega dvd\tau \sim&\: \int_{\tau_1}^{\tau_2}\int_{{N}_{\tau}} r^2(L(r^{-1}\phi))^2+r^{-2}|\snabla_{\s^2}\phi|^2\,d\omega dv d\tau\\
\lesssim &\:  \int_{\tau_1}^{\tau_2}\int_{{N}_{\tau}} r^{-2}\phi^2+(L\phi)^2+r^{-2}|\snabla_{\s^2}\phi|^2\,d\omega dv d\tau\\
\lesssim &\:  \int_{\tau_1}^{\tau_2}\int_{{N}_{\tau}}(L\phi)^2+r^{-2}|\snabla_{\s^2}\phi|^2\,d\omega d\tau+ \int_{\Sigma_{\tau_1}} {\mathbf{J}}^T[\psi]\cdot \mathbf{n}_{\tau_1}\,d\mu_{\tau_1},
\end{split}
\end{equation}
where in the final inequality we applied Lemma \ref{lm:hardy} and \eqref{eq:morawetzfow}, using that $\phi$ attains a finite limit at $\mathcal{I}^+$, by Proposition \ref{prop:regularityphiinfty}.

Similarly, we have that
\begin{equation}
\label{eq:relationTenpweightH}
\begin{split}
\int_{\tau_1}^{\tau_2}\int_{{\underline{N}}_{\tau}} {\mathbf{J}}^T[\psi]\cdot \underline{L}\,r^2d\omega dud\tau \lesssim &\:  \int_{\tau_1}^{\tau_2}\int_{{\underline{N}}_{\tau}}(\underline{L}\phi)^2+(r-M)^2|\snabla_{\s^2}\phi|^2\,d\omega du d\tau+ \int_{\Sigma_{\tau_1}} {\mathbf{J}}^T[\psi]\cdot \mathbf{n}_{\tau_1}\,d\mu_{\tau_1}. 
\end{split}
\end{equation}
We combine \eqref{eq:relationTenpweightI} and \eqref{eq:relationTenpweightH} together with \eqref{eq:morawetzfowTloss} to obtain the estimate:
\begin{equation*}
\begin{split}
\int_{\tau}^{\infty}\int_{\Sigma_{\tau'}} {\mathbf{J}}^T[\psi]\cdot n_{\tau'}\,d\mu_{\tau'}\lesssim&\: \int_{\tau}^{\infty}\int_{{N}_{\tau'}}(L\phi)^2+r^{-2}|\snabla_{\s^2}\phi|^2\,d\omega dv d\tau'+ \int_{\tau}^{\infty}\int_{{\underline{N}}_{\tau'}}(\underline{L}\phi)^2+(r-M)^2|\snabla_{\s^2}\phi|^2\,d\omega du d\tau'\\
&+\sum_{j=0}^1\int_{\Sigma_{\tau}} {\mathbf{J}}^T[T^j\psi]\cdot \mathbf{n}_{\tau}\,d\mu_{\tau}.
\end{split}
\end{equation*}
We now apply \eqref{eq:fowhierarchy} with $k=0$ and $p=1$ to obtain:
\begin{equation}
\label{eq:intp1est}
\begin{split}
\int_{\tau}^{\infty}\int_{\Sigma_{\tau'}} {\mathbf{J}}^T[\psi]\cdot n_{\tau'}\,d\mu_{\tau'}\lesssim&\: \int_{{N}_{\tau}}r(L\phi)^2\,d\omega dv+ \int_{{\underline{N}}_{\tau}}(r-M)^{-1}(\underline{L}\phi)^2\,d\omega du+\sum_{j=0}^1\int_{\Sigma_{\tau}} {\mathbf{J}}^T[T^j\psi]\cdot \mathbf{n}_{\tau}\,d\mu_{\tau}.
\end{split}
\end{equation}

By Lemma \ref{lm:tenconsv} and \eqref{eq:intp1est}, we immediately obtain also
\begin{equation}
\label{eq:intp1estb}
\begin{split}
\int_{\tau}^{\infty}\left[\int_{\mathcal{I}^+_{\geq \tau'}} {\mathbf{J}}^T[\psi]\cdot L\,r^2d\omega\right] du+\int_{\tau}^{\infty}\left[\int_{\mathcal{H}^+_{\geq \tau'}} {\mathbf{J}}^T[\psi]\cdot \underline{L}\,r^2d\omega\right] dv\lesssim&\: \int_{{N}_{\tau}}r(L\phi)^2\,d\omega dv+ \int_{{\underline{N}}_{\tau}}(r-M)^{-1}(\underline{L}\phi)^2\,d\omega du\\
&+\sum_{j=0}^1\int_{\Sigma_{\tau}} {\mathbf{J}}^T[T^j\psi]\cdot \mathbf{n}_{\tau}\,d\mu_{\tau}.
\end{split}
\end{equation}

We integrate once more in $\tau$ and apply \eqref{eq:fowhierarchy} with $k=0$ and $p=2$ to obtain \eqref{eq:integrateddecay1}. Equation \eqref{eq:integrateddecay2} follows from \eqref{eq:integrateddecay1} by applying Lemma \ref{lm:tenconsv} applied in the region $D^+(\Sigma_{\tau'})$, together with \eqref{eq:fowhierarchy} with $p=2$ and $k=0$.
\end{proof}

The following simple lemma is crucial in order to bound energy norms along $\mathcal{H}^+$ and $\mathcal{I}^+$ with time-weights inside the integrals.
\begin{lemma}
\label{lm:integralsandweights}
Let $f\in C^{0}([x_0,\infty))$. Let $ n\in \N$ such that $\lim_{x\to \infty} x^{n+1} |f(x)|=0$. Then 
\begin{equation}
\label{eq:xweightid}
\int_{x_0}^{\infty} (x-x_0)^n f(x)\,dx=n! \int_{x_0}^{\infty} \int_{x_1}^{\infty}\ldots \int_{x_n}^{\infty} f(x_{n+1})\,dx_{n+1}dx_n\ldots dx_1.
\end{equation}
\end{lemma}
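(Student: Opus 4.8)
The plan is to recognize the right-hand side of \eqref{eq:xweightid} as the integral of $f$ over the ordered simplex $\{x_0\leq x_1\leq \cdots \leq x_{n+1}<\infty\}$ and to evaluate it by peeling off one integration at a time. Concretely, I would set up an induction on the number of integrations, which is equivalent to a single application of Fubini's theorem but makes the role of the decay hypothesis transparent. The core of the argument reduces to one elementary antiderivative computation, while the only genuinely delicate point is the justification of the interchange of the order of integration.

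First I would introduce the iterated tail integrals $f_0:=f$ and $f_k(x):=\int_x^{\infty} f_{k-1}(y)\,dy$ for $k\geq 1$, so that the right-hand side of \eqref{eq:xweightid} is exactly $n!\,f_{n+1}(x_0)$. The decay hypothesis $\lim_{x\to\infty} x^{n+1}|f(x)|=0$ is used to make sense of these tails: writing $|f(x)|\leq \epsilon(x)\,x^{-(n+1)}$ with $\epsilon$ nonincreasing and $\epsilon(x)\to 0$, a direct estimate gives that each of $f_1,\ldots,f_n$ is well-defined and satisfies
\begin{equation*}
|f_k(x)|\leq \frac{\epsilon(x)}{n(n-1)\cdots(n-k+1)}\,x^{-(n-k+1)},\qquad 1\leq k\leq n,
\end{equation*}
so in particular $\lim_{x\to\infty} x^{n-k+1}|f_k(x)|=0$. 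These bounds guarantee convergence of all the intermediate integrals and the vanishing of the boundary terms that arise in the interchange below.

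The inductive claim is that, for $1\leq k\leq n$,
\begin{equation*}
f_k(x)=\frac{1}{(k-1)!}\int_x^{\infty} (y-x)^{k-1} f(y)\,dy\qquad \textnormal{for all } x\geq x_0.
\end{equation*}
The base case $k=1$ is just the definition of $f_1$. For the inductive step I would substitute the hypothesis for $f_k$ into $f_{k+1}(x)=\int_x^{\infty} f_k(z)\,dz$ and interchange the $z$- and $y$-integrations over the region $\{x\leq z\leq y\}$; the inner integral is the elementary computation $\int_x^{y}(y-z)^{k-1}\,dz=(y-x)^{k}/k$, which promotes the formula from $k$ to $k+1$. Applying this formula with $k=n$ and then integrating once more, $f_{n+1}(x_0)=\int_{x_0}^{\infty} f_n(z)\,dz$, and interchanging order one final time produces $f_{n+1}(x_0)=\tfrac{1}{n!}\int_{x_0}^{\infty}(y-x_0)^{n}f(y)\,dy$; multiplying by $n!$ gives \eqref{eq:xweightid}.

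The main obstacle is precisely the justification of these interchanges of integration order, since for a general sign-changing $f$ the bare hypothesis $\lim x^{n+1}|f|=0$ need not force absolute integrability of the outermost integral. This is, however, exactly the situation that is benign in all our applications, where the integrand $f$ is a \emph{nonnegative} energy-flux density: in that case Tonelli's theorem applies unconditionally, the identity \eqref{eq:xweightid} holds as an equality in $[0,\infty]$, and finiteness of either side is equivalent to finiteness of the other — which is the only use we make of the lemma. For signed $f$ with integrals that converge absolutely, the decay estimates on the $f_k$ recorded above license Fubini's theorem on each ordered simplex, and the remainder of the argument is the routine antiderivative computation described above.
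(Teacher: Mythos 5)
Your proof is correct, but it takes a genuinely different route from the paper's. The paper proves \eqref{eq:xweightid} by repeated integration by parts on the \emph{left}-hand side, writing $f(x)=-\frac{d}{dx}\bigl[\int_x^{\infty}f(x')\,dx'\bigr]$ and using the decay hypothesis at each stage to show the boundary terms $\lim_{x\to\infty}x^{n-k}\bigl|\int_x^{\infty}\cdots\int_{x_k}^{\infty}f\bigr|$ vanish; you instead start from the \emph{right}-hand side, identify it as $n!\,f_{n+1}(x_0)$ for the iterated tails $f_k$, and collapse the simplex integral by Fubini--Tonelli plus the elementary computation $\int_x^y(y-z)^{k-1}\,dz=(y-x)^k/k$. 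The two are of course dual (integration by parts is the one-variable shadow of the interchange), but your version buys something the paper's does not make explicit: for nonnegative $f$ --- which is the only case used in Proposition \ref{prop:integrateddecay}, Corollary \ref{cor:fowdecay}, and the backwards estimates --- Tonelli gives the identity unconditionally as an equality in $[0,\infty]$, with no decay hypothesis needed at all. You are also right to flag that for signed $f$ the bare hypothesis $\lim_{x\to\infty}x^{n+1}|f(x)|=0$ does not force absolute convergence of the outermost integral (your own bound $|f_n(x)|\lesssim \epsilon(x)/x$ shows the $k=n$ interchange is the one that genuinely needs an extra integrability assumption); the paper's integration-by-parts argument has the same implicit gap, since its final displayed identity presumes both sides converge. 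Your honest accounting of this point is a small improvement in rigor over the printed proof.
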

\begin{proof}
We integrate the left-hand side of \eqref{eq:xweightid} by parts to obtain
\begin{equation*}
\begin{split}
\int_{x_0}^{\infty} (x-x_0)^n f(x)\,dx=&-\int_{x_0}^{\infty} (x_1-x_0)^n \frac{d}{dx_1} \left[\int_{x_1}^{\infty} f(x_2)\,dx_2\right]\,dx_1\\
=&n \int_{x_0}^{\infty}(x_1-x_0)^{n-1} \int_{x_1}^{\infty} f(x_2)\,dx_2dx_1+  (x_1-x_0)^n\int_{x_1}^{\infty} f(x_2)\,dx_2\Big|^{x=\infty}_{x=x_0}\\
=&n \int_{x_0}^{\infty}(x_1-x_0)^{n-1} \int_{x_1}^{\infty} f(x_2)\,dx_2dx_1+\lim_{x\to \infty} x^n \int_x^{\infty} f(x')\,dx'.
\end{split}
\end{equation*}
Note that for $n\geq 1$:
\begin{equation*}
\lim_{x\to \infty} x^n \left|\int_x^{\infty} f(x')\,dx'\right|=\lim_{x\to \infty} \sup_{x'\geq x} x'^{n+1} |f(x')| x^n\int_x^{\infty}x^{-n-1}\,dx'=0
\end{equation*}
and hence,
\begin{equation*}
\begin{split}
\int_{x_0}^{\infty} (x-x_0)^n f(x)\,dx=n \int_{x_0}^{\infty}(x_1-x_0)^{n-1} \left[\int_{x_1}^{\infty} f(x_2)\,dx_2\right]dx_1.
\end{split}
\end{equation*}

We then keep integrating by parts to arrive \eqref{eq:xweightid}, using that
\begin{equation*}
\lim_{x\to \infty} x^{n-k}  \left|\int_x^{\infty} \int_{x_1}^{\infty}\ldots \int_{x_k}^{\infty} f(x')dx'dx_k\ldots dx_1\right|\leq   \sup_{x'\geq x_0} x^{n+1}|f(x')|\lim_{x\to \infty}\cdot  x^{-1}=0.
\end{equation*}
\end{proof}

\begin{corollary}
\label{cor:fowdecay}
There exists a constant  $C=C(M,\Sigma,r_{\mathcal{H}},r_{\mathcal{I}})>0$ such that
\begin{equation}
\label{eq:backestwithTder}
\begin{split}
\sum_{j=0}^2&\int_{\mathcal{H}^+_{\geq v_0}} v^{2-j}(L^{j+1}\phi)^2+|\snabla_{\s^2}\phi |^2\,d\omega dv+\int_{\mathcal{I}^+_{\geq u_0}} u^{2-j}(\Lbar^{j+1}\phi)^2+|\snabla_{\s^2}\phi |^2\,d\omega dv\\
\leq&\: C\Bigg[\sum_{j=0}^1 \int_{{\underline{N}_{v_0}}}(r-M)^{-2+j}(\underline{L}T^{j}\phi)^2\,d\omega du+\int_{{N}_{u_0}}r^{2-j}(LT^{j}\phi)^2\,d\omega dv\\
&+\sum_{j=0}^2\int_{\Sigma_0}{\mathbf{J}}^T[T^j\psi]\cdot \mathbf{n}_{\Sigma_0}\,d\mu_{\Sigma_0}\Bigg].
\end{split}
\end{equation}
\end{corollary}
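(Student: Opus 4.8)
The plan is to recognise the two sides of \eqref{eq:backestwithTder} as, respectively, $\|r\psi|_{\mathcal{H}^+_{\geq v_0}}\|^2_{\mathcal{E}_{\mathcal{H}^+_{\geq v_0}}}+\|r\psi|_{\mathcal{I}^+_{\geq u_0}}\|^2_{\mathcal{E}_{\mathcal{I}^+_{\geq u_0}}}$ and $\|(\Psi,\Psi')\|^2_{\mathcal{E}_{\Sigma_0}}$, and then to produce the three summands $j=0,1,2$ by applying Proposition \ref{prop:integrateddecay} (and the intermediate estimates in its proof) to the commuted solutions $\psi$, $T\psi$, $T^2\psi$ with $2-j$ time integrations, converting the resulting iterated time integrals into polynomial time weights.

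First I would record the boundary commutation identities. Since $T$ is Killing with $T(r)=0$, each $T^j\psi$ solves \eqref{eq:waveequation} and $rT^j\psi=T^j\phi$. On $\mathcal{H}^+$ the field $T$ restricts to the null generator $L=\partial_v$, so $(T^j\phi)|_{\mathcal{H}^+}=L^j(\phi|_{\mathcal{H}^+})$ and hence $L(T^j\phi)|_{\mathcal{H}^+}=L^{j+1}\phi$. On $\mathcal{I}^+$, Proposition \ref{prop:regularityphiinfty} applied to $T^{j}\psi$ gives $\lim_{v\to\infty}L(T^j\phi)=\lim_{v\to\infty}r^{-2}(r^2L)(T^j\phi)=0$; since $T=L+\Lbar$ and $\Lbar=\partial_u$ is tangent to $\mathcal{I}^+$, this yields inductively $(T^j\phi)|_{\mathcal{I}^+}=\Lbar^j(\phi|_{\mathcal{I}^+})$ and therefore $\Lbar(T^j\phi)|_{\mathcal{I}^+}=\Lbar^{j+1}\phi$. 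Combined with Lemma \ref{lm:tenconsv}, the $T$-flux of $T^j\psi$ through $\mathcal{H}^+$ and $\mathcal{I}^+$ equals $\int(L^{j+1}\phi)^2$ and $\int(\Lbar^{j+1}\phi)^2$, respectively.

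For $j=0$ I would apply \eqref{eq:integrateddecay2} directly to $\psi$. The double time integrals of the $\mathcal{H}^+$- and $\mathcal{I}^+$-fluxes on its left-hand side are, by Tonelli's theorem (equivalently Lemma \ref{lm:integralsandweights} with $n=2$), equal to $\tfrac12\int_{\mathcal{H}^+_{\geq v_0}}(v-v_0)^2(L\phi)^2\,d\omega dv$ and $\tfrac12\int_{\mathcal{I}^+_{\geq u_0}}(u-u_0)^2(\Lbar\phi)^2\,d\omega du$, while the remaining left-hand terms supply $\int_{\mathcal{H}^+}|\snabla_{\s^2}\phi|^2+\int_{\mathcal{I}^+}|\snabla_{\s^2}\phi|^2$; its right-hand side is already the claimed bound. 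For $j=1$ I would apply the single-integral estimate \eqref{eq:intp1estb} (established in the proof of Proposition \ref{prop:integrateddecay}, valid for any smooth solution) to $T\psi$ at $\tau=0$: Tonelli (Lemma \ref{lm:integralsandweights} with $n=1$) converts the single time integral into the weights $(v-v_0)$ and $(u-u_0)$, and the right-hand data $\int_{N_{u_0}}r(LT\phi)^2$, $\int_{\underline{N}_{v_0}}(r-M)^{-1}(\Lbar T\phi)^2$ and $\sum_{j'=0}^1\int_{\Sigma_0}\mathbf{J}^T[T^{j'+1}\psi]\cdot\mathbf{n}_0$ are precisely the $j'=1$ null-leg terms and the $j=1,2$ bulk terms of $\|(\Psi,\Psi')\|^2_{\mathcal{E}_{\Sigma_0}}$. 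For $j=2$ I would simply integrate $\mathrm{div}\,\mathbf{J}^T[T^2\psi]\equiv 0$ (Lemma \ref{lm:tenconsv}) over $\mathcal{R}$, obtaining $\int_{\mathcal{H}^+}(L^3\phi)^2+\int_{\mathcal{I}^+}(\Lbar^3\phi)^2\le\int_{\Sigma_0}\mathbf{J}^T[T^2\psi]\cdot\mathbf{n}_0$.

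It remains to pass from the weights $(v-v_0)^{2-j}$, $(u-u_0)^{2-j}$ to $v^{2-j}$, $u^{2-j}$. Since $u\ge u_0>0$ and $v\ge v_0>0$ are bounded below, one has $u^{2-j}\lesssim (u-u_0)^{2-j}+1$ for $j=0,1$, and the unweighted remainders $\int_{\mathcal{I}^+}(\Lbar^{j+1}\phi)^2=\int_{\mathcal{I}^+}(\Lbar T^j\phi)^2$ are, by energy conservation for $T^j\psi$ (Lemma \ref{lm:tenconsv}), bounded by $\int_{\Sigma_0}\mathbf{J}^T[T^j\psi]\cdot\mathbf{n}_0$, hence by the right-hand side; the same argument applies on $\mathcal{H}^+$. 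Summing the three cases $j=0,1,2$ then yields \eqref{eq:backestwithTder}. The only genuinely non-routine step is the boundary identity $(T^j\phi)|_{\mathcal{I}^+}=\Lbar^j(\phi|_{\mathcal{I}^+})$: it hinges on the existence of the rescaled limits $\lim_{v\to\infty}(r^2L)(T^j\phi)$ from Proposition \ref{prop:regularityphiinfty} (which forces the $L$-component of $T$ to vanish at null infinity) and on commuting the tangential derivative $\Lbar$ with the $v\to\infty$ limit.
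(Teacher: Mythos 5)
Your proof follows the same route as the paper's: the $j=0$ case via Proposition \ref{prop:integrateddecay} with a double time integration, the $j=1$ case via \eqref{eq:intp1estb} applied to $T\psi$ with a single time integration, and the $j=2$ case via $T$-energy conservation for $T^2\psi$, with the iterated time integrals converted into polynomial time weights. The only (minor) divergence is that you perform this conversion by Tonelli, which is legitimate since the integrands are nonnegative, whereas the paper invokes Lemma \ref{lm:integralsandweights} and must therefore first verify its pointwise decay hypothesis on $LT^j\phi|_{\mathcal{H}^+}$ and $\Lbar T^j\phi|_{\mathcal{I}^+}$ via Theorem 5.1 of \cite{paper4}; your route sidesteps that external input, and your explicit treatment of the boundary identities $(T^j\phi)|_{\mathcal{H}^+}=L^j\phi$, $(T^j\phi)|_{\mathcal{I}^+}=\Lbar^j\phi$ and of the passage from $(v-v_0)^{2-j}$, $(u-u_0)^{2-j}$ to $v^{2-j}$, $u^{2-j}$ weights supplies details the paper leaves implicit.
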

\begin{proof}
First of all, by Theorem 5.1 from \cite{paper4} it follows that for $0\leq j\leq 2$ the following qualitative statements hold:\footnote{In fact, Theorem 5.1 from \cite{paper4} provides much stronger, quantitative $L^2(\s^2)$ time-decay estimates, but we do not require those here.}
\begin{align*}
\limsup_{v\to \infty} v^{3-j}\int_{\s^2}(LT^j\phi|_{\mathcal{H}^+})^2\,d\omega<&\:\infty,\\
\limsup_{u\to \infty} u^{3-j}\int_{\s^2}(\Lbar T^j\phi|_{\mathcal{I}^+})^2\,d\omega<&\:\infty.
\end{align*}

We can therefore apply Proposition \ref{prop:integrateddecay} together with Lemma \ref{lm:integralsandweights} with $n=2$ to obtain the desired estimate for the $j=0$ term. The $j=1$ estimate follows by replacing $\phi$ with $T\phi$ and applying \eqref{eq:intp1estb} and Lemma \ref{lm:integralsandweights} with $n=1$. Finally, we obtain the $j=0$ estimate by replacing $\psi$ with $T^2\psi$ and applying Lemma \ref{lm:tenconsv}.
\end{proof}

We will complement \eqref{eq:backestwithTder} in Corollary \ref{cor:fowdecay} with an estimate involving additional angular derivatives. The motivation for this comes from the energy estimates in Section \ref{sec:enestasympflat}.

\begin{corollary}
\label{cor:fowdecayv2}
There exists a constant  $C=C(M,\Sigma,r_{\mathcal{H}},r_{\mathcal{I}})>0$ such that
\begin{equation*}
\begin{split}
\sum_{j=0}^2&\int_{\mathcal{H}^+_{\geq v_0}} v^{2-j}(L^{j+1}\phi)^2+|\snabla_{\s^2}\phi |^2+|\snabla_{\s^2} L \phi|^2\,d\omega dv+\int_{\mathcal{I}^+_{\geq u_0}} u^{2-j}(\Lbar^{j+1}\phi)^2+|\snabla_{\s^2}\phi |^2+|\snabla_{\s^2} \Lbar \phi|^2\,d\omega du\\
\leq&\: C\Bigg[\sum_{j=0}^1 \int_{{\underline{N}_{v_0}}}(r-M)^{-2+j}(\underline{L}T^{j}\phi)^2\,d\omega du+\int_{{N}_{u_0}}r^{2-j}(LT^{j}\phi)^2\,d\omega dv\\
&+\sum_{j=0}^2\int_{\Sigma_0}{\mathbf{J}}^T[T^j\psi]\cdot \mathbf{n}_{\Sigma_0}\,d\mu_{\Sigma_0}+\sum_{|\alpha|=1}\int_{\Sigma_0}{\mathbf{J}}^T[\Omega^{\alpha}\psi]\cdot \mathbf{n}_{\Sigma_0}\,d\mu_{\Sigma_0}\Bigg].
\end{split}
\end{equation*}
\end{corollary}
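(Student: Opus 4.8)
The plan is to deduce this Corollary directly from Corollary \ref{cor:fowdecay} by commuting the wave equation with the angular momentum operators, exploiting the crucial fact that the two \emph{new} left-hand side quantities carry \emph{no} time weights. Consequently they can be controlled by the plain (unweighted) degenerate $T$-energy flux of an angularly commuted solution, rather than by re-running the full weighted hierarchy of Theorem \ref{thm:fowhier}.

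First I would observe that, by spherical symmetry, each $\Omega_i$ commutes with $\square_g$ as well as with $L$, $\Lbar$ and $T$; moreover $\Omega^{\alpha}\phi=\Omega^{\alpha}(r\psi)=r\,\Omega^{\alpha}\psi$ since $r$ is angular-independent. Hence for every multi-index $\alpha$ with $|\alpha|=1$ the function $\Omega^{\alpha}\psi$ is again a smooth solution of \eqref{eq:waveequation} whose radiation field is $\Omega^{\alpha}\phi$. Applying the identity \eqref{eq:angmomentineq1} of Lemma \ref{lm:angmom} on each sphere of constant $(u,v)$ and integrating, I would rewrite the new left-hand side terms as
\[
\int_{\mathcal{H}^+_{\geq v_0}}|\snabla_{\s^2}L\phi|^2\,d\omega dv=\sum_{|\alpha|=1}\int_{\mathcal{H}^+_{\geq v_0}}(L\Omega^{\alpha}\phi)^2\,d\omega dv,
\]
and likewise $\int_{\mathcal{I}^+_{\geq u_0}}|\snabla_{\s^2}\Lbar\phi|^2\,d\omega du=\sum_{|\alpha|=1}\int_{\mathcal{I}^+_{\geq u_0}}(\Lbar\Omega^{\alpha}\phi)^2\,d\omega du$, where I used that $\Omega^{\alpha}$ commutes with $L$ and $\Lbar$.

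The key step is then to recognise, via Lemma \ref{lm:tenconsv}, that the right-hand sides above are precisely the $T$-energy fluxes $\int_{\mathcal{H}^+}\mathbf{J}^T[\Omega^{\alpha}\psi]\cdot L\,r^2$ and $\int_{\mathcal{I}^+}\mathbf{J}^T[\Omega^{\alpha}\psi]\cdot \Lbar\,r^2$. Since $\textnormal{div}\,\mathbf{J}^T[\Omega^{\alpha}\psi]\equiv 0$ and $T$ is causal, applying the divergence theorem in $D^+(\Sigma_0)$ between $\Sigma_0$ and $\Sigma_{\tau}$, discarding the nonnegative flux through $\Sigma_{\tau}$ and letting $\tau\to\infty$ yields
\[
\sum_{|\alpha|=1}\left[\int_{\mathcal{H}^+_{\geq v_0}}(L\Omega^{\alpha}\phi)^2\,d\omega dv+\int_{\mathcal{I}^+_{\geq u_0}}(\Lbar\Omega^{\alpha}\phi)^2\,d\omega du\right]\leq \sum_{|\alpha|=1}\int_{\Sigma_0}\mathbf{J}^T[\Omega^{\alpha}\psi]\cdot\mathbf{n}_{\Sigma_0}\,d\mu_{\Sigma_0}.
\]
Adding this bound to \eqref{eq:backestwithTder} from Corollary \ref{cor:fowdecay} produces exactly the claimed estimate, the only new contribution to the right-hand side being the last term $\sum_{|\alpha|=1}\int_{\Sigma_0}\mathbf{J}^T[\Omega^{\alpha}\psi]\cdot\mathbf{n}_{\Sigma_0}\,d\mu_{\Sigma_0}$.

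The main point to get right — more a structural subtlety than a genuine obstacle — is to \emph{avoid} commuting the full $r^p$/$(r-M)^{-p}$ hierarchy with $\Omega^{\alpha}$. Doing so would generate angularly commuted characteristic data terms $\int_{\underline{N}_{v_0}}(r-M)^{-2+j}(\Lbar T^j\Omega^{\alpha}\phi)^2$ and $\int_{N_{u_0}}r^{2-j}(LT^j\Omega^{\alpha}\phi)^2$ on the right-hand side, which are absent from the statement. What makes the short argument possible is precisely that the two additional left-hand side quantities appear \emph{without} time weights, so that they are dominated by the conserved degenerate $T$-energy of $\Omega^{\alpha}\psi$ alone, whose boundedness by the $\Sigma_0$ flux is immediate and requires neither the weighted hierarchy nor any time decay.
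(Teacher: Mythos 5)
Your proposal is correct and follows essentially the same route as the paper: the paper's proof is precisely to take \eqref{eq:backestwithTder} from Corollary \ref{cor:fowdecay} and add the degenerate $T$-energy conservation of Lemma \ref{lm:tenconsv} applied to $\Omega^{\alpha}\psi$ with $|\alpha|=1$, which is exactly your argument (spelled out in more detail, including the correct observation that the unweighted new flux terms do not require the weighted hierarchy).
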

\begin{proof}
We apply \eqref{eq:backestwithTder} and add the Lemma \ref{lm:tenconsv} estimate applied to $\Omega^{\alpha}\psi$, where $|\alpha|=1$.
\end{proof}

\subsection{Higher-order estimates}
\label{sec:hoestfow}
In this section we will derive the analogue of Corollary \ref{cor:fowdecay} for $T^n\phi$ with $n\geq 1$, but with stronger growing weights in $u$ and $v$ on the left-hand side (depending on $n$).
\begin{proposition}
\label{prop:hoedecay}
Let $n\in \N_0$. Then, there exists a constant $C=C(M,\Sigma,r_{\mathcal{H}},r_{\mathcal{I}},n )>0$, such that
\begin{equation}
\begin{split}
\label{eq:Tnenergyest}
\int_{\tau}^{\infty}\int_{\tau_{2n+1}}^{\infty}&\int_{\tau_{2n}}^{\infty}\ldots \int_{\tau_1}^{\infty} \left[\int_{\Sigma_{\tau'}} {\mathbf{J}}^T[T^n\psi]\cdot \mathbf{n}_{\tau'} d\mu_{\tau'}\right]\,d\tau' d\tau_1\ldots d\tau_{2n+1}\\
&+\int_{\tau }^{\infty} \int_{\tau_{2n}}^{\infty}\ldots \int_{\tau_2}^{\infty}\left[\int_{ \mathcal{H}^+_{\geq \tau_1}} |\snabla_{\s^2}T^{n}\phi|^2\,d\omega dv\right] d\tau_1\ldots d\tau_{2n}\\
&+\int_{\tau }^{\infty} \int_{\tau_{2n}}^{\infty}\ldots \int_{\tau_2}^{\infty}\left[\int_{ \mathcal{I}^+_{\geq \tau_1}} |\snabla_{\s^2}T^{n}\phi|^2\,d\omega du\right] d\tau_1\ldots d\tau_{2n}\\
\leq &\: C \Bigg[\sum_{j=0}^1 \sum_{m+|\alpha|+k\leq n} \int_{{N}_{\tau}} r^{2+2k-j}(L^{1+k}T^{m+j}\Omega^{\alpha}\phi)^2\,d\omega dv+ \int_{{\underline{N}}_{\tau}} (r-M)^{-2-2k+j}(\underline{L}^{1+k}T^{m+j}\Omega^{\alpha}\phi)^2\,d\omega du\\
&+\sum_{k\leq 2n+2} \int_{\Sigma_{\tau}} {\mathbf{J}}^T[T^k\psi]\cdot \mathbf{n}_{\tau} d\mu_{\tau}\Bigg].
\end{split}
\end{equation}
\end{proposition}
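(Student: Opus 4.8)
The plan is to prove Proposition \ref{prop:hoedecay} by induction on $n$, using Proposition \ref{prop:integrateddecay} and Corollary \ref{cor:fowdecay} as the base case $n=0$. The essential idea is that $T^n\phi$ satisfies the same wave equation as $\phi$ (since $T$ is Killing and commutes with $\square_g$), so every estimate already derived for $\phi$ applies verbatim to $T^n\psi$ in place of $\psi$. The role of the $n$-fold nested time-integrals on the left-hand side is precisely to convert the single integrated-decay estimate \eqref{eq:integrateddecay1} (which is the $n=0$ statement) into a statement with the correct growing weights after repeated integration in $\tau$; the nesting mirrors exactly the structure produced by Lemma \ref{lm:integralsandweights}.

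First I would set up the inductive scheme. For the base case, apply Proposition \ref{prop:integrateddecay} to $T^n\psi$ directly: this controls the doubly-integrated $\Sigma_{\tau'}$-flux and the doubly-integrated $\mathcal{H}^+$ and $\mathcal{I}^+$ fluxes of $T^n\psi$ by the $\Sigma_0$-data of $T^n\psi$ together with two extra $T$-derivatives, i.e. by $\mathbf{J}^T[T^{n+j}\psi]$-fluxes for $j\le 2$. The key mechanism for generating the additional factors of nested integration and the improved $r$- and $(r-M)$-weights is to feed the hierarchy estimate \eqref{eq:fowhierarchy} of Theorem \ref{thm:fowhier} with larger $k$ (namely $k$ up to $n$) and the corresponding $p$ in the admissible window $2k\le p\le 2+2k$. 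Concretely, at each stage of the induction I would integrate the previously-obtained estimate once more in $\tau$, use Lemma \ref{lm:tenconsv} (degenerate energy conservation) to relate the flux through $\Sigma_{\tau'}$ to the fluxes through $\mathcal{H}^+_{\ge \tau'}$ and $\mathcal{I}^+_{\ge \tau'}$, and then apply \eqref{eq:fowhierarchy} with the next value of $k$ to trade a time-integration for an improved weight and a higher transversal derivative $L^{1+k}$ or $\underline{L}^{1+k}$. The angular-derivative boundary terms on $\mathcal{H}^+$ and $\mathcal{I}^+$ (the $|\snabla_{\s^2} T^n\phi|^2$ terms) are produced exactly as the corresponding terms in Theorem \ref{thm:fowhier} with weight $(r-M)^{2-p}$ and $r^{p-2}$ at the horizon and infinity; choosing $p=2k$ at the top of the hierarchy makes these weights nondegenerate, yielding the $|\snabla_{\s^2}T^n\phi|^2$ fluxes on the left-hand side.

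The bookkeeping I would track carefully is the index budget. Each application of the Morawetz estimate \eqref{eq:morawetzfow} costs derivatives, and each use of \eqref{eq:fowhierarchy} raises the transversal-derivative order; after $n$ steps one should land exactly on the data norm $\sum_{m+|\alpha|+k\le n}$ of $L^{1+k}T^{m+j}\Omega^\alpha\phi$ along $N_\tau$ and $\underline{L}^{1+k}T^{m+j}\Omega^\alpha\phi$ along $\underline{N}_\tau$, together with $\mathbf{J}^T[T^k\psi]$ for $k\le 2n+2$ along $\Sigma_\tau$, which is precisely the right-hand side of \eqref{eq:Tnenergyest}. To generate the angular derivatives $\Omega^\alpha$ appearing in the data norm, I would commute the wave equation with $\Omega^\alpha$ (which commutes with $\square_g$ in spherical symmetry of the background) and add the resulting estimates, exactly as in the passage from Corollary \ref{cor:fowdecay} to Corollary \ref{cor:fowdecayv2}.

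The main obstacle I anticipate is not any single estimate but the \emph{compatibility of weights across the hierarchy}: one must verify that the $p$-values used at successive levels of the induction remain in the allowed range $2k\le p\le 2+2k$ of Theorem \ref{thm:fowhier} while simultaneously producing the precise power $r^{2+2k-j}$ (resp. $(r-M)^{-2-2k+j}$) demanded on the right-hand side and the nondegenerate $|\snabla_{\s^2}|^2$ weights on the left. This requires alternating between the $p=2k$ (top) and $p=2k+2$ (bottom) endpoints of each hierarchy and interleaving them with the time-integration of Lemma \ref{lm:integralsandweights}, so that the powers match up telescopically. A secondary technical point is the loss of a derivative at the photon sphere $r=2M$ in \eqref{eq:morawetzfow}; I would absorb this by carrying the full $\sum_{j}\mathbf{J}^T[T^j\psi]$ and $\Omega^\alpha$-commuted fluxes through the induction, ensuring the derivative count on the right-hand side stays within the stated budget $m+2|\alpha|+2k\le 2n$ for the characteristic terms and $k\le 2n+2$ for the $\Sigma_\tau$ flux. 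Provided these weight ledgers close, the estimate \eqref{eq:Tnenergyest} follows by collecting the contributions at each level of the induction.
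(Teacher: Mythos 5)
Your overall architecture matches the paper's: induction on $n$ with Proposition \ref{prop:integrateddecay} as the base case, climbing the $r^p$ and $(r-M)^{-p}$ hierarchies of Theorem \ref{thm:fowhier} two steps per induction level, converting the extra time integrations into weights via Lemma \ref{lm:integralsandweights}, commuting with $\Omega^{\alpha}$ for the angular terms, and carrying extra $T$-commuted fluxes to absorb the Morawetz derivative loss. However, there is one genuine gap, and it sits exactly at the point you flag as the ``main obstacle'': the weight ledger does \emph{not} close by hierarchy applications alone. You write that ``each use of \eqref{eq:fowhierarchy} raises the transversal-derivative order,'' but it does not: at fixed $k$, the estimate \eqref{eq:fowhierarchy} relates different $r$-weights of the \emph{same} derivative $L^{k+1}\phi$. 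The induction hypothesis applied to $T\psi$ produces flux terms of the form $\int_{N_{\tau'}} r^{2+2k-j}(L^{1+k}T^{m+j+1}\Omega^{\alpha}\phi)^2$, which for $j=0$ sit at the top endpoint $p=2k+2$ of the $k$-th hierarchy; to absorb the two additional time integrations needed for the step $N\to N+1$ one must move to the $(k+1)$-th hierarchy, where this weight is the \emph{bottom} endpoint $p=2(k+1)$, and there is no way to do that without raising the order of the $L$-derivative.

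The missing ingredient is the pair of wave-equation identities \eqref{eq:TconvLLbar1}--\eqref{eq:TconvLLbar2}, namely
\begin{equation*}
LT^{m+1}\phi=L^2T^m\phi+\tfrac{1}{4}Dr^{-2}\slashed{\Delta}_{\s^2}T^m\phi+O(r^{-3})T^m\phi,
\end{equation*}
and its analogue for $\underline{L}$, obtained by writing $T=L+\underline{L}$ and substituting the equation \eqref{eq:maineqradfield} for $L\underline{L}\phi$. This is what converts the surplus $T$-derivative coming from the induction hypothesis into the higher transversal derivative $L^{2+k}T^{m+j}\Omega^{\alpha}\phi$ (plus an angular Laplacian term with a gain of $r^{-4}$, and lower-order terms handled by Hardy and Morawetz), after which the two successive applications of \eqref{eq:fowhierarchy} at levels $p=2(k+1)-j$ and $p=2(k+1)+1-j$, $p=2(k+1)+2-j$ land precisely on the data norm $\sum_{m+|\alpha|+k\le N+1}\int_{N_{\tau}}r^{2+2k-j}(L^{1+k}T^{m+j}\Omega^{\alpha}\phi)^2$ of \eqref{eq:Tnenergyest}. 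Without this identity your inductive step stalls at the top of each hierarchy, so the ``telescoping'' of weights you describe cannot be carried out. (A minor additional point: you have the endpoints reversed --- $p=2k$ is the bottom of the admissible window and $p=2k+2$ the top --- and the paper in fact uses the interior/upper values $p=2k+1-j$ followed by $p=2k+2-j$ rather than alternating between the two endpoints.)
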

\begin{proof}
We will derive \eqref{eq:Tnenergyest} by induction. Observe that the $n=0$ case follows immediately from \eqref{prop:integrateddecay}. Now, suppose $\eqref{eq:Tnenergyest}$ holds for all $n=N$. Then, by replacing $T^N\psi$ with $T^{N+1} \psi$ (using that $T$ commutes with the wave operator $\square_g$) and setting $\tau=\tau_{2N+2}$, we have that
\begin{equation*}
\begin{split}
\int_{\tau_{2N+2}}^{\infty}\int_{\tau_{2N+1}}^{\infty}&\int_{\tau_{2N}}^{\infty}\ldots \int_{\tau_1}^{\infty} \left[\int_{\Sigma_{\tau'}} {\mathbf{J}}^T[T^{N+1}\psi]\cdot \mathbf{n}_{\tau'} d\mu_{\tau'}\right]\,d\tau_1\ldots d\tau_{2N+1} d\tau'\\
\leq &\: C \Bigg[\sum_{j=0}^1 \sum_{m+|\alpha|+k\leq N} \int_{{N}_{\tau_{2N+2}}} r^{2+2k-j}(L^{1+k}T^{m+j+1}\Omega^{\alpha}\phi)^2\,d\omega dv\\
&+ \int_{{\underline{N}}_{\tau_{2N+2}}} (r-M)^{-2-2k+j}(\underline{L}^{1+k}T^{m+j+1}\Omega^{\alpha}\phi)^2\,d\omega du\\
&+\sum_{k\leq 2N+2} \int_{\Sigma_{\tau_{2N+2}}} {\mathbf{J}}^T[T^{k+1}\psi]\cdot \mathbf{n}_{\tau} d\mu_{\tau}\Bigg].
\end{split}
\end{equation*}
Now, we apply the following identities
\begin{align}
\label{eq:TconvLLbar1}
LT^{m+1}\phi=&\:L^2T^m\phi+L\underline{L} T^m\phi=L^2T^m\phi+ \frac{1}{4}D r^{-2} \slashed{\Delta}_{\s^2}T^m\phi+O(r^{-3})T^m\phi,\\
\label{eq:TconvLLbar2}
\underline{L}T^{m+1}\phi=&\:\underline{L}^2T^m\phi+L\underline{L} T^m\phi=\underline{L}^2T^m\phi+ \frac{1}{4}D r^{-2} \slashed{\Delta}_{\s^2}T^N\phi+O((r-M)^{3})T^m\phi,
\end{align}
and we integrate once more in $\tau$ to obtain:
\begin{equation*}
\begin{split}
\int_{\tau_{2N+3}}^{\infty}&\int_{\tau_{2N+2}}^{\infty}\int_{\tau_{2N+1}}^{\infty}\int_{\tau_{2N}}^{\infty}\ldots \int_{\tau_1}^{\infty} \left[\int_{\Sigma_{\tau'}} {\mathbf{J}}^T[T^{N+1}\psi]\cdot \mathbf{n}_{\tau'} d\mu_{\tau'}\right]\,d\tau_1\ldots d\tau_{2N+2} d\tau'\\
\leq &\: C \Bigg[\sum_{j=0}^1 \sum_{m+|\alpha|+k\leq N} \int_{\tau_{2N+3}}^{\infty}\int_{{N}_{\tau_{2N+2}}} r^{2+2k-j}(L^{2+k}T^{m+j}\Omega^{\alpha}\phi)^2+r^{2+2k-j-4}|\slashed{\Delta}_{\s^2} L^{k}T^{m+j}\Omega^{\alpha}\phi|^2\,d\omega dvd\tau'\\
&+  \int_{\tau_{2N+3}}^{\infty}\int_{{\underline{N}}_{\tau_{2N+2}}} (r-M)^{-2-2k+j}(\underline{L}^{2+k}T^{m+j}\Omega^{\alpha}\phi)^2+(r-M)^{-2-2k+j+4}(\slashed{\Delta}_{\s^2}\underline{L}^{k}T^{m+j}\Omega^{\alpha}\phi)^2\,d\omega dud\tau'\\
&+\sum_{k\leq 2N+3}  \int_{\tau_{2N+3}}^{\infty}\int_{\Sigma_{\tau_{2N+2}}} {\mathbf{J}}^T[T^{k}\psi]\cdot \mathbf{n}_{\tau} d\mu_{\tau} d\tau'\Bigg],
\end{split}
\end{equation*}
where we moreover applied Lemma \ref{lm:hardy} (together with a standard averaging argument near the boundaries) and Theorem \ref{thm:morawetzfow} to control the lowest order derivative terms on the right-hand sides of \eqref{eq:TconvLLbar1} and \eqref{eq:TconvLLbar2}.

Now, apply \eqref{eq:fowhierarchy} with $k\leq N+1$ and $p=2k+1$ when $j=0$ and $k\leq N$ and $p=2k$ when $j=1$, together with Lemma \ref{lm:angmom}, to obtain
\begin{equation*}
\begin{split}
\int_{\tau_{2(N+1)+1}}^{\infty}\int_{\tau_{2N+2}}^{\infty}\int_{\tau_{2N+1}}^{\infty}&\int_{\tau_{2N}}^{\infty}\ldots \int_{\tau_1}^{\infty} \left[\int_{\Sigma_{\tau'}} {\mathbf{J}}^T[T^{N+1}\psi]\cdot \mathbf{n}_{\tau'} d\mu_{\tau'}\right]\,d\tau_1\ldots d\tau_{2N+2} d\tau'\\
\leq &\: C \Bigg[\sum_{j=0}^1 \sum_{m+|\alpha|+k\leq N+1} \int_{{N}_{\tau_{2N+3}}} r^{2+2k-j+1}(L^{1+k}T^{m+j}\Omega^{\alpha}\phi)^2\,d\omega dv\\
&+  \int_{{\underline{N}}_{\tau_{2N+3}}} (r-M)^{-2-2k+j-1}(\underline{L}^{1+k}T^{m+j}\Omega^{\alpha}\phi)^2\,d\omega du\\
&+\sum_{k\leq 2N+3}  \int_{\tau_{2N+3}}^{\infty}\left[\int_{\Sigma_{\tau_{2N+2}}} {\mathbf{J}}^T[T^{k}\psi]\cdot \mathbf{n}_{\tau_{2N+2}}\, d\mu_{\tau_{2N+2}}\right] d\tau_{\tau_{2N+2}}\Bigg].
\end{split}
\end{equation*}
Subsequently, apply \eqref{eq:fowhierarchy} again, with $k\leq N+1$ and $p=2k+2$ when $j=0$ and $k\leq N$ and $p=2k+1$ when $j=1$.

Finally, since we are integrating two more times in $\tau$ compared to the $n=N$ estimate, we can also include on the left-hand side of the above estimate the terms
\begin{equation*}
\begin{split}
\int_{\tau }^{\infty} &\int_{\tau_{2N+2}}^{\infty}\ldots \int_{\tau_2}^{\infty}\left[\int_{ \mathcal{H}^+_{\geq \tau_1}} |\snabla_{\s^2}T^{N+1}\phi|^2\,d\omega dv\right] d\tau_1\ldots d\tau_{2N+2}\\
&+\int_{\tau }^{\infty} \int_{\tau_{2N+2}}^{\infty}\ldots \int_{\tau_2}^{\infty}\left[\int_{ \mathcal{I}^+_{\geq \tau_1}} |\snabla_{\s^2}T^{N+1}\phi|^2\,d\omega du\right] d\tau_1\ldots d\tau_{2N+2}
\end{split}
\end{equation*}
 to obtain \eqref{eq:Tnenergyest} with $n=N+1$.
\end{proof}

\begin{corollary}
\label{cor:hoedecayv2}
Let $n\in \N_0$. Then, there exists a constant $C=C(M,\Sigma,r_{\mathcal{H}},r_{\mathcal{I}},n )>0$, such that
\begin{equation}
\begin{split}
\label{eq:Tnenergyestv2}
\sum_{j=0}^2&\sum_{m+2k+2|\alpha|\leq 2n}\int_{ \mathcal{H}^+_{\geq v_0}} v^{2k+2-j} (L^{1+k+m+j} \Omega^{\alpha}\phi)^2+v^{2k} |\snabla_{\s^2}L^{k+m} \Omega^{\alpha}\phi|^2\,d\omega dv\\
&+\int_{ \mathcal{I}^+_{\geq u_0}} u^{2k+2-j}(\Lbar^{1+k+m+j}\Omega^{\alpha}\phi)^2+ u^{2k}|\snabla_{\s^2}\Lbar^{k+m} \Omega^{\alpha}\phi|^2\,d\omega du\\
\leq &\: C \Bigg[\sum_{j=0}^1 \sum_{m+2|\alpha|+2k\leq 2n} \int_{{N}_{v_0}} r^{2+2k-j}(L^{1+k}T^{m+j}\Omega^{\alpha}\phi)^2\,d\omega dv+ \int_{{\underline{N}}_{u_0}} (r-M)^{-2-2k+j}(\underline{L}^{1+k}T^{m+j}\Omega^{\alpha}\phi)^2\,d\omega du\Bigg]\\
&+\sum_{\substack{m+2|\alpha|\leq 2n+2\\ |\alpha|\leq n}} \int_{\Sigma_{0}} {\mathbf{J}}^T[T^m \Omega^{\alpha}\psi]\cdot \mathbf{n}_{0}\, d\mu_{0}.
\end{split}
\end{equation}
\end{corollary}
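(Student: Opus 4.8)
The plan is to derive Corollary \ref{cor:hoedecayv2} from the iterated integral estimate of Proposition \ref{prop:hoedecay} in exactly the same manner that Corollary \ref{cor:fowdecay} was derived from Proposition \ref{prop:integrateddecay}: the nested $\tau$-integrations appearing on the left-hand side of \eqref{eq:Tnenergyest} are traded, one by one, for growing polynomial weights in $v$ (resp.\ $u$) inside the flux integrals along $\mathcal{H}^+$ (resp.\ $\mathcal{I}^+$), by means of the elementary identity of Lemma \ref{lm:integralsandweights}. Since the right-hand side of \eqref{eq:Tnenergyest} already coincides with the right-hand side of the corollary, the whole argument reduces to producing the weighted boundary fluxes on the left.

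First I would convert the iterated integrals of the spacetime energy $\int_{\Sigma_{\tau'}} \mathbf{J}^T[T^n\psi]\cdot \mathbf{n}_{\tau'}$ into iterated integrals of the corresponding fluxes through $\mathcal{H}^+$ and $\mathcal{I}^+$. This is done by inserting the energy-conservation identity of Lemma \ref{lm:tenconsv} in each region $D^+(\Sigma_{\tau'})$, exactly as in the passage from \eqref{eq:integrateddecay1} to \eqref{eq:integrateddecay2}; it replaces $\int_{\Sigma_{\tau'}}\mathbf{J}^T[T^n\psi]\cdot\mathbf{n}_{\tau'}$ by the fluxes $\int_{\mathcal{H}^+_{\geq\tau'}}(LT^n\phi)^2\,d\omega\,dv$ and $\int_{\mathcal{I}^+_{\geq\tau'}}(\Lbar T^n\phi)^2\,d\omega\,du$ up to controlled lower-order error. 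Crucially, each such flux, e.g.\ $\int_{\mathcal{H}^+_{\geq\tau'}}(LT^n\phi)^2\,d\omega\,dv=\int_{\tau'}^\infty\int_{\s^2}(LT^n\phi)^2\,d\omega\,dv$, already carries one $v$-integration, so that together with the explicit nested $\tau$-integrations the total integration count matches the weights we are after.

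Next I would apply Lemma \ref{lm:integralsandweights} repeatedly, peeling off one integration at a time. Starting from the fully iterated flux this produces the weight $v^{2n+2}$ in front of $(L^{1+n}\phi)^2$, i.e.\ the top term $k=n$, $m=|\alpha|=0$, $j=0$ (after converting the mixed $L$-$T$ derivatives into pure $L$-derivatives along $\mathcal{H}^+$ using \eqref{eq:TconvLLbar1}--\eqref{eq:TconvLLbar2}). The remaining members of the family $v^{2k+2-j}(L^{1+k+m+j}\Omega^\alpha\phi)^2$ are then obtained exactly as the cases $j=0,1,2$ were distinguished in Corollary \ref{cor:fowdecay}: replacing $\psi$ by the commuted solution $T^m\Omega^\alpha\psi$ (which still solves \eqref{eq:waveequation} since $T$ and $\Omega^\alpha$ commute with $\square_g$, and for which Proposition \ref{prop:hoedecay} applies at the reduced level) and retaining only the appropriate number of the available integrations accounts simultaneously for the shifted derivative order $1+k+m+j$ and the correspondingly reduced weight $2k+2-j$. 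The angular terms $v^{2k}|\snabla_{\s^2}L^{k+m}\Omega^\alpha\phi|^2$ come directly from the dedicated lines $\int_{\mathcal{H}^+_{\geq\tau_1}}|\snabla_{\s^2}T^n\phi|^2$ (and their $\mathcal{I}^+$ analogues) on the left of \eqref{eq:Tnenergyest}, weighted by the same procedure and using Lemma \ref{lm:angmom} to pass between $|\snabla_{\s^2}(\cdot)|^2$ and the $\Omega^\alpha$-derivatives.

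To legitimately apply Lemma \ref{lm:integralsandweights} at each stage one needs the qualitative decay hypothesis $\lim_{v\to\infty} v^{p+1}|f|=0$ for $f$ the relevant restriction of a derivative of $\phi$; I would supply these, as in the proof of Corollary \ref{cor:fowdecay}, from the quantitative time-decay estimates of Theorem 5.1 of \cite{paper4} applied to the commuted quantities $L^{1+k}T^{m+j}\Omega^\alpha\phi$ and $\snabla_{\s^2}L^{k+m}\Omega^\alpha\phi$. The main obstacle is essentially bookkeeping: one must check that for every admissible triple $(k,m,|\alpha|)$ with $m+2k+2|\alpha|\leq 2n$ and every $j\in\{0,1,2\}$ the number of nested integrations made available by Proposition \ref{prop:hoedecay} (after the $T^m\Omega^\alpha$-commutation) matches exactly the desired weight power $2k+2-j$, and that the conversion from $\Sigma_{\tau'}$-energies to $\mathcal{H}^+/\mathcal{I}^+$-fluxes does not consume any weights. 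Once the integration count is aligned term-by-term, summing over all admissible $(k,m,\alpha,j)$ yields the stated left-hand side.
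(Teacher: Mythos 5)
Your proposal is correct and follows essentially the same route as the paper: the paper's proof applies \eqref{eq:Tnenergyest} with $n$ replaced by $k\leq n$ and $\phi$ replaced by $T^m\Omega^{\alpha}\phi$, combines it with Lemma \ref{lm:tenconsv} and Lemma \ref{lm:integralsandweights}, and invokes Theorem 5.1 of \cite{paper4} for the qualitative decay needed to apply Lemma \ref{lm:integralsandweights} --- precisely the steps you describe.
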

\begin{proof}
We apply \eqref{eq:Tnenergyest}, with $n$ replaced by $k\leq n$ and $\phi$ replaced by $T^{m}\Omega^{\alpha}\phi$ with $|\alpha|\leq n-k$ and $m\leq 2n-2k-2|\alpha|$ suitably chosen, and combine it with Lemma \ref{lm:tenconsv}, Lemma \ref{lm:integralsandweights} to derive \eqref{eq:Tnenergyestv2}. The decay of $L^{1+k+m+j} \Omega^{\alpha}\phi|_{\mathcal{H}+}$ and $\Lbar^{1+k+m+j} \Omega^{\alpha}\phi|_{\mathcal{I}+}$ that is required in order to be able to apply Lemma \ref{lm:integralsandweights} follows from Theorem 5.1 of \cite{paper4}.
\end{proof}

We will complement \eqref{eq:Tnenergyestv2} in Corollary \ref{cor:hoedecayv2} with an estimate involving additional angular derivatives. The motivation for this comes from the energy estimates in Section \ref{sec:hoestasymflat}.

\begin{corollary}
\label{cor:hoedecayv3}
Let $n\in \N_0$. Then, there exists a constant $C=C(M,\Sigma,r_{\mathcal{H}},r_{\mathcal{I}},n )>0$, such that
\begin{equation}
\begin{split}
\label{eq:Tnenergyestv3}
\sum_{j=0}^2&\sum_{m+2k+2|\alpha|\leq 2n}\int_{ \mathcal{H}^+_{\geq v_0}} v^{2k+2-j} (L^{1+k+m+j} \Omega^{\alpha}\phi)^2+v^{2k} |\snabla_{\s^2}L^{k+m} \Omega^{\alpha}\phi|^2+v^{2k} |\snabla_{\s^2}L^{k+1+m} \Omega^{\alpha}\phi|^2\,d\omega dv\\
&+\int_{ \mathcal{I}^+_{\geq u_0}} u^{2k+2-j}(\Lbar^{1+k+m+j}\Omega^{\alpha}\phi)^2+ u^{2k}|\snabla_{\s^2}\Lbar^{k+m} \Omega^{\alpha}\phi|^2+ u^{2k}|\snabla_{\s^2}\Lbar^{k+1+m} \Omega^{\alpha}\phi|^2\,d\omega du\\
\leq &\:  C\sum_{j=0}^2\sum_{2|\alpha|+2k+m\leq 2n} \Bigg[\int_{N_{u_0}} r^{2k+2-j}(L^{k+1}\Omega^{\alpha}T^{j+m}\phi)^2+r^{2k}|\snabla_{\s^2}L^{k+1}\Omega^{\alpha}T^{m}\phi|^2\,d\omega dv \\
&+ \int_{\underline{N}_{v_0}} (r-M)^{-2k-2+j}(\Lbar^{k+1}\Omega^{\alpha}T^{j+m}\phi)^2+(r-M)^{-2k}|\snabla_{\s^2}\Lbar^{k+1}\Omega^{\alpha}T^{m}\phi|^2\,d\omega du\Bigg]\\
&+C\sum_{2|\alpha|+m\leq 2n+2}\int_{\Sigma_0} \mathbf{J}^T[\Omega^{\alpha}T^m\psi]\cdot \mathbf{n}_0\,d\mu_0.
\end{split}
\end{equation}
\end{corollary}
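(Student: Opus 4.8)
The plan is to imitate the one-line argument used for the lower-order Corollary \ref{cor:fowdecayv2}: commute \eqref{eq:waveequation} with the angular momentum operators and feed the commuted solution into the estimate already established in Corollary \ref{cor:hoedecayv2}. For each multi-index $\beta$ with $|\beta|=1$, the operator $\Omega^\beta$ commutes with $\square_g$ (by spherical symmetry) and with multiplication by $r$, so $\Omega^\beta\psi$ again solves \eqref{eq:waveequation} and has radiation field $r\Omega^\beta\psi=\Omega^\beta\phi$. First I would apply \eqref{eq:Tnenergyestv2} to each $\Omega^\beta\psi$, sum over $|\beta|=1$, and convert the sums $\sum_{|\beta|=1}(\Omega^\beta F)^2$ into $|\snabla_{\s^2}F|^2$ using the angular identity \eqref{eq:angmomentineq1} of Lemma \ref{lm:angmom}. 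Applied to the first flux term on the left-hand side of \eqref{eq:Tnenergyestv2} (with $j=0$ and $F=L^{k+1+m}\Omega^\alpha\phi$), this generates precisely the new terms $v^{2k}|\snabla_{\s^2}L^{k+1+m}\Omega^\alpha\phi|^2$ on $\mathcal{H}^+_{\geq v_0}$ and $u^{2k}|\snabla_{\s^2}\Lbar^{k+1+m}\Omega^\alpha\phi|^2$ on $\mathcal{I}^+_{\geq u_0}$ that distinguish \eqref{eq:Tnenergyestv3} from \eqref{eq:Tnenergyestv2}. Adding the uncommuted estimate \eqref{eq:Tnenergyestv2} back in restores the non-angular fluxes, so the full left-hand side of \eqref{eq:Tnenergyestv3} is recovered.

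The bookkeeping on the left is elementary: since $v\geq v_0>0$ along $\mathcal{H}^+_{\geq v_0}$ and $u\geq u_0>0$ along $\mathcal{I}^+_{\geq u_0}$, any surplus weight produced by the commuted estimate (the commuted first term naturally carries $v^{2k+2}$ rather than the desired $v^{2k}$, and likewise for $u$) is absorbed trivially, while the extra positive $|\snabla_{\s^2}^2(\cdots)|^2$ contributions produced through \eqref{eq:angmomentineq2} are simply discarded. The $\Sigma_0$-energy contribution on the right is handled as in Corollary \ref{cor:fowdecayv2} via Lemma \ref{lm:tenconsv}: commuting and summing yields $\sum_{|\beta|=1}\mathbf{J}^T[T^m\Omega^\alpha\Omega^\beta\psi]$, which by \eqref{eq:angmomentineq2} is comparable to the energy of $T^m\Omega^\alpha\psi$ carrying one additional angular derivative, i.e.\ to the terms $\int_{\Sigma_0}\mathbf{J}^T[\Omega^\alpha T^m\psi]\cdot\mathbf{n}_0$ appearing in \eqref{eq:Tnenergyestv3}.

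The main obstacle is matching the characteristic-data terms on the right. Commuting Corollary \ref{cor:hoedecayv2} with $\Omega^\beta$ and summing produces, on $N_{u_0}$, data terms of the form $r^{2+2k-j}|\snabla_{\s^2}L^{1+k}T^{m+j}\Omega^\alpha\phi|^2$, whose $r$-weight is two powers higher than the weight $r^{2k}$ carried by the admissible angular data terms $r^{2k}|\snabla_{\s^2}L^{k+1}\Omega^\alpha T^m\phi|^2$ in \eqref{eq:Tnenergyestv3} (and analogously on $\underline{N}_{v_0}$); this gap cannot be closed by the crude $v\geq v_0$ device because $r$ is unbounded on $N_{u_0}$. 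One must also ensure that the extra angular derivative does not push the $\Sigma_0$-energy order past the admissible range $2|\alpha|+m\leq 2n+2$. The resolution I would pursue is to abandon the black-box use of Corollary \ref{cor:hoedecayv2} for these terms and instead re-run the weighted hierarchy underlying Proposition \ref{prop:hoedecay}, commuting with $\Omega^\beta$ already at the level of Theorem \ref{thm:fowhier} and choosing the $r^p$-weight so that the $\mathcal{I}^+$ angular flux (which enters Theorem \ref{thm:fowhier} at weight $r^{p-2}$) is generated directly at the weight $r^{2k}$ demanded in \eqref{eq:Tnenergyestv3}, invoking the base estimate at a correspondingly reduced order for the top angular modes. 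Surplus angular derivatives are traded for $\Lbar L$-derivatives through the commutation identities \eqref{eq:TconvLLbar1} and \eqref{eq:TconvLLbar2} together with integration by parts on $\s^2$ (Lemma \ref{lm:angmom}), after which the remaining radial weights are absorbed exactly as in the proof of Proposition \ref{prop:hoedecay}.
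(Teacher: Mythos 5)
You should first note that the paper states this corollary with no proof at all (only the ``motivation'' sentence preceding it), so the implied argument is the one you start from: commute with the angular momentum operators and feed $\Omega^{\beta}\psi$ into Corollary \ref{cor:hoedecayv2}, in the same way that Corollary \ref{cor:fowdecayv2} is deduced from \eqref{eq:backestwithTder}. Your first paragraph executes the left-hand side correctly: applying \eqref{eq:Tnenergyestv2} to $\Omega^{\beta}\psi$, summing over $|\beta|=1$ with Lemma \ref{lm:angmom}, and using $v\geq v_0>0$, $u\geq u_0>0$ to discard the surplus factor $v^{2}$ (resp.\ $u^{2}$) does produce every new term $v^{2k}|\snabla_{\s^2}L^{k+1+m}\Omega^{\alpha}\phi|^2$ and $u^{2k}|\snabla_{\s^2}\Lbar^{k+1+m}\Omega^{\alpha}\phi|^2$ in the stated range. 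Your diagnosis of the obstruction is also correct, and it is the genuine content of the corollary: the commuted right-hand side of \eqref{eq:Tnenergyestv2} carries the data terms $r^{2+2k-j}|\snabla_{\s^2}L^{1+k}T^{m+j}\Omega^{\alpha}\phi|^2$ with $j\leq 1$ on $N_{u_0}$ and $(r-M)^{-2-2k+j}|\snabla_{\s^2}\Lbar^{1+k}T^{m+j}\Omega^{\alpha}\phi|^2$ on $\underline{N}_{v_0}$, whereas \eqref{eq:Tnenergyestv3} admits angular data only at the weights $r^{2k}$ and $(r-M)^{-2k}$ and without the $T^{j}$; since $r$ is unbounded on $N_{u_0}$ and $(r-M)^{-1}$ is unbounded on $\underline{N}_{v_0}$, the surplus of at least one power cannot be absorbed. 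So the black-box route does not close with the stated right-hand side.

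The gap is that your third paragraph does not actually repair this. The proposed mechanism --- ``choosing the $r^p$-weight so that the $\mathcal{I}^+$ angular flux \ldots is generated directly at the weight $r^{2k}$'' --- conflates two different families of weights. The factors $u^{2k}$ and $v^{2k}$ on $\mathcal{I}^+$ and $\mathcal{H}^+$ in \eqref{eq:Tnenergyestv3} are \emph{retarded/advanced time} weights produced by $2k$-fold time integration through Lemma \ref{lm:integralsandweights} (this is exactly how Proposition \ref{prop:hoedecay} feeds into Corollary \ref{cor:hoedecayv2}); they are not the radial factor $r^{p-2}$ in Theorem \ref{thm:fowhier}, which on $\mathcal{I}^+$ is only meaningful at $p=2$. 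Lowering the top value of $p$ for the angularly commuted quantities would indeed lower the radial weight of the characteristic data term, but it would simultaneously forfeit one or two of the time integrations that generate the $v^{2k}$, $u^{2k}$ weights on the left-hand side, so you would lose precisely the terms you are trying to control. What is actually needed --- and what your sketch does not supply --- is an argument showing that the top-order angularly commuted fluxes (the cases $m\in\{0,1\}$ with $m+2k+2|\alpha|=2n$, which are genuinely absent from \eqref{eq:Tnenergyestv2} even after re-indexing) can be reached through the hierarchy using only data of the form $r^{2k+2-j}(L^{k+1}\Omega^{\alpha}T^{j+m}\phi)^2$ with $j$ now allowed up to $2$ together with the weaker angular data $r^{2k}|\snabla_{\s^2}L^{k+1}\Omega^{\alpha}T^{m}\phi|^2$, e.g.\ by trading the extra angular derivative for $\underline{L}L$-derivatives via \eqref{eq:eqphicommLk}--\eqref{eq:eqphicommLbark} before the weights are assigned. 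Since the paper omits the proof, I cannot certify the authors' intended route, but as written your proposal does not establish the inequality.
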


\subsection{Construction of the forwards evolution map}
\label{sec:constfevo}
In this section, we will use the uniform estimates derived in Section \ref{sec:estfow} and \ref{sec:hoestfow} in order to construct the forward evolution map between suitable weighted energy spaces. 

\begin{proposition}
\label{prop:fowtennormbound}
Let $(\Psi,\Psi')\in  C^{\infty}(\widehat{\Sigma}_0)\oplus C^{\infty}(\Sigma_0\cap \{r_{\mathcal{H}}\leq r\leq r_{\mathcal{I}}\})$. Then the corresponding solution $\psi$ to \eqref{eq:waveequation} satisfies
\begin{equation*}
(r\cdot\psi|_{\mathcal{H}^+_{\geq v_0}},r\cdot\psi|_{\mathcal{I}^+_{\geq u_0}})\in \mathcal{E}^T_{\mathcal{H}^+_{\geq v_0}}\oplus \mathcal{E}^T_{\mathcal{I}^+_{\geq u_0}}.
\end{equation*}
and furthermore,
\begin{equation*}
||r\cdot\psi|_{\mathcal{H}^+_{\geq v_0}}||^2_{ \mathcal{E}^T_{\mathcal{H}^+_{\geq v_0}}}+||r\cdot\psi|_{\mathcal{I}^+_{\geq u_0}}||^2_{\mathcal{E}^T_{\mathcal{I}^+_{\geq u_0}}}=||(\Psi,\Psi')||^2_{\mathcal{E}^T_{\Sigma_0}}.
\end{equation*}
\end{proposition}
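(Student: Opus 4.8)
The plan is to recognise the claimed identity as a statement of \emph{exact} $T$-energy conservation and to prove it by applying the divergence theorem to the divergence-free current $\mathbf{J}^T[\psi]$ on $\mathcal{R}=D^+(\Sigma_0)$. First I would rewrite the two boundary norms as $T$-energy fluxes. Since $r\psi|_{\mathcal{H}^+_{\geq v_0}}=\underline{\Phi}$ and $L=\partial_v$, we have $L\phi|_{\mathcal{H}^+}=\partial_v\underline{\Phi}$, and likewise $\underline{L}\phi|_{\mathcal{I}^+}=\partial_u\Phi$ with $r\psi|_{\mathcal{I}^+_{\geq u_0}}=\Phi$. By the flux identities of Lemma \ref{lm:tenconsv} this gives $\|\underline{\Phi}\|^2_{\mathcal{E}^T_{\mathcal{H}^+_{\geq v_0}}}=\int_{\mathcal{H}^+_{\geq v_0}}\mathbf{J}^T[\psi]\cdot L\,r^2\,d\omega dv$ and $\|\Phi\|^2_{\mathcal{E}^T_{\mathcal{I}^+_{\geq u_0}}}=\int_{\mathcal{I}^+_{\geq u_0}}\mathbf{J}^T[\psi]\cdot\underline{L}\,r^2\,d\omega du$, so the assertion reduces to
\[
\int_{\Sigma_0}\mathbf{J}^T[\psi]\cdot\mathbf{n}_0\,d\mu_0=\int_{\mathcal{H}^+_{\geq v_0}}\mathbf{J}^T[\psi]\cdot L\,r^2\,d\omega dv+\int_{\mathcal{I}^+_{\geq u_0}}\mathbf{J}^T[\psi]\cdot\underline{L}\,r^2\,d\omega du.
\]

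Next I would apply the divergence theorem to $\mathbf{J}^T[\psi]$, which satisfies $\Div\,\mathbf{J}^T[\psi]\equiv0$ by Lemma \ref{lm:tenconsv}, on the slab $\bigcup_{0\le s\le\tau}\Sigma_s$. The boundary of this slab consists of $\Sigma_0$, $\Sigma_\tau$, the horizon segment $\mathcal{H}^+\cap\{v_0\le v\le v_0+\tau\}$ and the segment $\mathcal{I}^+\cap\{u_0\le u\le u_0+\tau\}$. Working in the conformally compactified picture $\widehat{\mathcal{R}}$, where $\mathcal{I}^+$ is a genuine boundary, and using Proposition \ref{prop:regularityphiinfty} to guarantee that $\phi=r\psi$ and its relevant derivatives extend smoothly up to $\mathcal{I}^+$ (so that the flux through the truncation $\{r=R\}$ converges as $R\to\infty$ to the flux through $\mathcal{I}^+$), this yields for each $\tau>0$
\[
\int_{\Sigma_0}\mathbf{J}^T[\psi]\cdot\mathbf{n}_0\,d\mu_0=\int_{\Sigma_\tau}\mathbf{J}^T[\psi]\cdot\mathbf{n}_\tau\,d\mu_\tau+\int_{\mathcal{H}^+\cap\{v_0\le v\le v_0+\tau\}}(L\phi)^2\,d\omega dv+\int_{\mathcal{I}^+\cap\{u_0\le u\le u_0+\tau\}}(\underline{L}\phi)^2\,d\omega du.
\]
Since the two partial boundary fluxes are nonnegative and bounded by the (finite) left-hand side, letting $\tau\to\infty$ shows both boundary norms are finite, i.e.\ $(\underline{\Phi},\Phi)\in\mathcal{E}^T_{\mathcal{H}^+_{\geq v_0}}\oplus\mathcal{E}^T_{\mathcal{I}^+_{\geq u_0}}$.

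To upgrade the resulting inequality to the claimed equality, I must show that no $T$-energy is lost to future timelike infinity, i.e.\ that $E(\tau):=\int_{\Sigma_\tau}\mathbf{J}^T[\psi]\cdot\mathbf{n}_\tau\,d\mu_\tau\to0$ as $\tau\to\infty$. This is the main obstacle, and it is where the forward decay estimates enter: the conservation identity above shows that $E$ is nonnegative and non-increasing, while Proposition \ref{prop:integrateddecay} gives, via Fubini, $\int_0^\infty \tau\,E(\tau)\,d\tau<\infty$; a nonnegative non-increasing function with finite weighted integral must vanish in the limit, so $E(\tau)\to0$. Passing to the limit $\tau\to\infty$ then gives the exact equality of norms. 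One mild point to verify en route is that $\|(\Psi,\Psi')\|_{\mathcal{E}^T_{\Sigma_0}}$ is itself finite for data in $C^\infty(\widehat{\Sigma}_0)\oplus C^\infty(\Sigma_0\cap\{r_{\mathcal{H}}\le r\le r_{\mathcal{I}}\})$; this follows from a direct computation of the flux near the two endpoints of $\Sigma_0$, using $r^2L=\tfrac12 D\partial_x$ together with the smoothness of $\phi$ in the coordinate $x=r^{-1}$ up to $\mathcal{I}^+$ on the outgoing null end, and the analogous regularity near $\mathcal{H}^+$.
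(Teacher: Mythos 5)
Your core argument --- rewriting the two boundary norms as $T$-energy fluxes via Lemma \ref{lm:tenconsv}, applying the divergence theorem to the divergence-free current $\mathbf{J}^T[\psi]$ on the slab between $\Sigma_0$ and $\Sigma_\tau$, and then killing the leakage term $E(\tau)=\int_{\Sigma_\tau}\mathbf{J}^T[\psi]\cdot\mathbf{n}_\tau\,d\mu_\tau$ in the limit --- is exactly the content behind the paper's one-line proof (which cites Lemma \ref{lm:tenconsv}, the decay results of \cite{paper4}, and Lemma \ref{lm:completionhorinf}). Your mechanism for $E(\tau)\to 0$ is a nice self-contained variant: instead of quoting the energy-decay statement of \cite{paper4} directly, you observe that Proposition \ref{prop:integrateddecay} plus Fubini gives $\int_0^\infty\tau\,E(\tau)\,d\tau<\infty$, and monotonicity of $E$ then forces $E(\tau)\to0$. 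That is correct (and in fact yields $E(\tau)=o(\tau^{-2})$), and it keeps the argument internal to the paper.

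There is, however, one genuine gap: the step ``both boundary norms are finite, i.e.\ $(\underline{\Phi},\Phi)\in\mathcal{E}^T_{\mathcal{H}^+_{\geq v_0}}\oplus\mathcal{E}^T_{\mathcal{I}^+_{\geq u_0}}$.'' These spaces are defined as \emph{completions} of $C_c^\infty(\mathcal{H}^+_{\geq v_0})\oplus C_c^\infty(\mathcal{I}^+_{\geq u_0})$, and finiteness of the norm of a smooth, non-compactly-supported function does not by itself place that function in the completion; one must exhibit an approximating sequence of compactly supported data. This is precisely the role of Lemma \ref{lm:completionhorinf} (part 1.)), whose cut-off argument requires, in addition to finiteness of the norm, that $\underline{\Phi}$ and $\Phi$ attain finite limits as $v\to\infty$ and $u\to\infty$. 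Note that the existence of these limits does \emph{not} follow from $\int(\partial_v\underline{\Phi})^2\,d\omega dv<\infty$ alone (Cauchy--Schwarz only gives $|\underline{\Phi}(v_2)-\underline{\Phi}(v_1)|\lesssim(v_2-v_1)^{1/2}$), so you need an extra input here --- either the pointwise decay/boundedness of the radiation field from \cite{paper4}, or the stronger weighted flux bounds of Corollary \ref{cor:fowdecay}, which do give the limit by a weighted Cauchy--Schwarz. Once you add the invocation of Lemma \ref{lm:completionhorinf} together with the existence of these limits, the proof is complete and coincides in substance with the paper's.
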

\begin{proof}
Follows from Lemma \ref{lm:tenconsv}, \cite{paper4} and Lemma \ref{lm:completionhorinf}.
\end{proof}
\begin{definition}
Define the forwards evolution map $\mathscr{F}: C^{\infty}(\widehat{\Sigma}_0)\oplus C^{\infty}(\Sigma_0\cap \{r_{\mathcal{H}}\leq r\leq r_{\mathcal{I}}\}) \to \mathcal{E}^T_{\mathcal{H}^+_{\geq v_0}}\oplus \mathcal{E}^T_{\mathcal{I}^+_{\geq u_0}}$ as the following linear operator:
\begin{equation*}
\mathscr{F}(\Psi,\Psi')=(r\cdot\psi|_{\mathcal{H}^+_{\geq v_0}},r\cdot\psi|_{\mathcal{I}^+_{\geq u_0}}),
\end{equation*}
where $\psi$ is the unique solution to \eqref{eq:waveequation} with $(\psi|_{\Sigma_0},\mathbf{n}_{\Sigma_0}\psi|_{\Sigma_0\cap \{r_{\mathcal{H}}\leq r\leq r_{\mathcal{I}}\}})=(\Psi,\Psi')$. Then $\mathscr{F}$ extends uniquely to a linear bounded  operator:
\begin{equation*}
\mathscr{F}: \mathcal{E}^T_{\Sigma_0}\to \mathcal{E}^T_{\mathcal{H}^+_{\geq v_0}}\oplus \mathcal{E}^T_{\mathcal{I}^+_{\geq u_0}}.
\end{equation*}
\end{definition}

\begin{proposition}
\label{prop:fowbound}
Let $n\in \N_0$. Then $\mathscr{F}$ is a bounded linear operator from $C^{\infty}(\widehat{\Sigma}_0)$ to $\mathcal{E}_{n; \mathcal{H}^+_{\geq v_0}} \oplus \mathcal{E}_{n; \mathcal{I}^+_{\geq u_0}}$, which can uniquely be extended as as a bounded linear operator 
\begin{equation*}
\mathscr{F}_n: \mathcal{E}_{n; \Sigma_0} \to \mathcal{E}_{n; \mathcal{H}^+_{\geq v_0}} \oplus \mathcal{E}_{n; \mathcal{I}^+_{\geq u_0}} .
\end{equation*}
We moreover have that $\mathscr{F}_n=\mathscr{F}|_{ \mathcal{E}_{n; \Sigma_0}}$.
\end{proposition}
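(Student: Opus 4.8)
The plan is to establish boundedness of $\mathscr{F}$ on smooth data with respect to the higher-order norms and then invoke the standard density/completion argument to extend to $\mathcal{E}_{n;\Sigma_0}$. The content of the statement is precisely that for $(\Psi,\Psi')\in C^{\infty}(\widehat{\Sigma}_0)\oplus C^{\infty}(\Sigma_0\cap\{r_{\mathcal{H}}\leq r\leq r_{\mathcal{I}}\})$ the higher-order norm of the image $(r\psi|_{\mathcal{H}^+_{\geq v_0}},r\psi|_{\mathcal{I}^+_{\geq u_0}})$ in $\mathcal{E}_{n;\mathcal{H}^+_{\geq v_0}}\oplus\mathcal{E}_{n;\mathcal{I}^+_{\geq u_0}}$ is controlled by $||(\Psi,\Psi')||_{\mathcal{E}_{n;\Sigma_0}}$. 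First I would write out the target norm $||r\psi|_{\mathcal{H}^+_{\geq v_0}}||^2_{\mathcal{E}_{n;\mathcal{H}^+_{\geq v_0}}}+||r\psi|_{\mathcal{I}^+_{\geq u_0}}||^2_{\mathcal{E}_{n;\mathcal{I}^+_{\geq u_0}}}$ from its definition and observe that, term by term, it is exactly the left-hand side of \eqref{eq:Tnenergyestv3} in Corollary \ref{cor:hoedecayv3} (which already bundles the pure $T$/$L$-weighted fluxes of Corollary \ref{cor:hoedecayv2} together with the extra angular-derivative fluxes). Thus the key estimate is already in hand.

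The main steps, in order, are as follows. Step one: match indices. The horizon norm $||\underline{\Phi}||^2_{\mathcal{E}_{n;\mathcal{H}^+_{\geq v_0}}}$ sums $v^{2k+2-j}(L^{1+k+m+j}\Omega^{\alpha}\underline{\Phi})^2$ and two angular terms $|\snabla_{\s^2}L^{k+m}\Omega^{\alpha}\underline{\Phi}|^2$, $|\snabla_{\s^2}L^{k+1+m}\Omega^{\alpha}\underline{\Phi}|^2$ (with weights $v^{2k}$ absorbed), over $m+2k+2|\alpha|\leq 2n$; substituting $\underline{\Phi}=r\psi|_{\mathcal{H}^+}=\phi|_{\mathcal{H}^+}$ and doing the same on $\mathcal{I}^+$, one sees these are literally the integrands appearing on the left of \eqref{eq:Tnenergyestv3}. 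Step two: apply Corollary \ref{cor:hoedecayv3} to bound this sum by the right-hand side, which consists of $r$-weighted and $(r-M)$-weighted fluxes of $L^{k+1}\Omega^{\alpha}T^{j+m}\phi$ and $\Lbar^{k+1}\Omega^{\alpha}T^{j+m}\phi$ along $N_{u_0}$ and $\underline{N}_{v_0}$, together with $T^m\Omega^{\alpha}$-energy fluxes on $\Sigma_0$. Step three: recognize that this right-hand side is, up to the equivalences \eqref{eq:angmomentineq1}--\eqref{eq:angmomentineq2} for converting $\snabla_{\s^2}$ into $\Omega^{\alpha}$ and back, precisely the definition of $||(\Psi,\Psi')||^2_{\mathcal{E}_{n;\Sigma_0}}$. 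This gives $||\mathscr{F}(\Psi,\Psi')||^2\leq C\,||(\Psi,\Psi')||^2_{\mathcal{E}_{n;\Sigma_0}}$ on smooth data, i.e. boundedness. Step four: since $C^{\infty}(\widehat{\Sigma}_0)\oplus C^{\infty}(\Sigma_0\cap\{r_{\mathcal{H}}\leq r\leq r_{\mathcal{I}}\})$ is by definition dense in $\mathcal{E}_{n;\Sigma_0}$, the bounded operator $\mathscr{F}$ extends uniquely (by the B.L.T.\ theorem) to a bounded linear $\mathscr{F}_n:\mathcal{E}_{n;\Sigma_0}\to\mathcal{E}_{n;\mathcal{H}^+_{\geq v_0}}\oplus\mathcal{E}_{n;\mathcal{I}^+_{\geq u_0}}$. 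Finally, the identity $\mathscr{F}_n=\mathscr{F}|_{\mathcal{E}_{n;\Sigma_0}}$ follows from the inclusions $\mathcal{E}_{n;\Sigma_0}\subset\mathcal{E}^T_{\Sigma_0}$ and $\mathcal{E}_{n;\mathcal{H}^+_{\geq v_0}}\oplus\mathcal{E}_{n;\mathcal{I}^+_{\geq u_0}}\subset\mathcal{E}^T_{\mathcal{H}^+_{\geq v_0}}\oplus\mathcal{E}^T_{\mathcal{I}^+_{\geq u_0}}$ together with the fact that both extensions agree on the common dense subspace of smooth data, so by uniqueness of continuous extensions they coincide.

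The genuine analytic work has all been discharged into Corollary \ref{cor:hoedecayv3} (and, beneath it, the forwards hierarchy Theorem \ref{thm:fowhier}, the Morawetz estimate Theorem \ref{thm:morawetzfow}, and the time-integral Lemma \ref{lm:integralsandweights}); hence I expect \emph{no serious obstacle} in the proof itself, which is essentially bookkeeping. The one point requiring genuine care is the index-matching in Step one and Step three: verifying that the multi-index ranges $m+2k+2|\alpha|\leq 2n$ in the boundary norms, the ranges $m+2|\alpha|+2k\leq 2n$ with $|\alpha|\leq n$ in the $\Sigma_0$-flux terms, and the angular-derivative weights line up exactly with the two corollaries, so that the constant $C$ really does absorb all terms without a loss of derivatives or weights. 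One must also confirm that the required decay hypotheses of Lemma \ref{lm:integralsandweights}, namely the pointwise time-decay of $L^{1+k+m+j}\Omega^{\alpha}\phi|_{\mathcal{H}^+}$ and $\Lbar^{1+k+m+j}\Omega^{\alpha}\phi|_{\mathcal{I}^+}$ supplied by Theorem 5.1 of \cite{paper4}, hold at the relevant orders for smooth data; this is already invoked inside Corollary \ref{cor:hoedecayv2}, so it carries over directly.
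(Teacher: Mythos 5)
Your overall strategy coincides with the paper's: reduce the boundedness of $\mathscr{F}_n$ on smooth data to the forwards higher-order flux estimates of Section \ref{sec:hoestfow} and then extend by density. However, Step three as written fails because you invoke the wrong corollary. The right-hand side of \eqref{eq:Tnenergyestv3} in Corollary \ref{cor:hoedecayv3} is \emph{not} the norm $||(\Psi,\Psi')||^2_{\mathcal{E}_{n;\Sigma_0}}$: it contains, in addition, the angular flux terms $r^{2k}|\snabla_{\s^2}L^{k+1}\Omega^{\alpha}T^{m}\phi|^2$ along $N_{u_0}$ and $(r-M)^{-2k}|\snabla_{\s^2}\Lbar^{k+1}\Omega^{\alpha}T^{m}\phi|^2$ along $\underline{N}_{v_0}$, the sum over $j$ runs to $2$ rather than $1$, and the $\Sigma_0$-energies run over $2|\alpha|+m\leq 2n+2$ \emph{without} the restriction $|\alpha|\leq n$ that appears in the definition of $||\cdot||_{\mathcal{E}_{n;\Sigma_0}}$. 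So applying Corollary \ref{cor:hoedecayv3} bounds the target norm by a quantity strictly stronger than $||(\Psi,\Psi')||^2_{\mathcal{E}_{n;\Sigma_0}}$, which does not prove boundedness of $\mathscr{F}_n$ on $\mathcal{E}_{n;\Sigma_0}$. The correct ingredient is Corollary \ref{cor:hoedecayv2} (which the paper uses): its left-hand side already dominates $||r\psi|_{\mathcal{H}^+}||^2_{\mathcal{E}_{n;\mathcal{H}^+_{\geq v_0}}}+||r\psi|_{\mathcal{I}^+}||^2_{\mathcal{E}_{n;\mathcal{I}^+_{\geq u_0}}}$ (the angular terms in the boundary norms occur at $k=0$ there), and its right-hand side is exactly $||(\Psi,\Psi')||^2_{\mathcal{E}_{n;\Sigma_0}}$. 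Corollary \ref{cor:hoedecayv3} is reserved for matching against the $\widetilde{\Sigma}$-norms in Proposition \ref{prop:towardsscatmat1}, where the extra angular fluxes are genuinely needed.

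A second, smaller omission: before you can speak of $||\mathscr{F}(\Psi,\Psi')||$ in $\mathcal{E}_{n;\mathcal{H}^+_{\geq v_0}}\oplus\mathcal{E}_{n;\mathcal{I}^+_{\geq u_0}}$ you must check that the traces $(r\psi|_{\mathcal{H}^+_{\geq v_0}},r\psi|_{\mathcal{I}^+_{\geq u_0}})$ actually lie in these spaces, which are defined as \emph{completions} of compactly supported smooth functions; finiteness of the norm alone is not membership. The paper handles this via Lemma \ref{lm:completionhorinf}, which requires both the finiteness of the norm and the qualitative decay $\lim_{v\to\infty}r\psi|_{\mathcal{H}^+}=0$, $\lim_{u\to\infty}r\psi|_{\mathcal{I}^+}=0$ supplied by \cite{paper4}. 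Your final remark about the decay hypotheses of Lemma \ref{lm:integralsandweights} is adjacent to, but does not substitute for, this step. With these two corrections (cite Corollary \ref{cor:hoedecayv2} in place of Corollary \ref{cor:hoedecayv3}, and insert the Lemma \ref{lm:completionhorinf} membership argument) your proof matches the paper's.
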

\begin{proof}
First of all, we assume that $(\Psi,\Psi') \in C_{c}^{\infty}(\widehat{\Sigma}_0)\oplus C^{\infty}(\Sigma_0\cap \{r_{\mathcal{H}}\leq r\leq r_{\mathcal{I}}\})$. We apply Proposition \ref{prop:hoedecay} to obtain estimates for the corresponding solution $\psi: D^+(\Sigma_0)\to \R$. By \cite{paper4}, it follows in particular that $\lim_{v\to \infty} \phi|_{\mathcal{H}^+}=0$ and $\lim_{u\to \infty} \phi|_{\mathcal{H}^+}=0$ for all $0\leq k \leq n$. Furthermore, by Corollary \ref{cor:hoedecayv2}, we have that there exists a constant $C>0$ such that
\begin{equation*}
||\phi|_{\mathcal{H}^+}||_{\mathcal{E}_{n;\mathcal{H}^+_{\geq v_0}}}^2+||\phi|_{\mathcal{I}^+}||_{\mathcal{E}_{n;\mathcal{I}^+_{\geq u_0}}}^2\leq C\cdot ||(\Psi,\Psi')||_{\mathcal{E}_{n; \Sigma_0}}^2.
\end{equation*}
Then, by Lemma \ref{lm:completionhorinf} it follows that $\mathscr{F}_n(\Psi,\Psi')=(\phi|_{\mathcal{H}^+},\phi|_{\mathcal{I}^+})\in \mathcal{E}_{n; \mathcal{H}^+_{\geq v_0}} \oplus \mathcal{E}_{n; \mathcal{I}^+_{\geq u_0}}$, so $||\mathscr{F}_n||\leq \sqrt{C}$. Then by a standard functional analytic argument, $\mathscr{F}_n$ extends uniquely to the completion $\mathcal{E}_{n; \mathcal{H}^+_{\geq v_0}} \oplus \mathcal{E}_{n; \mathcal{I}^+_{\geq u_0}}$ and the extension $\mathscr{F}_n$ also satisfies $||\mathscr{F}_n||\leq \sqrt{C}$.
\end{proof}

\section{The backwards evolution map}
\label{sec:backwest}
In this section we will construct a map from suitably weighted energy spaces on $\mathcal{H}^+$ and $\mathcal{I}^+$ to suitably weighted energy spaces on $\Sigma_0$. The construction will proceed in two steps. As a first step, we construct in Section \ref{sec:semiglobalexun} a map with the domain $C_{c}^{\infty}(\mathcal{H}^+_{\geq v_0})\oplus C_{c}^{\infty}(\mathcal{I}^+_{\geq u_0})$.  In other words, we establish \emph{semi-global} existence and uniqueness for the backwards scattering initial value problem.

In the second step, this will be promoted to \emph{global} existence and uniqueness in Section \ref{sec:bacwevomap} by using the global, uniform weighted energy estimates of Section \ref{sec:bacweest} that are valid on the \emph{completion} of $C_{c}^{\infty}(\mathcal{H}^+_{\geq v_0})\oplus C_{c}^{\infty}(\mathcal{I}^+_{\geq u_0})$ with respect to the associated energy norms.

\subsection{Initial value problem with compactly supported scattering data}
\label{sec:semiglobalexun}
In this section we will associate to a pair  $(\underline{\Phi},\Phi)\in C_{c}^{\infty}(\mathcal{H}^+_{\geq v_0})\oplus C_{c}^{\infty}(\mathcal{I}^+_{\geq u_0})$ a unique solution to \eqref{eq:waveequation} in $D^+(\Sigma_0)$ such that $r\cdot \psi|_{\mathcal{H}^+}={\underline{\Phi}}$ and $r\cdot \psi|_{\mathcal{I}^+}={\Phi}$. This association is central to the definition of the backwards evolution map (see Definition \ref{def:backwmap}). 

 \begin{figure}[H]
	\begin{center}
\includegraphics[scale=0.7]{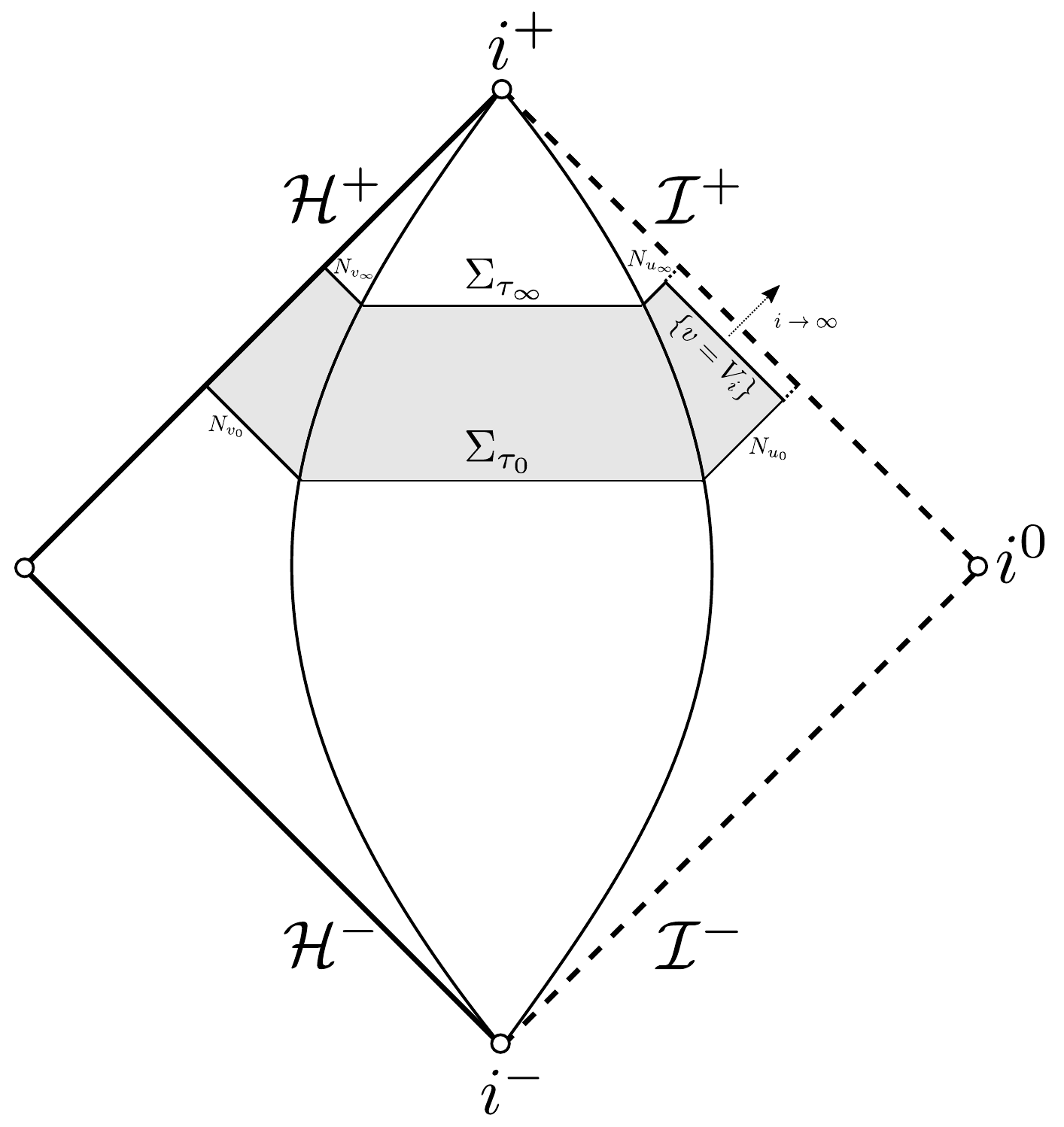}
\end{center}
\vspace{-0.2cm}
\caption{A Penrose diagrammatic representation of the spacetime regions in consideration in Proposition \ref{prop:mainpropbackdef}.}
	\label{fig:backwcons}
\end{figure}

\begin{proposition}
\label{prop:mainpropbackdef}
 Let $\tau_{\infty}>0$ and $-\infty<u_{-\infty},v_{-\infty}\leq u_0,v_0$ and define $u_{\infty}:=u_0+\tau_{\infty}$ and $v_{\infty}:=v_0+\tau_{\infty}$. Let $({\underline{\Phi}},{\Phi})\in C_{c}^{\infty}(\mathcal{H}^+)\oplus C_{c}^{\infty}(\mathcal{I}^+)$ such that ${\textnormal{supp}\, {\underline{\Phi}}}\subset \mathcal{H}^+_{v_{-\infty}<v<v_{\infty}}$ and ${\textnormal{supp}\, {\Phi}}\subset \mathcal{I}^+_{u_{-\infty}<u<u_{\infty}}$. Denote also with ${\Phi}$ a smooth extension to $\widehat{\mathcal{R}}$ of ${\Phi}$ such that ${\Phi}$ vanishes in a neighbourhood of $\widehat{\Sigma}_{\tau_{\infty}}$. Denote with $\psi_{i}$, with $i\in \N$, the unique smooth solution to \eqref{eq:waveequation} in ${D}^+({\Sigma}_{0})\cap \{v\leq V_i:=V\cdot i\}\cap J^-(\Sigma_{\tau_{\infty}})$ such that:
\begin{align*}
r\psi_{i}|_{\mathcal{H}^+_{v_{0}\leq v\leq v_{\infty}}}=&\:{\underline{\Phi}},\\
r\cdot \psi_{i}|_{\{v=V_i\}\cap \{u_{0}\leq u \leq u_{\infty}\}}=&\:{\Phi},\\
(\psi_{i}|_{\Sigma_{\tau_{\infty}}},\mathbf{n}_{{\Sigma}_{\tau_{\infty}}}(\psi_{i})|_{\Sigma_{\tau_{\infty}}\cap\{r_{\mathcal{H}}\leq r \leq r_{\mathcal{I}}\}})=&\:(0,0),
\end{align*}
with $V$ suitably large such that $r(\tau_{\infty},V)>r_{\mathcal{I}}$; see also Figure \ref{fig:backwcons}.
\begin{itemize}
\item[1.)](Semi-global existence) There exists a $\psi \in C^{\infty}({D}^+({\Sigma}_{0})\cap J^-(\Sigma_{\tau_{\infty}}))$ with the following property: let $\widetilde{V}\geq V$ and $n\in \N_0$, then there exists a strictly increasing sequence function $i^{(n)}: \N \to \N$, such that:
\begin{equation*}
\lim_{j\to \infty}||r\psi_{i^{(n)}(j)}-r\psi||_{C^n({D}^+({\Sigma}_{0})\cap \{v\leq \widetilde{V}\}\cap J^-(\Sigma_{\tau_{\infty}}))}=0.
\end{equation*}
In particular, $\square_g\psi=0$. Furthermore,
\begin{align*}
M\psi |_{\mathcal{H}^+_{v_{0}\leq v \leq v_{\infty}}}=&\:{\underline{\Phi}},\\
r\cdot\psi|_{\mathcal{I}^+_{u_{0}\leq u \leq u_{\infty}}}=&\:{\Phi},\\
(\psi|_{\Sigma_{\tau_{\infty}}},\mathbf{n}_{\Sigma_{\tau_{\infty}}}(\psi)|_{\Sigma_{\tau_{\infty}}\cap\{r_{\mathcal{H}}\leq r \leq r_{\mathcal{I}}\}})=&\:(0,0)
\end{align*}
and for any $j,k,l\in \N_0$
\begin{equation}
\label{eq:qualboundhoradfield}
\lim_{v\to \infty}\slashed{\nabla}_{\s^2}^j(r^2L)^k\underline{L}^l(r\cdot \psi)(u,v,\theta,\varphi)<\infty.
\end{equation}
\item[2.)] (Uniqueness)
If $\widetilde{\psi} \in C^{\infty}({D}^+({\Sigma}_{0})\cap J^-(\Sigma_{\tau_{\infty}}))$ is another solution to \eqref{eq:waveequation} that satisfies $$M\widetilde{\psi} |_{\mathcal{H}^+_{v_{0}\leq v \leq v_{\infty}}}={\underline{\Phi}}\quad \textnormal{and} \quad r\widetilde{\psi}|_{\mathcal{I}^+_{u_{0}\leq u \leq u_{\infty}}}={\Phi},$$ then $\widetilde{\psi}=\psi$.
\end{itemize}
\end{proposition}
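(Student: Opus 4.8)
The plan is to carry out the limiting construction laid out in the statement, reducing both claims to \emph{uniform-in-$i$} pointwise control of weighted derivatives of the radiation fields $\phi_i:=r\psi_i$ and then invoking Arzel\`a--Ascoli; uniqueness will be handled separately by a transport/Gr\"onwall argument exploiting the vanishing of the characteristic data. First I would record that each $\psi_i$ is well defined. On the truncated region $D^+(\Sigma_0)\cap\{v\le V_i\}\cap J^-(\Sigma_{\tau_{\infty}})$ we prescribe the value of the rescaled field on the two \emph{characteristic} lateral boundaries $\mathcal{H}^+_{v_0\le v\le v_{\infty}}$ and $\{v=V_i\}\cap\{u_0\le u\le u_{\infty}\}$, together with trivial Cauchy data on $\Sigma_{\tau_{\infty}}$. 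This is a standard well-posed mixed characteristic--Cauchy (Goursat-type) problem, solved backwards from $\Sigma_{\tau_{\infty}}$; existence, uniqueness and smoothness of $\psi_i$ follow from the local theory underlying Theorem \ref{thm:globexuni} and finite speed of propagation. Crucially, the extension of ${\Phi}$ to $\widehat{\mathcal{R}}$ is a \emph{fixed} smooth function vanishing near $\widehat{\Sigma}_{\tau_{\infty}}$, so the data induced on $\{v=V_i\}$ converges in $C^{\infty}_{\mathrm{loc}}$ to ${\Phi}|_{\mathcal{I}^+}$ as $i\to\infty$, and in particular is bounded with all derivatives uniformly in $i$.

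The main step is to bound $\|\snabla_{\s^2}^{j}(r^2L)^k\underline{L}^l\phi_i\|_{L^{\infty}}$ on each fixed region $\{v\le\widetilde{V}\}$ uniformly over all $i$ with $V_i>\widetilde{V}$. For this I would commute \eqref{eq:waveequation} to the hierarchy of transport equations \eqref{eq:maineqhoradfield} for $\phi_{i,n}:=(2D^{-1}r^2L)^n\phi_i$ and integrate $L\phi_{i,n}$ along the ingoing direction $\underline{L}$ starting from $\Sigma_{\tau_{\infty}}$, where all data vanish, picking up the contributions of the boundary data on $\mathcal{H}^+$ and on $\{v=V_i\}$. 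The favourable sign of the coefficient $-4nr^{-1}$ in \eqref{eq:maineqhoradfield} near $\mathcal{I}^+$, the analogous $(r-M)$-weighted structure of the field equation near $\mathcal{H}^+$ (where $D\to 0$), and the decay of the remaining $O(r^{-2})$ terms allow one to close a Gr\"onwall estimate. Since $\{v\le\widetilde{V}\}$ has $r$ bounded above, the region splits into a compact piece bounded away from $\mathcal{H}^+$, where the bounds reduce to those of the fixed local problem, and collars near $\mathcal{H}^+$ and $\mathcal{I}^+$, where the weighted transport estimates apply; uniform boundedness of the induced data then gives uniform bounds on $\phi_{i,n}$. I expect the genuine difficulty to be precisely the \emph{uniformity in $i$}: controlling the influence of the receding data on $\{v=V_i\}$ as $V_i\to\infty$. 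This is where extremality is essential, since it is the time-reversed mildness of the redshift that prevents the backwards blueshift amplification that would destroy uniformity in the sub-extremal case.

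Given uniform $C^{n+1}$ bounds on each $\{v\le\widetilde{V}\}$, Arzel\`a--Ascoli produces a subsequence converging in $C^n$, and a diagonal argument over $\widetilde{V}\to\infty$ and $n\to\infty$ yields $\psi\in C^{\infty}$ with the stated convergence. Consequently $\square_g\psi=0$, the traces $M\psi|_{\mathcal{H}^+}=\underline{\Phi}$, $r\psi|_{\mathcal{I}^+}=\Phi$ and the trivial data on $\Sigma_{\tau_{\infty}}$ all pass to the limit, while the finiteness \eqref{eq:qualboundhoradfield} is inherited from the uniform bounds, using Proposition \ref{prop:regularityphiinfty} for the existence of the limits at $\mathcal{I}^+$.

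For uniqueness, set $\chi:=\psi-\widetilde\psi$, so that $r\chi$, and hence the tangential derivatives $L(r\chi)$ and $\underline{L}(r\chi)$, vanish on $\mathcal{H}^+_{v_0\le v\le v_{\infty}}$ and $\mathcal{I}^+_{u_0\le u\le u_{\infty}}$ respectively. Feeding this vanishing into the radiation-field equation \eqref{eq:maineqradfield} and integrating inward from the future null boundary, one obtains a closed $L^2(\s^2)$-energy inequality of Gr\"onwall type for $r\chi$ — after decomposing into spherical harmonics, or equivalently commuting with the $\Omega_i$, to absorb the angular Laplacian $\slashed{\Delta}_{\s^2}$ — whose only solution compatible with the data propagated from $\Sigma_{\tau_{\infty}}$ is $r\chi\equiv 0$, so that $\widetilde\psi=\psi$. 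The delicate point here, and the place where the precise causal geometry of the slab between $\Sigma_0$ and $\Sigma_{\tau_{\infty}}$ enters, is closing this estimate over the \emph{whole} region rather than merely in collars of $\mathcal{H}^+$ and $\mathcal{I}^+$; note that the naive $\mathbf{J}^T$ energy identity of Lemma \ref{lm:tenconsv} only yields equality of the fluxes through $\Sigma_0$ and $\Sigma_{\tau_{\infty}}$, so the vanishing of the \emph{field} (not just its $T$-flux) on the null boundaries must be used directly.
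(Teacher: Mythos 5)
Your overall strategy coincides with the paper's: both arguments reduce part 1.) to \emph{uniform-in-$i$} bounds on weighted derivatives of $\phi_i=r\psi_i$ obtained by integrating the commuted radiation-field equation \eqref{eq:maineqhoradfield} transversally from the data surfaces (the paper first bounds the $T$-energy flux through the ingoing segments $I_v$ by the flux through $I_{V_i}$ via the divergence theorem, then applies the fundamental theorem of calculus in $u$ from $\Sigma_{\tau_\infty}$ and inducts on $(r^2L)^n\phi_i$, with the mirrored $(r-M)$-weighted argument near $\mathcal{H}^+$), and both then conclude by Arzel\`a--Ascoli plus a diagonal/exhaustion argument. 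However, your attribution of the key difficulty to extremality is wrong. This proposition lives on the \emph{fixed} compact slab $D^+(\Sigma_0)\cap J^-(\Sigma_{\tau_\infty})$, and the constants are allowed to (and do) degenerate in $\tau_\infty$ --- see the factors $u_\infty^4$ and $C(\tau_\infty,u_0)$ in \eqref{eq:tdependentestphi} and \eqref{eq:vinftypointwiseest}. Uniformity in $i$ comes solely from the fact that the data on $\{v=V_i\}$ is the restriction of a \emph{fixed} smooth extension of $\Phi$ to $\widehat{\mathcal{R}}$ (so its fluxes are uniformly bounded) together with $T$-energy conservation; the identical construction goes through on sub-extremal Kerr (Proposition 9.6.1 of \cite{linearscattering}), and extremality only becomes essential in the $\tau_\infty$-uniform backwards estimates of Section \ref{sec:bacweest}.

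On uniqueness, your dismissal of the $\mathbf{J}^T$-identity rests on a false premise. The divergence theorem for $\mathbf{J}^T[\widetilde{\psi}-\psi]$ in the region bounded by $\Sigma_0$, $\Sigma_{\tau_\infty}$, $\mathcal{H}^+_{v_0\leq v\leq v_\infty}$ and (in a limiting sense justified by \eqref{eq:qualboundhoradfield}) $\mathcal{I}^+_{u_0\leq u\leq u_\infty}$ does produce fluxes on the two null boundary pieces, and by Lemma \ref{lm:tenconsv} these equal $\int (L(r\widetilde{\psi}-r\psi))^2\,d\omega dv$ and $\int(\underline{L}(r\widetilde{\psi}-r\psi))^2\,d\omega du$, which vanish precisely because the radiation field of the difference vanishes identically there. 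Combined with the trivial Cauchy data on $\Sigma_{\tau_\infty}$ --- which, note, must be included among the hypotheses of part 2.) for the statement to be true, and which the paper's proof indeed assumes --- this forces the $T$-flux through every $\Sigma_\tau$ to vanish and hence $\widetilde{\psi}=\psi$; this is the paper's one-line argument. Your Gr\"onwall alternative commuting with the $\Omega_i$ would also close on this slab, but it is an unnecessary detour.
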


\begin{remark}
A variant of Proposition \ref{prop:mainpropbackdef} was established in Proposition 9.1.4 of \cite{linearscattering} in the setting of sub-extremal Kerr. Note however that Proposition \ref{prop:mainpropbackdef} establishes in addition qualitative bounds on the radiation field $r\psi$ and weighted higher-order derivatives thereof in the form of the inequality \eqref{eq:qualboundhoradfield}, which will be necessary in the backwards-in-time estimates of Section \ref{sec:bacweest}.
\end{remark}

\begin{proof}[Proof of Proposition \ref{prop:mainpropbackdef}]
Observe first of all that $\psi_i$ is well-defined by local existence and uniqueness with smooth initial data on $\Sigma_{\tau_{\infty}}\cup \{v=V_i\}$.

Apply the divergence theorem with $\mathbf{J}^T$ in the region $\{r\geq r_{\mathcal{I}}\}$ bounded to the past by $I_v=\{v'=v\}\cap\{u_0\leq u\leq u_{\infty}\}$ and ${\Sigma}_{0}$ and to the future by $I_{V_i}:=\{v'=V_i\}\cap\{u_0\leq u\leq u_{\infty}\}$ and $\Sigma_{\tau_{\infty}}$ to obtain:
\begin{equation*}
\int_{I_v}\mathbf{J}^T[\psi_{i}]\cdot \underline{L}\,r^2d\omega du \leq \int_{I_{V_i}}\mathbf{J}^T[\psi_{i}]\cdot \underline{L}\,r^2d\omega du,
\end{equation*}
which is equivalent to
\begin{equation*}
\int_{I_v} r^2 (\underline{L}\psi_{i})^2+\frac{1}{4}D r^{-2}|\slashed{\nabla}_{\s^2}\psi_{i}|^2\,d\omega du \leq \int_{I_{V_i}} r^2 (\underline{L}(r^{-1}{\Phi}))^2+\frac{1}{4}D r^{-2}|\slashed{\nabla}_{\s^2}{\Phi}|^2\,d\omega du.
\end{equation*}
By applying the fundamental theorem of calculus in $u$, integrating from $u'=\tau_{\infty}$ to $u'=u$, together with Cauchy--Schwarz, we therefore obtain
\begin{equation*}
\left[\int_{\s^2}\psi_{i}^2\,d\omega\right](u,v)\lesssim \int_{u}^{\tau_{\infty}}r^{-2}(u',v)\,du'\cdot \int_{I_{V_i}} r^2(\underline{L}\psi_{i})^2\,d\omega du',
\end{equation*}
where we used that $\psi_{i}|_{\Sigma_{\tau_{\infty}}}=0$, from which it follows that
\begin{equation}
\label{eq:tdependentestphi}
\begin{split}
\left[\int_{\s^2}\phi_{i}^2\,d\omega\right](u,v)\lesssim&\: u_{\infty}^2  \int_{I_{v}}(\underline{L} \phi_i)^2\,d\omega du\lesssim u_{\infty}^2  \int_{I_{v}}r^2(\underline{L} \psi_i)^2+\psi_i^2\,d\omega du \\
\lesssim &\:  u_{\infty}^4  \int_{I_{V_i}} r^2 (\underline{L}(r^{-1}{\Phi}))^2+\frac{1}{4}D r^{-2}|\slashed{\nabla}_{\s^2}{\Phi}|^2\,d\omega du'.
\end{split}
\end{equation}

Now, we can use \eqref{eq:tdependentestphi} and  \eqref{eq:maineqhoradfield} with $n=0$ together with the fundamental theorem of calculus in the $u$-direction to obtain
\begin{equation*}
\left[\int_{\s^2}r^4(L\phi_{i})^2\,d\omega\right](u,v)\lesssim C(\tau_{\infty},u_0)\cdot \sum_{|\alpha|\leq 2}\int_{I_{V_i}} r^2 (\underline{L}(r^{-1}{\Omega^{\alpha}\Phi}))^2+\frac{1}{4}D r^{-2}|\slashed{\nabla}_{\s^2}{\Omega^{\alpha}\Phi}|^2\,d\omega du'.
\end{equation*}
Similarly, we can use \eqref{eq:maineqhoradfield} and Lemma \ref{lm:angmom} in a simple induction argument to conclude that for all $n\in \N$ we have in $\{r\geq r_{\mathcal{I}}\}$:
\begin{equation*}
\left[\int_{\s^2}((r^2L)^n\phi_{i})^2\,d\omega\right](u,v)\lesssim C(\tau_{\infty},u_{0})\cdot  \sum_{|\alpha|\leq 2n}\int_{I_{V_i}} r^2 (\underline{L}(r^{-1}{\Omega^{\alpha}\Phi}))^2+\frac{1}{4}D r^{-2}|\slashed{\nabla}_{\s^2}{\Omega^{\alpha}\Phi}|^2\,d\omega du'.
\end{equation*}
We can immediately apply the above argument to $\Omega^{\alpha}\phi$ and $T^k$ for any $\alpha \in \N_0^3$, $k\in \N_0$, together with a standard Sobolev inequality on $\s^2$ to obtain the following $i$-\emph{independent} estimate: for all $k\in \N_0$ and $\alpha\in \N_0^3$, there exists a constant $C(\tau_{\infty},u_0)>0$, such that
\begin{equation}
\label{eq:vinftypointwiseest}
\begin{split}
|(r^2L)^n&T^k\Omega^{\alpha}\phi_{i}|^2(u,v,\theta,\varphi)\\
\leq&\: C(\tau_{\infty},u_{-\infty})\cdot\sum_{|\alpha'|\leq 2n+2}\int_{I_{V_i}}  (\underline{L}T^k\Omega^{\alpha+\alpha'}{\Phi})^2+r^{-2}(T^k\Omega^{\alpha+\alpha'}{\Phi})^2+\frac{1}{4}D r^{-2}|\slashed{\nabla}_{\s^2}T^k\Omega^{\alpha+\alpha'}{\Phi}|^2\,d\omega du.
\end{split}
\end{equation}
We obtain a similar estimate in the region $\{r\leq r_{\mathcal{H}}\}$ by reversing the roles of $u$ and $v$ (integrating in the $v$-direction) and replacing $r$ by $(r-M)^{-1}$:
\begin{equation}
\label{eq:vinftypointwiseesthor}
\begin{split}
|((r-M)^{-2}\underline{L})^n&T^k\Omega^{\alpha}\phi_{i}|^2(u,v,\theta,\varphi)\\
\leq&\: C(\tau_{\infty},v_{-\infty})\cdot\sum_{|\alpha'|\leq 2n+2}\int_{\mathcal{H}^+_{v_{-\infty}\leq v \leq v_{\infty}}} (LT^k\Omega^{\alpha+\alpha'}\underline{\Phi})^2\,d\omega du.
\end{split}
\end{equation}
Given $\widetilde{V}>0$ arbitrarily large and $n\geq N$, we have by \eqref{eq:vinftypointwiseest} and \eqref{eq:vinftypointwiseesthor} that for $I \geq 1$ such that $V_I>\widetilde{V}$, $\phi_i$ is uniformly bounded in $i$ for all $i\geq I$ with respect to the $C^k$ norm on $J^+(\widehat{\Sigma}_0)\cap J^-(\widehat{\Sigma}_{\tau_{\infty}})\cap \{v\leq \widetilde{V}\}\subset \widehat{\mathcal{R}}$ with respect to the differentiable structure on $\widehat{\mathcal{R}}$ and therefore, by Arzel\`a--Ascoli, there exists a subsequence $\{\phi_{i^{(k)}(j)}\}_{j\in \N}$, with $i^{(k)}: \N\to \N$ a strictly increasing function, such that moreover $\{\phi_{i^{(k+1)}(j)}\}_{j\in \N}$ is a subsequence of $\{\phi_{i^{(k)}(j)}\}_{j\in \N}$ for all $k\in \N$, and $\{\phi_{i^{(k)}(j)}\}_{j\in \N}$ converges in $C^k(J^+(\widehat{\Sigma}_0)\cap J^-(\widehat{\Sigma}_{\tau_{\infty}})\cap \{v\leq \widetilde{V}\})$, for any $k\in \N$, to the smooth function $\phi$ on $J^+(\widehat{\Sigma}_0)\cap J^-(\widehat{\Sigma}_{\tau_{\infty}})\cap \{v\leq \widetilde{V}\}$.\footnote{This formulation of Arzel\`a--Ascoli, involving $C^k$-norms, follows straightforwardly by applying the usual formulation (involving convergence with respect to the $C^0$-norm) successively to derivatives of $\phi_i$, using uniform boundedness of all derivatives of $\phi_i$, and taking a further subsequence for each successive derivative. One can further apply a diagonal argument to extract a single subsequence of $\{\phi_i\}$ converging with respect to all $C^k$ norms to $\phi$, but we omit this step.}

We can extend the domain of $\phi$ to $J^+(\widehat{\Sigma}_0)\cap J^-(\widehat{\Sigma}_{\tau_{\infty}})$ as follows: we replace $\widetilde{V}$ above with $\widetilde{V}'>\widetilde{V}$, applying Arzel\`a--Ascoli to the subsequence $\phi_{i_k}$ (starting from $k$ suitably large) in the corresponding larger spacetime region and passing to a further subsequence. By uniqueness of limits, the resulting limit, which we note by $\phi'$ has to agree with $\phi$ when $v\leq \widetilde{V}$.

The above $C^k$ convergence moreover implies that $\square_g\psi=0$, with $\psi=r^{-1}\phi$, $\phi|_{\mathcal{H}^+}={\underline{\Phi}}$ and 
$$(\psi|_{\Sigma_{\tau_{\infty}}},n_{\Sigma_{\tau_{\infty}}}(\psi)|_{\Sigma_{\tau_{\infty}}\cap\{r_{\mathcal{H}}\leq r \leq r_{\mathcal{I}}\}})=(0,0).$$

We also have by \eqref{eq:vinftypointwiseest} that for any $\epsilon>0$, there exist a $V>0$ and $K>0$, such that for all $v\geq V$ and $k>K$ in the region $\{r\geq r_{\mathcal{I}}\}$:
\begin{equation*}
\begin{split}
|r\psi(u,v,\theta,\varphi)-{\Phi}(u,\theta,\varphi)|\leq&\: |r\psi(u,v,\theta,\varphi)-r\psi_k(u,v,\theta,\varphi)|+|r\psi_{i_k}(u,v,\theta,\varphi)-{\Phi}(u,\theta,\varphi)|\\
\leq &\:|r\psi(u,v,\theta,\varphi)-r\psi_{i_k}(u,v,\theta,\varphi)|+ r^{-2}(u,v)\int_{v}^{V i_k} r^2|L \phi_{i_k}|\,dv'\\
\leq&\: \epsilon.
\end{split}
\end{equation*}
for all $u\in (-u_{-\infty},u_{\infty}]$ and $(\theta,\varphi)\in \s^2$. Hence, 
\begin{equation*}
\lim_{v\to \infty} r\psi(u,v,\theta,\varphi)={\Phi}(u,\theta,\varphi).
\end{equation*}

We can analogously use \eqref{eq:vinftypointwiseest} to obtain for all $j,k,l\in \N_0$:
\begin{equation*}
\lim_{v\to \infty}\slashed{\nabla}_{\s^2}^j(r^2\partial_v)^k\partial_u^l(r\cdot \psi)(u,v,\theta,\varphi)<\infty.
\end{equation*}
Furthermore, by replacing $\psi$ by $T^l\Omega^{\alpha}\psi$ we can conclude that with respect to the differentiable structure in $\hat{\mathcal{R}}$, the restriction $r\psi|_{\mathcal{I}^+}$ is a smooth function on $\mathcal{I}^+$, satisfying $r\psi|_{\mathcal{I}^+}={\Phi}$. We can therefore conclude $1.)$ of the proposition.

Now suppose $\widetilde{\psi}$ is another smooth solution to $\square_g \widetilde{\psi}=0$, such that 
\begin{align*}
M\widetilde{\psi} |_{\mathcal{H}^+_{v_{-\infty}\leq v \leq v_{\infty}}}=&\:{\underline{\Phi}},\\
r\cdot\widetilde{\psi}|_{\mathcal{I}^+_{u_{-\infty}\leq u \leq u_{\infty}}}=&\:{\Phi},\\
(\widetilde{\psi}|_{\Sigma_{\tau_{\infty}}},n_{\Sigma_{\tau_{\infty}}}(\widetilde{\psi})|_{\Sigma_{\tau_{\infty}}\cap\{r_{\mathcal{H}}\leq r \leq r_{\mathcal{I}}\}})=&\:(0,0)
\end{align*}
By a global $T$-energy estimate, we have that
\begin{equation*}
\int_{\widetilde{\Sigma}_{0}}\mathbf{J}^T[\tilde{\psi}-\psi]\cdot \mathbf{n}_{\widetilde{\Sigma}_{0}}\,d\mu_{\widetilde{\Sigma}_{0}}=0,
\end{equation*}
so $\widetilde{\psi}=\psi$, which concludes $2.)$ of the proposition.
\end{proof}
\subsection{Backwards energy estimates}
\label{sec:bacweest}
In this section, we will derive estimates for the solutions $\psi$ to \eqref{eq:waveequation} constructed in Proposition \ref{prop:mainpropbackdef} that are uniform in $\tau_{\infty}$. This is crucial for constructing solutions with scattering data that is not compactly suppported.

The main tool we will develop is this section is a hierarchy of $r$-weighted estimates in the backwards time direction. However, we will first state a \emph{backwards} Morawetz estimate that follows immediately from the results in \cite{aretakis1}, i.e.\ an analogue of Theorem \ref{thm:morawetzfow} in the backwards time direction.

In this section, we will always assume that $\psi$ is a solution to \eqref{eq:waveequation} arising from smooth and compactly supported scattering data $({\underline{\Phi}},{\Phi})\in C_{c}^{\infty}(\mathcal{H}^+_{\geq v_0})\oplus C_{c}^{\infty}(\mathcal{I}^+_{\geq u_0})$, as in Proposition \eqref{prop:mainpropbackdef}, i.e. let $\tau_{\infty}>0$ such that $\overline{\textnormal{supp}\, {\underline{\Phi}}}\subset \mathcal{H}^+_{v_0\leq v<\tau_{\infty}+v_0}$ and $\overline{\textnormal{supp}\, {\Phi}}\subset \mathcal{I}^+_{u_0\leq u<\tau_{\infty}+u_0}$. 
\begin{proposition}[Backwards Morawetz/integrated local energy decay estimate,\\ \cite{aretakis1}]
\label{prop:backwmorawetz}
Let $0\leq \tau_1<\tau_2<\infty$\\ and $M<r_0<r_1<2M<r_2<r_3<\infty$, then for all $k,l\in \N_0$ and $\alpha\in \N_0^3$ there exists a constant $C=C(r_i,M,\Sigma_0,k,l,\alpha)>0$, such that
\begin{equation}
\label{eq:morawetz}
\begin{split}
\int_{\tau_1}^{\tau_2}&\left[\int_{\Sigma_{\tau}\cap(\{r_0\leq r\leq r_1\}\cup\{r_2\leq r\leq r_3\})}(\partial_vT^k\partial_r^l\Omega^{\alpha}\psi)^2+|\snabla T^k\partial_r^l\Omega^{\alpha}\psi|^2+(T^k\partial_r^l\Omega^{\alpha}\psi)^2+(\partial_rT^k\partial_r^l\Omega^{\alpha}\psi)^2\,d\mu_{\Sigma_{\tau}}\right]\,d\tau\\
\leq&\:C\sum_{j=0}^{k+l+|\alpha|} \int_{\mathcal{H}^+\cap\{\tau_1\leq v \leq \tau_2\}}{\mathbf{J}}^T[T^j\psi]\cdot L \,d\omega dv+\int_{\mathcal{I}^+\cap\{\tau_1\leq u \leq \tau_2\}}r^2 {\mathbf{J}}^T[T^j\psi]\cdot \underline{L} d\omega du\\
&+C\int_{\Sigma_{\tau_2}}{\mathbf{J}}^T[T^j\psi]\cdot \mathbf{n}_{\Sigma_{\tau_2}}\,d\mu_{\Sigma_{\tau_2}}.
\end{split}
\end{equation}
\end{proposition}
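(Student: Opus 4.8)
The plan is to obtain this estimate as a soft corollary of the forwards Morawetz estimate of Theorem \ref{thm:morawetzfow} combined with the $T$-energy conservation law of Lemma \ref{lm:tenconsv}. The crucial observation is that the left-hand side of \eqref{eq:morawetz} is a \emph{bulk} spacetime integral over the slab $\mathcal{R}\cap\{\tau_1\leq \tau\leq \tau_2\}$, and is therefore insensitive to the choice of time direction: it is literally identical to the left-hand side of the forwards estimate \eqref{eq:morawetzfow}. Consequently, the only substantive task is to re-express the right-hand side of the forwards estimate, namely the $T$-energy flux through the \emph{past} slice $\Sigma_{\tau_1}$, in terms of data on the \emph{future} boundary of the slab, i.e.\ on $\Sigma_{\tau_2}$ together with the segments of $\mathcal{H}^+$ and $\mathcal{I}^+$ traversed between times $\tau_1$ and $\tau_2$.

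First I would apply Theorem \ref{thm:morawetzfow} verbatim, which bounds the bulk integral appearing in \eqref{eq:morawetz} by $C\sum_{j=0}^{k+l+|\alpha|}\int_{\Sigma_{\tau_1}}\mathbf{J}^T[T^j\psi]\cdot\mathbf{n}_{\Sigma_{\tau_1}}\,d\mu_{\Sigma_{\tau_1}}$. Since $T$ commutes with $\square_g$, each $T^j\psi$ is again a smooth solution of \eqref{eq:waveequation}, so Lemma \ref{lm:tenconsv} gives $\mathrm{div}\,\mathbf{J}^T[T^j\psi]\equiv 0$. Applying the divergence theorem to $\mathbf{J}^T[T^j\psi]$ in the region $\mathcal{R}\cap\{\tau_1\leq \tau\leq \tau_2\}$, whose boundary consists of $\Sigma_{\tau_1}$, $\Sigma_{\tau_2}$, $\mathcal{H}^+\cap\{\tau_1\leq v\leq \tau_2\}$ and $\mathcal{I}^+\cap\{\tau_1\leq u\leq \tau_2\}$, yields the conservation identity
\begin{equation*}
\int_{\Sigma_{\tau_1}}\mathbf{J}^T[T^j\psi]\cdot\mathbf{n}_{\tau_1}\,d\mu_{\tau_1}=\int_{\Sigma_{\tau_2}}\mathbf{J}^T[T^j\psi]\cdot\mathbf{n}_{\tau_2}\,d\mu_{\tau_2}+\int_{\mathcal{H}^+\cap\{\tau_1\leq v\leq \tau_2\}}\mathbf{J}^T[T^j\psi]\cdot L\,r^2d\omega dv+\int_{\mathcal{I}^+\cap\{\tau_1\leq u\leq \tau_2\}}\mathbf{J}^T[T^j\psi]\cdot\underline{L}\,r^2d\omega du,
\end{equation*}
with the flux conventions recorded in Lemma \ref{lm:tenconsv} (on $\mathcal{H}^+$ the factor $r^2\equiv M^2$ is constant and can be absorbed into $C$, matching the form of the right-hand side of \eqref{eq:morawetz}). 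Summing this identity over $0\leq j\leq k+l+|\alpha|$ reproduces exactly the three families of terms on the right-hand side of \eqref{eq:morawetz}, completing the argument.

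There is no genuine analytical obstacle here, which is precisely why the statement may be attributed directly to \cite{aretakis1}: all the quantitative content is already contained in the forwards estimate. The only points requiring care are bookkeeping ones, namely that the boundary fluxes of $T^j\psi$ through $\mathcal{H}^+$ and $\mathcal{I}^+$ are finite and that the divergence-theorem application is legitimate on the mixed spacelike-null slab. Both are guaranteed for the solutions under consideration, which arise from compactly supported scattering data via Proposition \ref{prop:mainpropbackdef} and hence satisfy the qualitative higher-order bounds \eqref{eq:qualboundhoradfield} rendering all such fluxes finite. Finally, because $T$ is causal throughout the domain of outer communications of extremal Reissner--Nordstr\"om, every flux on the right-hand side is non-negative, so the conservation identity furnishes a genuine control of the past-slice energy by the future-boundary quantities rather than relying on any cancellation.
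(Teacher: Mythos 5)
Your argument is correct and is precisely the route the paper intends: its proof of Proposition \ref{prop:backwmorawetz} is a one-line citation to \cite{aretakis1}, and what you have written out — the forwards Morawetz bound of Theorem \ref{thm:morawetzfow} with its right-hand side converted from the $\Sigma_{\tau_1}$ energy to the $\Sigma_{\tau_2}$ energy plus the $\mathcal{H}^+$ and $\mathcal{I}^+$ fluxes via the divergence-free property of $\mathbf{J}^T$ and the non-negativity of all fluxes for causal $T$ — is exactly the omitted bookkeeping. Your remarks on the limiting argument needed to treat $\mathcal{I}^+$ and on absorbing the constant $r^2=M^2$ on the horizon are the right points of care.
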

\begin{proof}
The proof of \eqref{eq:morawetz} follows directly from the Morawetz estimates established in \cite{aretakis1}.
\end{proof}

In the propositions below, we derive the ``backwards analogues'' of the hierarchies from Proposition \ref{thm:fowhier}.
\begin{proposition}
\label{prop:backwrp}
Let $0\leq p\leq 2$, then there exists a constant $C(M,\Sigma, r_{\mathcal{I}},r_{\mathcal{H}})>0$, such that for all $0\leq \tau_1\leq \tau_2\leq \tau_{\infty}$:
\begin{equation}
\label{eq:backhierarchy}
\begin{split}
\int_{{\underline{N}}_{\tau_1}}&(r-M)^{-2+p}(\underline{L}\phi)^2\,d\omega du+\int_{{N}_{\tau_1}}r^{p}(L\phi)^2\,d\omega dv\\
\leq&\: C\int_{{{\underline{\mathcal{A}}}}^{\tau_2}_{\tau_1}}(r-M)^{-p+1}(\underline{L}\phi)^2+(2-p)(r-M)^{3-p}|\snabla_{\s^2} \phi|^2\,d\omega du d\tau\\
&+C\int_{{{\mathcal{A}}}^{\tau_2}_{\tau_1}}r^{p-1}(L \phi)^2+(2-p)r^{p-3}|\snabla_{\s^2} \phi|^2\,d\omega dv d\tau\\
&+C\int_{{\underline{N}}_{\tau_2}}(r-M)^{-2+p}(\underline{L}\phi)^2\,d\omega du+C\int_{{N}_{\tau_2}}r^{p}(L\phi)^2\,d\omega dv\\
&+C\int_{\mathcal{H}^+\cap\{\tau_1\leq \tau \leq \tau_2\}}(r-M)^{2-p}|\snabla_{\s^2} \phi|^2\,d\omega dv+C\int_{\mathcal{I}^+\cap\{\tau_1\leq u \leq \tau_2\}}r^{p-2}|\snabla_{\s^2} \phi|^2\,d\omega du\\
&+C\int_{\mathcal{H}^+\cap\{\tau_1\leq \tau \leq \tau_2\}}{\mathbf{J}}^T[\psi]\cdot L \,d\omega dv+C\int_{\mathcal{I}^+\cap\{\tau_1\leq u \leq \tau_2\}}r^2 {\mathbf{J}}^T[\psi]\cdot \underline{L}\, d\omega du\\
&+C\int_{\Sigma_{\tau_2}}{\mathbf{J}}^T[\psi]\cdot \mathbf{n}_{\Sigma_{\tau_2}}\,d\mu_{\Sigma_{\tau_2}},
\end{split}
\end{equation}
\end{proposition}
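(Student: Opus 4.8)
The plan is to prove \eqref{eq:backhierarchy} by running the $r^p$-weighted method of Dafermos--Rodnianski \cite{newmethod}, together with its $(r-M)^{-p}$-weighted horizon analogue from \cite{paper4} (the forwards version of which is Theorem \ref{thm:fowhier}), \emph{backwards} in time. The starting point is the transport equation \eqref{eq:maineqradfield} for the radiation field $\phi=r\psi$. I would treat the far region $\{r\geq r_{\mathcal{I}}\}$ and the near-horizon region $\{r\leq r_{\mathcal{H}}\}$ with their own weighted multipliers and then patch across the compact region $\{r_{\mathcal{H}}\leq r\leq r_{\mathcal{I}}\}$, where neither weight is available, using the backwards Morawetz estimate of Proposition \ref{prop:backwmorawetz} and the degenerate $T$-flux on $\Sigma_{\tau_2}$.

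In the far region I would contract \eqref{eq:maineqradfield} with the multiplier $r^pL\phi$. Using $L(r)=\tfrac12 D$ and $\underline{L}(r)=-\tfrac12 D$, one has the pointwise identity
\begin{equation*}
r^p(L\phi)(\underline{L}L\phi)=\tfrac12\underline{L}\left[r^p(L\phi)^2\right]+\tfrac{p}{4}Dr^{p-1}(L\phi)^2,
\end{equation*}
while the angular term $\tfrac{D}{4}r^{p-2}(L\phi)\slashed{\Delta}_{\s^2}\phi$, after integration by parts on $\s^2$ and in $v$, produces the spacetime term $\sim(2-p)r^{p-3}|\snabla_{\s^2}\phi|^2$ and the boundary flux $\sim r^{p-2}|\snabla_{\s^2}\phi|^2$ at $\mathcal{I}^+$. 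Integrating the resulting divergence identity over $\mathcal{A}^{\tau_2}_{\tau_1}$ and solving for the flux across the \emph{past} cone $N_{\tau_1}$, the coercive bulk $\tfrac{p}{4}\int Dr^{p-1}(L\phi)^2$ lands on the right-hand side with a \emph{good} sign and therefore cannot be absorbed: it is precisely the spacetime error $\int_{\mathcal{A}^{\tau_2}_{\tau_1}}r^{p-1}(L\phi)^2$ of \eqref{eq:backhierarchy}, exactly as anticipated in Section \ref{intro:backwest}. The lower-order contribution $\tfrac{DD'}{4r}r^p\phi L\phi=O(r^{p-3})\phi L\phi$ is handled by one further integration by parts in $v$ followed by the Hardy inequality of Lemma \ref{lm:hardy} and Proposition \ref{prop:backwmorawetz}, which recast the resulting weighted zeroth-order terms into the $T$-energy and Morawetz terms already present on the right.

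The near-horizon estimate is the mirror image, obtained by contracting \eqref{eq:maineqradfield} with $(r-M)^{-p}\underline{L}\phi$ (the horizon analogue of $r^pL\phi$) and interchanging the roles of $L$ and $\underline{L}$ and of $r$ and $(r-M)^{-1}$. Here the \emph{extremality} enters through the degeneracy $D=(1-Mr^{-1})^2\sim (r-M)^2$, so that $L(r)=\tfrac12 D\sim (r-M)^2$ and $(r-M)^{-1}$ plays the role that $r$ plays near $\mathcal{I}^+$; this is what allows the weighted horizon flux to be controlled in backwards time, in contrast with the sub-extremal case. The same computation yields the weighted flux across $\underline{N}_{\tau_1}$ on the left of \eqref{eq:backhierarchy}, the bulk error $\int_{\underline{\mathcal{A}}^{\tau_2}_{\tau_1}}(r-M)^{1-p}(\underline{L}\phi)^2$, the angular bulk $(2-p)(r-M)^{3-p}|\snabla_{\s^2}\phi|^2$, and the $\mathcal{H}^+$ boundary term $(r-M)^{2-p}|\snabla_{\s^2}\phi|^2$. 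To glue the two regions I would introduce cutoffs supported in $\{r\leq r_{\mathcal{I}}\}$ and $\{r\geq r_{\mathcal{H}}\}$; the commutator terms generated by their derivatives, together with the fluxes across $\{r=r_{\mathcal{I}}\}$ and $\{r=r_{\mathcal{H}}\}$, are supported in the compact region and are bounded by the right-hand side of Proposition \ref{prop:backwmorawetz}, whose output furnishes exactly the $\mathcal{H}^+$, $\mathcal{I}^+$ and $\Sigma_{\tau_2}$ flux terms of \eqref{eq:backhierarchy}.

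The step I expect to require the most care is the justification that the boundary contributions ``at $v=\infty$'' and ``at $u=\infty$'' genuinely reduce to the stated $\mathcal{I}^+$ and $\mathcal{H}^+$ fluxes, with no hidden divergent term. This is where I would invoke the qualitative pointwise bounds \eqref{eq:qualboundhoradfield} established in Proposition \ref{prop:mainpropbackdef}, which guarantee that $r^p(L\phi)^2$ and $r^{p-2}|\snabla_{\s^2}\phi|^2$ admit finite limits as $v\to\infty$ along each cone $N_\tau$, so that the limiting fluxes coincide with the prescribed data; the analogous control with $r$ replaced by $(r-M)^{-1}$ legitimises the limits towards $\mathcal{H}^+$. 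Once these limits are secured, the remaining work is the bookkeeping of signs under backwards integration and the routine absorption of the lower-order terms via Lemma \ref{lm:hardy} and Proposition \ref{prop:backwmorawetz}, giving \eqref{eq:backhierarchy} uniformly for $0\leq p\leq 2$ and, crucially, with a constant independent of $\tau_{\infty}$.
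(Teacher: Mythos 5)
Your proposal is correct and follows essentially the same route as the paper: the multiplier identity you write for $r^pL\phi$ is precisely the expansion of $\underline{L}(r^p(L\phi)^2)$ used in the paper's proof, and the subsequent steps (angular integration by parts producing the $(2-p)$-weighted bulk and the $\mathcal{I}^+$/$\mathcal{H}^+$ angular fluxes, the sign flip turning the coercive bulk into the retained spacetime error, Hardy plus the backwards Morawetz estimate for the zeroth-order and cutoff terms, and the mirrored $(r-M)^{-p}$ argument near the horizon) match the paper's argument. Your additional remark on justifying the limiting fluxes via the qualitative bounds of Proposition \ref{prop:mainpropbackdef} is a point the paper leaves implicit but is indeed the intended role of those bounds.
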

\begin{proof}
Recall that $\phi$ satisfies the equation:
\begin{equation}
\label{eq:eqphi}
\underline{L}L\phi= \frac{1}{4}DD'r^{-1}\phi+Dr^{-2}\slashed{\Delta}_{\s^2}\phi.
\end{equation}
Therefore,
\begin{equation*}
\begin{split}
\underline{L}(r^p(L\phi)^2)=&-\frac{p}{2}r^{p-1}(L\phi)^2+2r^pL\phi\cdot \underline{L}L\phi\\
=&-\frac{p}{2}r^{p-1}(L\phi)^2+O(r^{p-3})\phi\cdot L\phi+(r^{p-2}+O(r^{p-3}))L\phi\cdot \slashed{\Delta}_{\s^2}\phi\\
=&-\frac{p}{2}r^{p-1}(L\phi)^2+O(r^{p-3})\phi\cdot L\phi+\slashed{\nabla}_{\s^2}(r^{p-2}L\phi\cdot \slashed{\nabla}_{\s^2}\phi)-L\left(\frac{1}{2}r^{p-2}|\snabla_{\s^2}\phi|^2\right)\\\
&-\frac{1}{2}(2-p)r^{p-3}|\snabla_{\s^2}\phi|^2+O(r^{p-3})\slashed{\Delta}_{\s^2}\phi\cdot L\phi\\
=&-\frac{p}{2}r^{p-1}(L\phi)^2+O(r^{p-3})\phi\cdot L\phi+\slashed{\nabla}_{\s^2}(r^{p-2}L\phi\cdot \slashed{\nabla}_{\s^2}\phi)-L\left(\frac{1}{2}r^{p-2}|\snabla_{\s^2}\phi|^2\right)\\\
&-\frac{1}{2}(2-p)r^{p-3}|\snabla_{\s^2}\phi|^2+O(r^{p-1})\underline{L}L\phi\cdot L\phi
\end{split}
\end{equation*}

By reordering the terms, we therefore obtain:
\begin{equation}
\label{eq:maineqrpest}
\begin{split}
\underline{L}((r^p+O(r^{p-1}))(L\phi)^2)&+L\left(\frac{1}{2}r^{p-2}|\snabla_{\s^2}\phi|^2\right)-\slashed{\nabla}_{\s^2}(r^{p-2}L\phi\cdot \slashed{\nabla}_{\s^2}\phi)\\
=&\left(-\frac{p}{2}r^{p-1}+O(r^{p-2})\right)(L\phi)^2-\frac{1}{2}(2-p)r^{p-3}|\snabla_{\s^2}\phi|^2+O(r^{p-3})\phi\cdot L\phi.
\end{split}
\end{equation}
Let $\chi$ denote a cut-off function and consider $\chi \phi$.

We integrate both sides of \eqref{eq:maineqrpest} in spacetime to obtain:
\begin{equation}
\label{eq:maineqrpest2}
\begin{split}
-&\int_{{N}_{\tau_2}}(r^p+O(r^{p-1}))(L(\chi \phi))^2\,d\omega dr+\int_{{N}_{\tau_1}}(r^p+O(r^{p-1}))(L(\chi \phi))^2\,d\omega dr\\
&- \frac{1}{2}\int_{\mathcal{I}^+\cap\{\tau_1\leq u \leq \tau_2\}}r^{p-2}|\snabla_{\s^2}\chi \phi|^2\,d\omega d\tau\\
=&\: \int_{\tau_1}^{\tau_2} \int_{{N}_{\tau}} \left(\frac{p}{2}r^{p-1}+O(r^{p-2})\right)(L(\chi\phi))^2+\frac{1}{2}(2-p)r^{p-3}|\snabla_{\s^2}\chi \phi|^2+O(r^{p-3})\chi\phi\cdot L(\chi \phi)\,d\omega dr d\tau\\
&+\sum_{|\alpha_1|+|\alpha_2|\leq 1}\int_{\tau_1}^{\tau_2}\int_{{N}_{\tau}}R_{\chi}[\partial^{\alpha_1}\phi\cdot \partial^{\alpha_2}\phi]\,d\omega dr d\tau\\
\leq &\: \int_{\tau_1}^{\tau_2} \int_{{N}_{\tau}} Cr^{p-1}(L(\chi\phi))^2+\frac{1}{2}(2-p)r^{p-3}|\snabla_{\s^2}\chi \phi|^2\,d\omega dr d\tau\\
&+ C(r_0,r_1)\int_{\mathcal{H}^+\cap\{\tau_1\leq v \leq \tau_2\}}{\mathbf{J}}^T[\psi]\cdot \partial_v \,d\omega dv+C(r_0,r_1)\int_{\mathcal{I}^+\cap\{\tau_1\leq u \leq \tau_2\}}r^2 {\mathbf{J}}^T[\psi]\cdot \partial_u d\omega du\\
&+C(r_0,r_1)\int_{\Sigma_{\tau_2}}{\mathbf{J}}^T[\psi]\cdot \mathbf{n}_{\Sigma_{\tau_2}}\,d\mu_{\Sigma_{\tau_2}},
\end{split}
\end{equation}
where we applied Lemma \ref{lm:hardy} and \eqref{eq:morawetz} to arrive at the inequality above. See also the derivations in the proof of Lemma 6.3 in \cite{paper4} in the special case $n=0$.

We can repeat the above steps in the region where $r\leq r_{\mathcal{H}}$ by reversing the roles of $L$ and $\underline{L}$ and replacing $r^p$ with $(r-M)^{-p}$; see the proof of Lemma 6.3 in \cite{paper4} for more details.
\end{proof}

We subsequently apply Proposition \ref{prop:backwrp} to arrive at uniform weighted energy estimates along $\Sigma_0$.

\begin{proposition}
\label{prop:mainbackweest}
Then there exists a constant $C(M,\Sigma, r_{\mathcal{I}},r_{\mathcal{H}})>0$, such that
\begin{equation}
\label{eq:backwjzerotau0}
\begin{split}
 \int_{\underline{N}_{v_0}}& (r-M)^{-2} (\underline{L}\phi)^2\,d\omega du +\int_{N_{u_0}} r^2(L\phi)^2\,d\omega dv+\int_{\Sigma_0} {\mathbf{J}}^T[\psi] \cdot \mathbf{n}_0\,d\mu_0\\
 \leq&\: C\int_{\mathcal{H}^+_{\geq v_0}}  v^2(L \phi)^2+|\snabla_{\s^2} \phi|^2\,d\omega dv+C\int_{\mathcal{I}^+_{\geq u_0} }u^2(\Lbar \phi)^2+|\snabla_{\s^2} \phi|^2\,d\omega du.
 \end{split}
\end{equation}
We moreover have that
\begin{equation}
\label{eq:mainbackweest}
\begin{split}
\sum_{j=0}^1 \int_{\underline{N}_{v_0}}& (r-M)^{-2+j} (\underline{L}T^j\phi)^2\,d\omega du +\int_{N_{u_0}} r^{2-j}(LT^j\phi)^2\,d\omega dv+\sum_{j=0}^2\int_{\Sigma_0} {\mathbf{J}}^T[T^j\psi] \cdot \mathbf{n}_0\,d\mu_0\\
\leq&\: C\sum_{j=0}^2\int_{\mathcal{H}^+_{\geq v_0}}  v^{2-j}(LT^j\phi)^2+|\snabla_{\s^2}\phi|^2\,d\omega dv+C\int_{\mathcal{I}^+_{\geq u_0}}  u^{2-j}(\Lbar T^j\phi)^2+|\snabla_{\s^2}\phi|^2\,d\omega du.
 \end{split}
\end{equation}
\end{proposition}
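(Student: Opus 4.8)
The plan is to derive \eqref{eq:backwjzerotau0} and \eqref{eq:mainbackweest} by running the backwards hierarchy of Proposition \ref{prop:backwrp} together with the backwards Morawetz estimate of Proposition \ref{prop:backwmorawetz}, controlling the ``bad-signed'' spacetime integrals on the right-hand side of \eqref{eq:backhierarchy} by integrating the degenerate $T$-flux in time and converting the nested time-integrations into growing time-weights along $\mathcal{H}^+$ and $\mathcal{I}^+$ via Lemma \ref{lm:integralsandweights}. The structure mirrors the Minkowski computation sketched in Section \ref{intro:backwest}: the $p=2$ estimate feeds on spacetime integrals that are morally the $p=1$ energy, which in turn feeds on the time-integral of the $T$-energy flux, and each such time-integration is traded for one extra power of $u$ or $v$ on the data via \eqref{eq:xweightid}.

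For \eqref{eq:backwjzerotau0}, first I would apply Proposition \ref{prop:backwrp} with $p=0$, taking $\tau_1=0$ and $\tau_2=\tau_\infty$. Since $\psi$ arises from compactly supported data as in Proposition \ref{prop:mainpropbackdef}, the fluxes along $\Sigma_{\tau_\infty}$ and the null segments $N_{\tau_\infty},\underline{N}_{\tau_\infty}$ vanish, which kills all the ``future'' boundary terms. The $p=0$ estimate controls
\begin{equation*}
\int_{\underline{N}_{0}}(r-M)^{-2}(\underline{L}\phi)^2\,d\omega du+\int_{N_{0}}(L\phi)^2\,d\omega dv
\end{equation*}
in terms of a spacetime integral of $r^{-1}(L\phi)^2+r^{-3}|\snabla_{\s^2}\phi|^2$ (and its horizon analogue) plus angular-derivative fluxes along $\mathcal{H}^+,\mathcal{I}^+$. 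I would then bound this spacetime integral by the time-integral $\int_0^{\tau_\infty}\int_{\Sigma_\tau}\mathbf{J}^T[\psi]\cdot\mathbf{n}_\tau\,d\mu_\tau\,d\tau$ using the backwards Morawetz estimate \eqref{eq:morawetz} and degenerate energy conservation (Lemma \ref{lm:tenconsv}); by Lemma \ref{lm:tenconsv} the $T$-flux through $\Sigma_\tau$ equals the flux of the data restricted to $\mathcal{H}^+\cap\{v\geq\tau\}$ and $\mathcal{I}^+\cap\{u\geq\tau\}$, so integrating once in $\tau$ and applying Lemma \ref{lm:integralsandweights} with $n=1$ produces a single weight $(v-v_0)$ resp.\ $(u-u_0)$ inside the flux integrals. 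Running the $p=2$ estimate of Proposition \ref{prop:backwrp} on top of the $p=0$ output, whose spacetime integral is in turn absorbed by the $p=0$ fluxes just obtained, and integrating in $\tau$ once more (Lemma \ref{lm:integralsandweights} with $n=2$), upgrades the weight to $v^2$ resp.\ $u^2$, giving precisely the right-hand side of \eqref{eq:backwjzerotau0}; the $\Sigma_0$ $T$-flux is then recovered from Lemma \ref{lm:tenconsv} applied at $\tau=0$.

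To obtain \eqref{eq:mainbackweest} I would commute \eqref{eq:waveequation} with $T$ (which commutes with $\square_g$) and repeat the argument for $T\phi$ and $T^2\phi$: since each additional $T$-commutation costs one power of the time-weight in the hierarchy, the $j=1$ terms require only the $p=1$ estimate and one time-integration (weight $u^{2-1}=u$), and the $j=2$ $T$-flux follows directly from energy conservation applied to $T^2\psi$. Summing the $j=0,1,2$ contributions and collecting the resulting weighted fluxes along $\mathcal{H}^+_{\geq v_0}$ and $\mathcal{I}^+_{\geq u_0}$ yields \eqref{eq:mainbackweest}. The essential hypothesis making all of this work is the \emph{extremality} of the black hole: the $(r-M)^{-p}$ hierarchy near $\mathcal{H}^+$ has good signs precisely because the surface gravity vanishes, so the backwards estimates do not encounter the blueshift obstruction of the sub-extremal case. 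The main obstacle I anticipate is a book-keeping one rather than a conceptual one: I must ensure the decay rates needed to apply Lemma \ref{lm:integralsandweights} (namely $\lim_{x\to\infty}x^{n+1}|f|=0$ for the relevant fluxes) are available, which is guaranteed here by the qualitative boundedness \eqref{eq:qualboundhoradfield} of the radiation field established in Proposition \ref{prop:mainpropbackdef} for the compactly supported data under consideration, so that all integration-by-parts boundary terms at $u,v\to\infty$ genuinely vanish.
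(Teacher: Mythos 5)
Your overall strategy is the one the paper follows: run the backwards hierarchy of Proposition \ref{prop:backwrp}, kill the future boundary terms using the trivial data on $\Sigma_{\tau_{\infty}}$, convert nested $\tau$-integrations of the degenerate $T$-flux into growing $u$- and $v$-weights via Lemma \ref{lm:integralsandweights}, and then commute with $T$ for \eqref{eq:mainbackweest}. However, as written your execution of the first estimate has a genuine gap: you jump from the $p=0$ rung directly to the $p=2$ rung and assert that the $p=2$ spacetime integral ``is absorbed by the $p=0$ fluxes just obtained.'' It is not. The $p=2$ estimate produces the bad-signed bulk terms $\int\!\!\int_{\mathcal{A}} r^{\,p-1}(L\phi)^2 = \int\!\!\int_{\mathcal{A}} r\,(L\phi)^2$ near infinity and the corresponding $(r-M)^{-1}(\underline{L}\phi)^2$ term near the horizon; these carry one more power of $r$ (respectively $(r-M)^{-1}$) than the time-integral of the unweighted fluxes your $p=0$ step controls, so the absorption fails. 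The intermediate $p=1$ application is indispensable: it is the $p=1$ flux $\int_{N_{\tau'}} r(L\phi)^2$, controlled at each $\tau'$ by the once-time-integrated $T$-flux and then itself integrated in $\tau'$, that dominates the $p=2$ bulk term. This is exactly the paper's chain \eqref{eq:backwTenbound} $\to$ ($p=1$) $\to$ ($p=2$), which produces the intermediate weight $u^1,v^1$ on the scattering data before the final upgrade to $u^2,v^2$. Your own opening paragraph states the correct mechanism (``the $p=2$ estimate feeds on spacetime integrals that are morally the $p=1$ energy''), so the fix is to actually insert that rung.

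A related misreading: you claim the $p=0$ estimate already controls $\int_{\underline{N}_0}(r-M)^{-2}(\underline{L}\phi)^2$. If that were so, the entire near-horizon hierarchy would be superfluous. The way \eqref{eq:backhierarchy} is used in the paper ($p=1$ yields the weight $(r-M)^{-1}$, $p=2$ yields $(r-M)^{-2}$) makes clear that the intended boundary weight is $(r-M)^{-p}$, so $p=0$ gives only the degenerate flux; the non-degenerate horizon energy must be climbed to in two steps just like the $r^2$-weighted energy at infinity. Your treatment of \eqref{eq:mainbackweest} (commuting with $T$, using only the $p=1$ level for $j=1$ and plain energy conservation for $j=2$) and your verification of the decay hypotheses of Lemma \ref{lm:integralsandweights} are both consistent with the paper.
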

\begin{proof}
By applying Lemma \ref{lm:hardy} and Lemma \ref{lm:tenconsv}, it follows that
\begin{equation}
\label{eq:backwTenbound}
\begin{split}
\int_{\tau}^{\infty}& \int_{N_{\tau}} (L\phi)^2+r^{-2}|\snabla_{\s^2}\phi|^2\,d\omega dv d\tau'+\int_{\tau}^{\infty} \int_{\underline{N}_{\tau}} (\underline{L}\phi)^2+r^{-2}|\snabla_{\s^2}\phi|^2\,d\omega du d\tau' \leq C\int_{\tau}^{\infty} \int_{N_{\tau}} {\mathbf{J}}^T[\psi]\cdot L\,r^2d\omega dv d\tau'\\
&+C\int_{\tau}^{\infty} \int_{\underline{N}_{\tau}} {\mathbf{J}}^T[\psi]\cdot \underline{L}\,r^2d\omega du d\tau'\\
\leq &\:  C\int_{\tau}^{\infty}\int_{\mathcal{H}^+_{\geq \tau'}} {\mathbf{J}}^T[\psi]\cdot L\,r^2d\omega dvd\tau'+ C\int_{\tau}^{\infty}\int_{\mathcal{I}^+_{\geq \tau}} {\mathbf{J}}^T[\psi]\cdot \underline{L}\,r^2d\omega dud\tau'\\
\leq &\:C\int_{\tau}^{\infty} \int_{\mathcal{H}^+_{\geq \tau'}}  (L \phi)^2\,d\omega dvd\tau'+C \int_{\tau}^{\infty}\int_{\mathcal{I}^+_{\geq \tau}}  (\Lbar \phi)^2\,d\omega dud\tau'.
\end{split}
\end{equation}
We now apply \eqref{eq:backhierarchy} with $p=1$, together with \eqref{eq:backwTenbound} to conclude that
\begin{equation*}
\begin{split}
 \int_{\underline{N}_{\tau}}& (r-M)^{-1} (\underline{L}\phi)^2\,d\omega du +\int_{N_{\tau}} r(L\phi)^2\,d\omega dv\leq C\int_{\tau}^{\infty} \int_{\mathcal{H}^+_{\geq \tau_1}}  (L \phi)^2\,d\omega dvd\tau_1\\
 &+C \int_{\tau}^{\infty}\int_{\mathcal{I}^+_{\geq \tau_1}}  (\Lbar \phi)^2\,d\omega dud\tau_1+ C\int_{\mathcal{H}^+_{\geq \tau}}  (L \phi)^2\,d\omega dv+C\int_{\mathcal{I}^+_{\geq \tau}}  (\Lbar \phi)^2\,d\omega du.
 \end{split}
\end{equation*}
Next, apply \eqref{eq:backhierarchy} with $p=2$ to obtain
\begin{equation*}
\begin{split}
 \int_{\underline{N}_{\tau}}& (r-M)^{-2} (\underline{L}\phi)^2\,d\omega du +\int_{N_{\tau}} r^2(L\phi)^2\,d\omega dv\leq C\int_{\tau}^{\infty} \int_{\tau_1}^{\infty}\int_{\mathcal{H}^+_{\geq \tau_2}}  (L \phi)^2\,d\omega dvd\tau_2 d\tau_1\\
 &+C \int_{\tau}^{\infty} \int_{\tau_1}^{\infty}\int_{\mathcal{I}^+_{\geq \tau_2}}  (\Lbar \phi)^2\,d\omega dud\tau_2 d\tau_1+C\int_{\tau}^{\infty} \int_{\mathcal{H}^+_{\geq \tau_1}}  (L \phi)^2\,d\omega dvd\tau_1+C \int_{\tau}^{\infty}\int_{\mathcal{I}^+_{\geq \tau_1}}  (\Lbar \phi)^2\,d\omega dud\tau_1\\
 &+ C\int_{\mathcal{H}^+_{\geq \tau}}  |\snabla_{\s^2} \phi|^2\,d\omega dv+C\int_{\mathcal{I}^+_{\geq \tau}}  |\snabla_{\s^2} \phi|^2\,d\omega du\\
 &+ C\int_{\mathcal{H}^+_{\geq \tau}}  (L \phi)^2\,d\omega dv+C\int_{\mathcal{I}^+_{\geq \tau}}  (\Lbar \phi)^2\,d\omega du.
 \end{split}
\end{equation*}
We apply Lemma \ref{lm:integralsandweights} to rewrite the right-hand side above to arrive at:
\begin{equation}
\label{eq:backwjzero} 
\begin{split}
 \int_{\underline{N}_{\tau}}& (r-M)^{-2} (\underline{L}\phi)^2\,d\omega du +\int_{N_{0}} r^2(L\phi)^2\,d\omega dv\leq C\int_{\mathcal{H}^+_{\geq v_{\mathcal{H}^+} (\tau)}}  v^2(L \phi)^2+|\snabla_{\s^2} \phi|^2\,d\omega dv\\
 &+C\int_{\mathcal{I}^+_{\geq u_{\mathcal{I}^+} (\tau)}}  u^2(\Lbar \phi)^2+|\snabla_{\s^2} \phi|^2\,d\omega du,
 \end{split}
\end{equation}
which leads to  \eqref{eq:backwjzerotau0} when we take $\tau=0$.

By applying the above estimates to $T\psi$ and $T^2\psi$ we moreover obtain:
\begin{equation*}
\begin{split}
\sum_{j=0}^1 \int_{\underline{N}_{0}}& (r-M)^{-2+j} (\underline{L}T^j\phi)^2\,d\omega du +\int_{N_{0}} r^{2-j}(LT^j\phi)^2\,d\omega dv\leq C\sum_{j=0}^1\int_{\mathcal{H}^+_{\geq v_0}}  v^{2-j}(\partial_vT^j\phi)^2\,d\omega dv\\
&+C\int_{\mathcal{I}^+_{\geq u_0}}  u^{2-j}(\partial_uT^j\phi)^2\,d\omega du.
 \end{split}
\end{equation*}
We conclude the proof by combining the above proposition with Lemma \ref{lm:tenconsv} to obtain
\begin{equation*}
\begin{split}
\sum_{j=0}^1 \int_{\underline{N}_{0}}& (r-M)^{-2+j} (\underline{L}T^j\phi)^2\,d\omega du +\int_{N_{0}} r^{2-j}(LT^j\phi)^2\,d\omega dv+\sum_{j=0}^2\int_{\Sigma_0} {\mathbf{J}}^T[T^j\psi] \cdot \mathbf{n}_0\,d\mu_0\\
\leq&\: C\sum_{j=0}^2\int_{\mathcal{H}^+_{\geq v_0}}  v^{2-j}(LT^j\phi)^2+|\snabla_{\s^2} \phi|^2\,d\omega dv+C\int_{\mathcal{I}^+_{\geq u_0}}  u^{2-j}(\Lbar T^j\phi)^2+|\snabla_{\s^2} \phi|^2\,d\omega du.
 \end{split}
\end{equation*}
\end{proof}

\begin{remark}
Note that in contrast with the estimates in Proposition \ref{prop:integrateddecay}, there is no loss of derivatives (caused by the application of \eqref{eq:morawetzfowTloss}) on the right-hand side of \eqref{eq:backwjzerotau0}.
\end{remark}

We will complement \eqref{eq:mainbackweestv2} in Proposition \ref{prop:mainbackweest} with an estimate involving additional angular derivatives. The motivation for this comes from the energy estimates in Section \ref{sec:enestasympflat}.
\begin{corollary}
\label{cor:mainbackweestv2}
Then there exists a constant $C(M,\Sigma, r_{\mathcal{I}},r_{\mathcal{H}})>0$, such that
\begin{equation}
\label{eq:mainbackweestv2}
\begin{split}
\sum_{j=0}^1 \int_{\underline{N}_{v_0}}& (r-M)^{-2+j} (\underline{L}T^j\phi)^2\,d\omega du +\int_{N_{u_0}} r^{2-j}(LT^j\phi)^2\,d\omega dv+\sum_{j=0}^2\int_{\Sigma_0} {\mathbf{J}}^T[T^j\psi] \cdot \mathbf{n}_0\,d\mu_0+\sum_{|\alpha|=1}\int_{\Sigma_0} {\mathbf{J}}^T[\Omega^{\alpha}\psi] \cdot \mathbf{n}_0\,d\mu_0\\
\leq&\: C\sum_{j=0}^2\int_{\mathcal{H}^+_{\geq v_0}}  v^{2-j}(LT^j\phi)^2+|\snabla_{\s^2}\phi|^2+|\snabla_{\s^2} L \phi|^2\,d\omega dv+C\int_{\mathcal{I}^+_{\geq u_0}}  u^{2-j}(\Lbar T^j\phi)^2+|\snabla_{\s^2}\phi|^2+|\snabla_{\s^2} L \phi|^2\,d\omega du.
 \end{split}
\end{equation}
\end{corollary}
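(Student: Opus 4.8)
My plan is to obtain \eqref{eq:mainbackweestv2} directly from the already-established estimate \eqref{eq:mainbackweest} of Proposition \ref{prop:mainbackweest}, by adjoining the single extra piece $\sum_{|\alpha|=1}\int_{\Sigma_0} {\mathbf{J}}^T[\Omega^{\alpha}\psi]\cdot \mathbf{n}_0\,d\mu_0$ on the left. This mirrors exactly the one-line argument used in the forwards Corollary \ref{cor:fowdecayv2}. The only structural input I need beyond \eqref{eq:mainbackweest} is that the angular momentum operators $\Omega_i$ commute with $\square_{g_M}$ (which holds because $g_M$ is spherically symmetric and the $\Omega_i$ are the Killing fields of the round sphere), so that $\Omega^{\alpha}\psi$ is again a smooth solution to \eqref{eq:waveequation} whenever $\psi$ is, with radiation field $r\,\Omega^{\alpha}\psi=\Omega^{\alpha}\phi$ since $\Omega^{\alpha}$ is tangent to the spheres of constant $(u,v)$ and annihilates $r$.

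First I would apply the degenerate energy conservation Lemma \ref{lm:tenconsv} to the commuted solution $\Omega^{\alpha}\psi$ with $|\alpha|=1$. Since we are in the setting of Section \ref{sec:bacweest}, the scattering data $({\underline{\Phi}},{\Phi})$ are compactly supported, so by Proposition \ref{prop:mainpropbackdef} the solution $\psi$ (and hence $\Omega^{\alpha}\psi$) vanishes identically to the future of $\Sigma_{\tau_{\infty}}$. Applying the divergence theorem with the divergence-free current $\mathbf{J}^T[\Omega^{\alpha}\psi]$ in the region bounded in the past by $\Sigma_0$, to the future by $\Sigma_{\tau_{\infty}}$ (where the flux vanishes), and laterally by $\mathcal{H}^+_{\geq v_0}$ and $\mathcal{I}^+_{\geq u_0}$, together with the horizon and null-infinity flux identities of Lemma \ref{lm:tenconsv}, yields
\begin{equation*}
\int_{\Sigma_0} {\mathbf{J}}^T[\Omega^{\alpha}\psi]\cdot \mathbf{n}_0\,d\mu_0=\int_{\mathcal{H}^+_{\geq v_0}} (L\Omega^{\alpha}\phi)^2\,d\omega dv+\int_{\mathcal{I}^+_{\geq u_0}} (\Lbar\Omega^{\alpha}\phi)^2\,d\omega du.
\end{equation*}
Summing over $|\alpha|=1$ and invoking \eqref{eq:angmomentineq1} of Lemma \ref{lm:angmom}, which identifies $\sum_{|\alpha|=1}(\Omega^{\alpha}f)^2$ with $|\snabla_{\s^2}f|^2$ after integration over $\s^2$ (using that $L$ and $\Lbar$ commute with $\Omega^{\alpha}$), converts the right-hand side into precisely the additional angular-derivative fluxes $\int_{\mathcal{H}^+_{\geq v_0}}|\snabla_{\s^2}L\phi|^2\,d\omega dv$ and $\int_{\mathcal{I}^+_{\geq u_0}}|\snabla_{\s^2}\Lbar\phi|^2\,d\omega du$ that appear on the right of \eqref{eq:mainbackweestv2}.

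Finally I would add this identity to \eqref{eq:mainbackweest}: the left-hand side then carries the extra term $\sum_{|\alpha|=1}\int_{\Sigma_0}{\mathbf{J}}^T[\Omega^{\alpha}\psi]\cdot \mathbf{n}_0\,d\mu_0$, while the right-hand side is enlarged only by the stated $|\snabla_{\s^2}L\phi|^2$ angular terms, giving \eqref{eq:mainbackweestv2}. I do not expect any genuine obstacle here — the corollary is a soft consequence of \eqref{eq:mainbackweest} and exact $T$-energy conservation. The one point requiring mild care is the vanishing of the $\Sigma_{\tau_{\infty}}$ boundary term in the conservation law; this is guaranteed by the compact-support assumption on the scattering data standing throughout Section \ref{sec:bacweest}, and it is exactly this feature that makes the argument loss-free (no Morawetz estimate, and hence no derivative loss at the photon sphere, is needed), in contrast with the forwards estimate of Proposition \ref{prop:integrateddecay}.
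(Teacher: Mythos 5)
Your proposal is correct and follows essentially the same route as the paper, whose proof is precisely to add to \eqref{eq:mainbackweest} the degenerate energy identity of Lemma \ref{lm:tenconsv} applied to the commuted solutions $\Omega^{\alpha}\psi$ with $|\alpha|=1$, converting $\sum_{|\alpha|=1}(L\Omega^{\alpha}\phi)^2$ and $\sum_{|\alpha|=1}(\Lbar\Omega^{\alpha}\phi)^2$ into the angular-gradient fluxes via Lemma \ref{lm:angmom}. Your additional remarks on the vanishing of the $\Sigma_{\tau_{\infty}}$ boundary term and the absence of any Morawetz-induced derivative loss are accurate and consistent with the paper's setting.
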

\begin{proof}
We apply \eqref{eq:mainbackweestv2} together with Lemma \ref{lm:tenconsv} applied to $\Omega^{\alpha}\psi$, with $|\alpha|=1$.
\end{proof}

\subsection{Higher-order estimates}
\label{sec:backwhoest}

By commuting \eqref{eq:eqphi} with $L^k$, we arrive at
\begin{equation}
\label{eq:eqphicommLk}
\underline{L}L(L^k\phi)=(r^{-2}+O(r^{-3}))\slashed{\Delta}_{\s^2}L^k\phi+\sum_{j=0}^kO(r^{-3-j})L^{k-j}\phi+\sum_{j=1}^{k}O(r^{-2-j})\slashed{\Delta}_{\s^2}L^{k-j}\phi.
\end{equation}
Similarly, we can commute \eqref{eq:eqphi} with $\underline{L}^k$ to obtain:
\begin{equation}
\label{eq:eqphicommLbark}
\underline{L}L(\underline{L}^k\phi)=[M^{-4}(r-M)^2+O((r-M)^{3}]\slashed{\Delta}_{\s^2}\underline{L}^k\phi+\sum_{j=0}^kO((r-M)^{3+j})\underline{L}^{k-j}\phi+\sum_{j=1}^kO((r-M)^{2+j})\slashed{\Delta}_{\s^2}\underline{L}^{k-j}\phi.
\end{equation}

\begin{proposition}
Fix $k\in \N_0$. Let $2k\leq p\leq 2+2k$, then we can estimate for all $0\leq \tau_1\leq \tau_2\leq \tau_{\infty}$:
\begin{equation}
\label{eq:hobackhierarchy}
\begin{split}
\int_{{\underline{N}}_{\tau_1}}&(r-M)^{-p}(\underline{L}^{k+1}\phi)^2\,d\omega dr+\int_{{N}_{\tau_1}}r^{p}(L^{k+1}\phi)^2\,d\omega dv\\
\leq&\: C\sum_{j=0}^k\sum_{|\alpha|\leq j}\int_{{{\underline{\mathcal{A}}}}^{\tau_2}_{\tau_1}}(r-M)^{1-p+2j}(\underline{L}^{k+1-j}\Omega^{\alpha}\phi)^2+(2-p)(r-M)^{3-p+2j}|\snabla_{\s^2} \underline{L}^{k-j}\Omega^{\alpha} \phi|^2\,d\omega du d\tau\\
&+C\sum_{j=0}^k\sum_{|\alpha|\leq j}\int_{{{\mathcal{A}}}^{\tau_2}_{\tau_1}}r^{p-1-2j}(L^{k+1-j}\Omega^{\alpha}\phi)^2+(2-p)r^{p-3-2j}|\snabla_{\s^2} L^{k-j}\Omega^{\alpha}\phi|^2\,d\omega dv d\tau\\
&+C\sum_{j=0}^k\sum_{|\alpha|\leq j}\int_{{\underline{N}}_{\tau_2}}(r-M)^{-p+2j}(\underline{L}^{k+1-j}\Omega^{\alpha}\phi)^2\,d\omega du+C\sum_{j=0}^k\sum_{|\alpha|\leq j}\int_{{N}_{\tau_2}}r^{p-2j}(L^{k+1-j}\Omega^{\alpha}\phi)^2\,d\omega dv\\
&+C\sum_{j=0}^k\sum_{|\alpha|\leq j}\int_{\mathcal{H}^+\cap\{\tau_1\leq \tau \leq \tau_2\}}(r-M)^{2-p-2j}|\snabla_{\s^2} \underline{L}^{k-j}\Omega^{\alpha}\phi|^2\,d\omega dv\\
&+C\sum_{j=0}^k\sum_{|\alpha|\leq j}\int_{\mathcal{I}^+\cap\{\tau_1\leq \tau \leq \tau_2\}}r^{p-2-2j}|\snabla_{\s^2} L^{k-j}\Omega^{\alpha}\phi|^2\,d\omega du\\
&+C\sum_{j=0}^k\sum_{|\alpha|\leq j}\int_{\mathcal{H}^+\cap\{\tau_1\leq \tau \leq \tau_2\}}{\mathbf{J}}^T[T^{k-j}\Omega^{\alpha}\psi]\cdot L \,d\omega dv+C\int_{\mathcal{I}^+\cap\{\tau_1\leq \tau \leq \tau_2\}}r^2 {\mathbf{J}}^T[T^{k-j}\Omega^{\alpha} \psi]\cdot \underline{L}\, d\omega du\\
&+C\sum_{j=0}^k\int_{\Sigma_{\tau_2}}{\mathbf{J}}^T[T^j\psi]\cdot \mathbf{n}_{\Sigma_{\tau_2}}\,d\mu_{\Sigma_{\tau_2}}.
\end{split}
\end{equation}
\end{proposition}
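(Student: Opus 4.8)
The plan is to run the weighted multiplier argument of Proposition \ref{prop:backwrp} with $\phi$ replaced by its $L$- and $\underline{L}$-differentiated versions, using the commuted equations \eqref{eq:eqphicommLk} and \eqref{eq:eqphicommLbark} in place of \eqref{eq:eqphi}. In the region $\{r\geq r_{\mathcal{I}}\}$ I would multiply \eqref{eq:eqphicommLk} by $2r^p L^{k+1}\phi$ and rewrite the left-hand side as $\underline{L}(r^p (L^{k+1}\phi)^2)+\tfrac{p}{2}Dr^{p-1}(L^{k+1}\phi)^2$. The principal angular contribution $2r^{p-2}L^{k+1}\phi\,\slashed{\Delta}_{\s^2}L^k\phi$ is treated exactly as for $k=0$: one integration by parts on $\s^2$ together with $\snabla_{\s^2}L^{k+1}\phi\cdot\snabla_{\s^2}L^k\phi=\tfrac12 L|\snabla_{\s^2}L^k\phi|^2$ produces the $L$-divergence $L(\tfrac12 r^{p-2}|\snabla_{\s^2}L^k\phi|^2)$, a spherical divergence, and the bulk term $\tfrac12(2-p)r^{p-3}|\snabla_{\s^2}L^k\phi|^2$. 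Near the horizon I would carry out the mirror computation with \eqref{eq:eqphicommLbark} multiplied by $2(r-M)^{-p}\underline{L}^{k+1}\phi$, interchanging the roles of $L$/$\underline{L}$ and $r$/$(r-M)^{-1}$, with the coefficient $M^{-4}(r-M)^2$ playing the role of $r^{-2}$; this is the near-horizon half of the argument in Lemma 6.3 of \cite{paper4}.

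The genuinely new feature relative to $k=0$ is the presence of the lower-order sums on the right of \eqref{eq:eqphicommLk}, namely $\sum_{j=0}^k O(r^{-3-j})L^{k-j}\phi$ and $\sum_{j=1}^k O(r^{-2-j})\slashed{\Delta}_{\s^2}L^{k-j}\phi$ (and their horizon counterparts in \eqref{eq:eqphicommLbark}). After multiplying by $2r^p L^{k+1}\phi$, I would estimate the first family by Cauchy--Schwarz as $\epsilon\, r^{p-1}(L^{k+1}\phi)^2+\epsilon^{-1} r^{p-5-2j}(L^{k-j}\phi)^2$, absorbing the first piece into the principal bulk term; the second piece carries a weight below the prescribed bulk weight $r^{p-1-2(j+1)}(L^{k-j}\phi)^2$, and for the lowest $L$-orders (where no such bulk term is available) it is reduced to a $\mathbf{J}^T$ flux via the Hardy inequalities of Lemma \ref{lm:hardy}. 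For the second family I would Cauchy--Schwarz directly against $L^{k+1}\phi$ to obtain $\epsilon\, r^{p-1}(L^{k+1}\phi)^2+\epsilon^{-1}r^{p-3-2j}|\slashed{\Delta}_{\s^2}L^{k-j}\phi|^2$, then invoke Lemma \ref{lm:angmom} to bound $|\slashed{\Delta}_{\s^2}L^{k-j}\phi|^2$ by $\sum_{|\alpha|=1}|\snabla_{\s^2}\Omega^\alpha L^{k-j}\phi|^2$; this reproduces precisely the angular bulk term $r^{p-3-2j}|\snabla_{\s^2}L^{k-j}\Omega^\alpha\phi|^2$ with $|\alpha|\leq j$ on the right of \eqref{eq:hobackhierarchy}. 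Since $\Omega^\alpha$ commutes with $\square_g$, the commuted equations remain valid after applying $\Omega^\alpha$, so no new structure appears. I would localize with a cut-off $\chi$ exactly as in Proposition \ref{prop:backwrp}, so that the resulting zeroth-order and finite-$r$ remainders $R_\chi[\,\cdot\,]$ are controlled by the backwards Morawetz estimate of Proposition \ref{prop:backwmorawetz}, commuted with $T^{k-j}\Omega^\alpha$; these account for the $\mathbf{J}^T[T^{k-j}\Omega^\alpha\psi]$ and $\mathbf{J}^T[T^j\psi]$ terms on the right-hand side.

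Assembling the pointwise identity and integrating over the slab bounded by $\Sigma_{\tau_1}$, $\Sigma_{\tau_2}$ and the null segments $N_\tau,\underline{N}_\tau$ truncated at $\mathcal{H}^+$ and $\mathcal{I}^+$, the $\underline{L}$-divergence yields the flux $\int_{N_{\tau_1}} r^p(L^{k+1}\phi)^2$ on the left and its $N_{\tau_2}$ counterpart on the right, while the $L$-divergence and the spherical divergence generate the $\mathcal{I}^+$ angular boundary terms; the horizon computation contributes the analogous $\underline{N}$ and $\mathcal{H}^+$ pieces, giving \eqref{eq:hobackhierarchy}. The main obstacle I anticipate is entirely one of bookkeeping: one must verify that \emph{every} error term produced by the lower-order corrections in \eqref{eq:eqphicommLk}--\eqref{eq:eqphicommLbark}, after the Cauchy--Schwarz splittings and spherical integrations by parts, lands with a weight no larger than the $r^{p-1-2j}$, $(r-M)^{1-p+2j}$ (and corresponding angular) weights prescribed on the right of \eqref{eq:hobackhierarchy} and with angular-derivative count at most $j$, and in particular that the highest-order piece is always the one absorbed into the principal bulk term rather than the one left behind.
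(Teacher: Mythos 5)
Your proposal is correct and follows essentially the same route as the paper, whose own proof is a one-line statement that one repeats the argument of Proposition \ref{prop:backwrp} with $\phi$ replaced by $L^k\phi$ (near infinity) and $\underline{L}^k\phi$ (near the horizon) using the commuted equations \eqref{eq:eqphicommLk} and \eqref{eq:eqphicommLbark}. Your write-up simply makes explicit the bookkeeping of the lower-order terms (Cauchy--Schwarz absorption, Hardy, Lemma \ref{lm:angmom}, and the commuted backwards Morawetz estimate) that the paper leaves implicit.
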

\begin{proof}
The proof is a straightforward generalisation of the proof of Proposition \ref{prop:backwrp}: we repeat the steps in the proof of Proposition \ref{prop:backwrp}, but we replace $\phi$ with either $L^k\phi$ (when $\{r\geq r_{\mathcal{I}}\}$) or $\underline{L}^k\phi$ (when $\{r\leq r_{\mathcal{H}}\}$), and we use \eqref{eq:eqphicommLk} and \eqref{eq:eqphicommLbark}.
\end{proof}
\begin{proposition}
\label{prop:mainbackweestho}
Let $n\in \N_0$ and let $\psi$ be a solution to \eqref{eq:waveequation} such that $\psi|_{\Sigma_{\tau_{\infty}}}=0$ and  $n_{\tau_{\infty}}\psi|_{\Sigma_{\tau_{\infty}}}=0$ for some $\tau_{\infty}<\infty$. Then there exists a constant $C(M,\Sigma, r_{\mathcal{I}},r_{\mathcal{H}},n)>0$ such that
\begin{equation}
\label{eq:mainbackweestho}
\begin{split}
\sum_{j=0}^1&\sum_{m+k+|\alpha|\leq n}\int_{{N}_{\tau}} r^{2+2k} (L^k T^{m+j}\Omega^{\alpha} \phi)^2\,d\omega dv+ \int_{{\underline{N}}_{\tau}} (r-M)^{-2-2k}(\underline{L}^kT^{m+j}\Omega^{\alpha}\phi)^2\,d\omega du\\
\leq &\:C\sum_{j=0}^1\sum_{|\alpha|+m\leq n}\int_{\mathcal{I}^+_{\geq u_{\mathcal{I}^+}(\tau)} } u^{2-j+2m}(\partial_u T^{m+j} \Omega^{\alpha} \phi)^2+|\snabla_{\s^2}\Omega^{\alpha}T^m\phi|^2\,d\omega du\\
&+ C\sum_{j=0}^1\sum_{|\alpha|+m\leq n}\int_{\mathcal{H}^+_{\geq v_{\mathcal{H}^+}(\tau)} } v^{2-j+2m}(\partial_v T^{m+j} \Omega^{\alpha} \phi)^2+|\snabla_{\s^2}\Omega^{\alpha}T^m\phi|^2\,d\omega dv.
 \end{split}
\end{equation}
\end{proposition}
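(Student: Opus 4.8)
The plan is to establish \eqref{eq:mainbackweestho} by induction on $n$, running exactly the bootstrap of Proposition \ref{prop:mainbackweest} but with the $k=0$ hierarchy \eqref{eq:backhierarchy} upgraded to the higher-order backwards hierarchy \eqref{eq:hobackhierarchy}. The base case $n=0$ is supplied by \eqref{eq:mainbackweest} of Proposition \ref{prop:mainbackweest} (together with the Hardy inequalities of Lemma \ref{lm:hardy} to handle the lowest-order weighted terms, and Corollary \ref{cor:mainbackweestv2} for the angular-derivative contributions). Throughout, I would exploit the hypothesis $\psi|_{\Sigma_{\tau_\infty}}=0$, $\mathbf{n}_{\tau_\infty}\psi|_{\Sigma_{\tau_\infty}}=0$ by always choosing $\tau_2=\tau_\infty$ in \eqref{eq:hobackhierarchy}; this annihilates every ``future'' boundary term on $N_{\tau_2}$, $\underline{N}_{\tau_2}$ and $\Sigma_{\tau_2}$, precisely the analogue of the vanishing of the data along $C_{u_2}$ in the Minkowski model of Section \ref{intro:backwest}.

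For the inductive step I would fix the top order $k+m+|\alpha|$ and apply \eqref{eq:hobackhierarchy} with $p$ stepped successively from $2k$ up to $2+2k$, each application raising the $r$-weight (resp.\ $(r-M)^{-1}$-weight) of the controlled flux of $L^{k+1}\phi$ (resp.\ $\underline{L}^{k+1}\phi$) by one power. The commutation identities \eqref{eq:eqphicommLk} and \eqref{eq:eqphicommLbark} show that $\underline{L}L(L^k\phi)$ and $\underline{L}L(\underline{L}^k\phi)$ reduce, to leading order, to angular Laplacians of $L^k\phi$, $\underline{L}^k\phi$ plus strictly lower-order terms; via Lemma \ref{lm:angmom} these angular terms $|\snabla_{\s^2}L^{k-j}\Omega^{\alpha}\phi|^2$, $|\snabla_{\s^2}\underline{L}^{k-j}\Omega^{\alpha}\phi|^2$ are absorbed either into the next-lower level of the hierarchy or into the inductive hypothesis applied to $\Omega^{\alpha}\psi$. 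Where $T$-derivatives must be traded for $L$- and $\underline{L}$-derivatives I would use the wave equation in the commuted form \eqref{eq:maineqhoradfield}, exactly as in the forwards Proposition \ref{prop:hoedecay}.

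The growing weights $v^{2-j+2m}$ and $u^{2-j+2m}$ on the right-hand side of \eqref{eq:mainbackweestho} are generated as in Proposition \ref{prop:mainbackweest}: each pass through \eqref{eq:hobackhierarchy} followed by Lemma \ref{lm:tenconsv} leaves on the right a $\tau$-integral of a flux along $\mathcal{H}^+$ or $\mathcal{I}^+$, and iterating $m+1$ such integrations and invoking Lemma \ref{lm:integralsandweights} converts the nested time-integrals into the stated polynomial time-weights. The pointwise decay of the fluxes required to apply Lemma \ref{lm:integralsandweights} is available because we work with solutions arising from the compactly supported scattering data of Proposition \ref{prop:mainpropbackdef}.

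I expect the main obstacle to be the \emph{bad sign} of the spacetime bulk integrals over $\mathcal{A}^{\tau_2}_{\tau_1}$ and $\underline{\mathcal{A}}^{\tau_2}_{\tau_1}$ in \eqref{eq:hobackhierarchy}, which in the backwards direction cannot be discarded. Controlling them forces a careful downward bootstrap through the hierarchy in $p$, terminating at $p=2k$ where the bulk couples to the plain $T$-energy and is closed off by the backwards Morawetz estimate of Proposition \ref{prop:backwmorawetz}. The delicate part is the bookkeeping: one must interlock this downward bootstrap with the successive time-integrations so that the $r$- and $(r-M)$-weights match those in \eqref{eq:mainbackweestho}, while verifying that the only derivatives lost are the angular ones already accounted for on the right-hand side (the backwards Morawetz estimate, unlike its forwards counterpart, not degenerating at the photon sphere).
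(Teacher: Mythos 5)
Your proposal is correct and follows essentially the same route as the paper: the paper's proof is an induction on $n$ whose step applies the higher-order backwards hierarchy \eqref{eq:hobackhierarchy} at the levels $p=2k+1$ and $p=2k+2$, converts $L^2$- and $\underline{L}^2$-derivatives into $LT$- and $\underline{L}T$-derivatives plus angular terms via \eqref{eq:TconvLLbar1}--\eqref{eq:TconvLLbar2} (your use of \eqref{eq:maineqhoradfield} is equivalent), closes the bulk terms with the lower-order estimate \eqref{eq:backwjzero} and Lemma \ref{lm:tenconsv}, and generates the polynomial time-weights from the nested $\tau$-integrations via Lemma \ref{lm:integralsandweights}, exactly as you describe. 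The only cosmetic difference is that you step $p$ from $2k$ upward while the paper starts the bootstrap at $p=2k+1$, the $p=2k$ level being subsumed in the inductive hypothesis.
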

\begin{proof}
We first consider the $n=1$ case. Note that by \eqref{eq:backhierarchy} with $k=1$ and $p=3$:
\begin{equation}
\label{eq:backwn1p3}
\begin{split}
\int_{{N}_{\tau}} r^3 (L^2 \phi)^2\,d\omega dv+ \int_{{\underline{N}}_{\tau}}& (r-M)^{-3}(\underline{L}^2\phi)^2\,d\omega du\leq C\sum_{|\alpha|\leq 1}\int_{\tau}^{\infty} \int_{{N}_{\tau'}} r^2(L^2\phi)^2+ (L \Omega^{\alpha} \phi)^2+ r^{-2}|\snabla_{\s^2} \Omega^{\alpha}\phi|^2\, d\omega dvdu\\
&+C\sum_{|\alpha|\leq 1}\int_{\tau}^{\infty} \int_{\underline{N}_{\tau'}} (r-M)^{-2}(\underline{L}^2\phi)^2+ (\underline{L} \Omega^{\alpha} \phi)^2+ (r-M)^2|\snabla_{\s^2} \Omega^{\alpha}\phi|^2\, d\omega dudv\\
&+ C \sum_{|\alpha|+m\leq 1}\int_{\mathcal{I}^+_{\geq u_{\mathcal{I}^+}(\tau)} } (\partial_u T^m \Omega^{\alpha} \phi)^2\,du+ C\sum_{|\alpha|+m\leq 1}\int_{\mathcal{H}^+_{\geq v_{\mathcal{H}^+}(\tau)} } (\partial_v T^m \Omega^{\alpha} \phi)^2\, d\omega dv\\
\stackrel{\textnormal{\eqref{eq:TconvLLbar1} and \eqref{eq:TconvLLbar2}}}{\leq}&\: C\sum_{|\alpha|\leq 1}\int_{\tau}^{\infty} \int_{{N}_{\tau'}} r^2(LT\phi)^2+ (L \Omega^{\alpha} \phi)^2+ r^{-2}|\snabla_{\s^2} \Omega^{\alpha}\phi|^2+r^{-4}\phi^2\, d\omega dvdu\\
+C\sum_{|\alpha|\leq 1}\int_{\tau}^{\infty}& \int_{\underline{N}_{\tau'}} (r-M)^{-2}(\underline{L}T\phi)^2+ (\underline{L} \Omega^{\alpha} \phi)^2+ (r-M)^2|\snabla_{\s^2} \Omega^{\alpha}\phi|^2+(r-M)^4 \phi^2\, d\omega dudv\\
&+ C\sum_{|\alpha|+m\leq 1}\int_{\mathcal{I}^+_{\geq u_{\mathcal{I}^+}(\tau)} } (\partial_u T^m \Omega^{\alpha} \phi)^2\,du+ \sum_{|\alpha|+m\leq 1}\int_{\mathcal{H}^+_{\geq v_{\mathcal{H}^+}(\tau)} } (\partial_v T^m \Omega^{\alpha} \phi)^2\, d\omega dv\\
\stackrel{\textnormal{\eqref{eq:backwjzero}, Lemma \ref{lm:integralsandweights} and  Lemma \ref{lm:tenconsv}}}{\leq}&\: C\sum_{|\alpha|+m\leq 1}\int_{\mathcal{I}^+_{\geq u_{\mathcal{I}^+}(\tau)} } u^{1+2m}(\partial_u T^m \Omega^{\alpha} \phi)^2\,du+ C\sum_{|\alpha|+m\leq 1}\int_{\mathcal{H}^+_{\geq v_{\mathcal{H}^+}(\tau)} } v^{1+2m}(\partial_v T^m \Omega^{\alpha} \phi)^2\, d\omega dv.
\end{split}
\end{equation}

Now, we apply  \eqref{eq:backhierarchy} with $k=1$ and $p=4$:
\begin{equation*}
\begin{split}
\int_{{N}_{\tau}}& r^4 (L^2 \phi)^2\,d\omega dv+ \int_{{\underline{N}}_{\tau}} (r-M)^{-4}(\underline{L}^2\phi)^2\,d\omega du\leq C\sum_{|\alpha|\leq 1}\int_{\tau}^{\infty} \int_{{N}_{\tau'}} r^3(L^2\phi)^2+ r (L \Omega^{\alpha} \phi)^2\, d\omega dvdu\\
&+C\sum_{|\alpha|\leq 1}\int_{\tau}^{\infty} \int_{\underline{N}_{\tau'}} (r-M)^{-3}(\underline{L}^2\phi)^2+ (r-M)^{-1} (\underline{L} \Omega^{\alpha} \phi)^2\, d\omega dudv\\
&+ C\sum_{|\alpha|+m\leq 1}\int_{\mathcal{I}^+_{\geq u_{\mathcal{I}^+}(\tau)} } (\partial_u T^m \Omega^{\alpha} \phi)^2\,du+ C\sum_{|\alpha|+m\leq 1}\int_{\mathcal{H}^+_{\geq v_{\mathcal{H}^+}(\tau)} } (\partial_v T^m \Omega^{\alpha} \phi)^2\, d\omega dv\\
\stackrel{\eqref{eq:backwn1p3}}{\leq}&\:C\sum_{|\alpha|+m\leq 1}\int_{\mathcal{I}^+_{\geq u_{\mathcal{I}^+}(\tau)} } u^{2+2m}(\partial_u T^m \Omega^{\alpha} \phi)^2\,du+ C\sum_{|\alpha|+m\leq 1}\int_{\mathcal{H}^+_{\geq v_{\mathcal{H}^+}(\tau)} } v^{2+2m}(\partial_v T^m \Omega^{\alpha} \phi)^2\, d\omega dv.
\end{split}
\end{equation*}
By replacing $\phi$ on the left-hand side of \eqref{eq:backwn1p3} with $T^j\phi$ and applying Proposition \ref{prop:mainbackweest} to $T^m \Omega^{\alpha}\phi$, we therefore obtain:
\begin{equation*}
\begin{split}
\sum_{j=0}^1&\sum_{m+k+|\alpha|\leq 1}\int_{{N}_{\tau}} r^{2+2k-j} (L^{k+1} T^{m+j}\Omega^{\alpha} \phi)^2\,d\omega dv+ \int_{{\underline{N}}_{\tau}} (r-M)^{-2-2k+j}(\underline{L}^{k+1}T^{m+j}\Omega^{\alpha}\phi)^2\,d\omega du\\
\leq &\:C\sum_{j=0}^1\sum_{|\alpha|+m\leq 1}\int_{\mathcal{I}^+_{\geq u_{\mathcal{I}^+}(\tau)} } u^{2-j+2m}(\partial_u T^{m+j} \Omega^{\alpha} \phi)^2+|\snabla_{\s^2}\Omega^{\alpha}\phi|^2\, d\omega du\\
&+ C\sum_{j=0}^1\sum_{|\alpha|+m\leq 1}\int_{\mathcal{H}^+_{\geq v_{\mathcal{H}^+}(\tau)} } v^{2-j+2m}(\partial_v T^{m+j} \Omega^{\alpha} \phi)^2+|\snabla_{\s^2}\Omega^{\alpha}\phi|^2\, d\omega dv,
\end{split}
\end{equation*}
where we applied Proposition \ref{prop:mainbackweest} and Lemma \ref{lm:integralsandweights} to arrive at the final inequality.

The general $n$ case now follows easily via an inductive argument, where we apply \eqref{eq:backhierarchy} with $k=n$ and $p=2n+1$ and $p=2n+2$.
\end{proof}

Proposition \ref{prop:mainbackweestho} combined with Lemma \ref{lm:tenconsv} immediately implies the following:
\begin{corollary}
\label{cor:mainbackweesthov2}
Let $n\in \N_0$. Then there exists a constant $C(M,r_{\mathcal{I}},r_{\mathcal{H}},n)>0$ such that
\begin{equation}
\label{eq:mainbackweesthov2}
\begin{split}
\sum_{j=0}^1&\sum_{m+2k+2|\alpha|\leq 2n}\int_{{N}_{u_0}} r^{2+2k-j} (L^{k+1} T^{m+j}\Omega^{\alpha} \phi)^2\,d\omega dv+ \int_{{\underline{N}}_{v_0}} (r-M)^{-2-2k+j}(\underline{L}^{k+1}T^{m+j}\Omega^{\alpha}\phi)^2\,d\omega du\\
&+\sum_{\substack{m+2|\alpha|\leq 2n+2\\|\alpha|\leq n}} \int_{\Sigma_{0}} {\mathbf{J}}^T[T^m \Omega^{\alpha}\psi]\cdot \mathbf{n}_{0}\, d\mu_{0}\\
\leq &\:C\Bigg[\sum_{j=0}^2\sum_{2|\alpha|+2k+m\leq 2n}\int_{\mathcal{I}^+_{\geq u_0 }} u^{2+2k-j}(\Lbar T^{m+j} \Omega^{\alpha} \phi)^2+|\snabla_{\s^2}\Omega^{\alpha}T^m\phi|^2\,du\\
&+ \int_{\mathcal{H}^+_{\geq v_0}} v^{2+2k-j}(L T^{m+j} \Omega^{\alpha} \phi)^2+|\snabla_{\s^2}\Omega^{\alpha}T^m\phi|^2\,dv\Bigg].
 \end{split}
\end{equation}
\end{corollary}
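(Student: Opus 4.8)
The plan is to obtain \eqref{eq:mainbackweesthov2} directly from Proposition \ref{prop:mainbackweestho} by specialising the backward hierarchy to $\tau=0$ and then restoring the $\Sigma_0$-energy terms on the left-hand side through the conservation law of Lemma \ref{lm:tenconsv}. This mirrors the forward-direction passage from Proposition \ref{prop:hoedecay} to Corollary \ref{cor:hoedecayv2}, except that here the growing $u$- and $v$-weights are already produced by the backward hierarchy itself, so no additional integration by parts (Lemma \ref{lm:integralsandweights}) is required, which is why the corollary follows essentially immediately.

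First I would apply \eqref{eq:mainbackweestho} with $\phi$ replaced by $T^m\Omega^\alpha\phi$ over the admissible multi-indices and sum the resulting estimates, exactly as in the proof of Corollary \ref{cor:hoedecayv2}; this assembles the full index range $m+2k+2|\alpha|\le 2n$ of \eqref{eq:mainbackweesthov2} out of the per-order estimate \eqref{eq:mainbackweestho}. Setting $\tau=0$, and using that the null legs of $\Sigma_0$ are $N_{u_0}$ and $\underline{N}_{v_0}$ while $u_{\mathcal{I}^+}(0)=u_0$ and $v_{\mathcal{H}^+}(0)=v_0$, the left-hand side becomes the weighted characteristic fluxes over $N_{u_0}$ and $\underline{N}_{v_0}$ appearing in the first line of \eqref{eq:mainbackweesthov2}, bounded by the weighted scattering-data integrals over $\mathcal{H}^+_{\ge v_0}$ and $\mathcal{I}^+_{\ge u_0}$ on the right.

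It remains to add the flux sum $\sum \int_{\Sigma_0}\mathbf{J}^T[T^m\Omega^\alpha\psi]\cdot\mathbf{n}_0\,d\mu_0$, over $m+2|\alpha|\le 2n+2$, $|\alpha|\le n$, to the left-hand side. Since $T$ and each $\Omega_i$ commute with $\square_g$, the quantity $T^m\Omega^\alpha\psi$ solves \eqref{eq:waveequation} and vanishes, together with its normal derivative, on $\Sigma_{\tau_\infty}$. The current $\mathbf{J}^T[T^m\Omega^\alpha\psi]$ is divergence-free by Lemma \ref{lm:tenconsv}, so the divergence theorem in the region bounded to the past by $\Sigma_0$ and to the future by $\mathcal{H}^+_{\ge v_0}\cup\mathcal{I}^+_{\ge u_0}\cup\Sigma_{\tau_\infty}$ — in which the $\Sigma_{\tau_\infty}$ contribution drops out — together with the boundary identities of Lemma \ref{lm:tenconsv} gives
\[
\int_{\Sigma_0}\mathbf{J}^T[T^m\Omega^\alpha\psi]\cdot\mathbf{n}_0\,d\mu_0=\int_{\mathcal{H}^+_{\ge v_0}}(LT^m\Omega^\alpha\phi)^2\,d\omega dv+\int_{\mathcal{I}^+_{\ge u_0}}(\Lbar T^m\Omega^\alpha\phi)^2\,d\omega du.
\]
Because $v\ge v_0>0$ and $u\ge u_0>0$, I would insert the favourable weights $1\le u_0^{-(2-j)}u^{2-j}$ and $1\le v_0^{-(2-j)}v^{2-j}$ for $j\in\{0,1,2\}$, so that each of these fluxes is dominated by a constant multiple of the $k=0$ weighted data integrals already present on the right of \eqref{eq:mainbackweesthov2} after the relabelling $m\mapsto m+j$. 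Summing and combining with the previous step yields \eqref{eq:mainbackweesthov2}.

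I expect the only genuine work to be bookkeeping: verifying that the substitution $\phi\mapsto T^m\Omega^\alpha\phi$ and the relabelling of $(m,k,|\alpha|,j)$ reproduce exactly the weight powers $u^{2+2k-j}$, $v^{2+2k-j}$ and the angular terms $|\snabla_{\s^2}\Omega^\alpha T^m\phi|^2$ on the right-hand side, and that the two constraint sets $m+2k+2|\alpha|\le 2n$ and $m+2|\alpha|\le 2n+2$, $|\alpha|\le n$ are consistently covered. No new analytic estimate beyond Proposition \ref{prop:mainbackweestho} and Lemma \ref{lm:tenconsv} enters.
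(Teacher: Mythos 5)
Your proposal is correct and follows exactly the route the paper takes: the paper offers no written proof beyond the remark that the corollary is "Proposition \ref{prop:mainbackweestho} combined with Lemma \ref{lm:tenconsv}," and your argument — specialising the backward hierarchy to $\tau=0$ after substituting $\phi\mapsto T^m\Omega^{\alpha}\phi$, then restoring the $\Sigma_0$ flux via $T$-energy conservation and absorbing it into the weighted data integrals using $u\geq u_0>0$, $v\geq v_0>0$ — is precisely that argument made explicit. The only caveat is that the statement of Proposition \ref{prop:mainbackweestho} as printed ($L^k$, weight $r^{2+2k}$ without the $-j$) contains typos relative to what its proof actually establishes ($L^{k+1}$, weight $r^{2+2k-j}$); you are implicitly, and correctly, working with the latter.
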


We will complement \eqref{eq:mainbackweesthov2} in Corollary \ref{cor:mainbackweesthov2} with an estimate involving additional angular derivatives. The motivation for this comes from the energy estimates in Section \ref{sec:hoestasymflat}.

\begin{corollary}
\label{cor:mainbackweesthov3}
Let $n\in \N_0$. Then there exists a constant $C(M,r_{\mathcal{I}},r_{\mathcal{H}},n)>0$ such that
\begin{equation}
\begin{split}
\label{eq:mainbackweesthov3}
\sum_{j=0}^2&\sum_{2|\alpha|+2k+m\leq 2n} \Bigg[\int_{N_{u_0}} r^{2k+2-j}(L^{k+1}\Omega^{\alpha}T^{j+m}\phi)^2+r^{2k}|\snabla_{\s^2}L^{k+1}\Omega^{\alpha}T^{m}\phi|^2\,d\omega dv \\
&+ \int_{\underline{N}_{v_0}} (r-M)^{-2k-2+j}(\Lbar^{k+1}\Omega^{\alpha}T^{j+m}\phi)^2+(r-M)^{-2k}|\snabla_{\s^2}\Lbar^{k+1}\Omega^{\alpha}T^{m}\phi|^2\,d\omega du\Bigg]\\
&+\sum_{2|\alpha|+m\leq 2n+2}\int_{\Sigma_0} \mathbf{J}^T[\Omega^{\alpha}T^m\psi]\cdot \mathbf{n}_0\,d\mu_0\\
\leq &\:  C \sum_{j=0}^2\sum_{m+2k+2|\alpha|\leq 2n}\int_{ \mathcal{H}^+_{\geq v_0}} v^{2k+2-j} (L^{1+k+m+j} \Omega^{\alpha}\phi)^2+v^{2k} |\snabla_{\s^2}L^{k+m} \Omega^{\alpha}\phi|^2+v^{2k} |\snabla_{\s^2}L^{k+1+m} \Omega^{\alpha}\phi|^2\,d\omega dv\\
&+C\int_{ \mathcal{I}^+_{\geq u_0}} u^{2k+2-j}(\Lbar^{1+k+m+j}\Omega^{\alpha}\phi)^2+ u^{2k}|\snabla_{\s^2}\Lbar^{k+m} \Omega^{\alpha}\phi|^2+ u^{2k}|\snabla_{\s^2}\Lbar^{k+1+m} \Omega^{\alpha}\phi|^2\,d\omega du.
\end{split}
\end{equation}
\end{corollary}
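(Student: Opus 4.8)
The plan is to deduce \eqref{eq:mainbackweesthov3} directly from Corollary \ref{cor:mainbackweesthov2} by commuting the equation with the angular momentum operators, exactly in the spirit of the short arguments used to pass from Proposition \ref{prop:mainbackweest} to Corollary \ref{cor:mainbackweestv2}. The starting observation is that, by spherical symmetry, each $\Omega_i$ is a Killing vector field of $g_M$, so $\Omega^{\beta}$ commutes with $\square_g$; moreover, in the double-null coordinates $(u,v,\theta,\varphi)$ the $\Omega_i$ commute with $L=\partial_v$, with $\Lbar=\partial_u$, and hence with $T$. Thus if $\psi$ is the solution furnished by Proposition \ref{prop:mainpropbackdef}, then $\Omega^{\beta}\psi$ is again a solution of \eqref{eq:waveequation} with the same vanishing Cauchy data on $\Sigma_{\tau_{\infty}}$, so that estimate \eqref{eq:mainbackweesthov2} may be applied verbatim to $\Omega^{\beta}\psi$ for each $\beta$.

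First I would rewrite the new angular terms on the left-hand side of \eqref{eq:mainbackweesthov3} using Lemma \ref{lm:angmom}. Integrating \eqref{eq:angmomentineq1} over the spheres and commuting $\Omega^{\beta}$ past $L$, $\Lbar$ and $T$ gives
\begin{equation*}
\int_{\s^2}|\snabla_{\s^2}L^{k+1}\Omega^{\alpha}T^m\phi|^2\,d\omega=\sum_{|\beta|=1}\int_{\s^2}(L^{k+1}\Omega^{\alpha+\beta}T^m\phi)^2\,d\omega,
\end{equation*}
and similarly with $\Lbar$ replacing $L$. Hence the $\snabla_{\s^2}$ terms on the left are precisely the principal left-hand side terms of \eqref{eq:mainbackweesthov2} for the solution $\Omega^{\beta}\psi$, but carrying the weaker weights $r^{2k}$ and $(r-M)^{-2k}$. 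Since $r\geq r_{\mathcal{I}}>1$ on $N_{u_0}$ and $r-M$ is bounded on $\underline{N}_{v_0}$, one has $r^{2k}\lesssim r^{2+2k}$ and $(r-M)^{-2k}\lesssim (r-M)^{-2-2k}$, so these terms are dominated by the left-hand side of \eqref{eq:mainbackweesthov2} applied to $\Omega^{\beta}\psi$. Summing \eqref{eq:mainbackweesthov2} for $\psi$ together with \eqref{eq:mainbackweesthov2} for $\Omega^{\beta}\psi$, $|\beta|=1$, therefore reproduces every term on the left-hand side of \eqref{eq:mainbackweesthov3}; the enlarged $\Sigma_0$-flux range $2|\alpha|+m\leq 2n+2$ comes for free because passing to $\Omega^{\beta}\psi$ raises the admissible angular order by one.

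Next I would match the right-hand side. On $\mathcal{H}^+$ one has $T=L$ and on $\mathcal{I}^+$ one has $T=\Lbar$ for the traces of the radiation field, so the boundary quantities $L^{k+1}T^{m+j}\Omega^{\alpha+\beta}\phi|_{\mathcal{H}^+}$ and $\Lbar^{k+1}T^{m+j}\Omega^{\alpha+\beta}\phi|_{\mathcal{I}^+}$ produced by \eqref{eq:mainbackweesthov2} equal $L^{1+k+m+j}\Omega^{\alpha+\beta}\phi|_{\mathcal{H}^+}$ and $\Lbar^{1+k+m+j}\Omega^{\alpha+\beta}\phi|_{\mathcal{I}^+}$; this identity is what dissolves the apparent mismatch between the $L^{k+1}T^{m+j}$ occurring in the bulk and the $L^{1+k+m+j}$ occurring on the boundary. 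A second application of Lemma \ref{lm:angmom} in its $\beta$-summed form then converts these $\Omega^{\alpha+\beta}$ boundary terms into the angular boundary terms $|\snabla_{\s^2}L^{k+m}\Omega^{\alpha}\phi|^2$ and $|\snabla_{\s^2}L^{k+1+m}\Omega^{\alpha}\phi|^2$ (and their $\Lbar$-analogues) with the correct weights $v^{2k}$, $u^{2k}$ appearing on the right of \eqref{eq:mainbackweesthov3}, while the $\Omega$-free boundary terms of \eqref{eq:mainbackweesthov2} supply the $(L^{1+k+m+j}\Omega^{\alpha}\phi)^2$ and $(\Lbar^{1+k+m+j}\Omega^{\alpha}\phi)^2$ contributions directly. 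The $\Sigma_0$-flux terms appearing on the right would then be reabsorbed into the left exactly as in the proof of Corollary \ref{cor:mainbackweesthov2}, using Lemma \ref{lm:tenconsv}.

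The main obstacle I anticipate is purely combinatorial rather than analytic: I must check that, after the reindexing forced by $T=L$ on $\mathcal{H}^+$ and $T=\Lbar$ on $\mathcal{I}^+$ and by the single commutation with $\Omega^{\beta}$, each summation range $\{m+2k+2|\alpha|\leq 2n\}$ (and $\{2|\alpha|+m\leq 2n+2\}$ for the $\Sigma_0$ fluxes) closes up consistently and that no boundary term of angular or differential order exceeding the allowed budget is generated. Tracking these index shifts carefully — and confirming that the weight comparisons above remain valid across the transition region $r_{\mathcal{H}}\leq r\leq r_{\mathcal{I}}$, where the Morawetz estimate \eqref{eq:morawetz} absorbs the resulting error terms — is the only point requiring genuine care.
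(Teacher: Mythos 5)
Your proposal is correct and takes essentially the same route as the paper: Corollary \ref{cor:mainbackweesthov3} is stated there without an explicit proof, being understood (in the same pattern as Corollaries \ref{cor:fowdecayv2} and \ref{cor:mainbackweestv2}) as the result of applying \eqref{eq:mainbackweesthov2} to the angularly commuted solutions $\Omega^{\beta}\psi$ and converting between $\Omega^{\beta}$-sums and $\snabla_{\s^2}$-norms via Lemma \ref{lm:angmom}, exactly as you describe. Your identification of the key ingredients — commutation of $\Omega_i$ with $\square_g$, $L$, $\Lbar$, $T$; the weight comparisons $r^{2k}\lesssim r^{2+2k}$ on $N_{u_0}$ and $(r-M)^{-2k}\lesssim(r-M)^{-2-2k}$ on $\underline{N}_{v_0}$; and the identification of $T$ with $\partial_v$ (resp. $\partial_u$) on the traces at $\mathcal{H}^+$ (resp. $\mathcal{I}^+$) — together with your explicit flagging of the index bookkeeping as the only delicate point, matches the intended argument.
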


\subsection{Construction of the backwards evolution map}
\label{sec:bacwevomap}
In this section, we apply the uniform estimates derived in Sections \ref{sec:bacweest} and \ref{sec:backwhoest} to construct the backwards evolution maps $\mathscr{B}$ on appropriate energy spaces.

\begin{proposition}
\label{prop:BTenergy}
Let $({\underline{\Phi}},{\Phi}) \in C_{c}^{\infty}(\mathcal{H}^+_{\geq v_0})\oplus C_{c}^{\infty}(\mathcal{I}^+_{\geq u_0})$, then the corresponding solution $\psi$ to \eqref{eq:waveequation} satisfies
\begin{equation*}
(\psi|_{\Sigma_0},\mathbf{n}_{\Sigma_0}\psi|_{\Sigma_0\cap\{r_{\mathcal{H}}\leq r\leq r_{\mathcal{I}}\}})\in  \mathcal{E}^T_{\Sigma_0}
\end{equation*}
and
\begin{equation*}
||(\psi|_{\Sigma_0},\mathbf{n}_{\Sigma_0}\psi|_{\Sigma_0\cap\{r_{\mathcal{H}}\leq r\leq r_{\mathcal{I}}\}})||^2_{\mathcal{E}^T_{\Sigma_0}}=||\underline{\Phi}||_{\mathcal{E}^T_{\mathcal{H}^+_{\geq v_0}}}^2+||\Phi||_{\mathcal{E}^T_{\mathcal{I}^+_{\geq u_0}}}^2.
\end{equation*}
\end{proposition}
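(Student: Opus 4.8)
The identity to be established is a manifestation of $T$-energy conservation, so the plan is to apply the divergence theorem to the current $\mathbf{J}^T[\psi]$, which satisfies $\textnormal{div}\,\mathbf{J}^T[\psi]\equiv 0$ by Lemma \ref{lm:tenconsv}. Recall that the solution $\psi$ furnished by Proposition \ref{prop:mainpropbackdef} is smooth on $\mathcal{R}\cap J^-(\Sigma_{\tau_\infty})$, vanishes together with its normal derivative on $\Sigma_{\tau_\infty}$, and arises from scattering data supported in $\mathcal{H}^+_{v_0\leq v\leq v_\infty}$ and $\mathcal{I}^+_{u_0\leq u\leq u_\infty}$. By uniqueness, $\psi$ vanishes on $J^+(\Sigma_{\tau_\infty})\cap \mathcal{R}$, so the relevant spacetime region is $\mathcal{R}_{\tau_\infty}:=\mathcal{R}\cap J^-(\Sigma_{\tau_\infty})$, whose boundary consists of $\Sigma_0$, $\Sigma_{\tau_\infty}$, the finite horizon segment $\mathcal{H}^+_{v_0\leq v\leq v_\infty}$, and the finite part of future null infinity $\mathcal{I}^+_{u_0\leq u\leq u_\infty}$.

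First I would apply the divergence theorem in the truncated region $\mathcal{R}_{\tau_\infty}\cap\{v\leq V\}$, which is compact, for $V$ suitably large. Since $\textnormal{div}\,\mathbf{J}^T[\psi]=0$, the total flux out of the boundary vanishes; the flux through $\Sigma_{\tau_\infty}$ is zero because $\psi$ and its normal derivative vanish there. The remaining boundaries are $\Sigma_0\cap\{v\leq V\}$, the segment $\mathcal{H}^+_{v_0\leq v\leq v_\infty}$, and the ingoing null slice $I_V=\{v=V\}\cap\{u_0\leq u\leq u_\infty\}$, yielding (up to standard orientation bookkeeping, and using that $T$ is future-directed causal so the integrands are non-negative)
\begin{equation*}
\int_{\Sigma_0\cap\{v\leq V\}}\mathbf{J}^T[\psi]\cdot\mathbf{n}_0\,d\mu_0=\int_{\mathcal{H}^+_{v_0\leq v\leq v_\infty}}\mathbf{J}^T[\psi]\cdot L\,r^2\,d\omega dv+\int_{I_V}\mathbf{J}^T[\psi]\cdot\underline{L}\,r^2\,d\omega du.
\end{equation*}
The main obstacle is to show that the flux through $I_V$ converges, as $V\to\infty$, to the null-infinity flux $\int_{\mathcal{I}^+_{u_0\leq u\leq u_\infty}}\mathbf{J}^T[\psi]\cdot\underline{L}\,r^2\,d\omega du$. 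This is precisely where the qualitative radiation-field bounds \eqref{eq:qualboundhoradfield} of Proposition \ref{prop:mainpropbackdef} are used: the existence and finiteness of the limits of $\slashed{\nabla}_{\s^2}^j(r^2L)^k\underline{L}^l(r\psi)$ as $v\to\infty$ guarantee that $\underline{L}\phi$ and $\snabla_{\s^2}\phi$ attain finite limits at $\mathcal{I}^+$ and that the $I_V$-flux converges to the corresponding flux at $\mathcal{I}^+$. By monotone convergence the left-hand side converges to the full flux through $\Sigma_0$, and the compact $u$-support lets us pass to the limit, giving
\begin{equation*}
\int_{\Sigma_0}\mathbf{J}^T[\psi]\cdot\mathbf{n}_0\,d\mu_0=\int_{\mathcal{H}^+_{\geq v_0}}\mathbf{J}^T[\psi]\cdot L\,r^2\,d\omega dv+\int_{\mathcal{I}^+_{\geq u_0}}\mathbf{J}^T[\psi]\cdot\underline{L}\,r^2\,d\omega du.
\end{equation*}

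Finally I would identify each boundary flux with the claimed norm using Lemma \ref{lm:tenconsv}. On $\mathcal{H}^+$ we have $r=M$, so $\phi=r\psi$ restricts to $M\psi|_{\mathcal{H}^+}=\underline{\Phi}$, whence $\int_{\mathcal{H}^+}\mathbf{J}^T[\psi]\cdot L\,r^2\,d\omega dv=\int_{\mathcal{H}^+}(L\phi)^2\,d\omega dv=\int_{\mathcal{H}^+}(\partial_v\underline{\Phi})^2\,d\omega dv=\|\underline{\Phi}\|^2_{\mathcal{E}^T_{\mathcal{H}^+_{\geq v_0}}}$; likewise, using $\phi|_{\mathcal{I}^+}=\Phi$, we get $\int_{\mathcal{I}^+}\mathbf{J}^T[\psi]\cdot\underline{L}\,r^2\,d\omega du=\int_{\mathcal{I}^+}(\underline{L}\phi)^2\,d\omega du=\|\Phi\|^2_{\mathcal{E}^T_{\mathcal{I}^+_{\geq u_0}}}$. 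This establishes the stated equality and in particular shows the left-hand side is finite. Membership $(\psi|_{\Sigma_0},\mathbf{n}_{\Sigma_0}\psi|_{\Sigma_0\cap\{r_{\mathcal{H}}\leq r\leq r_{\mathcal{I}}\}})\in\mathcal{E}^T_{\Sigma_0}$ then follows because, again by \eqref{eq:qualboundhoradfield}, $r\psi$ extends smoothly to $\widehat{\Sigma}_0$ up to and including $\mathcal{I}^+$, so the pair lies in $C^\infty(\widehat{\Sigma}_0)\times C^\infty(\Sigma_0\cap\{r_{\mathcal{H}}\leq r\leq r_{\mathcal{I}}\})\subset\mathcal{E}^T_{\Sigma_0}$ and has finite $\mathcal{E}^T_{\Sigma_0}$-norm. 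The only point requiring care beyond routine bookkeeping is the convergence of the $I_V$-flux to the $\mathcal{I}^+$-flux, which is exactly the reason the qualitative bounds \eqref{eq:qualboundhoradfield} were built into Proposition \ref{prop:mainpropbackdef}.
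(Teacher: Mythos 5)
Your proposal is correct and follows essentially the same route as the paper, whose proof is simply a two-line citation of Proposition \ref{prop:mainpropbackdef} for the smoothness of $(\psi|_{\Sigma_0},\mathbf{n}_{\Sigma_0}\psi|_{\Sigma_0\cap\{r_{\mathcal{H}}\leq r\leq r_{\mathcal{I}}\}})$ and of Lemma \ref{lm:tenconsv} for the energy identity. You have merely unpacked the details implicit in that citation (the divergence theorem in the truncated region, the vanishing flux on $\Sigma_{\tau_\infty}$, and the passage to the limit $V\to\infty$ using \eqref{eq:qualboundhoradfield}), all of which is consistent with the paper's intent.
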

\begin{proof}
From Proposition \ref{prop:mainpropbackdef} it follows that $\psi|_{\Sigma_0}\in C^{\infty}(\overline{\Sigma})$ and $\mathbf{n}_{\Sigma_0}\psi|_{\Sigma_0\cap\{r_{\mathcal{H}}\leq r\leq r_{\mathcal{I}}\}}\in C^{\infty}(\Sigma_0\cap\{r_{\mathcal{H}}\leq r\leq r_{\mathcal{I}}\})$. The remaining statment follows from Lemma \ref{lm:tenconsv}.
\end{proof}

Using Proposition \ref{prop:BTenergy}, together with the standard general construction of the unique extensions of bounded linear operators to the completion of their domains, we can define the \emph{backwards evolution map} as follows:

\begin{definition}
\label{def:backwmap}
The backwards evolution map is the map $\mathscr{B}: C_{c}^{\infty}(\mathcal{H}^+_{\geq v_0})\oplus C_{c}^{\infty}(\mathcal{I}^+_{\geq u_0}) \to \mathcal{E}_T(\Sigma_0)$, such that
\begin{equation*}
\mathscr{B}({\underline{\Phi}},{\Phi})=(\psi|_{\Sigma_0},\mathbf{n}_{\Sigma_0}\psi|_{\Sigma_0\cap\{r_0\leq r\leq r_1\}}),
\end{equation*}
where $\psi$ is the unique solution to $\square_g\psi=0$ with $(M\psi|_{\mathcal{H}^+_{\geq v_0}},r\psi|_{\mathcal{I}^+_{\geq u_0}})=({\underline{\Phi}},{\Phi})$. The map $\mathscr{B}$ uniquely extends to a unitary linear operator, which we will also denote with $\mathscr{B}$:
\begin{equation*}
\mathscr{B}: \mathcal{E}^T_{\mathcal{H}^+}\oplus \mathcal{E}^T_{\mathcal{I}^+} \to\mathcal{E}^T_{\Sigma_0}.
\end{equation*}
\end{definition}

In the proposition below, we show that we can consider restriction of $\mathscr{B}$ to suitably weighted energy spaces.
\begin{proposition}
\label{prop:backwmapho}
Let $n\in \N_0$. The backwards evolution map $\mathscr{B}$ is a bounded linear operator from $C_{c}^{\infty}(\mathcal{H}^+_{\geq v_0})\oplus C_{c}^{\infty}(\mathcal{I}^+_{\geq u_0})$ to $\mathcal{E}_{n; \Sigma_0}$, which can uniquely be extended as as the bounded linear operator 
\begin{equation*}
\mathscr{B}_n: \mathcal{E}_{n; \mathcal{H}^+_{\geq v_0}}\oplus \mathcal{E}_{n; \mathcal{I}^+_{\geq u_0}}\to \mathcal{E}_{n; \Sigma_0}.
\end{equation*}
We moreover have that $\mathscr{B}_n=\mathscr{B}|_{ \mathcal{E}_{n; \mathcal{H}^+_{\geq v_0}}\oplus \mathcal{E}_{n; \mathcal{I}^+_{\geq u_0}}}$.
\end{proposition}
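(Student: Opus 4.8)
The plan is to reproduce, in the backwards direction, the short argument used for the forwards map in Proposition \ref{prop:fowbound}: combine the semi-global well-posedness of Proposition \ref{prop:mainpropbackdef} with a single uniform higher-order energy estimate, and then invoke the abstract extension of bounded operators to their completions. First I would fix compactly supported scattering data $(\underline{\Phi},\Phi)\in C_{c}^{\infty}(\mathcal{H}^+_{\geq v_0})\oplus C_{c}^{\infty}(\mathcal{I}^+_{\geq u_0})$ and let $\psi$ be the unique solution produced by Proposition \ref{prop:mainpropbackdef}. For the $\tau_{\infty}$ determined by the supports this $\psi$ vanishes together with its normal derivative on $\Sigma_{\tau_{\infty}}$, so the hypotheses of Proposition \ref{prop:mainbackweestho}, and hence those of Corollary \ref{cor:mainbackweesthov3}, are satisfied. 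By Definition \ref{def:backwmap} we have $\mathscr{B}(\underline{\Phi},\Phi)=(\psi|_{\Sigma_0},\mathbf{n}_{\Sigma_0}\psi|_{\Sigma_0\cap\{r_{\mathcal{H}}\leq r\leq r_{\mathcal{I}}\}})$, which by the regularity part of Proposition \ref{prop:mainpropbackdef} is a genuine element of $C^{\infty}(\widehat{\Sigma}_0)\times C^{\infty}(\Sigma_0\cap \{r_{\mathcal{H}}\leq r\leq r_{\mathcal{I}}\})$.

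The analytic content is already contained in Corollary \ref{cor:mainbackweesthov3}. Its left-hand side dominates $\|\mathscr{B}(\underline{\Phi},\Phi)\|_{\mathcal{E}_{n;\Sigma_0}}^2$ term by term; indeed it contains strictly more terms than the norm defining $\mathcal{E}_{n;\Sigma_0}$ (the flux sum runs over the larger index set $2|\alpha|+m\leq 2n+2$, and the null legs carry $j$ up to $2$ together with extra angular fluxes). Its right-hand side must then be shown comparable to $\|\underline{\Phi}\|^2_{\mathcal{E}_{n;\mathcal{H}^+_{\geq v_0}}}+\|\Phi\|^2_{\mathcal{E}_{n;\mathcal{I}^+_{\geq u_0}}}$. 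This matching is the only genuinely fiddly point and is what I would flag as the main hurdle: one converts the $\snabla_{\s^2}$-factors into $\Omega$-derivatives via Lemma \ref{lm:angmom} and reindexes the triple sum in $(m,k,\alpha)$ so that each weighted angular term $v^{2k}|\snabla_{\s^2}L^{k+m}\Omega^{\alpha}\phi|^2$ (and its $L^{k+1+m}$ companion, and the analogous terms on $\mathcal{I}^+$) is identified with a term already present in the first sum defining $\|\cdot\|_{\mathcal{E}_{n;\mathcal{H}^+_{\geq v_0}}}$ for a suitable choice of $j\in\{0,1,2\}$. Carrying this out produces a constant $C=C(M,\Sigma,r_{\mathcal{H}},r_{\mathcal{I}},n)$ with
\[
\|\mathscr{B}(\underline{\Phi},\Phi)\|_{\mathcal{E}_{n;\Sigma_0}}^2\leq C\left(\|\underline{\Phi}\|^2_{\mathcal{E}_{n;\mathcal{H}^+_{\geq v_0}}}+\|\Phi\|^2_{\mathcal{E}_{n;\mathcal{I}^+_{\geq u_0}}}\right).
\]
Since $\mathscr{B}(\underline{\Phi},\Phi)$ is a smooth pair of finite $\mathcal{E}_{n;\Sigma_0}$-norm, it lies in the completion $\mathcal{E}_{n;\Sigma_0}$ directly, so no analogue of Lemma \ref{lm:completionhorinf} is required on the $\Sigma_0$ side.

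Because $C_{c}^{\infty}(\mathcal{H}^+_{\geq v_0})\oplus C_{c}^{\infty}(\mathcal{I}^+_{\geq u_0})$ is by definition dense in the completion $\mathcal{E}_{n;\mathcal{H}^+_{\geq v_0}}\oplus \mathcal{E}_{n;\mathcal{I}^+_{\geq u_0}}$, the standard extension of a bounded linear operator from a dense subspace yields a unique bounded operator $\mathscr{B}_n:\mathcal{E}_{n;\mathcal{H}^+_{\geq v_0}}\oplus \mathcal{E}_{n;\mathcal{I}^+_{\geq u_0}}\to\mathcal{E}_{n;\Sigma_0}$ with the same operator bound. To establish the final identity $\mathscr{B}_n=\mathscr{B}|_{\mathcal{E}_{n;\mathcal{H}^+_{\geq v_0}}\oplus \mathcal{E}_{n;\mathcal{I}^+_{\geq u_0}}}$, I would use the continuous inclusions $\mathcal{E}_{n;\mathcal{H}^+_{\geq v_0}}\oplus \mathcal{E}_{n;\mathcal{I}^+_{\geq u_0}}\subset\mathcal{E}^T_{\mathcal{H}^+}\oplus\mathcal{E}^T_{\mathcal{I}^+}$ and $\mathcal{E}_{n;\Sigma_0}\subset\mathcal{E}^T_{\Sigma_0}$ noted after the definitions: both $\mathscr{B}_n$ and the restriction to the weighted domain of the $T$-level map $\mathscr{B}$ of Definition \ref{def:backwmap} are continuous maps into $\mathcal{E}^T_{\Sigma_0}$, and they agree on the dense subspace of compactly supported data, hence everywhere. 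The uniform-in-$\tau_{\infty}$ character of the estimates of Section \ref{sec:bacweest}, inherited by Corollary \ref{cor:mainbackweesthov3}, is exactly what makes this passage to non-compactly-supported data legitimate.
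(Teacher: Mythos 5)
Your overall architecture coincides with the paper's: smoothness of the restriction to $\Sigma_0$ from Proposition \ref{prop:mainpropbackdef}, one uniform higher-order backwards estimate for compactly supported scattering data, the abstract extension to the completion, and the identification $\mathscr{B}_n=\mathscr{B}|_{\mathcal{E}_{n;\mathcal{H}^+_{\geq v_0}}\oplus\mathcal{E}_{n;\mathcal{I}^+_{\geq u_0}}}$ via the continuous inclusions into the $T$-energy spaces. Those parts are fine. The gap is in the one analytic step you flagged as "fiddly": you invoke Corollary \ref{cor:mainbackweesthov3}, and the right-hand side of \eqref{eq:mainbackweesthov3} is \emph{not} controlled by $\|\underline{\Phi}\|^2_{\mathcal{E}_{n;\mathcal{H}^+_{\geq v_0}}}+\|\Phi\|^2_{\mathcal{E}_{n;\mathcal{I}^+_{\geq u_0}}}$. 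The obstruction is the family of weighted angular fluxes $v^{2k}|\snabla_{\s^2}L^{k+1+m}\Omega^{\alpha}\phi|^2$ (and the $\mathcal{I}^+$ analogues). Already for $n=0$ this is $|\snabla_{\s^2}L\underline{\Phi}|^2$: converting $\snabla_{\s^2}$ into a unit $\Omega^{\beta}$ via Lemma \ref{lm:angmom} gives $(L\Omega^{\beta}\underline{\Phi})^2$ with \emph{no} $v$-weight and \emph{one extra} angular derivative, and no choice of $j\in\{0,1,2\}$ in $\sum_j v^{2-j}(\partial_v^{1+j}\underline{\Phi})^2+|\snabla_{\s^2}\underline{\Phi}|^2$ produces such a term; nor can it be recovered by Hardy or by integrating by parts on $\s^2$ without paying two angular derivatives. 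For $k\geq 1$ the reindexing you propose ($j=0$, $k\mapsto k-1$, $|\alpha|\mapsto|\alpha|+1$) forces $m\mapsto m+1$ and lands at index $m+2k+2|\alpha|+1$, which exceeds $2n$ whenever the original index is saturated. This is exactly the distinction the paper is making between the norms $\mathcal{E}_{n;\mathcal{H}^{\pm}},\mathcal{E}_{n;\mathcal{I}^{\pm}}$ (which \emph{do} carry the extra terms $|\snabla_{\s^2}\partial_{v_\pm}^{k+1+m}\Omega^{\alpha}\underline{\Phi}|^2$ and are needed for the estimates near spacelike infinity, which is what Corollary \ref{cor:mainbackweesthov3} is built for) and the smaller norms $\mathcal{E}_{n;\mathcal{H}^+_{\geq v_0}},\mathcal{E}_{n;\mathcal{I}^+_{\geq u_0}}$ that define the domain of $\mathscr{B}_n$.

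The repair is immediate and is what the paper does: use Corollary \ref{cor:mainbackweesthov2} instead. The left-hand side of \eqref{eq:mainbackweesthov2} is term-for-term the norm $\|(\psi|_{\Sigma_0},\mathbf{n}_{\Sigma_0}\psi|_{\Sigma_0})\|^2_{\mathcal{E}_{n;\Sigma_0}}$ (same index sets, same weights, including the flux sum over $m+2|\alpha|\leq 2n+2$ with $|\alpha|\leq n$), and its right-hand side carries only the \emph{unweighted} angular fluxes $|\snabla_{\s^2}\Omega^{\alpha}T^m\phi|^2$, which are precisely the angular terms present in $\|\cdot\|_{\mathcal{E}_{n;\mathcal{H}^+_{\geq v_0}}}$ and $\|\cdot\|_{\mathcal{E}_{n;\mathcal{I}^+_{\geq u_0}}}$. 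With that substitution the rest of your argument goes through unchanged.
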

\begin{proof}
By Proposition \ref{prop:mainpropbackdef} it follows that the solution $\psi$ corresponding to $(\underline{\Phi},\Phi)\in C_{c}^{\infty}(\mathcal{H}^+_{\geq v_0})\oplus C_{c}^{\infty}(\mathcal{I}^+_{\geq u_0})$ satisfies $\phi|_{\Sigma_0}\in C^{\infty}(\widehat{\Sigma}_0)$ and $n_{\Sigma_0}\psi|_{\Sigma_0}\in C^{\infty}(\Sigma_0\cap\{r_{\mathcal{H}}\leq r\leq r_{\mathcal{I}}\})$. By Corollary \ref{cor:mainbackweesthov2} it follows moreover that
\begin{equation*}
||(\psi|_{\Sigma_0},\mathbf{n}_{\Sigma_0}\psi|_{\Sigma_0})||_{\mathcal{E}_{n; \Sigma_0}}^2\leq C ||\underline{\Phi}||_{\mathcal{E}_{n; \mathcal{H}^+_{\geq v_0}}}^2+C||\underline{\Phi}||_{\mathcal{E}_{n; \mathcal{I}^+_{\geq v_0}}}^2,
\end{equation*}
so $||\mathscr{B}||\leq C$. We can infer that, in particular, $(\psi|_{\Sigma_0},\mathbf{n}_{\Sigma_0}\psi|_{\Sigma_0})\in \mathcal{E}_{n; \Sigma_0}$. The map $\mathscr{B}$ extends uniquely to the completion $\mathcal{E}_{\mathcal{H}^+_{\geq v_0}}\oplus \mathcal{E}_{\mathcal{I}^+_{\geq u_0}}$ and satisfies $||\mathscr{B}||\leq \sqrt{C}$.
\end{proof}

\begin{corollary}
\label{cor:bijectivity}
The map  $\mathscr{F}: \mathcal{E}^T_{\Sigma_0} \to \mathcal{E}^T_{\mathcal{H}^+_{\geq v_0}}\oplus \mathcal{E}^T_{\mathcal{I}^+_{\geq u_0}}$ is a bijection with inverse $\mathscr{B}=\mathscr{F}^{-1}$ and for each $n\in \N_0$, the restrictions  $\mathscr{F}_n: \mathcal{E}_{n;\Sigma_0} \to \mathcal{E}_{n;\mathcal{H}^+_{\geq v_0}}\oplus \mathcal{E}_{n;\mathcal{I}^+_{\geq u_0}}$ are also bijections with inverses $\mathscr{B}_n=\mathscr{F}_n^{-1}$.
\end{corollary}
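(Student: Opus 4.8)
The plan is to establish the two composition identities $\mathscr{F}\circ\mathscr{B} = \mathrm{id}$ and $\mathscr{B}\circ\mathscr{F} = \mathrm{id}$ on the $T$-energy spaces first, and then transfer them to the higher-order spaces by restriction. I would begin at the level of smooth, compactly supported scattering data. Fix $(\underline{\Phi},\Phi)\in C_{c}^{\infty}(\mathcal{H}^+_{\geq v_0})\oplus C_{c}^{\infty}(\mathcal{I}^+_{\geq u_0})$ and let $\psi$ be the unique solution produced by Proposition \ref{prop:mainpropbackdef}, so that $\mathscr{B}(\underline{\Phi},\Phi) = (\psi|_{\Sigma_0},\mathbf{n}_{\Sigma_0}\psi|_{\Sigma_0\cap\{r_{\mathcal{H}}\leq r\leq r_{\mathcal{I}}\}})$; by part 1.) of that proposition this pair lies in $C^{\infty}(\widehat{\Sigma}_0)\times C^{\infty}(\Sigma_0\cap\{r_{\mathcal{H}}\leq r\leq r_{\mathcal{I}}\})$, the classical domain of $\mathscr{F}$. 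The forwards solution emanating from this Cauchy data is, by the uniqueness statement in Theorem \ref{thm:globexuni}, exactly the same $\psi$, whence $\mathscr{F}(\mathscr{B}(\underline{\Phi},\Phi)) = (r\psi|_{\mathcal{H}^+_{\geq v_0}},r\psi|_{\mathcal{I}^+_{\geq u_0}}) = (\underline{\Phi},\Phi)$. Thus $\mathscr{F}\circ\mathscr{B} = \mathrm{id}$ on the dense subspace $C_{c}^{\infty}(\mathcal{H}^+_{\geq v_0})\oplus C_{c}^{\infty}(\mathcal{I}^+_{\geq u_0})$.

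Next I would upgrade this to the completions. Since $\mathscr{F}\colon\mathcal{E}^T_{\Sigma_0}\to\mathcal{E}^T_{\mathcal{H}^+_{\geq v_0}}\oplus\mathcal{E}^T_{\mathcal{I}^+_{\geq u_0}}$ is bounded (the Definition following Proposition \ref{prop:fowtennormbound}) and $\mathscr{B}$ extends to a bounded---indeed isometric---operator on the $T$-energy spaces (Definition \ref{def:backwmap}), the composition $\mathscr{F}\circ\mathscr{B}$ is a bounded operator agreeing with the identity on a dense set, hence equals the identity on all of $\mathcal{E}^T_{\mathcal{H}^+_{\geq v_0}}\oplus\mathcal{E}^T_{\mathcal{I}^+_{\geq u_0}}$. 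In particular $\mathscr{F}$ admits a right inverse and is therefore surjective. For injectivity I would invoke Proposition \ref{prop:fowtennormbound}, which shows that $\mathscr{F}$ exactly preserves the $T$-energy norm and hence is an isometry on $\mathcal{E}^T_{\Sigma_0}$; an isometry has trivial kernel. Therefore $\mathscr{F}$ is bijective, and composing $\mathscr{F}\circ\mathscr{B} = \mathrm{id}$ with $\mathscr{F}^{-1}$ on the left yields $\mathscr{B} = \mathscr{F}^{-1}$, so that both $\mathscr{F}\circ\mathscr{B}=\mathrm{id}$ and $\mathscr{B}\circ\mathscr{F}=\mathrm{id}$.

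For the higher-order maps $\mathscr{F}_n$ and $\mathscr{B}_n$ I would argue purely by restriction, exploiting the inclusions $\mathcal{E}_{n;\Sigma_0}\subset\mathcal{E}^T_{\Sigma_0}$ and $\mathcal{E}_{n;\mathcal{H}^+_{\geq v_0}}\oplus\mathcal{E}_{n;\mathcal{I}^+_{\geq u_0}}\subset\mathcal{E}^T_{\mathcal{H}^+_{\geq v_0}}\oplus\mathcal{E}^T_{\mathcal{I}^+_{\geq u_0}}$ together with the identifications $\mathscr{F}_n = \mathscr{F}|_{\mathcal{E}_{n;\Sigma_0}}$ (Proposition \ref{prop:fowbound}) and $\mathscr{B}_n = \mathscr{B}|_{\mathcal{E}_{n;\mathcal{H}^+_{\geq v_0}}\oplus\mathcal{E}_{n;\mathcal{I}^+_{\geq u_0}}}$ (Proposition \ref{prop:backwmapho}). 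Restricting the already-proven identity $\mathscr{B}\circ\mathscr{F} = \mathrm{id}$ to the subspace $\mathcal{E}_{n;\Sigma_0}$---on which $\mathscr{F}$ acts as $\mathscr{F}_n$ and lands in $\mathcal{E}_{n;\mathcal{H}^+_{\geq v_0}}\oplus\mathcal{E}_{n;\mathcal{I}^+_{\geq u_0}}$, where $\mathscr{B}$ acts as $\mathscr{B}_n$---gives $\mathscr{B}_n\circ\mathscr{F}_n = \mathrm{id}$; restricting $\mathscr{F}\circ\mathscr{B} = \mathrm{id}$ to $\mathcal{E}_{n;\mathcal{H}^+_{\geq v_0}}\oplus\mathcal{E}_{n;\mathcal{I}^+_{\geq u_0}}$ similarly gives $\mathscr{F}_n\circ\mathscr{B}_n = \mathrm{id}$. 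Hence $\mathscr{F}_n$ and $\mathscr{B}_n$ are mutually inverse bijections.

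I expect the main subtlety to be the asymmetry between the two compositions: one can verify $\mathscr{F}\circ\mathscr{B} = \mathrm{id}$ directly on compactly supported scattering data because $\mathscr{B}$ sends such data into the classical domain of $\mathscr{F}$, but one cannot as easily verify $\mathscr{B}\circ\mathscr{F} = \mathrm{id}$ on smooth $\Sigma_0$ data, since the forwards radiation fields are generically not compactly supported along $\mathcal{H}^+$ and $\mathcal{I}^+$. The isometry property from Proposition \ref{prop:fowtennormbound} is precisely what bridges this gap, converting surjectivity-plus-injectivity into genuine two-sided invertibility without ever having to compose $\mathscr{B}$ with a non-compactly-supported forwards image by hand. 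A secondary point requiring care is the coincidence, by uniqueness, of the backwards-constructed solution with the forwards evolution of its own Cauchy trace on $\Sigma_0$, which is what makes the verification of $\mathscr{F}\circ\mathscr{B}=\mathrm{id}$ on the dense set legitimate.
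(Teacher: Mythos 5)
Your proposal is correct and follows essentially the same route as the paper: verify $\mathscr{F}\circ\mathscr{B}=\mathrm{id}$ on compactly supported scattering data via uniqueness, extend by density and boundedness, and combine surjectivity (from the right inverse) with injectivity (from the $T$-energy isometry of Proposition \ref{prop:fowtennormbound}) to conclude bijectivity and $\mathscr{B}=\mathscr{F}^{-1}$. The only cosmetic difference is that the paper rereuns the same density argument for $\mathscr{F}_n$, $\mathscr{B}_n$, whereas you obtain the higher-order case by restricting the already-established identities via $\mathscr{F}_n=\mathscr{F}|_{\mathcal{E}_{n;\Sigma_0}}$ and $\mathscr{B}_n=\mathscr{B}|_{\mathcal{E}_{n;\mathcal{H}^+_{\geq v_0}}\oplus\mathcal{E}_{n;\mathcal{I}^+_{\geq u_0}}}$; both are equally valid.
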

\begin{proof}

Let $(\underline{\Phi},\Phi)\in C_{c}^{\infty}(\mathcal{H}^+_{\geq v_0})\oplus C_{c}^{\infty}(\mathcal{I}^+_{\geq u_0})$, then the corresponding solution $\psi$ to \eqref{eq:waveequation} satisfies $\phi|_{\Sigma_0}\in C^{\infty}(\widehat{\Sigma}_0)$ and $n_{\Sigma_0}\psi|_{\Sigma_0}\in C^{\infty}(\Sigma_0\cap\{r_{\mathcal{H}}\leq r\leq r_{\mathcal{I}}\})$, and hence $\mathscr{F}(\phi|_{\Sigma_0}, n_{\Sigma_0}\psi|_{\Sigma_0})=(\phi|_{\mathcal{H}^+,}\phi|_{\mathcal{I}^+})$ is well-defined and $(\phi|_{\mathcal{H}^+,}\phi|_{\mathcal{I}^+})=(\underline{\Phi},\Phi)$. We conclude that $\mathscr{F}\circ \mathscr{B}=\textnormal{id}$ on a dense subset. By boundedness of $\mathscr{F}\circ \mathscr{B}$ we can conclude that  $\mathscr{F}\circ \mathscr{B}=\textnormal{id}$ on the full domain. Hence, $\mathscr{F}$ must be surjective and in fact bijective (we have already established injectivity). It immediately follows then that $\mathscr{B}\circ \mathscr{F}=\textnormal{id}$. The above argument can also be applied to $\mathscr{F}_n$ and $\mathscr{B}_n$.
\end{proof}

\section{The scattering map}
\label{sec:spacelikeinfbfsphere}
The aim of this section is to extend the estimates of Section \ref{sec:fowmap} and \ref{sec:backwest} from the hypersurface $\Sigma_0$ to the hypersurface $\widetilde{\Sigma}$. This will allow us to construct the scattering map $\mathcal{S}$, a bijective map between (time-weighted) energy spaces on $\mathcal{H}^-\cup \mathcal{I}^-$ and $\mathcal{H}^+\cup \mathcal{I}^+$. The estimates in this section will therefore concern the ``triangular'' regions bounded to the future by the null hypersurfaces ${N}_0$ and $\underline{N}_0$ and to the past by $\widetilde{\Sigma}=\{t=0\}$.

\subsection{Weighted energy estimates near spacelike infinity}
\label{sec:enestasympflat}

In the proposition below we derive energy estimates with respect to the \emph{vector field multiplier}  $K=v^2L+u^2\underline{L}$, which is commonly referred to as the \emph{Morawetz conformal vector field}.\footnote{The geometric significance of $K$ is that it generates the inverted translation conformal symmetry on the Minkowski spacetime.} The main purpose of $K$ is to derive backwards energy estimates along $\widetilde{\Sigma}$ with $r$-weighted initial data along ${N}_{-u_0}$ and $\underline{N}_{-v_0}$ which are analogous to the $r$-weighted boundary terms in the estimates in Proposition \ref{prop:backwrp} with $p=2$.

\begin{proposition}
\label{prop:estasympflat}
Let $u_{-\infty}, v_{-\infty}<0$, with $|u_{-\infty}|, |v_{-\infty}|$ arbitrarily large. There exist constants $C,c=C,c(M, r_{\mathcal{I}},r_{\mathcal{H}},u_0,v_0)>0$, such that
\begin{align}
\label{eq:asymflatestback1}
\int_{\widetilde{\Sigma}\cap\{u_0\leq v\leq -u_{-\infty}\}}& r^2(L\phi)^2+r^2(\Lbar \phi)^2+|\snabla_{\s^2}\phi|^2\,dv\\ \nonumber
\sim_{C,c}&\: \int_{N_{-u_0}} r^2(L\phi)^2+r^{-2}|\snabla_{\s^2}\phi|^2\,d\omega dv+\int_{\mathcal{I}^+_{\leq -u_0}} u^2(\Lbar\phi)^2+|\snabla_{\s^2}\phi|^2\,d\omega du,\\
\label{eq:asymflatestback2}
\int_{\widetilde{\Sigma}\cap\{v_0\leq u\leq -v_{-\infty}\}}&(r-M)^{-2}(L\phi)^2+(r-M)^{-2}(\Lbar \phi)^2+|\snabla_{\s^2}\phi|^2\,du\\ \nonumber
\sim_{C,c}&\:  \int_{\underline{N}_{-v_0}} D^{-1}(\Lbar \phi)^2+D|\snabla_{\s^2} \phi|^2\,d\omega du+\int_{\mathcal{H}^+_{\leq -v_0}} v^2(L\phi)^2+ |\snabla_{\s^2}\phi|^2\,d\omega dv.
\end{align}
\end{proposition}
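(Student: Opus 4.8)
The plan is to prove the two-sided equivalences in Proposition \ref{prop:estasympflat} by using the Morawetz conformal vector field $K=v^2L+u^2\underline{L}$ as a multiplier in the ``triangular'' regions $D_{-u_0}$ and $\underline{D}_{-v_0}$ bounded by $\widetilde{\Sigma}=\{t=0\}$, the null cone ${N}_{-u_0}$ (resp.\ $\underline{N}_{-v_0}$), and the relevant piece of null infinity. The essential point is that $K$ is (exactly, in Minkowski, and approximately, in the large-$r$ region $\{r\geq r_{\mathcal{I}}\}$) a conformal Killing field for the rescaled metric $r^{-2}g_M$, so that the energy current associated to $K$ acting on the radiation field $\phi=r\psi$ produces an \emph{essentially conserved} flux with the correct $v^2$- and $u^2$-weights. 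Concretely, I would first compute the deformation tensor of $K$ with respect to the equation \eqref{eq:maineqradfield} for $\phi$, isolating the conserved boundary terms on $\widetilde{\Sigma}$, ${N}_{-u_0}$ and $\mathcal{I}^+_{\leq -u_0}$, and identifying the bulk error terms that arise because $K$ is only an \emph{approximate} conformal symmetry (these are lower-order in $r^{-1}$ and supported where $K$'s conformal property degrades).

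For the first estimate \eqref{eq:asymflatestback1}, I would apply the divergence theorem with the $K$-current in the region bounded to the past by $\widetilde{\Sigma}\cap\{u_0\leq v\leq -u_{-\infty}\}$ and to the future by ${N}_{-u_0}$ together with $\mathcal{I}^+_{\leq -u_0}$. Reading off the boundary contributions: on ${N}_{-u_0}$ the $K$-flux is $\sim v^2(L\phi)^2+\ldots$, but since along ${N}_{-u_0}$ the coordinate $v$ ranges over $r\gtrsim r_{\mathcal{I}}$ where $v\sim r$, this matches $r^2(L\phi)^2$; on $\mathcal{I}^+_{\leq -u_0}$ it is exactly the weighted flux $u^2(\Lbar\phi)^2+|\snabla_{\s^2}\phi|^2$; and on $\widetilde{\Sigma}$ it produces $r^2(L\phi)^2+r^2(\Lbar\phi)^2+|\snabla_{\s^2}\phi|^2$ because at $t=0$ one has $u=-v$ so both weights $u^2$ and $v^2$ equal $r^2$ up to constants. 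To upgrade the resulting one-sided inequality to the claimed two-sided equivalence $\sim_{C,c}$, I would run the estimate in both time directions (forwards and backwards), absorbing the bulk error terms via the Hardy inequalities of Lemma \ref{lm:hardy} and the Morawetz/integrated-local-energy-decay estimate of Proposition \ref{prop:backwmorawetz} (or \eqref{eq:morawetzfowTloss}), and controlling the angular terms using Lemma \ref{lm:angmom}. The second estimate \eqref{eq:asymflatestback2} follows by the \emph{same} argument in the near-horizon region $\{r\leq r_{\mathcal{H}}\}$, exploiting the crucial fact (highlighted in Section \ref{intro:spacelikeinf}) that $K$ is invariant under the Couch--Torrence conformal symmetry interchanging $D_{-u_0}$ and $\underline{D}_{-v_0}$, so that the roles of $u,v$ and of $r$, $(r-M)^{-1}$ simply swap and the factor $D^{-1}=(r-M)^{-2}M^2(1+O(r-M))$ reproduces the stated weights along $\underline{N}_{-v_0}$.

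I expect the \textbf{main obstacle} to be the careful treatment of the bulk error terms generated by $K$ failing to be an exact conformal Killing field in extremal Reissner--Nordstr\"om: these errors carry a borderline $r$-weight (the analogue of the $p=2$ critical case in the $r^p$-hierarchy), so they cannot simply be thrown away by a Gronwall-type absorption and must instead be paired off, via integration by parts in $u$ (resp.\ $v$) and the Hardy inequalities, against the conserved boundary fluxes — exactly as the borderline weights are handled in Proposition \ref{prop:backwrp}. A secondary technical point is justifying all the boundary integrations rigorously for the solutions constructed in Proposition \ref{prop:mainpropbackdef}, which requires the qualitative decay of $\phi$ and its weighted derivatives at $\mathcal{I}^+$ supplied by \eqref{eq:qualboundhoradfield} and Proposition \ref{prop:regularityphiinfty}, ensuring that the limiting boundary terms at spacelike infinity vanish and that the cutoff arguments (as in the proof of Proposition \ref{prop:backwrp}) close without leftover contributions.
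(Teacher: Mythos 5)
Your proposal follows essentially the same route as the paper: the multiplier identity for $K=v^2L+u^2\Lbar$ applied to the radiation field in the triangular regions, the key approximate-conservation cancellation $\Lbar\left(u^2Dr^{-2}\right)+L\left(v^2Dr^{-2}\right)=O(r^{-2})\log r$ (and its Couch--Torrence mirror near the horizon), reading off the stated fluxes on $\widetilde{\Sigma}$, $N_{-u_0}$ and $\mathcal{I}^+_{\leq -u_0}$ using $v\sim r$ and $u=-v$ on $\{t=0\}$, and running the estimate in both time directions to get the two-sided equivalence. The only cosmetic difference is that the paper absorbs the borderline bulk errors not via Morawetz or integration by parts but by choosing $r_{\mathcal{I}}$ large (resp.\ $r_{\mathcal{H}}-M$ small) so the $\log$-weighted terms and Young-inequality remainders are small multiples of the sup-fluxes, together with a fundamental-theorem-of-calculus bound on $\int_{\s^2}\phi^2\,d\omega$ exploiting the compact support of the scattering data.
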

\begin{proof}
By \eqref{eq:maineqradfield} it follows that
\begin{equation*}
\begin{split}
L(u^2(\Lbar\phi)^2)+\Lbar(v^2(L\phi)^2)=&\:2u^2\Lbar L\phi\cdot\Lbar\phi+2v^2\Lbar L \phi\cdot L\phi\\
=&\: \frac{1}{2}\frac{u^2}{r^2}D \slashed{\Delta}_{\s^2}\phi\cdot \Lbar\phi+\frac{1}{2}\frac{v^2}{r^2}D \slashed{\Delta}_{\s^2}\phi\cdot L\phi+(u^2\Lbar \phi+v^2L\phi)O(r^{-3})\phi.
\end{split}
\end{equation*}
After integrating by parts on $\s^2$, we therefore obtain:
\begin{equation}
\label{eq:keyidasymflat1}
\begin{split}
\int_{\s^2}&L(u^2(\Lbar \phi)^2)+\Lbar(v^2(L\phi)^2)+\frac{1}{4}L(v^2r^{-2}D|\slashed{\nabla}_{\s^2}\phi|^2)+\frac{1}{4}\Lbar(u^2r^{-2}D|\slashed{\nabla}_{\s^2}\phi|^2)\,d\omega\\
=&\:\int_{\s^2}\frac{1}{4}\left(\Lbar\left(\frac{u^2D}{r^2}\right)+L\left(\frac{v^2D}{r^2}\right)\right)|\snabla_{\s^2}\phi|^2\,d\omega+\int_{\s^2}(u^2\Lbar\phi+v^2L\phi)O(r^{-3})\phi \,d\omega.
\end{split}
\end{equation}
We first consider estimates in the backwards time direction. We integrate \eqref{eq:keyidasymflat1} in spacetime and we use the following identity:
\begin{equation}
\label{eq:keycancelinf}
\Lbar\left(\frac{u^2D}{r^2}\right)+L\left(\frac{v^2D}{r^2}\right)=O(r^{-2})\log r
\end{equation}
to estimate
\begin{equation}
\label{eq:auxestasympflat}
\begin{split}
\sup_{u} &\int_{N_u} v^2(L\phi)^2+\frac{1}{4}u^2r^{-2}D|\snabla_{\s^2}\phi|^2\,d\omega dv+ \sup_{v} \int_{I_v} u^2(\Lbar\phi)^2+\frac{1}{4}v^2r^{-2}D|\snabla_{\s^2}\phi|^2\,d\omega du\\
\leq&\: \int_{N_{-u_0}} v^2(L\phi)^2+\frac{1}{4}u^2r^{-2}D|\snabla_{\s^2}\phi|^2\,d\omega dv+\int_{\mathcal{I}^+_{u_{-\infty}\leq u  \leq -u_0}} u^2(\Lbar\phi)^2+\frac{1}{4}v^2r^{-2}D|\snabla_{\s^2}\phi|^2\,d\omega du\\
&+C\int_{u_{0}}^{\infty} \int^{-u_0}_{-\min\{v,|u_{-\infty}|\}}r^{-3}|\phi|\cdot (u^2|\Lbar \phi|+v^2|L\phi|)\,d\omega du dv\\
&+C\int_{u_{0}}^{\infty} \int^{-u_0}_{-\min\{v,|u_{-\infty}|\}}r^{-3} \log r|\snabla_{\s^2}\phi|^2 \,d\omega du dv.
\end{split}
\end{equation}
Using that $r\sim v+|u|\lesssim v$ in the integration region, we can further estimate:
\begin{equation*}
\begin{split}
\int_{u_0}^{\infty} \int^{-u_0}_{-\min\{v,|u_{-\infty}|\}} r^{-2} \log r|\snabla_{\s^2}\phi|^2 \,d\omega du dv\leq&\:C \int_{u_0}^{\infty}  v^{-2} \log v\,dv\cdot  \sup_{v} \int_{I_v}v^2r^{-2}D|\snabla_{\s^2}\phi|^2\,d\omega dv\\
\leq&\:C \epsilon \sup_{v} \int_{I_v}v^2r^{-2}D|\snabla_{\s^2}\phi|^2\,d\omega dv
\end{split}
\end{equation*}
for $\epsilon>0$ arbitrarily small given $r_{\mathcal{I}}>0$ suitably large (and $v^{-1}\lesssim r^{-1}$ in the integration region). Note that we can absorb the very right-hand side above into the left-hand side of \eqref{eq:auxestasympflat} when $\epsilon>0$ is suitably small.

We apply Young's inequality to estimate
\begin{equation*}
\begin{split}
r^{-3}|\phi| (u^2|\Lbar\phi|+v^2|L\phi|)\leq&\: r^{-1-\eta}  (u^2(\Lbar \phi)^2+v^2(L\phi)^2)+r^{-5+\eta}(u^2+v^2)\phi^2.
\end{split}
\end{equation*}
We absorb the spacetime integral of $(\Lbar \phi)^2$ and $(L \phi)^2$ to the left-hand side of \eqref{eq:auxestasympflat}, using that $r$ is suitably large and $(v+|u|)\lesssim r$ in the integration region. In order to absorb the $\phi^2$ term, we first observe that by assumption, we are considering $\phi$ such that $\phi|_{\mathcal{I}^+}$ is well-defined and is compactly supported in $u>u_{-\infty}$, so
\begin{equation*}
\lim_{v\to \infty} \phi(u_{-\infty},v)=0.
\end{equation*}
Therefore, by Cauchy--Schwarz, we can estimate
\begin{equation*}
\begin{split}
\left[\int_{\s^2}(\phi-\phi|_{\mathcal{I}^+})^2\,d\omega\right](u,v)\leq&\: \int_{\s^2}\left(\int_v^{\infty}(L\phi)^2\,dv'\right)^2d\omega\leq v^{-1} \int_{v}^{\infty} \int_{\s^2}v'^2(L\phi)^2\,d\omega dv'\\
\leq&\: v^{-1} \sup_u \int_{N_u}v'^2(L\phi)^2\,d\omega dv'.
\end{split}
\end{equation*}
Furthermore, similarly we have that 
\begin{equation*}
\begin{split}
\left[\int_{\s^2}\phi|_{\mathcal{I}^+}^2\,d\omega\right](u)\leq&\: u^{-1} \int^u_{u_{-\infty}}\int_{\s^2} u'^2(\Lbar \phi)^2\,d\omega du'\\
\leq&\: u^{-1} \sup_v \int_{I_v} u'^2(\Lbar \phi)^2\,d\omega du'.
\end{split}
\end{equation*}
Hence,
\begin{equation*}
\left[\int_{\s^2}\phi^2\,d\omega\right]\leq (u^{-1}+v^{-1}) \left[\sup_u \int_{N_u}v'^2(L\phi)^2\,d\omega dv'+\sup_v \int_{I_v} u'^2(\Lbar \phi)^2\,d\omega du'\right],
\end{equation*}
so we can estimate:
\begin{equation*}
\int_{u_0}^{\infty} \int^{-u_0}_{-\min\{v,|u_{-\infty}|\}} \int_{\s^2}(u^2+v^2)r^{-5+\eta}\phi^2\,d\omega du dv \lesssim \epsilon \left[\sup_u \int_{N_u}v'^2(L\phi)^2\,d\omega dv'+\sup_v \int_{I_v} u'^2(\Lbar\phi)^2\,d\omega du'\right],
\end{equation*}
with $\epsilon>0$ suitably small given ${r}_{\mathcal{I}}$ suitably large. As a result, we obtain
\begin{equation}
\label{eq:mainestKcomm}
\begin{split}
\sup_{u} &\int_{N_u} v^2(L\phi)^2+\frac{1}{4}u^2r^{-2}D|\snabla_{\s^2}\phi|^2\,d\omega dv+ \sup_{v} \int_{I_v} u^2(\Lbar \phi)^2+\frac{1}{4}v^2r^{-2}D|\snabla_{\s^2}\phi|^2\,d\omega du\\
\leq&\: C\int_{N_{-u_0}} v^2(L\phi)^2+\frac{1}{4}u^2r^{-2}D|\snabla_{\s^2}\phi|^2\,d\omega dv+C\int_{\mathcal{I}^+_{\leq -u_0}} u^2(\Lbar \phi)^2+\frac{1}{4}v^2r^{-2}D|\snabla_{\s^2}\phi|^2\,d\omega du.
\end{split}
\end{equation}
We integrate \eqref{eq:keyidasymflat1} and apply \eqref{eq:mainestKcomm} to obtain:
\begin{equation*}
\begin{split}
\int_{\widetilde{\Sigma}\cap\{v_{r_{\mathcal{I}}}\leq v\leq -u_{-\infty}\}}& r^2(L \phi)^2+r^2(\Lbar \phi)^2+|\snabla_{\s^2}\phi|^2\,dv\\
\leq&\: C\int_{N_{-u_0}} v^2(L \phi)^2+u^2r^{-2}D|\snabla_{\s^2}\phi|^2\,d\omega dv+\int_{\mathcal{I}^+_{\leq -u_0}} u^2(\Lbar \phi)^2+v^2r^{-2}D|\snabla_{\s^2}\phi|^2\,d\omega du\\
&+C\int_{v_{r_{\mathcal{I}}}}^{\infty} \int^{-u_0}_{-\min\{v,|u_{-\infty}|\}} r^{-3}|\phi|\cdot (u^2|\Lbar \phi|+v^2|L \phi|)\,d\omega du dv\\
&+C\int_{v_{r_{\mathcal{I}}}}^{\infty} \int^{-u_0}_{-\min\{v,|u_{-\infty}|\}} r^{-2} \log r|\snabla_{\s^2}\phi|^2 \,d\omega du dv\\
\leq &\: C\int_{N_{-u_0}} v^2(L \phi)^2+u^2r^{-2}D|\snabla_{\s^2}\phi|^2\,d\omega dv+\int_{\mathcal{I}^+_{\leq -u_0}} u^2(\Lbar \phi)^2+v^2r^{-2}D|\snabla_{\s^2}\phi|^2\,d\omega du.
\end{split}
\end{equation*}

Analogously, we have that
\begin{equation}
\label{eq:keyidasymflat2}
\begin{split}
\int_{\s^2}&L(u^2(\Lbar \phi)^2)+\Lbar(v^2(L \phi)^2)+\frac{1}{4}L(v^2r^{-2}D|\slashed{\nabla}_{\s^2}\phi|^2)+\frac{1}{4}\Lbar (u^2r^{-2}D|\slashed{\nabla}_{\s^2}\phi|^2)\,d\omega\\
=&\:\int_{\s^2}\frac{1}{4}\left(\Lbar \left(\frac{u^2D}{r^2}\right)+L\left(\frac{v^2D}{r^2}\right)\right)|\snabla_{\s^2}\phi|^2\,d\omega+\int_{\s^2}(u^2\Lbar \phi+v^2L \phi)O((r-M)^{3})\phi \,d\omega.
\end{split}
\end{equation}
and
\begin{equation}
\label{eq:keycancelhor}
\Lbar \left(\frac{u^2D}{r^2}\right)+L\left(\frac{v^2D}{r^2}\right)=O((r-M)^2)|\log (r-M)|,
\end{equation}
so that we can estimate
\begin{equation}
\label{eq:auxestasympflathor}
\begin{split}
\sup_{v} &\int_{\underline{N}_v} u^2(\Lbar \phi)^2+\frac{1}{4}v^2r^{-2}D|\snabla_{\s^2}\phi|^2\,d\omega du+ \sup_{u} \int_{H_u} v^2(L \phi)^2+\frac{1}{4}u^2r^{-2}D|\snabla_{\s^2}\phi|^2\,d\omega dv\\
\leq&\: \int_{\underline{N}_{v_0}}u^2(\Lbar \phi)^2+\frac{1}{4}v^2r^{-2}D|\snabla_{\s^2}\phi|^2\,d\omega du+\int_{\mathcal{H}^+_{\leq -v_{0}}}  v^2(L \phi)^2+\frac{1}{4}u^2r^{-2}D|\snabla_{\s^2}\phi|^2\,d\omega dv\\
&+C\int_{u_{r_{\mathcal{H}}}}^{\infty} \int^{-v_0}_{-\min\{u,|v_{-\infty}|\}}(r-M)^{3}|\phi|\cdot (u^2|\Lbar \phi|+v^2|L \phi|)\,d\omega du dv\\
&+C\int_{u_{r_{\mathcal{H}}}}^{\infty} \int^{-v_0}_{-\min\{u,|v_{-\infty}|\}}\log((r-M)^{-1}) D|\snabla_{\s^2}\phi|^2 \,d\omega du dv.
\end{split}
\end{equation}

Using that $(r-M)^{-1}\sim u+|v|\lesssim u$, we estimate further:
\begin{equation*}
\begin{split}
\int_{u_{r_{\mathcal{H}}}}^{\infty} \int^{-v_0}_{-\min\{u,|v_{-\infty}|\}}\log((r-M)^{-1}) D|\snabla_{\s^2}\phi|^2 \,d\omega du dv\leq &\: C\int_{u_{r_{\mathcal{H}}}}^{\infty} u^{-2}\log u\,du\cdot \sup_u \int_{H_u} u^2r^{-2}D|\snabla_{\s^2}\phi|^2\,d\omega dv\\
\leq &\: \epsilon \sup_u \int_{H_u} u^2r^{-2}D|\snabla_{\s^2}\phi|^2\,d\omega dv
\end{split}
\end{equation*}
for $\epsilon>0$ arbitrarily small given $r_{\mathcal{H}}-M>0$ suitably small. Note that we can absorb the very right-hand side above into the left-hand side of \eqref{eq:auxestasympflathor} when $\epsilon>0$ is suitably small.

We apply Young's inequality to estimate
\begin{equation*}
\begin{split}
(r-M)^{3}|\phi|(u^2|\Lbar \phi|+v^2|L \phi|)\leq (r-M)^{1+\eta}  (u^2(\Lbar \phi)^2+v^2(L \phi)^2)+(r-M)^{5-\eta} (u^2+v^2) \phi^2
\end{split}
\end{equation*}
and absorb the corresponding spacetime integral to the left-hand side of \eqref{eq:auxestasympflathor}, using that
\begin{equation*}
\left[\int_{\s^2}\phi^2\,d\omega\right]\leq (u^{-1}+v^{-1}) \left[\sup_u \int_{\underline{N}_u}u'^2(\Lbar \phi)^2\,d\omega du'+\sup_v \int_{\underline{H}_u} v'^2(L \phi)^2\,d\omega dv'\right],
\end{equation*}
which follows from Cauchy--Schwarz combined with the assumption that $\phi|_{\mathcal{H}^+}(v)=0$ for $v\leq v_{-\infty}$.
We are left with
\begin{equation}
\label{eq:mainestKcomm2}
\begin{split}
\sup_{v} &\int_{\underline{N}_v} u^2(\Lbar \phi)^2+\frac{1}{4}v^2r^{-2}D|\snabla_{\s^2}\phi|^2\,d\omega du+ \sup_{u} \int_{H_u} v^2(L \phi)^2+\frac{1}{4}u^2r^{-2}D|\snabla_{\s^2}\phi|^2\,d\omega dv\\
\leq&\: C\int_{\underline{N}_{v_0}}u^2(\Lbar \phi)^2+\frac{1}{4}v^2r^{-2}D|\snabla_{\s^2}\phi|^2\,d\omega du+C\int_{\mathcal{H}^+_{\leq -v_0}}  v^2(L \phi)^2+\frac{1}{4}u^2r^{-2}D|\snabla_{\s^2}\phi|^2\,d\omega dv
\end{split}
\end{equation}
and hence,
\begin{equation*}
\begin{split}
\int_{\widetilde{\Sigma}\cap\{u_{r_{\mathcal{H}}}\leq u\leq -v_{-\infty}\}}& (r-M)^{-2}(L \phi)^2+(r-M)^{-2}(\Lbar \phi)^2+|\snabla_{\s^2}\phi|^2\,du \leq C \int_{\mathcal{H}^+_{\leq -v_0}} v^2(L \phi)^2+ |\snabla_{\s^2}\phi|^2\,d\omega dv\\ 
&+C \int_{\underline{N}_{-v_0}} D^{-1}(\Lbar \phi)^2+D|\snabla_{\s^2} \phi|^2\,d\omega du.
\end{split}
\end{equation*}

We now consider the forwards time direction. First of all, we are assuming compact support on $\widetilde{\Sigma}_0\cap\{v_{r_{\mathcal{I}}}\leq v\leq -u_{-\infty}\}$, so for $|u_{-\infty}|,|v_{-\infty}|$ suitably large, we have that $\phi$ vanishes along $N_{-u_{-\infty}}$, $\underline{N}_{-v_{-\infty}}$, $\mathcal{I}^+\cap\{{u\leq u_{-\infty}\}}$ and $\mathcal{H}^+\cap\{{v\leq v_{-\infty}\}}$, by the domain of dependence property of the wave equation. 

We then apply the estimates \eqref{eq:mainestKcomm} and \eqref{eq:mainestKcomm2} to obtain:
\begin{align*}
\int_{N_{-u_0}}& v^2(L \phi)^2+\frac{1}{4}u^2r^{-2}D|\snabla_{\s^2}\phi|^2\,d\omega dv+\int_{\mathcal{I}^+_{\leq -u_0}} u^2(\Lbar \phi)^2+\frac{1}{4}v^2r^{-2}D|\snabla_{\s^2}\phi|^2\,d\omega du\\
\leq &\:\frac{1}{c}\int_{\widetilde{\Sigma}\cap\{v_{r_{\mathcal{I}}}\leq v\leq -u_{-\infty}\}} r^2(L \phi)^2+r^2(\Lbar \phi)^2+|\snabla_{\s^2}\phi|^2\,dv,\\
\int_{\underline{N}_{v_0}}&u^2(\Lbar \phi)^2+\frac{1}{4}v^2r^{-2}D|\snabla_{\s^2}\phi|^2\,d\omega du+\int_{\mathcal{H}^+_{\leq -v_{\infty}}}  v^2(L \phi)^2+\frac{1}{4}u^2r^{-2}D|\snabla_{\s^2}\phi|^2\,d\omega dv\\
\leq &\: \frac{1}{c}\int_{\widetilde{\Sigma}\cap\{u_{r_{\mathcal{H}}}\leq u\leq -v_{-\infty}\}}(r-M)^{-2}(L \phi)^2+(r-M)^{-2}(\Lbar \phi)^2+|\snabla_{\s^2}\phi|^2\,d\omega du,
\end{align*}
for a suitably small positive constant $c>0$.
\end{proof}

We complement Proposition \ref{prop:estasympflat} with estimates involving lower weights in $r$, $u$ and $v$, applied to $T\phi$ rather than $\phi$. The $r$-weighted energies along $N_{-u_0}$ and $\underline{N}_{-v_0}$ appearing in the proposition below appear as energy flux terms in Proposition \ref{prop:backwrp} with $p=1$. 
\begin{proposition}
\label{prop:p1estspacelikeinf}
Let $u_{-\infty}, v_{-\infty}<0$, with $|u_{-\infty}|, |v_{-\infty}|$ arbitrarily large. There exists constants\\ $C,c=C,c(M, r_{\mathcal{I}},r_{\mathcal{H}},u_0,v_0)>0$, such that
\begin{align}
\label{eq:p1estnearspacelikeinf1}
\int_{\widetilde{\Sigma}\cap\{v_{r_{\mathcal{I}}}\leq v\leq -u_{-\infty}\}}& r (\Lbar T \phi)^2+r (LT\phi)^2+r^{-1}|\snabla_{\s^2}T\phi|^2\,d\omega dr\\ \nonumber
&+\sum_{|\alpha|\leq 1}\int_{\widetilde{\Sigma}\cap\{v_{r_{\mathcal{I}}}\leq v\leq -u_{-\infty}\}}  (\Lbar \Omega^{\alpha} \phi)^2+(L\Omega^{\alpha}\phi)^2+r^{-2}|\snabla_{\s^2}\Omega^{\alpha}\phi|^2\,d\omega dr\\ \nonumber
\sim_{c,C}&\: \sum_{|\alpha|\leq 1}\int_{N_{-u_0}} v (LT \phi)^2+r^{-2}|\snabla_{\s^2} T\phi|^2+(L\Omega^{\alpha}\phi)^2+r^{-2}|\snabla_{\s^2}\Omega^{\alpha}\phi|^2\,d\omega dv\\ \nonumber
&+\sum_{j=0}^1\int_{\mathcal{I}^+_{\leq -u_0}} |u|^j(\Lbar T^j\phi)^2+|\snabla_{\s^2}\Lbar  \phi|^2\,d\omega du,\\
\label{eq:p1estnearspacelikeinf2}
\int_{\widetilde{\Sigma}\cap\{u_{r_{\mathcal{H}}}\leq u\leq -v_{-\infty}\}}& (r-M)^{-1}(L T\phi)^2+(r-M)^{-1}(\Lbar T\phi)^2+(r-M)|\snabla_{\s^2}T\phi|^2\,du \\ \nonumber
&+\sum_{|\alpha|\leq 1}\int_{\widetilde{\Sigma}\cap\{u_{r_{\mathcal{H}}}\leq u\leq -v_{-\infty}\}} (L \Omega^{\alpha}\phi)^2+(\Lbar  \Omega^{\alpha}\phi)^2+(r-M)^2|\snabla_{\s^2}\Omega^{\alpha}\phi|^2\,du \\ \nonumber
\sim_{c,C}&\:  \sum_{|\alpha|\leq 1}\int_{\underline{N}_{-v_0}} u (\Lbar T \phi)^2+(r-M)^{2}|\snabla_{\s^2} T\phi|^2+(\Lbar \Omega^{\alpha}\phi)^2+(r-M)^2|\snabla_{\s^2}\Omega^{\alpha}\phi|^2\,d\omega du\\ \nonumber
&+\sum_{j=0}^1\int_{\mathcal{H}^+_ {\leq -v_0}} |v|^j(LT^j\phi)^2+|\snabla_{\s^2}L \phi|^2\,  d\omega dv.
\end{align}
\end{proposition}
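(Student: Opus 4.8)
The plan is to run the same scheme as in the proof of Proposition \ref{prop:estasympflat}, but to replace the conformal multiplier $K=v^2L+u^2\Lbar$ by the lower-weight multiplier $Y=vL-u\Lbar$ in the triangular region near $\mathcal{I}^+$ (and $Y=u\Lbar-vL$ near $\mathcal{H}^+$, cf. Section \ref{intro:spacelikeinf}), now applied to the radiation field $T\phi=r\cdot T\psi$. Since $T$ commutes with $\square_g$, the field $T\phi$ again satisfies \eqref{eq:maineqradfield}, and by Proposition \ref{prop:regularityphiinfty} (together with the qualitative bounds \eqref{eq:qualboundhoradfield} from the backwards construction) its trace and decay at $\mathcal{I}^+$ are well-defined. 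I would prove \eqref{eq:p1estnearspacelikeinf1}--\eqref{eq:p1estnearspacelikeinf2} by splitting each side into a \emph{base} contribution carrying no time weight, controlled by $T$-energy conservation, and a \emph{$p=1$} contribution for $T\phi$, controlled by the $Y$-current; the two-sided equivalence $\sim_{c,C}$ then comes from running the estimate both backwards (data on $N_{-u_0}\cup\mathcal{I}^+$ controlling $\widetilde{\Sigma}$) and forwards (using the compact support of the $\widetilde{\Sigma}$ data to make $\phi$ vanish on the far null boundaries).

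For the base contribution I would argue purely by Lemma \ref{lm:tenconsv}: the divergence theorem for $\mathbf{J}^T$ in the region bounded by $\widetilde{\Sigma}$, $N_{-u_0}$ and $\mathcal{I}^+_{\leq -u_0}$ (resp. $\widetilde{\Sigma}$, $\underline{N}_{-v_0}$, $\mathcal{H}^+_{\leq -v_0}$) equates the $T$-flux through $\widetilde{\Sigma}$ with the fluxes through the null boundary and $\mathcal{I}^+$. Rewriting the flux densities in terms of $\phi$ produces exactly the $|\alpha|=0$, $j=0$ terms on both sides; commuting with $\Omega^{\alpha}$, $|\alpha|=1$, and invoking Lemma \ref{lm:angmom} produces the remaining $|\alpha|\leq 1$ terms on $\widetilde{\Sigma}$ and $N_{-u_0}$ and the $|\snabla_{\s^2}\Lbar\phi|^2$ (resp. $|\snabla_{\s^2}L\phi|^2$) term on $\mathcal{I}^+$ (resp. $\mathcal{H}^+$), with no $r$-weights generated since $\Omega^{\alpha}$ commutes with $T$ and with all derivative operators involved.

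The heart of the argument is the $p=1$ estimate for $T\phi$. Multiplying \eqref{eq:maineqradfield} for $T\phi$ by $2Y(T\phi)$ and using $\Lbar v=0=Lu$ gives the $p=1$ analogue of \eqref{eq:keyidasymflat1},
\begin{equation*}
\Lbar\left(v(LT\phi)^2\right)+L\left(|u|(\Lbar T\phi)^2\right)=\frac{D}{2r^2}\left(vL-u\Lbar\right)(T\phi)\cdot\slashed{\Delta}_{\s^2}(T\phi)+O(r^{-3})\left(vL-u\Lbar\right)(T\phi)\cdot T\phi.
\end{equation*}
Integrating by parts on $\s^2$ converts the angular term into the nonnegative boundary fluxes $\tfrac14 |u|Dr^{-2}|\snabla_{\s^2}T\phi|^2$ on $N_u$-surfaces and $\tfrac14 vDr^{-2}|\snabla_{\s^2}T\phi|^2$ on $I_v$-surfaces, plus a bulk term with coefficient $\tfrac14\big[L(vDr^{-2})+\Lbar(|u|Dr^{-2})\big]$. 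After integrating over the region, the boundary terms reproduce precisely the stated fluxes: $r(LT\phi)^2+r(\Lbar T\phi)^2+r^{-1}|\snabla_{\s^2}T\phi|^2$ on $\widetilde{\Sigma}$ (using $|u|=v\sim r$ there), $v(LT\phi)^2+r^{-2}|\snabla_{\s^2}T\phi|^2$ on $N_{-u_0}$, and $|u|(\Lbar T\phi)^2$ on $\mathcal{I}^+$. The lowest-order error $O(r^{-3})Y(T\phi)\cdot T\phi$ is absorbed by Young's inequality together with the Cauchy--Schwarz bound on $\int_{\s^2}(T\phi)^2\,d\omega$ coming from the vanishing of $T\phi$ at the past endpoints of $\mathcal{I}^+$ and $\mathcal{H}^+$, exactly as in the treatment following \eqref{eq:auxestasympflat}. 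The estimate near $\mathcal{H}^+$ is the mirror image, with $r^{-1}$, $r^{p-\cdot}$ replaced by $(r-M)$-weights and the cancellation \eqref{eq:keycancelhor} in place of \eqref{eq:keycancelinf}.

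The main obstacle is the control of the angular bulk term. In contrast with the critical case $p=2$ of Proposition \ref{prop:estasympflat}, where the leading angular coefficient degenerates and one must extract the precise cancellation \eqref{eq:keycancelinf}, at the subcritical level $p=1$ the relevant weight is the favorable $(2-p)=1>0$ factor familiar from the hierarchies \eqref{eq:backhierarchy}--\eqref{eq:fowhierarchy}; the delicate point is therefore to organize the $p=1$ and $p=2$ estimates so that the stronger, $v^2$-weighted angular control already furnished by \eqref{eq:mainestKcomm}--\eqref{eq:mainestKcomm2} (applied to $\phi$ and its angular commutations) dominates the borderline angular error of the $Y$-current, which otherwise only closes logarithmically. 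As in the backwards estimates of Section \ref{sec:bacweest}, no Morawetz estimate with non-degenerate control at the photon sphere is needed, so no derivatives are lost in \eqref{eq:p1estnearspacelikeinf1}--\eqref{eq:p1estnearspacelikeinf2}.
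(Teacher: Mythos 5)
Your overall architecture is the right one and matches the paper's: unweighted $T$-energy conservation (commuted with $\Omega^{\alpha}$) for the base terms, plus the multiplier $Y=vL-u\Lbar$ applied to $T\phi$ for the $p=1$ terms, run both forwards and backwards; your multiplier identity is also correct, and the bulk angular coefficient it produces is
$L\big(\tfrac{vD}{4r^2}\big)-\Lbar\big(\tfrac{uD}{4r^2}\big)=-\tfrac{t}{2}\big(r^{-3}+O(r^{-4})\big)$,
which is signed: favourable forwards, unfavourable backwards. The gap is in how you propose to absorb this bad-signed bulk term $\int\!\!\int t\,r^{-3}|\snabla_{\s^2}T\phi|^2$ in the backwards direction. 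You want to dominate it by the $v^2$-weighted angular control of \eqref{eq:mainestKcomm}--\eqref{eq:mainestKcomm2} ``applied to $\phi$ and its angular commutations.'' But the $K$-estimate applied to $\phi$ or to $\Omega^{\alpha}\phi$ controls $v^2r^{-2}D|\snabla_{\s^2}\phi|^2$ and $v^2r^{-2}D|\snabla_{\s^2}\Omega^{\alpha}\phi|^2$, neither of which bounds $|\snabla_{\s^2}T\phi|^2$ pointwise; and applying it instead to $T\phi$ would require the data fluxes $\int_{\mathcal{I}^+}u^2(\Lbar T\phi)^2+v^2r^{-2}D|\snabla_{\s^2}T\phi|^2$, which are strictly stronger than the $|u|(\Lbar T\phi)^2$ flux appearing on the right-hand side of \eqref{eq:p1estnearspacelikeinf1}. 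Either way the argument does not close with the stated norms. (You also misattribute the difficulty: the coefficient here is not the $(2-p)r^{p-3}$ factor of the $r^p$-hierarchy, and the logarithmic borderline issue belongs to the $p=2$ cancellation \eqref{eq:keycancelinf}, not to this term.)

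The step that actually closes the estimate — and the one place where it is essential that the field is a time derivative, which you state but do not exploit — is the following. Since $t\lesssim r$ in $D_{-u_0}$, the bulk term is $\lesssim \int\!\!\int r^{-2}|\snabla_{\s^2}T\phi|^2$, and by Lemma \ref{lm:angmom} together with $\Omega^{\alpha}T\phi=(L+\Lbar)\Omega^{\alpha}\phi$ one has $|\snabla_{\s^2}T\phi|^2\lesssim\sum_{|\alpha|=1}(L\Omega^{\alpha}\phi)^2+(\Lbar\Omega^{\alpha}\phi)^2$, i.e.\ an \emph{unweighted} $T$-energy density of $\Omega^{\alpha}\psi$. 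The remaining $r^{-2}\sim v^{-2}$ spacetime weight is integrable, so the whole term is bounded by $\sup$ of the conserved $T$-energy fluxes of $\Omega^{\alpha}\psi$, which by Lemma \ref{lm:tenconsv} are exactly the extra angular-commuted flux terms $(L\Omega^{\alpha}\phi)^2+r^{-2}|\snabla_{\s^2}\Omega^{\alpha}\phi|^2$ on $N_{-u_0}$ and $(\Lbar\Omega^{\alpha}\phi)^2$ on $\mathcal{I}^+_{\leq -u_0}$ appearing in the statement — and this is precisely why those terms are present on the right-hand side. With this replacement for your absorption step (and its mirror image near $\mathcal{H}^+$ using \eqref{eq:keycancelhor}-type weights), the rest of your outline goes through.
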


\begin{remark}
The energy estimates \eqref{eq:p1estnearspacelikeinf1} and \eqref{eq:p1estnearspacelikeinf2} are associated to the vector field multiplier $Y=v\partial_v-u\partial_u$ near infinity and $Y=-v\partial_v+u\partial_u$ near the horizon. In contrast with the vector field $K$ that plays a role in Proposition \ref{prop:estasympflat}, $Y$ does not correspond to a (conformal) symmetry generator in Minkowski.
\end{remark}
\begin{proof}[Proof of Proposition \ref{prop:p1estspacelikeinf}]
First of all, we have immediately that by Lemma \ref{lm:tenconsv} and [Hardy]
\begin{align}
\label{eq:tenconsvradfield1}
\int_{\widetilde{\Sigma}\cap\{v_{r_{\mathcal{I}}}\leq v\leq -u_{-\infty}\}}&  (\Lbar \phi)^2+(L \phi)^2+r^{-2}|\snabla_{\s^2}\phi|^2\,d\omega dv\\  \nonumber
&\sim_{c,C} \int_{\underline{N}_{u_0}}  (L \phi)^2+r^{-2}|\snabla_{\s^2}\phi|^2  d\omega dv+\int_{\mathcal{I}^+_ {\leq -u_0}}(\Lbar \phi)^2\, d\omega du,\\
\label{eq:tenconsvradfield2}
\int_{\widetilde{\Sigma}\cap\{u_{r_{\mathcal{H}}}\leq u\leq -v_{-\infty}\}}& (L \phi)^2+(\Lbar \phi)^2+(r-M)^2|\snabla_{\s^2}\phi|^2\, d\omega du \\ \nonumber
\sim_{c,C}&\:  \int_{\underline{N}_{-v_0}}(\Lbar \phi)^2+(r-M)^2|\snabla_{\s^2}\phi|^2\,d\omega du+ \int_{\mathcal{H}^+_{\leq -v_0}} (L \phi)^2\,d\omega dv.
\end{align}
We can moreover replace $\phi$ with $\Omega^{\alpha}\phi$ in the above estimates, with $|\alpha|\leq 1$, due to the commutation properties of $\Omega_i$ and $\square_g$.

By \eqref{eq:maineqradfield} it follows that
\begin{equation*}
\begin{split}
L (|u|(\Lbar T\phi)^2)+\Lbar (v(LT\phi)^2)=&\:2|u|\Lbar L T\phi\cdot \Lbar T\phi+2v\Lbar L T\phi\cdot LT\phi\\
=&\: \frac{1}{2}\frac{|u|}{r^2}D \slashed{\Delta}_{\s^2}T\phi\cdot \Lbar T\phi+\frac{1}{2}\frac{v}{r^2}D \slashed{\Delta}_{\s^2}T\phi\cdot L T\phi+(|u|\Lbar T\phi+vL T\phi)O(r^{-3})T\phi.
\end{split}
\end{equation*}
After integrating by parts on $\s^2$, we therefore obtain:
\begin{equation}
\label{eq:keyidasymflatp11}
\begin{split}
\int_{\s^2}& L(|u|(\Lbar T\phi)^2)+\Lbar(v(LT\phi)^2)+\frac{1}{4} L(v r^{-2}D|\slashed{\nabla}_{\s^2}T\phi|^2)+\frac{1}{4}\Lbar (|u|r^{-2}D|\slashed{\nabla}_{\s^2}T\phi|^2)\,d\omega\\
=&\:\int_{\s^2}\left(L\left(\frac{vD}{4r^2}\right)-\Lbar \left(\frac{uD}{4r^2}\right)\right)|\snabla_{\s^2}T\phi|^2\,d\omega+\int_{\s^2}(|u|\Lbar T\phi+vLT\phi)O(r^{-3})T\phi \,d\omega.
\end{split}
\end{equation}
Note that
\begin{equation*}
L\left(\frac{vD}{4r^2}\right)-\Lbar\left(\frac{uD}{4r^2}\right)=\frac{1}{8}(v+u)D\frac{d}{dr}(Dr^{-2})=-\frac{t}{2}(r^{-3}+O(r^{-4}))
\end{equation*}
Hence, after integrating \eqref{eq:keyidasymflatp11} in spacetime, the $|\snabla_{\s^2}T\phi|^2$ term on the right-hand side  will have a good sign if we consider forwards-in-time estimates and a bad sign if we consider backwards-in-time estimates.

In the backwards-in-time case, we use that $T=\partial_u+\partial_v$ and $t=\frac{1}{2}(v-|u|)$ and  $|u|+v\lesssim r$ in the integration region, together with Lemma \ref{lm:angmom} to estimate:
\begin{equation*}
\begin{split}
\int_{v_{r_{\mathcal{I}}}}^{-u_{\infty}} \int_{-u_0}^{-v}t\cdot r^{-3} |\snabla_{\s^2}T\phi|^2\,d\omega du dv\lesssim&\: \sum_{|\alpha|=1}\int_{v_{r_{\mathcal{I}}}}^{-u_{-\infty} } \int_{-u_0}^{-v}r^{-2}\left[ r^2(\Lbar\Omega^{\alpha}\psi)^2+r^2(L \Omega^{\alpha}\psi)^2\right]\,d\omega du dv\\
\lesssim&\: \sum_{|\alpha|=1} \sup_{v} \int_{I_v} r^2(\Lbar \Omega^{\alpha}\psi)^2\,d\omega du+ \sup_{u} \int_{N_u} r^2(L\Omega^{\alpha}\psi)^2\,d\omega dv\\
\lesssim&\: \sum_{|\alpha|\leq 1} \sup_{v} \int_{I_v} {\mathbf{J}}^T[\Omega^{\alpha}\psi]\cdot \underline{L}\,d\omega du+ \sup_{u} \int_{N_u} {\mathbf{J}}^T[\Omega^{\alpha}\psi]\cdot L\,d\omega dv\\
\lesssim&\: \sum_{|\alpha|\leq 1} \int_{\underline{N}_{-u_0}}  {\mathbf{J}}^T[\Omega^{\alpha}\psi]\cdot L\,   d\omega dv+\int_{\mathcal{I}^+_ {\leq -u_0}} {\mathbf{J}}^T[\Omega^{\alpha}\psi]\cdot \underline{L}\,  d\omega du,
\end{split}
\end{equation*}
where we arrived at the last inequality by applying Lemma \ref{lm:tenconsv}. \textbf{Note that in this step we needed to use that our solution to \eqref{eq:waveequation} a time derivative, i.e. it is of the form $T\psi$!}

We moreover apply Young's inequality to estimate
\begin{equation*}
\begin{split}
r^{-3}|T\phi| (|u||\Lbar T\phi|+v|LT\phi|)\leq&\: r^{-1-\eta}  (|u|(\Lbar T\phi)^2+v(LT\phi)^2)+ r^{-5+\eta}(|u|+v)(T\phi)^2\\
\leq&\: r^{-1-\eta}  (|u|(\Lbar T\phi)^2+v(LT\phi)^2)+ r^{-5+\eta}(|u|+v)((\Lbar \phi)^2+(L \phi)^2).
\end{split}
\end{equation*}
We can absorb the spacetime integrals of the terms on the very right-hand side into the following flux terms:
\begin{equation*}
\sup_{u} \int_{N_u} v(LT\phi)^2+\frac{1}{4}|u|r^{-2}D|\snabla_{\s^2}T\phi|^2\,d\omega dv+ \sup_{v} \int_{I_v} |u|(\Lbar T\phi)^2+\frac{1}{4}vr^{-2}D|\snabla_{\s^2}T\phi|^2\,d\omega du
\end{equation*}
and
\begin{equation*}
\sup_{u} \int_{N_u} \mathbf{J}^T[\psi]\cdot {L}\,d\omega dv+ \sup_{v} \int_{I_v}{\mathbf{J}}^T[\psi]\cdot \underline{L}\,d\omega du.
\end{equation*}

Integrating the identity \eqref{eq:keyidasymflatp11} in $u$ and $v$ and applying the above estimates therefore gives the following inequality:
\begin{equation}
\label{eq:fluxbound}
\begin{split}
\sup_{u} &\int_{N_u} v(LT\phi)^2+\frac{1}{4}|u|r^{-2}D|\snabla_{\s^2}T\phi|^2\,d\omega dv+ \sup_{v} \int_{I_v} |u|(\Lbar T\phi)^2+\frac{1}{4}vr^{-2}D|\snabla_{\s^2}T\phi|^2\,d\omega du\\
\leq&\: C\int_{N_{-u_0}} v(L T\phi)^2+\frac{1}{4}|u|r^{-2}D|\snabla_{\s^2}T\phi|^2\,d\omega dv+C\int_{\mathcal{I}^+_{\leq -u_0}} |u|(\Lbar T\phi)^2+\frac{1}{4}vr^{-2}D|\snabla_{\s^2}T\phi|^2\,d\omega du\\
&+ C\sum_{|\alpha|\leq 1} \int_{\underline{N}_{-u_0}}  {\mathbf{J}}^T[\Omega^{\alpha}\psi]\cdot L  d\omega dv+\int_{\mathcal{I}^+_ {\leq -u_0}} {\mathbf{J}}^T[\Omega^{\alpha}\psi]\cdot \underline{L}  d\omega du
\end{split}
\end{equation}
and hence, using \eqref{eq:keyidasymflatp11} and the above estimate once more, now in combination with \eqref{eq:fluxbound}, we arrive at 
\begin{equation*}
\begin{split}
\int_{\widetilde{\Sigma}\cap\{v_{r_{\mathcal{I}}}\leq v\leq -u_{-\infty}\}}& r (\Lbar T\phi)^2+r^2(L T\phi)^2+r^{-1}|\snabla_{\s^2}T\phi|^2\,d\omega dv\\
\leq&\: C\int_{N_{-u_0}} v(LT\phi)^2+\frac{1}{4}|u|r^{-2}D|\snabla_{\s^2}T\phi|^2\,d\omega dv+C\int_{\mathcal{I}^+_{\leq -u_0}} |u|(\Lbar T\phi)^2+\frac{1}{4}vr^{-2}D|\snabla_{\s^2}T\phi|^2\,d\omega du\\
&+ C\sum_{|\alpha|\leq 1} \int_{{N}_{-u_0}}  {\mathbf{J}}^T[\Omega^{\alpha}\psi]\cdot L  d\omega dv+\int_{\mathcal{I}^+_ {\leq -u_0}} (\Lbar \Omega^{\alpha}\phi)^2 d\omega du.
\end{split}
\end{equation*}

We repeat the above arguments near $\mathcal{H}^+$ by considering
\begin{equation*}
\Lbar(|v|(LT\phi)^2)+L(u(\Lbar T \phi)^2)
\end{equation*}
and reversing the roles of $u$ and $v$ and $L$ and $\underline{L}$, in order to obtain the near-horizon estimate in the backwards time direction. We omit further details of this step.

Now, we consider the forwards time direction. By repeating the arguments above in the forwards time direction, using that the $\psi$ and $\mathbf{n}_{\widetilde{\Sigma}}\psi$ are initially compactly supported and taking $|u_{-\infty}|$ and $|v_{-\infty}|$ appropriately large, we obtain moreover that
\begin{equation*}
\begin{split}
\sup_{u} &\int_{N_u} v(LT\phi)^2+\frac{1}{4}|u|r^{-2}D|\snabla_{\s^2}T\phi|^2\,d\omega dv+ \sup_{v} \int_{I_v} |u|(\Lbar T\phi)^2+\frac{1}{4}vr^{-2}D|\snabla_{\s^2}T\phi|^2\,d\omega dv\\
\leq&\: C\int_{\widetilde{\Sigma}\cap\{v_{r_{\mathcal{I}}}\leq v\leq -u_{-\infty}\}} r (\Lbar T\phi)^2+r^2(LT\phi)^2+r^{-1}|\snabla_{\s^2}T\phi|^2\,d\omega dv\\
&+C\int_{\widetilde{\Sigma}\cap\{v_{r_{\mathcal{I}}}\leq v\leq -u_{-\infty}\}}  {\mathbf{J}}^T[\psi]\cdot \mathbf{n}_{\widetilde{\Sigma}}\,  d\mu_{\widetilde{\Sigma}}.
\end{split}
\end{equation*}
Note that, in contrast with the backwards-in-time estimates, there is no need for an additional angular derivative in the $T$-energy term on the right hand side. The analogous estimate near $\mathcal{H}^+$ proceeds by repeating the above arguments, interchanging the roles of $u$ and $v$ and replacing $r$ by $(r-M)^{-1}$.
\end{proof}

\begin{corollary}
\label{cor:mainspacelikeinf}
Let $u_{-\infty}, v_{-\infty}<0$, with $|u_{-\infty}|, |v_{-\infty}|$ arbitrarily large. There exists constants $C,c=C,c(M, r_{\mathcal{I}},r_{\mathcal{H}},u_0,v_0)>0$, such that
\begin{align}
\label{eq:fullestnearspacelikeinf1}
\sum_{j=0}^2\int_{\widetilde{\Sigma}\cap\{v_{r_{\mathcal{I}}}\leq v\leq -u_{-\infty}\}}& r^{2-j} (\Lbar T^j \phi)^2+r^{2-j}(LT^j\phi)^2+r^{2-j}|\snabla_{\s^2}T^j\phi|^2\,d\omega dr\\ \nonumber
&+\int_{\widetilde{\Sigma}\cap\{v_{r_{\mathcal{I}}}\leq v\leq -u_{-\infty}\}}  |\snabla_{\s^2}\Lbar  \phi|^2+ |\snabla_{\s^2}L\phi|^2+r^{-2}|\snabla_{\s^2}^2 \phi|^2\,d\omega dr\\ \nonumber
\sim_{c,C}&\: \sum_{j=0}^2\int_{N_{-u_0}} r^{2-j} (LT^j \phi)^2+r^{-2}|\snabla_{\s^2} T^j\phi|^2+|\snabla_{\s^2}L\phi|^2+r^{-2}|\snabla_{\s^2}^2\phi|^2\,d\omega dv\\ \nonumber
&+\sum_{j=0}^2\int_{\mathcal{I}^+_{\leq -u_0}} (1+|u|)^{2-j}(\Lbar T^j\phi)^2+|\snabla_{\s^2} \phi|^2+|\snabla_{\s^2}\Lbar  \phi|^2\,d\omega du,\\
\label{eq:fullestnearspacelikeinf2}
\sum_{j=0}^2\int_{\widetilde{\Sigma}\cap\{u_{r_{\mathcal{H}}}\leq u\leq -v_{-\infty}\}}& (r-M)^{-2+j}(L T^j\phi)^2+(r-M)^{-2+j}(\Lbar T^j\phi)^2+(r-M)^{j}|\snabla_{\s^2}T^j\phi|^2\,du \\ \nonumber
&+\int_{\widetilde{\Sigma}\cap\{u_{r_{\mathcal{H}}}\leq u\leq -v_{-\infty}\}}  |\snabla_{\s^2}L \phi|^2+ |\snabla_{\s^2}\Lbar \phi|^2+(r-M)^2|\snabla_{\s^2}^2\phi|^2\,du \\ \nonumber
\sim_{c,C}&\:\sum_{j=0}^2\int_{\underline{N}_{-v_0}} (r-M)^{-2+j} (\Lbar T^j \phi)^2+(r-M)^{2}|\snabla_{\s^2} T^j\phi|^2+ |\snabla_{\s^2}\Lbar\phi|^2+(r-M)^{2}|\snabla_{\s^2}^2 \phi|^2\,d\omega du\\ \nonumber
&+\sum_{j=0}^2\int_{\mathcal{H}^+_ {\leq -v_0}} (1+|v|)^{2-j}(LT^j\phi)^2+|\snabla_{\s^2} \phi|^2+|\snabla_{\s^2}L  \phi|^2\,  d\omega dv.
\end{align}
\end{corollary}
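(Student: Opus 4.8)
The plan is to obtain \eqref{eq:fullestnearspacelikeinf1} and \eqref{eq:fullestnearspacelikeinf2} by \emph{assembling}, at each level of the $r^p$/$(r-M)^{-p}$ hierarchy, the estimates that have already been proved, the three summands $j=0,1,2$ corresponding respectively to the levels $p=2$, $p=1$ and $p=0$. Concretely, the $j=0$ terms, carrying the top weights $r^2$, $u^2$, $v^2$, are exactly the content of Proposition \ref{prop:estasympflat} (equations \eqref{eq:asymflatestback1}--\eqref{eq:asymflatestback2}); the $j=1$ terms, carrying one additional time derivative and weights $r$, $|u|$, $|v|$, are supplied by Proposition \ref{prop:p1estspacelikeinf}, whose estimates \eqref{eq:p1estnearspacelikeinf1}--\eqref{eq:p1estnearspacelikeinf2} are already phrased in terms of $T\phi$ and moreover already include the angularly commuted fields $\Omega^{\alpha}\phi$ with $|\alpha|\leq 1$; and the $j=2$ terms follow from the degenerate energy conservation of Lemma \ref{lm:tenconsv} applied to $T^2\psi$, after trading the zeroth-order radiation-field contributions for the fluxes of $LT^2\phi$ and $\Lbar T^2\phi$ by means of the Hardy inequalities of Lemma \ref{lm:hardy}. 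Since each of these inputs is a two-sided equivalence $\sim_{c,C}$ valid in \emph{both} time directions, summing them over $j$ again yields a two-sided equivalence.

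It remains to produce the additional purely angular terms $|\snabla_{\s^2}\Lbar\phi|^2$, $|\snabla_{\s^2}L\phi|^2$, $r^{-2}|\snabla_{\s^2}^2\phi|^2$ near $\mathcal{I}^+$ (and $|\snabla_{\s^2}L\phi|^2$, $|\snabla_{\s^2}\Lbar\phi|^2$, $(r-M)^2|\snabla_{\s^2}^2\phi|^2$ near $\mathcal{H}^+$). The key observation is that the angular momentum operators $\Omega_i$ commute with $\square_g$, with $L$ and $\Lbar$, and with the spherical Laplacian, so that for each multi-index $\alpha$ the field $\Omega^{\alpha}\phi$ again satisfies the radiation-field equation \eqref{eq:maineqradfield}. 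I would therefore apply Propositions \ref{prop:estasympflat} and \ref{prop:p1estspacelikeinf}, together with the $T^2$-energy conservation of Lemma \ref{lm:tenconsv}, to $\Omega^{\alpha}\phi$ with $|\alpha|\leq 1$ and, where a second spherical derivative is needed, $|\alpha|\leq 2$, and then convert the resulting sums $\sum_{|\alpha|\leq k}\int (\Omega^{\alpha}\,\cdot)^2$ into the stated $\snabla_{\s^2}$ and $\snabla_{\s^2}^2$ norms via the equivalences \eqref{eq:angmomentineq1}--\eqref{eq:angmomentineq2} of Lemma \ref{lm:angmom}. Collecting these commuted estimates with the base hierarchy of the previous paragraph accounts for every term on both sides of \eqref{eq:fullestnearspacelikeinf1}--\eqref{eq:fullestnearspacelikeinf2}.

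The main obstacle is not conceptual but a matter of weight bookkeeping. One must verify, level by level and after each angular commutation, that the $r$- and $(r-M)$-weights of the mixed angular/null contributions line up precisely with those claimed, and that the error terms generated upon commuting the wave equation can still be absorbed exactly as in the underlying propositions, i.e.\ through Young's inequality combined with the smallness gained by taking $r_{\mathcal{I}}$ large and $r_{\mathcal{H}}-M$ small, and through Lemma \ref{lm:hardy}. In particular, the top-order angular term $r^{-2}|\snabla_{\s^2}^2\phi|^2$ near $\mathcal{I}^+$ and $(r-M)^2|\snabla_{\s^2}^2\phi|^2$ near $\mathcal{H}^+$ require commuting twice with $\Omega$ before invoking the $T$-energy equivalence, so that no uncontrolled top-order angular flux remains. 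Once these routine consistency checks are carried out, the corollary follows by mere addition of the already-established inequalities, with no new vector-field multiplier estimate required.
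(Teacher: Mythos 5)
Your proposal is correct and follows essentially the same route as the paper: the paper's proof simply combines the degenerate $T$-energy identities \eqref{eq:tenconsvradfield1}--\eqref{eq:tenconsvradfield2} (applied to $T^2\phi$ and to $\Omega^{\alpha}\phi$ with $|\alpha|\leq 1$, which supplies the purely angular terms), Proposition \ref{prop:estasympflat} for the top-weight $j=0$ level, Proposition \ref{prop:p1estspacelikeinf} for the $j=1$ level, and Lemma \ref{lm:angmom} to convert $\Omega^{\alpha}$-sums into $\snabla_{\s^2}$-norms. The only cosmetic difference is that the second-order angular term is obtained in the paper from the $|\snabla_{\s^2}\Omega^{\alpha}\phi|^2$ flux already present after a single $\Omega$-commutation together with \eqref{eq:angmomentineq2}, rather than by commuting twice.
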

\begin{proof}
We combine \eqref{eq:tenconsvradfield2}, \eqref{eq:tenconsvradfield2} (and apply it to $T^2\phi$), Proposition \ref{prop:estasympflat} and Proposition \ref{prop:p1estspacelikeinf}. We moreover apply Lemma \ref{lm:angmom}.
\end{proof}

\subsection{Higher-order estimates}
\label{sec:hoestasymflat}
The aim of this section is to derive analogues of the estimates in Proposition \ref{prop:estasympflat} for higher-order derivatives of $\psi$ (with additional growing weights). The key vector field that plays a role in this step is $S=u\Lbar+vL$. This vector field is also called the \emph{scaling vector field} because it generates the scaling conformal symmetry in Minkowski. Even though the exact symmetry property is lost in extremal Reissner--Nordstr\"om, we will see below that the vector field still has favourable commutation properties with the operator $L\Lbar$.
\begin{lemma}
\label{lm:equationSnphi}
Let $n \in \N_0$ and $S=u\Lbar+vL$. Then
\begin{align}
\label{eq:commSn1}
\Lbar L(S^n\phi)=&\frac{1}{4}Dr^{-2}\slashed{\Delta}_{\s^2}(S^n\phi)-\frac{DD'}{4r}S^n\phi+ n\sum_{k=0}^{\max\{n-1,0\}} O(r^{-3})S^k\phi+ O(r^{-3})\log r\slashed{\Delta}_{\s^2}S^k\phi,\\
\label{eq:commSn2}
\Lbar L(S^n\phi)=&\frac{1}{4}Dr^{-2}\slashed{\Delta}_{\s^2}(S^n\phi)-\frac{DD'}{4r}S^n\phi+ n\sum_{k=0}^{\max\{n-1,0\}} O((r-M)^{3})S^k\phi+ O((r-M)^{3})\log ((r-M)^{-1})\slashed{\Delta}_{\s^2}S^k\phi.
\end{align}
\end{lemma}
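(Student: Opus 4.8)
The plan is to reduce the lemma to a single commutation identity together with two elementary size estimates on the coefficients of the radiation-field equation \eqref{eq:maineqradfield}, and then run an induction on $n$; the base case $n=0$ is precisely \eqref{eq:maineqradfield}. First I would work in $(u,v,\theta,\varphi)$ coordinates, where $S=u\Lbar+vL=u\partial_u+v\partial_v$, $L=\partial_v$, $\Lbar=\partial_u$, and record the commutator
\begin{equation*}
[\Lbar L,\,S]=2\,\Lbar L,
\end{equation*}
which follows from the one-line computation $[\partial_u\partial_v,\,u\partial_u]=[\partial_u\partial_v,\,v\partial_v]=\partial_u\partial_v$. Hence $\Lbar L\,S=(S+2)\Lbar L$, the structural fact that lets me pass each $S$-derivative through $\Lbar L$ at the cost of the harmless shift $S\mapsto S+2$; in particular $\Lbar L(S^{n+1}\phi)=(S+2)\,\Lbar L(S^n\phi)$. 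I would also note that $S$ commutes with $\slashed{\Delta}_{\s^2}$, since $S$ differentiates only in $(u,v)$ while $\slashed{\Delta}_{\s^2}$ has coefficients depending only on $(\theta,\varphi)$.

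Next I would isolate the two coefficient functions $a:=\tfrac14 Dr^{-2}$ and $b:=-\tfrac{DD'}{4r}$ from \eqref{eq:maineqradfield} and establish the key estimates
\begin{align*}
(S+2)a&=O(r^{-3})\log r, & (S+2)b&=O(r^{-3}), &&\text{near } \mathcal{I}^+,\\
(S+2)a&=O((r-M)^3)\log((r-M)^{-1}), & (S+2)b&=O((r-M)^3), &&\text{near } \mathcal{H}^+.
\end{align*}
These rest on the identity $Sg(r)=Dr_*\,g'(r)$, obtained from $Lr=\tfrac12 D$, $\Lbar r=-\tfrac12 D$ and $v-u=2r_*$, so that $S$ acts as the dilation field $r_*\partial_r$ weighted by $D$. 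The content is a cancellation of the naive leading term: near infinity $a\sim\tfrac14 r^{-2}$ and $Dr_*\sim r$, so $Sa\sim-\tfrac12 r^{-2}=-2a$ to leading order and $Sa+2a$ loses two powers of $r$, up to the logarithm generated by the $2M\log r$ term in $r_*$; near the horizon the same mechanism applies with $a\sim\tfrac{(r-M)^2}{4M^4}$ and $Dr_*\sim-(r-M)$. The function $b$ is already of the smaller size $O(r^{-3})$ (resp.\ $O((r-M)^3)$), so there no cancellation is needed. \emph{This coefficient cancellation is the main obstacle}, since it is exactly the place where the extremal form of $D$ and the precise asymptotics of $r_*$ enter.

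With these ingredients the inductive step is routine. Assuming \eqref{eq:commSn1} for $n$, write $\Lbar L(S^n\phi)=a\slashed{\Delta}_{\s^2}(S^n\phi)+bS^n\phi+E_n$ with $E_n$ the claimed error, apply $(S+2)$, and use $\Lbar L(S^{n+1}\phi)=(S+2)\Lbar L(S^n\phi)$. Since $S$ is a derivation commuting with $\slashed{\Delta}_{\s^2}$, Leibniz gives
\begin{equation*}
(S+2)\big(a\slashed{\Delta}_{\s^2}(S^n\phi)\big)=a\slashed{\Delta}_{\s^2}(S^{n+1}\phi)+\big((S+2)a\big)\slashed{\Delta}_{\s^2}(S^n\phi),
\end{equation*}
and similarly $(S+2)(bS^n\phi)=bS^{n+1}\phi+\big((S+2)b\big)S^n\phi$. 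The two leading pieces reproduce $a\slashed{\Delta}_{\s^2}(S^{n+1}\phi)+bS^{n+1}\phi$, while the coefficient estimates turn the remaining pieces into new error terms of exactly the advertised form at order $k=n$.

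Finally I would check that $(S+2)E_n$ stays of the same type. Acting on a generic error term $c(r)S^k\phi$ produces $\big((S+2)c\big)S^k\phi+c\,S^{k+1}\phi$, and since $Sc=Dr_*c'$ preserves the size of $c$ (for $c=O(r^{-3})$, $Sc=O(r)\,O(r^{-4})=O(r^{-3})$, and the horizon analogue is identical, with the logarithmic weight likewise preserved), every resulting term is again $O(r^{-3})S^j\phi$ or $O(r^{-3})\log r\,\slashed{\Delta}_{\s^2}S^j\phi$ (resp.\ the $\mathcal{H}^+$ versions) with $j\le n$, i.e.\ with $k\le (n+1)-1$. Summing all contributions yields \eqref{eq:commSn1} and \eqref{eq:commSn2} for $n+1$, completing the induction.
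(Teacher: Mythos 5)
Your proof is correct and follows essentially the same route as the paper: induction on $n$ using the commutation identity $\Lbar L\, S=(S+2)\Lbar L$ together with the cancellation $(S+2)(Dr^{-2})=O(r^{-3})\log r$ (and its horizon analogue). The only cosmetic difference is that you verify the coefficient cancellation via $Sg(r)=Dr_*\,g'(r)$, whereas the paper expands $Dr^{-2}=\tfrac{4}{(v-u)^2}+O(r^{-3})\log r$ and uses the exact homogeneity of $(v-u)^{-2}$ under $S$ --- the two computations are equivalent.
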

\begin{proof}
We will derive \eqref{eq:commSn1} and \eqref{eq:commSn2} inductively. Note that \eqref{eq:commSn1} and \eqref{eq:commSn2} hold for $n=0$ by \eqref{eq:maineqradfield}. Now assume  \eqref{eq:commSn1} and \eqref{eq:commSn2} hold for $n=N$ with $N\geq 0$.

Note first of all that for an arbitrary $C^2$ function $f$:
\begin{equation*}
\begin{split}
\Lbar L(Sf)=&\:\Lbar L(u\Lbar f+vLf)\\
=&\: (u\Lbar+vL+ 2)(\Lbar L f).
\end{split}
\end{equation*}
For any $p\geq 0$ we have that:
\begin{align*}
S(O(r^{-p}))= &O(r^{-p}),\\
S(O((r-M)^{p}))=&O((r-M)^{p}).
\end{align*}
Furthermore, we can expand
\begin{align*}
Dr^{-2}=&\:\frac{4}{(v-u)^2}+O(r^{-3})\log r,\\
Dr^{-2}=&\:\frac{4}{(v-u)^2}+O((r-M)^{3})\log ((r-M)^{-1}).
\end{align*}
Hence,
\begin{align*}
S(Dr^{-2})=&\:-\frac{8}{(v-u)^2}+O(r^{-3})\log r=-2Dr^{-2}+O(r^{-3})\log r,\\
S(Dr^{-2})=&\:-\frac{8}{(v-u)^2}+O((r-M)^{3})\log ((r-M)^{-1})=-2Dr^{-2}+O((r-M)^{3})\log ((r-M)^{-1}),
\end{align*}
and we obtain, using the above observations and applying \eqref{eq:commSn1} with $n=N$:
\begin{equation*}
\begin{split}
\Lbar L(S^{N+1}\phi)=&\:\frac{1}{4}Dr^{-2}\slashed{\Delta}_{\s^2}(S^{N+1}\phi)-\frac{DD'}{4r}S^{N+1}\phi+ N\sum_{k=0}^{\max\{N-1,0\}} O(r^{-3})S^{k+1}\phi+ O(r^{-3})\log r\slashed{\Delta}_{\s^2}S^{k+1}\phi\\
&+\frac{1}{4}(S(Dr^{-2})+2Dr^{-2})\slashed{\Delta}_{\s^2}S^N\phi+ \sum_{k=0}^{N} O(r^{-3})S^k\phi+ O(r^{-3})\log r\slashed{\Delta}_{\s^2}S^k\phi\\
=&\:\frac{1}{4}Dr^{-2}\slashed{\Delta}_{\s^2}(S^{N+1}\phi)-\frac{DD'}{4r}S^{N+1}\phi+ (N+1)\sum_{k=0}^{N} O(r^{-3})S^{k+1}\phi+ O(r^{-3})\log r\slashed{\Delta}_{\s^2}S^{k+1}\phi.
\end{split}
\end{equation*}
Hence, we can conclude that \eqref{eq:commSn1} must hold for all $n\in \N_0$. It follows analogously that \eqref{eq:commSn2} must hold for all $n\in \N_0$.
\end{proof}

Since the vector field $S$ does not commute with $\square_g$, we do not immediately obtain Lemma \ref{lm:tenconsv} for $S^n\psi$ replacing $\psi$, with $n\in \N$. However, we show in Proposition \ref{prop:TenergyboundSkphi} that, when considering $\phi$ instead of $\psi$, an equivalent energy boundedness statement holds.

\begin{proposition}
\label{prop:TenergyboundSkphi}
Let $n\in \N_0$. There exists constants $c,C=c,C(M,\widetilde{\Sigma}, r_{\mathcal{I}},r_{\mathcal{H}},,u_0,v_0,n)>0$, such that
\begin{align}
\label{est:p0skcomm1}
\sum_{0\leq k\leq n}\int_{\widetilde{\Sigma}\cap\{v_{r_{\mathcal{I}}}\leq v\leq -u_{-\infty}\}}& (L S^k\phi)^2+(\Lbar S^k\phi)^2+r^{-2}|\snabla_{\s^2}S^k\phi|^2\,d\omega dv\\ \nonumber
\sim_{c,C}&\: \sum_{0\leq k\leq n}\left[\int_{N_{-u_0}} (L S^k\phi)^2+r^{-2}|\snabla_{\s^2}S^k\phi|^2\,d\omega dv+\int_{\mathcal{I}^+_{\leq -u_0}} (\Lbar S^k\phi)^2\,d\omega du\right],\\
\sum_{0\leq k\leq n} \int_{\widetilde{\Sigma}\cap\{u_{r_{\mathcal{H}}}\leq u\leq -v_{-\infty}\}}& (LS^k\phi)^2+(\Lbar S^k\phi)^2+D|\snabla_{\s^2}S^k\phi|^2\,du\\ \nonumber
\sim_{c,C}&\:  \sum_{0\leq k\leq n} \left[\int_{\underline{N}_{ -v_0}} (\Lbar S^k\phi)^2+D|\snabla_{\s^2} S^k\phi|^2\,d\omega du+\int_{\mathcal{H}^+_{\leq -v_0}} (LS^k\phi)^2\,d\omega dv\right].
\end{align}
\end{proposition}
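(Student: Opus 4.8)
The plan is to argue by induction on the number $n$ of applied scaling vector fields $S$, exploiting the fact that, although $S$ fails to commute with $\square_g$, Lemma~\ref{lm:equationSnphi} shows that $S^n\phi$ solves the radiation-field equation \eqref{eq:maineqradfield} up to source terms that are \emph{lower order in the $S$-hierarchy} and carry the favourable decay $O(r^{-3})$ near $\mathcal{I}^+$ (respectively $O((r-M)^3)$ near $\mathcal{H}^+$). The base case $n=0$ is precisely the radiation-field $T$-energy equivalences \eqref{eq:tenconsvradfield1}--\eqref{eq:tenconsvradfield2}, which follow from Lemma~\ref{lm:tenconsv} and the Hardy inequalities of Lemma~\ref{lm:hardy}. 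Since the coordinate derivatives $L,\underline{L}$, the angular operators $\Omega_i$ and the spherical Laplacian $\snabla_{\s^2}$ all commute with $S$, and $\Omega_i$ also commutes with $\square_g$, I would in fact run the induction \emph{simultaneously over all angular orders}, i.e. for the solutions $\Omega^{\alpha}\psi$; this is the feature that will ultimately let me close the argument.

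First I would record the basic $T$-energy identity for a function $\Phi$ obeying $\underline{L}L\Phi=\tfrac{1}{4} Dr^{-2}\slashed{\Delta}_{\s^2}\Phi-\tfrac{DD'}{4r}\Phi+F$: multiplying by $2L\Phi$ and $2\underline{L}\Phi$, integrating over $\s^2$ and integrating the $\slashed{\Delta}_{\s^2}$-term by parts produces, after integration in the relevant spacetime region, a flux identity whose boundary terms are exactly the quantities $(L\Phi)^2$, $(\underline{L}\Phi)^2$ and $r^{-2}|\snabla_{\s^2}\Phi|^2$ appearing in \eqref{est:p0skcomm1}, together with a bulk contribution $\int (L\Phi+\underline{L}\Phi)\,F\,d\omega\,du\,dv$. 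Applying this with $\Phi=S^n\phi$ and $F=F_n$ the source of \eqref{eq:commSn1} (resp. \eqref{eq:commSn2}), and integrating in the two time directions exactly as in the proofs of Propositions~\ref{prop:estasympflat} and \ref{prop:p1estspacelikeinf}, reduces the two-sided equivalence to controlling the error integral $\int (|LS^n\phi|+|\underline{L}S^n\phi|)\,|F_n|$.

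The heart of the matter, and the step I expect to be the main obstacle, is the absorption of this error integral, since $F_n$ contains the \emph{second-order} angular term $O(r^{-3})\log r\,\slashed{\Delta}_{\s^2}S^k\phi$ with $k<n$. For the scalar terms $O(r^{-3})S^k\phi$ I would use Young's inequality to split off a piece $\eta\int r^{-1-\sigma}(\underline{L}S^n\phi)^2$, which is absorbed into the left-hand side with a small constant because $r\geq r_{\mathcal{I}}$ is large (so $\int r^{-1-\sigma}\,dv\lesssim r_{\mathcal{I}}^{-\sigma}$), leaving a strongly decaying piece $\int r^{-5+\sigma}(S^k\phi)^2$ estimated by a Hardy inequality and then by the inductive hypothesis at level $k<n$. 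For the angular term the same Young splitting produces $\eta\int r^{-1-\sigma}(\underline{L}S^n\phi)^2$ (again absorbed using large $r_{\mathcal{I}}$) and a remainder $\eta^{-1}\int r^{-5+\sigma}(\log r)^2(\slashed{\Delta}_{\s^2}S^k\phi)^2$; writing $\slashed{\Delta}_{\s^2}S^k\phi=S^k\slashed{\Delta}_{\s^2}\phi$ and invoking Lemma~\ref{lm:angmom} bounds the latter by $\sum_{|\alpha|\leq 2}\int r^{-5+\sigma}(S^k\Omega^{\alpha}\phi)^2$, which is controlled, via Hardy and the inductive hypothesis \emph{applied to the solutions $\Omega^{\alpha}\psi$ at $S$-level $k<n$}. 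Because angular commutation preserves the $S$-level and the base case $n=0$ holds at every angular order, this joint induction on $n$ closes; the near-horizon estimate follows identically with $r$ replaced by $(r-M)^{-1}$, $L$ and $\underline{L}$ interchanged, and the required smallness now coming from $r_{\mathcal{H}}-M$ small.
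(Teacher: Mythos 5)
Your proposal is correct and follows essentially the same route as the paper's proof: induction on the $S$-level, the commuted equation of Lemma \ref{lm:equationSnphi}, the $T$-multiplier flux identity with the spherical term integrated by parts, and absorption of the $O(r^{-3})$ (resp.\ $O((r-M)^3)$) bulk errors via Young's inequality into the boundary fluxes and the lower-order estimates. The one point you spell out that the paper leaves implicit is how the second-order angular error $\slashed{\Delta}_{\s^2}S^k\phi$ is closed, namely by applying the inductive hypothesis to the angular-commuted solutions $\Omega^{\alpha}\psi$ together with Lemma \ref{lm:angmom}; this is a correct and welcome clarification rather than a departure from the argument.
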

\begin{proof}
We establish the estimate \eqref{est:p0skcomm1} inductively. We prove the $n=0$ case first and then assume that \eqref{est:p0skcomm1} holds for $0\leq k\leq n-1$ in order to prove the $k=n$ case. We will in fact do both of these steps at the same time in the argument below. By Lemma \ref{lm:equationSnphi}, we have that
\begin{equation}
\label{eq:p0skcomm}
\begin{split}
L((\Lbar S^n\phi)^2)+\Lbar (v^2(LS^n\phi)^2)=&\:2\Lbar L S^n\phi\cdot \Lbar S^n\phi+2\Lbar L S^n\phi\cdot LS^n\phi\\
=&\: \frac{1}{2}\frac{1}{r^2}D \slashed{\Delta}_{\s^2}S^n\phi\cdot \Lbar S^n\phi+\frac{1}{2}\frac{1}{r^2}D \slashed{\Delta}_{\s^2}S^n\phi\cdot L S^n\phi\\
&+(\Lbar S^n\phi+L S^n\phi)\left(\sum_{k=0}^nO(r^{-3})S^k\phi+\sum_{k=0}^{n-1}\log r O(r^{-3})\slashed{\Delta}_{\s^2}S^k\phi\right).
\end{split}
\end{equation}
Furthermore,
\begin{equation*}
\frac{1}{2}\frac{1}{r^2}D \slashed{\Delta}_{\s^2}S^n\phi\cdot \Lbar S^n\phi+\frac{1}{2}\frac{1}{r^2}D \slashed{\Delta}_{\s^2}S^n\phi\cdot L S^n\phi= -\Lbar \left(\frac{1}{4r^2}D |\snabla_{\s^2}S^n\phi|^2\right)-L\left(\frac{1}{4r^2}D |\snabla_{\s^2}S^n\phi|^2\right).
\end{equation*}
We subsequently integrate both sides of \eqref{eq:p0skcomm} in $u$, $v$ and $\s^2$ and we apply Young's inequality to absorb all the spacetime integrals either into the corresponding boundary integrals as in the proof of Proposition \ref{prop:estasympflat}, or (if $n\geq 1$) also into the left-hand sides of the estimates contained in \eqref{est:p0skcomm1} with $0\leq k\leq n-1$. 
\end{proof}

\begin{proposition}
\label{prop:p2asympflatest}
Let $n\in \N_0$. There exists constants $c,C=c,C(M, r_{\mathcal{I}},r_{\mathcal{H}},n,u_0,v_0)>0$, such that
\begin{align}
\label{eq:asymflatestback1ho}
\sum_{0\leq k\leq n}&\sum_{|\alpha|\leq n-k}\int_{\widetilde{\Sigma}\cap\{v_{r_{\mathcal{I}}}\leq v\leq -u_{-\infty}\}} r^{2+2k}(L^{k+1}\Omega^{\alpha}\phi)^2+r^{2+2k}(\Lbar ^{k+1}\Omega^{\alpha}\phi)^2\\ \nonumber
&+r^{2k}(|\snabla_{\s^2}L^k\Omega^{\alpha}\phi|^2+|\snabla_{\s^2}\Lbar^k\Omega^{\alpha}\phi|^2)\,d\omega dv\\ \nonumber
\sim_{c,C}&\: \sum_{0\leq k\leq n}\sum_{|\alpha|\leq n-k}\int_{N_{-u_0}} r^{2+2k}(L^{k+1}\Omega^{\alpha}\phi)^2+r^{-2+2k}|\snabla_{\s^2}L^k\Omega^{\alpha}\phi|^2\,d\omega dv\\ \nonumber
&+\int_{\mathcal{I}^+_{\leq -u_0}} u^{2+2k}(\Lbar^k\Omega^{\alpha}\phi)^2+u^{2k}|\snabla_{\s^2}\Lbar^k\Omega^{\alpha}\phi|^2\,d\omega du,\\
\label{eq:asymflatestback2ho}
\sum_{0\leq k\leq n}&\sum_{|\alpha|\leq n-k}\int_{\widetilde{\Sigma}\cap\{u_{r_{\mathcal{H}}}\leq u\leq -v_{-\infty}\}} (r-M)^{-2-2k}(L^{k+1}\Omega^{\alpha}\phi)^2+(r-M)^{-2-2k}(\Lbar^{k+1}\Omega^{\alpha}\phi)^2\\ \nonumber
&+(r-M)^{-2k}(|\snabla_{\s^2}L^k\Omega^{\alpha}\phi|^2+|\snabla_{\s^2}\Lbar^k\Omega^{\alpha}\phi|^2)\,d\omega du \\ \nonumber
\sim_{c,C} &\:  \sum_{0\leq k\leq n}\sum_{|\alpha|\leq n-k} \int_{\underline{N}_{-v_0}} (r-M)^{-2-2k}(\Lbar^{k+1}\Omega^{\alpha}\phi)^2+(r-M)^{2-2k}|\snabla_{\s^2}\Lbar^k\Omega^{\alpha}\phi|^2\,d\omega du\\ \nonumber
&+ \int_{\mathcal{H}^+_{\leq -v_0}} v^{2+2k}(L^k\Omega^{\alpha}\phi)^2+v^{2k}|\snabla_{\s^2}L^k\Omega^{\alpha}\phi|^2\,d\omega dv.
\end{align}
\end{proposition}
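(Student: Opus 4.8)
The plan is to prove \eqref{eq:asymflatestback1ho} and \eqref{eq:asymflatestback2ho} by induction on $n$, with Proposition \ref{prop:estasympflat} (together with its angular-commuted analogue, obtained by replacing $\phi$ with $\Omega^{\alpha}\phi$ and using that $\Omega_i$ commutes with $\square_g$) serving as the base case $n=0$. As in the proofs of Propositions \ref{prop:estasympflat} and \ref{prop:p1estspacelikeinf}, the region near $\mathcal{I}^+$ relevant for \eqref{eq:asymflatestback1ho} and the region near $\mathcal{H}^+$ relevant for \eqref{eq:asymflatestback2ho} are interchanged by the formal substitution $u\leftrightarrow v$, $L\leftrightarrow \Lbar$, $r\leftrightarrow (r-M)^{-1}$, so it suffices to treat the near-$\mathcal{I}^+$ estimate \eqref{eq:asymflatestback1ho} in detail and transcribe the argument near $\mathcal{H}^+$ using \eqref{eq:eqphicommLbark} in place of \eqref{eq:eqphicommLk}.

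The main new ingredient at order $k$ is the higher-weight analogue of the $K$-multiplier identity \eqref{eq:keyidasymflat1}. Commuting the radiation-field equation \eqref{eq:maineqradfield} with $L^k$ produces \eqref{eq:eqphicommLk}, and I would then expand
\[
L\big(u^{2+2k}(\Lbar L^k\phi)^2\big)+\Lbar\big(v^{2+2k}(L^{k+1}\phi)^2\big)+\tfrac14 L\big(v^{2+2k}r^{-2}D|\snabla_{\s^2}L^k\phi|^2\big)+\tfrac14\Lbar\big(u^{2+2k}r^{-2}D|\snabla_{\s^2}L^k\phi|^2\big),
\]
using \eqref{eq:eqphicommLk} to rewrite $\Lbar L(L^k\phi)$. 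Integrating this identity over the triangular region yields as principal boundary fluxes the quantities $v^{2+2k}(L^{k+1}\phi)^2$ on the outgoing cones $N_u$ and $u^{2+2k}(\Lbar L^k\phi)^2$ on the ingoing cones $I_v$, together with the weighted angular fluxes of weight $r^{2k}$; taking suprema over the foliations and restricting to $\widetilde{\Sigma}$ then produces the bulk terms exactly as in Proposition \ref{prop:estasympflat}. The mixed derivative $\Lbar L^k\phi$ is converted into $\Lbar^{k+1}\phi$ plus angular and lower-order contributions by repeated use of \eqref{eq:eqphicommLk} (trading each $L\Lbar$ for $\tfrac14 Dr^{-2}\slashed{\Delta}_{\s^2}$ up to lower order), with the angular terms reorganised via Lemma \ref{lm:angmom}; this is precisely what generates both the $L^{k+1}$ and the $\Lbar^{k+1}$ terms with weight $r^{2+2k}$ on the left-hand side.

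The error terms split into three groups. First, the terms arising from differentiating the weights $u^{2+2k}$ and $v^{2+2k}$ require the higher-order analogue of the cancellation \eqref{eq:keycancelinf}: one must show
\[
\Lbar\left(\frac{u^{2+2k} D}{r^2}\right) + L\left(\frac{v^{2+2k} D}{r^2}\right) = O(r^{2k-2})\log r,
\]
so that, after multiplication by $|\snabla_{\s^2}L^k\phi|^2$, this contribution is integrable in $\{r\geq r_{\mathcal{I}}\}$ and can be absorbed into the weighted angular flux of weight $r^{2k}$ once $r_{\mathcal{I}}$ is taken suitably large. Second, the genuinely lower-order terms $\sum_{j\geq 1}O(r^{-2-j})\slashed{\Delta}_{\s^2}L^{k-j}\phi$ and $\sum_{j\geq 0}O(r^{-3-j})L^{k-j}\phi$ from \eqref{eq:eqphicommLk} are controlled, after Young's inequality, by the inductive hypothesis at orders $\leq k-1$ carrying one additional angular derivative — which is why the sums over $\Omega^{\alpha}$ with $|\alpha|\leq n-k$ are propagated throughout. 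Third, the zeroth-order $\phi$-error is absorbed by Cauchy--Schwarz together with the vanishing of $\phi$ at the past boundary of the triangular region, exactly as in the derivation of \eqref{eq:mainestKcomm}. The reverse inequalities giving the equivalences $\sim_{c,C}$ are then obtained by running the same identities forward in time, using that compactly supported data on $\widetilde{\Sigma}$ force $\phi$ and all its derivatives to vanish along $N_{-u_{-\infty}}$, $\underline{N}_{-v_{-\infty}}$ and the corresponding segments of $\mathcal{I}^+$ and $\mathcal{H}^+$.

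The hardest part is the weight-derivative cancellation with the growing factor $r^{2k}$: one must verify that the extra powers of $u^2$ and $v^2$ do not destroy the borderline logarithmic integrability that already made the base case delicate, i.e. that the leading $r^{2k}$-growth cancels and only an $O(r^{2k-2})\log r$ remainder survives (analogously using \eqref{eq:keycancelhor} near $\mathcal{H}^+$). Combined with the bookkeeping needed to feed each commutator error term into the inductive hypothesis at the correct order and angular regularity, this is where essentially all the work lies; once the cancellation and the induction are set up, the remaining steps are a direct generalisation of Proposition \ref{prop:estasympflat}, and the scaling-commuted identities of Lemma \ref{lm:equationSnphi} and the energy-level control of Proposition \ref{prop:TenergyboundSkphi} may be invoked to streamline the treatment of the top-order mixed derivatives if desired.
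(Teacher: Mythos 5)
Your argument hinges on the claimed higher-order cancellation $\Lbar\big(u^{2+2k}Dr^{-2}\big)+L\big(v^{2+2k}Dr^{-2}\big)=O(r^{2k-2})\log r$, and this identity is false for $k\geq 1$. Writing $Dr^{-2}=4(v-u)^{-2}+O(r^{-3})\log r$ and computing directly, the leading part of the left-hand side equals $4(v-u)^{-2}\big[(2+2k)(u^{1+2k}+v^{1+2k})-2\sum_{j=0}^{1+2k}v^{j}u^{1+2k-j}\big]$; for $k=0$ the bracket vanishes identically (this is exactly \eqref{eq:keycancelinf}), but for $k=1$ it equals $2(v-u)^2(u+v)$, so the expression is $8(u+v)=16t$, and in general it is $\sim k\,t\,r^{2k-2}\sim r^{2k-1}$ in $D_{-u_0}$ (where $t$ can be comparable to $v\sim r$). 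This is one full power of $r$ worse than you assert, and it is not absorbable: measured against the angular flux $v^{2+2k}r^{-2}D|\snabla_{\s^2}L^k\phi|^2\sim v^{2k}|\snabla_{\s^2}L^k\phi|^2$ along $N_u$, the bulk error $\int\!\!\int v^{2k-1}|\snabla_{\s^2}L^k\phi|^2\,du\,dv$ costs a factor $\int_{u_{-\infty}}^{-u_0}|u|^{-1}du=\log(|u_{-\infty}|/u_0)$, which diverges as $u_{-\infty}\to-\infty$, and the term has the unfavourable sign for the forwards direction of the asserted equivalence. (This is the same $t$-weighted obstruction that forces Proposition \ref{prop:p1estspacelikeinf} to commute with $T$ and pay an angular derivative; at top order $k=n$, $\alpha=0$ you have no such room.) Since you yourself identify this cancellation as ``where essentially all the work lies,'' the proof as written does not close.

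The fix is precisely the ingredient you relegate to an optional aside: the paper never uses the multiplier $u^{2+2k}\Lbar+v^{2+2k}L$ on $L^k\phi$. Instead it commutes with the scaling vector field $S=u\Lbar+vL$, for which Lemma \ref{lm:equationSnphi} shows that $S^k\phi$ satisfies an equation of the same form as \eqref{eq:maineqradfield} up to admissible errors (the key point being $S(Dr^{-2})=-2Dr^{-2}+O(r^{-3})\log r$), and then applies Proposition \ref{prop:estasympflat} verbatim to $S^k\phi$ with the \emph{fixed} weights $u^2,v^2$ -- so only the $k=0$ cancellation \eqref{eq:keycancelinf} is ever needed. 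The commutator errors are handled inductively using the unweighted energy control of Proposition \ref{prop:TenergyboundSkphi}, and the weighted $L^{k+1}$, $\Lbar^{k+1}$ and angular fluxes in \eqref{eq:asymflatestback1ho}--\eqref{eq:asymflatestback2ho} are recovered only at the very end, by algebraically expanding $S^k$ in terms of $u$- and $v$-derivatives on the boundary hypersurfaces (where $|u|^k$, $|v|^k$ and $r^k$ weights appear for free) and eliminating mixed derivatives via Lemma \ref{lm:equationSnphi} and Lemma \ref{lm:angmom}. Your induction scheme, the treatment of the lower-order commutator terms, and the forwards/backwards symmetry discussion are all sensible, but they need to be rebuilt around the $S$-commutation rather than around a boosted-weight $K$-multiplier.
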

\begin{proof}
We can apply the same arguments as in Proposition \ref{prop:estasympflat}, replacing $\phi$ by $S^k\phi$, with $0\leq k\leq n$ and applying the more general equations \eqref{eq:commSn1} and \eqref{eq:commSn2} instead of  \eqref{eq:maineqradfield} to obtain:
\begin{align*}
\sum_{0\leq k\leq n}\int_{\widetilde{\Sigma}\cap\{v_{r_{\mathcal{I}}}\leq v\leq -u_{-\infty}\}}& r^2(LS^k\phi)^2+r^2(\Lbar S^k\phi)^2+|\snabla_{\s^2}S^k\phi|^2\,d\omega dv\\ 
\sim_{c,C}&\: \sum_{0\leq k\leq n}\left[\int_{N_{-u_0}} r^2(LS^k\phi)^2+r^{-2}|\snabla_{\s^2}S^k\phi|^2\,d\omega dv+\int_{\mathcal{I}^+_{\leq -u_0}} u^2(\Lbar S^k\phi)^2+|\snabla_{\s^2}S^k\phi|^2\,d\omega du\right],\\
\sum_{0\leq k\leq n} \int_{\widetilde{\Sigma}\cap\{u_{r_{\mathcal{H}}}\leq u\leq -v_{-\infty}\}}& (r-M)^{-2}(LS^k\phi)^2+(r-M)^{-2}(\Lbar S^k\phi)^2+|\snabla_{\s^2}S^k\phi|^2\,du\\
\sim_{c,C}&\:  \sum_{0\leq k\leq n} \Bigg[\int_{\underline{N}_{ -v_0}} (r-M)^{-2}(\Lbar S^k\phi)^2+(r-M)^2|\snabla_{\s^2} S^k\phi|^2\,d\omega du\\
&+\int_{\mathcal{H}^+_{\leq -v_0}} v^2(LS^k\phi)^2+ |\snabla_{\s^2}S^k\phi|^2\,d\omega dv\Bigg].
\end{align*}

We conclude the proof by rewriting $S^k\phi$ in terms of $u$ and $v$ derivatives and we moreover apply Lemma \ref{lm:equationSnphi} to rewrite all mixed $u$ and $v$ derivatives. Furthermore, we apply Lemma \ref{lm:angmom} to replace the angular derivatives by derivatives of the form $\Omega^{\alpha}$.
\end{proof}

\begin{proposition}
\label{prop:hop1asympflatest}
Let $n\in \N_0$. Then there exists constants $c,C=c,C(M, r_{\mathcal{I}},r_{\mathcal{H}},u_0,v_0,n)>0$, such that
\begin{align}
\label{eq:asymflatestbackp11ho}
\sum_{j=0}^1&\sum_{0\leq k\leq n}\sum_{|\alpha|\leq n-k+1-j}\int_{\widetilde{\Sigma}\cap\{v_{r_{\mathcal{I}}}\leq v\leq -u_{-\infty}\}} r^{2k+j}(L^{k+1}\Omega^{\alpha}T^{j}\phi)^2+r^{2k-2+j}|\snabla_{\s^2}L^{k}\Omega^{\alpha}T^{j}\phi|^2 \\ \nonumber
&+r^{2k+j}(\Lbar^{k+1}\Omega^{\alpha}T^{j}\phi)^2+r^{2k-2+j}|\snabla_{\s^2}\Lbar^{k}\Omega^{\alpha}T^{j}\phi|^2\,d\omega dr\\ \nonumber
&+\sum_{|\alpha|\leq n+1}\int_{\widetilde{\Sigma}\cap\{v_{r_{\mathcal{I}}}\leq v\leq -u_{-\infty}\}}   \mathbf{J}^T[\Omega^{\alpha} \psi]\cdot \mathbf{n}_{\widetilde{\Sigma}}\,d\mu_{\widetilde{\Sigma}} \\ \nonumber
\sim_{c,C}&\: \sum_{j=0}^1\sum_{0\leq k\leq n}\sum_{|\alpha|\leq n-k+1-j}\int_{N_{-u_0}} r^{2k+j}(L^{k+1}\Omega^{\alpha}T^{j}\phi)^2+r^{2k-2+j}|\snabla_{\s^2}L^{k}\Omega^{\alpha}T^{j}\phi|^2\,d\omega dv\\ \nonumber
&+\sum_{0\leq k\leq n}\sum_{|\alpha|\leq n-k}\int_{\mathcal{I}^+_{\leq -u_0}} |u|^{2k+1}(\Lbar^{k+1}\Omega^{\alpha}T\phi)^2+ |u|^{2k}(\Lbar^{k+1}\Omega^{\alpha}\phi)^2\,d\omega du\\ \nonumber
&+ \sum_{|\alpha|\leq n+1}\int_{{N}_{-u_0}} \mathbf{J}^T[\Omega^{\alpha}\psi]\cdot {L}\,d\omega dv+ \sum_{|\alpha|\leq n+1}\int_{\mathcal{I}^+_{\leq -u_0}} (\Lbar \Omega^{\alpha}\phi)^2\,d\omega du ,\\
\label{eq:asymflatestbackp12ho}
\sum_{j=0}^1&\sum_{0\leq k\leq n}\sum_{|\alpha|\leq n-k+1-j}\int_{\widetilde{\Sigma}\cap\{u_{r_{\mathcal{H}}}\leq u\leq -v_{-\infty}\}} (r-M)^{-2k-j}(L^{k+1}\Omega^{\alpha}T^{j}\phi)^2+(r-M)^{-2k+2-j}|\snabla_{\s^2}L^{k}\Omega^{\alpha}T^{j}\phi|^2 \\ \nonumber
&+(r-M)^{-2k-j}(\Lbar^{k+1}\Omega^{\alpha}T^{j}\phi)^2+(r-M)^{-2k+2-j}|\snabla_{\s^2}\Lbar^{k}\Omega^{\alpha}T^{j}\phi|^2\,d\omega dr_*\\ \nonumber
&+\sum_{|\alpha|\leq n+1}\int_{\widetilde{\Sigma}\cap\{u_{r_{\mathcal{H}}}\leq u\leq -v_{-\infty}\}}   \mathbf{J}^T[\Omega^{\alpha} \psi]\cdot \mathbf{n}_{\widetilde{\Sigma}}\,d\mu_{\widetilde{\Sigma}}\\ \nonumber
\sim_{c,C}&\: \sum_{j=0}^1 \sum_{0\leq k\leq n}\sum_{|\alpha|\leq n-k+1-j}\int_{\underline{N}_{-v_0}} (r-M)^{-2k-j}(\Lbar^{k+1}\Omega^{\alpha}T^{j}\phi)^2+(r-M)^{-2k+2}|\snabla_{\s^2}\Lbar^{k}\Omega^{\alpha}T^{j}\phi|^2\,d\omega du\\ \nonumber
&+\sum_{0\leq k\leq n}\sum_{|\alpha|\leq n-k}\int_{\mathcal{H}^+_{\leq -v_0}} |v|^{2k+1}(L^{k+1}\Omega^{\alpha}T\phi)^2+|v|^{2k}(L^{k+1}\Omega^{\alpha}\phi)^2+|v|^{2k}|\snabla_{\s^2} L^{k}\Omega^{\alpha} \phi|^2\,d\omega dv\\ \nonumber
&+ \sum_{|\alpha|\leq n+1}\int_{\underline{N}_{-v_0}} \mathbf{J}^T[\Omega^{\alpha}\psi]\cdot \underline{L}\,d\omega du+\sum_{|\alpha|\leq n+1}\int_{\mathcal{H}^+_{\leq -v_0}} (L\Omega^{\alpha}\phi)^2\,d\omega dv.
\end{align}
\end{proposition}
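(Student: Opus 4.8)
The plan is to derive \eqref{eq:asymflatestbackp11ho} and \eqref{eq:asymflatestbackp12ho} by combining the two ingredients already developed in this section: the $Y$-multiplier estimate underlying Proposition \ref{prop:p1estspacelikeinf}, and the commutation with the scaling vector field $S=u\underline{L}+vL$ that was used to pass from Proposition \ref{prop:estasympflat} to Proposition \ref{prop:p2asympflatest}. I would proceed by induction on $n$, establishing the near-infinity estimate \eqref{eq:asymflatestbackp11ho} and the near-horizon estimate \eqref{eq:asymflatestbackp12ho} simultaneously; as in the earlier propositions, the near-horizon case follows from the near-infinity case by interchanging the roles of $u$ and $v$ and of $L$ and $\underline{L}$, and replacing $r$ by $(r-M)^{-1}$, so the argument can focus on \eqref{eq:asymflatestbackp11ho}.

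For the inductive step I would apply the multiplier identity of the type appearing in \eqref{eq:keyidasymflatp11}, but with $T\phi$ replaced by $S^kT^j\phi$ for $j=0,1$ and $0\le k\le n$; that is, I compute $L(|u|(\underline{L}S^kT^j\phi)^2)+\underline{L}(v(LS^kT^j\phi)^2)$ and invoke Lemma \ref{lm:equationSnphi}, equations \eqref{eq:commSn1} and \eqref{eq:commSn2}, in place of \eqref{eq:maineqradfield} to express the mixed derivative $\underline{L}LS^kT^j\phi$ in terms of $\tfrac14 Dr^{-2}\slashed{\Delta}_{\s^2}S^kT^j\phi$ together with lower-order error terms of the schematic form $\sum_{k'<k}O(r^{-3})S^{k'}T^j\phi$ and $\sum_{k'<k}O(r^{-3})\log r\,\slashed{\Delta}_{\s^2}S^{k'}T^j\phi$. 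After integration by parts on $\s^2$ and in spacetime, the genuinely lower-order pieces (those carrying $S^{k'}$ with $k'<k$) are absorbed into the left-hand sides of the inductive hypothesis, while the borderline $\log r$-weighted angular terms are absorbed using the largeness of $r_{\mathcal{I}}$, exactly as in the estimate following \eqref{eq:auxestasympflat}.

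The main obstacle, precisely as in the proof of Proposition \ref{prop:p1estspacelikeinf}, is the spacetime term carrying a \emph{bad} sign in the backwards direction, originating from the weight derivative $L(\tfrac{vD}{4r^2})-\underline{L}(\tfrac{uD}{4r^2})=-\tfrac{t}{2}(r^{-3}+O(r^{-4}))$. This term cannot be absorbed using only the $p=1$ fluxes; instead it must be controlled by a full $T$-energy flux carrying one additional angular derivative $\Omega^\alpha$ (and, at level $n$, the corresponding commutations), after using $t\lesssim r$ and $|u|+v\lesssim r$ in the integration region. This is exactly why the right-hand sides of \eqref{eq:asymflatestbackp11ho} and \eqref{eq:asymflatestbackp12ho} contain the terms $\sum_{|\alpha|\le n+1}\int_{\widetilde{\Sigma}}\mathbf{J}^T[\Omega^\alpha\psi]\cdot \mathbf{n}_{\widetilde{\Sigma}}\,d\mu_{\widetilde{\Sigma}}$ and the accompanying fluxes along $N_{-u_0}$, $\underline{N}_{-v_0}$, $\mathcal{H}^+_{\le -v_0}$ and $\mathcal{I}^+_{\le -u_0}$. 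These are controlled by Lemma \ref{lm:tenconsv} applied to $\Omega^\alpha\psi$ and, for the $S$-commuted $T$-energies, by Proposition \ref{prop:TenergyboundSkphi}. As emphasised in the base case, the decisive point is that a time derivative must be available (the $j=1$ slot, or the presence of $T\phi$) in order to trade the bad sign for a controllable angular flux.

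Finally, having obtained the estimates phrased in terms of $S^kT^j\phi$, I would conclude by rewriting $S^k=(u\underline{L}+vL)^k$ in terms of pure $L$- and $\underline{L}$-derivatives, applying Lemma \ref{lm:equationSnphi} once more to convert every mixed $L\underline{L}$-derivative into an angular derivative plus lower-order terms, and using Lemma \ref{lm:angmom} to replace $\snabla_{\s^2}$-derivatives by derivatives of the form $\Omega^\alpha$. The lower bound in the equivalences $\sim_{c,C}$ then follows by repeating the identical multiplier computation in the forwards time direction, where the $-\tfrac{t}{2}(r^{-3}+\dots)$ term now carries a \emph{good} sign, exploiting the compact support of the data on $\widetilde{\Sigma}$ and taking $|u_{-\infty}|,|v_{-\infty}|$ suitably large, exactly as at the end of the proof of Proposition \ref{prop:p1estspacelikeinf}.
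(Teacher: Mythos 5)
Your proposal is correct and follows essentially the same route as the paper: the paper's proof likewise repeats the $Y$-multiplier argument of Proposition \ref{prop:p1estspacelikeinf} with $\phi$ replaced by $S^k\phi$, uses Lemma \ref{lm:equationSnphi} to handle the commutator errors, substitutes Proposition \ref{prop:TenergyboundSkphi} for Lemma \ref{lm:tenconsv} where needed, and then converts the $S^k$-derivatives into $|u|$- and $|v|$-weighted pure $L$- and $\Lbar$-derivatives. Your write-up is in fact more explicit than the paper's about the bad-sign spacetime term and the role of the extra angular derivative, but the underlying argument is identical.
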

\begin{proof}
We repeat the arguments in the proof of Proposition \ref{prop:p1estspacelikeinf}, applying the equations in Lemma \ref{lm:equationSnphi} that introduce additional terms, which can be absorbed straightforwardly. Furthermore, rather than using Lemma \ref{lm:tenconsv}, we apply Proposition \ref{prop:TenergyboundSkphi} where necessary. We then obtain:
\begin{align*}
\sum_{0\leq k\leq n}\int_{\widetilde{\Sigma}\cap\{v_{r_{\mathcal{I}}}\leq v\leq -u_{-\infty}\}}& r (\Lbar T S^k\phi)^2+r (L TS^k\phi)^2+r^{-1}|\snabla_{\s^2}TS^k\phi|^2\,d\omega dr\\ \nonumber
&+\sum_{|\alpha|\leq 1}\int_{\widetilde{\Sigma}\cap\{v_{r_{\mathcal{I}}}\leq v\leq -u_{-\infty}\}}  (\Lbar \Omega^{\alpha} S^k\phi)^2+(L \Omega^{\alpha}\phi)^2+r^{-2}|\snabla_{\s^2}\Omega^{\alpha}S^k\phi|^2\,d\omega dr\\ \nonumber
\sim_{c,C}&\: \sum_{0\leq k\leq n}\sum_{|\alpha|\leq 1}\int_{N_{-u_0}} r (L T S^k\phi)^2+r^{-2}|\snabla_{\s^2} TS^k\phi|^2+(L\Omega^{\alpha}S^k\phi)^2+r^{-2}|\snabla_{\s^2}\Omega^{\alpha}S^k\phi|^2\,d\omega dv\\ \nonumber
&+\sum_{j=0}^1\int_{\mathcal{I}^+_{\leq -u_0}} |u|^j(\Lbar T^jS^k\phi)^2+|\snabla_{\s^2}\Lbar  S^k\phi|^2\,d\omega du,\\
\sum_{0\leq k\leq n}\int_{\widetilde{\Sigma}\cap\{u_{r_{\mathcal{H}}}\leq u\leq -v_{-\infty}\}}& (r-M)^{-1}(LTS^k\phi)^2+(r-M)^{-1}(\Lbar TS^k\phi)^2+(r-M)|\snabla_{\s^2}TS^k\phi|^2\,du \\ \nonumber
&+\sum_{|\alpha|\leq 1}\int_{\widetilde{\Sigma}\cap\{u_{r_{\mathcal{H}}}\leq u\leq -v_{-\infty}\}} (L \Omega^{\alpha}S^k\phi)^2+(\Lbar \Omega^{\alpha}S^k\phi)^2+(r-M)^2|\snabla_{\s^2}\Omega^{\alpha}S^k\phi|^2\,du \\ \nonumber
\sim_{c,C}&\:  \sum_{0\leq k\leq n}\sum_{|\alpha|\leq 1}\int_{\underline{N}_{-v_0}} (r-M)^{-1} (\Lbar T S^k\phi)^2+(r-M)^{2}|\snabla_{\s^2} TS^k\phi|^2+(\Lbar \Omega^{\alpha}S^k\phi)^2\\
&+(r-M)^2|\snabla_{\s^2}\Omega^{\alpha}S^k\phi|^2\,d\omega du\\ \nonumber
&+\sum_{j=0}^1\int_{\mathcal{H}^+_ {\leq -v_0}} |v|^j(L T^jS^k\phi)^2+|\snabla_{\s^2}L S^k\phi|^2\,  d\omega dv.
\end{align*}
We conclude the proof by replacing the $S^k$ derivatives by $u$ and $v$ derivatives with weights in $|u|$ and $|v|$, and moreover applying Lemma \ref{lm:equationSnphi} to rewrite all mixed $u$ and $v$ derivatives in terms of pure $u$ or $v$ derivatives, angular derivatives and lower-order derivatives.
\end{proof}

\begin{corollary}
Let $n\in \N_0$. Then there exists constants $c,C=c,C(M, r_{\mathcal{I}},r_{\mathcal{H}},u_0,v_0,n)>0$, such that
\begin{equation}
\label{eq:asymflatestbacktot1ho}
\begin{split}
\sum_{j=0}^2&\sum_{|\alpha|+k\leq n}\Bigg[\int_{\widetilde{\Sigma}\cap\{v_{r_{\mathcal{I}}}\leq v\leq -u_{-\infty}\}} r^{2+2k-j}(L^{k+1}\Omega^{\alpha}T^{j}\phi)^2+r^{2k-j}|\snabla_{\s^2}L^{k}\Omega^{\alpha}T^{j}\phi|^2 \\ 
&+ r^{2k}|\snabla_{\s^2}L^{k+1}\Omega^{\alpha}\phi|^2+r^{2k-2}|\snabla_{\s^2}^2L^{k}\Omega^{\alpha}\phi|^2+r^{2k+2-j}(\Lbar^{k+1}\Omega^{\alpha}T^{j}\phi)^2 \\
&+r^{2k-j}|\snabla_{\s^2}\Lbar^{k}\Omega^{\alpha}T^{j}\phi|^2+r^{2k}|\snabla_{\s^2}\Lbar^{k+1}\Omega^{\alpha}\phi|^2+r^{2k-2}|\snabla_{\s^2}^2\Lbar^{k}\Omega^{\alpha}\phi|^2\,d\omega dr\Bigg]  \\
\sim_{c,C}&\: \sum_{j=0}^2\sum_{|\alpha|+k\leq n} \Bigg[\int_{N_{-u_0}} r^{2k+2-j}(L^{k+1}\Omega^{\alpha}T^{j}\phi)^2+r^{2k}|\snabla_{\s^2}L^{k+1}\Omega^{\alpha}\phi|^2\,d\omega dv \\
&+\int_{\mathcal{I}^+_{\leq -u_0}} |u|^{2k+2-j}(\Lbar^{k+1}\Omega^{\alpha}T^{j}\phi)^2+ |u|^{2k}|\snabla_{\s^2}\Lbar^{k+1}\Omega^{\alpha}\phi|^2+|u|^{2k}|\snabla_{\s^2}\Lbar^{k}\Omega^{\alpha}\phi|^2\,d\omega du\Bigg]\\ 
&+\sum_{|\alpha|\leq n+1}\int_{{N}_{-u_0}} \mathbf{J}^T[\Omega^{\alpha}\psi]\cdot {L}\,d\omega dv+\sum_{|\alpha|\leq n+1} \int_{\mathcal{I}^+_{\leq -u_0}} (\Lbar \Omega^{\alpha} \phi)^2\,d\omega du,
\end{split}
\end{equation}
and
\begin{equation}
\label{eq:asymflatestbacktot2ho}
\begin{split}
\sum_{j=0}^2&\sum_{|\alpha|+k\leq n}\Bigg[\int_{\widetilde{\Sigma}\cap\{u_{r_{\mathcal{H}}}\leq u\leq -v_{-\infty}\}} (r-M)^{-2k-2+j}(L^{k+1}\Omega^{\alpha}T^{j}\phi)^2+(r-M)^{-2k+j}|\snabla_{\s^2}L^{k}\Omega^{\alpha}T^{j}\phi|^2 \\ 
&+(r-M)^{-2k}|\snabla_{\s^2}L^{k+1}\Omega^{\alpha}\phi|^2+(r-M)^{-2k+2}|\snabla_{\s^2}^2L^{k}\Omega^{\alpha}\phi|^2\\
&+(r-M)^{-2k-2+j}(\Lbar^{k+1}\Omega^{\alpha}T^{j}\phi)^2+(r-M)^{-2k+j}|\snabla_{\s^2}\Lbar^{k}\Omega^{\alpha}T^{j}\phi|^2\\
&+(r-M)^{-2k}|\snabla_{\s^2}\Lbar^{k+1}\Omega^{\alpha}\phi|^2+(r-M)^{-2k+2}|\snabla_{\s^2}^2\Lbar^{k}\Omega^{\alpha}\phi|^2\,d\omega dr_*\Bigg]\\
\sim_{c,C}&\: \sum_{j=0}^2 \sum_{|\alpha|+k\leq n}\Bigg[\int_{\underline{N}_{-v_0}} (r-M)^{-2k-2+j}(\Lbar^{k+1}\Omega^{\alpha}T^{j}\phi)^2+(r-M)^{-2k}|\snabla_{\s^2}\Lbar^{k+1}\Omega^{\alpha}\phi|^2\,d\omega du\\ 
&+\int_{\mathcal{H}^+_{\leq -v_0}} |v|^{2k+2-j}(L^{k+1}\Omega^{\alpha}T^{j}\phi)^2+|v|^{2k}|\snabla_{\s^2}L^{k+1}\Omega^{\alpha}\phi|^2+|v|^{2k}|\snabla_{\s^2} L^{k}\Omega^{\alpha} \phi|^2\,d\omega dv\Bigg] \\
&+ \sum_{|\alpha|\leq n+1}\int_{\underline{N}_{-v_0}} \mathbf{J}^T[\Omega^{\alpha}\psi]\cdot \underline{L}\,d\omega du+\sum_{|\alpha|\leq n+1} \int_{\mathcal{H}^+_{\leq -v_0}} (L\Omega^{\alpha} \phi)^2\,d\omega dv.
\end{split}
\end{equation}
\end{corollary}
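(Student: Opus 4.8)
The plan is to assemble the three preceding propositions into the two stated equivalences \eqref{eq:asymflatestbacktot1ho} and \eqref{eq:asymflatestbacktot2ho}, in direct analogy with the way Corollary \ref{cor:mainspacelikeinf} was obtained from Proposition \ref{prop:estasympflat}, Proposition \ref{prop:p1estspacelikeinf} and the $T$-energy identities \eqref{eq:tenconsvradfield1}--\eqref{eq:tenconsvradfield2}. The left-hand sides are organised by the number $j\in\{0,1,2\}$ of $T$-derivatives, the $r$-weight (respectively $(r-M)$-weight) on each top-order term $r^{2+2k-j}(L^{k+1}\Omega^{\alpha}T^j\phi)^2$ dropping by one power of $r$ for every extra $T$. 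I would therefore obtain each slice from a different member of the hierarchy: the top-weight slice $j=0$ (weight $r^{2+2k}$, i.e.\ $p=2+2k$) from Proposition \ref{prop:p2asympflatest}; the middle slice $j=1$ (weight $r^{1+2k}$, $p=1+2k$) from the $j=1$ part of Proposition \ref{prop:hop1asympflatest}; and the bottom slice $j=2$ (weight $r^{2k}$, $p=2k$) by applying the degenerate estimate of Proposition \ref{prop:TenergyboundSkphi} to $T^2\phi$ in place of $\phi$, exactly as \eqref{eq:tenconsvradfield2} was applied to $T^2\phi$ in Corollary \ref{cor:mainspacelikeinf}. Summing the three slices and discarding redundant lower-order contributions yields the full hierarchy on both sides.

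Two conversions then have to be performed uniformly. First, all three source propositions are phrased in terms of the scaling-field derivatives $S^k\phi$ (and $S^kT^j\phi$), where $S=u\Lbar+vL$; to reach the stated form I would expand $S$ and use the commutation identities \eqref{eq:commSn1}--\eqref{eq:commSn2} of Lemma \ref{lm:equationSnphi} to rewrite every mixed $\Lbar L$ derivative as a combination of pure $L^{k+1}$ or $\Lbar^{k+1}$ derivatives, angular derivatives, and strictly lower-order terms carrying the same weights. On $N_{-u_0}$ and $\mathcal{I}^+_{\leq -u_0}$ one has $S^k\sim v^kL^k\sim r^kL^k$ to leading order (and the time-reversed statement near $\mathcal{H}^+$), so the factor $r^{2k}$ produced by $S^k$ matches the stated $r$-powers after collecting terms. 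Second, I would invoke Lemma \ref{lm:angmom} to interchange the rotationally invariant quantities $|\snabla_{\s^2}\cdot|$, $|\snabla_{\s^2}^2\cdot|$ with the commuted quantities $\Omega^{\alpha}$, $|\alpha|\leq 2$. The second-order angular terms $r^{2k-2}|\snabla_{\s^2}^2L^{k}\Omega^{\alpha}\phi|^2$ and the mixed terms $r^{2k}|\snabla_{\s^2}L^{k+1}\Omega^{\alpha}\phi|^2$ on the left are recovered by allowing one or two extra angular-commuted copies of $\phi$ in the source propositions, whose $T$-energy fluxes are precisely what is tracked by the $|\alpha|\leq n+1$ sums on the right-hand sides (the extra angular derivative being the expected loss in the backwards direction).

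The one genuinely delicate point, as in the proofs of Propositions \ref{prop:p2asympflatest} and \ref{prop:hop1asympflatest}, is the passage between $T$-derivatives and $L,\Lbar$-derivatives in the bottom slice. After applying Proposition \ref{prop:TenergyboundSkphi} to $T^2\phi$ one controls quantities of the form $(L^{k+1}T^2\phi)^2$, but distributing the two $T$'s so that the weights line up forces one to trade a $T$ for $\frac{1}{2}(L+\Lbar)$ via \eqref{eq:TconvLLbar1}--\eqref{eq:TconvLLbar2}, which generates $\frac{1}{4}Dr^{-2}\slashed{\Delta}_{\s^2}$-terms. These must be reabsorbed using the angular energies already controlled at one higher differential order, and the care lies entirely in checking that each such error term carries a weight dominated by a term already present on the left-hand side at the appropriate order in $k$ and $|\alpha|$. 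Everything else is the bilinear bookkeeping of summing finitely many estimates with matching weights; the two-sided equivalence $\sim_{c,C}$ in both \eqref{eq:asymflatestbacktot1ho} and \eqref{eq:asymflatestbacktot2ho} follows because each source proposition was itself established as a forwards-and-backwards equivalence.
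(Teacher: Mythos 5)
Your proposal is correct and follows essentially the same route as the paper, which simply combines Propositions \ref{prop:p2asympflatest} and \ref{prop:hop1asympflatest} (whose statements already encode the $S^k$-to-$L^{k+1},\Lbar^{k+1}$ conversion via Lemma \ref{lm:equationSnphi} and the angular-derivative bookkeeping you describe). Your extraction of the $j=2$ slice from Proposition \ref{prop:TenergyboundSkphi} applied to $T^2\phi$ is a harmless variant of reading off the lowest-weight member of the hierarchy in Proposition \ref{prop:hop1asympflatest} applied to $T$-commuted solutions, so the slicing and the final assembly agree with the paper's intended argument.
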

\begin{proof}
Follows immediately after combining the results of Proposition \ref{prop:p2asympflatest} and Proposition \ref{prop:hop1asympflatest}.
\end{proof}

By commuting $\square_g$ additionally with $T$ and applying Lemma \ref{lm:tenconsv}, we arrive at energy estimates along $N_{u_0}$ and $\underline{N}_{v_0}$ (rather than $N_{-u_0}$ and $\underline{N}_{-v_0}$) with the same weights and number of derivatives as the energy fluxes that appear in Corollary \ref{cor:hoedecayv2} and Corollary \ref{cor:mainbackweesthov2}.

\begin{corollary}
\label{cor:hoestspinf}
Let $n\in \N_0$. Then there exists constants $c,C=c,C(M, r_{\mathcal{I}},r_{\mathcal{H}},u_0,v_0,n)>0$, such that
\begin{equation}
\label{eq:asymflatestbacktot1hov2}
\begin{split}
\sum_{j=0}^2&\sum_{2|\alpha|+2k+m\leq 2n}\Bigg[\int_{\widetilde{\Sigma}\cap\{v_{r_{\mathcal{I}}}\leq v\leq -u_{-\infty}\}} r^{2+2k-j}(L^{k+1}\Omega^{\alpha}T^{j+m}\phi)^2+r^{2k-j}|\snabla_{\s^2}L^{k}\Omega^{\alpha}T^{j+m}\phi|^2 \\ 
&+ r^{2k}|\snabla_{\s^2}L^{k+1}\Omega^{\alpha}T^m\phi|^2+r^{2k-2}|\snabla_{\s^2}^2L^{k}\Omega^{\alpha}T^m\phi|^2+r^{2k+2-j}(\Lbar^{k+1}\Omega^{\alpha}T^{j+m}\phi)^2 \\
&+r^{2k-j}|\snabla_{\s^2}\Lbar^{k}\Omega^{\alpha}T^{j+m}\phi|^2+r^{2k}|\snabla_{\s^2}\Lbar^{k+1}\Omega^{\alpha}T^{m}\phi|^2+r^{2k-2}|\snabla_{\s^2}^2\Lbar^{k}\Omega^{\alpha}T^{m}\phi|^2\,d\omega dr\Bigg]  \\
\sim_{c,C}&\: \sum_{j=0}^2\sum_{2|\alpha|+2k+m\leq 2n} \Bigg[\int_{N_{-u_0}} r^{2k+2-j}(L^{k+1}\Omega^{\alpha}T^{j+m}\phi)^2+r^{2k}|\snabla_{\s^2}L^{k+1}\Omega^{\alpha}T^{m}\phi|^2\,d\omega dv \\
&+\int_{\mathcal{I}^+_{\leq -u_0}} (1+|u|)^{2k+2-j}(\Lbar^{k+1}\Omega^{\alpha}T^{j+m}\phi)^2+ (1+|u|)^{2k}|\snabla_{\s^2}\Lbar^{k+1}\Omega^{\alpha}T^{m}\phi|^2+(1+|u|)^{2k}|\snabla_{\s^2}\Lbar^{k}\Omega^{\alpha}\phi|^2\,d\omega du\Bigg]\\ 
&+\sum_{2|\alpha|+m\leq 2n+2}\int_{{N}_{-u_0}} \mathbf{J}^T[\Omega^{\alpha}T^m\psi]\cdot {L}\,d\omega dv+\sum_{2|\alpha|+m\leq 2n+2} \int_{\mathcal{I}^+_{\leq -u_0}} (\Lbar \Omega^{\alpha} T^m\phi)^2\,d\omega du,
\end{split}
\end{equation}
and
\begin{equation}
\label{eq:asymflatestbacktot2hov2}
\begin{split}
\sum_{j=0}^2&\sum_{2|\alpha|+2k+m\leq 2n}\Bigg[\int_{\widetilde{\Sigma}\cap\{u_{r_{\mathcal{H}}}\leq u\leq -v_{-\infty}\}} (r-M)^{-2k-2+j}(L^{k+1}\Omega^{\alpha}T^{j+m}\phi)^2+(r-M)^{-2k+j}|\snabla_{\s^2}L^{k}\Omega^{\alpha}T^{j+m}\phi|^2 \\ 
&+(r-M)^{-2k}|\snabla_{\s^2}L^{k+1}\Omega^{\alpha}T^{m}\phi|^2+(r-M)^{-2k+2}|\snabla_{\s^2}^2L^{k}\Omega^{\alpha}T^{m}\phi|^2\\
&+(r-M)^{-2k-2+j}(\Lbar^{k+1}\Omega^{\alpha}T^{j+m}\phi)^2+(r-M)^{-2k+j}|\snabla_{\s^2}\Lbar^{k}\Omega^{\alpha}T^{j+m}\phi|^2\\
&+(r-M)^{-2k}|\snabla_{\s^2}\Lbar^{k+1}\Omega^{\alpha}T^{m}\phi|^2+(r-M)^{-2k+2}|\snabla_{\s^2}^2\Lbar^{k}\Omega^{\alpha}T^{m}\phi|^2\,d\omega dr_*\Bigg]\\ 
\sim_{c,C}&\: \sum_{j=0}^2 \sum_{2|\alpha|+2k+m\leq 2n}\Bigg[\int_{\underline{N}_{-v_0}} (r-M)^{-2k-2+j}(\Lbar^{k+1}\Omega^{\alpha}T^{j+m}\phi)^2+(r-M)^{-2k}|\snabla_{\s^2}\Lbar^{k+1}\Omega^{\alpha}T^{m}\phi|^2\,d\omega du\\ 
&+\int_{\mathcal{H}^+_{\leq -v_0}} (1+|v|)^{2k+2-j}(L^{k+1}\Omega^{\alpha}T^{j+m}\phi)^2+(1+|v|)^{2k}|\snabla_{\s^2}L^{k+1}\Omega^{\alpha}T^m\phi|^2+(1+|v|)^{2k}|\snabla_{\s^2} L^{k}\Omega^{\alpha} \phi|^2\,d\omega dv\Bigg] \\
&+ \sum_{2|\alpha|+m\leq 2n+2}\int_{\underline{N}_{-v_0}} \mathbf{J}^T[\Omega^{\alpha}T^m\psi]\cdot \underline{L}\,d\omega du+\sum_{2|\alpha|+m\leq 2n+2} \int_{\mathcal{H}^+_{\leq -v_0}} (L\Omega^{\alpha} T^m \phi)^2\,d\omega dv.
\end{split}
\end{equation}
\end{corollary}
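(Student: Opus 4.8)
The plan is to obtain \eqref{eq:asymflatestbacktot1hov2} and \eqref{eq:asymflatestbacktot2hov2} by reducing them to the estimates \eqref{eq:asymflatestbacktot1ho} and \eqref{eq:asymflatestbacktot2ho} of the preceding corollary (the combination of Proposition \ref{prop:p2asympflatest} and Proposition \ref{prop:hop1asympflatest}), at the cost of introducing additional $T$-derivatives. Since $T=\partial_u+\partial_v$ is Killing, it commutes with $\square_g$, so for every $m\in \N_0$ the function $T^m\psi$ is again a solution to \eqref{eq:waveequation} whose radiation field is $T^m\phi$. The key point is that if I apply \eqref{eq:asymflatestbacktot1ho}--\eqref{eq:asymflatestbacktot2ho} to $T^m\psi$ with $n$ replaced by $n_m:=n-\lceil m/2\rceil$, then every term $L^{k+1}\Omega^{\alpha}T^{j}(T^m\phi)=L^{k+1}\Omega^{\alpha}T^{j+m}\phi$ that occurs satisfies $|\alpha|+k\leq n_m$ with $0\leq j\leq 2$. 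A short computation shows that $|\alpha|+k\leq n-\lceil m/2\rceil$ is equivalent to $2|\alpha|+2k+m\leq 2n$ for both even and odd $m$, so the reduction is exact.

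First I would carry out this commutation for each $m$ with $0\leq m\leq 2n$ and sum over $m$. The union of the resulting index sets $\{(\alpha,k,j)\ :\ |\alpha|+k\leq n_m,\ 0\leq j\leq 2\}$, shifted by $T^m$, is precisely the index set $\{2|\alpha|+2k+m\leq 2n,\ 0\leq j\leq 2\}$ appearing in \eqref{eq:asymflatestbacktot1hov2}; likewise the $T$-energy flux terms $\int \mathbf{J}^T[\Omega^{\alpha}T^m\psi]\cdot L$ produced with $|\alpha|\leq n_m+1$ assemble exactly into the terms with $2|\alpha|+m\leq 2n+2$ on the right-hand side (again an exact, not merely lossy, match of the two index sets). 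The purely angular contributions $r^{2k}|\snabla_{\s^2}L^{k+1}\Omega^{\alpha}T^m\phi|^2$ and $r^{2k-2}|\snabla_{\s^2}^2 L^{k}\Omega^{\alpha}T^m\phi|^2$, together with their near-horizon analogues, are reproduced in the same manner, using Lemma \ref{lm:angmom} to pass freely between $|\snabla_{\s^2}^2\cdot|$ and second-order $\Omega^{\alpha}$ operators.

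The passage from the weights $|u|^{p}$, $|v|^{p}$ of the preceding corollary to the weights $(1+|u|)^{p}$, $(1+|v|)^{p}$ of the present statement is harmless: on $\mathcal{I}^+_{\leq -u_0}$ one has $|u|\geq u_0>0$ and on $\mathcal{H}^+_{\leq -v_0}$ one has $|v|\geq v_0>0$, so $|u|^{p}\sim_{c,C}(1+|u|)^{p}$ and $|v|^{p}\sim_{c,C}(1+|v|)^{p}$ with constants depending only on $u_0,v_0$, which the statement already permits. Finally, Lemma \ref{lm:tenconsv} applied to each $\Omega^{\alpha}T^m\psi$ ensures that the degenerate $T$-energy fluxes occurring on both sides are finite and mutually comparable across the relevant hypersurfaces; this is what lets the two-sided equivalence $\sim_{c,C}$ (rather than a one-sided bound) be inherited from \eqref{eq:asymflatestbacktot1ho}--\eqref{eq:asymflatestbacktot2ho}.

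I expect the only genuine work to be combinatorial bookkeeping: checking that summing the level-$n_m$ estimates over $0\leq m\leq 2n$ produces neither gaps nor redundancies in the index sets, that the floor/ceiling arithmetic in $n_m=n-\lceil m/2\rceil$ correctly reproduces the constraint $2|\alpha|+2k+m\leq 2n$ for both parities of $m$, and that the weights $r^{2+2k-j}$, $(r-M)^{-2k-2+j}$ and their angular counterparts line up under the shift $j\mapsto j+m$. There is no new analytic input whatsoever: all the hard vector-field estimates — the $K$-, $Y$- and $S$-multiplier estimates — are already contained in Propositions \ref{prop:p2asympflatest} and \ref{prop:hop1asympflatest}, so the corollary follows purely from commuting with the Killing field $T$ and reorganizing the resulting hierarchy.
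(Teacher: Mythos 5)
Your proposal is correct and matches the paper's (essentially unwritten) argument: the paper derives this corollary precisely by applying the preceding corollary to $T^m\psi$ (i.e.\ commuting with the Killing field $T$) together with Lemma \ref{lm:tenconsv}, and your index-set bookkeeping $|\alpha|+k\leq n-\lceil m/2\rceil \Leftrightarrow 2|\alpha|+2k+m\leq 2n$ (and likewise for the flux terms with $2|\alpha|+m\leq 2n+2$) is exactly the reorganization needed. No new analytic input is required, as you say.
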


\subsection{Construction of the scattering map}
\label{sec:consscatmat}
In this section we will construct the scattering map, which is a map from energy spaces on $\mathcal{I}^-$ and $\mathcal{H}^-$ to energy spaces on $\mathcal{I}^+$ and $\mathcal{H}^+$. First, we need to define what we mean by the solution to \eqref{eq:waveequation} in $J^+(\widetilde{\Sigma})$ arising from scattering data along $\mathcal{H}^+\cup \mathcal{I}^+$.

We introduce the following hypersurface: let $s<0$, then
\begin{equation*}
\widetilde{\Sigma}_s:=\widetilde{\Sigma}\cap\{s<r_*<|s|\}\cup N_{s}\cup \underline{N}_{s}.
\end{equation*}
\begin{definition}
\label{def:backwsolspacelikeinf}
Let $s_2<s_1<0$ and define the solutions $\psi_{s_i}: D^+(\widetilde{\Sigma}_{s_i})\to\R$ as the unique smooth solutions to \eqref{eq:waveequation} corresponding to scattering data $(\underline{\Phi},\Phi)\in C_{c}^{\infty}(\mathcal{H}^+)\oplus C_{c}^{\infty}(\mathcal{I}^+)$ in accordance with Proposition \ref{prop:mainpropbackdef}. Then, by uniqueness,
\begin{equation*}
\psi_{s_2}|_{D^+(\widetilde{\Sigma}_{s_1})}=\psi_{s_1},
\end{equation*}
so we can define the function $\psi: D^+(\widetilde{\Sigma})\to \R$ as follows: let $p\in D^+(\widetilde{\Sigma})$, then there exists an $s_*>0$ such that $p\in  D^+(\widetilde{\Sigma}_{s_*})$. Let
\begin{equation*}
\psi(p)=\psi_{s_*}(p).
\end{equation*}
It follows immediately that $\psi$ is a uniquely determined smooth solution to \eqref{eq:waveequation}, such that $\lim_{v\to \infty} r\psi(u,v,\theta,\varphi)=\Phi(u,\theta,\varphi)$ and $M\psi|_{\mathcal{H}^+}=\underline{\Phi}$.
\end{definition}

\begin{proposition}
Let $(\Psi,\Psi')\in  (C_{c}^{\infty}(\widetilde{\Sigma}))^2$. Then the corresponding solution $\psi$ to \eqref{eq:waveequation} satisfies
\begin{equation*}
(r\cdot\psi|_{\mathcal{H}^{\pm}},r\cdot\psi|_{\mathcal{I}^{\pm}})\in \mathcal{E}^T_{\mathcal{H}^{\pm}}\oplus \mathcal{E}^T_{\mathcal{I}^{\pm}}.
\end{equation*}
and furthermore, the following identity holds
\begin{equation*}
||r\cdot\psi|_{\mathcal{H}^{\pm} }||^2_{ \mathcal{E}^T_{\mathcal{H}^{\pm} }}+||r\cdot\psi|_{\mathcal{I}^{\pm}}||^2_{\mathcal{E}^T_{\mathcal{I}^{\pm}}}= ||(\Psi,\Psi')||_{\mathcal{E}^T_{\widetilde{\Sigma}} }^2.
\end{equation*}
\end{proposition}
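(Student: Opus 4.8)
The plan is to prove this as a straightforward $T$-energy conservation statement, following the same logic as Proposition \ref{prop:fowtennormbound} and Proposition \ref{prop:BTenergy}, but now adapted to the hypersurface $\widetilde{\Sigma}=\{t=0\}$ and its full past and future development. First I would recall that the $T$-energy norm $||\cdot||_{\mathcal{E}^T_{\widetilde{\Sigma}}}$ is, by definition, the flux $\int_{\widetilde{\Sigma}}\mathbf{J}^T[\psi]\cdot \mathbf{n}_{\widetilde{\Sigma}}\,d\mu_{\widetilde{\Sigma}}$ of the (unique) solution $\psi$ generated by the compactly supported Cauchy data $(\Psi,\Psi')$ on $\widetilde{\Sigma}$, in accordance with statement 2 of Theorem \ref{thm:globexuni}. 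The heart of the matter is then to show that this flux equals the sum of the $T$-fluxes of the associated radiation fields along $\mathcal{H}^{\pm}$ and $\mathcal{I}^{\pm}$, which are precisely $||r\psi|_{\mathcal{H}^{\pm}}||^2_{\mathcal{E}^T_{\mathcal{H}^{\pm}}}$ and $||r\psi|_{\mathcal{I}^{\pm}}||^2_{\mathcal{E}^T_{\mathcal{I}^{\pm}}}$ by the second and third identities of Lemma \ref{lm:tenconsv}.

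The key step is an application of the divergence theorem for the current $\mathbf{J}^T[\psi]$, using that $\operatorname{div}\mathbf{J}^T[\psi]\equiv 0$ (Lemma \ref{lm:tenconsv}), since $T$ is Killing. I would apply this in the future development $J^+(\widetilde{\Sigma})$ for the $``+"$ statement and in the past development $J^-(\widetilde{\Sigma})$ for the $``-"$ statement. For the $``+"$ case, one foliates $J^+(\widetilde{\Sigma})$ and integrates $\operatorname{div}\mathbf{J}^T=0$ over the region bounded in the past by $\widetilde{\Sigma}$ and to the future by $\mathcal{H}^+\cup \mathcal{I}^+$; the radiation field limits along $\mathcal{H}^+$ and $\mathcal{I}^+$ exist and are smooth by Proposition \ref{prop:regularityphiinfty} (and its $\mathcal{H}^+$ analogue), and the fluxes through these null boundaries convert into the horizon and null-infinity norms via the flux identities in Lemma \ref{lm:tenconsv}. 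The identity $\int_{\mathcal{H}^+}(L\phi)^2\,d\omega dv$ and $\int_{\mathcal{I}^+}(\Lbar\phi)^2\,d\omega du$ match exactly the $\mathcal{E}^T_{\mathcal{H}^+}$ and $\mathcal{E}^T_{\mathcal{I}^+}$ norms after identifying $\partial_{v_+}=\partial_v=L$ and $\partial_{u_+}=\partial_u=\underline{L}$. The $``-"$ statement follows identically using the time-reflection symmetry $t\mapsto -t$ of the spacetime, with $u_-=v$, $v_-=u$, exactly as exploited in Section \ref{intro:spacelikeinf}.

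The main technical obstacle will be justifying the \emph{absence of flux at the endpoints} of the null boundaries, i.e.\ controlling the limiting behaviour near spacelike infinity $i^0$ (the past endpoints of $\mathcal{H}^+$ and $\mathcal{I}^+$) and near the future timelike/null limit points. This is where compact support of $(\Psi,\Psi')$ is essential: it guarantees, by finite speed of propagation and the domain-of-dependence property, that $\psi$ vanishes in a neighbourhood of $i^0$ and that all boundary fluxes are finite with no contributions from the corners. Concretely, I would argue that for data supported in $\widetilde{\Sigma}\cap\{|r_*|\leq R\}$, the solution vanishes along $N_{u'}$ and $\underline{N}_{v'}$ for $|u'|,|v'|$ large, so the divergence-theorem argument may be performed on a sequence of truncated regions whose boundary terms at the truncation surfaces vanish in the limit. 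The equality (rather than mere equivalence) of norms is then forced by the \emph{exact} conservation $\operatorname{div}\mathbf{J}^T\equiv 0$, with no error terms, which is the reason one obtains an identity rather than a two-sided estimate. Finally, the membership $(r\psi|_{\mathcal{H}^{\pm}},r\psi|_{\mathcal{I}^{\pm}})\in \mathcal{E}^T_{\mathcal{H}^{\pm}}\oplus \mathcal{E}^T_{\mathcal{I}^{\pm}}$ follows since the radiation-field traces are smooth with the stated finite fluxes, so they lie in the domain of smooth functions whose completion defines these spaces.
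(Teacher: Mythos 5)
Your proposal is correct and follows essentially the same route as the paper: the statement is exactly global $T$-energy conservation (the divergence theorem for $\mathbf{J}^T$ with $\operatorname{div}\mathbf{J}^T\equiv 0$ from Lemma \ref{lm:tenconsv}, corner contributions killed by compact support) applied in $J^{+}(\widetilde{\Sigma})$, combined with the time-reflection symmetry $t\mapsto -t$ for the past case, which is precisely how the paper reduces it to Proposition \ref{prop:fowtennormbound} and its past-directed analogue. The only point you gloss is the final membership claim: since $\mathcal{E}^T_{\mathcal{H}^{\pm}}\oplus\mathcal{E}^T_{\mathcal{I}^{\pm}}$ is the completion of \emph{compactly supported} smooth functions, finiteness of the flux for a smooth trace does not by itself give membership --- one needs the cut-off/density argument of Lemma \ref{lm:completionhorinf}, whose hypothesis on the existence of the limits of the radiation fields at the future endpoints is supplied by the decay results cited in the paper.
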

\begin{proof}
Follows from Lemma \ref{lm:tenconsv} and Proposition \ref{prop:fowtennormbound} (combined with an analogue of Proposition \ref{prop:fowtennormbound} in the past-direction, making use of the time-symmetry of the spacetime).
\end{proof}
\begin{definition}
Define the evolution maps $\widetilde{\mathscr{F}}_{\pm}: (C_{c}^{\infty}(\widetilde{\Sigma}))^2 \to \mathcal{E}^T_{\mathcal{H}^{\pm}}\oplus \mathcal{E}^T_{\mathcal{I}^{\pm}}$ as the following linear operator:
\begin{equation*}
\widetilde{\mathscr{F}}_{\pm}(\Psi,\Psi')=(r\cdot\psi|_{\mathcal{H}^{\pm}},r\cdot\psi|_{\mathcal{I}^{\pm}}),
\end{equation*}
where $\psi$ is the unique solution to \eqref{eq:waveequation} with $(\psi|_{\widetilde{\Sigma}},\mathbf{n}_{\widetilde{\Sigma}}\psi|_{\widetilde{\Sigma}})=(\Psi,\Psi')$. Then $\widetilde{\mathscr{F}}_{\pm}$ extends uniquely to a linear bounded  operator, also denoted $\widetilde{\mathscr{F}}_{\pm}$:
\begin{equation*}
\widetilde{\mathscr{F}}_{\pm}: \mathcal{E}^T_{\widetilde{\Sigma}}\to \mathcal{E}^T_{\mathcal{H}^{\pm}}\oplus \mathcal{E}^T_{\mathcal{I}^{\pm}}.
\end{equation*}
\end{definition}

\begin{proposition}
\label{prop:towardsscatmat1}
Let $n\in \N_0$. Then for all $n\in \N_0$
\begin{equation}
\label{eq:fowincltilde}
\widetilde{\mathscr{F}}_{\pm}(C_{c}^{\infty}(\widetilde{\Sigma}))^2)\subseteq \mathcal{E}_{n; \mathcal{H}^{\pm}}\oplus \mathcal{E}_{n; \mathcal{I}^{\pm}}, 
\end{equation}
and $\widetilde{\mathscr{F}}_{\pm}$ can uniquely be extended as as the following bounded linear operator
\begin{equation*}
\widetilde{\mathscr{F}}_{n; \pm}: \mathcal{E}_{n; \widetilde{\Sigma}}\to \mathcal{E}_{n; \mathcal{H}^{\pm}}\oplus \mathcal{E}_{n; \mathcal{I}^{\pm}}.
\end{equation*}
We moreover have that $\widetilde{\mathscr{F}}_{n; \pm}=\widetilde{\mathscr{F}}_{\pm}|_{  \mathcal{E}_{n; \widetilde{\Sigma}}}$.
\end{proposition}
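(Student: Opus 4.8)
The plan is to mimic the proof of Proposition~\ref{prop:fowbound}, replacing the role of the forward estimates on $D^+(\Sigma_0)$ by a combination of the already-constructed forward map and the spacelike-infinity estimates of Section~\ref{sec:spacelikeinfbfsphere}. First, since the extremal Reissner--Nordstr\"om exterior is invariant under the time reflection $t\mapsto -t$, which fixes $\widetilde{\Sigma}=\{t=0\}$ and interchanges $(\mathcal{H}^+,\mathcal{I}^+)$ with $(\mathcal{H}^-,\mathcal{I}^-)$, it suffices to treat $\widetilde{\mathscr{F}}_+$; the statement for $\widetilde{\mathscr{F}}_-$ follows by applying the $+$ result to the time-reflected solution. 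Fix $(\Psi,\Psi')\in (C_c^\infty(\widetilde{\Sigma}))^2$ and let $\psi$ be the associated solution, which is smooth on $D^+(\widetilde{\Sigma})\cup\mathcal{H}^+$ by Theorem~\ref{thm:globexuni}; by Proposition~\ref{prop:regularityphiinfty} together with the forward decay results of \cite{paper4} the radiation field $r\psi$ extends smoothly to $\mathcal{I}^+$ with the relevant limits vanishing, so $\widetilde{\mathscr{F}}_+(\Psi,\Psi')=(r\psi|_{\mathcal{H}^+},r\psi|_{\mathcal{I}^+})$ is a well-defined pair of smooth functions.

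The core is the uniform estimate $\|r\psi|_{\mathcal{H}^+}\|^2_{\mathcal{E}_{n;\mathcal{H}^+}}+\|r\psi|_{\mathcal{I}^+}\|^2_{\mathcal{E}_{n;\mathcal{I}^+}}\leq C\,\|(\Psi,\Psi')\|^2_{\mathcal{E}_{n;\widetilde{\Sigma}}}$ with $C$ independent of the support of the data. I would obtain it by splitting $\mathcal{I}^+=\mathcal{I}^+_{\leq -u_0}\cup\mathcal{I}^+_{\geq -u_0}$ and $\mathcal{H}^+=\mathcal{H}^+_{\leq -v_0}\cup\mathcal{H}^+_{\geq -v_0}$; since the $\mathcal{E}_{n;\cdot}$-norms use the weights $(1+|u|)$ and $(1+|v|)$, they split over this decomposition up to a constant. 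For the \emph{lower} pieces $\mathcal{I}^+_{\leq -u_0}$ and $\mathcal{H}^+_{\leq -v_0}$ I would invoke Corollary~\ref{cor:hoestspinf}, read in the direction \eqref{eq:asymflatestbacktot1hov2}--\eqref{eq:asymflatestbacktot2hov2} that bounds the $\mathcal{I}^+$/$\mathcal{H}^+$ integrals, together with the data on the null legs $N_{-u_0}$ and $\underline{N}_{-v_0}$, by the weighted $\widetilde{\Sigma}$-energy, i.e.\ by $\|(\Psi,\Psi')\|^2_{\mathcal{E}_{n;\widetilde{\Sigma}}}$; here compact support of the data guarantees that $|u_{-\infty}|,|v_{-\infty}|$ can be taken large while the constants remain uniform. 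The control of $N_{-u_0}$ and $\underline{N}_{-v_0}$ obtained in this step, combined with the fact that $\widetilde{\Sigma}$ and the spacelike-null hypersurface $\Sigma'$ carrying these null legs agree in the middle region $\{r_{\mathcal{H}}\leq r\leq r_{\mathcal{I}}\}$, furnishes exactly the ingredients of the $\mathcal{E}_{n;\Sigma'}$-norm (the weighted null-leg integrals coming from Corollary~\ref{cor:hoestspinf} and the $\mathbf{J}^T$-fluxes over the spacelike middle, the latter bounded from the $\widetilde{\Sigma}$-data). Applying Proposition~\ref{prop:fowbound} from $\Sigma'$, which is valid for any such hypersurface by stationarity via Corollary~\ref{cor:hoedecayv2}, then bounds the \emph{upper} pieces $\mathcal{I}^+_{\geq -u_0}$ and $\mathcal{H}^+_{\geq -v_0}$ by the same $\widetilde{\Sigma}$-energy. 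Combining the two families yields the desired uniform bound.

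With the uniform bound in hand, the membership \eqref{eq:fowincltilde} follows from the analogue of Lemma~\ref{lm:completionhorinf}, which identifies $(r\psi|_{\mathcal{H}^+},r\psi|_{\mathcal{I}^+})$ as an element of the completion $\mathcal{E}_{n;\mathcal{H}^{\pm}}\oplus\mathcal{E}_{n;\mathcal{I}^{\pm}}$ and shows $\|\widetilde{\mathscr{F}}_\pm\|\leq\sqrt{C}$ on $(C_c^\infty(\widetilde{\Sigma}))^2$. Since $(C_c^\infty(\widetilde{\Sigma}))^2$ is dense in $\mathcal{E}_{n;\widetilde{\Sigma}}$, the bounded-linear-transformation theorem provides a unique bounded extension $\widetilde{\mathscr{F}}_{n;\pm}:\mathcal{E}_{n;\widetilde{\Sigma}}\to\mathcal{E}_{n;\mathcal{H}^{\pm}}\oplus\mathcal{E}_{n;\mathcal{I}^{\pm}}$. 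The identity $\widetilde{\mathscr{F}}_{n;\pm}=\widetilde{\mathscr{F}}_{\pm}|_{\mathcal{E}_{n;\widetilde{\Sigma}}}$ then holds because both maps agree on the dense common subspace $(C_c^\infty(\widetilde{\Sigma}))^2$ and are continuous, recalling $\mathcal{E}_{n;\widetilde{\Sigma}}\subset\mathcal{E}^T_{\widetilde{\Sigma}}$, the domain on which $\widetilde{\mathscr{F}}_{\pm}$ was already defined.

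The main obstacle is bookkeeping rather than a new idea: one must match the $r$- and $(r-M)$-weights, the growing $(1+|u|)$- and $(1+|v|)$-weights, and the exact derivative counts between Corollary~\ref{cor:hoestspinf} and Proposition~\ref{prop:fowbound}. In particular one must track the extra angular derivative in the $\mathbf{J}^T$-terms, which is precisely why the $|\alpha|\leq n+1$ fluxes were built into $\mathcal{E}_{n;\widetilde{\Sigma}}$, and verify that no derivative is lost when the spacelike-infinity estimates are used in the forward direction, in contrast with their backward counterparts. One must also confirm the geometric matching of the two decompositions across the junction null legs $N_{-u_0}$ and $\underline{N}_{-v_0}$ so that the compact middle bands of $\mathcal{H}^+$ and $\mathcal{I}^+$ are covered without gaps; these I would control by a finite-time local energy estimate, the time-weights being bounded there.
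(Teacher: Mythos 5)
Your proposal follows essentially the same route as the paper's proof: reduce to $\widetilde{\mathscr{F}}_+$ by time symmetry, use Corollary \ref{cor:hoestspinf} to control both the early portions of $\mathcal{H}^+\cup\mathcal{I}^+$ and the weighted data on a spacelike-null hypersurface $\Sigma_0$ chosen to agree with $\widetilde{\Sigma}$ on $\{r_{\mathcal{H}}\leq r\leq r_{\mathcal{I}}\}$, then apply the bounded map $\mathscr{F}_n$ of Proposition \ref{prop:fowbound} for the late portions, and conclude by density. The only step you gloss as a ``finite-time local energy estimate'' --- passing from the null legs $N_{-u_0},\underline{N}_{-v_0}$ to $N_{u_0},\underline{N}_{v_0}$ --- is exactly what the paper handles by commuting with $T$ and applying Lemma \ref{lm:tenconsv}, as remarked just before Corollary \ref{cor:hoestspinf}.
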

\begin{proof}
Without loss of generality, we restrict our considerations to $\widetilde{\mathscr{F}}_{+}$. We choose $\Sigma_0$ so that
\begin{equation*}
\Sigma_0\cap \{r_{\mathcal{H}}\leq r\leq r_{\mathcal{I}}\}=\widetilde{\Sigma}\cap  \{r_{\mathcal{H}}\leq r\leq r_{\mathcal{I}}\}.
\end{equation*}
Let $\psi$ denote the solution to \eqref{eq:waveequation} corresponding to initial data $(\Psi,\Psi')\in C_{c}^{\infty}(\widetilde{\Sigma}))^2$. We apply Corollary \ref{cor:hoestspinf} to conclude that
\begin{equation*}
||(\psi|_{\Sigma_0}, \mathbf{n}_{\Sigma_0}\psi|_{\Sigma_0 \cap \{r_{\mathcal{H}}\leq r\leq r_{\mathcal{I}}\}})||_{ \mathcal{E}_{n; \Sigma_0}}\leq C||(\Psi,\Psi')||_{\mathcal{E}_{n; \widetilde{\Sigma}}}.
\end{equation*}
We then apply the bounded operator $\mathscr{F}_n$ from Corollary \ref{prop:fowbound} to arrive at \eqref{eq:fowincltilde}. The extension property follows immediately from the uniform boundedness of $\widetilde{\mathscr{F}}_{+}$ with respect to the desired norms.
\end{proof}
\begin{proposition}
Let $(\underline{\Phi},\Phi)\in C_{c}^{\infty}(\mathcal{H}^{\pm})\oplus C_{c}^{\infty}(\mathcal{I}^{\pm})$. Then the corresponding solution $\psi$ according to Definition \ref{def:backwsolspacelikeinf} satisfies $\psi|_{\widetilde{\Sigma}}(r,\theta,\varphi)\to 0$ as $r\to \infty$ and $r\downarrow M$ and
\begin{equation*}
||(\psi|_{\widetilde{\Sigma}},\mathbf{n}_{\widetilde{\Sigma}} \psi|_{\widetilde{\Sigma}})||_{\mathcal{E}^T_{\widetilde{\Sigma}} }^2=||\underline{\Phi}||^2_{ \mathcal{E}^T_{\mathcal{H}^{\pm} }}+||\Phi||^2_{\mathcal{E}^T_{\mathcal{I}^{\pm}}}.
\end{equation*}
\end{proposition}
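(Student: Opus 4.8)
The plan is to exploit the fact that, because the scattering data $(\underline{\Phi},\Phi)$ is compactly supported, the backwards-constructed solution $\psi$ of Definition~\ref{def:backwsolspacelikeinf} restricts to a \emph{compactly supported} function on $\widetilde{\Sigma}$, so that the desired identity reduces to the conservation law for the $T$-energy current between $\widetilde{\Sigma}$ and $\mathcal{H}^{+}\cup\mathcal{I}^{+}$. I would treat the $+$ case and obtain the $-$ case from the isometry $t\mapsto -t$, which interchanges $\mathcal{H}^{+}\cup\mathcal{I}^{+}$ with $\mathcal{H}^{-}\cup\mathcal{I}^{-}$ and $D^{+}(\widetilde{\Sigma})$ with $D^{-}(\widetilde{\Sigma})$.

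First I would establish the support statement. Choose $\tau_{\infty}>0$ with $\overline{\supp\,\underline{\Phi}}\subset\mathcal{H}^{+}_{v_{-\infty}<v<v_{\infty}}$ and $\overline{\supp\,\Phi}\subset\mathcal{I}^{+}_{u_{-\infty}<u<u_{\infty}}$, as in Proposition~\ref{prop:mainpropbackdef}. On $\widetilde{\Sigma}=\{t=0\}=\{u=-v\}$ one has $r_{*}=v$, so $r\to\infty$ corresponds to $v\to+\infty$, $u\to-\infty$ and $r\downarrow M$ to $v\to-\infty$, $u\to+\infty$. For a point $p\in\widetilde{\Sigma}$ the future-outgoing ray $\{u=u_{p}\}$ meets $\mathcal{I}^{+}$ at retarded time $u_{p}$ and the future-ingoing ray $\{v=v_{p}\}$ meets $\mathcal{H}^{+}$ at advanced time $v_{p}$. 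Hence, once $v_{p}>\max\{|u_{-\infty}|,v_{\infty}\}$ (respectively $v_{p}<\min\{v_{-\infty},-u_{\infty}\}$), all scattering data lying in $J^{+}(p)\cap(\mathcal{H}^{+}\cup\mathcal{I}^{+})$ vanish; combined with the trivial data on $\Sigma_{\tau_{\infty}}$ and the domain-of-dependence property built into Proposition~\ref{prop:mainpropbackdef}, this forces $\psi(p)=0$. Thus $\psi|_{\widetilde{\Sigma}}$ is supported in a compact subset of $\{M<r<\infty\}$, which in particular yields the claimed limits $\psi|_{\widetilde{\Sigma}}\to 0$ as $r\to\infty$ and $r\downarrow M$.

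The energy identity then follows from $T$-energy conservation. Since $\textnormal{div}\,\mathbf{J}^{T}[\psi]\equiv 0$ by Lemma~\ref{lm:tenconsv} and the total data is compactly supported, I would apply the divergence theorem to $\mathbf{J}^{T}[\psi]$ over $D^{+}(\widetilde{\Sigma})$, exhausting it by compact subregions bounded in the past by $\widetilde{\Sigma}$ and in the future by truncations of $\mathcal{H}^{+}$ and $\mathcal{I}^{+}$, with the contributions near spacelike and timelike infinity vanishing by the support property. This gives
\[
\int_{\widetilde{\Sigma}}\mathbf{J}^{T}[\psi]\cdot\mathbf{n}_{\widetilde{\Sigma}}\,d\mu_{\widetilde{\Sigma}}=\int_{\mathcal{H}^{+}}\mathbf{J}^{T}[\psi]\cdot L\, r^{2}\,d\omega dv+\int_{\mathcal{I}^{+}}\mathbf{J}^{T}[\psi]\cdot\underline{L}\, r^{2}\,d\omega du.
\]
Invoking the flux formulas of Lemma~\ref{lm:tenconsv} and using $\phi|_{\mathcal{H}^{+}}=M\psi|_{\mathcal{H}^{+}}=\underline{\Phi}$ (so that $L\phi|_{\mathcal{H}^{+}}=\partial_{v}\underline{\Phi}$) together with $\phi|_{\mathcal{I}^{+}}=\Phi$ (so that $\underline{L}\phi|_{\mathcal{I}^{+}}=\partial_{u}\Phi$), the right-hand side equals $\int_{\mathcal{H}^{+}}(\partial_{v}\underline{\Phi})^{2}\,d\omega dv+\int_{\mathcal{I}^{+}}(\partial_{u}\Phi)^{2}\,d\omega du=\|\underline{\Phi}\|^{2}_{\mathcal{E}^{T}_{\mathcal{H}^{+}}}+\|\Phi\|^{2}_{\mathcal{E}^{T}_{\mathcal{I}^{+}}}$, which is precisely the stated identity. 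This is exactly the backwards counterpart of Proposition~\ref{prop:fowtennormbound}.

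The main obstacle I anticipate is the rigorous justification that the divergence theorem produces no hidden boundary contribution at spacelike infinity $i^{0}$ and at the past limit points of $\mathcal{H}^{+}$ and $\mathcal{I}^{+}$. I expect this to be handled cleanly by the compact-support statement above: once $\psi|_{\widetilde{\Sigma}}$ and the scattering data are known to vanish outside compact sets, the entire spacetime support of $\psi$ in $D^{+}(\widetilde{\Sigma})$ is contained in a region bounded away from $i^{0}$ and from timelike infinity, so the only boundaries carrying flux are the relevant compact portions of $\widetilde{\Sigma}$, $\mathcal{H}^{+}$ and $\mathcal{I}^{+}$. The remaining steps — the evaluation of the fluxes and the reduction of $L\phi$ and $\underline{L}\phi$ to $\partial_{v}\underline{\Phi}$ and $\partial_{u}\Phi$ — are routine once the support and conservation statements are in place.
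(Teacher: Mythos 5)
Your proof rests on the claim that $\psi|_{\widetilde{\Sigma}}$ is compactly supported, and that claim is false; the domain-of-dependence argument you give for it is inverted. For the backwards problem, $\psi(p)$ is determined by the scattering data on $J^{+}(p)\cap(\mathcal{H}^{+}\cup\mathcal{I}^{+}\cup\Sigma_{\tau_{\infty}})$. Take $p\in\widetilde{\Sigma}$ with $r_p$ large, so $v_{p}=r_{*}(r_p)\to+\infty$ and $u_{p}=-v_{p}\to-\infty$. Then $J^{+}(p)\cap\mathcal{H}^{+}=\mathcal{H}^{+}\cap\{v\geq v_{p}\}$ indeed misses $\supp\,\underline{\Phi}$, but $J^{+}(p)\cap\mathcal{I}^{+}=\mathcal{I}^{+}\cap\{u\geq -v_{p}\}$ \emph{contains the entire support of} $\Phi$ once $v_{p}>|u_{-\infty}|$ — exactly the opposite of what you assert. (The same happens at the horizon end with the roles of $\underline{\Phi}$ and $\Phi$ exchanged.) Since extremal Reissner--Nordstr\"om does not satisfy the strong Huygens principle, the backscattered contribution from that data is generically nonzero, so $\psi|_{\widetilde{\Sigma}}$ has a tail reaching $r=\infty$ and $r=M$; it decays but is not compactly supported. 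Everything downstream — the vanishing of the limits and, more importantly, the absence of flux through spacelike infinity in your divergence-theorem argument — is justified in your write-up only by this false support property, so the proof as written does not close.

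The paper instead proves the limits \emph{quantitatively}: integrating $L\psi$ along the outgoing null cone $N_{-r_{*}}$ from $(0,r_{*})$ out to $\mathcal{I}^{+}$ (where $\psi\to0$) and applying Cauchy--Schwarz gives
\begin{equation*}
\psi^{2}(0,r_{*},\theta,\varphi)\lesssim \frac{1}{r}\int_{N_{-r_{*}}}\mathbf{T}(\partial_{t},L)\,r^{2}\,d\omega\,dv\lesssim \frac{1}{r}\Big(\|\underline{\Phi}\|^{2}_{\mathcal{E}^{T}_{\mathcal{H}^{+}}}+\|\Phi\|^{2}_{\mathcal{E}^{T}_{\mathcal{I}^{+}}}\Big),
\end{equation*}
and symmetrically near the horizon, which yields the decay without any support claim. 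The energy identity is then obtained from Lemma \ref{lm:tenconsv}, with the control near spacelike infinity coming from the limiting construction of $\psi$ in Definition \ref{def:backwsolspacelikeinf} (approximating solutions with trivial data beyond $\widetilde{\Sigma}_{s}$) rather than from compact support. Your final flux computation on $\mathcal{H}^{+}$ and $\mathcal{I}^{+}$ is fine, but you need to replace the support argument with an estimate of this kind to rule out energy escaping at $i^{0}$ and to obtain the stated limits.
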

\begin{proof}
By applying the fundamental theorem of calculus, we have that for suitably large $r_*>0$
\begin{equation*}
\psi^2(0,r_*,\theta,\varphi)\leq \frac{1}{r} \int_{N_{-r_*}} \mathbf{T}(\partial_t,L)\, r^2d\omega dv\leq \int_{\mathcal{H}^+} (L\phi)^2\,d\omega dv+ \int_{\mathcal{I}^+} (\Lbar \phi)^2\,d\omega du.
\end{equation*}
so $\psi|_{\widetilde{\Sigma}}(r,\theta,\varphi)\to 0$ as $r\to \infty$. By considering $r_*<0$ with $|r_*|$ suitably large, we can conclude analogously that $\psi|_{\widetilde{\Sigma}}(r,\theta,\varphi)\to 0$ as $r\to \infty$ and $r\downarrow M$.

The energy conservation statement simply follows from applying Lemma \ref{lm:tenconsv}. 
\end{proof}

\begin{definition}
Define the backwards evolution maps $\widetilde{\mathscr{B}}_{\pm}:C_{c}^{\infty}(\mathcal{H}^{\pm})\oplus C_{c}^{\infty}(\mathcal{I}^{\pm})\to \mathcal{E}^T_{\widetilde{\Sigma}}$ as the following linear operator:
\begin{equation*}
\widetilde{\mathscr{B}}_{\pm}(\underline{\Phi},\Phi)=(\psi|_{\widetilde{\Sigma}},\mathbf{n}_{\widetilde{\Sigma}} \psi|_{\widetilde{\Sigma}}),
\end{equation*}
where $\psi$ is the corresponding unique solution to \eqref{eq:waveequation} as defined in Definition \ref{def:backwsolspacelikeinf}. Then $\widetilde{\mathscr{B}}_{\pm}$ extends uniquely to a linear bounded  operator, also denoted $\widetilde{\mathscr{B}}_{\pm}$:
\begin{equation*}
\widetilde{\mathscr{B}}_{\pm}: \mathcal{E}^T_{\mathcal{H}^{\pm}}\oplus \mathcal{E}^T_{\mathcal{I}^{\pm}}\to \mathcal{E}^T_{\widetilde{\Sigma}}.
\end{equation*}
\end{definition}

\begin{proposition}
\label{prop:towardsscatmat2}
The linear operator $\widetilde{\mathscr{F}}_{\pm}: \mathcal{E}^T_{\widetilde{\Sigma}} \to \mathcal{E}^T_{\mathcal{H}^{\pm}}\oplus \mathcal{E}^T_{\mathcal{I}^{\pm}}$ is bijective with $\widetilde{\mathscr{B}}_{\pm}=\widetilde{\mathscr{F}}_{\pm}^{-1}$.
\end{proposition}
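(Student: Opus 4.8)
The plan is to follow the same strategy as in Corollary \ref{cor:bijectivity}, where the corresponding statement was proved for $\mathscr{F}$ and $\mathscr{B}$ on $\Sigma_0$. Without loss of generality I restrict to $\widetilde{\mathscr{F}}_+$ and $\widetilde{\mathscr{B}}_+$; the $-$ case is identical after the time reflection $t\mapsto -t$. First I record that both operators are isometries for the $T$-energy: the conservation identity stated just before the definition of $\widetilde{\mathscr{F}}_\pm$ shows $\|\widetilde{\mathscr{F}}_+(\Psi,\Psi')\|^2 = \|(\Psi,\Psi')\|^2_{\mathcal{E}^T_{\widetilde{\Sigma}}}$ on the dense subset $(C_c^\infty(\widetilde{\Sigma}))^2$, and the analogous identity preceding the definition of $\widetilde{\mathscr{B}}_\pm$ shows $\|\widetilde{\mathscr{B}}_+(\underline{\Phi},\Phi)\|^2_{\mathcal{E}^T_{\widetilde{\Sigma}}} = \|\underline{\Phi}\|^2 + \|\Phi\|^2$ on $C_c^\infty(\mathcal{H}^+)\oplus C_c^\infty(\mathcal{I}^+)$. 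By density these extend to the completions, so $\widetilde{\mathscr{F}}_+$ and $\widetilde{\mathscr{B}}_+$ are isometries; in particular each is injective with closed range. It therefore suffices to prove that $\widetilde{\mathscr{F}}_+$ is surjective and that $\widetilde{\mathscr{B}}_+ = \widetilde{\mathscr{F}}_+^{-1}$, and for this it is enough to establish the single operator identity $\widetilde{\mathscr{F}}_+\circ \widetilde{\mathscr{B}}_+ = \mathrm{id}$ on a dense subset.

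The core computation is the identity $\widetilde{\mathscr{F}}_+(\widetilde{\mathscr{B}}_+(\underline{\Phi},\Phi)) = (\underline{\Phi},\Phi)$ for $(\underline{\Phi},\Phi)\in C_c^\infty(\mathcal{H}^+)\oplus C_c^\infty(\mathcal{I}^+)$. By Definition \ref{def:backwsolspacelikeinf}, the pair $(\Psi,\Psi') := \widetilde{\mathscr{B}}_+(\underline{\Phi},\Phi) = (\psi|_{\widetilde{\Sigma}}, \mathbf{n}_{\widetilde{\Sigma}}\psi|_{\widetilde{\Sigma}})$ is the Cauchy data on $\widetilde{\Sigma}$ of the unique smooth solution $\psi$ to \eqref{eq:waveequation} in $J^+(\widetilde{\Sigma})$ satisfying $M\psi|_{\mathcal{H}^+} = \underline{\Phi}$ and $r\psi|_{\mathcal{I}^+} = \Phi$. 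If $(\Psi,\Psi')$ were compactly supported one could apply $\widetilde{\mathscr{F}}_+$ directly: the solution with Cauchy data $(\Psi,\Psi')$ is $\psi$ by uniqueness of the Cauchy problem (Theorem \ref{thm:globexuni}), and hence its radiation traces are exactly $(\underline{\Phi},\Phi)$, giving $\widetilde{\mathscr{F}}_+(\Psi,\Psi') = (\underline{\Phi},\Phi)$.

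The main obstacle is precisely that $(\Psi,\Psi')$ is \emph{not} compactly supported — the backwards-constructed Cauchy data only decays as $r\to\infty$ and $r\downarrow M$ — whereas $\widetilde{\mathscr{F}}_+$ was defined classically only on $(C_c^\infty(\widetilde{\Sigma}))^2$ and must here be interpreted as its bounded extension to $\mathcal{E}^T_{\widetilde{\Sigma}}$ (note $(\Psi,\Psi')\in \mathcal{E}^T_{\widetilde{\Sigma}}$ by the isometry property of $\widetilde{\mathscr{B}}_+$). I would resolve this by approximation: choose $(\Psi_k,\Psi_k')\in (C_c^\infty(\widetilde{\Sigma}))^2$ (for instance spatial cut-offs of $(\Psi,\Psi')$) converging to $(\Psi,\Psi')$ in $\mathcal{E}^T_{\widetilde{\Sigma}}$, let $\psi_k$ be the corresponding solutions, and use boundedness of $\widetilde{\mathscr{F}}_+$ to pass to the limit $\widetilde{\mathscr{F}}_+(\Psi_k,\Psi_k') \to \widetilde{\mathscr{F}}_+(\Psi,\Psi')$ in $\mathcal{E}^T_{\mathcal{H}^+}\oplus \mathcal{E}^T_{\mathcal{I}^+}$. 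It then remains to identify this limit with $(\underline{\Phi},\Phi)$, which I would do by showing that the radiation traces $(r\psi_k|_{\mathcal{H}^+}, r\psi_k|_{\mathcal{I}^+})$ converge to those of $\psi$; this follows from local $C^n$ convergence $\psi_k\to\psi$ on compact subsets (as in the semi-global construction of Proposition \ref{prop:mainpropbackdef}) together with the uniform weighted energy estimates of Corollary \ref{cor:hoestspinf} and Lemma \ref{lm:tenconsv}, which control the tails of the radiation fields and upgrade local convergence to convergence in the $\mathcal{E}^T$ norms. Once $\widetilde{\mathscr{F}}_+\circ \widetilde{\mathscr{B}}_+ = \mathrm{id}$ is established on the dense subset $C_c^\infty(\mathcal{H}^+)\oplus C_c^\infty(\mathcal{I}^+)$, boundedness extends it to the whole domain; the range of $\widetilde{\mathscr{F}}_+$ then contains this dense subset, and being closed it is all of $\mathcal{E}^T_{\mathcal{H}^+}\oplus \mathcal{E}^T_{\mathcal{I}^+}$. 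Thus $\widetilde{\mathscr{F}}_+$ is bijective, and composing with injectivity gives $\widetilde{\mathscr{B}}_+\circ \widetilde{\mathscr{F}}_+ = \mathrm{id}$, so $\widetilde{\mathscr{B}}_+ = \widetilde{\mathscr{F}}_+^{-1}$.

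An alternative route that sidesteps re-examining the convergence of traces would be to factor $\widetilde{\mathscr{F}}_+$ through $\Sigma_0$: choosing $\Sigma_0$ with $\Sigma_0\cap\{r_{\mathcal{H}}\le r\le r_{\mathcal{I}}\} = \widetilde{\Sigma}\cap\{r_{\mathcal{H}}\le r\le r_{\mathcal{I}}\}$ as in Proposition \ref{prop:towardsscatmat1}, the two-sided norm equivalences of Proposition \ref{prop:estasympflat} and Corollary \ref{cor:hoestspinf} identify the data on $\widetilde{\Sigma}$ with the Cauchy data on $\Sigma_0$ together with the traces on $\mathcal{H}^+_{\le v_0}\cup\mathcal{I}^+_{\le u_0}$, thereby reducing the claim to the already-established bijectivity on $\Sigma_0$ (Corollary \ref{cor:bijectivity}) composed with an invertible ``triangular region'' evolution map. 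I expect the approximation argument of the previous paragraph to be the cleanest overall, with the verification that the cut-off radiation traces converge to $(\underline{\Phi},\Phi)$ being the one genuinely technical point.
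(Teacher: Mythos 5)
Your proposal is correct and follows essentially the same route as the paper, whose proof of this proposition is literally the one-line remark that it ``follows by the same arguments as in the proof of Corollary \ref{cor:bijectivity}'' (dense-subset identity $\widetilde{\mathscr{F}}_{\pm}\circ\widetilde{\mathscr{B}}_{\pm}=\mathrm{id}$, isometry/boundedness, closed range plus dense range). Your explicit treatment of the one genuine subtlety --- that $\widetilde{\mathscr{B}}_{\pm}(\underline{\Phi},\Phi)$ is not compactly supported, so $\widetilde{\mathscr{F}}_{\pm}$ must be applied via its bounded extension (or by factoring through $\Sigma_0$, where the classical domain $C^{\infty}(\widehat{\Sigma}_0)$ already accommodates the backwards data) --- is a detail the paper glosses over, and either of your two resolutions closes it.
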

\begin{proof}
Follows by the same arguments as in the proof of Proposition \ref{cor:bijectivity}.
\end{proof}

\begin{proposition}
\label{prop:towardsscatmat3}
Let $n\in \N_0$. Then for all $n\in \N_0$
\begin{equation}
\label{eq:bacwincltilde}
\widetilde{\mathscr{B}}_{\pm}(C_{c}^{\infty}(\mathcal{H}^{\pm})\oplus C_{c}^{\infty}(\mathcal{I}^{\pm}))\subseteq  \mathcal{E}_{n; \widetilde{\Sigma}}, 
\end{equation}
and $\widetilde{\mathscr{B}}_{\pm}$ can uniquely be extended as as the following bounded linear operator
\begin{equation*}
\widetilde{\mathscr{B}}_{n; \pm}:  \mathcal{E}_{n; \mathcal{H}^{\pm}}\oplus \mathcal{E}_{n; \mathcal{I}^{\pm}} \to \mathcal{E}_{n; \widetilde{\Sigma}}.
\end{equation*}
We moreover have that $\widetilde{\mathscr{B}}_{n; \pm}=\widetilde{\mathscr{B}}_{\pm}|_{  \mathcal{E}_{n; \widetilde{\Sigma}}}$ and $\widetilde{\mathscr{B}}_{n; \pm}= \widetilde{\mathscr{F}}_{n;\pm}^{-1}$.
\end{proposition}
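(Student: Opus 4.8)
The plan is to establish \eqref{eq:bacwincltilde} by running the argument of Proposition \ref{prop:towardsscatmat1} in reverse, exchanging the roles of $\widetilde{\Sigma}$ and $\mathcal{H}^{\pm}\cup \mathcal{I}^{\pm}$ and reading the two-sided (equivalence) estimates of Corollary \ref{cor:hoestspinf} from the scattering-data side towards $\widetilde{\Sigma}$. Without loss of generality I would treat $\widetilde{\mathscr{B}}_+$, the case $\widetilde{\mathscr{B}}_-$ being identical after $t\mapsto -t$, and, as before, choose $\Sigma_0$ so that $\Sigma_0\cap \{r_{\mathcal{H}}\leq r\leq r_{\mathcal{I}}\}=\widetilde{\Sigma}\cap \{r_{\mathcal{H}}\leq r\leq r_{\mathcal{I}}\}$, with the null wings $N_{-u_0}$ and $\underline{N}_{-v_0}$ of $\Sigma_0$ bounding the triangular regions $D_{-u_0}$ and $\underline{D}_{-v_0}$. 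Fix $(\underline{\Phi},\Phi)\in C_{c}^{\infty}(\mathcal{H}^+)\oplus C_{c}^{\infty}(\mathcal{I}^+)$ and let $\psi$ be the associated smooth solution to \eqref{eq:waveequation} on $D^+(\widetilde{\Sigma})$ furnished by Definition \ref{def:backwsolspacelikeinf}; the goal is the uniform bound $\|\widetilde{\mathscr{B}}_+(\underline{\Phi},\Phi)\|_{\mathcal{E}_{n;\widetilde{\Sigma}}}^2\leq C(\|\underline{\Phi}\|_{\mathcal{E}_{n;\mathcal{H}^+}}^2+\|\Phi\|_{\mathcal{E}_{n;\mathcal{I}^+}}^2)$ on this dense subspace.

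The first step is to control the higher-order flux of $\phi=r\psi$ on the wings $N_{-u_0}$, $\underline{N}_{-v_0}$. Restricting the data to the part of $\mathcal{H}^+\cup \mathcal{I}^+$ in $J^+(\Sigma_0)$ (which is again smooth and compactly supported), I would \emph{not} merely invoke Proposition \ref{prop:backwmapho}, since the plain space $\mathcal{E}_{n;\Sigma_0}$ does not carry enough angular regularity on the wings; instead I would apply Corollary \ref{cor:mainbackweesthov3}. Its left-hand side yields precisely the enhanced wing fluxes, namely the $r$-weighted (resp.\ $(r-M)$-weighted) terms together with one extra $\snabla_{\s^2}$-derivative of $L^{k+1}$ (resp.\ $\Lbar^{k+1}$), while its right-hand side is, on $\{v\geq v_0\}$ and $\{u\geq u_0\}$ where $v\sim 1+|v|$ and $u\sim 1+|u|$, equivalent to $\|\underline{\Phi}\|_{\mathcal{E}_{n;\mathcal{H}^+}}^2+\|\Phi\|_{\mathcal{E}_{n;\mathcal{I}^+}}^2$. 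The same application also controls the $T$-energy fluxes $\int_{\Sigma_0}\mathbf{J}^T[\Omega^{\alpha}T^m\psi]\cdot \mathbf{n}_0$, and hence by Lemma \ref{lm:tenconsv} the $T$-fluxes through the wings, which are the remaining boundary quantities appearing on the right-hand sides of \eqref{eq:asymflatestbacktot1hov2} and \eqref{eq:asymflatestbacktot2hov2}.

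The second step is to feed these into the spacelike-infinity estimates. Splitting $\widetilde{\Sigma}$ into its compact middle (where it equals $\Sigma_0$, so the norm is controlled by the $T$-fluxes just obtained) and the two triangular pieces $\{r\geq r_{\mathcal{I}}\}$ and $\{r\leq r_{\mathcal{H}}\}$, I would apply \eqref{eq:asymflatestbacktot1hov2} and \eqref{eq:asymflatestbacktot2hov2} of Corollary \ref{cor:hoestspinf} in the direction that bounds the $\mathcal{E}_{n;\widetilde{\Sigma}}$ contributions by the wing fluxes from Step one together with the weighted fluxes on $\mathcal{I}^+_{\leq -u_0}$ and $\mathcal{H}^+_{\leq -v_0}$, the latter being directly dominated by $\|\Phi\|_{\mathcal{E}_{n;\mathcal{I}^+}}^2$ and $\|\underline{\Phi}\|_{\mathcal{E}_{n;\mathcal{H}^+}}^2$ as these are subregions. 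Summing the three contributions gives the desired uniform bound, i.e.\ \eqref{eq:bacwincltilde} together with boundedness of $\widetilde{\mathscr{B}}_+$ in the $\mathcal{E}_{n;\cdot}$ norms on $C_{c}^{\infty}\oplus C_{c}^{\infty}$. The standard extension of a bounded operator to the completion of its domain then produces $\widetilde{\mathscr{B}}_{n;\pm}$, and it agrees with the restriction of $\widetilde{\mathscr{B}}_{\pm}$ to $\mathcal{E}_{n;\mathcal{H}^{\pm}}\oplus \mathcal{E}_{n;\mathcal{I}^{\pm}}$ because both coincide with $\widetilde{\mathscr{B}}_{\pm}$ on the common dense subspace and $\mathcal{E}_{n;\mathcal{H}^{\pm}}\oplus \mathcal{E}_{n;\mathcal{I}^{\pm}}\subset \mathcal{E}^T_{\mathcal{H}^{\pm}}\oplus \mathcal{E}^T_{\mathcal{I}^{\pm}}$. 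For $\widetilde{\mathscr{B}}_{n;\pm}=\widetilde{\mathscr{F}}_{n;\pm}^{-1}$ I would argue exactly as in Corollary \ref{cor:bijectivity} and Proposition \ref{prop:towardsscatmat2}: on $(C_{c}^{\infty}(\widetilde{\Sigma}))^2$ the compositions $\widetilde{\mathscr{B}}_{n;\pm}\circ \widetilde{\mathscr{F}}_{n;\pm}$ and $\widetilde{\mathscr{F}}_{n;\pm}\circ \widetilde{\mathscr{B}}_{n;\pm}$ restrict to the $T$-level compositions, which equal the identity by Proposition \ref{prop:towardsscatmat2}, and boundedness of all four maps promotes these identities to the full completions.

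The main obstacle I anticipate is the angular-derivative bookkeeping at the interface between the two families of estimates. The $K$- and $S$-multiplier estimates underlying Corollary \ref{cor:hoestspinf} each lose one angular derivative relative to a bare $T$-energy, so the wing data supplied to them must carry that extra $\snabla_{\s^2}$; this is exactly why Step one must use the angular-enhanced Corollary \ref{cor:mainbackweesthov3} rather than Corollary \ref{cor:mainbackweesthov2}, and why the norms $\mathcal{E}_{n;\mathcal{H}^{\pm}}$, $\mathcal{E}_{n;\mathcal{I}^{\pm}}$ were defined to include the mixed $\snabla_{\s^2}\partial_{v_{\pm}}^{\,\cdot}$ and $\snabla_{\s^2}\partial_{u_{\pm}}^{\,\cdot}$ terms. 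The delicate point is to verify that the total derivative budget closes: that at each order $k,m,|\alpha|$ no more than the $2n+2$ available commutations with $\Omega^{\alpha}$ (together with the allotted weights in $r$, $r-M$, $u$ and $v$) are ever demanded, so that the chain through Corollary \ref{cor:mainbackweesthov3} and Corollary \ref{cor:hoestspinf} terminates in a bound by the stated norms rather than a strictly stronger one.
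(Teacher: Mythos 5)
Your proof follows essentially the same route as the paper's: control the higher-order wing fluxes on $\Sigma_0$ via the backwards estimates of Section \ref{sec:backwhoest}, feed them into Corollary \ref{cor:hoestspinf} to reach $\widetilde{\Sigma}$, and then extend by density and obtain the inversion exactly as in Corollary \ref{cor:bijectivity} and Proposition \ref{prop:towardsscatmat2}. Your one deviation is an improvement in precision: the paper's proof cites only Proposition \ref{prop:backwmapho} (i.e.\ the bare $\mathcal{E}_{n;\Sigma_0}$ bound) before invoking Corollary \ref{cor:hoestspinf}, whereas you correctly observe that the right-hand sides of \eqref{eq:asymflatestbacktot1hov2}--\eqref{eq:asymflatestbacktot2hov2} also demand the extra $\snabla_{\s^2}L^{k+1}\Omega^{\alpha}T^m\phi$ wing fluxes, which is precisely what the angular-enhanced Corollary \ref{cor:mainbackweesthov3} supplies and the reason it was included.
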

\begin{proof}
Without loss of generality, we consider $\widetilde{\mathscr{B}}_{+}$. We choose $\Sigma_0$ so that
\begin{equation*}
\Sigma_0\cap \{r_{\mathcal{H}}\leq r\leq r_{\mathcal{I}}\}=\widetilde{\Sigma}\cap  \{r_{\mathcal{H}}\leq r\leq r_{\mathcal{I}}\}.
\end{equation*}
We apply $\mathscr{B}_n$ from Proposition \ref{prop:backwmapho} to conclude that the $\psi$ corresponding to initial data $(\underline{\Phi},\Phi)\in C_{c}^{\infty}(\mathcal{H}^{+})\oplus C_{c}^{\infty}(\mathcal{I}^{+})$ satisfies
\begin{equation*}
||(\psi|_{\Sigma_0}, \mathbf{n}_{\Sigma_0}\psi|_{\Sigma_0 \cap \{r_{\mathcal{H}}\leq r\leq r_{\mathcal{I}}\}})||_{ \mathcal{E}_{n; \Sigma_0}}^2\leq C( ||\underline{\Phi}||^2_{ \mathcal{E}_{n; \mathcal{H}^{+}}}+||\Phi||^2_{ \mathcal{E}_{n; \mathcal{I}^{+}}}).
\end{equation*}
Hence, we can apply Corollary \ref{cor:hoestspinf} to obtain \eqref{eq:bacwincltilde}. The extension property then follows from the uniformity of all estimates involved. The inversion follows by repeating the arguments in the proof of Proposition \ref{cor:bijectivity}.
\end{proof}

\begin{definition}
\label{def:scatmat}
We define the scattering matrix $\mathscr{S}: \mathcal{E}^T_{\mathcal{H}^{-}} \oplus \mathcal{E}^T_{\mathcal{I}^{-}}\to \mathcal{E}^T_{\mathcal{H}^{+}} \oplus \mathcal{E}^T_{\mathcal{I}^{+}}$ as the following bounded linear operator:
\begin{equation*}
\mathscr{S}:= \widetilde{\mathscr{F}}_{+} \circ \widetilde{\mathscr{B}}_{-}.
\end{equation*}
Let $n\in \N_0$. Then we define the restricted scattering matrix $\mathscr{S}_n:  \mathcal{E}_{n; \mathcal{H}^{-}}\oplus \mathcal{E}_{n; \mathcal{I}^{-}}\to  \mathcal{E}_{n; \mathcal{H}^{+}}\oplus \mathcal{E}_{n; \mathcal{I}^{+}}$ as the following bounded linear operator:
\begin{equation*}
\mathscr{S}_n:= \widetilde{\mathscr{F}}_{n;+} \circ \widetilde{\mathscr{B}}_{n; -}.
\end{equation*}
\end{definition}

\section{Scattering in the black hole interior}
\label{sec:regcauchyhor}
In this section, we obtain some additional estimates in the black hole interior, which allow use to construct a non-degenerate interior scattering map.
\begin{proposition}
\label{prop:interiorestimates}
Let $u_{\rm int}<0$ with $|u_{\rm int}|$ suitably large. Then there exist constants $c,C=c,C(M,u_0,v_0)>0$ such that
\begin{equation}
\label{eq:intest}
\begin{split}
\int_{{H}^{\rm int}_{u_{\rm int}}}& v^2(L \phi)^2+ D |\snabla_{\s^2}\phi|^2\,dv + \int_{\mathcal{CH}^+_{\leq u_{\rm int}}} u^2(\Lbar \phi)^2 + |\snabla_{\s^2}\phi|^2 \, d\omega du\\
 \sim_{c,C}&\: \int_{\underline{N}^{\rm int}_{v_0}} u^2 (\Lbar \phi)^2+ D |\snabla_{\s^2}\phi|^2\,d\omega du+\int_{\mathcal{H}^+_{\geq v_0}} v^2(L \phi)^2+ |\snabla_{\s^2}\phi|^2\,dv.
 \end{split}
\end{equation}
\end{proposition}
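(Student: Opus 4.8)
The statement to prove is the two-sided estimate \eqref{eq:intest} comparing weighted energy fluxes along the characteristic initial hypersurfaces $\underline{N}^{\rm int}_{v_0}$ and $\mathcal{H}^+_{\geq v_0}$ with fluxes along the outgoing hypersurface $H^{\rm int}_{u_{\rm int}}$ and the Cauchy horizon $\mathcal{CH}^+_{\leq u_{\rm int}}$, entirely inside the black hole interior $\mathring{\mathcal{M}}^{\rm int}$. Since the equation \eqref{eq:maineqradfield} for the radiation field $\phi = r\psi$ takes the \emph{same form} in $(u,v)$ double-null coordinates in $\mathring{\mathcal{M}}^{\rm int}$ as in $\mathring{\mathcal{M}}^{\rm ext}$ (as emphasized in Section \ref{intro:interior}), the plan is to run the \emph{same} $K=v^2L + u^2\underline{L}$ multiplier identity used in the exterior estimates of Proposition \ref{prop:estasympflat}. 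The guiding principle is the invariance of $K$ under the Couch--Torrence conformal symmetry together with its role as an approximate conformal Killing field, which is what made the exterior identity \eqref{eq:keyidasymflat1} close up.

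\textbf{Key steps.} First I would write down the pointwise divergence identity for $L(u^2(\underline{L}\phi)^2) + \underline{L}(v^2(L\phi)^2)$ exactly as in the derivation of \eqref{eq:keyidasymflat1}, using \eqref{eq:maineqradfield} to replace the mixed derivative $\underline{L}L\phi$ by the angular Laplacian and the zeroth-order term $-\tfrac{DD'}{4r}\phi$. After integrating by parts on $\mathbb{S}^2$, this produces the analogue of \eqref{eq:keyidasymflat1} with error terms controlled by the coefficient $\underline{L}(u^2 D r^{-2}) + L(v^2 D r^{-2})$ multiplying $|\snabla_{\s^2}\phi|^2$ and a first-order term $(u^2\underline{L}\phi + v^2 L\phi)O((r-M)^3)\phi$. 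Second, I would integrate this identity over the interior ``diamond'' region bounded by the four hypersurfaces $\underline{N}^{\rm int}_{v_0}$, $\mathcal{H}^+_{\geq v_0}$, $H^{\rm int}_{u_{\rm int}}$ and $\mathcal{CH}^+_{\leq u_{\rm int}}$. Both directions of the equivalence $\sim_{c,C}$ follow by integrating the same identity in the two opposite causal directions, just as the forwards and backwards halves of Proposition \ref{prop:estasympflat} were obtained from one identity. Third, the bulk error terms must be absorbed: the $|\snabla_{\s^2}\phi|^2$ error requires the cancellation analogous to \eqref{eq:keycancelhor}, $\underline{L}(u^2 D r^{-2}) + L(v^2 D r^{-2}) = O((r-M)^2)|\log(r-M)|$, and is absorbed into the boundary flux integrals provided $|u_{\rm int}|$ is suitably large (equivalently $r-M$ is suitably small near $\mathcal{CH}^+$), using $(r-M)^{-1}\sim u + |v|$. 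The first-order term is handled by Young's inequality, splitting into an absorbable weighted-derivative piece and a $\phi^2$ piece controlled via Cauchy--Schwarz from the null-flux energies, exactly as after \eqref{eq:auxestasympflathor}.

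\textbf{The main obstacle.} The essential difference from the exterior case is geometric: in the interior the metric coefficient $D(r) = (1-Mr^{-1})^2$ is again positive but now $L(r) = \tfrac12 D > 0$ and $\underline{L}(r) = -\tfrac12 D$, so the spheres are \emph{untrapped} (as the footnote in Section \ref{sec:geom} stresses), and one must verify that the weight $u^2 D r^{-2}$ degenerates at precisely the right rate as $r\downarrow M$ for the Cauchy-horizon flux $\int_{\mathcal{CH}^+}u^2(\underline{L}\phi)^2 + |\snabla_{\s^2}\phi|^2$ to emerge with the $|\snabla_{\s^2}\phi|^2$ term \emph{non-degenerate} (no $D$ weight), in contrast to the $D|\snabla_{\s^2}\phi|^2$ appearing on $\underline{N}^{\rm int}_{v_0}$ and $H^{\rm int}_{u_{\rm int}}$. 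The delicate point is therefore the boundary-term bookkeeping: as $r \to M$ the factor $r^{-2}D \to 0$, so the angular flux on the Cauchy horizon is captured through a Hardy-type or direct limiting argument rather than directly from the $K$-current, and one must confirm the logarithmic weight $|\log(r-M)|$ in the error is integrable against $u^{-2}$ near $\mathcal{CH}^+$, which forces the smallness condition on $r_{\mathcal{H}}-M$ (hence the hypothesis that $|u_{\rm int}|$ be suitably large). No loss of derivatives occurs because, as in the backwards exterior estimates, no Morawetz estimate with non-degenerate control at a photon sphere is required in the interior.
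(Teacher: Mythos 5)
Your proposal is correct and follows essentially the same route as the paper: the proof of Proposition \ref{prop:interiorestimates} runs the $K=u^2\Lbar+v^2L$ multiplier identity \eqref{eq:keyidasymflat2}--\eqref{eq:auxestasympflathor} in the interior, absorbs the bulk errors using $(r-M)^{-1}\sim v+|u|$ for $|u_{\rm int}|$ suitably large, and recovers the Cauchy-horizon flux (with its non-degenerate angular term) via exactly the limiting argument you anticipate, here implemented as a Cauchy-sequence argument in $v$. The only quibble is a harmless transposition: the angular weight on $\underline{N}^{\rm int}_v$ is $v^2r^{-2}D$, not $u^2Dr^{-2}$.
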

\begin{proof}
Observe first that \eqref{eq:keyidasymflat2} and \eqref{eq:auxestasympflathor} hold also in $\mathcal{M}^{\rm int}$, with respect to the Eddington--Finkelstein double-null coordinates $(u,v)$. Hence, we can estimate, for $\psi$ arising from data along $\mathcal{H}^+_{\geq v_0}$ and $\underline{N}^{\rm int}_{v_0}$:
\begin{equation}
\label{eq:auxestasympflathorint}
\begin{split}
\sup_{v} &\int_{\underline{N}^{\rm int}_v} u^2(\Lbar \phi)^2+\frac{1}{4}v^2r^{-2}D|\snabla_{\s^2}\phi|^2\,d\omega du+ \sup_{u} \int_{H^{\rm int}_u} v^2(L \phi)^2+\frac{1}{4}u^2r^{-2}D|\snabla_{\s^2}\phi|^2\,d\omega dv\\
\leq&\: \int_{\underline{N}^{\rm int}_{v_0}}u^2(\Lbar \phi)^2+\frac{1}{4}v^2r^{-2}D|\snabla_{\s^2}\phi|^2\,d\omega du+\int_{\mathcal{H}^+_{\geq v_{0}}}  v^2(L \phi)^2+\frac{1}{4}u^2r^{-2}D|\snabla_{\s^2}\phi|^2\,d\omega dv\\
&+Cv_0^{-\epsilon}\int_{-\infty }^{u_0} \int_{v_0}^{\infty}(r-M)^{3}|\phi|\cdot (u^2|\Lbar \phi|+v^2|L \phi|)\,d\omega du dv\\
&+Cv_0^{-\epsilon}\int_{-\infty }^{u_0} \int_{v_0}^{\infty} \log((r-M)^{-1}) D|\snabla_{\s^2}\phi|^2 \,d\omega du dv.
\end{split}
\end{equation}

Using that $(r-M)^{-1}\sim  v+|u|$ in $\mathcal{M}^{\rm int}\cap D^+(\Sigma_0\cup \underline{N}^{\rm int}_{v_0})$, we can absorb the last two integrals on the right-hand side into the left-hand side for $|u_0|$ suitably large, in order to obtain
\begin{equation*}
\begin{split}
\sup_{v} &\int_{\underline{N}^{\rm int}_v} u^2(\Lbar \phi)^2+\frac{1}{4}v^2r^{-2}D|\snabla_{\s^2}\phi|^2\,d\omega du+ \sup_{u} \int_{H^{\rm int}_u} v^2(L \phi)^2+\frac{1}{4}u^2r^{-2}D|\snabla_{\s^2}\phi|^2\,d\omega dv\\
\leq&\: C\int_{\underline{N}^{\rm int}_{v_0}}u^2(\Lbar \phi)^2+\frac{1}{4}v^2r^{-2}D|\snabla_{\s^2}\phi|^2\,d\omega du+C\int_{\mathcal{H}^+_{\geq v_{0}}}  v^2(L \phi)^2+\frac{1}{4}u^2r^{-2}D|\snabla_{\s^2}\phi|^2\,d\omega dv.
\end{split}
\end{equation*}
From the above estimate it moreover follows that for any increasing sequence $\{v_k\}$ we can bound for any $n>m\geq 0$:
\begin{equation*}
\begin{split}
\Bigg|\int_{\underline{N}^{\rm int}_{v_n}}& u^2(\Lbar \phi)^2+\frac{1}{4}v^2r^{-2}D|\snabla_{\s^2}\phi|^2\,d\omega du-\int_{\underline{N}^{\rm int}_{v_m}} u^2(\Lbar \phi)^2+\frac{1}{4}v^2r^{-2}D|\snabla_{\s^2}\phi|^2\,d\omega du \Bigg|\\
\leq&\: C(v_m)^{-\epsilon}\left[ \int_{\underline{N}^{\rm int}_{v_0}}u^2(\Lbar \phi)^2+\frac{1}{4}v^2r^{-2}D|\snabla_{\s^2}\phi|^2\,d\omega du+\int_{\mathcal{H}^+_{\geq v_{0}}}  v^2(L \phi)^2+\frac{1}{4}u^2r^{-2}D|\snabla_{\s^2}\phi|^2\,d\omega dv\right]\\
&+\int_{\mathcal{H}^+_{\geq v_{m}}}  v^2(L \phi)^2+\frac{1}{4}u^2r^{-2}D|\snabla_{\s^2}\phi|^2\,d\omega dv.
\end{split}
\end{equation*}
So we can conclude that
\begin{equation*}
 \int_{\underline{N}^{\rm int}_{v_k}} u^2(\Lbar \phi)^2+\frac{1}{4}v^2r^{-2}D|\snabla_{\s^2}\phi|^2\,d\omega du
\end{equation*}
is a Cauchy sequence, so it must converge as $k\to \infty$. Furthermore, the limit is independent of the choice of sequence. Hence, 
\begin{equation*}
 \int_{\mathcal{CH}^+_{\leq u_{\rm int}}} u^2(\Lbar \phi)^2 + \frac{1}{4}v^2r^{-2}D|\snabla_{\s^2}\phi|^2 \, d\omega du=: \lim_{v\to \infty }\int_{\underline{N}^{\rm int}_{v}} u^2(\Lbar \phi)^2+\frac{1}{4}v^2r^{-2}D|\snabla_{\s^2}\phi|^2\,d\omega du
\end{equation*}
is well-defined.

Similarly, if we take $\psi$ to arise from data along $\mathcal{CH}^+_{\leq u_{\rm int}}$ and $H_{u_{\rm int}}^{ \rm int}$, we can apply \eqref{eq:keyidasymflat2} and \eqref{eq:auxestasympflathor} to show that
\begin{equation*}
\begin{split}
\sup_{v} &\int_{\underline{N}^{\rm int}_v} u^2(\Lbar \phi)^2+\frac{1}{4}v^2r^{-2}D|\snabla_{\s^2}\phi|^2\,d\omega du+ \sup_{u} \int_{H^{\rm int}_u} v^2(L \phi)^2+\frac{1}{4}u^2r^{-2}D|\snabla_{\s^2}\phi|^2\,d\omega dv\\
\leq&\: \frac{1}{c}\int_{{H}^{\rm int}_{u_{\rm int}}} v^2(L \phi)^2+ D |\snabla_{\s^2}\phi|^2\,dv+\frac{1}{c}\int_{\mathcal{CH}^+_{\leq u_{\rm int}}} u^2(\Lbar \phi)^2 + |\snabla_{\s^2}\phi|^2 \, d\omega du.
\end{split}
\end{equation*}
and it follows analogously that 
\begin{equation*}
 \int_{\mathcal{H}^+_{\geq v_0}} v^2(L \phi)^2 + \frac{1}{4}u^2r^{-2}D|\snabla_{\s^2}\phi|^2 \, d\omega du=: \lim_{u\to -\infty }\int_{{N}^{\rm int}_{u}} v^2(L \phi)^2+\frac{1}{4}u^2r^{-2}D|\snabla_{\s^2}\phi|^2\,d\omega dv
\end{equation*}
is well-defined.

The estimate \eqref{eq:intest} then follows by combining the above estimates.
\end{proof}

\begin{proposition}
\label{prop:intscat}
Let $u_{\rm int}<0$ with $|u_{\rm int}|$ suitably large. Let $\mathscr{S}^{\rm int}: C_{c}^{\infty}(\mathcal{H}^+_{\geq v_0})\times C^{\infty}(\underline{N}_{v_0}^{\rm int})\to \mathcal{E}_{\mathcal{CH}^+_{\leq u_{\rm int}}}\oplus \mathcal{E}_{H^{\rm int}_{u_{\rm int}}}$ be defined as follows:
\begin{equation*}
\mathscr{S}^{\rm int}(r\psi|_{\mathcal{H}^+_{\geq v_0}}, r\psi|_{\underline{N}_{v_0}^{\rm int}})=(r\psi|_{\mathcal{CH}^+_{\leq u_{\rm int}}},r\psi_{H^{\rm int}_{u_{\rm int}}}).
\end{equation*}
Then $\mathscr{S}^{\rm int}$ extends uniquely as a bijective, bounded linear operator:
\begin{equation*}
\mathscr{S}^{\rm int}: \mathcal{E}_{\mathcal{H}^+_{\geq v_0}}\oplus \mathcal{E}_{\underline{N}_{v_0}^{\rm int}}\to \mathcal{E}_{\mathcal{CH}^+_{\leq u_{\rm int}}}\oplus \mathcal{E}_{H^{\rm int}_{u_{\rm int}}}.
\end{equation*}
\end{proposition}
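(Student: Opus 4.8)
The plan is to deduce the proposition directly from the two-sided energy equivalence \eqref{eq:intest} of Proposition \ref{prop:interiorestimates}, which carries all the analytic content. The first step is pure bookkeeping: recalling that $\phi=r\psi$ and that $L\phi=\partial_v\phi$, $\Lbar\phi=\partial_u\phi$, one checks that the left-hand side of \eqref{eq:intest} equals $\|r\psi|_{\mathcal{CH}^+_{\leq u_{\rm int}}}\|^2_{\mathcal{E}_{\mathcal{CH}^+_{\leq u_{\rm int}}}}+\|r\psi|_{H^{\rm int}_{u_{\rm int}}}\|^2_{\mathcal{E}_{H^{\rm int}_{u_{\rm int}}}}$, while the right-hand side equals $\|r\psi|_{\mathcal{H}^+_{\geq v_0}}\|^2_{\mathcal{E}_{\mathcal{H}^+_{\geq v_0}}}+\|r\psi|_{\underline{N}^{\rm int}_{v_0}}\|^2_{\mathcal{E}_{\underline{N}^{\rm int}_{v_0}}}$, matching precisely the norms defining the four interior energy spaces. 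Thus \eqref{eq:intest} reads as a norm equivalence $\|\mathscr{S}^{\rm int}(\cdot)\|^2\sim_{c,C}\|\cdot\|^2$ on smooth data, which simultaneously gives boundedness (via $C$) and a bound below (via $c$).

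Well-definedness on smooth data is established next. Forward existence and uniqueness of $\psi$ in $D^+(\Sigma_0^{\rm int})\cap\mathcal{M}^{\rm int}$ is supplied by statement 3.\ of Theorem \ref{thm:globexuni}. The restriction $r\psi|_{H^{\rm int}_{u_{\rm int}}}$ is unproblematic, since $H^{\rm int}_{u_{\rm int}}$ is an interior null hypersurface on which $\psi$ is smooth. The restriction $r\psi|_{\mathcal{CH}^+_{\leq u_{\rm int}}}$ to the Cauchy horizon is the only delicate point, but it is already settled inside the proof of Proposition \ref{prop:interiorestimates}: there one shows that the fluxes $\int_{\underline{N}^{\rm int}_{v}} u^2(\Lbar\phi)^2+\tfrac14 v^2 r^{-2}D|\snabla_{\s^2}\phi|^2\,d\omega du$ form a Cauchy sequence as $v\to\infty$, so that the limiting flux on $\mathcal{CH}^+$ exists and $r\psi|_{\mathcal{CH}^+_{\leq u_{\rm int}}}$ is a genuine element of $\mathcal{E}_{\mathcal{CH}^+_{\leq u_{\rm int}}}$. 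Hence $\mathscr{S}^{\rm int}$ maps smooth data into $\mathcal{E}_{\mathcal{CH}^+_{\leq u_{\rm int}}}\oplus\mathcal{E}_{H^{\rm int}_{u_{\rm int}}}$, as asserted.

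With boundedness in hand, the extension to the completions $\mathcal{E}_{\mathcal{H}^+_{\geq v_0}}\oplus\mathcal{E}_{\underline{N}^{\rm int}_{v_0}}$ is the standard extension-by-density of a bounded linear operator from a dense subspace. The lower bound in \eqref{eq:intest} guarantees the extended operator is bounded below, hence injective with closed range. For surjectivity I would invoke the backwards half of Proposition \ref{prop:interiorestimates} — the estimate obtained by taking $\psi$ to arise from data along $\mathcal{CH}^+_{\leq u_{\rm int}}$ and $H^{\rm int}_{u_{\rm int}}$ — to produce a bounded candidate inverse on smooth data and extend it by density. A short argument, identical in structure to Corollary \ref{cor:bijectivity}, then shows $\mathscr{S}^{\rm int}$ composed with this inverse equals the identity on the dense set of smooth data, hence everywhere, giving bijectivity.

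The main obstacle is precisely the construction of this backwards (inverse) map, namely associating to prescribed data on $\mathcal{CH}^+_{\leq u_{\rm int}}\cup H^{\rm int}_{u_{\rm int}}$ a unique solution of \eqref{eq:waveequation} realizing it. Since $\mathcal{CH}^+$ is attained only in the limit $v\to\infty$, this is a semi-global characteristic initial value problem of the same type as the exterior backwards problem of Proposition \ref{prop:mainpropbackdef}: I would approximate the $\mathcal{CH}^+$ data by data posed on $\underline{N}^{\rm int}_{v_i}$ with $v_i\to\infty$, solve each truncated problem by local well-posedness, and extract a limit via the uniform backwards flux bound of Proposition \ref{prop:interiorestimates} together with an Arzel\`a--Ascoli argument. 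The remaining steps are routine functional analysis.
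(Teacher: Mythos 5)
Your proposal is correct and follows essentially the same route as the paper, which likewise deduces everything from the two-sided estimate \eqref{eq:intest} of Proposition \ref{prop:interiorestimates} together with its backwards counterpart (data posed on $\mathcal{CH}^+_{\leq u_{\rm int}}\cup H^{\rm int}_{u_{\rm int}}$), with the Cauchy-horizon trace understood in the limiting sense established there and the extension/bijectivity handled by density exactly as you describe. The only detail you gloss over is that finiteness of the flux norm alone does not place $r\psi|_{\mathcal{CH}^+_{\leq u_{\rm int}}}$ in the \emph{completion} $\mathcal{E}_{\mathcal{CH}^+_{\leq u_{\rm int}}}$ of compactly supported smooth functions; the paper supplies the missing decay via the fundamental theorem of calculus bound $\left(\int_{\s^2}\phi^2\,d\omega\right)(u,v)\leq |u|^{-1}\int_{\underline{N}^{\rm int}_v}u'^2(\Lbar\phi)^2\,d\omega du'$ and a variant of part 2.) of Lemma \ref{lm:completionhorinf}.
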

\begin{proof}
The construction of $\mathscr{S}^{\rm int}$ and its inverse, on a domain of smooth, compactly supported functions, follow immediately from the estimates in the proof of Proposition \ref{prop:interiorestimates}, where $r\psi|_{\mathcal{CH}^+_{\leq u_{\rm int}}}$ (in the forwards direction) and $r\psi|_{\mathcal{H}^+_{\geq v_0}}$ (in the backwards direction) can understood in a limiting sense, as in Proposition \ref{prop:interiorestimates}, and it follows that $r\psi|_{\mathcal{CH}^+_{\leq u_{\rm int}}}\in \mathcal{E}_{\mathcal{CH}^+_{\leq u_{\rm int}}}$ and $r\psi|_{\mathcal{H}^+_{\geq v_0}} \in \mathcal{E}_{\mathcal{H}^+_{\geq v_0}}$ by the fundamental theorem of calculus:
\begin{align*}
\left(\int_{\s^2}\phi^2 \,d\omega\right)(u,v)\leq |u|^{-1}\int_{\underline{N}^{\rm int}_v} u'^2(\Lbar \phi)^2\,d\omega du'\quad \textnormal{in the forwards direction and}\\
\left(\int_{\s^2}\phi^2 \,d\omega\right)(u,v)\leq |v|^{-1}\int_{H^{\rm int}_u} v'^2(L\phi)^2\,d\omega dv' \quad \textnormal{in the backwards direction},
\end{align*}
and (a straightforward variation of) 2.) of Lemma \ref{lm:completionhorinf}. The extendibility follows moreover from the uniformity of the estimates in Proposition \ref{prop:interiorestimates}.
\end{proof}

\begin{corollary}
\label{cor:regint}
Let $u_{\rm int}<0$ with $|u_{\rm int}|$ suitably large. Let $u_1<u_{\rm int}$, with $|u_1|$ arbitrarily large. Then there exist a constant $C=C(M,u_{\rm int}, u_1,v_0)>0$ such that we can estimate with respect to $(u,r)$ coordinates:
\begin{equation}
\label{eq:intest1}
\begin{split}
\sup_{u_1\leq u\leq u_{\rm int}}&\int_{{H}^{\rm int}_{u_{\rm int}}} (\partial_r\phi)^2+ |\snabla_{\s^2}\phi|^2\,d\omega dr+\sup_{v_0\leq v\leq \infty}\int_{\underline{N}^{\rm int}_v\cap \{u_1\leq u\leq u_{\rm int}\}} (\Lbar \phi)^2+|\snabla_{\s^2}\phi|^2\,d\omega du\\
\leq&\: C \Bigg[\int_{\underline{N}^{\rm int}_{v_0}} u^2 (\partial_u \phi)^2+ D |\snabla_{\s^2} \phi|^2\,d\omega du+\int_{\mathcal{H}^+_{\geq v_0}} v^2(\partial_v \phi)^2+ |\snabla_{\s^2} \phi|^2\,d\omega dv\Bigg]
\end{split}
\end{equation}
Furthermore,
\begin{equation}
\label{eq:intest2}
\begin{split}
\sup_{u_1\leq u\leq u_{\rm int}}&\int_{{H}^{\rm int}_{u_{\rm int}}} (\partial_r\phi)^2+ |\snabla_{\s^2}\phi|^2\,d\omega dr+\sup_{v_0\leq v\leq \infty}\int_{\underline{N}^{\rm int}_v\cap \{u_1\leq u\leq u_{\rm int}\}} (\Lbar \phi)^2+|\snabla_{\s^2}\phi|^2\,d\omega du\\
\leq&\: C \Bigg[  \int_{\underline{N}^{\rm int}_{v_0}} u^2 (\partial_u  \phi)^2+ D |\snabla_{\s^2}\phi|^2\,d\omega du+||(\psi|_{\Sigma_0},\mathbf{n}_{0} \psi|_{\Sigma_0})||_{\mathcal{E}_{\Sigma_0}}^2\Bigg]
\end{split}
\end{equation}
\end{corollary}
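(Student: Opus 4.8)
The plan is to read the Corollary off from the uniform weighted estimates already contained in the proof of Proposition~\ref{prop:interiorestimates}, localising to the compact range $u_1\leq u\leq u_{\rm int}$ and trading the degenerate weights for the unweighted $W^{1,2}$-type quantities on the left-hand sides of \eqref{eq:intest1} and \eqref{eq:intest2}. Throughout I read the first term on the left of each estimate as $\sup_{u_1\le u\le u_{\rm int}}\int_{H^{\rm int}_u}$, the subscript $u_{\rm int}$ being a misprint for the running variable $u$.

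First I would invoke the uniform bound \eqref{eq:auxestasympflathorint} established inside the proof of Proposition~\ref{prop:interiorestimates} (after the absorption step carried out there), which controls
\[
\sup_{v}\int_{\underline{N}^{\rm int}_v} u^2(\Lbar\phi)^2 + \tfrac14 v^2 r^{-2}D|\snabla_{\s^2}\phi|^2\,d\omega du \;+\; \sup_{u}\int_{H^{\rm int}_u} v^2(L\phi)^2 + \tfrac14 u^2 r^{-2}D|\snabla_{\s^2}\phi|^2\,d\omega dv
\]
by the data integrals on the right-hand side of \eqref{eq:intest1}. It then remains to dominate each term on the left of \eqref{eq:intest1} by one of these two suprema.

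For the outgoing cones I would change variables on $H^{\rm int}_u$ from $v$ to $r$ via $\frac{dr}{dv}|_u = L(r) = \tfrac12 D$ together with the identity $\partial_r = 2D^{-1}L$, which turns $\int_{H^{\rm int}_u}(\partial_r\phi)^2\,d\omega dr$ into $2\int D^{-1}(L\phi)^2\,d\omega dv$ and $\int_{H^{\rm int}_u}|\snabla_{\s^2}\phi|^2\,d\omega dr$ into $\tfrac12\int D|\snabla_{\s^2}\phi|^2\,d\omega dv$. On $\{u_1\leq u\leq u_{\rm int}\}\cap\{v\geq v_0\}$, which reaches the Cauchy horizon $\{r=M\}$ only as $v\to\infty$, the asymptotics $(M-r)^{-1}\sim v+|u|$ give $D^{-1}\sim (v+|u|)^2\lesssim v^2$, while $u^2 r^{-2}\geq u_{\rm int}^2 M^{-2}>0$; hence these two converted terms are bounded by $\int v^2(L\phi)^2\,d\omega dv$ and $\int u^2 r^{-2}D|\snabla_{\s^2}\phi|^2\,d\omega dv$, uniformly in $u$, i.e.\ by the second supremum above. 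For the ingoing cones $\underline{N}^{\rm int}_v\cap\{u_1\leq u\leq u_{\rm int}\}$ no change of measure is needed: there I would use $u^2\geq u_{\rm int}^2$ to pass from $(\Lbar\phi)^2$ to $u^2(\Lbar\phi)^2$, and $v^2 r^{-2}D\gtrsim 1$ (again from $(M-r)^{-1}\sim v+|u|$ and $|u|\lesssim v$ on the range) to pass from $|\snabla_{\s^2}\phi|^2$ to $v^2 r^{-2}D|\snabla_{\s^2}\phi|^2$. This proves \eqref{eq:intest1}. Finally, \eqref{eq:intest2} follows from \eqref{eq:intest1} upon replacing $\int_{\mathcal{H}^+_{\geq v_0}} v^2(\partial_v\phi)^2 + |\snabla_{\s^2}\phi|^2\,dv$ by $\|(\psi|_{\Sigma_0},\mathbf{n}_0\psi|_{\Sigma_0})\|_{\mathcal{E}_{\Sigma_0}}^2$, which is exactly the $j=0$ content of Corollary~\ref{cor:fowdecay}.

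The main obstacle is the degeneration of $D$ at the Cauchy horizon. Expressing the $W^{1,2}$ quantity $(\partial_r\phi)^2 = 4D^{-2}(L\phi)^2$ and folding in the measure factor $dr=\tfrac12 D\,dv$ produces a net factor $D^{-1}$ that blows up as $r\to M$, and the scheme closes only because the polynomially growing weight $v^2$ supplied by Proposition~\ref{prop:interiorestimates} exactly matches $D^{-1}\sim(v+|u|)^2$. The delicate point is therefore to verify all the weight equivalences ($D^{-1}\sim v^2$, $u^2 r^{-2}\gtrsim1$, $v^2 r^{-2}D\gtrsim1$) uniformly over the whole compact region --- both down to $v=v_0$, where $r$ is bounded away from $M$ and $D$ is non-degenerate, and up to the Cauchy horizon --- and it is precisely here that the \emph{extremal} structure, yielding polynomial rather than exponential $v$-weights, is essential.
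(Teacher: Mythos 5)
Your proposal is correct and follows essentially the same route as the paper: the authors' (very terse) proof likewise rests on the weight relation $\partial_v r\,|_{H^{\rm int}_{u}}\sim v^{-2}$ (equivalently $D^{-1}\sim (v+|u|)^2\lesssim_{u_1}v^2$ on the compact $u$-range) to convert the $d\omega\,dr$-integral of $(\partial_r\phi)^2=4D^{-2}(L\phi)^2$ into the $v^2(L\phi)^2$ flux of Proposition \ref{prop:interiorestimates}, together with the elementary lower bounds $u^2r^{-2}\gtrsim 1$ and $v^2r^{-2}D\gtrsim 1$ on $\{u_1\leq u\leq u_{\rm int},\,v\geq v_0\}$, and then appeals to Corollary \ref{cor:fowdecay} (its $j=0$ part) for \eqref{eq:intest2}. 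The paper's proof additionally cites a Hardy-type bound for $\int_{\s^2}\phi^2$ and commutation with $T$, but these are not needed for the left-hand sides as written (they have no zeroth-order term), so their omission is not a gap.
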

\begin{proof}
We use that $\partial_v r|_{{H}^{\rm int}_{u_{\rm int}}}\sim v^{-2}$, together with
\begin{equation*}
\left(\int_{\s^2}\phi^2 \,d\omega\right)(u,v)\leq |u|^{-1}\int_{\underline{N}^{\rm int}_v} u'^2(\Lbar \phi)^2\,d\omega du'
\end{equation*}
and we apply the estimates of Proposition \ref{prop:interiorestimates}, replacing $\psi$ with $T^j \psi$, $j=0,1$, to arrive at \eqref{eq:intest1}. We obtain \eqref{eq:intest2} by appealing additionally to Corollary \ref{cor:fowdecay}.
\end{proof}

\begin{remark}
One can easily extend the estimate in Corollary \ref{cor:regint} to smaller values of $|u_{\rm int}|$ (provided $r>r_{\rm min}>0$ in the spacetime region under consideration), by applying a standard Gr\"onwall inequality.
\end{remark}

\section{Application 1 : Regularity at the event horizon and null infinity}
\label{sec:appreg}
As an application of the maps $\mathscr{B}_n$ constructed in Proposition \ref{prop:backwmapho}, we can show that we can associate arbitrarily regular solutions to suitably polynomially decaying scattering data along $\mathcal{H}^+$ and $\mathcal{I}^+$. First of all, we will show that by considering $T^k\psi$, rather than $\psi$, we obtain higher-regularity near $\mathcal{H}^+$ and $\mathcal{I}^+$.

Before we address these regularity properties, we will relate the differential operators $(r^2L)^k$ and $((r-M)^{-2}\underline{L})^k$ to $r^{2k}L^{k}$ and $(r-M)^{-2k}\underline{L}^k$.
\begin{lemma}
\label{lm:equationreghoderphi}
Let $\psi$ be a solution to \eqref{eq:waveequation}. Then we can express for all $k\in \N_0$:
\begin{equation*}
r^2L\underline{L}((r^2L)^k\phi)=O(r)L((r^2L)^k\phi)+\sum_{j=0}^k O(1)(r^2L)^j\phi+O(1)(r^2L)^j\slashed{\Delta}_{\s^2}\phi.
\end{equation*}
and
\begin{equation*}
(r-M)^{-2}L\underline{L}(((r-M)^{-2}\underline{L})^k\phi)=O((r-M)^{-1})\underline{L}((r^2\underline{L})^k\phi)+\sum_{j=0}^k O(1)(r^2\underline{L})^j\phi+O(1)(r^2\underline{L})^j\slashed{\Delta}_{\s^2}\phi.
\end{equation*}
\end{lemma}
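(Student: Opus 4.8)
The plan is to establish both identities by induction on $k$, treating the near-infinity identity and the near-horizon identity in parallel since they are related by the formal substitution $r \leftrightarrow (r-M)^{-1}$ and $L \leftrightarrow \underline{L}$ (reflecting the Couch--Torrence symmetry). First I would record the base case $k=0$: by the equation \eqref{eq:maineqradfield} for the radiation field, $\underline{L}L\phi = -\frac{DD'}{4r}\phi + \frac{D}{4r^2}\slashed{\Delta}_{\s^2}\phi$. Multiplying through by $r^2$ and using $D = (1-Mr^{-1})^2 = 1 + O(r^{-1})$ and $D' = O(r^{-2})$, one sees $r^2 L\underline{L}\phi = O(1)\phi + O(1)\slashed{\Delta}_{\s^2}\phi$, which is exactly the claimed form with the $O(r)L$-term absent at this level. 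This confirms the $k=0$ case; the near-horizon base case follows identically using $D = M^{-4}(r-M)^2 + O((r-M)^3)$ near $r=M$.

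For the inductive step I would set $\phi_k := (r^2L)^k\phi$ and assume the identity holds at level $k$. The key is to commute one more factor of $r^2L$ through the operator $r^2 L\underline{L}$. The natural approach is to relate $r^2L\underline{L}\phi_{k+1} = r^2 L\underline{L}(r^2 L \phi_k)$ to $r^2 L \underline{L} \phi_k$ by computing the commutator $[\,\underline{L}, r^2L\,]$. Since $\underline{L}(r^2) = r^2\underline{L}(r)\cdot(2r^{-1}) \cdot \tfrac12$, and more precisely $\underline{L}(r) = -\tfrac12 D = O(1)$, we get $[\underline{L}, r^2 L] = \underline{L}(r^2) L = O(r) L$. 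Applying the inductive hypothesis to $\phi_k$ and carefully tracking how the operator $r^2L$ acts on each term $O(1)\phi_j$, $O(1)\slashed{\Delta}_{\s^2}\phi_j$ — using that $r^2L$ applied to an $O(1)$ coefficient produces another $O(1)$ (times a derivative) plus possibly an $O(r)L$ contribution when it hits the coefficient itself — should reassemble precisely into the stated form at level $k+1$, with the sum now running to $k+1$. One uses throughout that $[\slashed{\Delta}_{\s^2}, r^2L] = 0$ since $\slashed{\Delta}_{\s^2}$ acts only on the angular variables and commutes with $r$-dependent radial operators.

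The main obstacle I anticipate is bookkeeping the weights correctly: one must verify that commuting $r^2 L$ past the lower-order terms $(r^2L)^j\phi$ does not generate weights worse than $O(1)$ or $O(r)$, and in particular that no term of the form $O(r^2)(r^2L)^j\slashed{\Delta}_{\s^2}\phi$ with a bad weight survives. The subtlety is that $r^2L$ acting on $r^2 = O(r^2)$ could naively produce growth, but the saving grace is that the equation \eqref{eq:maineqhoradfield} of Lemma \ref{lm:maineqhoradfield} already packages the higher-order radial derivatives into the good variables $(2D^{-1}r^2 L)^n\phi$; I would lean on that structural identity rather than differentiate raw powers of $r$ by hand. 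Indeed the cleanest route is probably to derive the claim directly from \eqref{eq:maineqhoradfield} by expanding $2D^{-1}r^2 L = r^2 L(1 + O(r^{-1}))$ and comparing the operator $(r^2L)^k$ with $(2D^{-1}r^2L)^k$, absorbing the discrepancy into the lower-order sum. The near-horizon identity is then handled by the analogous argument using $\underline{L}(r) = -\tfrac12 D$, $D \sim (r-M)^2$, and the horizon version of \eqref{eq:maineqhoradfield}, with all $r$-weights replaced by $(r-M)^{-1}$-weights and the roles of $L$ and $\underline{L}$ interchanged.
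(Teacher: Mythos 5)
Your proposal is correct and follows essentially the same route as the paper, which proves the lemma by induction on $k$, using the radiation-field equation \eqref{eq:eqphi} for the base case and commuting $L\underline{L}$ with $r^2L$ (respectively $(r-M)^{-2}\underline{L}$) in the inductive step — exactly your commutator $[\underline{L},r^2L]=\underline{L}(r^2)\,L=-rD\,L=O(r)L$. The only blemish is the garbled intermediate expression for $\underline{L}(r^2)$ (it equals $2r\underline{L}(r)=-rD$), which does not affect your conclusion.
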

\begin{proof}
The identities can be obtained inductively by applying \eqref{eq:eqphi} and commuting $L \underline{L}$ with $r^2L$ and $r^2\underline{L}$. See Lemma 6.1 in \cite{paper4} for more details.
\end{proof}
\begin{proposition}
\label{prop:regderphi}
Let $\psi$ be a solution to \eqref{eq:waveequation}. For all $k\geq 1$ we have that:
\begin{equation}
\label{eq:regderphi1}
(r^2LT)^{k}(\phi)=\sum_{\substack{m+i+2s\leq 2k\\ 0\leq s\leq k, m\geq 0 \\ 0\leq i\leq  k-1}} \sum_{j=0}^{m} O(r^{m-j})L^{m-j}\slashed{\Delta}_{\s^2}^sT^i\phi.
\end{equation}
and
\begin{equation}
\label{eq:regderphi2}
((r-M)^{-2}\underline{L}T)^{k}(\phi)=\sum_{\substack{m+i+2s\leq 2k\\ 0\leq s\leq k, m\geq 0 \\ 0\leq i\leq k-1}} \sum_{j=0}^{m} O((r-M)^{-m+j})\underline{L}^{m-j}\slashed{\Delta}_{\s^2}^sT^i\phi.
\end{equation}
\end{proposition}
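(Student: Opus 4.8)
The plan is to establish both identities \eqref{eq:regderphi1} and \eqref{eq:regderphi2} by induction on $k$, treating the null-infinity case \eqref{eq:regderphi1} in detail since \eqref{eq:regderphi2} follows by the symmetric argument (reversing the roles of $L$ and $\underline{L}$ and replacing $r$-weights by $(r-M)^{-1}$-weights, exactly as in the proofs of Lemma \ref{lm:equationreghoderphi} and Proposition \ref{prop:backwrp}). The base case $k=1$ should be checked directly: writing $(r^2LT)\phi = r^2 L(T\phi)$ and using $T=L+\underline{L}$, the term $r^2 L\underline{L}\phi$ is rewritten via the radiation field equation \eqref{eq:maineqradfield}, which turns $\underline{L}L\phi$ into $O(r^{-1})\phi + \tfrac14 D r^{-2}\slashed{\Delta}_{\s^2}\phi$; multiplying by $r^2$ yields terms of the schematic form $O(r^2)L^2\phi$, $O(r)L\phi$, $O(1)\phi$, and $O(1)\slashed{\Delta}_{\s^2}\phi$, all of which fit the claimed sum with $k=1$.

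First I would set up the inductive step by assuming \eqref{eq:regderphi1} holds for some $k\geq 1$ and applying one further factor of $r^2 LT$ to each summand $O(r^{m-j})L^{m-j}\slashed{\Delta}_{\s^2}^s T^i\phi$. The operator $r^2 LT = r^2 L^2 + r^2 L\underline{L}$ splits into two pieces. The ``good'' piece $r^2 L^2$ simply raises the number of $L$-derivatives and, after commuting $r^2 L$ through the weight $O(r^{m-j})$ (which produces only lower-order-in-$L$ terms with adjusted $r$-weights since $L(r)=\tfrac12 D$), keeps us inside the claimed index set; the $T^i$ count rises to $T^{i+1}$, consistent with $0\le i\le k-1 \mapsto 0\le i \le k$. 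The ``bad'' piece $r^2 L\underline{L}$ is where I would invoke the commuted equation: since $\slashed{\Delta}_{\s^2}$ and $T$ commute with $\square_g$, the quantity $\slashed{\Delta}_{\s^2}^s T^i\phi$ is itself (up to the angular Laplacian acting as a bounded operator in the schematic $O(\cdot)$ bookkeeping) a solution's radiation field, so $\underline{L}L$ acting on it is again governed by \eqref{eq:maineqradfield}, trading $\underline{L}L$ for $O(r^{-1})L$-terms plus an extra $\slashed{\Delta}_{\s^2}$ (raising $s$ to $s+1$) plus lower-order terms. The factor $r^2$ in front then exactly balances the $r^{-2}$ from the $Dr^{-2}\slashed{\Delta}_{\s^2}$ term, and I would verify that the resulting weight and index shifts respect the constraint $m+i+2s\leq 2(k+1)$.

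The key bookkeeping point, and the main obstacle, is verifying that the index inequality $m+i+2s\le 2k$ is preserved under the step: each application of $r^2 LT$ should be accounted as raising the ``total weight'' $m+i+2s$ by at most $2$. Applying $L$ raises $m$ by one but applying $T$ raises either $m$ (when $T\to L$) or, through $\underline{L}$ converted via \eqref{eq:maineqradfield}, raises $s$ by one (costing $2$) while lowering the differential order — so the worst case is an increase of exactly $2$, matching the jump from $2k$ to $2(k+1)$. I would also need to track carefully that commuting $r^2 L$ past the explicit $r$-power weights $O(r^{m-j})$, and past $\slashed{\Delta}_{\s^2}^s$, generates only admissible terms; the commutator $[r^2 L, \slashed{\Delta}_{\s^2}]$ vanishes since $\slashed{\Delta}_{\s^2}$ acts on the sphere while $r^2 L = \tfrac12 D\partial_x$ is radial/null, and $[L, r]=\tfrac12 D$ produces clean lower-order contributions. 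Having checked these shifts, the induction closes, and the analogous argument near $\mathcal{H}^+$ using \eqref{eq:eqphicommLbark}-type commutations (or directly \eqref{eq:maineqradfield} with $(r-M)^{-1}$-weights) yields \eqref{eq:regderphi2}.
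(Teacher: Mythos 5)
Your proposal is correct and follows essentially the same route as the paper: induction on $k$, splitting $r^2LT=r^2L^2+r^2L\underline{L}$ and converting the $r^2L\underline{L}$ piece via the (commuted) radiation field equation \eqref{eq:maineqradfield} (i.e.\ Lemma \ref{lm:equationreghoderphi}) into terms with an extra $\slashed{\Delta}_{\s^2}$ or lower order, with the same weight bookkeeping $m+i+2s\mapsto m+i+2s+2$. The only difference is order of operations — the paper applies the lemma to the full quantity $(r^2LT)^n\phi$ before invoking the induction hypothesis, while you distribute the new operator over the already-expanded summands — which amounts to the same computation.
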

\begin{proof}
We will do a proof by induction. We have that  \eqref{eq:regderphi1} and \eqref{eq:regderphi2} hold for $k=1$. Suppose \eqref{eq:regderphi1} and \eqref{eq:regderphi2} hold for $1\leq k\leq n$. We will show below that \eqref{eq:regderphi1} and \eqref{eq:regderphi2}  also hold for $k=n+1$.

Writing $T=L+\underline{L}$, we can express
\begin{equation*}
(r^2LT)^{n+1}\phi=r^2L^2(r^2LT)^n\phi+r^2L\underline{L}((r^2LT)^n\phi)
\end{equation*}
and apply Lemma \ref{lm:equationreghoderphi} to obtain
\begin{equation*}
(r^2LT)^{n+1}\phi=r^2L^2(r^2LT)^n\phi+O(r)L((r^2LT)^n\phi)+\sum_{k=0}^n O(1)(r^2LT)^kT^{n-k}\phi+O(1)(r^2LT)^k\slashed{\Delta}_{\s^2}T^{n-k}\phi.
\end{equation*}
Now, we take apply \eqref{eq:regderphi1} for $0\leq k\leq n$ (taking appropriate derivatives on both sides of the equation) to obtain:
\begin{equation*}
\begin{split}
(r^2LT)^{n+1}\phi=&\:\sum_{\substack{m+i+2s\leq 2n+2\\ 0\leq s\leq n, m\geq 0 \\ 0\leq i\leq n-1}} \sum_{j=0}^{m} O(r^{m-j})L^{m-j}\slashed{\Delta}_{\s^2}^sT^i\phi\\
&+\sum_{k=1}^n\sum_{\substack{m+i+2s\leq 2k+2\\ 0\leq s\leq k+1, m\geq 0 \\ n-k\leq i\leq k-1+(n-k)}} \sum_{j=0}^{m} O(r^{m-j})L^{m-j}\slashed{\Delta}_{\s^2}^sT^i\phi\\
&+O(1)T^n\phi+O(1)\slashed{\Delta}_{\s^2}T^n\phi\\
=&\:\sum_{\substack{m+i+2s\leq 2(n+1)\\ 0\leq s\leq n+1, m\geq 0 \\ 0\leq i\leq  n}} \sum_{j=0}^{m} O(r^{m-j})L^{m-j}\slashed{\Delta}_{\s^2}^sT^i\phi.
\end{split}
\end{equation*}
We apply an analogous argument, using that $\underline{L}(O((r-M)^{p})=O((r-M)^{p+1})$, to also conclude that \eqref{eq:regderphi2} holds for $k=n+1$.
\end{proof}
\begin{proposition}
\label{prop:regtimeder}
Let $n\in \N_0$. Suppose that $(\Phi,\underline{\Phi})\in \mathcal{E}_{2n; \mathcal{I}^+_{\geq u_0}}\oplus  \mathcal{E}_{2n; \mathcal{H}^+_{\geq v_0}}$. Then we have that the corresponding solution $\psi$ to \eqref{eq:waveequation} satisfies
\begin{equation*}
T^n(r\psi)\in W_{\rm loc}^{n+1,2}(\widehat{\mathcal{R}}).
\end{equation*}
\end{proposition}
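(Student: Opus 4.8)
The plan is to reduce the statement $T^n(r\psi)\in W^{n+1,2}_{\rm loc}(\widehat{\mathcal{R}})$ to the uniform control of weighted, higher-order fluxes along $N_{u_0}$ and $\underline{N}_{v_0}$ that is already available from the backwards estimates of Proposition \ref{prop:backwmapho} (equivalently Corollary \ref{cor:mainbackweesthov2} and Corollary \ref{cor:mainbackweesthov3}). The key structural input is Proposition \ref{prop:regderphi}: since the differentiable structure on $\widehat{\mathcal{R}}$ is generated by $r^2L=\frac{1}{2}D\partial_x$, $D^{-1}\underline{L}$ and $\slashed{\nabla}_{\s^2}$, membership of $T^n(r\psi)$ in $W^{n+1,2}_{\rm loc}$ amounts to bounding $L^2$ norms of expressions of the form $\slashed{\nabla}_{\s^2}^{k_1}(r^2L)^{k_2}(D^{-1}\underline{L})^{k_3}T^n\phi$ with $k_1+k_2+k_3\leq n+1$ over compact subsets $K\subset\widehat{\mathcal{R}}$.

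First I would fix such a $K$ and cover it by the characteristic foliations $\{N_{u}\}$ near $\mathcal{I}^+$ and $\{\underline{N}_v\}$ near $\mathcal{H}^+$, together with a compact interior piece $\{r_{\mathcal{H}}\leq r\leq r_{\mathcal{I}}\}$ where all vector fields are uniformly regular and the claim follows from ordinary finite-order energy estimates via Theorem \ref{thm:morawetzfow} and Lemma \ref{lm:tenconsv}. On the near-infinity piece I would integrate the pointwise flux bounds over $K$, converting the $d\mu_{\hat{g}_M}=d\omega\,du\,dx$ volume form into $r^{-2}\,d\omega\,du\,dv$-type integrals and using that $r^2L$ and $D^{-1}\underline L$ extend regularly, so that the relevant integrand is controlled by $r^{2k_2}(L^{1+\cdots})^2$ and $(r-M)^{-2k_3}(\underline L^{\cdots})^2$ weighted quantities of exactly the type appearing on the left-hand sides of \eqref{eq:mainbackweesthov2} and \eqref{eq:mainbackweesthov3}. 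The crucial point is the commutation identity \eqref{eq:regderphi1}--\eqref{eq:regderphi2}, which expresses $(r^2LT)^k\phi$ (and its $\underline{L}$-analogue) as a sum of terms $O(r^{m-j})L^{m-j}\slashed{\Delta}_{\s^2}^sT^i\phi$ with the degree count $m+i+2s\leq 2k$; this is precisely what lets me trade the $n$ factors of $T$ in $T^n(r\psi)$, together with the up-to-$(n+1)$ derivatives measuring $W^{n+1,2}$ regularity, against the $2n$ angular/$L$-type derivatives and growing $u,v$-weights that the backwards hierarchy supplies.

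Concretely, I would argue that controlling $\slashed{\nabla}_{\s^2}^{k_1}(r^2L)^{k_2}(D^{-1}\underline L)^{k_3}T^n\phi$ in $L^2(K)$ reduces, after applying Proposition \ref{prop:regderphi} and Lemma \ref{lm:angmom}, to bounding a finite sum of fluxes $\int_{N_{u_0}} r^{2+2k-j}(L^{1+k}T^{m+j}\Omega^\alpha\phi)^2$ and $\int_{\underline N_{v_0}}(r-M)^{-2-2k+j}(\underline L^{1+k}T^{m+j}\Omega^\alpha\phi)^2$ with $m+2|\alpha|+2k\leq 2n$, plus finite $T$-energy terms, i.e.\ exactly $\|(\Phi,\underline\Phi)\|^2_{\mathcal{E}_{2n;\mathcal{I}^+_{\geq u_0}}\oplus\mathcal{E}_{2n;\mathcal{H}^+_{\geq v_0}}}$ up to the map $\mathscr{B}_{2n}$. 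Since $(\Phi,\underline\Phi)\in\mathcal{E}_{2n;\mathcal{I}^+_{\geq u_0}}\oplus\mathcal{E}_{2n;\mathcal{H}^+_{\geq v_0}}$ by hypothesis, Proposition \ref{prop:backwmapho} with $n$ replaced by $2n$ gives finiteness of all these fluxes, and I would conclude by summing over the finitely many multi-indices and patching the near-$\mathcal{H}^+$, near-$\mathcal{I}^+$, and interior estimates with a partition of unity adapted to $K$.

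The main obstacle I anticipate is bookkeeping the degree count in Proposition \ref{prop:regderphi} against the precise weight exponents in the $\mathcal{E}_{2n}$-norms: one must verify that each term $O(r^{m-j})L^{m-j}\slashed{\Delta}_{\s^2}^sT^i\phi$ arising from $(r^2LT)^{k_2}$ (with the remaining $k_1$ angular and $k_3$ ingoing derivatives) lands at a weight no worse than what \eqref{eq:mainbackweesthov2}--\eqref{eq:mainbackweesthov3} supply, including correctly accounting for the $r^{-2}$ in $d\mu_{\hat g_M}$ and the loss of one power of $D^{-1}\underline L$ versus $\partial_r$. A secondary technical point is that the lower-order $\slashed{\Delta}_{\s^2}^sT^i\phi$ terms (those with $s\geq 1$ and small $m$) carry extra angular derivatives, which is exactly why the \emph{angular-derivative-augmented} versions in Corollary \ref{cor:mainbackweesthov3} are needed rather than \eqref{eq:mainbackweesthov2} alone; I would make sure to invoke the former. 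Once the exponent arithmetic is checked, the remainder is routine, so I expect the proof to be short modulo this verification, which is essentially a rewriting of Lemma 6.1 and Proposition 7.6 of \cite{paper4} in the backwards setting already recorded in Section \ref{sec:backwest}.
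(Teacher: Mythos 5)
Your proposal is correct and follows essentially the same route as the paper: apply the backwards map $\mathscr{B}_{2n}$ of Proposition \ref{prop:backwmapho} to place the data in $\mathcal{E}_{2n;\Sigma_0}$, then use the commutation identities of Proposition \ref{prop:regderphi} to convert the resulting weighted $L$-, $\underline{L}$-, $T$- and $\Omega$-fluxes into control of $(r^2L)^{k}T^m\Omega^{\alpha}(T^n\phi)$ and $\partial_r^{k}T^m\Omega^{\alpha}(T^n\phi)$, and finally integrate locally in $\tau$ to obtain $W^{n+1,2}_{\rm loc}$ membership. The extra care you describe about the exponent bookkeeping and the angular-derivative-augmented norms is exactly the verification implicit in the paper's two displayed estimates, so nothing essential is missing.
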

\begin{proof}
By Proposition \ref{prop:backwmapho}, we have that $\mathscr{B}_{2n}(\Phi,\underline{\Phi})\in \mathcal{E}_{2n; \Sigma_0}$. Hence,
\begin{equation*}
\begin{split}
\sum_{j=0}^1&\sum_{m+2k+2|\alpha|\leq 4n}\int_{{N}_{u_0}} r^{2+2k-j} (L^{k+1} T^{m+j}\Omega^{\alpha} \phi)^2\,d\omega dv+ \int_{{\underline{N}}_{v_0}} (r-M)^{-2-2k+j}(\underline{L}^{k+1}T^{m+j}\Omega^{\alpha}\phi)^2\,d\omega du\\
&+\sum_{\substack{m+2|\alpha|\leq 4n+2\\|\alpha|\leq 2n}} \int_{\Sigma_{0}} {\mathbf{J}}^T[T^m \Omega^{\alpha}\psi]\cdot \mathbf{n}_{0}\, d\mu_{0}< \infty.
 \end{split}
\end{equation*}
We subsequently apply Proposition \ref{prop:regderphi}  to obtain in $(v,r)$ coordinates:
\begin{equation*}
\begin{split}
\sum_{k+m+\alpha\leq n}&\int_{{N}_{v_0}} r^2(L(r^2L)^{k}T^m \Omega^{\alpha}(T^{n}\phi))^2\,d\omega dv+ \int_{{\underline{N}}_{u_0}} (\partial_r^{k}T^m \Omega^{\alpha}(T^{n}\phi))^2\,d\omega dr\\
&+\sum_{\substack{m+2|\alpha|\leq 3n+2\\|\alpha|\leq 2n}}\int_{\Sigma_{0}} {\mathbf{J}}^T[T^m \Omega^{\alpha}(T^n\psi)]\cdot \mathbf{n}_{0}\, d\mu_{0}<\infty.
\end{split}
\end{equation*}
We conclude the proof by integrating the above norm locally in $\tau$.
\end{proof}

\begin{definition}
\label{def:timeinv}
Consider $(\underline{\Phi},\Phi) \in C^{\infty}(\mathcal{H}^+_{\geq v_0}) \oplus C^{\infty}(\mathcal{I}^+_{\geq u_0})$ such that
\begin{align*}
\int_{v_0}^{\infty} |\underline{\Phi}|\,dv<&\:\infty,\\
\int_{u_0}^{\infty} |{\Phi}|\,du<&\:\infty.
\end{align*}
Then we define the time-integrals $T^{-1}\underline{\Phi}$ and $T^{-1}\Phi$ of $\underline{\Phi}$ and $\Phi$ as follows:
\begin{align*}
T^{-1}\underline{\Phi}(v,\theta,\varphi)=&-\int_{v}^{\infty} \underline{\Phi}(v',\theta,\varphi)\,dv',\\
T^{-1}{\Phi}(u,\theta,\varphi)=&-\int_{u}^{\infty}{\Phi}(u',\theta,\varphi)\,du'
\end{align*}
Let $n\geq1$ and $\delta>0$ and suppose that $\lim_{v \to \infty} v^{n+\delta}|\underline{\Phi}| (v,\theta,\varphi)<\infty$ and $\lim_{u \to \infty} u^{n+\delta}|{\Phi}| (u,\theta,\varphi)<\infty$. Then we define the $n$-{th} order time-integrals $T^{-n}\underline{\Phi}$ and $T^{-n}\Phi$ of $\underline{\Phi}$ and $\Phi$ inductively as follows:
\begin{align*}
T^{-n}\underline{\Phi}(v,\theta,\varphi)=&-\int_{v}^{\infty} T^{-(n-1)}\underline{\Phi}(v',\theta,\varphi)\,dv',\\
T^{-n}{\Phi}(u,\theta,\varphi)=&-\int_{u}^{\infty}T^{-(n-1)}{\Phi}(u',\theta,\varphi)\,du',
\end{align*}
with $T^0 \underline{\Phi}:=\underline{\Phi}$ and $T^0 \Phi:= \Phi$.
\end{definition}

\begin{lemma}
\label{lm:timeinv}
Let $n\in \N_0$ and let $(\underline{\Phi},\Phi)\in C^{\infty}(\mathcal{H}^+_{\geq v_0}) \oplus C^{\infty}(\mathcal{I}^+_{\geq u_0})$. Assume that $\lim_{v \to \infty} v^{n+\delta}|\underline{\Phi}| (v,\theta,\varphi)<\infty$ and $\lim_{u \to \infty} u^{n+\delta}|{\Phi}| (u,\theta,\varphi)<\infty$ for some $\delta>0$ and assume moreover that
\begin{equation}
\label{eq:normtminnphi2}
||T^{-n}\Phi||_{\mathcal{E}_{n; \mathcal{I}^+_{\geq u_0}}}+ ||T^{-n}\underline{\Phi}||_{ \mathcal{E}_{n; \mathcal{H}^+_{\geq v_0}}}<\infty.
\end{equation}
Then
\begin{equation*}
T^n(T^{-n}\psi)=\psi,
\end{equation*}
with $\psi$ the solution associated to $(\underline{\Phi},\Phi)$ and $T^{-n}\psi$ the solution associated to $(T^{-n}\Phi, T^{-n}\underline{\Phi})$.
\end{lemma}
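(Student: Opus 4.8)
The plan is to establish the identity $T^n(T^{-n}\psi) = \psi$ by showing that both sides are solutions to \eqref{eq:waveequation} with the same scattering data, and then appealing to the uniqueness statement for the backwards scattering problem (Proposition \ref{prop:mainpropbackdef}, promoted to the completed energy spaces via Corollary \ref{cor:bijectivity} and the bijectivity of $\mathscr{B}_n$). The crucial preliminary observation is that the hypotheses \eqref{eq:normtminnphi2} together with the decay assumptions $\lim_{v\to\infty} v^{n+\delta}|\underline{\Phi}|<\infty$ and $\lim_{u\to\infty}u^{n+\delta}|\Phi|<\infty$ guarantee that $(T^{-n}\underline{\Phi}, T^{-n}\Phi)$ lies in $\mathcal{E}_{n;\mathcal{H}^+_{\geq v_0}}\oplus \mathcal{E}_{n;\mathcal{I}^+_{\geq u_0}}$, so that the solution $T^{-n}\psi := \mathscr{B}(T^{-n}\underline{\Phi}, T^{-n}\Phi)$ is well-defined by Proposition \ref{prop:backwmapho}. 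The decay rates are exactly what is needed to make the iterated integrals in Definition \ref{def:timeinv} convergent and to justify the boundary terms vanishing in the integration-by-parts arguments below.

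The first key step is to verify that differentiating commutes with the backwards evolution in the expected way, i.e. that $T(T^{-n}\psi)$ solves \eqref{eq:waveequation} with scattering data $(T^{-(n-1)}\underline{\Phi}, T^{-(n-1)}\Phi)$. Since $T = \partial_v$ along $\mathcal{H}^+$ and $T = \partial_u$ along $\mathcal{I}^+$ (in the respective null coordinates), and since $\partial_v(T^{-n}\underline{\Phi}) = T^{-(n-1)}\underline{\Phi}$ and $\partial_u(T^{-n}\Phi) = T^{-(n-1)}\Phi$ directly from Definition \ref{def:timeinv}, the trace of $T(T^{-n}\psi)$ along the horizon and null infinity is precisely $(T^{-(n-1)}\underline{\Phi}, T^{-(n-1)}\Phi)$. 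Here I would use that $T$ commutes with $\square_g$ (as $T$ is Killing), so $T(T^{-n}\psi)$ is again a solution, and that the restriction of $r\cdot T(T^{-n}\psi)$ to $\mathcal{H}^+$ and $\mathcal{I}^+$ equals the $T$-derivative of the corresponding traces of $r\cdot T^{-n}\psi$. The latter requires knowing that $r\psi$ extends smoothly up to $\mathcal{H}^+$ and $\mathcal{I}^+$ with the trace taken in the appropriate limiting sense, which is provided by the qualitative bound \eqref{eq:qualboundhoradfield} in Proposition \ref{prop:mainpropbackdef}.

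Iterating this commutation identity $n$ times then yields that $T^n(T^{-n}\psi)$ is a solution to \eqref{eq:waveequation} whose scattering data is $(T^0\underline{\Phi}, T^0\Phi) = (\underline{\Phi},\Phi)$, which is exactly the data defining $\psi$. The final step is to invoke uniqueness: both $T^n(T^{-n}\psi)$ and $\psi$ are solutions in $D^+(\Sigma_0)$ (or $D^+(\widetilde{\Sigma})$) with identical traces along $\mathcal{H}^+$ and $\mathcal{I}^+$, and since these traces lie in energy spaces on which $\mathscr{F}$ is a bijection with inverse $\mathscr{B}$ (Corollary \ref{cor:bijectivity}), the two solutions must coincide. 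I expect the main obstacle to be the careful justification of the commutation step at the level of the \emph{completed} energy spaces rather than for smooth compactly supported data: one must ensure that the operations of taking $T$-derivatives and restricting to the boundary are compatible with the limiting procedure defining $\mathscr{B}$ on $\mathcal{E}_{n;\mathcal{H}^+_{\geq v_0}}\oplus\mathcal{E}_{n;\mathcal{I}^+_{\geq u_0}}$, and that each intermediate datum $(T^{-(n-k)}\underline{\Phi}, T^{-(n-k)}\Phi)$ remains in the appropriate space with finite norm. This is where the decay hypotheses and the nesting of the weighted norms (the inclusions $\mathcal{E}_{n;\cdot}\subset \mathcal{E}_{\cdot}$ recorded in Section \ref{sec:energyspaces}) are used to control the boundary contributions at each stage.
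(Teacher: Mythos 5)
Your proposal is correct and follows essentially the same route as the paper's proof: one checks that the traces of $T^n(rT^{-n}\psi)$ along $\mathcal{H}^+$ and $\mathcal{I}^+$ recover $(\underline{\Phi},\Phi)$ (using that differentiation along the boundary inverts the iterated time integrals of Definition \ref{def:timeinv}) and then concludes by uniqueness of the solution associated to given scattering data. The paper's argument is just a terser version of your plan, with the membership of $(T^{-n}\underline{\Phi},T^{-n}\Phi)$ in the weighted energy spaces handled separately in Proposition \ref{prop:regtimeinv} via Lemma \ref{lm:completionhorinf}.
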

\begin{proof}
By \eqref{eq:normtminnphi2}, we can conclude that
\begin{align*}
\lim_{u\to \infty}T^n(rT^{-n}\psi)(u,v,\theta,\varphi)=L^n(T^{-n}\underline{\Phi})=\underline{\Phi},\\
\lim_{v\to \infty}T^n(rT^{-n}\psi)(u,v,\theta,\varphi)=\underline{L}^n(T^{-n}\underline{\Phi})=\underline{\Phi}
\end{align*}
Hence, by uniqueness of $\psi$ given $(\underline{\Phi},\Phi)$, we conclude that 
\begin{equation*}
T^n(T^{-n}\psi)=\psi.
\end{equation*}
\end{proof}

\begin{proposition}
\label{prop:regtimeinv}
Let $n\in \N_0$ and let $(\underline{\Phi},\Phi)\in C^{\infty}(\mathcal{H}^+_{\geq v_0}) \oplus C^{\infty}(\mathcal{I}^+_{\geq u_0})$. Assume that\\ $\lim_{v \to \infty} v^{n+\delta}|\underline{\Phi}| (v,\theta,\varphi)<\infty$ and $\lim_{u \to \infty} u^{n+\delta}|{\Phi}| (u,\theta,\varphi)<\infty$ for some $\delta>0$ and assume moreover that
\begin{equation}
\label{eq:normtminnphi}
||T^{-n}\Phi||_{\mathcal{E}_{2n; \mathcal{I}^+_{\geq u_0}}}+ ||T^{-n}\underline{\Phi}||_{ \mathcal{E}_{2n; \mathcal{H}^+_{\geq v_0}}}<\infty.
\end{equation}
Then
\begin{equation*}
r\psi\in W_{\rm loc}^{n+1,2}(\widehat{\mathcal{R}}).
\end{equation*}
\end{proposition}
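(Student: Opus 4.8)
The plan is to combine the time-integral construction of Definition \ref{def:timeinv} with the higher-order regularity transfer of Proposition \ref{prop:regtimeder}, using the operator identity $\square_g$ commutes with $T$ together with the equation for the radiation field to promote control of $T^n(r\psi)$ to control of $r\psi$ itself. First I would set $\psi^{(n)}:=T^{-n}\psi$, the solution associated to the scattering data $(T^{-n}\underline{\Phi}, T^{-n}\Phi)$, which is well-defined precisely because hypothesis \eqref{eq:normtminnphi} guarantees that $(T^{-n}\underline{\Phi},T^{-n}\Phi)\in \mathcal{E}_{2n;\mathcal{H}^+_{\geq v_0}}\oplus \mathcal{E}_{2n;\mathcal{I}^+_{\geq u_0}}$, so that the backwards map $\mathscr{B}_{2n}$ of Proposition \ref{prop:backwmapho} produces a genuine solution with finite $\mathcal{E}_{2n;\Sigma_0}$ norm. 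By Lemma \ref{lm:timeinv} (whose hypotheses are exactly those assumed here), we have $T^n \psi^{(n)}=\psi$, so it suffices to extract $W^{n+1,2}_{\rm loc}$ regularity of $r\psi=r\,T^n\psi^{(n)}$.

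Next I would apply Proposition \ref{prop:regtimeder} directly to $\psi^{(n)}$: since $(T^{-n}\underline{\Phi}, T^{-n}\Phi)\in \mathcal{E}_{2n;\mathcal{H}^+_{\geq v_0}}\oplus \mathcal{E}_{2n;\mathcal{I}^+_{\geq u_0}}$, that proposition yields
\begin{equation*}
T^n(r\psi^{(n)})=r\,T^n\psi^{(n)}=r\psi\in W^{n+1,2}_{\rm loc}(\widehat{\mathcal{R}}).
\end{equation*}
This is the crux of the argument: Proposition \ref{prop:regtimeder} says precisely that the $n$-fold time derivative of the radiation field of a solution with finite order-$2n$ scattering norm lies in $W^{n+1,2}_{\rm loc}$, and by construction $T^n(r\psi^{(n)})$ is exactly $r\psi$. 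Thus the statement follows by a single application of Proposition \ref{prop:regtimeder} to the auxiliary solution $\psi^{(n)}$, once Lemma \ref{lm:timeinv} is invoked to identify $T^n\psi^{(n)}$ with $\psi$.

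The main subtlety — the step I would treat most carefully rather than the computational core — is verifying that the decay hypotheses of Lemma \ref{lm:timeinv} are genuinely met so that the time integrals $T^{-n}\underline{\Phi}$, $T^{-n}\Phi$ are well-defined and that $T^n(T^{-n}\psi)=\psi$ holds, i.e. that no polynomially growing ``integration constant'' is introduced. This is controlled by the assumed pointwise bounds $\lim_{v\to\infty}v^{n+\delta}|\underline{\Phi}|<\infty$ and $\lim_{u\to\infty}u^{n+\delta}|\Phi|<\infty$, which guarantee the nested improper integrals in Definition \ref{def:timeinv} converge, together with the finiteness of the weighted energy norm \eqref{eq:normtminnphi}. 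The interchange of $T^n$ with the backwards evolution (that applying $T^n$ to the solution of the integrated data recovers the solution of the original data) rests on uniqueness in the backwards scattering problem, namely Proposition \ref{prop:mainpropbackdef}, and on the fact that $T$ commutes with $\square_g$. I would therefore state the proof as: the hypothesis \eqref{eq:normtminnphi} places $(T^{-n}\underline{\Phi},T^{-n}\Phi)$ in the order-$2n$ energy space, Lemma \ref{lm:timeinv} identifies $T^n(T^{-n}\psi)=\psi$, and Proposition \ref{prop:regtimeder} applied to $T^{-n}\psi$ then gives $r\psi=T^n(rT^{-n}\psi)\in W^{n+1,2}_{\rm loc}(\widehat{\mathcal{R}})$, completing the argument.
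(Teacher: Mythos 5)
Your proposal is correct and follows essentially the same route as the paper: place $(T^{-n}\underline{\Phi},T^{-n}\Phi)$ in the order-$2n$ scattering energy spaces, apply Proposition \ref{prop:regtimeder} to the auxiliary solution $T^{-n}\psi$, and invoke Lemma \ref{lm:timeinv} to identify $T^n(T^{-n}\psi)=\psi$. The only point you gloss over is that membership of the time-integrated data in the \emph{completion} spaces $\mathcal{E}_{2n;\mathcal{H}^+_{\geq v_0}}\oplus\mathcal{E}_{2n;\mathcal{I}^+_{\geq u_0}}$ (defined as closures of compactly supported data) is not automatic from finiteness of the norm alone but also uses the decay at infinity, via Lemma \ref{lm:completionhorinf} — a minor bookkeeping step the paper cites explicitly.
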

\begin{proof}
By the assumptions on the limiting behaviour of $\Phi$ and $\underline{\Phi}$, together with \eqref{eq:normtminnphi}, we can apply Proposition \ref{lm:completionhorinf} to conclude that $(T^{-n}\underline{\Phi},T^{-n}\Phi)\in \mathcal{E}_{2n; \mathcal{H}^+_{\geq v_0}}\oplus \mathcal{E}_{2n; \mathcal{I}^+_{\geq u_0}}$. Then we can apply Proposition \ref{prop:regtimeder} together with Lemma \ref{lm:timeinv} to conclude the proof.
\end{proof}

\section{Application 2: A scattering construction of smooth solutions}
\label{sec:appmode}
We make use of the results in Section \ref{sec:appreg} to construct smooth solutions from scattering data.

\begin{corollary}
\label{cor:smoothbackw}
Let  $(\underline{\Phi},\Phi)\in C^{\infty}(\mathcal{H}^+_{\geq v_0}) \oplus C^{\infty}(\mathcal{I}^+_{\geq u_0})$ such that 
\begin{align*}
\lim_{v \to \infty} v^{p}|L^k\Omega^{\alpha}\underline{\Phi}| (v,\theta,\varphi)=&\:0,\\
\lim_{u \to \infty} u^{p}|\underline{L}^k\Omega^{\alpha}{\Phi}| (u,\theta,\varphi)=&\: 0,
\end{align*}
for all $p\in \R$, $k\in \N_0$ and $\alpha\in \N_0^3$. Then
\begin{equation*}
r\psi\in C^{\infty}(\widehat{\mathcal{R}}).
\end{equation*}
\end{corollary}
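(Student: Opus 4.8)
The plan is to deduce the smoothness of $r\psi$ on $\widehat{\mathcal{R}}$ from the finite-regularity statement of Proposition \ref{prop:regtimeinv} by letting the Sobolev order $n$ tend to infinity and invoking Sobolev embedding. First I would observe that the hypotheses here are precisely the \emph{superpolynomial} decay assumptions, and that they are stable under the time-integration operators $T^{-n}$ of Definition \ref{def:timeinv}: since $|L^k\Omega^\alpha\underline{\Phi}|$ and $|\underline{L}^k\Omega^\alpha\Phi|$ decay faster than any polynomial, all the iterated integrals $T^{-n}\underline\Phi$, $T^{-n}\Phi$ are well-defined, smooth, and themselves decay superpolynomially together with all their derivatives. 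Consequently, for \emph{every} fixed $n\in\N_0$, the weighted higher-order norms $\|T^{-n}\Phi\|_{\mathcal{E}_{2n;\mathcal{I}^+_{\geq u_0}}}$ and $\|T^{-n}\underline\Phi\|_{\mathcal{E}_{2n;\mathcal{H}^+_{\geq v_0}}}$ are finite, because each summand in those norms is an integral of a superpolynomially decaying quantity against a polynomial weight $(1+|u_\pm|)^{2+2k-j}$ or $v^{2k+2-j}$, hence convergent.

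The key step is then to apply Proposition \ref{prop:regtimeinv} for each $n$ separately. This yields $r\psi\in W^{n+1,2}_{\rm loc}(\widehat{\mathcal{R}})$ for every $n\in\N_0$, where $\psi$ is the solution associated to $(\underline\Phi,\Phi)$ (note that the solution is independent of $n$; only the regularity conclusion improves with $n$). Thus $r\psi$ lies in $\bigcap_{m\geq 1} W^{m,2}_{\rm loc}(\widehat{\mathcal{R}})$. Since $\widehat{\mathcal{R}}$ is a $4$-dimensional manifold(-with-boundary), the standard Sobolev embedding $W^{m+2,2}_{\rm loc}\hookrightarrow C^{m}_{\rm loc}$ shows that $r\psi\in C^m_{\rm loc}(\widehat{\mathcal{R}})$ for all $m$, i.e.\ $r\psi\in C^\infty(\widehat{\mathcal{R}})$ with respect to the differentiable structure on $\widehat{\mathcal{R}}$ defined via the $(\chi,\theta,\varphi)$ and $(u,x,\theta,\varphi)$ charts. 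I would emphasize that the norms $W^{n,2}(K)$ are exactly the ones built from the regular vector fields $\slashed\nabla_{\s^2}$, $r^2L$ and $D^{-1}\underline{L}$ of Section \ref{addnot}, so finiteness of these norms genuinely controls all derivatives transverse and tangential to both $\mathcal{H}^+$ and $\mathcal{I}^+$ in the conformal picture.

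The main obstacle I anticipate is verifying that the superpolynomial decay hypothesis on $(\underline\Phi,\Phi)$ transfers correctly to the finiteness of $\|T^{-n}\Phi\|_{\mathcal{E}_{2n;\mathcal{I}^+_{\geq u_0}}}$ for \emph{all} $n$ simultaneously, since the weighted norms $\mathcal{E}_{2n;\cdot}$ involve up to $2n$ derivatives and polynomial weights growing with $n$, and one must check that time-integration does not destroy the decay needed to absorb these weights. Concretely, one needs that $T^{-n}$ converts a function decaying like $u^{-p}$ (for arbitrarily large $p$) into one decaying like $u^{-(p-n)}$, so that after pairing with the weight $(1+|u|)^{2+2k}$ (with $k\leq 2n$) the resulting integrand still decays integrably; this is immediate because $p$ may be taken as large as desired by hypothesis. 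The angular derivatives commute with $T^{-1}$ and with the wave operator (via the $\Omega^\alpha$ commutation properties used throughout Section \ref{sec:estfow}), so no loss occurs there. Once this bookkeeping is in place — which is routine given the superpolynomial assumption — the conclusion follows directly by assembling Proposition \ref{prop:regtimeinv} across all $n$ together with Sobolev embedding, so the corollary is essentially a clean limiting consequence of the already-established finite-regularity theory.
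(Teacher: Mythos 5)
Your proposal is correct and follows essentially the same route as the paper: the superpolynomial decay hypotheses guarantee that $T^{-n}\underline{\Phi}$ and $T^{-n}\Phi$ are well-defined and satisfy the norm condition \eqref{eq:normtminnphi} for every $n\in\N_0$, so Proposition \ref{prop:regtimeinv} gives $r\psi\in W^{n+1,2}_{\rm loc}(\widehat{\mathcal{R}})$ for all $n$, and standard Sobolev embeddings yield smoothness. (The only quibble is that in four dimensions the embedding is $W^{m+3,2}_{\rm loc}\hookrightarrow C^{m}_{\rm loc}$ rather than $W^{m+2,2}_{\rm loc}\hookrightarrow C^{m}_{\rm loc}$, which is immaterial since $n$ is arbitrary.)
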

\begin{proof}
By the initial data assumptions, we have that $T^{-n}\Phi$ and $T^{-n}\underline{\Phi}$ are well-defined and satisfy \eqref{eq:normtminnphi} for all $n\in \N_0$. Hence we arrive at the desired statement by applying Proposition \ref{prop:regtimeinv} together with standard Sobolev embeddings.
\end{proof}
Corollary \ref{cor:smoothbackw} allows us to construct smooth ``mode solutions'' with an \emph{arbitrary} frequency $\omega$ with postitive imaginary part:
\begin{proposition}
\label{prop:modesol}
Let $\omega \in \C$ with $\textnormal{Im}\,\omega <0$. Let ${\underline{\Phi}}(v,\theta,\varphi)=f_H(\theta,\varphi)e^{-i\omega v}$ and ${\Phi}(u,\theta,\varphi)=f_I(\theta,\varphi)e^{-i\omega u}$ for $f_H,f_I\in C^{\infty}(\s^2)$. Then there exists a unique smooth solution $\psi$ to \eqref{eq:waveequation} on $\hat{\mathcal{R}}$, such that
\begin{equation*}
r\cdot \psi(\tau,\rho,\theta,\varphi)=f(\rho,\theta,\varphi)e^{-i\omega\cdot \tau},
\end{equation*}
with $f\in C^{\infty}(\hat{\Sigma})$ and
\begin{align*}
\lim_{\rho \downarrow M}f(\rho,\theta,\varphi)=&\: f_H(\theta,\varphi),\\
\lim_{\rho \to \infty }f(\rho,\theta,\varphi)=&\: f_I(\theta,\varphi).
\end{align*}
\end{proposition}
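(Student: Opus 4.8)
The strategy is to reduce the construction of the mode solution to the smooth backwards scattering result of Corollary \ref{cor:smoothbackw}, and then to extract the exact exponential time dependence from the symmetry of the data under the Killing flow generated by $T$. First I would verify that the data $\underline{\Phi}(v,\theta,\varphi)=f_H(\theta,\varphi)e^{-i\omega v}$ and $\Phi(u,\theta,\varphi)=f_I(\theta,\varphi)e^{-i\omega u}$, with $\textnormal{Im}\,\omega<0$, satisfy the hypotheses of Corollary \ref{cor:smoothbackw}. Indeed, $L^k\Omega^\alpha\underline{\Phi}=(-i\omega)^k\Omega^\alpha f_H\cdot e^{-i\omega v}$, so that $|L^k\Omega^\alpha\underline{\Phi}|\lesssim e^{(\textnormal{Im}\,\omega)v}$, which decays superpolynomially as $v\to\infty$ since $\textnormal{Im}\,\omega<0$; the analogous statement holds on $\mathcal{I}^+$ for $\underline{L}^k\Omega^\alpha\Phi$. (One works with real and imaginary parts separately, or simply notes that all estimates in the paper apply verbatim to complex-valued solutions.) Applying Corollary \ref{cor:smoothbackw} then yields a solution $\psi$ to \eqref{eq:waveequation} with $r\psi\in C^\infty(\widehat{\mathcal{R}})$, $M\psi|_{\mathcal{H}^+}=\underline{\Phi}$ and $r\psi|_{\mathcal{I}^+}=\Phi$.

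\textbf{Extracting the exponential time dependence.} The key observation is that the flow of $T$ acts on the scattering data by multiplication by $e^{-i\omega\cdot s}$: if $\Phi_T^s$ denotes pullback by the time-$s$ flow of $T=\partial_v$ on $\mathcal{H}^+$ (resp.\ $\partial_u$ on $\mathcal{I}^+$), then $(\Phi_T^s)^*\underline{\Phi}=e^{-i\omega s}\underline{\Phi}$ and likewise for $\Phi$. I would consider the solution $\psi_s$ obtained by flowing $\psi$ along $T$ by time $s$; since $T$ is Killing, $\psi_s$ is again a solution to \eqref{eq:waveequation}, and its scattering data are precisely $(\Phi_T^s)^*(\underline{\Phi},\Phi)=e^{-i\omega s}(\underline{\Phi},\Phi)$. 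By the \emph{uniqueness} half of the backwards construction (Proposition \ref{prop:mainpropbackdef}, part 2, together with the limiting construction in Definition \ref{def:backwsolspacelikeinf}), the solution with these data is unique, and by linearity it equals $e^{-i\omega s}\psi$. Hence $\psi\circ\Phi_T^s=e^{-i\omega s}\psi$ for all $s$, which is exactly the statement that, writing $r\psi=f(\rho,\theta,\varphi)e^{-i\omega\tau}$ in the $(\tau,\rho,\theta,\varphi)$ coordinates adapted to the foliation $\Sigma_\tau$ (with $T=\partial_\tau$), the profile $f$ is $\tau$-independent.

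\textbf{Smoothness of the profile and boundary values.} Smoothness of $f$ on $\widehat{\Sigma}$ follows from $r\psi\in C^\infty(\widehat{\mathcal{R}})$ together with the relation $f(\rho,\theta,\varphi)=(r\psi)(\tau,\rho,\theta,\varphi)e^{i\omega\tau}|_{\tau=0}=(r\psi)|_{\widehat{\Sigma}_0}$, i.e.\ $f$ is just the (smooth) restriction of the radiation field to $\widehat{\Sigma}=\widehat{\Sigma}_0$. The boundary limits are then immediate from the prescribed scattering data: since $r\psi$ extends smoothly to $\widehat{\mathcal{R}}$, we have $\lim_{\rho\downarrow M}f(\rho,\theta,\varphi)=\lim_{\rho\downarrow M}(r\psi)|_{\widehat{\Sigma}_0}=(r\psi)|_{\mathcal{H}^+\cap\widehat{\Sigma}_0}=f_H(\theta,\varphi)$ and $\lim_{\rho\to\infty}f(\rho,\theta,\varphi)=(r\psi)|_{\mathcal{I}^+\cap\widehat{\Sigma}_0}=f_I(\theta,\varphi)$, using that the endpoints of $\widehat{\Sigma}_0$ lie on $\mathcal{H}^+$ and $\mathcal{I}^+$ where $r\psi$ equals $\underline{\Phi}$ and $\Phi$, evaluated at $\tau=0$.

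\textbf{Main obstacle.} The substantive points are the superpolynomial-decay verification (routine, but one must check it holds for all orders $k$ of $L,\underline{L}$ and all angular orders simultaneously, which is clear from the explicit exponential form) and, more importantly, the rigorous justification that flowing by $T$ and the backwards construction commute. The latter requires that the construction of $\psi$ from its data via Definition \ref{def:backwsolspacelikeinf} be genuinely \emph{equivariant} under the time-translation symmetry; I would argue this by invoking uniqueness (Proposition \ref{prop:mainpropbackdef}), since both $\psi_s$ and $e^{-i\omega s}\psi$ solve \eqref{eq:waveequation} with the identical scattering data $e^{-i\omega s}(\underline{\Phi},\Phi)$, and the noncompactly-supported data are handled by the limiting procedure of Definition \ref{def:backwsolspacelikeinf}, which is itself $T$-equivariant because the hypersurfaces $\widetilde{\Sigma}_{s}$ are mapped into one another under the flow. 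This uniqueness-plus-symmetry argument is the conceptual heart of the proof; the remaining regularity and boundary-value assertions are soft consequences of $r\psi\in C^\infty(\widehat{\mathcal{R}})$.
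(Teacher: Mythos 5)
Your proposal is correct and follows essentially the same route as the paper: apply Corollary \ref{cor:smoothbackw} to the superpolynomially decaying data, then use uniqueness of the backwards-constructed solution together with the fact that the data are eigenfunctions of the $T$-flow to force the exponential time dependence. The only cosmetic difference is that you phrase the symmetry argument via the finite time-$s$ flow ($\psi\circ\Phi_T^s=e^{-i\omega s}\psi$) whereas the paper works infinitesimally, deducing $T\psi+i\omega\psi=0$ from $T\underline{\Phi}+i\omega\underline{\Phi}=0$ and $T\Phi+i\omega\Phi=0$ via uniqueness and Lemma \ref{lm:timeinv}; these are equivalent.
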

\begin{proof}
The initial data satisfy the assumptions of Corollary \ref{cor:smoothbackw}, so we have that $r\psi\in C^{\infty}(\widehat{\mathcal{R}})$ and
\begin{align*}
\lim_{u\to \infty} rT\psi(u,v,\theta,\varphi)=&\:T\underline{\Phi}(v,\theta,\varphi),\\
\lim_{v\to \infty} rT\psi(u,v,\theta,\varphi)=&\:T{\Phi}(u,\theta,\varphi).
\end{align*}
Furthermore, the specific choice of $(\underline{\Phi},\Phi)$ ensures that
\begin{align*}
T \underline{\Phi}+i\omega \underline{\Phi}=&\:0,\\
T\Phi+i\omega\Phi=&\:0.
\end{align*}
Hence, by uniqueness of the associated solution to \eqref{eq:waveequation}, linearity and Lemma \ref{lm:timeinv}, we have that $T\psi+i\omega \psi=0$ so $\psi(\tau,\rho,\theta,\varphi)=f(\rho,\theta,\varphi)e^{-i\omega\cdot \tau}$ for some $f\in C^{\infty}(\hat{\Sigma})$.
\end{proof}

\appendix

\section{Proof of Theorem \ref{thm:Nenergyinfinite}}
\label{sec:proofB}
We consider first (i). The argument consists of an application of the propositions proved in Section 6 of \cite{DafShl2016}, which apply directly to the setting of ERN provided the following key assumptions are verified:
\begin{enumerate}[1)]
\item We can associate to smooth radiation fields $r\psi|_{\mathcal{H}^+}$ and $r \psi|_{\mathcal{I}^+}$ in $\bigcap_{s=1}^{\infty} \dot{H}^s(\R\times \s^2)$ a corresponding unique solution $\psi$ to \eqref{eq:waveequation} that is smooth away from $\mathcal{H}^-$ and has finite $T$-energy along $\mathcal{H}^-$ and $\mathcal{I}^-$.
\item If we consider spherically symmetric, smooth, compactly supported $r\psi|_{\mathcal{I}^+}$ and vanishing $\psi|_{\mathcal{H}^+}$, then $T\psi$ must be non-vanishing on $\mathcal{H}^-$.
\end{enumerate}
Assumption 1) holds in ERN by Theorem \ref{thm:tscatERN}, together with commutation with $T$ and standard elliptic estimates.

In order to verify that assumption 2) also holds, we first conclude by a domain of dependence argument that the solution is vanishing in the region $u\geq u_*$ for sufficiently large $u_*$, where $u_*$ depends on the size of the support of $\phi|_{\mathcal{I}^+}$.

Now suppose $T\psi$ is vanishing along $\mathcal{H}^+$. Consider the following identity for $\psi$:
\begin{equation*}
-\partial_u\left(\frac{1}{2}r^2(\partial_v\psi)^2\right)+\partial_v\left(\frac{1}{2}r^2(\partial_u\psi)^2\right)=Dr^2\partial_u\psi\partial_v\psi,
\end{equation*}
which follows immediately from \eqref{eq:waveequation} and spherical symmetry of $\psi$. Integrating the above equation in the rectangle $[u_1,u_*]\times [-\infty,v_1]$, with $u<u_*$ and $v_1\in \R$ arbitrary, we obtain:
\begin{equation*}
\begin{split}
\sup_{-\infty<v'\leq v_1}& \int_{\{v=v'\}} r^2(\partial_u\psi)^2\,du+\sup_{u_1<u'\leq u_*} \int_{\{u=u'\}} r^2(\partial_v\psi)^2\,dv\leq \int_{\{v=-\infty\}} r^2(\partial_u\psi)^2\,du'+ \int_{\{u=u_*\}} r^2(\partial_v\psi)^2\,dv'\\
&+\int_{u_1}^{u_*} \int_{-\infty}^{v_1} r^2 D |\partial_u\psi||\partial_v\psi|\,dudv'\\
\leq&\: \int_{\{v=-\infty\}} r^2(\partial_u\psi)^2\,du'+ \int_{\{u=u_*\}} r^2(\partial_v\psi)^2\,dv'+\frac{1}{2}\int_{-\infty}^{v_1}(1+|v'|)^{-2} \left[\sup_{-\infty \leq v''\leq v'}\int_{\{v=v''\}} r^2(\partial_u\psi)^2\,du'\right]\,dv'\\
&+\frac{1}{2} \int_{u_1}^{u_*} \sup_{-\infty <v' \leq v_1} (D^2(u,v') (1+|v'|)^{2}) \left[\sup_{-\infty \leq u''\leq u'}\int_{\{u=u''\}} r^2(\partial_v\psi)^2\,dv'\right]\,du'.
\end{split}
\end{equation*}
Since $\sup_{-\infty <v' \leq v_1} D^2(u,v') (1+|v'|)^{2}$ is bounded, we can apply a Gr\"onwall inequality (see for example Lemma 4.1 of \cite{gajic}) to conclude that: for all $v_1\in \R$ and $u_0<u_*$
\begin{equation*}
\int_{\{v=v_1\}} r^2(\partial_u\psi)^2\,du+ \int_{\{u=u_1\}} r^2(\partial_v\psi)^2\,dv\leq C_{u_1,v_1}\left[\int_{\{v=-\infty\}} r^2(\partial_u\psi)^2\,du'+ \int_{\{u=u_*\}} r^2(\partial_v\psi)^2\,dv'\right].
\end{equation*}
Since $\partial_u\psi$ vanishes along $\mathcal{H}^-=\{v=-\infty\}$ and $\partial_v\psi$ vanishes along $\{u=u_*\}$, we can conclude that $\psi$ must vanish in $[u_1,u_*]\times [-\infty,v_1]$ and therefore in the full spacetime. This is a contradiction, since $r\psi$ has non-trivial support on $\mathcal{I}^+$. Hence, assumption 2) is indeed verified.

Then, we can apply Proposition 6.1 of \cite{DafShl2016} with $p=1$, to construct smooth, spherically symmetric data along $\mathcal{I}^+$ with a polynomial tail that propagates to the past event horizon $\mathcal{H}^-$, and then apply $T$-energy conservation to further propagate to $\Sigma_0$ in order to conclude that the corresponding solution $\psi$ to \eqref{eq:waveequation} has infinite $N$-energy flux in a neighbourhood of $\mathcal{H}^+$.

The argument above can be applied \emph{mutatis mutandis} by reversing the role of $\mathcal{I}^+$ and $\mathcal{H}^+$ above to prove (ii).

\section{Basic estimates}

\begin{lemma}
\label{lm:completionhorinf}
Let $(f,g)\in C^{\infty}(\mathcal{H}^+_{\geq v_0})\oplus  C^{\infty}(\mathcal{I}^+_{\geq u_0})$.
\begin{itemize}
\item[1.)]We have that $(f,g)\in \mathcal{E}_{ \mathcal{H}^+_{\geq v_0}}^T\oplus \mathcal{E}_{ \mathcal{I}^+_{\geq u_0}}^T$ if 
\begin{equation*}
||f||_{\mathcal{E}_{\mathcal{H}^+_{\geq v_0}}^T}+ ||g||_{\mathcal{E}_{ \mathcal{I}^+_{\geq u_0}}^T}<\infty.
\end{equation*}
 and $\lim_{v\to \infty} f<\infty$, $\lim_{u\to \infty}  g<\infty$.
\item[2.)]Let $n\in \N_0$. Then $(f,g)\in \mathcal{E}_{n; \mathcal{H}^+_{\geq v_0}}\oplus \mathcal{E}_{n; \mathcal{I}^+_{\geq u_0}}$ if 
\begin{equation*}
||f||_{\mathcal{E}_{n; \mathcal{H}^+_{\geq v_0}}}+ ||g||_{\mathcal{E}_{n; \mathcal{I}^+_{\geq u_0}}}<\infty
\end{equation*}
 and $\lim_{v\to \infty} f=0$, $\lim_{u\to \infty}  g=0$.
 \end{itemize}
\end{lemma}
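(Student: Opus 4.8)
The plan is to prove both statements by a single cutoff-and-completion scheme: since each space is by definition the completion of the corresponding space of smooth compactly supported functions, it suffices, given smooth $(f,g)$ with finite norm and the stated limit behaviour, to produce a sequence in $C_{c}^{\infty}(\mathcal{H}^+_{\geq v_0})\oplus C_{c}^{\infty}(\mathcal{I}^+_{\geq u_0})$ that is Cauchy and converges to $(f,g)$ in the relevant norm. I would fix cutoffs $\chi_R\in C^{\infty}([v_0,\infty))$ with $\chi_R\equiv 1$ on $[v_0,R]$, $\chi_R\equiv 0$ on $[2R,\infty)$ and $|\partial_v^{\ell}\chi_R|\lesssim R^{-\ell}$, set $f_R:=\chi_R f$ (and analogously $g_R:=\chi_R g$ with $u$ in place of $v$), and show $\|f_R-f\|\to 0$ as $R\to\infty$; the Cauchy property follows from this. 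Expanding each derivative by Leibniz, $\partial_v^{\ell}(\chi_R f)=\sum_{i}\binom{\ell}{i}(\partial_v^{i}\chi_R)(\partial_v^{\ell-i}f)$, the $i=0$ contribution reproduces $\chi_R$ times the corresponding term of $f$, whose difference with the $f$-term tends to $0$ by dominated convergence using only finiteness of the norm. Everything then reduces to estimating the error terms with $i\geq 1$, each supported on $[R,2R]$.

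For statement 1.) the norm involves only $\partial_v f$ (resp. $\partial_u g$), so the sole error term is $i=1$: $\int_{\s^2}\int_{R}^{2R}(\partial_v\chi_R)^2 f^2\,d\omega dv\lesssim R^{-2}\int_{\s^2}\int_R^{2R}f^2\,d\omega dv$. Since $\lim_{v\to\infty}f$ exists and is finite, $f$ is bounded on $[R,\infty)$ and this is $\lesssim R^{-2}\cdot R=R^{-1}\to 0$. This is precisely why finiteness, rather than vanishing, of the limit suffices here; note that finiteness of the $\mathcal{E}^T$-norm alone does not force a limit to exist (for instance $\partial_v f\in L^2$ permits $f$ without a limit), so the limit hypothesis is genuinely used.

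For statement 2.) I would apply the same decomposition to each term $\int v^{2k+2-j}(\partial_v^{1+k+m+j}(\chi_R f))^2\,d\omega dv$ and to the angular terms $\int|\snabla_{\s^2}\Omega^{\alpha}\partial_v^{m}(\chi_R f)|^2\,d\omega dv$ (angular derivatives commute with $\chi_R$, so $\chi_R$-derivatives only arise from $\partial_v^m$ there and are lower order). The mixed error terms, where $1\leq i<1+k+m+j$ derivatives land on $\chi_R$, are bounded by $R^{2k+2-j-2i}\int_R^{2R}(\partial_v^{1+k+m+j-i}f)^2\,dv$; choosing the matching lower-order term of the norm, with $j'=j$ and $k'+m'=k+m-i$ (which stays inside the admissible range $m+2k+2|\alpha|\leq 2n$ since indices are only lowered), replaces this by $R^{2k+2-j-2i}\cdot R^{-(2k'+2-j)}\varepsilon_R$, where the net power of $R$ is non-positive and $\varepsilon_R=\int_R^{2R}v^{2k'+2-j}(\partial_v^{1+k'+m'+j}f)^2\,dv\to 0$ is a tail of a convergent weighted integral. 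The delicate term is $i=1+k+m+j$ (all derivatives on the cutoff, $f$ undifferentiated), which in the worst case $j=m=0$ equals, up to constants, $\int_R^{2R}f^2\,dv$. Here I would combine $f\to 0$ with finiteness of $\int v^2(\partial_v f)^2$: writing $f(v)=-\int_v^{\infty}\partial_v f$ and applying Cauchy--Schwarz gives $f(v)^2\leq v^{-1}G(v)$ with $G(v)=\int_v^{\infty}w^2(\partial_v f)^2\,dw\to 0$, whence $\int_R^{2R}f^2\,dv\leq G(R)\log 2\to 0$. The remaining configurations ($j\geq 1$ or $m\geq 1$) carry strictly negative net powers of $R$ and are handled identically, using the weighted Hardy inequality \eqref{eq:hardy1} to pass between undifferentiated and differentiated quantities.

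The main obstacle is exactly this top error term: extracting, from mere finiteness of the weighted norms together with $\lim f=0$, enough decay of $f$ (and, after commuting with $\Omega^{\alpha}$ and $T$, of its angular and transversal derivatives) to kill the $i=1+k+m+j$ contribution, since the crude pointwise bound $|f|\lesssim v^{-1/2}$ is insufficient and one must instead use the vanishing of the tail integral $G(R)$. Once these error estimates are established, $f_R\to f$ and $g_R\to g$ in norm, the sequence $(f_R,g_R)$ is Cauchy in $C_{c}^{\infty}(\mathcal{H}^+_{\geq v_0})\oplus C_{c}^{\infty}(\mathcal{I}^+_{\geq u_0})$, and $(f,g)$ is identified with its limit in the completion, yielding both inclusions; the argument on $\mathcal{I}^+$ is identical with $(u,\Lbar)$ in place of $(v,L)$.
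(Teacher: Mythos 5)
Your proposal is correct and follows essentially the same route as the paper's own proof: a rescaled cutoff $\chi_i$ with $|\chi_i^{(k)}|\lesssim v^{-k}$, a Leibniz expansion whose error terms are supported in a dyadic annulus, and the key tail estimate $f^2(v)\leq v^{-1}\int_v^{\infty}v'^2(\partial_v f)^2\,dv'$ (from $\lim_{v\to\infty}f=0$ and Cauchy--Schwarz) to kill the term where all derivatives fall on the cutoff. Your treatment is in fact slightly more explicit than the paper's about the intermediate mixed Leibniz terms, which the paper absorbs without comment, but the substance of the argument is identical.
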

\begin{proof}
We will prove $2.)$. The proof of $1.)$ proceeds very similarly. Without loss of generality, we can restrict to $f$. The estimates for $g$ proceed entirely analogously. We introduce a smooth cut-off $\chi:[v_0,\infty)\to \R$ such that $\chi(v)=1$ for all $v\leq 2v_0$ and $\chi=0$ for all $v\geq 4v_0$. Then $|\chi^{(k)}|\leq C_k$. Rescale $\chi$ by defining $\chi_i:[v_0,\infty)\to \R$, with $i\in \N$, as follows: $\chi_i(v):=\chi(\frac{v}{i})$. Then $|\chi_i^{(k)}|\leq i^{-k}C_k$ for all $0\leq k\leq n$ and hence $|\chi_i^{(k)}|\leq C_kv^{-k}$.

Now, define $f_i=\chi_i\cdot f$, then $f_i\in C_{c}^{\infty}(\mathcal{H}^+_{\geq v_0})$. Furthermore, by applying the Leibniz rule successively and using that $|\chi_i^{(k)}|\leq C_kv^{-k}$ for $k\geq 1$, we obtain
\begin{equation*}
\begin{split}
||f-f_i||_{\mathcal{E}_{n; \mathcal{H}^+_{\geq v_0}}}^2\leq&\: C \sum_{j=0}^2\sum_{m+2k+2|\alpha|\leq 2n}\int_{ \mathcal{H}^+_{\geq  2v_0 i}} v^{2k+2-j} (\partial_v^{1+k+m+j} \Omega^{\alpha} f)^2+v^{2k} |\snabla_{\s^2}\partial_v^{k+m} \Omega^{\alpha}f|^2\,d\omega dv\\
&+C\sum_{j=0}^2\sum_{m+2k \leq 2n}\int_{ \mathcal{H}^+_{2v_0 i\leq v\leq 4v_0 i}} v^{2+2k-j}|\chi^{(1+k+m+j)}|^2 f^2\,d\omega dv.
\end{split}
\end{equation*}
and
\begin{equation*}
\begin{split}
\sum_{j=0}^2\sum_{m+2k \leq 2n}\int_{ \mathcal{H}^+_{2v_0 i\leq v\leq 4v_0 i}} v^{2+2k-j}|\chi^{(1+k+m+j)}|^2 f^2\,d\omega dv\leq&\: C\int_{ \mathcal{H}^+_{2v_0 i\leq v\leq 4v_0 i}}f^2\,d\omega dv.
\end{split}
\end{equation*}
Note that since $f(v,\theta,\varphi)\to 0$ as $v\to \infty$, we can estimate
\begin{equation*}
f^2(v,\theta,\varphi)\leq v^{-1}\int_v^{\infty} {v'}^2(\partial_vf)^2(v',\theta,\varphi)\,dv,
\end{equation*}
so
\begin{equation*}
\int_{ \mathcal{H}^+_{2v_0 i\leq v\leq 4v_0 i}} f^2\,d\omega dv\leq \int_{ \mathcal{H}^+_{ v\geq 2v_0 i}} v^2(\partial_vf)^2\,d\omega dv.
\end{equation*}
Hence, $||f-f_i||_{\mathcal{E}_{n; \mathcal{H}^+_{\geq v_0}}}\to 0$ as $i\to \infty$ and we can conclude that $f$ lies in the completion of $C_{c}^{\infty}(\mathcal{H}^+_{\geq v_0})$ with respect to the norm $||\cdot ||_{\mathcal{E}_{n; \mathcal{H}^+_{\geq v_0}}}$.
\end{proof}

\begin{lemma}
\label{lm:completiontildeSigma}
Let $(f,g)\in (C^{\infty}(\widetilde{\Sigma}))^2$.
\begin{itemize}
\item[1.)] Then $(f,g)\in \mathcal{E}_{n; \widetilde{\Sigma}}$ if 
\begin{equation*}
||(f,g)||_{\mathcal{E}_{ \widetilde{\Sigma}}^T}<\infty
\end{equation*}
and $\lim_{r\to \infty} f=0$.
\item[2.)]Let $n\in \N_0$. Then $(f,g)\in \mathcal{E}_{n; \widetilde{\Sigma}}$ if 
\begin{equation*}
||(f,g)||_{\mathcal{E}_{n; \widetilde{\Sigma}}}<\infty
\end{equation*}
 and $\lim_{r\to \infty} rf=0$.
 \end{itemize}
\end{lemma}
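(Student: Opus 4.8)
The final statement to prove is Lemma~\ref{lm:completiontildeSigma}, which establishes that smooth data on $\widetilde{\Sigma}$ lying in the degenerate ($\mathcal{E}^T_{\widetilde{\Sigma}}$) or higher-order ($\mathcal{E}_{n;\widetilde{\Sigma}}$) energy spaces (as measured by finiteness of the respective norm) actually lie in the corresponding \emph{completions} of compactly supported data, provided a suitable decay condition holds at spacelike infinity. This is the exact analogue for $\widetilde{\Sigma}$ of Lemma~\ref{lm:completionhorinf}, which was proved for data on $\mathcal{H}^+_{\geq v_0}$ and $\mathcal{I}^+_{\geq u_0}$.

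\textbf{Approach.} The plan is to mirror the cutoff argument in the proof of Lemma~\ref{lm:completionhorinf} verbatim, transplanting it from the null hypersurfaces to the spacelike hypersurface $\widetilde{\Sigma}=\{t=0\}$. I would prove statement $2.)$ in detail and note that $1.)$ follows by the same (in fact simpler) reasoning, just as is done for Lemma~\ref{lm:completionhorinf}. The key point is that the $\widetilde{\Sigma}$-norms $||\cdot||_{\mathcal{E}_{n;\widetilde{\Sigma}}}$ defined in Section~\ref{sec:mainspaces} are integrals over $\{v\geq v_{r_{\mathcal{I}}}\}$ (the asymptotically flat end, with $r$-weights) and over $\{u\geq u_{r_{\mathcal{H}}}\}$ (the near-horizon end, with $(r-M)^{-1}$-weights), plus a compactly supported $T$-energy piece over the whole of $\widetilde{\Sigma}$. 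Since the middle region $\{r_{\mathcal{H}}\leq r\leq r_{\mathcal{I}}\}$ is compact, the only obstruction to approximating $(f,g)$ by compactly supported data is controlling the two noncompact ends.

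\textbf{Key steps.} First I would introduce, for the large-$r$ end, a rescaled smooth cutoff $\chi_i$ in the coordinate along $\widetilde{\Sigma}$ that degenerates appropriately at infinity, exactly as in Lemma~\ref{lm:completionhorinf}: take $\chi$ supported so that $\chi=1$ near the compact region and $\chi=0$ past a large value, and rescale so that the $k$-th derivative of $\chi_i$ is bounded by a constant times the corresponding inverse power of the coordinate weight. Define $f_i=\chi_i f$ (and an analogous cutoff at the horizon end, degenerating in $(r-M)^{-1}$), so that $f_i$ is compactly supported on $\widetilde{\Sigma}$. Second, I would expand $||f-f_i||_{\mathcal{E}_{n;\widetilde{\Sigma}}}^2$ by the Leibniz rule; the terms where no derivative falls on $\chi_i$ are integrals of the full norm over the ever-shrinking support tail $\{v\geq 2 v_0 i\}\cup\{u\geq 2 v_0 i\}$, which tend to zero by the assumed finiteness $||(f,g)||_{\mathcal{E}_{n;\widetilde{\Sigma}}}<\infty$. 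Third, the terms where at least one derivative falls on $\chi_i$ carry the bound $|\chi_i^{(k)}|\lesssim v^{-k}$ (resp.\ $(r-M)^{k}$ via $r_*$), so the accompanying weight is reduced and these reduce to a zeroth-order integral $\int f^2$ over the annular region $\{2v_0 i\leq v\leq 4v_0 i\}$. Fourth, here is where the hypothesis $\lim_{r\to\infty} rf=0$ enters: writing $\phi=rf$ and using the fundamental theorem of calculus exactly as in Lemma~\ref{lm:completionhorinf}, one bounds the pointwise value of $\phi^2$ (hence of $r^2 f^2$) at radius $v$ by $v^{-1}\int_v^\infty (v')^2(\partial_v\phi)^2\,dv'$, which is controlled by the $r$-weighted term already present in the norm and vanishes as $i\to\infty$. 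The near-horizon end is handled identically with the roles of $u,v$ reversed and $r$ replaced by $(r-M)^{-1}$.

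\textbf{Main obstacle.} The routine parts are the Leibniz bookkeeping and the weight counting, which are identical in structure to Lemma~\ref{lm:completionhorinf}. The one genuine subtlety is verifying that the decay hypothesis stated ($\lim_{r\to\infty} rf=0$ for the higher-order case, and $\lim_{r\to\infty} f=0$ for the $T$-energy case) is precisely what is needed to kill the zeroth-order annular integral at the asymptotically flat end, and confirming that an analogous automatic control holds at the horizon end without an extra hypothesis there --- this should follow because the $(r-M)^{-1}$-weighted near-horizon norm already dominates the corresponding boundary term via the same Hardy-type fundamental-theorem-of-calculus estimate, so that no separate assumption at $r\downarrow M$ is required. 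I would double-check that the compactly supported $T$-energy piece over $\{r_{\mathcal{H}}\leq r\leq r_{\mathcal{I}}\}$ causes no difficulty, as it is unaffected by the cutoffs for $i$ large. Thus $||(f,g)-(f_i,g_i)||_{\mathcal{E}_{n;\widetilde{\Sigma}}}\to 0$, placing $(f,g)$ in the completion, which completes the proof.
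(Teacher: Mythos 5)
Your proposal is correct and follows essentially the same route as the paper's proof: a rescaled cutoff in $r_*$ with $|\chi_i^{(k)}|\lesssim (1+|r_*|)^{-k}$, a Leibniz expansion whose commutator terms reduce to a zeroth-order integral of $\phi^2$ over the annulus $\{2r_0 i\leq |r_*|\leq 4r_0 i\}$, and the fundamental-theorem-of-calculus bound $\int\phi^2\lesssim \int r^2(L\phi)^2$ over the tail using the decay of $\phi=rf$. The subtlety you flag at the horizon end is handled in the paper exactly as you suggest, via the same Hardy-type estimate with $r$ replaced by $(r-M)^{-1}$.
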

\begin{proof}
We will prove $2.)$. The proof of $1.)$ proceeds very similarly. The proof proceeds analogously to the proof of Lemma \ref{lm:completionhorinf}. We first introduce a cut-off $\chi: (-\infty,\infty)\to \R$ such that $\chi(r_*)=1$ for all $|r_*|\leq 2r_0$, with $r_0>0$, and  $\chi(r_*)=0$ for all $|r_*|\geq 4r_0$. Then we define  $\chi_i: (-\infty,\infty)\to \R$ as follows: $\chi_i(r_*)=\chi(\frac{r_*}{i})$. Observe that $|\chi_i^{(k)}|\leq C_k(1+|r_*|)^{-k}$. Define $f_i:=\chi_i\cdot f$ and $g_i=\chi_i\cdot f$, then $(f_i,g_i)\in C_{c}^{\infty}(\widetilde{\Sigma})\times C_{c}^{\infty}(\widetilde{\Sigma})$.

Furthermore, if $\psi$ denotes the solution corresponding to the initial data $(f,g)$, we can estimate
\begin{equation*}
\begin{split}
||(f-f_i,g-g_i)||_{\mathcal{E}_{n; \widetilde{\Sigma}}}\leq&\: C\sum_{j=0}^1\sum_{m+2k+2|\alpha|\leq 2n}\int_{\widetilde{\Sigma}\cap\{r_*\geq 2r_0 i\}} r^{2+2k-j}(\partial_v^{k+1}\Omega^{\alpha}T^{j+m}\phi)^2+r^{2k-j}|\snabla_{\s^2}\partial_v^{k}\Omega^{\alpha}T^{j+m}\phi|^2 \\
&+r^{2k+2-j}(\partial_u^{k+1}\Omega^{\alpha}T^{j+m}\phi)^2+r^{2k-j}|\snabla_{\s^2}\partial_u^{k}\Omega^{\alpha}T^{j+m}\phi|^2\,d\omega dr_*\\
&+C\sum_{j=0}^1\sum_{m+2k+2|\alpha|\leq 2n}\int_{\widetilde{\Sigma}\cap\{r_*\leq -2r_0i\}} (r-M)^{-2k-2+j}(\partial_v^{k+1}\Omega^{\alpha}T^{j+m}\phi)^2\\
&+(r-M)^{-2k+j}|\snabla_{\s^2}\partial_v^{k}\Omega^{\alpha}T^{j+m}\phi|^2 \\
&+(r-M)^{-2k-2+j}(\partial_u^{k+1}\Omega^{\alpha}T^{j+m}\phi)^2+(r-M)^{-2k+j}|\snabla_{\s^2}\partial_u^{k}\Omega^{\alpha}T^{j+m}\phi|^2\,d\omega dr_*\\
&+C\sum_{m\leq 2n+2} \int_{\widetilde{\Sigma} \cap\{|r_*|\geq 2r_0 i\}} {\mathbf{J}}^T[T^m\psi]\cdot \mathbf{n}_{\widetilde{\Sigma}} \,d\mu_{\widetilde{\Sigma}}\\
&+C\int_{ \widetilde{\Sigma}\cap\{2r_0 i\leq |r_*|\leq 4r_0 i\} }\phi^2\,d\omega dr_*.
\end{split}
\end{equation*}
Furthermore, using that $\lim_{|r_*|\to \infty} \phi=0$,
\begin{equation*}
\int_{ \widetilde{\Sigma}\cap\{2r_0 i\leq |r_*|\leq 4r_0 i\} }\phi^2\,d\omega dr_* \leq C\int_{ \widetilde{\Sigma}\cap\{ |r_*|\geq 2r_0 i\} }r^2(L \phi)^2\,d\omega dr_*.
\end{equation*}
Hence, $||(f-f_i,g-g_i)||_{\mathcal{E}_{n; \widetilde{\Sigma}}} \to 0$ as $i\to \infty$, so $(f,g)$ are in the completion of $(C_{c}^{\infty}(\widetilde{\Sigma}))^2$ with respect to the norm $|| \cdot ||_{\mathcal{E}_{n; \widetilde{\Sigma}}}$.
\end{proof}

\bibliographystyle{plain}

\end{document}